\documentclass[a4paper,10pt]{amsart}
\usepackage[utf8]{inputenc}
\usepackage{amsmath,amssymb,amsthm}
\usepackage[utf8]{inputenc}
\usepackage{tikz-cd}
\usetikzlibrary{decorations.pathreplacing}
\usepackage{todonotes}
\usepackage[foot]{amsaddr}
\usepackage[pagebackref=true]{hyperref}
\setuptodonotes{color=red!40,fancyline}
\usepackage{color}
\usepackage{subcaption}
\usepackage{longtable}

\renewcommand{\theenumi}{(\alph{enumi})}
\renewcommand{\labelenumi}{\theenumi}


\newtheorem{theorem}{Theorem}[section]
\newtheorem{proposition}[theorem]{Proposition}
\newtheorem{corollary}[theorem]{Corollary}
\newtheorem{lemma}[theorem]{Lemma}
\newtheorem{definition}[theorem]{Definition}

\theoremstyle{definition}
\newtheorem{remark}[theorem]{Remark}
\newtheorem{example}[theorem]{Example}
\newtheorem{assumption}{Assumption}

\usepackage[linesnumbered,ruled,vlined,hanginginout]{algorithm2e}
\SetKwInput{KwInput}{Input}
\SetKwInput{KwOutput}{Output}

\title[Differential Galois Theory for the Classical Groups]{On the Direct Problem in Differential Galois Theory for the Classical Groups}
\date{\today}
\author{Daniel Robertz}
\address{Lehrstuhl f\"ur Algebra und Zahlentheorie, RWTH Aachen University, D--52056 Aachen, Germany} 
\email{daniel.robertz@rwth-aachen.de}
\author{Matthias Sei\ss}
\address{Institut f\"ur Mathematik, Universit\"at Kassel, D--34109 Kassel, Germany}
\email{mseiss@mathematik.uni-kassel.de}

\newcommand{\N}{\mathbb{N}}
\newcommand{\Z}{\mathbb{Z}}
\newcommand{\Q}{\mathbb{Q}}

\DeclareMathOperator{\Stab}{Stab}
\DeclareMathOperator{\Aut}{Aut}
\DeclareMathOperator{\kernel}{ker}
\DeclareMathOperator{\image}{im}
\DeclareMathOperator{\Frac}{Frac}
\DeclareMathOperator{\trdeg}{trdeg}

\newcommand{\gauge}[2]{#1 . #2}

\newcommand{\dlog}{\ell\delta}

\newcommand{\diag}{{\rm diag}}
\newcommand{\group}{G}

\newcommand{\GalConn}{H_{\rm con}}
\newcommand{\Hred}{H_{\rm red}}

\newcommand{\GL}{\mathrm{GL}}
\newcommand{\SL}{\mathrm{SL}}
\newcommand{\SP}{\mathrm{Sp}}
\newcommand{\SO}{\mathrm{SO}}
\newcommand{\Gzwei}{\mathrm{G}_2}

\newcommand{\cocomp}[1]{#1^{\circ}}
\newcommand{\liealg}{\mathfrak{g}}
\newcommand{\gl}{\mathfrak{gl}}

\newcommand{\field}{C}
\newcommand{\difffield}{F}
\newcommand{\difffieldalg}{F_{\mathrm{alg}}}

\newcommand{\generalext}{\mathcal{E}}
\newcommand{\Gal}{{\rm Gal}}
\newcommand{\extfieldspec}{\mathcal{\overline{E}}}
\newcommand{\extfieldred}{E_{\rm red}}
\newcommand{\KPV}{K_{\rm PV}}

\newcommand{\roots}{\Phi}
\newcommand{\rootbasis}{\Delta}
\newcommand{\weyl}{\mathcal{W}}
\newcommand{\height}{\mathrm{ht}}
\newcommand{\cartan}{\mathfrak{h}}
\newcommand{\torus}{T}
\newcommand{\borel}{B}
\newcommand{\unipotent}{U}
\newcommand{\parabolic}{P}
\newcommand{\levi}{L}
\newcommand{\Lred}{L_{\rm red}}
\newcommand{\radic}{R}
\newcommand{\unirad}{R_u}
\newcommand{\urelem}{u}
\newcommand{\leviroots}{\Psi}

\newcommand{\ANFcomp}{A_{\group}^{\rm comp}}
\newcommand{\BNFcomp}{B_{\group}}

\newcommand{\Aprered}{A_{\rm red}^{\rm pre}}
\newcommand{\Aprerad}{A_{\rm rad}^{\rm pre}}
\newcommand{\Ared}{A_{\rm red}}
\newcommand{\Arad}{A_{\rm rad}}

\DeclareMathOperator{\Lie}{Lie}

\DeclareMathOperator{\Ad}{Ad}
\DeclareMathOperator{\ad}{ad}

\newcommand{\wronski}{\mathrm{wr}}

\newcommand{\coord}{Y}

\newcommand{\vq}{\overline{v}}

\newcommand{\baa}{\boldsymbol{a}}

\newcommand{\bvq}{\overline{\boldsymbol{v}}}
\newcommand{\bvh}{\widehat{\boldsymbol{v}}}

\newcommand{\bff}{\boldsymbol{f}}

\newcommand{\bee}{\boldsymbol{e}}
\newcommand{\bsq}{\overline{\boldsymbol{s}}}
\newcommand{\bss}{\boldsymbol{s}}
\newcommand{\sq}{\overline{s}}
\newcommand{\bpp}{\boldsymbol{p}}
\newcommand{\btt}{\boldsymbol{t}}
\newcommand{\buu}{\boldsymbol{u}}
\newcommand{\bvv}{\boldsymbol{v}}
\newcommand{\bww}{\boldsymbol{w}}
\newcommand{\bxx}{\boldsymbol{x}}
\newcommand{\byy}{\boldsymbol{y}}
\newcommand{\bzz}{\boldsymbol{z}}

\newcommand{\bvvbase}{\boldsymbol{v}_{\rm base}}
\newcommand{\bvvqbase}{\overline{\boldsymbol{v}}_{\rm base}}
\newcommand{\bvvext}{\boldsymbol{v}_{\rm ext}}

\newcommand{\locali}{\mathcal{R}}

\newcommand{\bZ}{\boldsymbol{Z}}
\newcommand{\bexp}{\boldsymbol{\exp}}
\newcommand{\bint}{\boldsymbol{\mathrm{int}}}

\newcommand{\intn}{\mathrm{int}}
\newcommand{\intrad}{\mathrm{intrad}}
\newcommand{\bintrad}{\boldsymbol{\mathrm{intrad}}}

\newcommand{\expsolass}{\exp^{\rm ass}}
\newcommand{\exprforexp}{\mathrm{EXP}}
\newcommand{\exprforint}{\mathrm{INT}}

\newcommand{\ratexp}{\widehat{\exp}}
\newcommand{\ratint}{\widehat{\mathrm{int}}}\newcommand{\ratv}{\widehat{v}}
\newcommand{\ratf}{\widehat{f}}
\newcommand{\ratfm}{\widehat{\fm}}
\newcommand{\bratexp}{\boldsymbol{\widehat{\exp}}}
\newcommand{\bratint}{\boldsymbol{\widehat{\mathrm{int}}}}
\newcommand{\bratv}{\boldsymbol{\widehat{v}}}
\newcommand{\bratf}{\boldsymbol{\widehat{f}}}

\newcommand{\Sol}{\mathrm{Sol}}
\newcommand{\RatSol}{\mathrm{RatSol}}
\newcommand{\sol}{\mathrm{sol}}
\newcommand{\bsol}{\boldsymbol{\mathrm{sol}}}

\newcommand{\fm}{\mathcal{Y}}
\newcommand{\fmspec}{\mathcal{\overline{Y}}}

\newcommand{\fml}{\mathcal{Y}_{\mathrm{Liou}}}
\newcommand{\fmlspec}{\mathcal{\overline{Y}}_{\mathrm{Liou}}}
\newcommand{\ALiou}{A_{\rm Liou}}

\newcommand{\fmuq}{\underline{\mathcal{Y}}}
\newcommand{\fmred}{\mathcal{Y}_{\rm red}}
\newcommand{\fmrad}{\mathcal{Y}_{\rm rad}}
\newcommand{\fmredq}{\overline{\mathcal{Y}}_{\rm red}}
\newcommand{\fmradq}{\overline{\mathcal{Y}}_{\rm rad}}
\newcommand{\fmreduq}{\underline{\mathcal{Y}}_{\rm red}}
\newcommand{\fmraduq}{\underline{\mathcal{Y}}_{\rm rad}}
\newcommand{\fmredqhat}{\widehat{\overline{\mathcal{Y}}}_{\rm red}}
\newcommand{\fmradqhat}{\widehat{\overline{\mathcal{Y}}}_{\rm rad}}
\newcommand{\fmreduqhat}{\widehat{\underline{\mathcal{Y}}}_{\rm red}}
\newcommand{\fmraduqhat}{\widehat{\underline{\mathcal{Y}}}_{\rm rad}}

\DeclareMathOperator{\LCLM}{LCLM}

\newcommand{\Ric}[3]{\mathrm{Ric}_{#1}(#2, #3)}
\newcommand{\idealspec}{S}
\newcommand{\idealinter}{S_{\mathrm{inter}}}
\newcommand{\idealmax}{S_{\mathrm{max}}}
\newcommand{\idealric}{S_{\mathrm{Ric}}}
\newcommand{\Imax}{I_{\mathrm{max}}}

\newcommand{\Iuni}{I_{\mathrm{uni}}}

\newcommand{\sigred}{\sigma_{\rm red}}
\newcommand{\siginter}{\sigma_{\rm inter}}
\newcommand{\sigul}{\underline{\sigma}}
\newcommand{\sigmax}{\sigma_{\rm max}}
\newcommand{\sigPV}{\sigma_{\rm PV}}

\newcommand{\ulphi}{\underline{\varphi}}
\newcommand{\ovphi}{\overline{\varphi}}
\newcommand{\ovphired}{\overline{\varphi}_{\rm red}}
\newcommand{\ulQ}{\underline{Q}}
\newcommand{\ovQ}{\overline{Q}}
\newcommand{\ovQred}{\overline{Q}_{\rm red}}

\def\addots{\mathinner{\mkern1mu\raise1pt\vbox{\kern7pt\hbox{.}}
  \mkern2mu\raise4pt\hbox{.}\mkern2mu\raise7pt\hbox{.}\mkern1mu}}

\definecolor{DarkGreen}{rgb}{0, 0.5, 0}

\begin{document}

\begin{abstract}
   Let $\group$ be a classical group of Lie rank $l$ and let $\field$ be an algebraically closed field of characteristic zero. For $l$ differential indeterminates $\bvv=(v_1,\dots,v_l)$ over $\field$ we constructed in \cite{Seiss_Generic} a general Picard-Vessiot extension $\generalext$ of the differential field $\field\langle \bss(\bvv)\rangle$ having differential Galois group $\group(\field)$. Here $\bss(\bvv)=(s_1(\bvv),\dots,s_l(\bvv))$ are certain differential polynomials in $\field\{\bvv \}$ which are differentially algebraically independent over $\field$. The linear differential equation defining $\generalext$ is defined by the normal form matrix $A_{\group}(\bss(\bvv))$ lying in the Lie algebra of $\group$.\\
    In the first part of this paper we analyze the structure of $\generalext$ induced by the action of the standard parabolic subgroups of $\group(\field)$ on $\generalext$.
   In the second part we consider specializations $A_{\group}(\bss(\bvv)) \to A_{\group}(\bsq)$ with $\bsq \in \field(z)^l$ of the normal form matrix for $\group$ of type $A_l$, $B_l$, $C_l$ or $\Gzwei$ (here $l=2$). We show how one can combine the results of the first part with known algorithms for the computation of the differential Galois group and its Lie algebra to determine the differential Galois group of certain specialized equations
   $\partial(\byy) = A_{\group}(\bsq)\byy$ over $\field(z)$ with $\field$ a computable algebraically closed field of characteristic zero.
\end{abstract}

\maketitle

\section{Introduction}\label{sec:intro}
\subsection*{Differential Galois Theory}

Differential Galois theory is a generalization of the well-known classical Galois theory for polynomial equations to linear ordinary  differential equations.
We recall briefly that in the classical theory one considers a polynomial equation  
\[
p(x) \, := \, x^n + a_{n-1} x^{n-1} + \dots + a_1 x+ a_0 \, = \, 0
\]
with coefficients in some field $k$ and that one wants to study the symmetries of the roots of this equation.
These symmetries are described by the group of all $k$-automorphisms of a splitting field for $p(x)=0$, i.e.\ a smallest 
extension field of $k$ such that $p(x)$ splits into linear factors. This group of $k$-automorphism is called the Galois group.
In differential Galois theory we study instead the solutions of a linear differential equation
\begin{equation} \label{eqn:firstlineardifferentialeq}
    L(y) \, := \, y^{(n)}+a_{n-1} y^{(n-1)} + \dots + a_1 y' + a_0 y \, = \, 0
\end{equation}
with coefficients in some differential field $F$ with algebraically closed field of constants $\field$.
Here, the analogous object to the splitting field is the so-called Picard-Vessiot field, which is a certain differential field extension $E$ of $F$ containing a full set of solutions for the equation $L(y)=0$.
The differential algebraic relations of the solutions are now described by the properties of the group of all differential $F$-automorphisms of $E$, meaning the $F$-automorphisms of $E$ which commute with the derivation of $F$.
While the classical Galois group has a representation as a permutation group, the differential Galois group acts by $\field$-linear transformations on the full set of solutions and so has a representation as a linear algebraic group defined over $\field$.
The direct problem is concerned with the computation of the differential Galois group for a given linear differential equation \eqref{eqn:firstlineardifferentialeq}.
Even though some progress has been made in some particular cases, the general case is still very challenging, in particular effective computations. This paper makes a contribution to this problem based on recent progress for the case of generic equations.

\subsection*{State of the Art of the Direct Problem}
The first algorithmic contribution to the direct problem in differential Galois theory (for a general introduction to the topic see \cite{vanderPutSinger,Magid}) was given by J.~Kovacic in 1986 in  \cite{KovacicAlgorithmSL2}. Kovacic uses the classification of the algebraic subgroups of $\SL_2(\field)$ to design an effective algorithm to compute the Liouvillian solutions of an order two homogeneous linear differential equation of type
\[
L(y) \, := \, y'' + a_0 y \, = \, 0 \qquad \text{with} \  a_0 \in \field(z)\,,
\]
where $\field$ is an algebraically closed field of characteristic zero and $\difffield := \field(z)$ is the rational function field in $z$ endowed with the usual derivation $\partial=\frac{d}{dz}$.
Knowing the Liouvillian solutions of the equation $L(y)=0$, one can deduce its differential Galois group $H \leq \SL_2(\field)$ and vice versa. M.~F.\ Singer and F.~Ulmer generalized this idea to order three differential equations in \cite{SingerUlmerThirdOrder,SingerUlmerSecondThirdOrder,SingerUlmerProducts}.

For a homogeneous linear differential equation
\[
 L(y) \, := \, y^{(n)} + a_{n-1} y^{(n-1)} + \dots + a_1 y' + a_0 y \, = \, 0 \qquad \text{with} \  a_i \in \field(z),
\]
of order $n$
whose corresponding first order system is completely reducible, i.e.\ a direct sum of irreducible systems, 
E.~Compoint and M.~F.\ Singer showed in \cite{SingerCompoint} how to compute the differential Galois group $H\leq \GL_n(\field)$.
Their approach is based on E.~Compoint's result that if the differential Galois group is unimodular and reductive, then the ideal defining the Picard-Vessiot extension is generated by the invariants of the differential Galois group it contains.
Degree bounds for generating invariants of reductive groups give a degree bound for the generating polynomials of the ideal defining the Picard-Vessiot extension.
These polynomials can be computed from the differential equation without knowing the differential Galois group using the methods of \cite{HoeijWeilInvariant}. 

An algorithm for the general case was given by E.\ Hrushovski in \cite{Hrushovski} using model theory. His approach was elaborated on and improved by R.~Feng in \cite{Feng}, by  M.~Sun in \cite{Sun} and by D.~Rettstadt in his PhD thesis \cite{Rettstadt}. An improvement for the degree bounds of the defining equations of the proto-Galois group was given by  E.~Amzallag,
A.~Minchenko and G.~Pogudin in \cite{Pogudin}.
But it is still very hard to actually compute the Galois group using E.\ Hrushovski's algorithm.

Instead of computing the differential Galois group $H$ of an order $n$ equation one can focus on the computation of the Lie algebra $\Lie(H)$ of $H$. This approach is based on the Kolchin-Kovacic reduction theorems appearing in  \cite{KovacicInvProb,KovacicOntheInverseProblem} and \cite{Kolchin,BuiumCassidy}.
Roughly speaking they state that over an algebraic extension of the base field $\field(z)$ one can gauge transform the defining matrix $A$ of the corresponding first order system into the Lie algebra of the differential Galois group.
Such a matrix is called a \emph{reduced form} of $A$.
Computing the smallest Lie algebra containing a Wei-Norman decomposition (cf.\ \cite{WeiNorman}) of a reduced form determines $\Lie(H)$.
However, this approach only yields a full solution if the differential Galois group is connected.
An algorithm to compute a reduced form of a first order system is presented in the paper \cite{DreyfusWeil} by T.~Dreyfus and J.-A.\ Weil. It is based on the works \cite{CompointWeil,BarkatouEigenrings,vanderHoevenGalois} and \cite{BarkatouCluzeauWeilDiVizio,MonfordReducedForms} by 
A.\ Aparicio-Monforte, E.~Compoint, M.~Barkatou, T.~Cluzeau, L.\ Di Vizio, J.-A.\ Weil and J.\ van der Hoeven. 

\subsection*{General Extension Fields for the Classical Groups}
In this paper we consider the direct problem in differential Galois theory of a special family of linear differential equations with differential Galois group a subgroup $H$ of a classical group $\group$ of type $A_l$, $B_l$, $C_l$ or $\Gzwei$ (here $l=2$).
We make essential use of the general differential equations and their general Picard-Vessiot extensions constructed in  \cite{Seiss_Generic,Seiss}, meaning that the general extension field $\generalext$ has generators which depend on $l$ differential indeterminates $\bvv=(v_1,\dots,v_l)$ over $\field$, where $l$ is the Lie rank of $\group$. Since in this paper we consider specializations into the differential field $\difffield$, we here perform our construction of $\generalext$ over $\difffield \supset \field$ as in \cite{RobertzSeissNormalForms}, meaning that $\bvv$ are differential indeterminates over $\difffield$. 
The construction relies on the geometric structure of $\group$ expressed by a Chevalley basis of the Lie algebra $\Lie(\group)$ and the Bruhat decomposition of $\group$.
The first one facilitates the construction of the \emph{normal form matrix} $A_{\group}(\bss(\bvv))$ defining the differential equation 
\begin{equation}\label{eqn:introduction2}
\partial(\byy) \, = \, A_{\group}(\bss(\bvv)) \, \byy
\end{equation}
for $\generalext$. The matrix $A_{\group}(\bss(\bvv))$ depends on  certain $l$ differential polynomials 
\[
\bss(\bvv) \, = \, (s_1(\bvv),\dots,s_l(\bvv)) \quad \text{with} \quad s_i(\bvv) \in \field\{\bvv\},
\]
which are differentially algebraically independent over $\difffield$.
In this framework we can construct a fundamental matrix $\fm$ for \eqref{eqn:introduction2} as a parametrization of the double coset in the Bruhat decomposition corresponding to the longest Weyl group element $\overline{w}$.  
In other words, with a representative $n(\overline{w})$ of $\overline{w}$ in the normalizer of a maximal torus we let
\begin{equation}\label{eq:fundmatrixgen}
\fm \, = \, \buu(\bvv,\bff) \, n(\overline{w}) \, \btt(\bexp) \, \buu(\bint)\,,
\end{equation}
 where
\begin{itemize}
    \item $\buu(\bvv,\bff)$ is a parametrization of the product of all negative root groups by $\bvv$ and  $m-l$ differential polynomials $\bff=(f_{l+1},\dots,f_m)$ in $\field\{\bvv\}$ with $m=|\roots^-|$ where $\roots$ denotes the root system of $\group$,
    \item $\btt(\bexp)$ is the product of torus elements parametrized by exponentials $\bexp=(\exp_1,\dots, \exp_l)$ satisfying $\frac{\exp_i'}{\exp_i} = g_i(\bvv)$ with $g_i(\bvv) \in \Z[\bvv]$ homogeneous of degree one,
    \item $\buu(\bint)$ is a parametrization of the product of all negative root groups by certain iterated integrals $\bint=(\intn_1,\dots,\intn_m)$.
\end{itemize}
It is shown in \cite[Section~7]{RobertzSeissNormalForms} that the field
\[
\generalext \, = \, \difffield\langle \bss(\bvv) \rangle (\bvv,\bff,\bexp,\bint) 
\]
is a Picard-Vessiot extension of $\difffield\langle \bss(\bvv)\rangle$ with differential Galois group $\group$.
Note that the whole construction depends on the differential indeterminates $\bvv$. A summary of our construction and the main notation are presented in Part~\ref{part:I}.

\subsection*{The Contribution of the Paper}
The goal of this paper is to present an algorithm to compute the differential Galois groups $H(\field)\leq \group(\field)$ of certain well-behaving specializations
\[
\partial(\byy) \, = \,A_{\group}(\bsq) \, \byy 
\]
of the normal form matrix $A_{\group}(\bss(\bvv))$ with 
\[
\sigma_0\colon \difffield\{ \bss(\bvv) \} \to \difffield, \quad \bss(\bvv) \mapsto \bsq \in \difffield^l, \quad \difffield \, = \, \field(z)\,.
\]
This is achieved by combining the above two mentioned algorithms of E.~Compoint and M.~F.\ Singer respectively T.~Dreyfus and J.-A.\ Weil with our construction of a general extension and in particular of our fundamental matrix $\fm$. We are going to elaborate on our approach.

\subsection*{The General Extension and Parabolic Subgroups} Before developing our algorithm for the specialized equation we analyze in Part~\ref{part:II} intermediate extensions of our general extension field $\generalext$ over $\difffield\langle \bss(\bvv) \rangle$ corresponding, according to the Fundamental Theorem of Differential Galois Theory, to parabolic subgroups of $\group$.
A key ingredient here is the parametrized Bruhat decomposition of the general fundamental matrix $\fm$ in \eqref{eq:fundmatrixgen},
which allows us to connect explicitly standard parabolic subgroups $\parabolic_J \leq \group$ for $J \subseteq \{1, \dots, l \}$, where $\rootbasis=\{ \alpha_1,\dots,\alpha_l \}$ is a basis of the root system $\roots$ of $\group$, and their Levi decompositions
\[
\parabolic_J \, = \, \levi_J \ltimes \unirad(\parabolic_J)
\]
with intermediate extensions. Here $\unirad(\parabolic_J)$ denotes the unipotent radical and $\levi_J$ the standard Levi group of $\parabolic_J$ (cf.\ \eqref{eqn:standardLevi} on page \pageref{eqn:standardLevi}). The Galois action of $g \in \group(\field)$ on the elements $\bvv$, $\bff$, $\bexp$ and $\bint$ is induced by recomputing the Bruhat decomposition of
\[
\fm \, g \, = \, \buu(\bvv,\bff) \, n(\overline{w}) \, \btt(\bexp) \, \buu(\bint) \, g\,.
\]
We establish a bijection between standard parabolic subgroups $\parabolic_J$ and partitions $\bvv =  \bvvext \cup \bvvbase $ of the differential indeterminates $\bvv$, where $\bvvbase$ consists of those indeterminates which are fixed under the action of $\parabolic_J$. 
Correspondingly we obtain  partitions of the set of indices $I=\{1,\dots,l\}$ of $\bvv$ into $I=I'\cup I''$, where the indices in $I'$ correspond to the indeterminates in $\bvvext$ and the ones in $I''$ to the indeterminates in $\bvvbase$. 
We show that the fixed field $\generalext^{\parabolic_J}$ for a parabolic subgroup $\parabolic_J$ is generated as differential field by $\bss(\bvv)$ and $\bvvbase$ over $\difffield$, that is
\[
\generalext^{\parabolic_J} \, = \, \difffield\langle \bss(\bvv), \bvvbase\rangle\,.
\]
We denote the negative roots of $\roots^-$ by $\beta_1,\dots,\beta_m$. Let $\leviroots$ be the root subsystem of $\roots$ generated by all simple roots $\alpha_i$ with $i \in J$.
Then $\leviroots$ is the root system of $\levi_J$ and $\roots^- \setminus \leviroots^-$ is the set of roots $\beta$ whose corresponding root groups $U_{\beta} \leq \unipotent^-$ generate the unipotent radical $\unirad(\parabolic_J)$.    
Combining the structure of the root system with the induced action of $\unirad(\parabolic_J)$ on the parameters we prove that the fixed field $\generalext^{\unirad(\parabolic_J)}$ of the unipotent radical is generated over $\difffield\langle \bss(\bvv), \bvvbase\rangle$ as a differential field by
\begin{equation}\label{eqn:paramtesreductivepart}
\bvvext, \, \bexp, \, \intn_i \quad \text{with} \ 1\leq i \leq m \ \text{such that} \ \beta_i \in \leviroots^-.
\end{equation}
Moreover, the Fundamental Theorem of Differential Galois Theory implies that the differential Galois group of
$\generalext^{\unirad(\parabolic_J)}$ over $\difffield\langle \bss(\bvv), \bvvbase\rangle$ is isomorphic to the Levi group $\levi_J$.
 Levi groups of $\parabolic_J$ are maximal reductive subgroups justifying to call the elements in \eqref{eqn:paramtesreductivepart} the \emph{parameters of the 
 reductive part} of $\parabolic_J$. In turn we call the integrals $\intn_i$ with $\beta_i \in \roots^- \setminus \leviroots^-$ the \emph{parameters of the unipotent radical part}.
 
 The intermediate extensions $\generalext/\generalext^{\parabolic_J}$ and  
 $\generalext^{\unirad(\parabolic_J)}/\generalext^{\parabolic_J}$ and the decompositions $\bvv=\bvvbase\cup \bvvext$ are linked to
 solutions of certain differential operators.
 Denote by
\[
L_{\group}(\bss(\bvv),\partial) \in \field\langle \bss(\bvv) \rangle[\partial]
\]
the \emph{normal form operator} corresponding to the normal form matrix $A_{\group}(\bss(\bvv))$. 
It turns out that for each indeterminate $v_i$ there is an associated equation
\begin{equation}\label{eqn:Ldet(i)}
L^{\det(i)}(\bss(\bvv),y) \, = \, 0\,, \qquad i=1, \ldots, l\,,
\end{equation}
with solution $\exp(\int b_i v_i)$ with $b_i \in \field^{\times}$. Thus, $b_i v_i$ is a solution of the Riccati equation $\Ric{i}{\bss(\bvv)}{y} = 0$ corresponding to \eqref{eqn:Ldet(i)}.
The Riccati equations will help us later to compute for the specialized equation the specialized elements $\bvvqbase$ in $\difffield$ and so to determine $\parabolic_J$ containing the Galois group. (This is a generalization of \emph{Case~1} in Kovacic's Algorithm.)
Moreover, each partition $\bvv = \bvvbase \cup \bvvext$, and thus each standard parabolic subgroup, induces an irreducible factorization of the normal form operator
\begin{equation}\label{eqn:introduction3}
L_{\group}(\bss(\bvv),\partial) \, = \, L_{1}(\bss(\bvv),\bvvbase,\partial) \cdots L_{k}(\bss(\bvv),\bvvbase, \partial)
\end{equation}
over $\difffield\langle \bss(\bvv),\bvvbase \rangle$.
We prove that the extension $\generalext^{\unirad(\parabolic_J)}/\generalext^{\parabolic_J}$ with differential Galois group isomorphic to $\levi_J$ is a Picard-Vessiot extension for the least common left multiple
\begin{equation}\label{eqn:introduction4}
\LCLM(\bss(\bvv),\bvvbase,\partial) \, := \, \LCLM(L_{1}(\bss(\bvv),\bvvbase,\partial), \dots, L_{k}(\bss(\bvv),\bvvbase,\partial))
\end{equation}
 of these irreducible factors.  
On the one hand, the basis elements 
\[
y^{I''}_1,\dots,y^{I''}_{n^{I''}} \in \generalext^{\unirad(\parabolic_J)} \quad \text{(fixed basis)}
\]
of the solution space of the 
least common left multiple in $\generalext^{\unirad(\parabolic_J)}$ are differential rational functions in the parameters of the reductive part.
On the other hand, since $\generalext^{\unirad(\parabolic_J)}$ is generated as a differential field by this basis over $\generalext^{\parabolic_J}$, we can express the parameters of the reductive part as differential rational functions in these basis elements. We can determine differential rational functions 
$\exprforexp^{I''}_i(\bZ)$, $V^{I''}_i(\bZ)$ and $\exprforint^{I''}_j(\bZ)$ in $\generalext^{\parabolic_J}\langle \bZ \rangle$ for $i=1,\dots,l$ and $j=1,\dots,m$ with $\beta_j \in \leviroots^-$ such that
\begin{equation}\label{eqn:introduction1}
\begin{array}{c}
\exprforexp^{I''}_i(y^{I''}_1,\dots,y^{I''}_{n^{I''}}) \, = \, \exp_i, \quad
V^{I''}_i(y^{I''}_1,\dots,y^{I''}_{n^{I''}}) \, = \, v_i 
\quad \text{and} \\[0.5em] \exprforint^{I''}_j(y^{I''}_1,\dots,y^{I''}_{n^{I''}}) \, = \, \intn_j \, ,
\end{array}
\end{equation}
where $\bZ = (Z_1,\dots, Z_{n^{I''}})$ are differential indeterminates over $\generalext^{\parabolic_J}$.  
The functions $\exprforexp^{I''}_i(\bZ)$, $V^{I''}_i(\bZ)$ and $\exprforint^{I''}_j(\bZ)$ are important, since they will allow us later to determine a specialization of the parameters of the reductive part from a basis of a solution space of the specialized least common left multiple.  
In the last section of Part~\ref{part:II} we show that the fundamental matrix $\fm$ factors into $\fm = \fmred \, \fmrad$ with 
\begin{eqnarray*}
\fmred & = & \buu(\bvv,\bff) \, n(\overline{w}) \, \btt(\bexp) \prod_{\beta_i \in \leviroots^-} u_{\beta_i}(\intn_i) \,,\\
\fmrad & = & \prod_{\beta_i \in \roots^- \setminus \leviroots^-} u_{\beta_i}(y_{\beta_i}) \, \in \, \unirad(\parabolic_J)(\generalext \setminus\generalext^{\unirad(\parabolic_J)})\,,
\end{eqnarray*}
where $y_{\beta_i}$ are polynomials in $\field[\bint] \setminus \field[\intn_i \mid \beta_i \in \leviroots^- ]$.
Moreover, we prove that we can determine a reduction matrix $g_1 \in \group(\generalext^{\parabolic_J})$ such that $g_1\fm$ lies in the parabolic subgroup $\parabolic_J(\generalext)$ and $g_1\fmred$ is contained in the Levi group $\levi_J(\generalext^{\unirad(\parabolic_J)})$.
In other words, $g_1\fm$ admits the factorization
\[
g_1\fm \, = \, (g_1 \fmred) \cdot \fmrad \, \in \, \levi_J(\generalext^{\unirad(\parabolic_J)}) \cdot \unirad(\parabolic_J)(\generalext\setminus \generalext^{\unirad(\parabolic_J)})\,,
\]
splitting $g_1 \fm$ into its reductive part and unipotent radical part.

 \subsection*{The Three Extensions of $\sigma_0$}
The main part of our paper, that is Part~\ref{part:III}, employs the above results to compute the differential Galois group of the specialized normal form matrix $A_{\group}(\bsq)$ with 
\[
\sigma_0\colon \difffield\{ \bss(\bvv) \} \to \difffield, \quad \bss(\bvv) \mapsto \bsq \in \difffield^l, \quad \difffield \, = \, \field(z) \,.
\]
 We are going to extend $\sigma_0$ in three steps to a specialization 
 \begin{equation}\label{eq:introduction1}
\begin{array}{rcl}
  \sigPV \colon D^{-1} \difffield\{ \bvv \}[\bexp, \bexp^{-1}, \bint] & \to & \overline{\generalext},\\[0.2em]
   (\bvv, \bexp,\bint) &\mapsto & (\bvq, \overline{\bexp},\overline{\bint})
\end{array}
\end{equation}
for a Picard-Vessiot extension $\overline{\generalext}$ of $\difffield$ for $A_{\group}(\bsq)$, where $D$ is a certain multiplicatively closed subset. In each step
we add new algebraic relations between the parameters $\bvv, \bexp$ and $\bint$ until we obtain the Picard-Vessiot extension $\overline{\generalext}$ with fundamental matrix $\overline{\fm}$ being a specialization of $\fm$.
\begin{figure}[h]
\centering
\begin{tikzcd}
D^{-1} \difffield\{ \bvv \}[\bexp, \bexp^{-1}, \bint]
\arrow{rr}{\sigPV} & &
\overline{\generalext}\\
D^{-1} \difffield\{ \bvv \}[\bexp, \bexp^{-1}, \intn_i \mid \beta_i \in \leviroots^-]
\arrow[hookrightarrow]{u}{}
\arrow{rr}{\sigred} & &
\extfieldred
\arrow[hookrightarrow]{u}{}\\
\difffield\{ \bss(\bvv), \bvvbase \}
\arrow[hookrightarrow]{u}{}
\arrow{rr}{\siginter} & &
\difffield
\arrow[hookrightarrow]{u}{}\\
\difffield\{ \bss(\bvv) \}
\arrow[hookrightarrow]{u}{}
\arrow{rr}{\sigma_0} & &
\difffield
\arrow[hookrightarrow]{u}{}
\end{tikzcd}
\caption{ {Overview of successive extensions of the specialization $\sigma_0$}}\label{fig:extendsigma}
\end{figure}

\subsection*{A Parabolic Bound for a Specialization} The first step of the extension of $\sigma_0$ is supposed to map as many $v_i$ to rational functions in $\difffield$ as possible, provided that a construction of a Picard-Vessiot extension $\overline{\generalext}$ of $\difffield$ is still possible based on that choice.
More precisely, we compute with the known algorithms the rational solutions in $\difffield$ of the specialized Riccati equations $\Ric{i}{\bsq}{v_i}=0$ corresponding to the specialized associated equations $L^{\det(i)}(\bsq,y) = 0$ for $i=1,\dots,l$. Here we make the assumption on $\sigma_0$ that these specializations are defined.
Using the differential Thomas decomposition (cf.\ \cite{RobertzHabil}) we construct a longest tuple $\bvvqbase$ of rational solutions of the various Riccati equations such that the differential ideal
\[
\idealinter \, = \, \langle \bss(\bvv)-\bsq, \bvvbase - \bvvqbase \rangle \lhd \difffield\{\bvv\}
\]
is proper.
Being a proper ideal guarantees that we are able to continue from here the construction of a Picard-Vessiot extension $\overline{\generalext}/\difffield$ and the extended specialization \eqref{eq:introduction1}.
We obtain the first extended specialization
\[
\siginter\colon \difffield\{ \bss(\bvv), \bvvbase \} \to \difffield, \ (\bss(\bvv),\bvvbase) \mapsto (\bsq,\bvvqbase) \, .
\]
We denote by $H$ the differential Galois group of $\overline{\generalext}/\difffield$, whose representation depends on the fixed choice of $\idealinter$ as well as the subsequent choices made to complete the construction of $\overline{\generalext}$.
The choice of $\bvvqbase$ induces a partition of the indices $I=I'\cup I''$ or equivalently a partition of the indeterminates $\bvv=\bvvext \cup  \bvvbase$. 
According to the bijection established in Part~\ref{part:II}, this partition
 determines a standard parabolic subgroup $\parabolic_J$ of $\group$.
We will prove that $H$ is contained in $\parabolic_J$.
Since the tuple $\bvvqbase$ has as many entries as possible, we also prove
that $\parabolic_J$ is minimal among the standard parabolic subgroups  containing $H$ with respect to inclusion.
This implies that for every Levi group $\levi$ of $H$, there is a Levi group 
$\widetilde{\levi}$ of $\parabolic_J$ with $\levi \leq \widetilde{\levi}$ such that $\levi$ is $\widetilde{\levi}$-irreducible,
i.e., $\levi$ is not contained in any proper parabolic subgroup of $\widetilde{\levi}$. We also prove that the unipotent radical $\unirad(H)$ of $H$ is contained in the unipotent radical $\unirad(\parabolic_J)$ of $\parabolic_J$.

\subsection*{The Reductive Part}
Next we perform an extension of the specialization $\siginter$ to the parameters of the reductive part. 
To this end, we show that under certain natural assumptions on $\siginter$ the generic irreducible factorization \eqref{eqn:introduction3} corresponding to $\bvvbase$ specializes to an irreducible factorization 
\[
L_{\group}(\bsq,\partial) \, = \, L_{1}(\bsq,\bvvqbase,\partial) \cdots L_{k}(\bsq,\bvvqbase, \partial)
\]
of the specialized normal form operator.
We denote by $\overline{\LCLM}(\bsq,\bvvqbase,\partial)$ the
specialization of the generic least common left multiple.
With a further assumption on $\siginter$ we prove that 
\[
\overline{\LCLM}(\bsq,\bvvqbase,\partial) \, = \, \LCLM(L_{1}(\bsq,\bvvqbase,\partial), \dots, L_{k}(\bsq,\bvvqbase,\partial))\,.
\]
We apply now the algorithm of E.~Compoint and M.~F.\ Singer developed in \cite{SingerCompoint} to compute the generators of a maximal differential ideal
\[
Q \lhd \difffield[\GL_{n_{I''}}] \, = \, \difffield[X_{i,j},\det(X_{i,j})^{-1}]
\]
for the specialized least common left multiple  
and obtain a Picard-Vessiot extension
\[
\extfieldred \, := \, \Frac(\difffield[\GL_{n_{I''}}]/Q) ,
\]
whose differential Galois group is the stabilizer $\Stab(Q) \leq \GL_{n_{I''}}(\field)$ of $Q$, which is isomorphic to any Levi group $\levi$ of $H$.

We continue the construction of our Picard-Vessiot extension $\overline{\generalext}$ of $\difffield$ and our extended specialization \eqref{eq:introduction1} by matching the parameters of the reductive part with their respective counterparts in $\extfieldred$.
More precisely, the idea is to find for the parameters $\bvv$, $\bexp$ and $\intn_i$ with $\beta_i \in \leviroots^-$ rational functions
$\widehat{\bvv}$, 
$\bratexp$ and $\ratint_i$ in $\Frac(\difffield[\GL_{n_{I''}}])$ such that $\siginter$ extends to a differential homomorphism
\[
\begin{array}{rcl}
    \sigred\colon D^{-1}\difffield\{ \bvv \}[\bexp,\bexp^{-1} , \intn_i \mid \beta_i \in \leviroots^- ] & \to & \extfieldred\,,\\[0.2em]
    \bvv & \mapsto & \bvh + Q \, =: \, \bvq\,,\\[0.2em]
    \bexp & \mapsto & \bratexp + Q \, =: \, \overline{\bexp}\,,\\[0.2em]
    \intn_i & \mapsto & \ratint_i + Q \, =: \, \overline{\intn}_i \, . 
\end{array}
\]
Fixing a basis $\overline{X}_{1,1}:=X_{1,1}+Q,\dots,\overline{X}_{1,n_{I''}} := X_{1,n_{I''}}+Q$ of the solution space in $\extfieldred$ of $\overline{\LCLM}(\bsq,\bvvqbase,\partial) \, y = 0$ 
we develop a necessary and sufficient condition such that the map 
\begin{eqnarray*}
\eta\colon D^{-1} \field\{ \bss(\bvv) , \bvvbase \} \{ y^{I''}_1,\dots,y^{I''}_{n_{I''}} \} & \to & \difffield [\GL_{n_{I''}}] / Q\,,\\
(\bss(\bvv),\bvvbase) & \mapsto & (\bsq,\bvvqbase)\,,\\
y^{I''}_i & \mapsto & \sum_{j=1}^{n_{I''}} \overline{c}_{j,i} \overline{X}_{1,j} 
\end{eqnarray*}
sending the generic basis $y^{I''}_1,\dots,y^{I''}_{n_{I''}}$ to $\field$-linear combinations of $\overline{X}_{1,1},\dots,\overline{X}_{1,n_{I''}}$ is a differential ring homomorphism.
We use the differential Thomas decomposition to determine $\overline{c}_{i,j} \in \field$ satisfying this condition.
The rational functions
$\bratexp$, $\widehat{\bvv}$ and $\ratint_i$ are then obtained by applying this homomorphism to 
\[
\exprforexp^{I''}_i(y_1^{I''},\dots,y^{I''}_{n_{I''}}), \ V^{I''}_i(y_1^{I''},\dots,y^{I''}_{n_{I''}})  \quad \text{and} \quad \exprforint^{I''}_j(y_1^{I''},\dots,y^{I''}_{n_{I''}}) 
\quad \text{(cf.\ \eqref{eqn:introduction1})} .
\]

Having computed the \emph{matching} $\sigred$, we extend the field $\extfieldred$ allowing us to specialize also the remaining parameters $\intn_i$ with $\beta_i \in \roots^- \setminus \leviroots^-$ of the unipotent radical part.
More precisely, we
adjoin for each $\intn_i$ with $\beta_i \in \roots^- \setminus \leviroots^-$ an element $\underline{\intrad}_i$ to $\extfieldred$ which has the appropriate derivative and all these elements are algebraically independent over $\extfieldred$.
We obtain the differential ring  
\[
\underline{R} \, := \, \extfieldred[\underline{\intrad}_i \mid \beta_i \in \roots^- \setminus \leviroots^-]
\]
such that $\sigred$ extends to a differential homomorphism
\begin{equation}\label{eqn:sigmaintroduction}
\begin{array}{rcl}
\sigul\colon D^{-1}\difffield\{ \bvv \}[\bexp,\bexp^{-1},\bint ] & \to & \Frac(\underline{R}) =: \underline{E}\\[0.5em]
\intn_i & \mapsto & \underline{\intrad}_i \quad \text{for} \   \beta_i \in \roots^- \setminus \leviroots^-\,.
\end{array}
\end{equation}
Note that $\underline{E}$ is not necessarily a Picard-Vessiot extension of $\difffield$ for $A_{\group}(\bsq)$.
We construct our final Picard-Vessiot extension $\overline{\generalext}$ of $\difffield$ as the field of fractions of $\underline{R}/\Imax$ for a choice of a maximal differential ideal $\Imax \lhd \underline{R}$. 

Applying $\sigul$ to the general fundamental matrix $\fm$ and its decomposition $\fm = \fmred \, \fmrad$ we obtain
\[
\fmuq \, = \, \fmreduq \, \fmraduq \in \group(\underline{R})
\]
with $\fmreduq \in \group(\extfieldred)$ and $\fmraduq \in \unirad(\parabolic_J)(\underline{R})$.
Note that the subsequent steps of constructing $\overline{\generalext}$
will not affect $\fmreduq$, whose construction is completed at this point.
We prove that the logarithmic derivative
\[
\Aprered \, := \, \dlog (\fmreduq) \in \Lie(\group)(\difffield)
\]
has entries in $\difffield$ and that $\extfieldred$ is a Picard-Vessiot extension of $\difffield$ for $\Aprered$.
The differential Galois group $\Lred$ of $\extfieldred/\difffield$,
in its representation induced by the fundamental matrix $\fmreduq$,
is contained in the standard Levi group of $\parabolic_J$ and will be used later.

The group of differential $\difffield$-automorphisms of $\underline{E}$
has a linear representation $\underline{H}$ induced by $\fmuq$.
We prove that $H\leq \underline{H}\leq \parabolic_J$ and that $\underline{H}$ has Levi decomposition $\underline{H} = \underline{\levi} \ltimes \unirad(\parabolic_J)$, that is, its unipotent radical coincides with the unipotent radical of $\parabolic_J$.
It turns out that $\Lred$ and any Levi group $\levi$ of $H$ are Levi groups of $\underline{H}$, implying that $\underline{\levi}$, $\levi$ and $\Lred$ are conjugate by elements in $\unirad(\parabolic_J)$.
Moreover, we will show that different choices of $\Imax \lhd \underline{R}$ lead to differential $\difffield$-isomorphic extension fields $\overline{\generalext}$.
We prove that these isomorphisms are induced by elements of $\unirad(\parabolic_J)(\field)$, as are the automorphisms of $\underline{R}$ corresponding to
transitions between different $\Imax$.
Moreover, for every Levi group $\underline{\levi}$ of $\underline{H}$, there exists $\Imax$ such that $\underline{\levi}$ is a Levi group of $H$, in particular, there exists $\Imax$ realizing $\Lred$ as a Levi group of $H$.
We present an algorithm which computes a generating set of the defining ideals $I_{\Lred}$ and $I_{\underline{H}}$ of the respective groups $\Lred$ and $\underline{H}$. 

As a conclusion, note that all algebraic relations between the parameters of the reductive part are given by the ideal $Q$. It is left to determine the algebraic relations over $\extfieldred$ among the indeterminates $\underline{\intrad}_i$.

\subsection*{The Unipotent Radical Part} 
The third and last extension takes care of the specialization of the parameters $\intn_i$ with $\beta_i \in \roots^- \setminus \leviroots^-$. The basic idea is to compute the Lie algebra of the unipotent radical $\unirad(H) = \unirad(\cocomp{H})$ of the connected component of a potential differential Galois group $H$ by reduction of $A_{\group}(\bsq)$.
Knowing the Lie algebra we can compute generators of the defining ideal of $\unirad(H)$ using the exponential map. 
Applying the same reduction to $\fmuq$, the unipotent radical part of the resulting Levi decomposition is required to lie in $\unirad(H)(\underline{R})$.
Evaluating the above generators in order to express this condition, gives
a generating set for a maximal differential ideal $\Imax$ in $\underline{R}$. We are going to explain this in more detail.

In order to determine the Lie algebra of a potential differential Galois group we compute a reduced form of $A_{\group}(\bsq)$ using the algorithm of 
T.~Dreyfus and J.-A.\ Weil presented in \cite{DreyfusWeil}. Their algorithm can be divided into three main steps: transforming $A_{\group}(\bsq)$ into block triangular form, reducing the diagonal part and finally reducing the off-diagonal part.
Since we already made choices for the construction of our fundamental matrix $\fmspec$, we replace the first two steps by a reduction procedure specific to our choices. We apply the third reduction step of their algorithm to our intermediate result to reduce the part belonging to the Lie algebra of $\unirad(\parabolic_J)$.

For our first reduction step recall from Part~\ref{part:II} the matrix $g_1$ and the Levi decomposition of $g_1 \fm$.
Applying $\sigul$ defined in \eqref{eqn:sigmaintroduction} to them we obtain a matrix $\overline{g}_1 \in \group(\difffield)$ and the Levi decomposition
\[
\overline{g}_1 \fmuq \, = \, (\overline{g}_1 \fmreduq) \cdot \fmraduq \in \parabolic_J(\underline{R})
\]
with $\overline{g}_1 \fmreduq \in \levi_J(\extfieldred)$ and $\fmraduq \in \unirad(\parabolic_J)(\underline{R})$. 

For our second reduction we possibly need to algebraically extend the base field $\difffield$ depending on the connectedness of the differential Galois group of the reductive part.
By the fundamental theorem, the fixed field in $\extfieldred$ of the connected component $\cocomp{\Stab(Q)}$ is an algebraic extension $\difffieldalg$ of $\difffield$ and the differential Galois group of $\extfieldred$ over $\difffieldalg$ is $\cocomp{\Stab(Q)}$.
This implies that the differential Galois group of $\extfieldred$ over $\difffieldalg$ for $\Aprered$ is the connected component $\cocomp{\Lred}$ of $\Lred$. Connectedness and the fact that with $\difffield$ also $\difffieldalg$ is a $C_1$-field guarantee that there exists a matrix $\overline{g}_2 \in \levi_J(\difffieldalg)$ gauge transforming $\Aprered$ into the Lie algebra of $\cocomp{\Lred}(\difffieldalg)$.
In order to compute $\overline{g}_2$ we use invariant theory for reductive groups to determine first a primitive element in $\extfieldred$ for the algebraic extension $\difffieldalg$ of $\difffield$. 
Using the ideal $Q$ and the fundamental matrix $\overline{g}_1 \fmreduq$ we can compute with Gr\"obner basis methods a generating set of a maximal differential ideal in $\difffield[\GL_n]$ defining a Picard-Vessiot extension for $\Aprered$. Based on the existence of $\overline{g}_2$ and on the primitive element for $\difffieldalg$ we develop an algorithm which computes an $\difffieldalg$-rational point $\overline{g}_2 \in \levi_J(\difffieldalg)$ of this maximal differential ideal. 
The matrix $\overline{g}_2$ achieves the reduction of the reductive part, meaning that it satisfies $\overline{g}_2 \overline{g}_1 \fmreduq \in \cocomp{\Lred}(\extfieldred)$ and $\Aprered$ is reduced into the Lie algebra: 
\[
\gauge{\overline{g}_2\overline{g}_1}{\Aprered} \, =: \, \Ared \in \Lie(\cocomp{\Lred})(\difffieldalg)\,.
\]
We prove that the effect of gauge transforming $A_{\group}(\bsq)$ by $\overline{g}_2 \overline{g}_1$ is the direct sum decomposition
\begin{equation}\label{eqn:introsemireducedA_G(bsq)}
\gauge{\overline{g}_2 \overline{g}_1}{A_{\group}(\bsq)} \, = \, \Ared + \Aprerad \in \Lie(\cocomp{\Lred})(\difffieldalg) \oplus \Lie(\unirad(\parabolic_J))(\difffieldalg)  
\end{equation}
with reduced reductive part $\Ared$. 

For the computation of our third reduction matrix $\overline{g}_3 \in \unirad(\parabolic_J)(\difffieldalg)$ we intend to use the third step of the algorithm of T.~Dreyfus and J.-A.\ Weil.
Note that their algorithm at this step requires a triangular block structure with reduced diagonal part. In case $\group = \SL_{l+1}$ or $\group = \SP_{2l}$ the decomposition \eqref{eqn:introsemireducedA_G(bsq)} fulfills this condition.
For the remaining groups the authors wonder if an adaptation of this part of their algorithm using the structure of the Lie algebra instead of the block triangularity is possible (cf.\ Proposition~\ref{prop:droppingtraingularblock}).
Neglecting effectivity at this point, we show that such a reduction using the Lie algebra exists.  
Having computed $\overline{g}_3$ with their algorithm, we achieve a complete reduction
\[
\gauge{\overline{g}_3\overline{g}_2 \overline{g}_1}{A_{\group}(\bsq)} \, = \, \Ared + \Arad 
\]
into the Lie algebra of a connected group $\GalConn$. Theoretically the defining ideal of $\GalConn$ would allow us to construct a Picard-Vessiot extension of $\difffieldalg$ for $\Ared + \Arad$ with connected differential Galois group $\GalConn$.
We prove that $\cocomp{\Lred}$ is a Levi group of $\GalConn$ and that the unipotent radical $R_1$ of $\GalConn$ is a subgroup of $\unirad(\parabolic_J)$ so that $\GalConn$ has the Levi decomposition $\GalConn=\cocomp{\Lred} \ltimes R_1$.

Using the Wei-Norman decomposition and the exponential map we can compute from $\Arad$ generators $f_1,\dots,f_a$ of the defining ideal $I_{R_1}$ of the unipotent radical $R_1$. Moreover, we explain how one can compute the Levi decomposition of the reduced fundamental matrix
\[
\overline{g}_3\overline{g}_2 \overline{g}_1 \fmuq \, = \, \fmreduqhat \fmraduqhat
\]
with $\fmreduqhat \in \cocomp{\Lred}(\extfieldred)$ and $\fmraduqhat \in \unirad(\parabolic_J)(\underline{R})$.
As a crucial step, we show that from evaluating the generators $f_1,\dots,f_a$ of $I_{R_1}$  at the matrix $\fmraduqhat$ we obtain generators
\[
f_1(\fmraduqhat), \quad \dots, \quad f_a(\fmraduqhat) \in \underline{R}
\]
of a maximal differential ideal $\Imax$ in $\underline{R}$. We construct with respect to this $\Imax$ our final Picard-Vessiot extension $\overline{\generalext}$ of $\difffield$ with Galois group $H$ and fundamental matrix $\fmspec$.
The factors of the Levi decomposition of the reduced fundamental matrix
\[
\overline{g}_3 \overline{g}_2 \overline{g}_1 \fmspec \, = \, \fmredqhat \fmradqhat
\]
then satisfy $\fmredqhat \in \cocomp{\Lred}(\extfieldred)$ and more importantly $\fmradqhat \in R_1(\overline{\generalext})$
with entries in $\overline{\generalext} \setminus \extfieldred$.
We prove that the differential Galois group $H$ has Levi decomposition $H=\levi \ltimes R_1$ with a Levi group $\levi$ such that $\cocomp{\Lred} = \cocomp{\levi}$.
In other words, we only know that the connected component of $\Lred$ is a Levi group of $\cocomp{H}$. We do not know if $\Lred$ is always automatically a Levi group of $H$. Thus, we cannot determine $I_{H}$ by computing the defining ideal of the product variety of $\Lred$ and $R_1$ from $I_{\Lred}$ and $I_{R_1}$. Instead, we actually need to compute the generators of the defining ideal $I_{H}$ of $H$ from the known algebraic relations between the parameters of $\fmspec$. 
More precisely, choosing elements $\widetilde{f}_1,\dots , \widetilde{f}_a$ in 
\[
\difffield(\GL_{n_{I''}})[\underline{\intrad}_i \mid \beta_i \in \leviroots^-]
\]
which are equal to $f_1(\fmraduqhat),\dots,f_a(\fmraduqhat)$ modulo $Q$, we present an algorithm which computes from the algebraic relations defined by the ideal $Q \lhd \difffield[\GL_{n_{I''}}]$ and by the ideal  
\[
(\widetilde{f}_1,\dots , \widetilde{f}_a ) \lhd  \difffield(\GL_{n_{I''}})[\underline{\intrad}_i \mid \beta_i \in \leviroots^-]
\]
generators of the defining ideal $I_H$ of the differential Galois group $H$. 
\subsection*{The Structure of the Paper and Conclusions} 
The paper is divided into three main parts \ref{part:I}, \ref{part:II}, \ref{part:III} each of which consists of several sections.
Part~\ref{part:I} deals with the introduction of the classical groups and the normal form equations defining the general extension field.
In Part~\ref{part:II} we analyze the action of the standard parabolic subgroups on $\generalext$ and we derive the consequences of the Galois correspondence in the generic case. The main part is \ref{part:III} in which we develop the algorithm for the computation of the differential Galois group of a specialization of the normal form matrix.  Throughout the paper we use statements whose proofs can be found in the appendix. Due to the length of the paper we included at the end a table of notation. 
  
The algorithm in its present form has certainly room for improvement. For example, the computation of the second reduction matrix $\overline{g}_2$ in Section~\ref{sec:unipotentradical} is very costly and one might use an adapted version of the algorithm of T.\ Dreyfus and J.-A.\ Weil. 
Moreover, we did not perform a detailed complexity analysis of our algorithm,  which uses Gr\"obner basis and Thomas decomposition computations and the algorithms of M.~F.\ Singer and E.\ Compoint as well as of T.\ Dreyfus and J.-A.\ Weil. Furthermore, we did not investigate yet the consequences of the assumptions on the specializations of our normal form matrices. This and the above mentioned questions are topics for future investigation. 
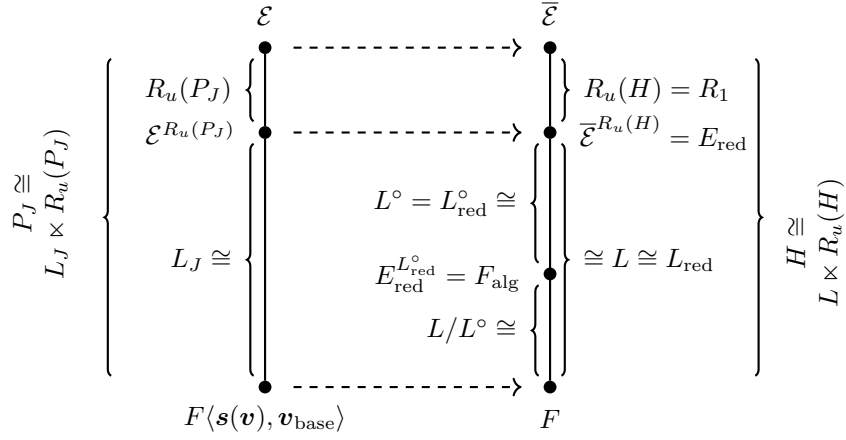
\begin{figure}[h]
\centering
\begin{tikzpicture}[thick,scale=0.75]
\node[circle, draw, fill=black, inner sep=0pt, minimum width=4pt] (CSV) at (-1,-2) {};
\node[circle, draw, fill=black, inner sep=0pt, minimum width=4pt] (RUP) at (-1,2.5) {};
\node[circle, draw, fill=black, inner sep=0pt, minimum width=4pt] (EE) at (-1,4) {};
\node[circle, draw, fill=black, inner sep=0pt, minimum width=4pt] (CZ) at (4,-2) {};
\node[circle, draw, fill=black, inner sep=0pt, minimum width=4pt] (RUH) at (4,2.5) {};
\node[circle, draw, fill=black, inner sep=0pt, minimum width=4pt] (ES) at (4,4) {};
\node[circle, draw, fill=black, inner sep=0pt, minimum width=4pt] (FALG) at (4,0) {};
\node (G1) at (-1,0.25) {};
\node (G2) at (-1,3.25) {};
\node (H1) at (4,0.25) {};
\node (H2) at (4,3.25) {};
\node (GG) at (-3.7,3) {};
\node (HH) at (7.3,-1) {};
\node (LREDCC) at (4,1.25) {};
\node (LHCC) at (4,-1) {};
\draw (CSV) -- (RUP);
\draw (RUP) -- (EE);
\draw (CZ) -- (RUH);
\draw (RUH) -- (ES);
\node (LC) [below=0.05em of CSV] {$\difffield\langle \bss(\bvv),\bvvbase\rangle$};
\node (RCZ) [below=0.25em of CZ] {$\difffield$};
\node (REE) [above=0.25em of EE] {$\generalext$};
\node (RES) [above=0.25em of ES] {$\extfieldspec$};
\node (LRUP) [left=0.5em of RUP] {$\generalext^{\unirad(\parabolic_J)}$};
\node (LFALG) [left=0.5em of FALG] {$\extfieldred^{\cocomp{\Lred}}=\difffieldalg$};
\node (RRUH) [right=0.5em of RUH] {$\extfieldspec^{\unirad(H)}=\extfieldred$};
\node (LG1) [left=0.5em of G1] {$\levi_J \cong$};
\node (LG2) [left=0.5em of G2] {$\unirad(\parabolic_J)$};
\node (RH1) [right=0.5em of H1] {$\cong \levi \cong \Lred$};
\node (RH2) [right=0.5em of H2] {$\unirad(H)=R_1$};
\node (LL) [left=2.25em of GG,rotate=90] {$\begin{array}{c} \parabolic_J\cong \\ \levi_J \ltimes \unirad(\parabolic_J) \end{array}$};
\node (RR) [right=2.5em of HH,rotate=90] {$\begin{array}{c} H\cong\\ \levi \ltimes \unirad(H) \end{array}$};
\node (LLREDCC) [left=0.5em of LREDCC] {$\cocomp{\levi} = \cocomp{\Lred} \cong$};
\node (LLHCC) [left=0.5em of LHCC] {$\levi / \cocomp{\levi} \cong$};
\draw [decorate,
    decoration = {brace}] (-1.2,-1.8) -- (-1.2,2.3);
\draw [decorate,
    decoration = {brace}] (-1.2,2.7) -- (-1.2,3.8);
\draw [decorate,
    decoration = {brace}] (4.2,2.3) -- (4.2,-1.8);
\draw [decorate,
    decoration = {brace}] (4.2,3.8) -- (4.2,2.7);
\draw [decorate,
    decoration = {brace}] (-3.7,-1.8) -- (-3.7,3.8);
\draw [decorate,
    decoration = {brace}] (7.6,3.8) -- (7.6,-1.8);
\draw [decorate,
    decoration = {brace}] (3.8,0.2) -- (3.8,2.3);
\draw [decorate,
    decoration = {brace}] (3.8,-1.8) -- (3.8,-0.2);
\draw[->, dashed] (-0.5,-2) to (3.5,-2);
\draw[->, dashed] (-0.5,2.5) to (3.5,2.5);
\draw[->, dashed] (-0.5,4) to (3.5,4);
\end{tikzpicture}
\caption{ {Decomposition of Picard-Vessiot extension as tower of fixed fields for the generic case and its specialization.}}\label{fig:fields}
\end{figure}

\part{The Classical Groups and Their Normal Forms}\label{part:I}
\section{The Classical Groups}\label{sec:classicalgroups}
Let $\field$ be an algebraically closed field of characteristic zero and let 
\[
\group(\field) \leq \GL_n(\field)
\]
be one of the  classical groups of Lie rank $l$ of type $A_l$, $B_l$, $C_l$, $D_l$ or $\Gzwei$ (here $l=2$). 
We denote by $\roots$ the root system of the type corresponding to $\group$ and we write 
$$\rootbasis=\{ \alpha_1,\dots,\alpha_l\}$$ for a basis of $\roots$ with simple roots $\alpha_i$. Each root in $\roots$ can be written uniquely as a $\Z$-linear combination of simple roots with all coefficients either positive or negative leading to a disjoint decomposition
\[
\roots \, = \, \roots^+ \cup \roots^-
\]
of $\roots$ into a set of positive roots $\roots^+$ and a set of negative roots $\roots^-$. The cardinalities of the positive and negative roots are equal and we denote them by
\[
m \, = \, |\roots^+| \, = \, |\roots^-|.
\]
The height $\height(\alpha)$ of a root $\alpha \in \roots$ is defined as the sum of all coefficients in the $\Z$-linear combination of $\alpha$ with respect to the basis $\rootbasis$. We write 
\[
\roots^+ \, = \, \{\alpha_1,\dots,\alpha_m \},
\]
where we enumerate the positive roots  
in such a way that $\alpha_1,\dots,\alpha_l$ are the simple roots from above and $\height(\alpha_r) \leq \height(\alpha_s)$ for all $r \leq s$. We write 
\[\label{eqn:numberingnegativeroots}
\roots^- \, = \, \{ \beta_1,\dots,\beta_m \}
\]
for the negative roots, where the enumeration is chosen such that $\beta_i=-\alpha_i$.
Then $\beta_1,\dots,\beta_l$ are the negative simple roots and for $i \leq j$ we have $|\height(\beta_i)|\leq |\height(\beta_j)|$.

Let $\weyl$ be the Weyl group of $\roots$. It is well known that $\weyl$ is generated by the $l$ simple reflections $w_{\alpha_i}$ for the simple roots $\alpha_i \in \rootbasis$. Each element $w$ of $\weyl$ can be written as a finite product of simple reflections.
The minimal number of simple reflections needed to express $w \in \weyl$ as such a product is called the length $l(w)$ of $w$.
The length of $w$ is equal to the number of roots $\alpha \in \roots^+$ such that $w(\alpha) \in \roots^-$. There is a unique Weyl group element of maximal length which we denote by $\overline{w}$. It induces a bijection between $\roots^+$ and $\roots^-$ and it maps the simple roots $\rootbasis$ bijectively to the negative simple roots $-\rootbasis$.

We denote the Lie algebra of $\group$ by $\liealg \subset \gl_n (\field)$ and we fix a Cartan subalgebra $\cartan$ of $\liealg$ consisting of diagonal matrices. With respect to the Cartan subalgebra $\cartan$ we consider the Cartan decomposition   
\begin{equation}
\liealg \, = \, \cartan \oplus \bigoplus_{\alpha \in \roots} \liealg_{\alpha}
\end{equation}
 of $\liealg$ with one-dimensional root spaces $\liealg_{\alpha} \subset \liealg$ for the roots $\alpha \in \roots$. We choose a Chevalley basis 
\begin{equation}\label{eqn:Chevbasis}
\{ H_i  \mid 1 \leq i \leq l \} \cup \{ X_{\alpha} \mid \alpha \in \roots  \}
\end{equation}
according to this decomposition, where $H_1,\dots,H_l$ span the Cartan subalgebra $\cartan$ and each $X_{\alpha}$ the root space $\liealg_{\alpha}$. We write
\[
\mathfrak{u}^+ \, = \, \bigoplus_{i=1}^m  \liealg_{\alpha_i} \quad \text{and} \quad  \mathfrak{u}^- \, = \, \bigoplus_{i=1}^m \liealg_{\beta_i}
\]
for the maximal nilpotent Lie subalgebras being the direct sums of all positive and all negative root spaces, respectively. We denote by 
\[
\mathfrak{b}^+ \, = \, \cartan \oplus \mathfrak{u}^+ \quad \text{and} \quad \mathfrak{b}^- \, = \, \cartan \oplus \mathfrak{u}^-
\]
the maximal solvable Lie subalgebras containing the Cartan subalgebra $\cartan$. 

For a root $\alpha \in \roots$ we denote by $U_{\alpha}$ the one-dimensional root group whose Lie algebra is the root space $\liealg_{\alpha}$. For $x \in \field$ we write $u_{\alpha}(x) \in U_{\alpha}$ for the image of $x X_{\alpha}$ under the exponential map 
\begin{equation*}\label{eqn:exopnentialmap}
\exp\colon \liealg_{\alpha} \to U_{\alpha}, \ X_{\alpha} \mapsto \sum_{j \geq 0} \frac{1}{j!} X_{\alpha}^j  
\end{equation*}
and we call $u_{\alpha}(x)$ a \emph{parametrization of the root group} $U_{\alpha}$.
For $\bxx=(x_1,\dots,x_m) \in \field^m$ we denote by 
\[
\buu(\bxx) \, = \, u_{\beta_1}(x_1) \cdots u_{\beta_m}(x_m)  
\]
the product of all parametrized root group elements for the negative roots in this fixed order.
Let $\unipotent^+$ and $\unipotent^-$ be the maximal unipotent subgroups generated by all root groups corresponding to the positive and negative roots, respectively. The Lie algebra of $\unipotent^+$ (resp.\ $\unipotent^-$) is then $\mathfrak{u}^+$ (resp.\ $\mathfrak{u}^-$). 

We denote by $\torus$ the maximal torus whose Lie algebra is $\cartan$. Moreover, for $i=1,\dots,l$ consider the one-dimensional subtorus $T_i$ of $\torus$ with Lie algebra spanned by $H_i$.
For $x \in \field^{\times}$ let $t_i(x)$ be the image of 
$$\begin{pmatrix} x & 0 \\ 0 &x^{-1} \end{pmatrix}$$ under the isomorphism
\[
 \SL_2 \to \langle U_{\alpha_i},U_{\beta_i} \rangle, \ \begin{pmatrix}
    1 & x \\ 0 & 1
\end{pmatrix} \mapsto \exp(x X_{\alpha_i}), \ \begin{pmatrix}
    1 & 0 \\ x & 1
\end{pmatrix} \mapsto \exp(x X_{\beta_i}) \, .
\]
Then $t_i(x)$ parametrizes $T_i$. For $\bxx = (x_1,\dots,x_l) \in (\field^{\times})^l$ we write
\[
\btt(\bxx) \, = \, t_1(x_1) \cdots t_l(x_l)
\]
for the product of all $t_i(x_i)$, and $\btt(\bxx)$ parametrizes the full torus $\torus$. We denote the normalizer of $\torus$ in $\group$ by $N_{\group}(\torus)$. The Weyl group $\weyl$ is isomorphic to the quotient $N_{\group}(\torus)/\torus$ and for each $w \in \weyl$ we fix a representative $n(w)$ of $w$ in $N_{\group}(\torus)$.

We denote by $\borel^+$ and $\borel^-$ the Borel subgroups containing $\torus$ and the maximal unipotent subgroups $\unipotent^+$ and $\unipotent^-$, respectively.
Clearly, we have $\borel^+ = \torus \unipotent^+$ and $\borel^- = \torus \unipotent^-$ and their Lie algebras are $\mathfrak{b}^+$ and $\mathfrak{b}^-$, respectively. We denote a parabolic subgroup of $\group$, that is a subgroup which contains a Borel subgroup, by $\parabolic$.
In this paper the standard parabolic subgroups are those which contain the Borel subgroup $\borel^-$. Each parabolic subgroup of $\group$ is conjugate to one and only one standard parabolic subgroup. For a subset
$J \subset \{1,\dots,l \}$ let $\weyl_J$ be the subgroup of $\weyl$ generated by the simple reflections $w_{\alpha_j}$ with $j \in J$. The groups
\begin{equation}\label{eqn:standardparabolic}
    \parabolic_J \, := \, \bigcup_{w \in \weyl_J} \borel^- \, n(w) \, \borel^-
\end{equation}
are the standard parabolic subgroups of $\group$, i.e., the map $J\mapsto \parabolic_J$ defines a bijection between the subsets of $\{1,\dots,l\}$ and the standard parabolic subgroups of $\group$. The roots of $\parabolic_J$ relative to $\torus$ are the roots in $\roots^-$ and $\leviroots^+ := \leviroots \cap \roots^+$, where
\begin{equation}\label{eqn:definitionLeviroots}
\leviroots \, := \, \roots \cap \langle \alpha_j \mid j \in J \rangle_{\Z\mathrm{-span}} \, .
\end{equation}

To shorten notation we will omit in the following the plus sign in the notation of Lie subalgebras and subgroups, i.e., we will write $\mathfrak{u}$ for $\mathfrak{u}^+$ and so on.

Our results (including the previous ones in \cite{Seiss,Seiss_Generic,RobertzSeissNormalForms}) are based on geometric structure theorems of algebraic groups (cf.\ Theorems~\ref{thm:bruhat1}--\ref{thm:levidecomposition}). They are used to establish a connection between the defining matrix for the differential equation, the fundamental solution matrix and the differential Galois group. 

For the reductive group $\group$ the following two theorems provide a normal form for elements in $\group$ parametrized by $\borel$ and $\weyl$. For their proofs we refer to \cite[28.3 Theorem and 28.4 Theorem]{HumGroups}.

\begin{theorem}[Bruhat decomposition]\label{thm:bruhat1}
We have 
\[
\group \, = \, \biguplus_{w \in \weyl} \borel \, n(w) \, \borel \qquad \text{(disjoint  union)}
\]
with $\borel n(w) \borel = \borel n(\widetilde{w}) \borel$ if and only if $w=\widetilde{w}$ in $\weyl$.
\end{theorem}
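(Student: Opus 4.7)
The plan is to prove existence and disjointness separately, following the standard $(B,N)$-pair approach. For existence, I would show that the union $X := \bigcup_{w \in \weyl} \borel\, n(w)\, \borel$ is all of $\group$ by verifying that $X$ is closed under left multiplication by $\borel$ and by any lift $n(w_{\alpha_i})$ of a simple reflection; together with $T \subseteq B$ and the fact that $\group$ is generated (as an algebraic group) by $\borel$ together with the $n(w_{\alpha_i})$, this forces $X = \group$. Trivially $\borel \cdot X = X$ since $\borel n(w) \borel$ is already left $\borel$-invariant, so the work is in handling the action of $n(w_{\alpha_i})$.

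The key computational step is the multiplication rule
\[
n(w_{\alpha_i}) \cdot \borel\, n(w)\, \borel \;\subseteq\; \borel\, n(w_{\alpha_i} w)\, \borel \;\cup\; \borel\, n(w)\, \borel .
\]
To prove this, I would write $\borel = \torus\, \unipotent$, collect the torus factor into $n(w_{\alpha_i})$, and then analyze the product $n(w_{\alpha_i}) \, u \, n(w)$ for $u \in \unipotent$. Split $\unipotent$ according to the partition $\roots^+ = (w_{\alpha_i})^{-1}(\roots^+) \cup (w_{\alpha_i})^{-1}(\roots^-)$: factors of $u$ in root groups $U_{\alpha}$ with $w_{\alpha_i}(\alpha) \in \roots^+$ commute through $n(w_{\alpha_i})$ back into $\unipotent$, while the single remaining factor $U_{\alpha_i}$ is handled using the $\SL_2$ relation
\[
n(w_{\alpha_i})\, u_{\alpha_i}(x) \;\in\; U_{\alpha_i} \, \torus \, U_{\alpha_i} \cup U_{\alpha_i} \, n(w_{\alpha_i}) \, U_{\alpha_i},
\]
according to whether $x = 0$ or $x \neq 0$. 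Induction on $l(w)$, together with the standard length inequality $|l(w_{\alpha_i} w) - l(w)| = 1$, then shows every element of $\group$ lies in some $\borel\, n(w)\, \borel$.

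For disjointness I would argue geometrically on the flag variety $\group/\borel$: the $\torus$-fixed points are exactly $\{n(w)\borel \mid w \in \weyl\}$, so if $\borel\, n(w)\, \borel = \borel\, n(w')\, \borel$ then $n(w')\borel$ lies in the $\borel$-orbit of $n(w)\borel$, and taking the unique $\torus$-fixed point of that orbit gives $n(w)\borel = n(w')\borel$, hence $w = w'$ in $\weyl$. Alternatively, one can argue by induction on $l(w) + l(w')$ using the multiplication rule above to reduce to the base case $w = 1$, where $\borel\, n(w')\, \borel = \borel$ forces $n(w') \in \borel \cap N_{\group}(\torus) = \torus$ and hence $w' = 1$. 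The main obstacle is the multiplication rule in the second paragraph: all the structural input (root subgroup commutation, the $\SL_2$-relation for simple reflections, and the length function behaviour) must be marshalled carefully, and it is here that the geometry of the root system genuinely enters. Once that lemma is in place, both existence and uniqueness fall out by short inductions.
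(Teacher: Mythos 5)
The paper itself gives no proof of this statement: Theorem~\ref{thm:bruhat1} is quoted from the literature (Humphreys, 28.3 and 28.4), and your proposal is essentially the standard Tits-system proof found there, so the overall architecture is the right one -- closure of the union under left multiplication by $\borel$ and by the lifts $n(w_{\alpha_i})$, a multiplication rule for simple reflections proved by splitting $\unipotent$ along $w_{\alpha_i}$ and an $\SL_2$ computation, induction on length, and disjointness reduced to $\borel \cap N_{\group}(\torus) = \torus$.

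The genuine problem is the $\SL_2$ relation you state, which is the crux of the multiplication rule and is wrong as written. Since $\torus$ normalizes $U_{\alpha_i}$, the first set $U_{\alpha_i}\,\torus\,U_{\alpha_i} = U_{\alpha_i}\torus$ lies in $\borel$, so it can never contain $n(w_{\alpha_i})\,u_{\alpha_i}(x)$ for any $x$ (that would force $n(w_{\alpha_i}) \in \borel \cap N_{\group}(\torus) = \torus$); and the second set $U_{\alpha_i}\,n(w_{\alpha_i})\,U_{\alpha_i}$ contains $n(w_{\alpha_i})\,u_{\alpha_i}(x)$ for every $x$, trivially. So the dichotomy is vacuous, and -- more importantly -- useless for the induction: in the critical case $w^{-1}(\alpha_i) \in \roots^-$, the $U_{\alpha_i}$-factor appearing to the right of $n(w_{\alpha_i})$ conjugates under $n(w)$ into a negative root group and cannot be absorbed into $\borel$, so the containment $n(w_{\alpha_i})\,\borel\, n(w)\,\borel \subseteq \borel\, n(w_{\alpha_i}w)\,\borel \cup \borel\, n(w)\,\borel$ does not follow from what you wrote. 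The relation actually needed is the opposite-cell identity: for $x \neq 0$, $n(w_{\alpha_i})\,u_{\alpha_i}(x) \in U_{\alpha_i}\,\torus\,U_{-\alpha_i}$ (equivalently, $u_{-\alpha_i}(y) \in U_{\alpha_i}\,\torus\,n(w_{\alpha_i})\,U_{\alpha_i}$ for $y \neq 0$). It is precisely the $U_{-\alpha_i}$ factor that, when $w^{-1}(\alpha_i) \in \roots^-$, is conjugated by $n(w)$ back into $\unipotent \subseteq \borel$ and produces the $\borel\, n(w)\,\borel$ alternative, whereas in the case $w^{-1}(\alpha_i) \in \roots^+$ no case distinction on $x$ is needed at all. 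With that correction the rest of your sketch goes through: the inductive disjointness argument is the standard one, while the flag-variety variant additionally requires knowing that each orbit $\borel\, n(w)\,\borel/\borel$ contains a unique $\torus$-fixed point, which is essentially the refined statement recorded in Theorem~\ref{thm:bruhat2}.
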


\begin{theorem}\label{thm:bruhat2}
Each element $g \in \group$ can be written in the form 
\[
g \, = \, u' \, n(w) \, t \, u \, ,
\]
where $w \in \weyl$, $t \in \torus$, $u \in U$ and $u' \in U'_{w} := U \cap n(w) \,\unipotent^- \, n(w)^{-1}$ are all determined uniquely by $g$.
\end{theorem}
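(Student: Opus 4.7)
The plan is to refine the coarse Bruhat decomposition from Theorem~\ref{thm:bruhat1} by commuting torus and unipotent elements across the Weyl representative $n(w)$. First I would apply Theorem~\ref{thm:bruhat1} to obtain a unique $w \in \weyl$ and $b_1, b_2 \in \borel$ with $g = b_1\,n(w)\,b_2$. Writing $b_1 = u_1 t_1'$ and $b_2 = t_2 u_2$ with $u_i \in \unipotent$ and $t_1', t_2 \in \torus$, and using that $n(w)$ normalizes $\torus$, I can absorb both torus elements into a single $t = (n(w)^{-1}\,t_1'\,n(w))\,t_2 \in \torus$, so that $g = u_1\,n(w)\,t\,u_2$.

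For existence of the refined factorization, the key ingredient is the product decomposition $\unipotent = U'_w \cdot U''_w$, where $U''_w := \unipotent \cap n(w)\,\unipotent\,n(w)^{-1}$ consists of the root groups $U_\alpha$ with $\alpha \in \roots^+$ and $w^{-1}(\alpha) \in \roots^+$, while $U'_w$ consists of those with $w^{-1}(\alpha) \in \roots^-$; both subsets of $\roots^+$ are closed, so both products are subgroups. Writing $u_1 = u'\,u''$ in this decomposition, the defining property of $U''_w$ yields $n(w)^{-1}\,u''\,n(w) \in \unipotent$, which lets me push $u''$ past $n(w)$. Since $\torus$ normalizes each root group, I can then absorb the torus conjugate of the resulting unipotent factor into $u_2$, arriving at $g = u'\,n(w)\,t\,u$ with $u' \in U'_w$, $t \in \torus$, $u \in \unipotent$.

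The hard part will be uniqueness. Suppose $u_1'\,n(w_1)\,t_1\,u_1 = u_2'\,n(w_2)\,t_2\,u_2$; Theorem~\ref{thm:bruhat1} forces $w_1 = w_2 =: w$, and rearranging gives
\[
(u_2')^{-1}\,u_1' \, = \, n(w)\,(t_2 t_1^{-1})\,(t_1\,u_2\,u_1^{-1}\,t_1^{-1})\,n(w)^{-1} \, \in \, \unipotent \,\cap\, n(w)\,\borel\,n(w)^{-1}.
\]
The main task is then the identity $\unipotent \cap n(w)\,\borel\,n(w)^{-1} = U''_w$, which I would prove by decomposing $n(w)\,\unipotent\,n(w)^{-1} = \prod_{\alpha \in w(\roots^+)} U_\alpha$ according to the disjoint union $w(\roots^+) = (w(\roots^+) \cap \roots^+) \sqcup (w(\roots^+) \cap \roots^-)$ into a product $U''_w \cdot V$ with $V \subseteq \unipotent^-$, and then using $\unipotent^- \cap \borel = \{e\}$ together with $\torus \cap \unipotent = \{e\}$ to strip off the $\torus$ and $V$ factors. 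Combined with $U'_w \cap U''_w = \{e\}$ (their indexing subsets of $\roots^+$ being disjoint), this forces $(u_2')^{-1}\,u_1' = e$, hence $u_1' = u_2'$; the residual equality $n(w)\,t_1\,u_1 = n(w)\,t_2\,u_2$ then collapses to $t_1 = t_2$ and $u_1 = u_2$ via the uniqueness of the $\torus\unipotent$ decomposition of $\borel$.
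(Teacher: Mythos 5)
Your argument is correct, and it is essentially the same proof as the one the paper relies on: the paper does not prove Theorem~\ref{thm:bruhat2} itself but refers to \cite[28.4 Theorem]{HumGroups}, whose argument likewise refines the double-coset decomposition of Theorem~\ref{thm:bruhat1} using the factorization $\unipotent = U'_w \, U''_w$ (from 28.1 of the same reference, which the paper also invokes elsewhere) and the triviality of the relevant intersections. So apart from your quoting the decomposition $\unipotent = U'_w\, U''_w$ as a known ingredient rather than reproving it — exactly as the cited source organizes it — there is nothing to add.
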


 The next two theorems provide us with a decomposition of a subgroup  of $\group$ into a reductive and a unipotent subgroup. For details and the proofs of the theorems we refer to \cite[Chapter~6, $4^{\circ}$]{OnishchikVinberg}.
The radical $\radic(H)$ of a subgroup $H$ of $\group$ is the largest connected normal solvable subgroup of $H$. The subgroup $\unirad(H)\leq \radic(H)$ of unipotent elements of $\radic(H)$ is normal in $H$ and is called the unipotent radical of $H$.

\begin{theorem}[Levi decomposition]\label{thm:levidecomposition}
There is a reductive subgroup $\levi$ of $H$, called a \emph{Levi group} of $H$, such that
\[
H \, = \, \levi \ltimes \unirad(H) \qquad \text{(semidirect product).}
\]
Each element $g \in H$ can be written in the form $g = g' \, u$ with $g' \in \levi$ and $u \in \unirad(H)$ uniquely determined by $g$.
\end{theorem}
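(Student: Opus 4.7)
The plan is to reduce the statement to Levi's theorem for Lie algebras and then integrate back to the group level, handling the connected case first. Assume $H$ is connected. Set $\mathfrak{h} = \Lie(H)$ and $\mathfrak{n} = \Lie(\unirad(H))$, a nilpotent ideal. Since $\field$ has characteristic zero, the Levi--Malcev theorem produces a semisimple subalgebra $\mathfrak{s} \subset \mathfrak{h}$ complementing $\mathfrak{r} := \Lie(\radic(H))$, i.e.\ $\mathfrak{h} = \mathfrak{s} \oplus \mathfrak{r}$. Because $\radic(H)$ is connected solvable, in characteristic zero it itself admits a decomposition $\radic(H) = \torus' \ltimes \unirad(H)$ with $\torus'$ a maximal torus of $\radic(H)$. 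Using conjugacy of maximal tori inside a connected solvable group, I would replace $\torus'$ by a conjugate (by an element of $\unirad(H)$) normalized by the connected algebraic subgroup $S \leq H$ integrating $\mathfrak{s}$. Setting $\mathfrak{l} := \mathfrak{s} \oplus \Lie(\torus')$ then gives a reductive subalgebra of $\mathfrak{h}$ complementary to $\mathfrak{n}$, and exponentiating $\mathfrak{l}$ yields a connected reductive algebraic subgroup $\levi$ of $H$.

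Next I would verify the two identities $\levi \cap \unirad(H) = \{e\}$ and $H = \levi \cdot \unirad(H)$. The intersection has Lie algebra $\mathfrak{l} \cap \mathfrak{n} = \{0\}$ by construction, so it is a finite unipotent normal subgroup of the reductive group $\levi$; but a reductive algebraic group has no nontrivial unipotent normal subgroups, hence the intersection is trivial. The multiplication map $\levi \times \unirad(H) \to H$ is therefore injective on points, and a dimension count using $\dim \mathfrak{h} = \dim \mathfrak{l} + \dim \mathfrak{n}$ shows that its image is all of $H$. Since $\unirad(H) \lhd H$, this is the required semidirect product decomposition. Uniqueness of $g = g' u$ is immediate: from $g_1 u_1 = g_2 u_2$ we obtain $g_2^{-1} g_1 = u_2 u_1^{-1} \in \levi \cap \unirad(H) = \{e\}$.

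For the disconnected case, I would apply the above to the connected component $\cocomp{H}$ to obtain $\levi^{\circ} \leq \cocomp{H}$, then invoke the conjugacy of Levi subgroups of $\cocomp{H}$ under $\unirad(\cocomp{H}) = \unirad(H)$ (Mostow's theorem) to choose, within each coset of $H/\cocomp{H}$, a representative normalizing $\levi^{\circ}$; the subgroup generated by $\levi^{\circ}$ and these representatives is the desired $\levi$. The main obstacle is the Levi--Malcev theorem together with the requirement that the resulting subalgebra $\mathfrak{l}$ be the Lie algebra of a \emph{closed algebraic} subgroup—semisimple subalgebras of algebraic Lie algebras over a field of characteristic zero are automatically algebraic, but aligning the torus part $\Lie(\torus')$ with $\mathfrak{s}$ so that their sum integrates to a single algebraic subgroup is precisely what the normalizer adjustment via conjugation in $\unirad(H)$ accomplishes.
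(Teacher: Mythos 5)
The paper does not prove this statement at all: it is quoted as a known structure theorem with a reference to Onishchik--Vinberg (Chapter~6, $4^{\circ}$), so there is no internal argument to compare yours against; what matters is whether your sketch would stand on its own, and as written it does not.

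The decisive gap is your step aligning the torus with the semisimple part. You write that ``using conjugacy of maximal tori inside a connected solvable group'' you replace $\torus'$ by a conjugate under $\unirad(H)$ that is normalized by $S$. Conjugacy of maximal tori only says that the maximal tori of $\radic(H)$ form a single orbit under $\unirad(H)$; it gives no $S$-stable point in that orbit. Producing a maximal torus of $\radic(H)$ normalized by $S$ is essentially equivalent to the Levi decomposition of $S\cdot\radic(H)$ itself, and the standard proofs obtain it from a vanishing statement for non-abelian cohomology of the reductive group with values in $\unirad(H)$ (via a filtration of $\unirad(H)$ by $S$-stable vector groups and linear reductivity in characteristic zero) --- i.e.\ from the hard core of Mostow's theorem, which your sketch silently assumes at exactly this point. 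A second, smaller gap is the disconnected case: choosing in each coset of $H/\cocomp{H}$ a representative normalizing $\levi^{\circ}$ and taking the subgroup generated by $\levi^{\circ}$ and these representatives does not in general give a complement to $\unirad(H)$. Products of the representatives can differ from the chosen representative of their coset by elements of $N_{\unirad(H)}(\levi^{\circ})$, so the generated subgroup may meet $\unirad(H)$ nontrivially; already for $H=\mu_2\times\mathbb{G}_a$ (so $\levi^{\circ}=\{e\}$) an arbitrary normalizing representative $(-1,a)$ squares to $(1,2a)\in\unirad(H)$. One needs either a careful choice of representatives justified by a splitting/cohomological argument for the extension $1\to\unirad(H)\to H\to H/\unirad(H)\to 1$, or an induction over an $H$-stable filtration of $\unirad(H)$. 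The remaining steps (algebraicity of the semisimple subalgebra, triviality of $\levi\cap\unirad(H)$, the dimension count, uniqueness of the factorization) are fine.
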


\begin{theorem}\label{thm:levidecompositionconj}
    Let $H = \levi \ltimes \unirad(H)$ be a Levi decomposition of $H$ and let $\widetilde{H}$ be a reductive subgroup of $H$. Then there exists $u \in \unirad(H)$ such that $u \widetilde{H} u^{-1} \leq \levi$. In particular, if $\widetilde{\levi}$ is another Levi group of $H$, then $\levi$ and $\widetilde{\levi}$ are conjugate by an element of $\unirad(H)$.
\end{theorem}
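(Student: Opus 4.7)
The plan is to use a cohomological splitting argument, which is the standard algebraic-group-theoretic route to Mostow-type conjugacy statements. I would work with the quotient map $\pi\colon H \to H/\unirad(H)$ and exploit that $\pi$ restricted to $\levi$ is an isomorphism of algebraic groups, so that $H/\unirad(H)$ may be identified with $\levi$.

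First I would show that $\widetilde{H} \cap \unirad(H)$ is trivial. This intersection is closed and normal in $\widetilde{H}$, hence is a closed subgroup of the reductive group $\widetilde{H}$ whose every element is unipotent; in characteristic zero such a subgroup must be trivial. Consequently $\pi|_{\widetilde{H}}$ is an isomorphism onto its image $\widetilde{L} := \pi(\widetilde{H})$, a reductive subgroup of $\levi$. The preimage $\pi^{-1}(\widetilde{L}) \leq H$ then fits into the short exact sequence
$$1 \longrightarrow \unirad(H) \longrightarrow \pi^{-1}(\widetilde{L}) \longrightarrow \widetilde{L} \longrightarrow 1,$$
and both $\widetilde{L} \hookrightarrow \levi \hookrightarrow \pi^{-1}(\widetilde{L})$ and $\widetilde{H} \hookrightarrow \pi^{-1}(\widetilde{L})$ provide sections of this sequence.

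The heart of the argument is to prove that any two sections of such an extension are conjugate by an element of $\unirad(H)$. I would proceed by induction on the length of the descending central series of $\unirad(H)$, which reduces the problem to the case that $\unirad(H)$ is abelian, and then via the exponential map to the case of a rational $\widetilde{L}$-module $V = \Lie(\unirad(H))$. Two sections then differ by a $1$-cocycle of $\widetilde{L}$ with values in $V$, and since $\widetilde{L}$ is reductive and $\field$ has characteristic zero, Weyl's complete reducibility yields $H^1(\widetilde{L}, V) = 0$, so every cocycle is a coboundary. Patching the inductive steps together produces $u \in \unirad(H)$ with $u \widetilde{H} u^{-1} \subseteq \levi$.

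For the second statement, applying the first part with $\widetilde{H} = \widetilde{\levi}$ gives $u \widetilde{\levi} u^{-1} \subseteq \levi$; since both $\widetilde{\levi}$ and $\levi$ surject isomorphically onto $H/\unirad(H)$ under $\pi$, this inclusion must be an equality. The principal obstacle is the cohomological vanishing step: it is where the characteristic zero hypothesis enters essentially (in positive characteristic the statement is known to fail without further hypotheses), and the devissage along the derived series of $\unirad(H)$ must be set up compatibly with the $\widetilde{L}$-action so that the inductive hypothesis applies at each stage.
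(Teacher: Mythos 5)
The paper does not actually prove this statement: it is recorded as a known structure theorem and its proof is delegated to \cite[Chapter~6, $4^{\circ}$]{OnishchikVinberg}, so there is no internal argument to compare yours against. Your proposal is the standard Mostow--Malcev-type proof and is essentially sound: the triviality of $\widetilde{H}\cap \unirad(H)$ (a normal unipotent, hence in characteristic zero connected, subgroup of a reductive group), the reduction to comparing two splittings of $1\to \unirad(H)\to \pi^{-1}(\widetilde{L})\to \widetilde{L}\to 1$, the devissage along a filtration of $\unirad(H)$ with vector-group quotients, and the vanishing of rational $H^{1}(\widetilde{L},V)$ by linear reductivity in characteristic zero are exactly the ingredients behind the cited result. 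A few points to tighten if you write this out in full: (i) you switch between the descending central series and the derived series --- either works, but the chosen filtration must consist of subgroups that are stable under conjugation by $\pi^{-1}(\widetilde{L})$ (the descending central series of $\unirad(H)$ is characteristic, hence fine) so that each graded piece is a rational $\widetilde{L}$-module and the induction is well posed; (ii) the phrase ``$\widetilde{L}\hookrightarrow \levi \hookrightarrow \pi^{-1}(\widetilde{L})$'' should say that the copy of $\widetilde{L}$ inside $\levi$ obtained from the isomorphism $\pi|_{\levi}$ lies in $\pi^{-1}(\widetilde{L})$ and furnishes the second section, since $\levi$ itself is in general not contained in $\pi^{-1}(\widetilde{L})$; (iii) in the abelian step you should state explicitly that the two sections are morphisms of algebraic groups, so the difference cocycle is a rational cocycle, and that rational (Hochschild) $H^{1}$ classifies sections up to conjugation by the kernel; the vanishing then follows because in characteristic zero reductive groups are linearly reductive. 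With these small repairs your argument is a complete, self-contained proof of a statement the paper only cites, and for the final assertion your observation that $u\widetilde{\levi}u^{-1}\subseteq \levi$ together with the injectivity of $\pi|_{\levi}$ and $\pi(u\widetilde{\levi}u^{-1})=\pi(\levi)$ correctly forces equality.
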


For a standard parabolic subgroup $\parabolic_J$ the unipotent radical $\unirad(\parabolic_J)$ is generated by those root groups $U_{\beta}$ for which $\beta \in \roots^- \setminus \leviroots^-$ with $\leviroots^-:=\roots^- \cap \leviroots$. 
Its Lie algebra is the direct sum of root spaces
\[
\Lie(\unirad(\parabolic_J)) \, = \, \bigoplus_{\beta \in \roots^- \setminus \leviroots^-} \liealg_{\beta} \, .
\]
The group $\parabolic_J$ has many Levi groups and by Theorem~\ref{thm:levidecompositionconj} any two Levi groups of $\parabolic_J$ are conjugate by an element of $\unirad(\parabolic_J)$.
We denote by $\levi_J$ the Levi group of $\parabolic_J$ whose Lie algebra is
\[
\Lie(\levi_J) \, = \, \cartan \oplus \bigoplus_{\alpha \in \leviroots} \liealg_{\alpha} \, .
\]
We call $\levi_J$ the \emph{standard Levi group of} $\parabolic_J$ and
\begin{equation}\label{eqn:standardLevi}
\parabolic_J \, = \, \levi_J\ltimes \unirad(\parabolic_J) 
\end{equation}
the \emph{standard Levi decomposition of} $\parabolic_J$.

The following definition is taken from \cite[Section~3.2.1]{SerreIrreductiblite}.

\begin{definition}
    Let $H$ be a connected reductive linear algebraic group. A closed subgroup $\widetilde{H}$ of $H$ is called $H$-irreducible, if $\widetilde{H}$ is not contained in any proper parabolic subgroup of $H$.
\end{definition}

\begin{proposition}\label{prop:LirreducibleForLevi}
    Let $H$ be a closed reductive subgroup of $\group$.
    Then there exists a parabolic subgroup $P$ of $\group$ which is minimal with respect to containing $H$, such that $H$ is $\levi$-irreducible for a Levi group $\levi$ of $P$. Every parabolic subgroup which is minimal with respect to containing $H$ has this property.
\end{proposition}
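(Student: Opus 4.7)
The plan is to first secure the existence of a parabolic subgroup of $\group$ minimal with respect to containing $H$, then use the Levi decomposition to place $H$ inside a Levi group $\levi$ of any such minimal $P$, and finally deduce $\levi$-irreducibility by a contradiction argument using the correspondence between parabolic subgroups of $P$ and parabolic subgroups of its Levi group.

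For the existence of a minimal $P$, I would observe that the collection of parabolic subgroups of $\group$ containing $H$ is nonempty (it contains $\group$) and that every descending chain of such parabolic subgroups stabilizes (dimension decreases and $\group$ is Noetherian), so a minimal element exists. Fix any such $P$ with standard Levi decomposition $P = \levi_0 \ltimes \unirad(P)$ from Theorem~\ref{thm:levidecomposition}. Since $H$ is reductive, Theorem~\ref{thm:levidecompositionconj} provides $u \in \unirad(P)$ with $u H u^{-1} \leq \levi_0$. Setting $\levi := u^{-1} \levi_0 u$ yields a Levi group of $P$ (it is conjugate to $\levi_0$ by an element of $P$, hence a Levi group of $P$) with $H \leq \levi$, while $P$ itself is unchanged so that minimality is preserved.

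It remains to show $H$ is $\levi$-irreducible. Suppose for contradiction that $H \leq Q$ for some proper parabolic subgroup $Q$ of $\levi$. Here I would invoke the classical structural fact that the parabolic subgroups of $\group$ contained in $P$ are precisely the subgroups of the form $Q \cdot \unirad(P)$ with $Q$ a parabolic subgroup of $\levi$, and that $Q \cdot \unirad(P)$ is a proper subgroup of $P$ exactly when $Q$ is proper in $\levi$. Consequently $Q \cdot \unirad(P)$ is a parabolic subgroup of $\group$ which is strictly contained in $P$ yet still contains $H$, contradicting the minimality of $P$. Hence no such $Q$ exists and $H$ is $\levi$-irreducible.

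The argument was carried out for an arbitrary minimal parabolic subgroup $P$ of $\group$ containing $H$, so every such minimal $P$ admits a Levi group $\levi$ with $H \leq \levi$ for which $H$ is $\levi$-irreducible; this is the second assertion. The only nontrivial ingredient is the structural fact about parabolic subgroups of $\group$ contained in $P$, which I would quote from the standard references on linear algebraic groups rather than reprove.
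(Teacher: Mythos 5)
Your argument is correct, but it takes a different route from the paper. The paper disposes of the proposition in two lines: in characteristic zero a closed subgroup of $\group$ is reductive if and only if it is $\group$-completely reducible (citing Bate--Martin--R\"ohrle, Section~2.2), and the statement is then exactly their Corollary~3.5. You instead give a direct structure-theoretic proof: minimal parabolics containing $H$ exist by a dimension/chain argument; Theorem~\ref{thm:levidecompositionconj} lets you conjugate a Levi group of a minimal $P$ (by an element of $\unirad(P)$, so $P$ and its minimality are untouched) so that $H \leq \levi$; and the Borel--Tits correspondence, namely that the parabolic subgroups of $\group$ contained in $P$ are exactly the groups $Q\cdot\unirad(P)$ with $Q$ parabolic in $\levi$, with $Q\cdot\unirad(P)$ proper in $P$ precisely when $Q$ is proper in $\levi$, turns any proper parabolic of $\levi$ containing $H$ into a strictly smaller parabolic of $\group$ containing $H$, contradicting minimality. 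All the steps check out: strict inclusions of parabolics force a dimension drop since parabolics are connected, $\levi \cong P/\unirad(P)$ is connected reductive so Definition~\ref{def:associatedRic}'s predecessor (the $\levi$-irreducibility notion of the paper) applies, and the one ingredient not stated in the paper, the correspondence between parabolics of $\group$ inside $P$ and parabolics of a Levi of $P$, is standard and citable (Borel, Borel--Tits, Digne--Michel). What each approach buys: the paper's proof is shorter and leans on the $G$-complete reducibility machinery, which packages exactly this argument; yours is self-contained modulo one classical fact and makes transparent why minimality of $P$ is what forces irreducibility in the Levi, at the cost of having to import and correctly invoke that extra structural statement.
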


\begin{proof}
In characteristic zero for a closed subgroup $H$ of $\group$ the notion of being $\group$-completely reducible and reductive coincide (cf.\ \cite[Section~2.2]{BateMartinRoehrle}). Thus the statement follows from \cite[Corollary~3.5]{BateMartinRoehrle}.
\end{proof}

\section{A Normal Form for the Classical Groups}\label{sec:normalform}

Let $\difffield$ be the rational function field $\field(z)$ with standard derivation $\frac{\mathrm{d}}{\mathrm{d}z}$. Let $\boldsymbol{\mathrm{t}}=(\mathrm{t}_1,\dots,\mathrm{t}_l)$ be differential indeterminates over $\difffield$. Let $\group$ be one of the classical groups of Section~\ref{sec:classicalgroups}.
In \cite{Seiss} we constructed a matrix $A_{\group}(\boldsymbol{\mathrm{t}})$ in $\liealg( \field \langle \boldsymbol{\mathrm{t}}\rangle)$ such that 
$\partial(\byy) = A_{\group}(\boldsymbol{\mathrm{t}}) \, \byy$ defines a Picard-Vessiot extension of $\field \langle \boldsymbol{\mathrm{t}} \rangle$ with differential Galois group $\group(\field)$.
For the construction of $A_{\group}(\boldsymbol{\mathrm{t}})$ we considered the structure of the Lie algebra $\liealg$.
More precisely, there are $l$ positive roots $\gamma_1,\dots,\gamma_l \in \roots^+$ such that we obtain a direct sum decomposition
\[
\mathfrak{b}^+ \, = \, \ad(A_0^-)(\mathfrak{u}^+) + \sum_{i=1}^l \liealg_{\gamma_i}\,,
\]
where 
\[
A_0^- \, = \, \sum_{i=1}^l X_{\beta_i}
\]
is the sum of all basis elements of the root spaces corresponding to the simple negative roots. The roots $\gamma_1,\dots,\gamma_l$ are called the \emph{complementary roots} and their heights $\height(\gamma_i)$ are equal to the exponents of $\liealg$.
Interchanging the roles of the negative with the positive roots, we define 
\[
A_0^+ \, = \, \sum_{i=1}^l X_{\alpha_i}\,.
\]
\begin{definition}
We call the matrix
\[
A_{\group}(\boldsymbol{\mathrm{t}}) \, := \, A_0^+ + \sum_{i=1}^l \mathrm{t}_i X_{-\gamma_i}
\]
the \emph{normal form matrix} for the classical group $\group$.
\end{definition}

For the construction of our general extension field we consider further differential indeterminates $\bvv=(v_1,\dots,v_l)$ over $\difffield$.
Recall that $n(\overline{w}) \in N_{\group}(\torus)$ denotes a representative of the longest Weyl group element $\overline{w} \in \weyl$.

\begin{theorem}[\cite{RobertzSeissNormalForms}]\label{thm:RobertzSeissNormalForms}
    There are 
    \begin{enumerate}
    \renewcommand{\theenumi}{(\roman{enumi})}
    \renewcommand{\labelenumi}{\theenumi}
    \item non-zero constants $c_1,\dots,c_l$ in $\field$,
    \item polynomials $g_1(\bvv),\dots,g_l(\bvv) \in \Z[\bvv]$ that are $\field$-linearly independent and homogeneous  of degree one,
    \item differential polynomials $\bss(\bvv)=(s_1(\bvv),\dots,s_l(\bvv))$ with $s_i(\bvv) \in \field\{ \bvv\}$ which are differentially algebraically independent over $\difffield$ and 
    \item differential polynomials $\bff=(f_{l+1},\dots,f_m)$ with $f_i \in \field\langle \bvv \rangle$
    \end{enumerate}
    having the following properties:
    \begin{enumerate}
    \item The matrix
    \[
    \ALiou(\bvv) \, := \, \sum_{i=1}^l g_i(\bvv) H_i + \sum_{i=1}^l c_i X_{\beta_i} \in \mathfrak{b}^-(\field\langle \bvv\rangle),
    \]
    where $ \beta_i = -\alpha_i$ for $i=1,\dots,l$, defines a Picard-Vessiot extension $\generalext$ of $\difffield\langle \bvv \rangle$ with differential Galois group $\borel^-(\field)$ and has fundamental solution matrix 
    \[
    \fml \, := \, \btt(\bexp) \, \buu(\bint) \in \borel^-(\generalext)\,.
    \]
    The parameters $\bexp=(\exp_1,\dots,\exp_l) \in \generalext^l$ for the diagonal torus in $\borel^-(\generalext)$ satisfy
    \[
    \partial(\exp_i) \exp_i^{-1} \, = \, g_i(\bvv)
    \]
    and the parameters $\bint = (\intn_1, \dots, \intn_m) \in \generalext^m$ for the maximal unipotent subgroup of $\borel^-(\generalext)$ are successive integrals, meaning that the integral $\intn_i$ corresponds to the root $\beta_i$ and depends on $\bexp$ and on those integrals $\intn_j$ with $|\height(\beta_j)| < |\height(\beta_i)|$. 
    \item The logarithmic derivative of 
    \[
    \fm \, := \, \buu(\bvv,\bff) \, n(\overline{w}) \, \fml
    \]
    is the normal form matrix $A_{\group}(\bss(\bvv))$.
    The differential field $\generalext=\difffield\langle \bvv \rangle(\fml)$ is a Picard-Vessiot extension of $\difffield\langle \bss(\bvv) \rangle$ for $A_{\group}(\bss(\bvv))$ and the differential Galois group of $\generalext$ over $\difffield\langle \bss(\bvv) \rangle$ is $\group(\field)$.
    \end{enumerate}
\end{theorem}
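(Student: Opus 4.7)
The plan is to prove the two parts in sequence: first build a Borel-Liouvillian Picard-Vessiot extension with Galois group $\borel^-(\field)$, then show that the Bruhat twist $\buu(\bvv,\bff) \, n(\overline w)$ transports the defining matrix $\ALiou$ into the normal form $A_{\group}(\bss(\bvv))$, while simultaneously enlarging the Galois group to all of $\group(\field)$.

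For part (1), I would start by choosing arbitrary nonzero $c_1,\dots,c_l \in \field$ and any $l$ homogeneous linear forms $g_i(\bvv)\in \Z[\bvv]$ that are $\Z$-linearly independent, so that $\ALiou(\bvv)\in \mathfrak{b}^-(\field\langle\bvv\rangle)$. Because $\ALiou$ is (block) triangular, the system $\partial(\byy)=\ALiou\,\byy$ solves by quadratures: first the diagonal torus coordinates, producing $\exp_1,\dots,\exp_l$ with $\partial(\exp_i)/\exp_i = g_i(\bvv)$, and then, by induction on $|\height(\beta_i)|$, the unipotent coordinates $\intn_i$, each obtained by integrating an expression lying in the field generated by $\bexp$ and the $\intn_j$ of strictly smaller $|\height(\beta_j)|$. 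A direct computation verifies that $\fml = \btt(\bexp)\,\buu(\bint)$ is a fundamental solution matrix. To conclude that the Galois group is the full Borel, one applies Kolchin's criterion on exponentials: since $\bvv$ are differential indeterminates over $\difffield$ and $g_1,\dots,g_l$ are $\Z$-linearly independent, no nontrivial $\Z$-linear combination of the $g_i$ can be a logarithmic derivative in $\difffield\langle \bvv\rangle$, so $\exp_1,\dots,\exp_l$ are multiplicatively independent. An induction on $|\height(\beta_i)|$ then shows that each $\intn_i$ is transcendental over the field generated by the previously constructed elements. Hence $\trdeg(\generalext/\difffield\langle\bvv\rangle)=l+m=\dim\borel^-$, so the Galois group equals $\borel^-(\field)$.

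For part (2), the key computation is the logarithmic derivative of $\fm = \buu(\bvv,\bff) \, n(\overline w) \, \fml$. Iterating $\dlog(gh)=\dlog(g)+\Ad(g)(\dlog(h))$ and using that $n(\overline w)$ is constant gives
\[
\dlog(\fm) \, = \, \dlog(\buu(\bvv,\bff)) + \Ad(\buu(\bvv,\bff))\bigl(\Ad(n(\overline w))(\ALiou(\bvv))\bigr).
\]
Because $\overline w$ is the longest Weyl element, $\Ad(n(\overline w))$ sends each $X_{\beta_i}$ to a nonzero scalar multiple of $X_{\alpha_{\sigma(i)}}$ for the duality permutation $\sigma$, and thus $\Ad(n(\overline w))(\ALiou)\in \mathfrak{b}^+(\field\langle\bvv\rangle)$. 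Choosing the constants $c_i$ so that the simple-positive-root part of this image equals $A_0^+ = \sum_i X_{\alpha_i}$, the task reduces to exhibiting differential polynomials $f_{l+1},\dots,f_m \in \field\langle\bvv\rangle$ such that the subsequent conjugation by $\buu(\bvv,\bff) \in \unipotent^-$ kills all contributions outside $A_0^+ + \bigoplus_i \liealg_{-\gamma_i}$. Here the decomposition $\mathfrak{b}^+ = \ad(A_0^-)(\mathfrak{u}^+) + \sum_i \liealg_{\gamma_i}$ recalled just above the definition of the normal form matrix is crucial: arranging the unwanted contributions by root height, at each step the equation to be solved for the next $f_i$ is linear in $f_i$ with coefficients already in $\field\langle\bvv\rangle$, hence solvable. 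What remains is exactly $A_0^+ + \sum_i s_i(\bvv) X_{-\gamma_i}$ for certain $s_i(\bvv) \in \field\{\bvv\}$, i.e.\ $A_{\group}(\bss(\bvv))$.

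It remains to check that $s_1(\bvv),\dots,s_l(\bvv)$ are differentially algebraically independent over $\difffield$ and that the Galois group of $\generalext/\difffield\langle\bss(\bvv)\rangle$ equals $\group(\field)$. The independence follows by a leading-term analysis: the height-induction leaves in each $s_i$ a nonvanishing highest-order derivative of $v_i$ (with order equal to $\height(\gamma_i)$), so any nontrivial differential relation among the $s_i$ would descend to one among the differential indeterminates $\bvv$ over $\difffield$, which is impossible. For the Galois group, $\fm \in \group(\generalext)$ with $\dlog(\fm) \in \liealg(\field\langle \bss(\bvv)\rangle)$ places it in $\group(\field)$, and equality follows from the dimension count $\trdeg(\generalext/\difffield\langle\bss(\bvv)\rangle) = (l+m) + m = l + 2m = \dim\group$, using $\sum_i \height(\gamma_i) = m$ (the sum of exponents equals the number of positive roots). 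The main obstacle I expect lies in the third paragraph: carrying out the height-induction explicitly and verifying that the leading terms of the resulting $s_i$ are nonzero requires case-by-case information on the Chevalley structure constants and on the complementary roots $\gamma_1,\dots,\gamma_l$ for each of the types $A_l$, $B_l$, $C_l$, $D_l$ and $\Gzwei$.
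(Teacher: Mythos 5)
You should note first that this paper does not prove Theorem~\ref{thm:RobertzSeissNormalForms} at all: it imports it from \cite{RobertzSeissNormalForms} (building on \cite{Seiss_Generic,Seiss}), and your outline does follow the same route as that construction (Liouvillian tower for $\borel^-$, twist by $\buu(\bvv,\bff)\,n(\overline{w})$, height induction driven by the decomposition of the Borel subalgebra through $\ad(A_0^{\pm})$). Still, two of your steps have genuine gaps. First, the data $c_1,\dots,c_l$ and, above all, $g_1(\bvv),\dots,g_l(\bvv)$ cannot be chosen ``arbitrarily'' at the start: the only adjustable parameters in your part (2) are $\bff$, attached to the \emph{non-simple} negative roots. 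By Remark~\ref{remark3}, Cartan components of $\dlog(\fm)$ arise only from the Cartan part of $\Ad(n(\overline{w}))(\ALiou(\bvv))$ and from conjugating the simple positive root vectors by the factors $u_{-\alpha_j}(v_j)$, whose parameters are frozen to be $v_j$; the term $\dlog(\buu(\bvv,\bff))$ lies in $\mathfrak{u}^-$ and conjugation by root elements for roots of height $\le -2$ never produces Cartan terms. Hence the Cartan component of $\dlog(\fm)$ is independent of $\bff$, and since $A_{\group}(\bss(\bvv))=A_0^+ + \sum_i s_i(\bvv)X_{-\gamma_i}$ has no Cartan part, it vanishes only for one specific tuple of degree-one forms $g_i$ (this is exactly how the $g_i$ are determined in the cited construction). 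With generic $\Z$-linearly independent $g_i$, your step ``the conjugation by $\buu(\bvv,\bff)$ kills all contributions outside $A_0^+ + \bigoplus_i \liealg_{-\gamma_i}$'' fails in the Cartan direction; the repair is to treat the $g_i$, like the $c_i$, as outputs of the normalization rather than inputs.

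Second, the independence and Galois-group step is not established by your argument. The sentence ``any nontrivial differential relation among the $s_i$ would descend to one among the differential indeterminates $\bvv$, which is impossible'' proves nothing: a differential relation among the $s_i(\bvv)$ \emph{is} by definition a relation among elements of $\field\{\bvv\}$, and such relations are not a priori excluded (any $l+1$ elements of $\field\{\bvv\}$ are differentially dependent; $l$ elements may or may not be). What the cited proof actually uses is that $s_i$ contains the top derivative $v_i^{(d_i)}$ quasi-linearly with unit coefficient, so that $\difffield\langle\bvv\rangle$ is generated over $\difffield\langle\bss(\bvv)\rangle$ by the $m=\sum_i d_i$ derivatives $v_i^{(j)}$, $j<d_i$; this gives the upper bound $\trdeg_{\difffield\langle\bss(\bvv)\rangle}\difffield\langle\bvv\rangle\le m$ and, via a differential transcendence degree argument, the differential algebraic independence of the $s_i$. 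Your concluding dimension count moreover needs the \emph{lower} bound, i.e.\ the algebraic independence of those $m$ derivatives over $\difffield\langle\bss(\bvv)\rangle$, which does not follow from a leading-term remark; without it you only obtain $\Gal_\partial(\generalext/\difffield\langle\bss(\bvv)\rangle)\le \group(\field)$, not equality. (Similarly, in part (1) the transcendence of each successive integral requires showing the corresponding integrand is not a derivative in the field already constructed; you assert this, but it is part of what has to be proved.)
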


The action of $\group(\field)$ on the parameters $\bvv$, $\bff$, $\bexp$ and $\bint$, which generate $\generalext$ over $\difffield \langle \bss(\bvv)\rangle$ as a field, is induced by the Bruhat decomposition, namely each $g \in \group(\field)$ induces a differential $\difffield \langle \bss(\bvv)\rangle$-automorphism 
\[
\gamma_g \in \Gal_\partial(\generalext / \difffield \langle \bss(\bvv) \rangle)
\]
of $\generalext$ by multiplying $\fm$ from the right with $g$.
Thus we can determine the effect of $\gamma_g$ on the parameters $\bvv$, $\bff$, $\bexp$ and $\bint$ by means of the Bruhat decomposition of the product
\[
\fm \, g \, = \, \buu(\bvv,\bff ) \, n(\overline{w}) \, \btt (\bexp) \, \buu(\bint) \, g\,.
\]
More precisely, if $\bvv^g = (v^g_1, \dots , v^g_l)$, $\bff^g=(f^g_{l+1},\dots,f^g_m)$, $\bexp^g=(\exp^g_1,\dots, \exp^g_l)$ and $\bint^g=(\intn^g_1, \dots , \intn^g_m)$ are the parameters of the Bruhat decomposition
\[
\fm \, g \, = \, \buu(\bvv^g,\bff^g) \, n(\overline{w}) \, \btt (\bexp^g) \, \buu(\bint^g)\,,
\] 
then the images of $\bvv$, $\bff$, $\bexp$ and $\bint$ under $\gamma_g$ are 
\[
\begin{array}{rcl}
\gamma_g(\bvv) \! & \! = \! & \! \bvv^g\,,\\[0.2em]
\gamma_g(\bff) \! & \! = \! & \! \bff^g\,,\\[0.2em]
\gamma_g(\bexp) \! & \! = \! & \! \bexp^g\,,\\[0.2em]
\gamma_g(\bint) \! & \! = \! & \! \bint^g\,.
\end{array}
\]
Recall from Theorem~\ref{thm:RobertzSeissNormalForms} that the differential polynomials $s_i(\bvv) \in \difffield\{ \bvv \} \subset \generalext$ are invariant under the action of the differential Galois group $\group(\field)$.

\begin{definition}\label{de:normalformequation}
The matrix differential equation defined by the normal form matrix $A_{\group}(\bss(\bvv))$ corresponds to the scalar linear differential equation
\[
L_{\group}(\bss(\bvv),\partial) \, y \, = \, 0
\] 
introduced in \cite{Seiss} with suitable operator
\[
L_{\group}(\bss(\bvv),\partial) \in \field\{ \bss( \bvv) \}[\partial]\,.
\]
We call $L_{\group}(\bss(\bvv),\partial) \, y = 0$ the \emph{normal form (scalar) equation} for $\group$ and the linear operator 
$L_{\group}(\bss(\bvv),\partial)$  
the \emph{normal form operator} for $\group$.
\end{definition}

Given the normal form equation, we consider its associated equations (cf.\ Appendix~\ref{app:associated&riccati} for their definition and construction) and their corresponding Riccati equations in case $\group$ is of type $A_l$, $B_l$, $C_l$ or $\Gzwei$ (here $l=2$).

\begin{definition}\label{def:associatedRic}
For $i=1,\dots,l$ we denote the
\emph{$i$-th associated equation} for the normal form equation $L_{\group}(\bss(\bvv),\partial) \, y = 0$ by
\[
L^{\det(i)}(\bss(\bvv),\partial) \, y \, = \, 0 \, . 
\]
Moreover, we denote by 
\[
\Ric{i}{\bss(\bvv)}{y} \, = \, 0
\]
the Riccati equation for the $i$-th associated equation.
\end{definition}

\begin{proposition}\label{prop:exponentialandRiccati}
The $i$-th associated equation has the exponential
\[
\expsolass_i \, := \, e^{\int b_i v_i} \in \generalext  
\] 
for some $b_i \in \{\pm 1,-2 \}$
as a solution and so $\Ric{i}{\bss(\bvv)}{y} = 0$ has the solution $b_i v_i \in \field \{ \bvv \}$. Moreover, $b_1v_1,\dots,b_lv_l$ and $g_1(\bvv), \dots,g_l(\bvv)$ generate the same $\Z$-module.
\end{proposition}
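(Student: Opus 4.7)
The plan is to extract the exponential solution of each associated equation from the parametrised fundamental matrix $\fm$, and then to check the $\Z$-module claim by a case-by-case examination for the groups of type $A_l$, $B_l$, $C_l$, and $\Gzwei$.

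\textbf{Step 1 — Associated equation via a fundamental representation.} By the construction recalled in Appendix~\ref{app:associated&riccati}, the $i$-th associated equation $L^{\det(i)}(\bss(\bvv),\partial)\,y = 0$ has a solution space spanned by matrix coefficients of $\fm$ acting on the $i$-th fundamental representation $V_{\omega_i}$ of $\group$ (an appropriate exterior power of the standard representation for $A_l$, $B_l$, $C_l$; the standard explicit realisation for $\Gzwei$). Reading a coordinate of $\fm\cdot v$ with respect to a weight-adapted basis of $V_{\omega_i}$ thus gives a solution of $L^{\det(i)}$.

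\textbf{Step 2 — A pure-torus coordinate of $\fm$.} Write
\[
\fm \;=\; \buu(\bvv,\bff)\, n(\overline{w})\, \btt(\bexp)\, \buu(\bint)
\]
and pick $v$ a generator of the unique lowest-weight line $L^-\subset V_{\omega_i}$. Then $\buu(\bint)$ fixes $v$ (since $\unipotent^-$ stabilises lowest weight vectors), $\btt(\bexp)$ scales it by the value of the character $-\overline{w}\omega_i$ at $\btt(\bexp)$, $n(\overline{w})$ sends $L^-$ to the highest-weight line $L^+$, and $\buu(\bvv,\bff)\in\unipotent^-$ preserves $L^+$ modulo strictly lower weight spaces. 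The coordinate of $\fm \cdot v$ along a chosen generator of $L^+$ is therefore
\[
\expsolass_i \;=\; \prod_{j=1}^{l}\exp_j^{\,n_{ij}},
\]
for integers $n_{ij}\in\Z$ that record the weight $-\overline{w}\omega_i$ in the basis dual to the $H_j$. By Step~1, $\expsolass_i$ is a solution of $L^{\det(i)}(\bss(\bvv),\partial)\,y=0$.

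\textbf{Step 3 — Logarithmic derivative and Riccati solution.} Using $\partial(\exp_j)/\exp_j = g_j(\bvv)$ we obtain
\[
\frac{\partial(\expsolass_i)}{\expsolass_i} \;=\; \sum_{j=1}^{l} n_{ij}\,g_j(\bvv)\;\in\;\field\{\bvv\},
\]
and hence $\sum_{j} n_{ij}\,g_j(\bvv)$ is a solution of the Riccati equation $\Ric{i}{\bss(\bvv)}{y}=0$.

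\textbf{Step 4 — Matching with $b_iv_i$ and the $\Z$-span claim.} It remains to verify that
\[
\sum_{j=1}^{l} n_{ij}\,g_j(\bvv) \;=\; b_i\,v_i, \qquad b_i\in\{\pm 1,-2\},
\]
and that the resulting transition between the $l$-tuples $(b_iv_i)_{i=1,\dots,l}$ and $(g_j(\bvv))_{j=1,\dots,l}$ has the same $\Z$-span inside $\field\{\bvv\}$. This is done case by case: the polynomials $g_j(\bvv)$ are those exhibited in Theorem~\ref{thm:RobertzSeissNormalForms}, while the integers $n_{ij}$ are determined by $-\overline{w}\omega_i$ expressed in the coroot/$H_j$ basis, i.e.\ entries of the (signed, permuted) inverse Cartan matrix for the classical types, with the analogous explicit list for $\Gzwei$. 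In the simply-laced case $A_l$ all $b_i$ come out in $\{\pm 1\}$; for $B_l$ and $\Gzwei$ the fundamental weight paired with a long coroot contributes a factor of $2$, which produces the value $b_i=-2$ on that index. In every type the two families $(b_iv_i)_i$ and $(g_j(\bvv))_j$ are related by a signed permutation matrix (with the single factor of $2$ sitting on the same index on both sides), which ensures that they generate the same $\Z$-submodule of $\field\{\bvv\}$.

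\textbf{Main obstacle.} Steps~1–3 are formal and follow from the Bruhat factorisation of $\fm$ and the representation-theoretic description of the associated equations. The real work is Step~4: one must reconcile the specific normalisations of $g_j(\bvv)$ chosen in \cite{RobertzSeissNormalForms} with the fundamental weights entering each $L^{\det(i)}$, and track the single factor of $2$ coming from short-root coroots in $B_l$ and $\Gzwei$ so that the transition matrix between $(b_iv_i)_i$ and $(g_j(\bvv))_j$ is indeed unimodular and the two $\Z$-spans coincide.
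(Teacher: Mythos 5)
Your Steps 1--3 follow a viable route and are essentially the paper's mechanism in representation-theoretic clothing: the paper (Propositions~\ref{prop:assoc_operator}, \ref{prop:factorization} and \ref{prop:factorizationG2}) factors $L_{\group}(\bss(\bvv),\partial)$ over $\field\langle \bvv\rangle$ into first-order factors $(\partial-a_k)$, takes the Wronskian of the solution space of the order-$i$ right factor -- which lies in the solution space of $L^{\det(i)}$ by the very definition of the associated equation -- and reads off its logarithmic derivative as the partial sum $a_1+\dots+a_i$. However, your Step~1 is too loose to stand as written: the solution space of $L^{\det(i)}(\bss(\bvv),\partial)\,y=0$ is \emph{not} spanned by arbitrary matrix coefficients of $\fm$ in the exterior power; it is spanned by the $i\times i$ minors taken from the first $i$ rows of the Wronskian matrix $\BNFcomp\fm$, i.e.\ by coefficients against one specific covector, namely $e_1\wedge\dots\wedge e_i$ twisted by $\bigwedge^i\BNFcomp$. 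To justify extracting the highest-weight coefficient of $\fm\cdot v$ you must check that this covector is a constant multiple of the highest-weight functional; this is true for $\SL_{l+1}$, $\SP_{2l}$, $\SO_{2l+1}$ because there $\BNFcomp$ is unipotent lower triangular (Proposition~\ref{prop:PropertiesOfBcomp}), but it is not automatic for $\Gzwei$, whose $\BNFcomp$ is not triangular. Also, for $\SO_{2l+1}$ and $i=l$ the relevant module is $\bigwedge^l$ of the natural representation (highest weight $2\omega_l$), not the fundamental spin representation, and this is exactly where the value $-2$ originates, so the identification ``$V_{\omega_i}$ = appropriate exterior power'' needs care.

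The substantive content of the proposition -- that the resulting weight combination is exactly $b_iv_i$ with $b_i\in\{\pm 1,-2\}$, and that $b_1v_1,\dots,b_lv_l$ and $g_1(\bvv),\dots,g_l(\bvv)$ span the same $\Z$-module -- is precisely what your Step~4 defers to an unexecuted case-by-case check, so the proof is incomplete exactly where the paper does the actual work (explicit computation of the diagonal entries of $\gauge{n(\overline{w})}{\ALiou(\bvv)}$ for each group). Moreover, two of your predicted outcomes are wrong: for $\Gzwei$ the paper obtains $b_1=b_2=1$ (the value $-2$ occurs only for $\SO_{2l+1}$ at $i=l$), and the passage between $(b_iv_i)_i$ and $(g_j(\bvv))_j$ is \emph{not} a signed permutation -- for $\SP_{2l}$ and $\SO_{2l+1}$ the $g_j(\bvv)$ are differences such as $\pm(v_j-v_{j-1})$ and $\pm(2v_l-v_{l-1})$, so the transition matrix is only unimodular triangular. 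The equality of $\Z$-spans still holds, but not by your stated mechanism; the paper deduces it directly from the two inclusions: the logarithmic derivatives of the $\expsolass_i$ are $\Z$-combinations of the $g_j(\bvv)$, and conversely each $g_j(\bvv)$ is a $\Z$-combination of the $b_iv_i$.
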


\begin{proof}
See Proposition~\ref{prop:assoc_operator} and Corollary~\ref{cor:riccati_assoc_operator} in  Appendix~\ref{app:associated&riccati}.
\end{proof}

Over $\field\langle \bss(\bvv) \rangle$ the normal form operator is irreducible. 
In the subsequent sections we will consider irreducible factorizations of the normal form operator over intermediate differential fields
of $\field\langle \bss(\bvv) \rangle \subset \field\langle \bvv \rangle$ obtained by adjoining subsets of the indeterminates $\bvv$ to $\field\langle \bss(\bvv) \rangle$.
In the other extreme case when the base field is $\field\langle \bvv \rangle$ we have the following factorization.
\begin{proposition}
    The normal form operator
    \[
    L_{\group}(\bss(\bvv),\partial) \in \field \langle \bss(\bvv) \rangle[\partial]
    \]
    of order $n$ factors over $\field\langle \bvv \rangle$ into a product of first order operators
    \[
    L_{\group}(\bss(\bvv),\partial) \, = \, \prod_{i=1}^n (\partial - a_i)\,,
    \]
    where $a_i \in \field [ \bvv]$ is homogeneous of degree one for all $i=1,\dots,n$.
\end{proposition}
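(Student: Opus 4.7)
The plan is to transport the triangular structure of the gauge equivalent matrix $\ALiou(\bvv) \in \mathfrak{b}^-(\field\langle\bvv\rangle)$ to a complete factorization of the scalar operator $L_{\group}(\bss(\bvv),\partial)$ over $\field\langle\bvv\rangle$. The starting observation is that by Theorem~\ref{thm:RobertzSeissNormalForms} the fundamental matrix of $A_{\group}(\bss(\bvv))$ factors as $\fm = \buu(\bvv,\bff)\, n(\overline{w})\, \fml$ with $\buu(\bvv,\bff)\, n(\overline{w}) \in \GL_n(\field\langle\bvv\rangle)$, so $A_{\group}(\bss(\bvv))$ and $\ALiou(\bvv)$ are gauge equivalent over $\field\langle\bvv\rangle$, and therefore their associated differential modules are isomorphic. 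Passing through a cyclic vector, the submodule lattice of the module of $\ALiou(\bvv)$ matches, via the standard correspondence, the lattice of right factorizations of $L_{\group}(\bss(\bvv),\partial)$.

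Next, since $\ALiou(\bvv) = \sum_{i=1}^l g_i(\bvv) H_i + \sum_{i=1}^l c_i X_{\beta_i}$ lies in $\cartan \oplus \mathfrak{u}^-$, the image of $\ALiou(\bvv)$ under the defining representation $\liealg \hookrightarrow \gl_n$ is, with respect to a basis of weight vectors for $\torus$ ordered compatibly with a maximal flag stabilized by $\borel^-$, lower triangular. Its diagonal entries are the values $\omega_k(\sum_{i=1}^l g_i(\bvv) H_i)$ for $k=1,\ldots,n$, where $\omega_1,\ldots,\omega_n$ are the weights of the representation counted with multiplicity. Each $\omega_k$ takes integer values on the coroot lattice basis $H_1,\ldots,H_l$, so each such entry is a $\Z$-linear combination of $g_1(\bvv),\ldots,g_l(\bvv)$. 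Since every $g_i(\bvv)$ lies in $\Z[\bvv]$ and is homogeneous of degree one by Theorem~\ref{thm:RobertzSeissNormalForms}, so is each of these $n$ diagonal entries.

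Finally, the lower triangular form equips the differential module of $\ALiou(\bvv)$ with a complete flag of $\partial$-invariant submodules, and the successive one-dimensional quotients carry the action $\partial - a_k$ where $a_k = \omega_k(\sum_j g_j(\bvv) H_j)$. Transporting the flag through the gauge equivalence and the cyclic vector isomorphism yields a maximal chain of right factors of $L_{\group}(\bss(\bvv),\partial)$, from which one reads off a factorization $L_{\group}(\bss(\bvv),\partial) = (\partial - a_n) \cdots (\partial - a_1)$ with each $a_i \in \field[\bvv]$ homogeneous of degree one, as required.

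The main obstacle is the last identification, namely checking that the first-order right factor corresponding to a one-dimensional composition factor of a module really has the form $\partial - a$ with $a$ equal to the scalar by which $\partial$ acts on that quotient. This is a general fact about cyclic differential modules over a $\partial$-field, but it must be invoked carefully because the particular order in which the factors $\partial - a_i$ appear depends on the chosen maximal flag, and the $a_i$ are only determined up to the choice of this ordering and of representatives for each composition factor. Since any admissible ordering produces coefficients in the $\Z$-span of $g_1(\bvv),\ldots,g_l(\bvv)$, the conclusion of the proposition is independent of these choices.
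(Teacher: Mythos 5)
There is a genuine gap at the final identification step. The ``general fact'' you invoke --- that the first-order right factor attached to a one-dimensional composition factor is $\partial-a$ with $a$ \emph{equal} to the scalar by which $\partial$ acts on that quotient --- is only true when that scalar is computed with respect to the basis vector of the quotient induced by the cyclic vector (equivalently, by the images of $1,\partial,\partial^2,\dots$ under the cyclic-vector isomorphism). You instead compute the scalars with respect to the weight basis, where they are the diagonal entries $\omega_k(\sum_i g_i(\bvv)H_i)$. Changing the basis vector of a rank-one quotient by $u\in\field\langle\bvv\rangle^{\times}$ shifts the corresponding coefficient by the logarithmic derivative $u'/u$, so from the composition-factor data alone the coefficients of the factorization of $L_{\group}(\bss(\bvv),\partial)$ are only determined modulo logarithmic derivatives of elements of $\field\langle\bvv\rangle$ --- and such corrections are in general neither polynomial nor homogeneous of degree one, and certainly not in the $\Z$-span of $g_1(\bvv),\dots,g_l(\bvv)$. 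Your closing sentence asserts exactly what needs to be proved. A toy example shows the danger: for the system given by $\begin{pmatrix} a_1 & f\\ 0 & a_2\end{pmatrix}$ with $f$ nonconstant, the operator annihilating the first coordinate is $(\partial-a_2-f'/f)(\partial-a_1)$, not $(\partial-a_2)(\partial-a_1)$.

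What rescues the statement, and what the paper's proof (Propositions~\ref{prop:factorization} and \ref{prop:factorizationG2} via Lemma~\ref{lem:factorization}) actually uses, is precisely the information your argument discards: in the chosen representations the gauge transform $\gauge{n(\overline{w})}{\ALiou(\bvv)}$ is bidiagonal with \emph{constant} superdiagonal entries $e_i\in\{\pm1,2\}$ (the nilpotent part $\sum_i c_iX_{\beta_i}$ of $\ALiou(\bvv)$ has constant coefficients), and the explicit row reduction of Lemma~\ref{lem:factorization} then shows that the first standard basis vector is cyclic with annihilator exactly a constant multiple of $(\partial-a_n)\cdots(\partial-a_1)$, so no logarithmic-derivative corrections arise for the cyclic vector defining the normal form operator. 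To repair your argument you would have to track the change of basis between the weight basis and the iterated derivatives of the cyclic vector and use the constancy of the $e_i$ --- which essentially reproduces the paper's computation; note also that the paper treats $\Gzwei$ separately by direct verification, since the uniform bidiagonal argument is not applied there.
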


\begin{proof}
See Propositions~\ref{prop:factorization} and \ref{prop:factorizationG2} in Appendix~\ref{sec:exponentialsol}.
\end{proof}

\section{The Gauge Transformation}\label{sec:gauge}

Let $K$ be a differential field with field of constants $\field$ and derivation $\partial_K$.
Two matrices $A_1$ and $A_2 \in \gl_n(K)$ are called \emph{gauge equivalent} over $K$ if there exists $g \in \GL_n(K)$ such that  
\begin{equation}\label{eqn:gaugeequiv}
\gauge{g}{A_1} \, := \, g A_1 g^{-1} + \partial_K(g)g^{-1} \, = \, A_2 \, . 
\end{equation}
Consider now the adjoint action $\Ad$ of $\group$ on $\liealg$, that is for $g \in \group$ the automorphism 
\[
\Ad(g)\colon \liealg \to \liealg, \quad X \mapsto gXg^{-1}  \, .
\]
Moreover, let
\[
\dlog\colon \GL_n(K) \to \gl_n(K), \quad g \mapsto \partial_k(g)g^{-1}
\]
be the  logarithmic derivative.
Then we have
\begin{equation}
   \gauge{g}{A_1} \, = \, \Ad(g)(A_1) + \dlog (g) \, = \, A_2\,,
\end{equation}
and so the following two remarks enable us to describe the gauge transformation of an element in $\liealg(K)$ by a root group element in terms of the root system of $\group$.  
\begin{remark}\label{remark3}
For linearly independent $\alpha$, $\beta \in \roots$ and $r, q \in \N$ let
$\alpha - r \beta, \dots ,\alpha + q \beta$ be the $\beta$-string through $\alpha$ and let 
$\langle \alpha, \beta \rangle$ be the Cartan integer. Then, with respect to the Chevalley basis defined in \eqref{eqn:Chevbasis}, we have \cite[Section~4.3]{Carter}
\begin{eqnarray*}
\Ad(u_{\beta}(x))(X_{\alpha}) & = & \sum\nolimits_{i=0}^q c_{\beta, \alpha,i} x^i X_{\alpha + i \beta}\,,\\
\Ad(u_{\beta}(x))(H_{\alpha}) & = & H_{\alpha} - \langle \alpha, \beta \rangle x X_{\beta}\,,\\
\Ad(u_{\beta}(x))(X_{-\beta}) & = & X_{-\beta} + x H_{\beta} - x^2  X_{\beta}\,,
\end{eqnarray*}
where $c_{\beta,\alpha,0} = 1$ and $c_{\beta,\alpha,i} = \pm \binom{r+i}{i}$.
\end{remark}

The next remark is taken from \cite{KovacicInvProb}.
\begin{remark}\label{remark4}
 Let $\group \leq \GL_n$ be a linear algebraic group. Then the restriction of $\dlog$ to $\group$ maps
 $\group(K)$ to its Lie algebra $\liealg(K)$, i.e., we have
 \begin{equation*}
  \dlog|_{\group}\colon \group(K) \to \liealg(K).
 \end{equation*}
\end{remark}

\part{The General Extension Field and the Standard Parabolic Subgroups}\label{part:II}

\section{The Fixed Field of a Standard Parabolic Subgroup}\label{sec:fixedfieldparabolic}
Since up to conjugation every reductive subgroup of $\group$ is contained $\levi_J$-irreducibly in the standard Levi group $\levi_J$ of a standard parabolic subgroup $\parabolic_J$, we investigate in this section the fixed field $\generalext^{\parabolic_J}$ of the general extension field under $\parabolic_J$.
It will turn out that it is differentially generated over $\difffield$ by $\bss(\bvv)$ and by the differential indeterminates of a uniquely determined subset of $\{ v_1,\dots, v_l \}$.  
We introduce the following notation which will be used throughout the paper.
\begin{definition}\label{def:partitions}
We denote by 
\[
I' \cup I'' \, = \, \{i_1, \dots, i_r \} \cup \{ i_{r+1},\dots, i_l \} 
\]
a partition of $I := \{ 1, \dots, l \}$ and we define the set  
\[
J \, = \, \{ j \in I \mid \alpha_j = \overline{w} \, (-\alpha_i) \ \text{for some} \ i \in I' \}\,.
\]
The partition $I' \cup I''$ defines a partition  
\[
\bvvbase \, = \, (v_{i_{r+1}}, \dots, v_{i_l}), \quad 
\bvvext \, = \, (v_{i_{1}}, \dots, v_{i_r})
\]
of the differential indeterminates $\bvv = (v_1,\dots,v_l)$.
\end{definition}

Recall from Section~\ref{sec:normalform} that the Galois action $\gamma_g$ for $g \in \group(\field)$ on the parameters $\bvv$, $\bff$, $\bexp$ and $\bint$ in the Bruhat decomposition of $\fm$ is induced by the Bruhat decomposition of $\fm \, g$. For a standard parabolic subgroup
\[
\parabolic_J \, = \, \bigcup_{w \in \weyl_J} \borel^- \, n(w) \, \borel^-
\]
the following lemma determines the action of $n(w_{\alpha_j}) \in N_{\group}(\torus)$ on the parameters for the simple reflections $w_{\alpha_j}$ with $j \in J$ which generate $\weyl_J$.

\begin{lemma}\label{lem:invariantreflections}
Let $\bxx=(x_1,\dots,x_m)$, $\bee=(e_1,\dots,e_l)$ and $\byy=(y_1,\dots,y_m)$ be indeterminates over $\field$. For a simple root $\alpha_j \in \rootbasis$ let $\alpha_i \in \rootbasis$ be the unique simple root such that $\overline{w} \, (-\alpha_i)=\alpha_j$. Then there exist $x \in \field(\bxx,\bee,\byy) \setminus \field (\bxx )$ and $b \in \borel^-(\field(\bxx,\bee,\byy))$ such that
\[
\buu(\bxx) \, n(\overline{w}) \, \btt(\bee) \, \buu(\byy) \, n(w_{\alpha_j}) \, = \, \buu(\bxx) \, u_{-\alpha_i}(x) \, n(\overline{w}) \, b\,.
\]
\end{lemma}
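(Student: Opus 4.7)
The plan is to cancel $\buu(\bxx)$ from both sides, leaving the identity
\[
n(\overline{w}) \, \btt(\bee) \, \buu(\byy) \, n(w_{\alpha_j}) \, = \, u_{-\alpha_i}(x) \, n(\overline{w}) \, b
\]
to prove, and to establish it by a rank-one reduction. The key Weyl-group identity I would use is $\overline{w} \, w_{\alpha_j} = w_{\alpha_i} \, \overline{w}$, which follows from the hypothesis $\overline{w}(-\alpha_i) = \alpha_j$ together with $\overline{w}^2 = 1$ and the conjugation formula $\overline{w} \, w_\alpha \, \overline{w}^{-1} = w_{\overline{w}(\alpha)}$. Lifting to representatives in the normalizer gives $n(\overline{w}) \, n(w_{\alpha_j}) = t_0 \, n(w_{\alpha_i}) \, n(\overline{w})$ for some $t_0 \in \torus(\field)$.

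First I would conjugate the factors $\btt(\bee)$ and $\buu(\byy)$ through $n(\overline{w})$, using $n(\overline{w}) \torus n(\overline{w})^{-1} = \torus$ and $n(\overline{w}) \unipotent^- n(\overline{w})^{-1} = \unipotent^+$, and absorb $t_0$, to rewrite the LHS as $\btt^{**}(\bee) \, \buu^+(\byy) \, n(w_{\alpha_i}) \, n(\overline{w})$ with $\btt^{**}(\bee) \in \torus$ and $\buu^+(\byy) \in \unipotent^+$. Since $\overline{w}(-\alpha_j) = \alpha_i$, exactly one factor of $\buu^+(\byy)$ lies in $U_{\alpha_i}$; using the Chevalley commutator relations among positive root groups I would rearrange to $\buu^+(\byy) = \buu_0^+ \cdot u_{\alpha_i}(Y)$, with $\buu_0^+$ supported on $\roots^+ \setminus \{\alpha_i\}$ and $Y \in \field(\bee,\byy)$ nonzero generically (its leading term in the coordinate $y_{j'}$ for $\beta_{j'} = -\alpha_j$ being $\pm y_{j'}$). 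The rank-one $SL_2$-identity in $\langle U_{\alpha_i}, U_{-\alpha_i}, T_i \rangle$,
\[
u_{\alpha_i}(Y) \, n(w_{\alpha_i}) \, = \, u_{-\alpha_i}(Y^{-1}) \, t_i(Y) \, u_{\alpha_i}(-Y^{-1}),
\]
then introduces the desired single $U_{-\alpha_i}$-factor.

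Next, substituting and pulling $u_{-\alpha_i}(Y^{-1})$ to the left---first past $\buu_0^+$ via the Chevalley commutation relations, then past $\btt^{**}$ via the character $-\alpha_i$ of $\torus$---produces a leftmost factor $u_{-\alpha_i}(x)$ for some $x \in \field(\bee,\byy) \setminus \{0\}$ depending on $\byy$ through $Y^{-1}$; in particular $x \in \field(\bxx,\bee,\byy) \setminus \field(\bxx)$. The remaining product between $u_{-\alpha_i}(x)$ and the trailing $n(\overline{w})$ will then lie in $\borel^+$, and conjugating it through $n(\overline{w})$ via $n(\overline{w})^{-1} \borel^+ n(\overline{w}) = \borel^-$ yields $b \in \borel^-(\field(\bxx,\bee,\byy))$ as required.

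The main obstacle I anticipate is the bookkeeping of the commutator terms produced when moving $u_{-\alpha_i}(Y^{-1})$ past $\buu_0^+$: these contribute factors in $U_{\gamma - k\alpha_i}$ for $\gamma \in \roots^+ \setminus \{\alpha_i\}$ and $k \geq 1$. The crucial point, ensuring that the remaining middle factor lies in $\borel^+$ and that no additional $U_{-\alpha_i}$ is introduced, is the sign-uniformity of root coordinates: if $\gamma - k\alpha_i$ is a root at all, then either its $\alpha_i$-coefficient remains non-negative (so $\gamma - k\alpha_i \in \roots^+$) or $\gamma$ itself is a multiple of $\alpha_i$, which is excluded by $\gamma \neq \alpha_i$. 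In particular $\gamma - k\alpha_i = -\alpha_i$ would force $\gamma = (k-1)\alpha_i$, again excluded, so no extra $u_{-\alpha_i}$-factor can appear and the form of the left factor as a single $u_{-\alpha_i}(x)$ is forced.
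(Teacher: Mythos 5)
Your argument is correct, and it reaches the lemma by a route that is organized differently from the paper's. After cancelling $\buu(\bxx)$ (legitimate, since the requirement $x \in \field(\bxx,\bee,\byy)\setminus\field(\bxx)$ then reduces to producing a non-constant $x \in \field(\bee,\byy)$, which your explicit dependence on $y_j$ through $Y^{-1}$ gives, using that the blocks of indeterminates are algebraically independent), you push everything to the positive side at once: you use $\overline{w}\,w_{\alpha_j}\,\overline{w}^{-1} = w_{\alpha_i}$ to replace $n(\overline{w})\,n(w_{\alpha_j})$ by a torus element times $n(w_{\alpha_i})\,n(\overline{w})$, isolate the single $\unipotent_{\alpha_i}$-component of $n(\overline{w})\,\buu(\byy)\,n(\overline{w})^{-1}$ (parameter a nonzero constant times $y_j$, since $\alpha_i$ is simple and hence no commutator terms land in $\unipotent_{\alpha_i}$), apply the rank-one Bruhat identity for $\alpha_i$, and then commute the resulting $u_{-\alpha_i}$-factor leftward past $\unipotent^+$ and the torus. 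The paper instead keeps $\buu(\bxx)$ in play: it splits $\unipotent^- = \unipotent_{-\alpha_j}\,\unipotent^-_{w_{\alpha_j}}$, commutes $\btt(\bee)\,\buu(\byy)$ through $n(w_{\alpha_j})$ to extract $u_{-\alpha_j}(1/x)\,n(w_{\alpha_j})$ times an element of $\borel^-$, performs the $\SL_2$ computation in the $\alpha_j$ rank-one subgroup between this factor and the $\unipotent_{\alpha_j}$-component (parameter $x_i$) of $n(\overline{w})^{-1}\buu(\bxx)\,n(\overline{w})$, and finally refactors $u_{-\alpha_i}(x_i + x) = u_{-\alpha_i}(x_i)\,u_{-\alpha_i}(x)$ to reassemble $\buu(\bxx)$. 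Your version avoids the additive refactoring and any interaction with $\buu(\bxx)$, at the cost of the extra leftward commutation of $u_{-\alpha_i}$, which you justify correctly; the paper's version avoids that commutation but must track the $x_i$-component explicitly. Two small points to tighten: the positivity of the commutator roots $a\gamma - b\alpha_i$ should be argued from the coefficients at simple roots \emph{other than} $\alpha_i$ (they are inherited from $\gamma\in\roots^+\setminus\{\alpha_i\}$, hence non-negative and not all zero, so sign-uniformity forces positivity) rather than from the $\alpha_i$-coefficient; and the rank-one identity holds only up to signs and a torus factor depending on the chosen representative $n(w_{\alpha_i})$, which is harmless since the lemma only asserts existence of $x$ and $b$.
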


\begin{proof}
For the given simple root $\alpha := \alpha_j \in \rootbasis$ the maximal unipotent group 
$\unipotent^-$ can be written as
\begin{equation}\label{eq:decompositionUneg}
\unipotent^- \, = \, \unipotent_{-\alpha} \, \unipotent^-_{w_{\alpha}} \, = \, \unipotent^-_{w_{\alpha}} \, \unipotent_{-\alpha}\,,
\end{equation}
where $U^-_{w_{\alpha}} := \unipotent^- \cap n(w_{\alpha}) \, \unipotent^- \, n(w_{\alpha})^{-1}$,
by \cite[end of 28.1]{HumGroups}, exchanging the roles played by the positive and negative roots. 
Moreover, the subgroups $\unipotent^-_{w_{\alpha}}$ and $\torus$ are normalized by 
$n(w_{\alpha})$  
and so we obtain
\[
\begin{array}{rcl}
\unipotent^- \, n(\overline{w}) \, \borel^- \, n(w_{\alpha})
\! & \! = \! & \! \unipotent^- \, n(\overline{w}) \, \torus \, \unipotent^- \,  n(w_{\alpha}) \, = \, \unipotent^- \, n(\overline{w}) \, \unipotent^- \, \torus \, n(w_{\alpha})\\[0.5em]
\! & \! = \! & \!
\unipotent^- \, n(\overline{w}) \, \unipotent^- n(w_{\alpha}) \, \torus \, = \, \unipotent^- \, n(\overline{w}) \, \unipotent_{-\alpha} \, \unipotent^-_{w_{\alpha}} \, n(w_{\alpha}) \, \torus\\[0.5em]
\! & \! = \! & \!
\unipotent^- \, n(\overline{w}) \, \unipotent_{-\alpha} \, n(w_{\alpha}) \, \unipotent^-_{w_{\alpha}} \, \torus\,.   
\end{array}
\]
Applying this to the product of $\buu(\bxx) \, n(\overline{w}) \, \btt(\bee) \, \buu(\byy)$ with $n(w_{\alpha})$, we obtain 
\begin{equation}\label{eq:fundmatrixlevel}
\begin{array}{rcl}
\! & \! \! & \! \buu(\bxx) \, n(\overline{w}) \, \btt(\bee) \, \buu(\byy) \, n(w_{\alpha}) \\[0.5em]
\! & \! = \! & \!
\buu(\bxx) \, n(\overline{w}) \, u_2 \, t_2 \, n(w_{\alpha}) = \,
 \buu(\bxx) \, n(\overline{w}) \, u_2 \, n(w_{\alpha}) \, t_3\\[0.5em]
\! & \! = \! & \! \buu(\bxx) \, n(\overline{w}) \, u_{-\alpha} \, u_{w_{\alpha}} \, n(w_{\alpha}) \, t_3 \, = \,
  \buu(\bxx) \, n(\overline{w}) \, u_{-\alpha} \, n(w_{\alpha}) \, \widetilde{u}_{w_{\alpha}} \, t_3\,,
\end{array}
\end{equation}
where $t_2$, $t_3 \in \torus$, $u_2 = u_{-\alpha} u_{w_{\alpha}} \in \unipotent^-$ with $u_{-\alpha} \in \unipotent_{-\alpha}$, $u_{w_{\alpha}} \in \unipotent^-_{w_{\alpha}}$ and $\widetilde{u}_{w_{\alpha}} \in U^-_{w_{\alpha}}$.
Since we only rewrote the product 
\[
\btt(\bee) \, \buu(\byy) \, n(w_{\alpha}) \, ,
\] 
whose factors are parametrized by $\bee$ and $\byy$, we conclude that the parameter of $u_{-\alpha}$ lying in the one-parameter subgroup $U_{-\alpha}$ is an element of $\field(\bxx,\bee,\byy) \setminus \field(\bxx)$.
In the last product of \eqref{eq:fundmatrixlevel} we investigate further the factor $\buu(\bxx ) \, n(\overline{w}) \, u_{-\alpha} \, n(w_{\alpha})$. From \cite[end of 28.1]{HumGroups} (now without exchanging the roles played by the positive and negative roots) it follows that on group level we have
\begin{equation}\label{eq:grouplevel}
\unipotent^- \, n(\overline{w}) \, \unipotent_{-\alpha} \, n(w_{\alpha}) \, = \, n(\overline{w}) \, \unipotent^+ \, \unipotent_{-\alpha} \, n(w_{\alpha}) \, = \, n(\overline{w}) \, \unipotent_{w_{\alpha}} \, \unipotent_{\alpha} \, \unipotent_{-\alpha} \, n(w_{\alpha})
\end{equation}
with $n(\overline{w})^{-1} \, \unipotent^- \, n(\overline{w}) = \unipotent^+$ and $\unipotent^+ = \unipotent_{w_{\alpha}}^+ \, \unipotent_{\alpha}$ and $U^+_{w_{\alpha}} := \unipotent^+ \cap n(w_{\alpha}) U^+n(w_{\alpha})^{-1}$.  
Since $\unipotent_{\alpha} \, \unipotent_{-\alpha} \, n(w_{\alpha})$ is contained in the centralizer $C_{\group}(\kernel(\alpha))$ (the root $\alpha$ is here considered as the map $\alpha: \torus \to \field^{\times}$), the computation in $\SL_2$
\begin{equation}\label{eq:sl2computation}
\begin{pmatrix} 1 & a_1 \\ 0 & 1 \end{pmatrix} \begin{pmatrix} 1 & 0 \\ a_2 & 1 \end{pmatrix}
\begin{pmatrix} 0 & 1 \\ -1 & 0 \end{pmatrix} = 
\begin{pmatrix} 1 & \frac{a_1 a_2+1}{a_2} \\ 0 & 1 \end{pmatrix}
\begin{pmatrix} \frac{1}{a_2} & 0 \\ -1 & a_2 \end{pmatrix} 
\end{equation}
with $a_1$, $a_2 \in \field(\bxx,\bee,\byy)$ and $a_2 \neq 0$
combined with \eqref{eq:grouplevel} implies that there exist 
$u_+ \in \unipotent^+$ and $u_{w_{\alpha}} \in \unipotent^+_{w_{\alpha}}$, $u_{\alpha} \in \unipotent_{\alpha}$
with $u_+=u_{w_{\alpha}} u_{\alpha}$ and $\widetilde{u}_{\alpha} \in \unipotent_{\alpha}$ and $b \in \borel^-$ 
such that
\begin{equation}\label{eq:importantpart}
\begin{array}{rcl}
\buu(\bxx) \, n(\overline{w}) \, u_{-\alpha} \, n(w_{\alpha}) \! & \! = \! & \!
n(\overline{w}) \, u_+ \, u_{-\alpha} \, n(w_{\alpha})\\[0.5em]
\! & \! = \! & \!  
 n(\overline{w}) \, u_{w_{\alpha}} \, u_{\alpha} \, u_{-\alpha} \, n(w_{\alpha}) \, = \, n(\overline{w}) \, u_{w_{\alpha}} \, \widetilde{u}_{\alpha} \, b\,,
\end{array}
\end{equation}
where $\widetilde{u}_{\alpha} \, b =  u_{\alpha} \, u_{-\alpha} \, n(w_{\alpha})$ according to \eqref{eq:sl2computation}.
The longest Weyl group element $\overline{w}$ induces a bijection between $\rootbasis$ and $\rootbasis^-$ and so there exists a unique $-\alpha_i \in \rootbasis^-$ such that $\overline{w}(-\alpha_i) = \alpha$. Thus, from the decomposition
\[
\buu(\bxx ) \, = \, u_{w_{\alpha_i}} u_{-\alpha_i}(x_i) \, = \,
\Big( \prod_{-\alpha_k \in \rootbasis^- \setminus \{ -\alpha_i \}} u_{-\alpha_k}(x_k) \prod_{\scriptsize \begin{array}{c} \beta \in \roots^- \\ \height(\beta)< -1 \end{array}} u_{\beta} \Big) \, u_{-\alpha_i}(x_i)\,,
\]
which we obtain from applying \eqref{eq:decompositionUneg} for the simple root $\alpha_i$ to $\buu(\bxx )$, 
it follows that
\[
\begin{array}{rcl}
n(\overline{w})^{-1} \, \buu(\bxx ) \, n(\overline{w})
\! & \! = \! & \! u_+ \, = \, u_{w_{\alpha}} \, u_{\alpha}(x_i)\\[0.75em]
\! & \! = \! & \! \displaystyle
\Big( \prod_{  \alpha_{j_k} \in \rootbasis \setminus \{ \alpha \} } u_{\alpha_{j_k}}(x_k) \prod_{\scriptsize \begin{array}{c} \beta \in \roots^+ \\ \height(\beta)> 1 \end{array}} u_{\beta} \Big) \, u_{\alpha}(x_i),
\end{array}
\]
where $\overline{w} \, (-\alpha_k) = \alpha_{j_k}$ and $u_{w_{\alpha_i}} \in \unipotent^-_{w_{\alpha_i}}$.
Since the parameter of $u_{\alpha}$ is $x_i$ and the parameter of $u_{-\alpha}$ is an element of $\field(\bxx,\bee,\byy) \setminus \field(\bxx)$, say $1/x$, it follows from \eqref{eq:sl2computation} that the parameter $\widetilde{x}$ of $\widetilde{u}_{\alpha}$ is 
\[
\widetilde{x} \, = \, x_i+x .
\]
Further developing \eqref{eq:importantpart} by inserting $n(\overline{w})^{-1} n(\overline{w})$ between $\widetilde{u}_{\alpha}$ and $b$ we obtain
\[
\begin{array}{l}
\buu(\bxx ) \, n(\overline{w}) \, u_{-\alpha} \, n(w_{\alpha}) \, = \\[0.75em]
\qquad\qquad
\displaystyle
\Big( \prod_{-\alpha_k \in \rootbasis^- \setminus \{ -\alpha_i \}} u_{-\alpha_k}(x_k)  \prod_{\scriptsize \begin{array}{c} \beta \in \roots^- \\ \height(\beta) < -1 \end{array}} u_{\beta} \Big) \, u_{-\alpha_i}(\widetilde{x}) \, n(\overline{w}) \, b \,.
\end{array}
\]
Finally, combining \eqref{eq:fundmatrixlevel} and the last equation together with
\[
u_{-\alpha_i} (\widetilde{x}) \, = \, u_{-\alpha_i}(x_i) \, u_{-\alpha_i}(x)\,,
\]
we obtain for some $\widetilde{b} \in \borel^-$ the decomposition
\[ 
\buu(\bxx) \, n(\overline{w}) \, \btt(\bee) \, \buu(\byy) \, n(w_{\alpha}) \, = \, \buu(\bxx) \, u_{-\alpha_i}(x) \, n(\overline{w}) \, \widetilde{b}\,. 
\]
\end{proof}

\begin{theorem}\label{thm:fixedfieldparabolic}
Let $\parabolic_J$ be a standard parabolic subgroup.
According to Definition~\ref{def:partitions}
the set $J$ uniquely determines the partition
\[ 
I' \cup I'' \, = \, \{i_1, \dots, i_r\} \cup \{ i_{r+1}, \dots, i_l \} 
\]
of $I = \{ 1, \dots, l \}$.
Let  
\[
q \, := \, |\roots^- \setminus \langle \alpha_{i_1},\dots, \alpha_{i_r} \rangle_{\Z-\mathrm{span}}|.
\]
Then there exist elements $\bpp=(p_1,\dots,p_q)$ with $p_i \in \field[\bvv,\bff] \subset \field\{\bvv \}$ such that
\[
\generalext^{\parabolic_J} \, = \, \difffield\langle \bss(\bvv)\rangle ( \bpp ) .
\]
Moreover, we may choose the first $l-r$ entries
of $\bpp$ to be the indeterminates 
$\bvvbase = (v_{i_{r+1}},\dots, v_{i_l})$.
The indeterminates $\bvvext=(v_{i_{1}},\dots, v_{i_r})$ are not fixed by $\parabolic_J$ and $\parabolic_J$ is the largest subgroup of $\group$ fixing the indeterminates $\bvvbase$.
\end{theorem}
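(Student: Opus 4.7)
The plan is to exploit the inclusion $\borel^{-} \subseteq \parabolic_J$ and characterize $\generalext^{\parabolic_J}$ as a subfield of $\generalext^{\borel^{-}}$. First I identify $\generalext^{\borel^{-}}$: by the uniqueness in the Bruhat decomposition (Theorem~\ref{thm:bruhat2}), right multiplication of $\fm$ by an element of $\borel^{-}(\field)$ leaves the factor $\buu(\bvv,\bff)\, n(\overline{w})$ unchanged, hence every $v_i$ and every $f_k$ lies in $\generalext^{\borel^{-}}$. The Picard--Vessiot dimension formula forces $\trdeg(\generalext^{\borel^{-}}/\difffield\langle \bss(\bvv)\rangle) = \dim(\group/\borel^{-}) = m$, matching the $m$ parameters, so $\generalext^{\borel^{-}} = \difffield\langle \bss(\bvv)\rangle(\bvv,\bff)$. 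Because $\parabolic_J = \bigcup_{w \in \weyl_J} \borel^{-} n(w) \borel^{-}$ and $\weyl_J = \langle w_{\alpha_j} : j \in J\rangle$, an element of $\generalext^{\borel^{-}}$ lies in $\generalext^{\parabolic_J}$ if and only if it is fixed by the automorphism induced by each $n(w_{\alpha_j})$ with $j \in J$ (the torus discrepancies between $n(w_1 w_2)$ and $n(w_1)n(w_2)$ are absorbed by $\borel^{-}$-invariance).

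Next I analyze the action of a single $n(w_{\alpha_j})$ on $\bvv,\bff$ via Lemma~\ref{lem:invariantreflections}: it multiplies $\buu(\bvv,\bff)$ on the right by $u_{-\alpha_i}(x)$ with $i \in I'$ the unique index satisfying $\overline{w}(-\alpha_i) = \alpha_j$. Since the abelianization $\unipotent^{-}/[\unipotent^{-},\unipotent^{-}]$ collects only the simple-root quotients -- all Chevalley commutators $[u_{\beta_k}, u_{\beta_i}]$ produce corrections in root groups $U_{a\beta_k+b\beta_i}$ with $a,b \geq 1$, hence of absolute height at least two -- the projection of $\buu(\bvv,\bff)$ to $\bigoplus_{k=1}^{l} U_{\beta_k}$ is $(v_1,\dots,v_l)$, and right multiplication by $u_{\beta_i}(x)$ shifts only the $i$-th coordinate by $x$. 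Thus in the canonical re-factorization $\buu(\bvv,\bff)\,u_{\beta_i}(x) = \buu(\bvv',\bff')$ one has $v'_i = v_i + x$, $v'_k = v_k$ for all $k \leq l$ with $k \neq i$, and only the non-simple-root parameters $f_k$ are modified. Consequently $v_k$ for every $k \in I''$ is fixed by every $n(w_{\alpha_j})$ with $j \in J$, whence $\bvvbase \subseteq \generalext^{\parabolic_J}$; on the other hand, the guaranteed shift $x \in \field(\bxx,\bee,\byy) \setminus \field(\bxx)$ is nonzero in $\generalext$, so every $v_i$ with $i \in I'$ fails to be fixed, proving $\bvvext \not\subseteq \generalext^{\parabolic_J}$.

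To complete the generating description, the Fundamental Theorem gives $\trdeg(\generalext^{\parabolic_J}/\difffield\langle \bss(\bvv)\rangle) = \dim(\group/\parabolic_J) = |\roots^{-}| - |\leviroots^{-}| = q$. Inspection of the re-factorization above shows that the shifts of each $f_k$ are polynomial combinations of $\bvv,\bff$ with coefficient $x$; iteratively correcting each non-invariant $f_k$ by suitable polynomials in the previously-constructed invariant coordinates yields $q - (l-r)$ additional polynomial invariants in $\field[\bvv,\bff]$, which one may view as pulled back from the coordinates on the big cell $R_u(\parabolic_J^{\mathrm{op}}) \cong \mathbb{A}^q$ of $\group/\parabolic_J$. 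Together with $\bvvbase$, these form a tuple $\bpp = (p_1,\dots,p_q) \subset \field[\bvv,\bff]$ with $\generalext^{\parabolic_J} = \difffield\langle \bss(\bvv)\rangle(\bpp)$ and with the first $l-r$ entries equal to $\bvvbase$.

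For the final claim, if a closed subgroup $H \leq \group$ fixes $\bvvbase$ pointwise, then so does the closed subgroup $\langle H, \borel^{-}\rangle$; by the classical fact that closed subgroups of $\group$ containing $\borel^{-}$ are precisely the standard parabolic subgroups, this equals $\parabolic_{J'}$ for some $J' \subseteq \{1,\dots,l\}$. By the first part of the theorem, $J \subseteq J'$; if the inclusion were strict, then $\parabolic_{J'}$ would contain $n(w_{\alpha_{j_0}})$ for some $j_0 \in J' \setminus J$, corresponding to an index $i_0 \in I''$ via $\overline{w}(-\alpha_{i_0}) = \alpha_{j_0}$, and the commutator analysis would force $v_{i_0}$ to be shifted, contradicting the hypothesis. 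Hence $J' = J$ and $H \leq \parabolic_J$. \textbf{The main obstacle} is the commutator bookkeeping in the second paragraph: one must verify that the re-factorization $\buu(\bvv,\bff)\,u_{\beta_i}(x) = \buu(\bvv',\bff')$ alters only the single simple-root coordinate $v_i$, which rests on the chosen ordering of negative roots (simple first, then by absolute height) and on the height bound $|\height(a\beta_i+b\beta_k)| \geq 2$ for $a,b \geq 1$. Constructing the additional $q-(l-r)$ invariants in $\field[\bvv,\bff]$, rather than merely in the ambient field $\difffield\langle \bss(\bvv)\rangle(\bvv,\bff)$, requires a further explicit cancellation analysis using the Chevalley structure constants for non-simple negative roots.
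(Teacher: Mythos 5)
Your overall skeleton matches the paper's: you use Lemma~\ref{lem:invariantreflections} to reduce the Galois action of $\parabolic_J$ on $(\bvv,\bff)$ to right translation of $\buu(\bvv,\bff)$ by elements of the unipotent group generated by the simple negative root groups $U_{\beta_{i_1}},\dots,U_{\beta_{i_r}}$, you use the height/commutator argument to see that $\bvvbase$ is fixed while each $v_i$ with $i\in I'$ is shifted by a nonzero element, and you close with the transcendence-degree count from the Fundamental Theorem. Your treatment of the final claim (the pointwise fixer of $\bvvbase$ contains $\borel^-$, hence is a standard parabolic $\parabolic_{J'}$, and $J'\subseteq J$ by the shift argument) is a legitimate alternative to the paper's Bruhat-decomposition argument for arbitrary $g\notin\parabolic_J$; your parenthetical claim $J'=J$ is not needed and not true in general, but $J'\subseteq J$ suffices.

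The genuine gap is exactly where you flag it: the existence of the remaining $q-(l-r)$ invariants $p_i\in\field[\bvv,\bff]$, together with their algebraic independence over $\difffield\langle\bss(\bvv)\rangle$, which is the heart of the theorem. Your ``iteratively correcting each non-invariant $f_k$'' is only a heuristic: the shifts produced by the Galois action are specific elements of $\generalext$ depending on $\bexp,\bint$, whereas the correction procedure must produce identities in $\field[\bvv,\bff]$ invariant under translation by the whole group $\widetilde{U}=\langle U_{\beta_{i_1}},\dots,U_{\beta_{i_r}}\rangle$ with free parameters; when $|I'|>1$ this group is non-abelian and contains non-simple root groups, so the corrections must be made coherently, which is precisely the construction of a section of $\unipotent^-\to\unipotent^-/\widetilde U$ that you defer. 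The paper fills this with invariant theory: since $\unipotent^-$ and $\widetilde U$ are unipotent in characteristic zero, the quotient $\unipotent^-/\widetilde U$ is affine and isomorphic to $\field^q$, so $\field[\unipotent^-]^{\widetilde U}$ is a polynomial ring in $q$ algebraically independent generators; base-changing to $\generalext$ and evaluating at $\buu(\bvv,\bff)$ gives $\bpp$, and the algebraic independence of the $p_i$ over $\difffield\langle\bss(\bvv)\rangle$ is then proved via the ring isomorphism \eqref{eqn:isomforgroupaction} before the transcendence-degree comparison is invoked. Note also that your degree count alone does not finish the proof: without knowing that $p_1,\dots,p_q$ are algebraically independent over $\difffield\langle\bss(\bvv)\rangle$, the subfield $\difffield\langle\bss(\bvv)\rangle(\bpp)$ could have transcendence degree smaller than $q$, so this independence statement must be supplied, as the paper does.
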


\begin{proof}
Generally, if $u_1 \, n(w) \, b_1$ is the Bruhat decomposition of an element of $\group$, then the Bruhat decomposition after multiplication on the right with an element $b_2 \in \borel^-$ is $u_1 \, n(w) \, b_3$ with $b_3 = b_1 \, b_2$.
Hence, the factor $u_1$ does not change.

Let 
\[
b_1 \, n(w) \, b_2 \in \parabolic_J \, = \, \bigcup_{w \in \weyl_J} \borel^- \, n(w) \, \borel^-
\]
with $b_1$, $b_2 \in \borel^-$ and $w \in \weyl_J = \langle w_{\alpha_j} \mid j \in J \rangle$.
Moreover, let $\widetilde{U}$ be the unipotent subgroup of $\unipotent^-$ generated by the root groups 
\[
\unipotent_{\beta_{i_1}}, \, \dots, \, \unipotent_{\beta_{i_r}} \,  
\] 
corresponding to the simple negative roots $\beta_{i_1},\dots,\beta_{i_r}$.
Then the above observation and Lemma~\ref{lem:invariantreflections}, applied to each factor in the product of simple reflections for $w$ separately, imply that
\begin{eqnarray*}
\fm \, b_1 \, n(w) \, b_2 
\! & \! = \! & \! \buu(\bvv, \bff) \, n(\overline{w}) \, \btt(\bexp) \, \buu(\bint)  \, b_1 \, n(w) \, b_2\\
\! & \! = \! & \! \buu(\bvv, \bff) \, n(\overline{w}) \, b_3 \, n(w) \, b_2\\
\! & \! = \! & \! \buu(\bvv,\bff) \, u \, n(\overline{w}) \, b_4 \, b_2\\
\! & \! = \! & \! \buu(\bvv, \bff) \, u \, n(\overline{w}) \, b_5\,,
\end{eqnarray*}
where $b_3 = \btt(\bexp) \, \buu(\bint) \, b_1$ and $u \in \widetilde{U}(\generalext)$ obtained from applying successively Lemma~\ref{lem:invariantreflections} to the factors $n(w_{\alpha_j})$ with $j \in J$ in the product expression for $n(w)$, $b_4 \in \borel^-$ and $b_5 = b_4 \, b_2$. 
Thus the Galois action of $b_1 \, n(w) \, b_2$ on $\bvv$ and $\bff$ is determined by the Bruhat decomposition of the product
\[
\buu(\bvv, \bff) \, u \, .  
\]
Since $\widetilde{U}$ is generated by the root groups corresponding to the negative simple roots $\beta_{i_1},\dots,\beta_{i_r}$, the elements $\bvvbase=(v_{i_{r+1}},\dots,v_{i_l})$ are among the invariants of this action and the elements $\bvvext=(v_{i_1},\dots,v_{i_r})$ are not. First we are going to determine the invariants for the action of $\widetilde{U}(\field)$ on $\unipotent^-(\field)$.
Therefore, let $\field[\unipotent^-]$ be the coordinate ring of $\unipotent^-(\field)$. Since $\unipotent^-(\field)$ is unipotent, the quotient $(\unipotent^-/\widetilde{U})(\field)$ is affine by \cite[II.6, Corollary~6.9 (b)]{Borel}. 
 Moreover $(\unipotent^-/\widetilde{U})(\field)$ is isomorphic to $\field^{q}$ and so the ring of invariants 
\[
\field[\unipotent^-/\widetilde{U}] \, \cong \, \field[\unipotent^-]^{\widetilde{U}(\field)}
\]
is generated by $q$ algebraically independent elements $\widetilde{\bpp}=(\widetilde{p}_1,\dots,\widetilde{p}_q)$. We extend now the field of definition from $\field$ to $\generalext$, that is we consider now the action of $\widetilde{U}(\generalext)$ on 
$\unipotent^-(\generalext)$.
Since $\generalext[\widetilde{U}]\cong \generalext \otimes_{\field} \field[\widetilde{U}]$, we have 
\[
(\field[\unipotent^-] \otimes_{\field} \generalext)^{\widetilde{U}(\generalext)} \, \cong \, \field[\unipotent^-]^{\widetilde{U}(\field)} \otimes_{\field} \generalext\,,
\]
i.e., the invariant ring for the action of $\widetilde{U}(\generalext)$ on $\unipotent^-(\generalext)$ is generated by the 
$q$ algebraically independent polynomials 
\[
\widetilde{p}_1 \otimes 1,\dots,\widetilde{p}_q \otimes 1 \in \field[\unipotent^-]^{\widetilde{U}(\field)}\otimes_{\field} \generalext \, .
\]
For the point $\buu(\bvv,\bff) \in \unipotent^-(\generalext)$, the invariants have constant values on the orbit of $\buu(\bvv,
\bff)$ under $\widetilde{U}(\generalext)$.
Evaluating the above invariants $\widetilde{p}_1,\dots,\widetilde{p}_q$ at 
$\buu(\bvv,\bff)$ we obtain $\bpp=(p_1,\dots,p_q)$ with $p_i \in \field[ \bvv,\bff ] \subset \generalext$ which are 
invariant under the action of $\widetilde{U}(\generalext)$.
Since $\bvvbase$ are generators of the polynomial ring $\field[\bvv,\bff]$ and they are invariant, we can choose the first components of $\bpp$ to be these elements. We  show that $\bpp$ are algebraically independent over 
$\difffield\langle \bss(\bvv)\rangle$. 
The differential field extension $\generalext^{\borel^-}=\difffield\langle \bss(\bvv)\rangle(\bvv,\bff)$ of $\difffield
\langle \bss(\bvv)\rangle$ has transcendence degree $m$ by the Fundamental Theorem of Differential Galois Theory and so the $m$ elements $\bvv,\bff$ 
are algebraically independent over $\difffield\langle \bss(\bvv)\rangle$. Thus $\bvv,\bff$ are algebraically 
independent over $\difffield$ and so the ring homomorphism 
\begin{equation}\label{eqn:isomforgroupaction}
\begin{array}{rcl}
\difffield \otimes_{\field} \field[\unipotent^-] = \difffield \otimes_{\field} \field[\coord, \det(\coord)^{-1}] / 
I_{\unipotent^-} \! & \! \to \! & \! \difffield[\bvv ,\bff]\,,\\[0.5em]
\overline{\coord}_{i,j} := \coord_{i,j} + I_{\unipotent^-} \! & \! \mapsto \! & \! (\buu(\bvv,\bff))_{i,j}
\end{array}
\end{equation}
is a ring isomorphism, where $I_{\unipotent^-}$ is the defining ideal in $\field [\coord, \det(\coord)^{-1}]$ of $\unipotent^-$.
Since 
$\widetilde{p}_1,\dots,\widetilde{p}_q$ are algebraically independent over $\difffield$, their images $p_1,\dots,p_q$ 
are also algebraically independent over $\difffield$.
Assume that there exists 
\[
P(Z_1,\dots,Z_q) \in \difffield \{ \bss(\bvv)\}[Z_1,\dots,Z_q]
\] 
such that $P(p_1,\dots,p_q)=0$. Since the elements $\bss(\bvv)$ are algebraically independent over $\difffield[Z_1,\dots,Z_q]$, the 
coefficients in $\difffield[Z_1,\dots,Z_q]$ of $P(Z_1,\dots,Z_q)$ considered as a polynomial in $\bss(\bvv)$ have to 
vanish when substituting $p_1,\dots,p_q$ for $Z_1,\dots,Z_q$. Since $p_1,\dots ,p_q$ are algebraically independent over 
$\difffield$, these coefficients are the zero polynomials implying that $P(Z_1,\dots,Z_q)$ is also the zero polynomial.

By the Fundamental Theorem of Differential Galois Theory, the transcendence degree of $\generalext^{\parabolic_J}$ over $\difffield\langle \bss(\bvv) \rangle$ is
\[
2m+l- \dim(\parabolic_J) \, = \, 2m+l-(m+l+(m-q)) \, = \, q\,.
\]
Since the transcendence degree of $\difffield\langle \bss(\bvv) \rangle (\bpp)$ over $\difffield\langle \bss(\bvv) \rangle$ is $q$ and $\difffield\langle \bss(\bvv) \rangle (\bpp) \subset \generalext^{\parabolic_J}$,  
we conclude that $\generalext^{\parabolic_J}=\difffield\langle \bss(\bvv) \rangle (\bpp)$ (note that this implies that $\difffield\langle \bss(\bvv) \rangle (\bpp)$ is a differential field, i.e., all derivatives of the elements in 
$\bpp$ are in $\difffield\langle \bss(\bvv) \rangle (\bpp)$).

Let $g \in \group \setminus \parabolic_J$. Then by Theorem~\ref{thm:bruhat2}, formulated for $\borel^-$ instead of $\borel^+$, there is a unique $w \in \weyl \setminus \weyl_J$ such that 
\[
g \in (\unipotent^-_{w})' \, n(w) \, \borel^-\,.
\]
Since $w \notin \weyl_J$, there is at least one simple reflection $w_{\alpha_i}$ appearing in the product of simple reflections for $w$ with $i \notin J$.
Let $\alpha_s \in \Delta$ such that $\overline{w}(-\alpha_s)=\alpha_i$. Since $\overline{w}$ induces a bijection between $\rootbasis$ and $\rootbasis^-$, we conclude that $s \notin I'$ and so $s \in I''$. We conclude with Lemma~\ref{lem:invariantreflections} that the indeterminate $v_s$ is not fixed by $g$ and so $\parabolic_J$ is the largest group fixing $\bvvbase$.
\end{proof}

\begin{remark}\label{rem:correspondencepartitionparabolic}
We have the one-to-one correspondences 
\[
\left\{ \begin{array}{c}
\text{partitions}\\
I = I' \cup I''
\end{array} \right\}
\stackrel{1:1}{\longleftrightarrow}
\left\{ \begin{array}{c}
\text{standard parabolic}\\
\text{subgroups} \, \parabolic_J
\end{array} \right\}
\stackrel{1:1}{\longleftrightarrow}
\left\{ \begin{array}{c}
 \bvvbase \, \text{fixed by} \, \parabolic_J \,  \text{and }    \\
 \bvvext \, \text{not fixed by} \, \parabolic_J
\end{array} \right\}
\]
The first correspondence follows from the definition of the partition $I' \cup I''$ of $I$ and the set $J$. 
The second correspondence is a consequence of Theorem~\ref{thm:fixedfieldparabolic}, since it states that the 
fixed field $\difffield\langle \bss(\bvv) \rangle (\bpp)$ of $\parabolic_J$ contains the differential indeterminates $\bvvbase$ and does not contain the differential indeterminates $\bvvext$. 
\end{remark}

\begin{corollary}\label{cor:fixedfieldgenbyvbase}
We have $\difffield \langle \bss(\bvv)\rangle (\bpp)=\difffield \langle \bss(\bvv), \bvvbase \rangle$.
\end{corollary}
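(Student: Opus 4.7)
The plan is to deduce the equality from Theorem~\ref{thm:fixedfieldparabolic} together with the Fundamental Theorem of Differential Galois Theory, without ever having to identify the remaining components $p_{l-r+1}, \ldots, p_q$ of $\bpp$ explicitly.

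First I would establish the inclusion $\difffield\langle \bss(\bvv), \bvvbase \rangle \subseteq \difffield\langle \bss(\bvv) \rangle(\bpp)$. By Theorem~\ref{thm:fixedfieldparabolic} the tuple $\bvvbase$ occurs among the first $l-r$ entries of $\bpp$, and $\bss(\bvv)$ is fixed by all of $\group(\field)$, hence in particular by $\parabolic_J$. Both $\bss(\bvv)$ and $\bvvbase$ therefore lie in $\generalext^{\parabolic_J} = \difffield\langle \bss(\bvv) \rangle(\bpp)$, which is a differential field, so all their derivatives do as well, giving the inclusion.

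For the reverse inclusion I would invoke the Galois correspondence. Let $H \leq \group(\field)$ denote the closed subgroup corresponding to the intermediate differential field $\difffield\langle \bss(\bvv), \bvvbase \rangle$, i.e., the group of differential $\difffield\langle \bss(\bvv), \bvvbase \rangle$-automorphisms of $\generalext$. Since $\parabolic_J$ fixes both $\bss(\bvv)$ (as part of $\group(\field)$) and $\bvvbase$ (by Theorem~\ref{thm:fixedfieldparabolic}), we have $\parabolic_J \leq H$. Conversely, every element of $H$ fixes $\bvvbase$, and the last assertion of Theorem~\ref{thm:fixedfieldparabolic} states that $\parabolic_J$ is the largest subgroup of $\group$ fixing $\bvvbase$; hence $H \leq \parabolic_J$. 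Thus $H = \parabolic_J$, and the Fundamental Theorem yields
\[
\difffield\langle \bss(\bvv), \bvvbase \rangle \, = \, \generalext^{H} \, = \, \generalext^{\parabolic_J} \, = \, \difffield\langle \bss(\bvv) \rangle(\bpp).
\]

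I do not foresee a genuine obstacle: the only subtlety is making sure that the Fundamental Theorem applies to the (possibly not obviously closed) intermediate field $\difffield\langle \bss(\bvv), \bvvbase \rangle$, but this is standard, since every intermediate differential subfield of a Picard-Vessiot extension corresponds to the closed subgroup of automorphisms fixing it and equals the fixed field of that subgroup.
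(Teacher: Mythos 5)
Your proposal is correct and follows essentially the same route as the paper: both establish the inclusion $\difffield\langle \bss(\bvv),\bvvbase\rangle \subseteq \difffield\langle \bss(\bvv)\rangle(\bpp)$ from Theorem~\ref{thm:fixedfieldparabolic}, then apply the Fundamental Theorem to the intermediate field $\difffield\langle \bss(\bvv),\bvvbase\rangle$ and use the maximality of $\parabolic_J$ among subgroups fixing $\bvvbase$ to identify the corresponding closed subgroup with $\parabolic_J$. The only cosmetic difference is that you obtain $\parabolic_J \leq H$ directly from $\parabolic_J$ fixing $\bss(\bvv)$ and $\bvvbase$, while the paper reads off $\widetilde{G}\geq \parabolic_J$ from the field inclusion; these are equivalent.
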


\begin{proof}
According to Theorem~\ref{thm:fixedfieldparabolic} the indeterminates $\bvvbase$ are among the differential polynomials $\bpp$ and since $\difffield \langle \bss(\bvv)\rangle (\bpp)$ is a differential field, also all derivatives of $\bvvbase$ are contained in $\difffield \langle \bss(\bvv)\rangle (\bpp)$.
Thus we have inclusions of differential fields
\[
\difffield \langle \bss(\bvv)\rangle \subset \difffield \langle \bss(\bvv),\bvvbase\rangle \subset \difffield \langle \bss(\bvv)\rangle (\bpp)=\generalext^{\parabolic_J} \subset \generalext.
\]
By the Fundamental Theorem of Differential Galois Theory \cite[Theorem~6.3.8 (1)]{CrespoBook}, there exists a closed subgroup $\widetilde{G}$ of $\group$
such that $\generalext^{\widetilde{G}}=\difffield \langle \bss(\bvv),\bvvbase\rangle$ and $\widetilde{G} \geq \parabolic_J$.
Since $\widetilde{G}$ also fixes $\bvvbase$ and $\parabolic_J$ is according to Theorem~\ref{thm:fixedfieldparabolic} the largest subgroup of $\group$ fixing $\bvvbase$, we conclude that 
$\widetilde{G} = \parabolic_J$ and so $\difffield \langle \bss(\bvv)\rangle (\bpp) = \difffield \langle \bss(\bvv), \bvvbase \rangle$.
\end{proof}

\begin{example}
    Let $\group = \SL_4(\field)$. The root system $\roots$ of $\SL_4$ is of type $A_3$ and $\roots^-$ consists of the six roots 
    \begin{gather*}
    \beta_1 \, := \, -\alpha_1, \quad \beta_2 \, := \, -\alpha_2, \quad
    \beta_3 \, := \, -\alpha_3, \\ \beta_4 \, := \, -\alpha_1-\alpha_2, \quad
    \beta_5 \, := \, -\alpha_2-\alpha_3, \quad \beta_6 \, := \, -\alpha_1-\alpha_2-\alpha_3\,.
    \end{gather*} 
     For six indeterminates $\boldsymbol{x}=(x_1,\dots,x_6)$ the respective parametrized root group elements are
     \begin{gather*}
         u_i(x_i) = E_4 + x_i E_{i+1,i} \quad \text{for} \ i=1,2,3, \\
         u_4(x_4) = E_4 + x_4 E_{3,1}, \ u_5(x_5) = E_4 + x_5 E_{4,2} \quad \text{and} \quad 
         u_6(x_6) = E_4 + x_6 E_{4,1}.
     \end{gather*}
     We consider here the case where $I' = \{ 1,3 \}$ and $I''=\{ 2 \}$, that is $\bvvext=(v_1,v_3)$ and $\bvvbase=(v_2)$.
     In other words, the indeterminate $v_2$ will be fixed by the Galois action of $\parabolic_J$ with $J = \{1,3\}$. The group $\widetilde{U}$ (cf.\ Theorem~\ref{thm:fixedfieldparabolic}) is the product of the root groups $U_{\beta_1}$ and $U_{\beta_3}$. 
  For two new indeterminates $\boldsymbol{y}=(y_1,y_3)$ the action of $\widetilde{U}$ on  $\field[\boldsymbol{x}]$ is described by recomputing the standard decomposition of  
  \[
  \boldsymbol{u}(\boldsymbol{x}) u_{\beta_1}(y_1) u_{\beta_3}(y_3).
  \]
  We find that the ring of invariants $\field[\boldsymbol{x}]^{\widetilde{U}}$ is generated over $\field$ by 
  \begin{equation}\label{eqn:examplefirstInvariants}
x_{2},\  -x_{1}x_{2} + x_{4}, \ x_5, \ -x_1 x_5 + x_3 x_4 + x_6 .
  \end{equation}
  Following the construction presented in \cite{Seiss_Generic} for $A_{\SL_4}(\bss(\bvv))$ and its fundamental matrix $\fm$ we find the following differential polynomials 
  in $\field\{ \bvv\}$: 
  \begin{eqnarray*}
  s_1(\bvv) \! & \! = \! & \!
v_3^2 + v_2^2 - v_1 v_2 - v_2 v_3 + v_1^2 + v_1' + v_2' + v_3'\,,\\[0.2em]
s_2(\bvv) \! & \! = \! & \!
4v_1v_1' - v_2v_1' - 2v_1v_2' + 2v_2v_2' - v_2v_3' - v_1v_2^2 \, +\\[0.2em]
\! & \! \! & \! v_2^2v_3 + v_1^2v_2 - v_2v_3^2 + 2v_1'' + v_2''\,,\\[0.2em]
s_3(\bvv) \! & \! = \! & \!
v_1''' + 2v_1v_1'' - v_1v_2'' + 2(v_1')^2 + 2v_1v_2v_1' - v_2^2v_1' \, +\\[0.2em]
\! & \! \! & \! v_2v_3v_1' - v_3^2v_1' - v_1'v_2' - v_1'v_3' + v_1^2v_2' - 2v_1v_2v_2' \, -\\[0.2em]
\! & \! \! & \! v_1^2v_3' + v_1v_2v_3' + v_1^2v_2v_3 - v_1^2v_3^2 - v_1v_2^2v_3 + v_1v_2v_3^2\,,\\[0.2em]
f_4(\bvv) \! & \! = \! & \!
v_1^2 + v_1'\,,\\[0.2em]
f_5(\bvv) \! & \! = \! & \!
v_2^2 + v_1^2 + v_1' + v_2' - v_2 v_1\,,\\[0.2em]
f_6(\bvv) \! & \! = \! & \!
v_1^3 - v_1^2 v_3 + 3 v_1 v_1' - v_1' v_3 + v_1''\,.
\end{eqnarray*}
 Substituting $(\bvv,\bff)$ for $\bxx$ in the invariants in \eqref{eqn:examplefirstInvariants} we obtain the invariants in $\field[\bvv , \bff]$ under the Galois action of $\parabolic_J$.
We find  
\begin{gather*}
\mathrm{Inv}_1 \, := \, v_2\,, \quad
\mathrm{Inv}_2 \, := \, -v_2 v_1 + v_1^2 + v_1'\,, \quad
\mathrm{Inv}_3 \, := \, v_2^2 + v_1^2 + v_1' + v_2' - v_2 v_1\,,\\
\mathrm{Inv}_4 \, := \, -v_2^2 v_1 + v_1^2 v_2 + 2 v_1 v_1' - v_2'v_1 + v_1''\,.
\end{gather*}
 According to Corollary~\ref{cor:fixedfieldgenbyvbase}, we have $\generalext^{\parabolic_J} = \field\langle \bss(\bvv), v_2 \rangle$.
 We are going to check that $\mathrm{Inv}_i \in \field\langle \bss(\bvv), v_2 \rangle$ for $i=1,\ldots,4$.
 Clearly, $\mathrm{Inv}_1 \in \field\{ \bss(\bvv), v_2 \}$ and we observe that 
 \[
 \mathrm{Inv}_4 \, = \, \frac{1}{2}(s_1(\bvv) - v_2' - v_2^2+v_2(s_2(\bvv) - v_2'' - 2 v_2' v_2)) \in \field\{ \bss(\bvv),v_2 \}\,.
 \]
 For the remaining two invariants consider the three differential polynomials
 \begin{equation*}
 \begin{array}{rcl}
     w_1 \! & \! := \! & \!
     v_2^4 - 2 v_2^2 s_1 + 2v_2' v_2^2 + s_1^2 - 2 s_1 v_2' +(v_2')^2 + 4s_3 - 4 \ \mathrm{Inv}_4' \in \field\{ \bss(\bvv) , v_2 \}, \\[0.5em]
     w_2 \! & \! := \! & \! -2 (s_1 - v_2' - v_2^2)' +2(s_2 - v_2'' - 2 v_2'  v_2) \in \field\{ \bss(\bvv) , v_2 \},  \\[0.5em]
     w_3 \! & \! := \! & \! v_1^2 - v_2 v_1 + v_3 v_2 - v_3^2 + v_1' - v_3' \in \field\{ \bvv \}.
     \end{array}
 \end{equation*}
  The element $w_1$ is actually the square $w_3^2$ and the derivative of $w_3$ is 
  \[
  w_3' \, = \, \frac{1}{2}w_2  -v_2 w_3\,.
  \]
  Differentiating $w_1=w_3^2$ and using the expression for $w_3'$ and $w_1=w_3^2$ we obtain
  \[
  w_1' \, = \, 2 w_3 (\frac{1}{2}w_2  -v_2 w_3) \, = \, w_3w_2 -2v_2 w_3^2 \, = \, w_3w_2 -2v_2 w_1\,,
  \]
  which is equivalent to $w_1'+2v_2 w_1=w_3w_2 $.
  The element $w_1'+2v_2 w_1 \in \field\{\bss(\bvv),v_2\}$ factors in $\field\{ \bvv\}$ as $w_2 w_3$ and, since $w_2 \in \field\{\bss(\bvv),v_2\}$, we conclude that  
  \[
  w_3 \, = \, \frac{w_1'+2v_2 w_1}{w_2} \in \field\langle \bss(\bvv), v_2 \rangle.
  \]
  We obtain that
  \[
  \mathrm{Inv}_2 \, = \, \frac{1}{2}(w_3+(s_1 - v_2' - v_2^2))  \quad \text{and} \quad  \mathrm{Inv}_3 \, = \, \mathrm{Inv}_2 +v_2^2 + v_2' 
  \]
  are elements of $\field\langle \bss(\bvv), v_2 \rangle$. Note that  it is in general not true that the subring of all differential polynomial invariants of $\field\langle \bss(\bvv), v_2 \rangle$ is equal to $\field\{ \bss(\bvv),v_2 \}$. 
\end{example}

\begin{remark}\label{rem:computerepofp_i}
    According to Corollary~\ref{cor:fixedfieldgenbyvbase} the invariants $p_1,\dots,p_q$ in $\field\{ \bvv \}$ have a representation as a differential rational function in $\field\langle \bss(\bvv), \bvvbase \rangle$. We are going to explain how one can compute $p_{i,1}$ and $p_{i,2}$ in $\field \{ \bss(\bvv), \bvvbase  \}$ such that $p_i=p_{i,1}/p_{i,2} $ for $1 \leq i \leq q$. For a differentiation order $d_1\geq0$ and a degree $d_2\geq 1$ one considers all monomials $\mathrm{mon}_1,\dots,\mathrm{mon}_k$
    up to degree $d_2$ (including also the monomial $1$) in the new indeterminates 
    \begin{equation}\label{eqn:computeinvariantsindeterminates}
      \partial^j(\widehat{s}_1), \dots,\partial^j(\widehat{s}_l), \partial^j(\widehat{v}_{i_{r+1}}),\dots, \partial^j(\widehat{v}_{i_{l}}) \quad \text{for} \ 0 \leq j \leq d_1.
    \end{equation} 
    For constant indeterminates $c_{1,1},\dots,c_{1,k}$ and $c_{2,1},\dots,c_{2,k}$ one makes the ansatz
    \[
    \widehat{p}_{i,1} = \sum_{j=1}^k c_{1,j} \mathrm{mon}_j \quad \text{and} \quad  \widehat{p}_{i,2} = \sum_{j=1}^k c_{2,j} \mathrm{mon}_j
    \]
    and substitutes in $\widehat{p}_{i,2} p_i - \widehat{p}_{i,1}=0$ for the variables in \eqref{eqn:computeinvariantsindeterminates}
    the respective elements
    \begin{equation}\label{eqn:computeinvariantselements}
        \partial^j(s_1(\bvv)),\dots,\partial^j(s_l(\bvv)),   \partial^j(v_{i_{r+1}}),\dots, \partial^j(v_{i_{l}}) \quad \text{for} \ 0 \leq j \leq d_1
    \end{equation}
    in $\field \{ \bvv \}$. Comparing coefficients in $\field \{ \bvv \}$, one obtains a linear system in the indeterminates $c_{1,1},\dots,c_{1,k}$ and $c_{2,1},\dots,c_{2,k}$ over $\field$. 
    If this system has a solution in $\field^{2k}$, 
    then one
    substitutes in $\widehat{p}_{i,1}$ and $\widehat{p}_{i,2}$ for the constant indeterminates this solution and for the variables in 
    \eqref{eqn:computeinvariantsindeterminates} the
    elements in \eqref{eqn:computeinvariantselements}
    and obtains $p_{i,1}$ 
    and  $p_{i,2}$ in $\field \{ \bss(\bvv), \bvvbase  \}$ such that $p_i=p_{i,1}/p_{i,2} $. If the linear system has no solution, then one increases $d_1$ and $d_2$ and repeats the computation. Since by Corollary~\ref{cor:fixedfieldgenbyvbase} such a representation of $p_i $ exists, this process has to stop after finitely many iterations.
\end{remark}

\section{The Structure Theorem for Parabolic Subgroups}\label{sec:structureparabolic}
Let $\parabolic_J$ be a proper standard parabolic subgroup of $\group$. Then the fixed field of $\generalext$ under $\parabolic_J(\field)$ is
\[
\generalext^{\parabolic_J} \, = \, \difffield\langle \bss(\bvv),\bvvbase \rangle
\]
as we have seen in the previous section. By the Fundamental Theorem of Differential Galois Theory, the extension 
$\generalext$ of $\difffield\langle \bss(\bvv),\bvvbase \rangle$ is a Picard-Vessiot extension for the normal form equation with differential Galois group $\parabolic_J(\field)$.
Moreover, if
\[
\parabolic_J(\field) \, = \, \levi_J(\field) \ltimes \unirad(\parabolic_J)(\field)
\]
is the standard Levi decomposition of $\parabolic_J$, then $\generalext^{\unirad(\parabolic_J)}$ is a Picard-Vessiot extension of $\difffield\langle \bss(\bvv),\bvvbase \rangle$ with differential Galois group isomorphic to 
\[
\levi_J(\field) \, \cong \, \big( \parabolic_J / \unirad(\parabolic_J) \big)(\field)\,.
\]
In this section we determine generators and a defining equation for $\generalext^{\unirad(\parabolic_J)}$.

Over $\difffield\langle \bss(\bvv),\bvvbase \rangle$ the normal form operator is not irreducible anymore. Let
\begin{equation}\label{eqn:irreduciblefactorization}
L_{\group}(\bss(\bvv),\partial) \, = \, L_{1}(\bss(\bvv),\bvvbase,\partial) \cdots L_{k}(\bss(\bvv),\bvvbase, \partial)
\end{equation}
be an irreducible factorization of $L_{\group}(\bss(\bvv),\partial)$ over $\difffield\langle \bss(\bvv),\bvvbase \rangle$, 
where each factor in this product is of order at least $1$ and monic.
The dependence of $L_{i}(\bss(\bvv),\bvvbase,\partial)$ on $\bvvbase$ indicates the ground field $\difffield\langle \bss(\bvv),\bvvbase \rangle$ for the factorization. If the ground field is clear from the context we shortly write 
\begin{equation}\label{eqn:shortdefofirredfactors}
  L_1(\partial) \, := \, L_{1}(\bss(\bvv),\bvvbase,\partial), \quad \dots, \quad
L_k(\partial) \, := \, L_{k}(\bss(\bvv),\bvvbase,\partial) .
\end{equation}

\begin{definition}\label{def:lclm}
We denote by 
\[
\LCLM(\bss(\bvv),\bvvbase,\partial) \, := \, \LCLM(L_{1}(\bss(\bvv),\bvvbase,\partial), \dots, L_{k}(\bss(\bvv),\bvvbase,\partial)) 
\]  
the \emph{least common left multiple} of the irreducible factors
\[
L_{1}(\bss(\bvv),\bvvbase,\partial), \quad \dots, \quad
    L_{k}(\bss(\bvv),\bvvbase,\partial)
\]
of $L_{\group}(\bss(\bvv),\partial)$ over $\difffield\langle \bss(\bvv),\bvvbase \rangle $. We denote its order by $n_{I''}$.
\end{definition}

The unipotent radical $\unirad(\parabolic_J)$ is generated by those root groups which correspond to the roots in $\roots^- \setminus \leviroots^-$. The following lemma implies that those $\intn_i$ in $\bint=(\intn_1,\dots, \intn_m)$ with index $i$ such that $\beta_i \in \leviroots^-$ are fixed by the action of $\unirad(\parabolic_J)(\field)$.

\begin{lemma}\label{lem:bruhatunipotent}
Let $\beta_j \in \roots^-\setminus \leviroots^-$ and let
$\bxx=(x_1,\dots,x_m)$ and $y$ be indeterminates over $\field$.
Then the coefficients $\tilde{x}_1,\dots,\tilde{x}_m \in \field[\bxx,y]$ in the standard decomposition 
\[
u_{\beta_1}(\tilde{x}_1) \cdots u_{\beta_m}(\tilde{x}_m)
\]
of the product
\begin{equation}\label{eqn:productrootgroups}
\buu(\bxx) \, u_{\beta_j}(y) \, = \, \big(   u_{\beta_1}(x_1) \cdots  u_{\beta_m}(x_m)\big) \, u_{\beta_j}(y)  
\end{equation}
are uniquely determined.
Moreover, they satisfy $\tilde{x}_j\neq x_j$ and $\tilde{x}_i = x_i$ for all $i \in \{ 1,\dots,m \}$ such that $\beta_i \in \leviroots^-$.
\end{lemma}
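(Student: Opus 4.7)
The plan is to exploit the normality of $\unirad(\parabolic_J)$ in $\unipotent^-$ together with the standard fact that the multiplication map
\[ \mu\colon \field^m \to \unipotent^-,\quad (z_1,\dots,z_m)\mapsto u_{\beta_1}(z_1)\cdots u_{\beta_m}(z_m) \]
is an isomorphism of affine varieties (a general fact about unipotent groups in characteristic zero, valid for any enumeration of $\roots^-$). Applied over the coordinate ring $\field[\bxx,y]$, this immediately gives that the polynomials $\tilde x_1,\dots,\tilde x_m$ in the standard decomposition of $\buu(\bxx)\,u_{\beta_j}(y)$ are uniquely determined elements of $\field[\bxx,y]$.

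For the identity $\tilde x_i = x_i$ whenever $\beta_i\in\leviroots^-$, I would use the quotient homomorphism
\[ \pi\colon \unipotent^- \twoheadrightarrow \unipotent^-/\unirad(\parabolic_J) \;\cong\; \unipotent^-_{\leviroots}, \]
where $\unipotent^-_{\leviroots}$ denotes the subgroup generated by the root groups $U_\beta$ with $\beta\in\leviroots^-$. The map $\pi$ is well-defined because $\unirad(\parabolic_J)$ is normal in $\parabolic_J$ and $\unipotent^-\subset\borel^-\subset\parabolic_J$; the identification on the right rests on the Lie algebra decomposition $\mathfrak u^- = \Lie(\unirad(\parabolic_J)) \oplus \bigoplus_{\beta\in\leviroots^-}\liealg_\beta$ together with the fact that $\leviroots^-$ is closed under addition within $\roots^-$. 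Since $\beta_j\in\roots^-\setminus\leviroots^-$, the factor $u_{\beta_j}(y)$ lies in the kernel of $\pi$, and so does every $u_{\beta_i}(x_i)$ with $\beta_i\notin\leviroots^-$. Applying $\pi$ to both sides of $\buu(\bxx)\,u_{\beta_j}(y)=u_{\beta_1}(\tilde x_1)\cdots u_{\beta_m}(\tilde x_m)$ reduces the identity to
\[ \prod_{i\,:\,\beta_i\in\leviroots^-} u_{\beta_i}(x_i) \;=\; \prod_{i\,:\,\beta_i\in\leviroots^-} u_{\beta_i}(\tilde x_i) \]
in $\unipotent^-_{\leviroots}$, with both sides taken in the order inherited from $\beta_1,\dots,\beta_m$. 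Invoking the uniqueness of the standard decomposition now inside $\unipotent^-_{\leviroots}$ forces $\tilde x_i = x_i$ for each such $i$.

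To see $\tilde x_j \neq x_j$, I would specialize all indeterminates $x_k$ with $k\neq j$ to zero. The left-hand side then collapses to $u_{\beta_j}(x_j)\,u_{\beta_j}(y)=u_{\beta_j}(x_j+y)$, since $U_{\beta_j}$ is isomorphic to the additive group. By uniqueness the specialization gives $\tilde x_j|_{x_k=0,\,k\neq j} = x_j + y$, so if $\tilde x_j$ were equal to $x_j$ as an element of $\field[\bxx,y]$, the specialization would read $x_j = x_j + y$, a contradiction. The main technical point to verify carefully is that $\pi$ sends the ordered product $u_{\beta_1}(x_1)\cdots u_{\beta_m}(x_m)$ to precisely the subproduct indexed by $\{i\,:\,\beta_i\in\leviroots^-\}$ in its induced order; this is a straightforward consequence of $\pi$ being a group homomorphism that annihilates each factor indexed by $\roots^-\setminus\leviroots^-$, and nothing more delicate (such as an explicit Chevalley-commutator computation) is needed.
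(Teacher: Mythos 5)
Your proof is correct, but it follows a genuinely different route from the paper. The paper proves the invariance of the $\leviroots^-$-parameters by an explicit recursion with the exchange formula \eqref{eqn:exchangeformula} (Carter, Theorem~5.2.2): it moves $u_{\beta_j}(y)$ leftwards to merge with $u_{\beta_j}(x_j)$, observes that every newly created factor belongs to a root in $\roots^-\setminus\leviroots^-$ (since $a'\beta+a\beta_j\in\roots^-\setminus\leviroots^-$ whenever it is a root), and then reorders, checking at each step that a factor $u_{\beta_{k}}(x_{k})$ with $\beta_{k}\in\leviroots^-$ occurs exactly once and is never altered; this computation also yields the sharper statement $\tilde{x}_j=x_j+y$. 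You instead push the identity through the quotient homomorphism $\unipotent^-\twoheadrightarrow\unipotent^-/\unirad(\parabolic_J)\cong U_{\leviroots^-}$, which kills every factor indexed by $\roots^-\setminus\leviroots^-$ and reduces the claim to uniqueness of the parametrization inside $U_{\leviroots^-}$; your ingredients (normality of $\unirad(\parabolic_J)$ in $\unipotent^-\subset\parabolic_J$, the semidirect decomposition $\unipotent^-=U_{\leviroots^-}\ltimes\unirad(\parabolic_J)$, the closedness of $\leviroots^-$, and the product-map isomorphism for any fixed ordering, valid over the $\field$-algebra $\field[\bxx,y]$) are all standard, so the argument is sound, and your specialization $x_k\mapsto 0$ for $k\neq j$ correctly settles $\tilde{x}_j\neq x_j$. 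What you gain is a shorter, more structural proof; what the paper's commutator computation buys is the explicit value $\tilde{x}_j=x_j+y$, finer bookkeeping of which parameters in $\roots^-\setminus\leviroots^-$ can change, and a computational template (iterated exchange formula) that the authors reuse in the effectivity-oriented arguments of Lemmas~\ref{lem:separationradicalnew} and Proposition~\ref{prop:transformationintoPJgen}.
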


\begin{proof}
The first part follows from \cite[Theorem~5.3.3 (ii)]{Carter} applied for the set of negative roots $\roots^-$.

The second part involves more work.
From \cite[Theorem~5.2.2]{Carter} we have for two roots $\beta$, $\beta' \in \roots^-$ the exchange formula
\begin{equation}\label{eqn:exchangeformula}
   u_{\beta'}(x') \, u_{\beta}(x) \, = \, u_{\beta}(x) \, u_{\beta'}(x') \prod_{a',a>0}  u_{a'\beta' + a \beta} (c_{a',a,\beta,\beta'}(-x)^{a'}x'^{a})\,,
\end{equation}
where the product is taken over all positive integers $a'$, $a$ such that $a' \beta' + a \beta \in \roots^-$ and where
$c_{a',a,\beta,\beta'} \in \Q$. We apply formula \eqref{eqn:exchangeformula} to the product \eqref{eqn:productrootgroups} until we moved $u_{\beta_j}(y)$ to the $j$-th factor $u_{\beta_j}(x_j)$ of the product $u_{\beta_1}(x_1) \cdots u_{\beta_m}(x_m)$. More precisely, we obtain
\begin{eqnarray}
\nonumber
\buu(\bxx) \, u_{\beta_j}(y) \! & \! = \! & \!
\big( u_{\beta_1}(x_1) \cdots  u_{\beta_m}(x_m)\big) u_{\beta_j}(y)\\
\nonumber
     \! & \! = \! & \! u_{\beta_1}(x_1) \cdots u_{\beta_{j-1}}(x_{j-1}) u_{\beta_j}(x_j)u_{\beta_j}(y)\prod_{i=j+1}^m u_{\beta_i}(x_i) \, \widetilde{u}_i\\ \label{eqn:rewriting2}
     \! & \! = \! & \! u_{\beta_1}(x_1) \cdots u_{\beta_{j-1}}(x_{j-1}) u_{\beta_j}(x_j+ y)\prod_{i=j+1}^m u_{\beta_i}(x_i) \, \widetilde{u}_i \, ,
\end{eqnarray}
where $\tilde{u}_i \in \unipotent^-(\field[\bxx,y])$. Moreover, each $\widetilde{u}_i$ is a product of elements of root groups belonging to roots of $\roots^- \setminus \leviroots^-$, since with $\beta_j \in \roots^- \setminus \leviroots^-$ also
$a' \beta + a \beta_j$
belongs to $\roots^- \setminus \leviroots^-$ for every root $\beta \in \roots^-$ and  $a', \, a > 0$.
Furthermore, the roots $a' \beta + a \beta_j$ are ranked higher than the roots $\beta_1,\dots,\beta_j$. Thus in the product 
\begin{equation}\label{eqn:product1}
\prod_{i=j+1}^m u_{\beta_i}(x_i) \, \widetilde{u}_i
\end{equation}
only elements of the root groups $U_{\beta_{j+1}},\dots,U_{\beta_m}$ can appear as factors. We apply recursively formula \eqref{eqn:exchangeformula} to these factors in increasing order of roots.
The above arguments show that this process creates only new factors which belong to higher ranked roots and so the process of bringing the factors into order stops after finitely many steps and we conclude that the product in \eqref{eqn:product1} is rewritten as 
\[
u_{\beta_{j+1}}(\tilde{x}_{j+1})\cdots u_{\beta_{m}}(\tilde{x}_{m}).
\]
The rewriting process defined by \eqref{eqn:exchangeformula} and \eqref{eqn:rewriting2} determines unique polynomials
\[
\tilde{x}_1 , \ldots, \tilde{x}_m \in \field[\bxx,y]\,.
\]
This proves $\tilde{x}_j=x_j+y \neq x_j$. 

Assume we are in step $k=j+1,\dots,m$ of the recursion, i.e., we have introduced the product
\[
 u_{\beta_1}(x_1) \cdots u_{\beta_{j-1}}(x_{j-1}) \, u_{\beta_j}(x_j+ y) u_{\beta_{j+1}}(\tilde{x}_{j+1}) \cdots u_{\beta_k}(\tilde{x}_k) \, \widetilde{u} \,,
\]
where $\widetilde{u}$ is a product of root group elements belonging to $U_{\beta_{k+1}}, \dots, U_{\beta_m}$.
Every time we applied formula \eqref{eqn:exchangeformula}, the newly created factors are root group elements belonging to roots in $\roots^- \setminus \leviroots^-$.
Thus only these root group elements can appear more than once as factors in the new product.
If $\beta_{k+1} \in \leviroots^-$, then $u_{\beta_{k+1}}(x_{k+1})$ is the only factor in the product $\tilde{u}$ lying in the root group $U_{\beta_{k+1}}$. Swapping $u_{\beta_{k+1}}(x_{k+1})$ to position $k+1$ yields
\[
 u_{\beta_1}(x_1) \cdots u_{\beta_{j-1}}(x_{j-1}) u_{\beta_j}(x_j+ y) u_{\beta_{j+1}}(\tilde{x}_{j+1}) \cdot u_{\beta_k}(\tilde{x}_k)  u_{\beta_{k+1}}(x_{k+1}) \, \widehat{u} \,,
\]
where $\widehat{u}$ is a product of root group elements belonging to $U_{\beta_{k+2}}, \dots, U_{\beta_m}$. Thus, for all $i \in \{ 1,\dots,m \}$ with $\beta_i \in \leviroots^-$ we have $\tilde{x}_i = x_i$.

We further mention that if $\beta_{k+1} \in \roots^- \setminus \leviroots^-$, then there might appear several factors
\[
u_{\beta_{k+1}}(y_1), \quad \dots, \quad u_{\beta_{k+1}}(y_t)
\]
in the product $\tilde{u}$ belonging to the root group $U_{\beta_{k+1}}$, where without loss of generality $y_1=x_{k+1}$ and
$y_2,\dots,y_t \in \field[\bxx,y]$. 
Collecting all these factors in $\tilde{u}$ towards the left, we obtain
\begin{gather*}
    u_{\beta_1}(x_1) \cdots u_{\beta_{j-1}}(x_{j-1}) \, u_{\beta_j}(x_j+ y) \, u_{\beta_{j+1}}(\tilde{x}_{j+1}) \cdots u_{\beta_k}(\tilde{x}_k) \, u_{\beta_{k+1}}(y_1) \cdots u_{\beta_{k+1}}(y_t) \, \widehat{u} \\
    = u_{\beta_1}(x_1) \cdots u_{\beta_{j-1}}(x_{j-1}) \, u_{\beta_j}(x_j+ y) \, u_{\beta_{j+1}}(\tilde{x}_{j+1}) \cdots u_{\beta_k}(\tilde{x}_k) \, u_{\beta_{k+1}}(\tilde{x}_{k+1}) \, \widehat{u} \, ,
\end{gather*}
where $\tilde{x}_{k+1}=y_1+\dots +y_t$ and $\widehat{u}$ is a product of root group elements belonging to $U_{\beta_{k+2}}, \dots, U_{\beta_m}$. Hence, for all $\beta_k$ with $j+1 \leq k \leq m$ and $\beta_k \in \roots^- \setminus \leviroots^-$ we have either
$\tilde{x}_k \neq x_k$ or $\tilde{x}_k = x_k$.
\end{proof}

\begin{theorem}\label{cor:extensionsbyPI}
For a standard parabolic subgroup $\parabolic_J$ we consider its standard Levi decomposition
\[
\parabolic_J \, = \,  \levi_J \ltimes \unirad(\parabolic_J).
\] 
Let $\generalext^{\parabolic_J} = \difffield\langle \bss(\bvv)\rangle ( \bpp )$ be as in Theorem~\ref{thm:fixedfieldparabolic}. Then the following statements hold:
\begin{enumerate}
    \item\label{cor:extensionsbyPI(a)} The general extension field $\generalext$ 
    is a Picard-Vessiot extension of $\difffield\langle \bss(\bvv)\rangle ( \bpp )$ with differential Galois group $\parabolic_J(\field)$ (cf.\ Figure~\ref{fig:koerper} (a)).
    \item\label{cor:extensionsbyPI(b)} 
    We have 
    \[
    \generalext^{\unirad(\parabolic_J)} = \difffield\langle \bss(\bvv)\rangle ( \bpp ) 
    \langle \bexp, \bvvext ,   \intn_{i} \mid \beta_i \in \leviroots^- \rangle.
    \]
    \item\label{cor:extensionsbyPI(c)} 
    The differential field $\generalext^{\unirad(\parabolic_J)}$ 
    is a Picard-Vessiot extension of $\difffield\langle \bss(\bvv)\rangle ( \bpp )$ with differential Galois group isomorphic to $\levi_J$ and is defined by the linear differential equation
\[
\LCLM(\bss(\bvv),\bvvbase,\partial) \, y \, = \, 0
\]
    in the differential indeterminate $y$ over $\difffield\langle \bss(\bvv)\rangle ( \bpp )$ (cf.\ Figure~\ref{fig:koerper} (b)). 
\end{enumerate}
\end{theorem}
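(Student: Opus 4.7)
The plan is to prove the three statements in sequence, leveraging the Fundamental Theorem of Differential Galois Theory together with Lemma~\ref{lem:bruhatunipotent} and a transcendence-degree count. Part~(a) is a direct application of the Fundamental Theorem: since $\generalext / \difffield\langle \bss(\bvv) \rangle$ is a Picard-Vessiot extension with Galois group $\group(\field)$ by Theorem~\ref{thm:RobertzSeissNormalForms} and $\parabolic_J \leq \group$ is closed with $\generalext^{\parabolic_J} = \difffield\langle \bss(\bvv) \rangle(\bpp)$ by Theorem~\ref{thm:fixedfieldparabolic}, the same matrix $A_{\group}(\bss(\bvv))$ defines a Picard-Vessiot extension $\generalext / \generalext^{\parabolic_J}$ whose Galois group is $\parabolic_J(\field)$.

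For part~(b), the inclusion of the right-hand side into $\generalext^{\unirad(\parabolic_J)}$ is the computational step: for any $u_{\beta_j}(y) \in \unirad(\parabolic_J)$ with $\beta_j \in \roots^- \setminus \leviroots^-$, right multiplication of $\fm = \buu(\bvv,\bff)\,n(\overline{w})\,\btt(\bexp)\,\buu(\bint)$ by $u_{\beta_j}(y)$ only affects the last factor, and by Lemma~\ref{lem:bruhatunipotent} the standard decomposition of $\buu(\bint)\,u_{\beta_j}(y)$ preserves every coordinate $\intn_i$ with $\beta_i \in \leviroots^-$; hence $\bvv, \bff, \bexp$, and the $\intn_i$ with $\beta_i \in \leviroots^-$ are all fixed by $\unirad(\parabolic_J)(\field)$. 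For the reverse inclusion, the Fundamental Theorem identifies the right-hand side with $\generalext^{\widetilde{H}}$ for a closed subgroup $\unirad(\parabolic_J) \leq \widetilde{H} \leq \parabolic_J$, and by connectedness of $\unirad(\parabolic_J)$ it suffices to verify that the right-hand side has algebraic transcendence degree $\dim \levi_J = l + 2|\leviroots^-|$ over $\generalext^{\parabolic_J}$. The count works out as follows: adjoining $\bvvext$ lifts $\generalext^{\parabolic_J}$ to $\difffield\langle \bvv \rangle = \generalext^{\borel^-}$, contributing $|\leviroots^-|$ transcendentals (using $\trdeg(\generalext^{\borel^-}/\difffield\langle \bss(\bvv)\rangle) = m$); the exponentials $\bexp$ contribute a further $l$ transcendentals via the equations $\exp_i'/\exp_i = g_i(\bvv)$ with $\field$-linearly independent $g_i(\bvv)$; and the $\intn_i$ with $\beta_i \in \leviroots^-$ contribute another $|\leviroots^-|$ as a successive tower of integrals, summing to the required $l + 2|\leviroots^-|$.

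For part~(c), the first assertion is immediate from the Fundamental Theorem and the normality of $\unirad(\parabolic_J)$ in $\parabolic_J$: the extension $\generalext^{\unirad(\parabolic_J)}/\generalext^{\parabolic_J}$ is a Picard-Vessiot extension with Galois group $\parabolic_J/\unirad(\parabolic_J) \cong \levi_J$. To identify the defining equation, I would view the solution space $V \subset \generalext$ of $L_{\group}(\bss(\bvv),\partial)\,y = 0$ as a faithful $\parabolic_J$-module. The irreducible factorization $L_{\group} = L_1 \cdots L_k$ over $\generalext^{\parabolic_J}$ yields, for each factor $L_i$, an irreducible $\parabolic_J$-submodule of solutions inside $V$; because $\unirad(\parabolic_J)$ acts trivially on every irreducible $\parabolic_J$-module in characteristic zero, these solution spaces all lie in $\generalext^{\unirad(\parabolic_J)}$. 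Their sum is the solution space in $\generalext$ of $\LCLM(\bss(\bvv),\bvvbase,\partial)\,y = 0$, and matching transcendence degrees with the count from part~(b) shows that it differentially generates $\generalext^{\unirad(\parabolic_J)}$ over $\generalext^{\parabolic_J}$, so the LCLM is the defining operator.

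The main obstacle I anticipate is the transcendence-degree bookkeeping in part~(b), since the three families of generators $\bvvext$, $\bexp$ and $\{\intn_i : \beta_i \in \leviroots^-\}$ are differentially intertwined via the equations $\exp_i'/\exp_i = g_i(\bvv)$ and the successive-integral structure, all of which reference $\bvv$; extracting a genuine algebraic transcendence basis of the expected size requires careful control. A secondary technical point in part~(c) is the precise identification of the sum of the solution spaces of the $L_i$ with a maximal semisimple subspace of $V$, which requires verifying that each solution space actually sits inside $V = \mathrm{Sol}(L_{\group})$ rather than in some strictly larger Picard-Vessiot extension.
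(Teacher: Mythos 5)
Your parts (a) and (b) follow the same route as the paper: (a) is the Fundamental Theorem applied to Theorem~\ref{thm:fixedfieldparabolic}, and (b) is the inclusion via Lemma~\ref{lem:bruhatunipotent} followed by a transcendence-degree comparison. The one loose end is exactly the point you flag as your "main obstacle": you assert, but do not prove, that $\bexp$ and the $\intn_i$ with $\beta_i\in\leviroots^-$ contribute $l+|\leviroots^-|$ further transcendentals. The paper closes this without any bookkeeping of the differential relations: the $2m+l$ elements $v_1,\dots,v_1^{(d_1-1)},\dots,v_l,\dots,v_l^{(d_l-1)},\bexp,\bint$ generate $\generalext$ over $\difffield\langle\bss(\bvv)\rangle$, whose transcendence degree equals $\dim\group=2m+l$, so this generating set is algebraically independent and any subset has transcendence degree equal to its cardinality; the field $K$ of part (b) is generated by such a subset, and the comparison then goes through the Galois correspondence as in your sketch.

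Part (c) contains a genuine gap. You claim that each irreducible factor $L_i$ yields an irreducible $\parabolic_J$-submodule of solutions \emph{inside} $V=\mathrm{Sol}(L_{\group})$, and your "secondary technical point" is to verify that these solution spaces sit inside $V$. For $i<k$ this is false, not merely delicate: only $\mathrm{Sol}(L_k)$ is a subspace of $V$; the solution spaces of the remaining factors are realized in $\generalext$ as the images $(L_{i+1}(\partial)\circ\cdots\circ L_k(\partial))(V)$, i.e.\ as subquotients of $V$, not subspaces. This is precisely how the paper, following the proof of \cite[Proposition~4.2]{SingerCompoint}, produces the spanning set \eqref{eqn:generatorsforLCLM} for the solution space of $\LCLM(\bss(\bvv),\bvvbase,\partial)\,y=0$. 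Your concluding step is also unsubstantiated: "matching transcendence degrees with the count from part (b)" would require knowing that the differential field generated over $\generalext^{\parabolic_J}$ by the LCLM solutions already has transcendence degree $\dim\levi_J$, which is essentially the statement to be proved; the paper instead quotes the proof of \cite[Proposition~4.2]{SingerCompoint}, where it is shown directly that this field equals $\generalext^{\unirad(\parabolic_J)}$. Your representation-theoretic idea can be completed into an alternative argument: the differential field generated by the composition-factor solution spaces is Galois-stable, hence of the form $\generalext^{N}$ for a closed normal $N\trianglelefteq\parabolic_J$; since $\unirad(\parabolic_J)$ acts trivially on each irreducible factor one has $N\geq\unirad(\parabolic_J)$, and since $N$ acts trivially on all graded pieces of the flag it acts unipotently on $V$, so (unipotent groups being connected in characteristic zero) $N\leq\unirad(\parabolic_J)$, giving $N=\unirad(\parabolic_J)$. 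As written, however, your proof neither carries this out nor can rely on the false submodule claim, so the identification of the LCLM as the defining operator is not established.
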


\begin{figure}
\centering
\begin{subfigure}[b]{0.27\linewidth}
\begin{tikzpicture}[thick,scale=0.75]
\node[circle, draw, fill=black, inner sep=0pt, minimum width=4pt] (A0) at (0,4) {};
\node[circle, draw, fill=black, inner sep=0pt, minimum width=4pt] (B0) at (0,1) {};
\node[circle, draw, fill=black, inner sep=0pt, minimum width=4pt] (C0) at (0,0) {};
\draw (A0) -- (B0);
\draw (B0) -- (C0);
\node (RA0) [right=0.5em of A0] {$\generalext$};
\node (RC0) [right=0.5em of C0] {$\difffield \langle \bss(\bvv) \rangle$};
\node (RB0) [right=0.5em of B0] {$\generalext^{\parabolic_J}$};
\node (D0) at (0,2.5) {};
\node (LD0) [left=0.5em of D0] {$\parabolic_J$};
\draw [decorate, decoration = {brace,mirror}] (-0.2,3.8) --  (-0.2,1.2);
\end{tikzpicture}
\subcaption*{(a)\label{subfig:3a}}
\end{subfigure}
\begin{subfigure}[b]{0.27\linewidth}
\begin{tikzpicture}[thick,scale=0.75]
\node[circle, draw, fill=black, inner sep=0pt, minimum width=4pt] (A1) at (0,4) {};
\node[circle, draw, fill=black, inner sep=0pt, minimum width=4pt] (B1) at (0,3) {};
\node[circle, draw, fill=black, inner sep=0pt, minimum width=4pt] (C1) at (0,1) {};
\node[circle, draw, fill=black, inner sep=0pt, minimum width=4pt] (D1) at (0,0) {};
\draw (A1) -- (B1);
\draw (B1) -- (C1);
\draw (C1) -- (D1);
\node (RA1) [right=0.5em of A1] {$\generalext$};
\node (RD1) [right=0.5em of D1] {$\difffield \langle \bss(\bvv) \rangle$};
\node (RC1) [right=0.5em of C1] {$\generalext^{\parabolic_J}$};
\node (RB1) [right=0.5em of B1] {$\generalext^{\unirad(\parabolic_J)}$};
\node (E1) at (0,2) {};
\node (F1) at (0,3.5) {};
\node (LE1) [left=0.5em of E1] {$\levi_J\cong$};
\node (LF1) [left=0.5em of F1] {$\unirad(\parabolic_J)$};
\draw [decorate, decoration = {brace,mirror}] (-0.2,2.8) --  (-0.2,1.2);
\draw [decorate, decoration = {brace,mirror}] (-0.2,3.8) --  (-0.2,3.2);
\end{tikzpicture}
\subcaption*{(b)\label{subfig:3b}}
\end{subfigure}

\caption{The different subextensions of $\generalext/ \difffield\langle \bss(\bvv)\rangle$.\label{fig:koerper}}
\end{figure}

\begin{proof}
\ref{cor:extensionsbyPI(a)}\
According to Theorem~\ref{thm:fixedfieldparabolic} the fixed field of $\generalext$ under the action of $\parabolic_J(\field)$ is $\generalext^{\parabolic_J} = \difffield \langle \bss(\bvv) \rangle (\bpp)$ and so by the Fundamental Theorem of Differential Galois Theory $\generalext$ is a Picard-Vessiot extension of $\difffield \langle \bss(\bvv) \rangle (\bpp)$ with differential Galois group $\parabolic_J(\field)$.\\
\ref{cor:extensionsbyPI(b)}\
Multiplying the fundamental matrix 
\[
\fm \, = \, \buu(\bvv,\bff) \, n(\overline{w}) \, \btt(\bexp) \, \buu(\bint)
\]
from the right by an element of $\unipotent^-(\field)$ and determining the Bruhat decomposition of the product only has an effect on the factor $\buu(\bint )$. Thus the parameters $\bvv$, $\bff$ and $\bexp$ are left fixed by $\unipotent^-(\field)$ and so by the unipotent radical $\unirad(\parabolic_J)(\field) \leq \unipotent^-(\field)$, too.
Recall that the unipotent radical $\unirad(\parabolic_J)(\field)$ is generated by those root groups which correspond to the roots in $\roots^-\setminus \leviroots^-$. We apply now Lemma~\ref{lem:bruhatunipotent} successively for each factor $u_j(y_j) \in U_{\beta_j}(\field)$ to
\[
\buu(\bint) \, \prod_{\beta_j \in \roots^- \setminus \leviroots^- } u_j(y_j)
\]
and conclude that $\intn_{i}$ is left fixed by $\unirad(\parabolic_J)$ for every $\beta_i \in \leviroots^-$ and that $\intn_{j}$ is not left fixed by $\unirad(\parabolic_J)$ for every $\beta_j \in \roots^-\setminus \leviroots^-$.
With $\difffield\langle \bss(\bvv)\rangle ( \bpp )=\difffield \langle \bss(\bvv),\bvvbase \rangle$
we conclude that 
\begin{equation}\label{eq:inclusionK}
K \, := \, \difffield\langle \bss(\bvv)\rangle ( \bpp ) 
    \langle \bexp, \bvvext, \intn_{i} \mid \beta_i \in \leviroots^- \rangle \subset \generalext^{\unirad(\parabolic_J)} \, .
\end{equation}

By the Fundamental Theorem of Differential Galois Theory, $\generalext$ is a purely transcendental extension of 
$\difffield\langle \bss(\bvv) \rangle$ and of $\generalext^{\unirad(\parabolic_J)}$ with respective transcendence degrees
\[
\trdeg_{\difffield\langle \bss(\bvv) \rangle}(\generalext) \, = \, 2m+l 
\quad \mbox{and} \quad 
\trdeg_{\generalext^{\unirad(\parabolic_J)}}(\generalext) \, = \, |\roots^- \setminus \leviroots^-| \, .
\]
Then the transcendence degree of $\generalext^{\unirad(\parabolic_J)}$ over $\difffield\langle \bss(\bvv) \rangle$ is
\begin{eqnarray*}
\trdeg_{\difffield\langle \bss(\bvv) \rangle}(\generalext^{\unirad(\parabolic_J)}) & = & \trdeg_{\difffield\langle \bss(\bvv) \rangle}(\generalext) - \trdeg_{\generalext^{\unirad(\parabolic_J)}}(\generalext) \\
& = & 2m+l - |\roots^- \setminus \leviroots^-| \\
& = & m + l + |\leviroots^-|\,,
\end{eqnarray*}
where we used $|\roots^- \setminus \leviroots^-| = m - |\leviroots|$. 
We prove that
\[
\trdeg_{\difffield\langle \bss(\bvv) \rangle}(K) \, = \, m + l + |\leviroots^-| \, .
\]
Since $\difffield\langle \bss(\bvv) \rangle (\bpp)=\difffield\langle \bss(\bvv),\bvvbase \rangle$ according to Corollary~\ref{cor:fixedfieldgenbyvbase}, we have on the one hand that
\[
\difffield\langle \bss(\bvv) \rangle (\bpp)\langle \bvvext \rangle \, = \, \difffield\langle \bvv \rangle\,.
\]
On the other hand, the proofs of \cite[Proposition~7.1]{RobertzSeissNormalForms} and \cite[Corollary~7.2]{RobertzSeissNormalForms} imply that 
\[
\difffield\langle \bss(\bvv) \rangle (v_1,v_1',\dots,v_1^{(d_1-1)},\dots, v_l,v_l',\dots,v_l^{(d_l-1)}) \, = \, \difffield\langle \bvv \rangle 
\]
with $d_1+\dots+d_l=m$ and so combined we have that  
\[
\difffield\langle \bss(\bvv) \rangle (\bpp)\langle \bvvext \rangle \, = \, \difffield\langle \bss(\bvv) \rangle (v_1,v_1',\dots,v_1^{(d_1-1)},\dots, v_l,v_l',\dots,v_l^{(d_l-1)})\,.
\]
We conclude that the $2m+l$ elements 
\begin{equation}\label{eqn:generatorsfixedfieldrad}
v_1, \, v_1', \, \dots, \, v_1^{(d_1-1)}, \, \dots, \, v_l, \, v_l', \, \dots, \,v_l^{(d_l-1)}, \,\bexp , \, \bint
\end{equation}
are algebraically independent over $\difffield\langle \bss(\bvv) \rangle$, since they generate the general extension $\generalext$ of $\difffield\langle \bss(\bvv) \rangle$ with differential Galois group $\group(\field)$ of transcendence degree 
\[
\trdeg_{\difffield\langle \bss(\bvv) \rangle}( \generalext)= \dim(\group) \, = \, 2m+l\,.
\]
Since $K$ is generated over $\difffield\langle \bss(\bvv) \rangle$ as a field by the generators of $\generalext$ in \eqref{eqn:generatorsfixedfieldrad} without the $m - |\leviroots^-|$ generators $\intn_i$ in $\bint=(\intn_1,\dots, \intn_m) $ corresponding to the roots $\beta_i \in \roots^- \setminus \leviroots^-$,
we conclude that
\[
\trdeg_{\difffield\langle \bss(\bvv) \rangle}(K) \, = \, m+l+|\leviroots^-|\,.
\]
Because the transcendence degrees coincide it follows with \eqref{eq:inclusionK} that $K = \generalext^{\unirad(\parabolic_J)}$.\\
\ref{cor:extensionsbyPI(c)} Recall from Definition~\ref{def:lclm} that 
\[
\LCLM(\bss(\bvv),\bvvbase,\partial) 
\]  
is the least common left multiple of the irreducible factors $L_1(\partial),\dots,L_k(\partial)$ 
of $L_{\group}(\bss(\bvv),\partial)$ over $\difffield\langle \bss(\bvv) \rangle (\bpp) = \difffield \langle \bss(\bvv), \bvvbase \rangle$.
We denote by $n_1,\dots,n_k$ the orders of the irreducible factors $L_1(\partial),\dots,L_k(\partial)$ and 
we define the integers
\[
n_k' \, = \, n_k, \ n_{k-1}' \, = \, n_k+n_{k-1}, \  \dots , \ n_2' \, = \, n_k+ \dots + n_2, \ n_1' \, = \, n_k+\dots+n_1\,.
\]
According to the proof of \cite[Proposition~4.2]{SingerCompoint} there exists a basis $y_1,\dots,y_n$ in $\generalext$ of the solution space of 
$L_{\group}(\bss(\bvv),\partial) \, y = 0$ such that the elements 
\begin{equation}\label{eqn:generatorsforLCLM}
\begin{array}{l}
     y_1, \ \dots, \ y_{n_k'}, \\[0.5em] L_k(\partial)y_{n_k'+1}, \ \dots , \ L_k(\partial)y_{n_{k-1}'},\\[0.5em]
     (L_{k-1}(\partial) \circ L_k(\partial))y_{n_{k-1}'+1}, \  \dots , \  (L_{k-1}(\partial) \circ L_k(\partial))y_{n_{k-2}'}, \  \dots, \\[0.5em]
     (L_{2}(\partial) \circ \cdots \circ L_k(\partial))y_{n_{2}'+1}, \ \dots , \  (L_{2}(\partial) \circ \cdots \circ L_k(\partial))y_{n_{1}'}
\end{array}
\end{equation}
span the solution space of the equation  
\[
\LCLM(\bss(\bvv),\bvvbase,\partial) \, y \, = \, 0\,.
\]
It is also shown there that the Picard-Vessiot extension generated as a differential field over $\difffield\langle \bss(\bvv) \rangle (\bpp)$ by the elements in \eqref{eqn:generatorsforLCLM} is equal to the fixed field $\generalext^{\unirad(\parabolic_J)}$.
Hence, by the Fundamental Theorem of Differential Galois Theory $\generalext^{\unirad(\parabolic_J)}$ is a Picard-Vessiot extension of $\difffield\langle \bss(\bvv) \rangle (\bpp)$ with differential Galois group 
\[
(\parabolic_J / \unirad(\parabolic_J))(\field) \, \cong \, \levi_J(\field)\,.
\]
\end{proof}

\begin{remark}\label{rem:basisLCLM}
    A $\field$-basis of the solution space for the least common left multiple can be computed using the $\field$-basis of the solution space of the normal form equation 
    \[
    L_{\group}(\bss(\bvv),\partial) \, y \, = \, 0\,.
    \]
    A $\field$-basis of the latter is formed by the entries $y_1,\dots,y_n$ of the first row of the matrix $\BNFcomp\fm$,
    where $\BNFcomp$ is the matrix we choose to gauge transform 
    $A_{\group}(\bss(\bvv))$ to the companion matrix which corresponds to the normal form equation. 
    The matrix $\BNFcomp$ describes a change of a basis of a differential module for $A_{\group}(\bss(\bvv))$ to a basis defined by a cyclic vector which can be taken from \cite{Seiss}. It is shown in Proposition~\ref{prop:PropertiesOfBcomp} that $\BNFcomp \in \GL_n(\field \{ \bss(\bvv) \})$. 
    Let $(c_{i,j})$ be an $n \times n$ matrix of constant indeterminates and define 
    \[
    (\tilde{y}_1,\dots,\tilde{y}_n)^{tr} \, = \, (c_{i,j}) (y_1,\dots,y_n)^{tr}.
    \]
    Let $n'_k,\dots,n'_1 $ be as in the proof of Theorem~\ref{cor:extensionsbyPI}.
    One determines the first $n_k'$ rows of $(c_{i,j})$ such that 
    $\{ \tilde{y}_1, \, \dots, \, \tilde{y}_{n_k'} \}$ is a basis of the solution space of $L_k(\partial)$.
    Then one computes $c_{i,j}$ with $n_k'+1 \le i \le n_{k-1}'$ and $1 \le j \le n$ such that 
    $$\{ L_k(\partial)\tilde{y}_{n_k'+1},\dots , \,L_k(\partial)\tilde{y}_{n_{k-1}'} \}$$
    is a $\field$-basis of the solution space of $L_{k-1}(\partial)$. Continuing in this way we find bases
    \begin{equation}\label{eqn:basesLCLM}
    \begin{array}{l}
     \{ \tilde{y}_1, \, \dots, \, \tilde{y}_{n_k'} \},   
     \{ L_k(\partial)\tilde{y}_{n_k'+1},\dots , \,L_k(\partial)\tilde{y}_{n_{k-1}'}\}, \\[0.5em]
     \{ (L_{k-1}(\partial) \circ L_k(\partial))\tilde{y}_{n_{k-1}'+1}, \, \dots , (L_{k-1}(\partial) \circ L_k(\partial))\tilde{y}_{n_{k-2}'}\}, \, \dots, \\[0.5em]
    \{ (L_{2}(\partial) \circ \cdots \circ L_k(\partial))\tilde{y}_{n_{2}'+1}, \, \dots , \, (L_{2}(\partial) \circ \cdots \circ L_k(\partial))\tilde{y}_{n_{1}'}\}
    \end{array}
     \end{equation}
     of the solution spaces of the irreducible factors $L_k(\partial),L_{k-1}(\partial)\dots,L_1(\partial)$. All these elements together generate the solution space of the least common left multiple, but need not be linearly independent. 
Using Gaussian elimination we find a basis $y^{I''}_1,\dots,y^{I''}_{n_{I''}}$ for the solution space of    
\begin{equation} \label{eqn:lclmforbasis}
    \LCLM(\bss(\bvv),\bvvbase,\partial) \, y \, = \, 0 .
    \end{equation} 
\end{remark}

We fix a $\field$-basis 
\[
y^{I''}_1,\dots,y^{I''}_{n_{I''}}
\]
in $\generalext^{\unirad(\parabolic_J)}$ of the solution space of the linear differential equation \eqref{eqn:lclmforbasis}.
The next step is to express the parameters $\bvv$, $\bexp$ and the entries $\intn_i$ of $\bint=(\intn_1,\dots,\intn_m)$ with indices $i$ such that $\beta_i \in \leviroots^-$ in differential algebraic terms with respect to $y^{I''}_1,\dots,y^{I''}_{n_{I''}}$. It is an immediate consequence of Theorem~\ref{cor:extensionsbyPI} that this is possible.

\begin{proposition}\label{prop:BruhatParametersofXmainpart}
Let
\[
\field[\group] \, = \, \field[\overline{Y}_{i,j} \mid i,j=1, \dots, n] = \field[Y_{i,j}\mid i,j=1, \dots, n]/I_{\group}
\]
be the coordinate ring of $\group$, where $I_{\group}$ is the defining ideal of $\group$ and $n$ denotes the dimension of the representation of $\group$. Then there exist $e_1,\dots,e_l \in \field[\group]$ and
\[
\boldsymbol{z} \, = \, (\prod_{j=1}^l e_j^{a_{1,j}},\dots, \prod_{j=1}^l e_j^{a_{l,j}}) \quad \text{with} \ a_{i,j} \in \Z
\]
and $\bxx:=(x_1,\dots,x_m)$ and $\bww:=(w_1,\dots,w_m)$ in the localization $\mathcal{M}^{-1}\field[\group]$ of $\field[\group]$ by the multiplicatively closed subset  $\mathcal{M}$ generated by $e_1,\dots,e_l$ such that 
    \[
    \overline{Y} \, = \, \buu(\bxx) \, n(\overline{w}) \, \btt(\boldsymbol{z}) \, \buu(\bww)  
    \]
    is the Bruhat decomposition of $\overline{Y}:=(\overline{Y}_{i,j})$.  
\end{proposition}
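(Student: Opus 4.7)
The plan is to apply Theorem~\ref{thm:bruhat2} (with the roles of positive and negative roots interchanged, exactly as in the proof of Lemma~\ref{lem:invariantreflections}) at the level of the generic point of $\group$. Under this convention, the statement yields, for each $g$ in the orbit $\unipotent^- n(w) \torus \unipotent^-$, a unique factorization $g = v' \, n(w) \, t \, v$ with $v' \in \unipotent^- \cap n(w) \unipotent \, n(w)^{-1}$, $t \in \torus$ and $v \in \unipotent^-$. For the longest element $w = \overline{w}$ we have $n(\overline{w}) \unipotent \, n(\overline{w})^{-1} = \unipotent^-$, so $v' \in \unipotent^-$. Hence the \emph{big cell}
\[
\Omega := \unipotent^- \, n(\overline{w}) \, \torus \, \unipotent^- \subseteq \group
\]
carries a well-defined multiplication map
\[
\phi \colon \unipotent^- \times \torus \times \unipotent^- \longrightarrow \Omega, \qquad (v', t, v) \mapsto v' \, n(\overline{w}) \, t \, v,
\]
which is bijective on closed points and in fact an isomorphism of affine varieties, a standard consequence of the Bruhat decomposition for split reductive groups. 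In particular $\Omega$ is open and dense in $\group$, and its complement is the zero locus of finitely many regular functions on $\group$.

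Next I would choose $e_1, \ldots, e_l \in \field[\group]$ to be regular extensions to $\group$ of a $\Z$-basis of the character lattice $X^*(\torus)$. For the classical groups these may be taken as explicit principal minors of $\overline{Y}$ in the standard matrix realization (for type $A_l$, the leading principal minors of size $k = 1, \ldots, l$; analogous explicit choices work for $B_l$, $C_l$, $D_l$, $\Gzwei$). Let $\mathcal{M}$ be the multiplicative subset of $\field[\group]$ generated by these $e_i$. These functions are nonzero precisely on an open subset of $\group$ containing $\Omega$, so $\field[\Omega] \subseteq \mathcal{M}^{-1}\field[\group]$. Composing the comorphism of $\phi^{-1}$ with the parametrizations $\buu(\cdot)$ of $\unipotent^-$ and $\btt(\cdot)$ of $\torus$ produces tuples $\bxx, \bww \in (\mathcal{M}^{-1}\field[\group])^m$ and $\boldsymbol{z} \in (\mathcal{M}^{-1}\field[\group])^l$ such that
\[
\overline{Y} = \buu(\bxx) \, n(\overline{w}) \, \btt(\boldsymbol{z}) \, \buu(\bww)
\]
holds in $(\mathcal{M}^{-1}\field[\group])/I_{\group}$.

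The shape of $\boldsymbol{z}$ as a tuple of Laurent monomials in the $e_i$ then follows because $\torus$ is a split torus of rank $l$, whose coordinate ring is the Laurent polynomial ring in the chosen character basis $e_1, \ldots, e_l$. Each entry $z_i$ of $\boldsymbol{z}$ is itself a character of $\torus$, namely the one parametrizing the factor $t_i(\cdot)$ in $\btt$, and as such is uniquely a Laurent monomial $\prod_{j=1}^l e_j^{a_{i,j}}$ with $a_{i,j} \in \Z$ given by its coordinates in this basis of $X^*(\torus)$. The main obstacle is to verify case-by-case that the chosen $e_i$ indeed lie in $\field[\group]$ and that their joint non-vanishing locus covers the big cell; this is a routine but type-dependent computation using the matrix realizations of the classical groups, whereas the conceptual content of the statement is the standard fact that $\phi$ is an isomorphism of affine varieties.
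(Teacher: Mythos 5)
Your overall strategy is legitimate and genuinely different from the paper's: you want to exhibit the big cell $\Omega=\unipotent^-\,n(\overline{w})\,\torus\,\unipotent^-$ as a principal open subset cut out by $l$ regular functions $e_1,\dots,e_l$, so that $\field[\Omega]=\mathcal{M}^{-1}\field[\group]$ and the torus parameters become Laurent monomials in the $e_i$. The paper instead argues computationally (Propositions~\ref{prop:BruhatParametersofX} and \ref{prop:BruhatParametersofXforG2}): it produces the factorization by an explicit Cramer's-rule elimination (Lemma~\ref{lem:sollinearsystem}), identifies the denominators with the upper-right corner minors $\det(Y^{(k)})$, and then shows these minors are, up to nonzero constants, Laurent monomials in $e_i:=\psi^{-1}(\expsolass_i)\in\field[\group]$, with a separate explicit computation for $\Gzwei$. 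Note that the paper's specific choice of the $e_i$ as images of the exponential solutions of the associated equations is used later (proof of Proposition~\ref{cor:exprforexpandint} and Remark~\ref{rem:computeEXPVINT}), so a proof of bare existence with other $e_i$ would not serve those arguments, though it does suffice for the statement itself.

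As written, however, your argument has genuine gaps exactly at the point you defer as routine. First, the localization inference runs backwards: you say the $e_i$ vanish nowhere on an open set \emph{containing} $\Omega$ and conclude $\field[\Omega]\subseteq\mathcal{M}^{-1}\field[\group]$. If the common nonvanishing locus $D$ contains $\Omega$, then $\mathcal{M}^{-1}\field[\group]=\field[D]\subseteq\field[\Omega]$, which gives nothing; what you need is $D\subseteq\Omega$ (ideally $D=\Omega$), so that the Bruhat parameters, a priori regular only on $\Omega$, restrict to elements of $\field[D]=\mathcal{M}^{-1}\field[\group]$. Proving that simultaneous nonvanishing of your $e_i$ forces membership in $\borel^-\,n(\overline{w})\,\borel^-$ is the actual content of the proposition. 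Second, your concrete choice for type $A_l$ is wrong for this decomposition: the relevant functions are the upper-right corner minors $\det(Y^{(k)})$, not the leading principal minors; already for $\SL_2$ the $(1,1)$ entry of $\buu(x)\,n(\overline{w})\,\btt(z)\,\buu(w)$ equals $\pm w z^{-1}$, which vanishes on part of the big cell, so the leading principal minors satisfy neither the containment you assert nor the one you need. Third, the step ``extend a $\Z$-basis of $X^*(\torus)$ to regular functions on $\group$'' is not automatic: only suitable ($\overline{w}$-twisted dominant) weights admit such extensions, and for $\SO_{2l+1}$, which is not simply connected, the last fundamental weight does not lift to a character of $\torus$; this is precisely why the paper's $e_l$ in that case is $\psi^{-1}\!\big(\exp(-\smallint 2v_l)\big)$, and it is the kind of type-dependent verification your sketch postpones. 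With the corrected minors (and the doubling in type $B_l$) your route can be completed, but those verifications must be supplied, not assumed.
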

\begin{proof}
    For a proof see Propositions~\ref{prop:BruhatParametersofX} and \ref{prop:BruhatParametersofXforG2} in Appendix~\ref{sec:denom}.
\end{proof}

\begin{proposition}\label{cor:exprforexpandint}
Let $y^{I''}_1,\dots,y^{I''}_{n_{I''}}$ be a $\field$-basis of the solution space of the least common left multiple and let $\bZ=(Z_1,\dots,Z_{n_{I''}})$ be differential indeterminates over $\field\langle \bss(\bvv),\bvvbase \rangle$. 
\begin{enumerate}
\item\label{cor:exprforexp}
There exist $l$ differential rational functions
\[
\exprforexp^{I''}_1(\bZ), \quad \dots, \quad \exprforexp^{I''}_l(\bZ)
\]
in $\field\langle \bss(\bvv),\bvvbase \rangle \langle \bZ \rangle$ such that 
\[
\exprforexp^{I''}_i(y^{I''}_1,\dots,y^{I''}_{n_{I''}}) \, = \, \exp_i \, .
\]
In other words, we have
\[
\frac{\exprforexp^{I''}_i(y^{I''}_1,\dots,y^{I''}_{n_{I''}})'}{ \exprforexp^{I''}_i(y^{I''}_1,\dots,y^{I''}_{n_{I''}})} \, = \,g_i(\bvv)\,.
\]
\item\label{cor:exprforv}
There exist $l$ differential rational functions 
\[
V_i^{I''}(\bZ) \in \field\langle \bss(\bvv) ,\bvvbase \rangle \langle \bZ \rangle
\]
such that 
\[
V_i^{I''}(y^{I''}_1,\dots,y^{I''}_{n_{I''}}) \, = \, v_i \, .
\]
\item\label{cor:exprforint}
For each index $i$ such that $\beta_i \in \leviroots^-$ there exist differential rational functions
\[
\exprforint_i^{I''}(\bZ) \in \field \langle \bss(\bvv),\bvvbase \rangle  \langle \bZ \rangle
\]
such that
\[
\exprforint_i^{I''}(y^{I''}_1,\dots,y^{I''}_{n_{I''}}) \, = \, \intn_i \, .
\]
\end{enumerate}
Moreover, all these rational functions can be chosen to be contained in a localization $D_{\exp}^{-1} \field\langle \bss(\bvv),\bvvbase \rangle\{ \bZ \}$ by a multiplicatively closed subset $D_{\exp}$ generated by $l$ differential polynomials 
$$ E_1(\bZ), \ \dots , \
E_l(\bZ) \in \field\langle \bss(\bvv),\bvvbase \rangle\{ \bZ \}$$ having the additional property that their evaluations
\[
E_1(y^{I''}_1, \dots, y^{I''}_{n_{I''}}) \, = \, \expsolass_1, 
\quad \dots,\quad 
E_l(y^{I''}_1,\dots,y^{I''}_{n_{I''}}) \, = \, \expsolass_l 
\]
are the exponential solutions of the associated equations (cf.\ Proposition~\ref{prop:exponentialandRiccati}).
\end{proposition}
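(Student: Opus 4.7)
The bulk of the statement follows quickly from Theorem~\ref{cor:extensionsbyPI}. Part~(c) identifies $\generalext^{\unirad(\parabolic_J)}$ as the Picard--Vessiot extension of $\difffield\langle \bss(\bvv), \bvvbase\rangle$ for $\LCLM(\bss(\bvv), \bvvbase, \partial)\, y = 0$, so by general Picard--Vessiot theory any fundamental system differentially generates this field over the base:
\[
\generalext^{\unirad(\parabolic_J)} \, = \, \difffield\langle \bss(\bvv), \bvvbase\rangle \langle y^{I''}_1, \ldots, y^{I''}_{n_{I''}}\rangle.
\]
Meanwhile part~(b) places $\bexp$, $\bvvext$ and each $\intn_i$ with $\beta_i \in \leviroots^-$ inside $\generalext^{\unirad(\parabolic_J)}$, while for $i \in I''$ the component $v_i$ already lies in $\bvvbase$ and is trivially of the required form. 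Hence each of $\exp_i$, $v_i$, and the required $\intn_i$ is some differential rational function in $y^{I''}_1, \ldots, y^{I''}_{n_{I''}}$ over $\difffield\langle \bss(\bvv), \bvvbase\rangle$, producing $\exprforexp^{I''}_i$, $V^{I''}_i$, $\exprforint^{I''}_i$ as claimed.

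For the \emph{moreover} part, the plan is to use Proposition~\ref{prop:BruhatParametersofXmainpart}. That result describes the generic Bruhat decomposition of the universal point $\overline{Y} \in \group$ in the localization $\mathcal{M}^{-1}\field[\group]$, where $\mathcal{M}$ is generated by certain polynomials $e_1, \ldots, e_l$ and where the torus component is a Laurent monomial in the $e_j$. Specializing $\overline{Y} = \fm$ yields rational expressions for $\bvv$, $\bff$, $\bexp$, $\bint$ in the entries of $\fm$ whose denominators are Laurent monomials in the values $e_j(\fm)$. By Proposition~\ref{prop:exponentialandRiccati}, the exponentials $\expsolass_j$ are, up to nonzero constants, Laurent monomials in $\exp_1, \ldots, \exp_l$ (since $b_1 v_1, \ldots, b_l v_l$ and $g_1(\bvv), \ldots, g_l(\bvv)$ generate the same $\Z$-module), and the $\exp_i$ in turn are Laurent monomials in the $e_j(\fm)$; after rescaling by constants one may arrange $e_j(\fm) = \expsolass_j$, so that the same $l$ denominators control all the Bruhat parameters at once.

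It remains to transport these expressions from the matrix entries of $\fm$ to the basis $y^{I''}_1, \ldots, y^{I''}_{n_{I''}}$. By Remark~\ref{rem:basisLCLM}, a solution basis of the normal form equation consists of the entries of the first row of $\BNFcomp\, \fm$ with $\BNFcomp \in \GL_n(\field\{\bss(\bvv)\})$, and the LCLM basis is obtained by applying suitable compositions of the irreducible factors $L_k(\partial)$ to $\field$-linear combinations of these entries. Tracing through, each $e_j(\fm) = \expsolass_j$ is a differential polynomial in the $y^{I''}_k$ over $\field\langle \bss(\bvv), \bvvbase\rangle$, producing the desired lifts $E_j(\bZ)$. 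Substituting $y^{I''}_k \mapsto Z_k$ into the rational expressions for $\exp_i$, $v_i$ and $\intn_i$ from the previous paragraph then yields $\exprforexp^{I''}_i$, $V^{I''}_i$ and $\exprforint^{I''}_i$ inside $D_{\exp}^{-1}\field\langle \bss(\bvv), \bvvbase\rangle\{\bZ\}$, where $D_{\exp}$ is generated by $E_1(\bZ), \ldots, E_l(\bZ)$.

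The main obstacle is the last bookkeeping step: one must verify that each $e_j(\fm)$ can indeed be realized as a \emph{polynomial} (rather than merely a rational function) in the $y^{I''}_k$'s with coefficients in $\field\langle \bss(\bvv), \bvvbase\rangle$, and that inverting the basis map on the generic Bruhat expressions introduces no new denominators beyond those already captured by the $E_j$. Granting this, the rest of the proposition is a clean consequence of the Picard--Vessiot structure of $\generalext^{\unipotent}$ over $\generalext^{\parabolic_J}$ and of the explicit generic Bruhat parametrization.
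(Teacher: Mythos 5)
Your parts (a)--(c) are fine and essentially follow the paper's route: Theorem~\ref{cor:extensionsbyPI}\ref{cor:extensionsbyPI(b)} puts $\bexp$, $\bvvext$ and the $\intn_i$ with $\beta_i \in \leviroots^-$ into $\generalext^{\unirad(\parabolic_J)}$, and part \ref{cor:extensionsbyPI(c)} identifies that field as the Picard--Vessiot extension for the least common left multiple, hence as $\field\langle \bss(\bvv),\bvvbase\rangle\langle y^{I''}_1,\dots,y^{I''}_{n_{I''}}\rangle$; your shortcut for (b) (using that every $v_i$ already lies in this field) is a legitimate replacement for the paper's more constructive step via $\Q$-linear combinations of the $g_j(\bvv)$ and logarithmic derivatives of the $\exprforexp^{I''}_j$.

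The supplement, however, is where the real work lies, and you concede exactly that point: you ``grant'' that each $\expsolass_j$, and the denominator-cleared Bruhat parameters, can be written as differential \emph{polynomials} in $y^{I''}_1,\dots,y^{I''}_{n_{I''}}$ over $\field\langle \bss(\bvv),\bvvbase\rangle$, with no denominators beyond powers of the $E_j(\bZ)$. This is the entire content of the ``moreover'' clause, and the ``tracing through'' you propose cannot supply it: the passage from $\fm$ to the LCLM basis goes in the wrong direction (the $y^{I''}_k$ arise by applying operators and linear combinations to the first row of $\BNFcomp\,\fm$, and inverting this inside a polynomial ring is exactly what needs proof), so your construction only re-delivers rational functions in the $y^{I''}_k$, which parts (a)--(c) already give. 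The paper closes the gap by a different mechanism: via the isomorphism $\psi\colon \field\langle\bss(\bvv)\rangle\otimes_{\field}\field[\group]\to\field\langle\bss(\bvv)\rangle[\fm]$ and Propositions~\ref{prop:BruhatParametersofX}, \ref{prop:BruhatParametersofXforG2} (where $e_j=\psi^{-1}(\expsolass_j)\in\field[\group]$ and the Bruhat parameters lie in $\mathcal{M}^{-1}\field[\group]$), one sees that $\expsolass_j$ and the products $\exp_j\prod_k(\expsolass_k)^{a_{j,k}}$, $v_j\prod_k(\expsolass_k)^{b_{j,k}}$, $\intn_i\prod_k(\expsolass_k)^{c_{i,k}}$ with non-negative exponents lie in the Picard--Vessiot \emph{ring} $\field\langle\bss(\bvv)\rangle[\fm]$; hence each satisfies a linear differential equation over $\field\langle\bss(\bvv)\rangle$, and since each also lies in $\generalext^{\unirad(\parabolic_J)}$, the criterion of \cite[Corollary~1.38]{vanderPutSinger}, applied to the Picard--Vessiot ring of the least common left multiple, places them in $\field\langle\bss(\bvv),\bvvbase\rangle\{y^{I''}_1,\dots,y^{I''}_{n_{I''}}\}$. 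Some argument of this kind (membership in the PV ring characterized by satisfying a linear ODE over the base) is indispensable; without it the claim about $D_{\exp}^{-1}\field\langle\bss(\bvv),\bvvbase\rangle\{\bZ\}$ remains unproven in your write-up.
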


\begin{proof}
According to Theorem~\ref{cor:extensionsbyPI} we have that $\exp_j \in \generalext^{\unirad(\parabolic_J)}$ for $1\leq j \leq l$ and that $\intn_i \in \generalext^{\unirad(\parabolic_J)}$ for all indices $i$ such that $\beta_i \in \leviroots^-$.
Moreover, Theorem~\ref{cor:extensionsbyPI} also implies that 
\[
\generalext^{\unirad(\parabolic_J)} \, = \, \field \langle \bss(\bvv),\bvvbase \rangle \langle y^{I''}_1,\dots,y^{I''}_{n_{I''}} \rangle \,. 
\]
This proves \ref{cor:exprforexp} and \ref{cor:exprforint}.
For \ref{cor:exprforv} we use Gaussian elimination to determine $\Q$-linear combinations of $g_1(\bvv),\dots,g_l(\bvv)$ to express $v_1,\dots,v_l$.
In these expressions we substitute for $g_1(\bvv),\dots,g_l(\bvv)$ the differential rational functions
$$\frac{\exprforexp^{I''}_j(\bZ)'}{\exprforexp^{I''}_j(\bZ)} \qquad \text{for} \ j=1,\dots,l$$
and obtain $V_i^{I''}(\bZ)$.

To prove the supplement we consider the differential $\field\langle \bss(\bvv) \rangle$-isomorphism of Picard-Vessiot rings 
\[
\psi\colon \field \langle \bss(\bvv) \rangle \otimes_{\field} \field[\group] \rightarrow  \field \langle \bss(\bvv) \rangle [\fm], \ \overline{Y}_{i,j} \mapsto \fm_{i,j}
\]
for $A_{\group}(\bss(\bvv))$ (cf.\ \cite[Theorem 1.28]{vanderPutSinger} and note that here the torsor is trivial) and extend it to the differential $\field \langle \bss(\bvv) \rangle$-isomorphism of Picard-Vessiot fields
\[
\psi\colon \field \langle \bss(\bvv) \rangle \otimes_{\field} \field(\group) \rightarrow  \field \langle \bss(\bvv) \rangle (\fm) = \generalext, \ \overline{Y}_{i,j} \mapsto \fm_{i,j} \, ,
\]
which we also denote by $\psi$.
By the uniqueness of the Bruhat decomposition the inverse $\psi^{-1}$ maps the Bruhat decomposition of $\fm$ to the Bruhat decomposition of $\overline{Y}$ from Proposition~\ref{prop:BruhatParametersofXmainpart}.
More precisely, we have   
\[
\begin{array}{rcl}
\psi^{-1}(\fm)  \! & \! = \! & \!
\psi^{-1}(\buu(\bvv,\bff) \, n(\overline{w}) \, \btt(\bexp) \, \buu(\bint)) \\[0.5em] \! & \! = \! & \!
\buu(\psi^{-1}(\bvv,\bff)) \, n(\overline{w}) \, \btt(\psi^{-1}(\bexp)) \, \buu(\psi^{-1}(\bint))\\[0.5em]
\! & \! = \! & \! \buu(\bxx) \, n(\overline{w}) \, \btt(\bzz) \, \buu(\bww) = \overline{Y}\,,
\end{array}
\]
from which we conclude that $\psi^{-1}(\bvv)=(x_1,\dots,x_l)$, $\psi^{-1}(\bexp)=\bzz$ and that $\psi^{-1}(\intn_i)=w_i$ fo all $1 \leq i \leq m$  with $\beta_i \in \leviroots^-$. 

We are going to prove that there exist exponents $a_{j,k}$, $b_{j,k}$, $c_{i,k}$ in $\Z_{\geq 0}$ such that
\[
\begin{array}{ll}
\check{z}_j \, := \, \exp_j \prod_{k=1}^l (\expsolass_k)^{a_{j,k}} &
\quad \text{for} \ 1 \leq j \leq l\,, \\[0.5em]
\check{x}_j \, := \, v_j \prod_{k=1}^l (\expsolass_k)^{b_{j,k}} & \quad \text{for} \ 1 \leq j \leq l\,,\\[0.5em]
\check{w}_i \, := \, \intn_i \prod_{k=1}^l (\expsolass_k)^{c_{i,k}} & \quad \text{for} \ 1 \leq i \leq m \ \text{with} \ \beta_i \in \leviroots^-
\end{array}
\]
are in the Picard-Vessiot ring $\field\langle \bss(\bvv)\rangle[\fm]$.
To this end, we refer to the proof of Proposition~\ref{prop:BruhatParametersofX}  and \ref{prop:BruhatParametersofXforG2}, where $e_1,\dots,e_l$ of Proposition~\ref{prop:BruhatParametersofXmainpart} are defined as the images of $\expsolass_1, \dots, \expsolass_l$ under $\psi^{-1}$. 
According to Proposition~\ref{prop:BruhatParametersofXmainpart}, the elements $e_1,\dots,e_l$ are contained in $\field[\group]$ and $z_1,\dots,z_l$ are products of powers of $e_1,\dots,e_l$ with exponents in $\Z$.
Moreover, the same proposition yields that $x_1,\dots,x_l$ as well as those $w_i$ with $1 \leq i \leq m$ such that $\beta_i \in \leviroots^-$ are
in the localization $\mathcal{M}^{-1}\field[\group]$, where  $\mathcal{M}$ is generated by $e_1,\dots,e_l$.
We conclude that we can multiply $z_1,\dots,z_l$ and $x_1,\dots,x_l$ as well as $w_i$ with suitable non-negative powers of $e_1,\dots,e_l$ to obtain elements in $\field[\group]$. Applying now $\psi$ to these products yields the elements $\check{z}_j$, $\check{x}_j$ and $\check{w}_i$ in $\field\langle \bss(\bvv)\rangle[\fm]$ as claimed.

Finally, we are going to prove that the $\check{z}_j$, $\check{x}_j$ and $\check{w}_i$ as well as the $\expsolass_j$ lie in the Picard-Vessiot ring  
\[
\field \langle \bss(\bvv), \bvvbase \rangle \{ y^{I''}_1,\dots,y^{I''}_{n_{I''}} \} \subset \generalext^{\unirad(\parabolic_J)}
\]
for the least common left multiple. This will then show the assertion of the supplement.
Theorem~\ref{cor:extensionsbyPI} implies that the $v_j$, $\exp_j$ and $\intn_i$ are elements of $\generalext^{\unirad(\parabolic_J)}$ and with the $\exp_j$ also the $\expsolass_j$ are in $\generalext^{\unirad(\parabolic_J)}$ by Proposition~\ref{prop:exponentialandRiccati}. Hence, the elements $\check{z}_j$, $\check{x}_j$ and $\check{w}_i$ are in $\generalext^{\unirad(\parabolic_J)}$.
Since $\check{z}_j$, $\check{x}_j$ and $\check{w}_i$  lie in $\field\langle \bss(\bvv)\rangle[\fm]$, they satisfy a linear differential equation over $\field\langle \bss(\bvv)\rangle$ according to \cite[Corollary~1.38]{vanderPutSinger}. Clearly, the $\expsolass_j$ also satisfy a linear differential equation over $\field\langle \bss(\bvv)\rangle$. 
Since $\field\langle \bss(\bvv)\rangle \subset \field\langle \bss(\bvv), \bvvbase \rangle$, the elements $\check{z}_j$, $\check{x}_j$, $\check{w}_i$ and the $\expsolass_j$
trivially satisfy a linear differential equation over $\field\langle \bss(\bvv), \bvvbase \rangle$ and so 
the previous reference implies that they lie in the Picard-Vessiot ring  
$\field \langle \bss(\bvv), \bvvbase \rangle \{ y^{I''}_1,\dots,y^{I''}_{n_{I''}} \}$  
for the least common left multiple. 
\end{proof} 

\begin{remark} \label{rem:introThomas}
We briefly recall that for any differential system
\begin{equation}\label{eq:differentialsystem}
p_1 = 0, \quad \ldots, \quad p_r = 0, \quad q_1 \neq 0, \quad \ldots, \quad q_s \neq 0,
\end{equation}
defined over a differential field $K$ of characteristic zero,
where $p_1, \ldots, p_r, q_1, \ldots, q_s \in K\{ x_1, \ldots, x_m \}$
are differential polynomials, a \emph{Thomas decomposition} can be
computed in finitely many steps, which consists of finitely many
simple differential systems $S_1$, \ldots, $S_k$ such that the
solution set of \eqref{eq:differentialsystem} in formal power series
is the disjoint union
of the solutions set of $S_1$, \ldots, $S_k$. This decomposition depends, in particular, on a chosen ranking on $K\{ x_1, \ldots, x_m \}$.
Using an elimination ranking $x_1, \ldots, x_k \gg x_{k+1}, \ldots, x_m$ produces simple differential systems with the property that those equations that do not
involve $x_1, \ldots, x_k$ generate all equations not involving $x_1, \ldots, x_k$ implied by the system.
\end{remark}

\begin{remark}\label{rem:computeEXPVINT}
  We are going to explain how one can compute the rational functions of Proposition~\ref{cor:exprforexpandint} \ref{cor:exprforexp}, \ref{cor:exprforv} and \ref{cor:exprforint} such that they have the properties stated in the supplement.
  
Let $\BNFcomp$ and $y_1,\dots,y_n$ be as in Remark~\ref{rem:basisLCLM}. The process explained in that remark yields a basis $y^{I''}_1,\dots,y^{I''}_{n_{I''}}$ of the solution space for 
\[
\LCLM(\bss(\bvv),\bvvbase,\partial) \, y \, = \, 0
\]
from $y_1,\dots,y_n$.
  We repeat now the same construction with differential indeterminates $\hat{y}_1,\dots,\hat{y}_n$ over $\field \langle \bss(\bvv),\bvvbase \rangle$  instead of $y_1,\dots,y_n$. 
  More precisely,  let
  \[
  h_1(\hat{y}_1,\dots,\hat{y}_n), \quad \dots, \quad h_n(\hat{y}_1,\dots,\hat{y}_n)
  \]
  from Remark~\ref{rem:basisLCLM} be the linear differential polynomials in $\hat{y}_1,\dots,\hat{y}_n$ with coefficients in $\field\langle \bss(\bvv),\bvvbase \rangle$ such that their evaluations $h_1(y_1,\dots,y_n),\dots,h_n(y_1,\dots,y_n)$ are equal to the basis elements in \eqref{eqn:basesLCLM}. 
  We renumber $h_1(\hat{y}_1,\dots,\hat{y}_n),\dots, h_n(\hat{y}_1,\dots,\hat{y}_n)$ such that the first $n_{I''}$ linear differential polynomials 
 are the ones defining the basis
 \[
 y_1^{I''} = h_1(y_1,\dots,y_n), \quad \dots, \quad y_{n_{I''}}^{I''} = h_{n_{I''}}(y_1,\dots,y_n)
 \]
from Remark~\ref{rem:basisLCLM}.
We define with new differential indeterminates $\hat{y}^{I''}_1,\dots,\hat{y}^{I''}_{n_{I''}}$ over $\field \langle \bss(\bvv), \bvvbase \rangle$ the differential polynomials
\begin{gather*}
\hat{y}^{I''}_1 - h_1(\hat{y}_1,\dots,\hat{y}_n), \quad \ldots, \quad
\hat{y}^{I''}_{n_{I''}}-h_{n_{I''}}(\hat{y}_1,\dots,\hat{y}_n)\\
\in \field\langle \bss(\bvv),\bvvbase \rangle \{ \hat{y}_1,\dots,\hat{y}_n , \hat{y}^{I''}_1,\dots,\hat{y}^{I''}_{n_{I''}}\}.
\end{gather*}
Next we apply the linear change of variables
\begin{equation}\label{eqn:linearchangeofvariables}
\BNFcomp^{-1} \wronski(\hat{y}_1,\dots, \hat{y}_n) \, = \, \coord, \quad \text{where} \ \coord=(\coord_{i,j}),
\end{equation}
to the defining ideal $I_{\group} \unlhd \field[\coord,\det(\coord)^{-1}]=\field[\GL_n]$ of $\group$ and obtain the ideal $$I_{\group}^{\rm comp} \unlhd \field\{ \bss(\bvv)\}[\hat{y}_1,\dots,\hat{y}_1^{(n-1)},\dots, \hat{y}_n, \dots , \hat{y}_n^{(n-1)}] .$$
Moreover, let $\overline{\coord} = (\overline{\coord}_{i,j})$ be the matrix whose entries are the residue classes of $\coord_{i,j}$ in
\[
\field[\coord,\det(\coord)^{-1}]/I_{\group}.
\]
Compute the Bruhat decomposition of $\overline{\coord}$ and obtain for its coefficients $\bxx$, $\bzz$ and $\bww$ rational functions  in $\overline{\coord}_{i,j}$
as in Proposition~\ref{prop:BruhatParametersofXmainpart}.
Recall from the proof of 
Proposition~\ref{cor:exprforexpandint} that $\psi$ maps these rational functions to their corresponding counterpart in the Bruhat decomposition of $\fm$. 
Moreover, the images under $\psi$ of $e_1,\dots,e_l$ and 
the numerators of $x_1,\dots,x_l$, $z_1,\dots,z_l$ and of $w_i$ for $1 \leq i \leq m$ with $\beta_i \in \leviroots^-$  are 
$\expsolass_1, \dots, \expsolass_l$ and $\check{x}_1, \dots,\check{x}_l$, $\check{z}_1,\dots,\check{z}_l$ and $\check{w}_i$ in $\field\langle \bss(\bvv), \bvvbase\rangle \{y_1^{I''}, \dots, y_{n_{I''}}^{I''} \}$ (cf.~the proof of Proposition~\ref{cor:exprforexpandint}).
We are going to explain how one can compute a representation  
of the latter elements 
in $\field\langle \bss(\bvv), \bvvbase\rangle \{y_1^{I''}, \dots, y_{n_{I''}}^{I''} \}$ using their preimages in $\field[\overline{Y}_{i,j}]$. 
To this purpose, let $f$ be one of the above elements in $\field[\overline{Y}_{i,j}]$ and let $\check{f}$ be its corresponding image in $\field\langle \bss(\bvv), \bvvbase\rangle \{y_1^{I''}, \dots, y_{n_{I''}}^{I''} \}$.
We apply the linear change of variables \eqref{eqn:linearchangeofvariables} to $f$ and obtain $\tilde{f} \in \field \langle \bss(\bvv) \rangle \{\hat{y}_1, \dots, \hat{y}_n\}$. We compute now by differential elimination the intersection of the differential ideal generated by the generators of $I_{\group}^{\rm comp}$ and by
\[
\begin{array}{ll}
x - \tilde{f}(\hat{y}_1,\dots,\hat{y}_n)\,, & \\[0.5em]
\hat{y}^{I''}_i - h_i(\hat{y}_1,\dots,\hat{y}_n)\,, & \quad i = 1, \ldots, n_{I''}\,,\\[0.5em]
L_{\group}(\bss(\bvv),\partial) \, \hat{y}_j\,, & \quad j = 1, \ldots, n\,,\\[0.5em]
L_s(\partial) \, \hat{y}^{I''}_i\,, & \quad \text{if} \ \hat{y}^{I''}_i \ \text{is a solution of the factor} \  L_s(\partial), \, s = 1, \ldots, k\,,
\end{array}
\]
with the differential polynomial ring $\field\langle \bss(\bvv),\bvvbase \rangle \{ x, \hat{y}^{I''}_1,\dots,\hat{y}^{I''}_{n_{I''}}\}$.
Using an appropriate elimination ranking as
$\hat{y}_1,\dots,\hat{y}_n \gg x \gg \hat{y}^{I''}_1,\dots,\hat{y}^{I''}_{n_{I''}}$
we obtain for each ideal a differential polynomial of the form 
    \[
    x-d
    \]
    with $d \in \field\langle \bss(\bvv),\bvvbase \rangle \{ \hat{y}^{I''}_1,\dots,\hat{y}^{I''}_{n_{I''}}\}$.
    Note that in general the differential Thomas decomposition returns several simple systems. 
     By substituting the basis elements $y_1,\dots, y_n$, $y_1^{I''},\dots , y_{n_{I''}}^{I''}$ of Remark~\ref{rem:basisLCLM} for $\hat{y}_1,\dots, \hat{y}_n$, $\hat{y}_1^{I''},\dots , \hat{y}_{n_{I''}}^{I''}$   into the equations and inequations of the simple systems and taking one without a contradiction, one finds a valid relation $ x -d=0$. 
     The differential polynomial $d$ defines a differential polynomial in $\field \langle \bss(\bvv), \bvvbase \rangle \{ \bZ \}$ with the property that when we substitute for $\bZ$ the basis elements $y_1^{I''},\dots , y_{n_{I''}}^{I''}$ we obtain $\check{f}$. 
     If $f$ was $e_j$ for $1 \leq j \leq l$, then $d$ defines the differential polynomial $E_j(\bZ)$ in the supplement of Proposition~\ref{cor:exprforexpandint}.
     If $f$ was the numerator of $x_j$, $z_j$ or $w_i$, then dividing the differential polynomial in $\field \langle \bss(\bvv), \bvvbase \rangle \{ \bZ \}$ defined by $d$  by the respective product of powers of $E_1(\bZ), \dots,E_l(\bZ)$ with exponents in $\Z_{\geq 0}$ yields the rational function 
  $\exprforexp^{I''}_j(\bZ)$, $V_j(\bZ)$ or $\exprforint_i^{I''}(\bZ)$
of Proposition~\ref{cor:exprforexpandint} having the properties of the supplement.
\end{remark}

\begin{example}
For $\group = \SL_4$ the normal form operator is
\[
L_{\SL_4}(\bss(\bvv), \partial) \, = \, \partial^4 - s_1(\bvv) \partial^2 - s_2(\bvv) \partial - s_3(\bvv)
\]
with coefficients 
\[
\begin{array}{rcl}
s_1(\bvv) \! & \! = \! & \!
v_3^2 + v_2^2 - v_1 v_2 - v_2 v_3 + v_1^2 + v_1' + v_2' + v_3',\\[0.2em]
s_2(\bvv) \! & \! = \! & \!
4v_1v_1' - v_2v_1' - 2v_1v_2' + 2v_2v_2' - v_2v_3' - v_1v_2^2 \, +\\[0.2em]
\! & \! \! & \! v_2^2v_3 + v_1^2v_2 - v_2v_3^2 + 2v_1'' + v_2'',\\[0.2em]
s_3(\bvv) \! & \! = \! & \!
v_1''' + 2v_1v_1'' - v_1v_2'' + 2(v_1')^2 + 2v_1v_2v_1' - v_2^2v_1' \, +\\[0.2em]
\! & \! \! & \! v_2v_3v_1' - v_3^2v_1' - v_1'v_2' - v_1'v_3' + v_1^2v_2' - 2v_1v_2v_2' \, -\\[0.2em]
\! & \! \! & \! v_1^2v_3' + v_1v_2v_3' + v_1^2v_2v_3 - v_1^2v_3^2 - v_1v_2^2v_3 + v_1v_2v_3^2.
\end{array}
\]
We consider here the case where $I'=\{2,3\}$ and $I''=\{ 1\}$. The longest Weyl group element $\overline{w}$ maps $-\alpha_2$, $-\alpha_3$ to $\alpha_2$, $\alpha_1$ and so we have $J=\{1,2 \}$. Over $\field \langle \bss(\bvv), v_1\rangle$ 
the normal form operator has the irreducible factorization $L_{\SL_4}(\bss(\bvv), \partial)=L_1(\partial) L_2(\partial)$ with 
\[
\begin{array}{l}
L_1(\partial) := \partial^3 + v_1 \partial^2 - (s_1(\bvv) - v_1^2 - 3 v_1') \partial - s_2(\bvv) - v_1 s_1(\bvv) + 3 v_1'' + 5 v_1 v_1' + v_1^3, \\[0.2em]
L_2(\partial):= \partial - v_1 .
\end{array}
\]
Moreover, with $J=\{1,2 \}$ we find that among the coefficients $\bint$ of the Bruhat decomposition of $\fm$ the elements $\intn_3$, $\intn_5$ and $\intn_6$ are not in the fixed field $\generalext^{\unirad(\parabolic_J)}$. 
From the coefficients of the Bruhat decomposition of $\overline{Y}$ we obtain the following rational functions in $\field(\overline{Y}_{i,j})$ which correspond to $\expsolass_1,\expsolass_2,\expsolass_3$, $\bexp$ and $\intn_1$, $\intn_2$, $\intn_4$, respectively:
\[
\begin{array}{rcl}
    e_1 \! & \! = \! & \! \overline{Y}_{1,4}\,, \quad
    e_2 \, = \, \overline{Y}_{1,3} \overline{Y}_{2,4} - \overline{Y}_{1,4} \overline{Y}_{2,3}\,,\\[0.5em]
    e_3 \! & \! = \! & \! \overline{Y}_{1,2} \overline{Y}_{2,3} \overline{Y}_{3,4} - \overline{Y}_{1,2}\overline{Y}_{2,4}\overline{Y}_{3,3} - \overline{Y}_{1,3}\overline{Y}_{2, 2}\overline{Y}_{3,4} + \overline{Y}_{1,3}\overline{Y}_{2,4}\overline{Y}_{3,2} \, +\\[0.2em]
    \! & \! \! & \! \overline{Y}_{1,4}\overline{Y}_{2,2}\overline{Y}_{3,3} - \overline{Y}_{1,4}\overline{Y}_{2,3}\overline{Y}_{3,2}\,,\\[0.5em]
    z_1 \! & \! = \! & \! 1 / e_3\,, \quad
    z_2 \, = \, 1 / e_2\,, \quad
    z_3 \, = \, 1 / e_1\,,\\[0.5em]
    w_1 \! & \! = \! & \! \displaystyle
    \frac{1}{e_3}(\overline{Y}_{1,1}\overline{Y}_{2,3}\overline{Y}_{3,4} - \overline{Y}_{1,1}\overline{Y}_{2,4}\overline{Y}_{3,3} - \overline{Y}_{1,3}\overline{Y}_{2,1}\overline{Y}_{3,4} + \overline{Y}_{1,3}\overline{Y}_{2,4}\overline{Y}_{3,1} \, + \\[0.5em]
    \! & \! \! & \! \overline{Y}_{1,4}\overline{Y}_{2,1}\overline{Y}_{3,3} - \overline{Y}_{1, 4} \overline{Y}_{2,3} \overline{Y}_{3,1})\,,\\[0.5em]
    w_2 \! & \! = \! & \! \displaystyle
    \frac{\overline{Y}_{1, 2} \overline{Y}_{2,4} - \overline{Y}_{1, 4} \overline{Y}_{2, 2}}{e_2}\,, \quad
    w_4 \, = \, \frac{\overline{Y}_{1,1} \overline{Y}_{2, 4} - \overline{Y}_{1,4} \overline{Y}_{2, 1}}{e_2}\,.
\end{array}
\]
Let $f$ be one of the $e_i$ or one of the numerators of
$w_j$ and perform the substitution of variables $\overline{Y} \mapsto \wronski(\hat{y}_1,\dots,\hat{y}_n)$ to it.  
Then the next step is the computation of the differential Thomas decomposition of the differential ideal in 
$$\field \{ \bss(\bvv),v_1 \} \{x,\hat{y}_i,\hat{y}^{I''}_i \mid i=1,2,3,4 \}$$ generated by
\begin{gather*}
\begin{array}{l}
x - \tilde{f}(\hat{y}_1,\dots,\hat{y}_n),  \\[0.5em]
\hat{y}^{I''}_1 - L_2(\partial)\hat{y}_1, \ 
\hat{y}^{I''}_2 - L_2(\partial)\hat{y}_2, \
\hat{y}^{I''}_3 - L_2(\partial)\hat{y}_3, \
\hat{y}^{I''}_4 - \hat{y}_4, \
\\[0.5em]
L_{\group}(\bss(\bvv),\partial) \, \hat{y}_1, \ L_{\group}(\bss(\bvv),\partial) \, \hat{y}_2, L_{\group}(\bss(\bvv),\partial) \, \hat{y}_3, L_{\group}(\bss(\bvv),\partial) \, \hat{y}_4,
\\[0.5em]
L_1(\partial) \, \hat{y}^{I''}_1,  
L_1(\partial) \, \hat{y}^{I''}_2, 
L_1(\partial) \, \hat{y}^{I''}_3,
L_2(\partial) \, \hat{y}^{I''}_4  \quad \text{and} \\[0.5em]
\det(\wronski(\hat{y}_1,\hat{y}_2,\hat{y}_3,\hat{y}_4))-1 .
\end{array}
\end{gather*}
The numbering of the $\hat{y}_i^{I''}$ is chosen according to the numbering of the first row $y_1,\dots,y_4$ in $\fm$. Note that $y_4$ is a solution of $L_2(\partial)$ and that $L_2(\partial) \, y_1$, $L_2(\partial) \, y_2$ and $L_2(\partial) \, y_3$ are solutions of $L_1(\partial)$. Using the elimination ranking defined by
\[
\hat{y}_1 > \hat{y}_2 > \hat{y}_3 > \hat{y}_4 \gg x \gg \hat{y}^{I''}_1 > \hat{y}^{I''}_2 > \hat{y}^{I''}_3 > \hat{y}^{I''}_{4}\,,
\]
the Thomas decompositions for these ideals consist of a single simple differential system. 
In this system we find the linear relation of the form $x-d$ with $d \in \field\{ \bss(\bvv),v_1 \}  \{ \hat{y}_1^{I''},\hat{y}_2^{I''},\hat{y}_3^{I''},\hat{y}_4^{I''} \}$. For the different choices of $f$, we find $d$ as  
\[
\begin{array}{cl}
 \hat{y}_4^{I''}    & \text{if} \, f = e_1\,, \\[0.5em]
  - \hat{y}_3^{I''} \hat{y}_4^{I''}    & \text{if} \, f = e_2\,, \\[0.5em]
   \hat{y}_2^{I''} (\hat{y}_3^{I''})' \hat{y}_4^{I''} - (\hat{y}_2^{I''})' \hat{y}_3^{I''} \hat{y}_4^{I''}   & \text{if} \, f=e_3\,, \\[0.5em]
 \hat{y}_1^{I''} (\hat{y}_3^{I''})' \hat{y}_4^{I''} - (\hat{y}_1^{I''})'  \hat{y}_3^{I''} \hat{y}_4^{I''}    & \text{if} \, f=w_1\,, \\[0.5em]
  - \hat{y}_2^{I''} \hat{y}_4^{I''}   & \text{if} \, f=w_2\,,  \\[0.5em]
  - \hat{y}_1^{I''} \hat{y}_4^{I''}   & \text{if} \, f=w_4\,.
\end{array}
\]
Replacing in the first three differential polynomials the indeterminates $\hat{y}_i^{I''}$ by $Z_i$, we obtain 
$E_1(\bZ)$, $E_2(\bZ)$ and $E_3(\bZ)$. Performing the same replacement in the last three differential polynomials and dividing by $E_3 (\bZ)$, $E_2(\bZ)$ and $E_2(\bZ)$ respectively, we obtain $\exprforint_1(\bZ)$, $\exprforint_2(\bZ)$ and $\exprforint_4(\bZ)$. Moreover, we have 
\[
\exprforexp_1(\bZ) \, = \, \frac{1}{E_3(\bZ)}, \quad
\exprforexp_2(\bZ) \, = \, \frac{1}{E_2(\bZ)}, \quad
\exprforexp_3(\bZ) \, = \, \frac{1}{E_1(\bZ)}
\]
and as $V_1(\bZ)$, $V_2(\bZ)$ and $V_3(\bZ)$ we take the logarithmic derivative of $E_1(\bZ)$, $E_2(\bZ)$ and $E_3(\bZ)$ respectively.
\end{example}

\begin{remark}\label{rem:continueThomas}
Continuing the discussion of Remark~\ref{rem:introThomas},
each simple differential system $S_i$ in a Thomas decomposition
admits an effective membership test to its associated radical differential ideal. More precisely,
let $\mathcal{I}(S_i^{=})$ be the differential ideal of the differential polynomial ring
$K\{ x_1, \ldots, x_m \}$ which is generated by the left hand sides of the equations in $S_i$.
Let $q$ be the product of the initials and separants of the equations in $S_i$ (determined by the ranking).
Then iterated pseudo-reductions of a given  differential polynomial
$p \in K\{ x_1, \ldots, x_m \}$ modulo the left hand sides of the equations in $S_i$ decides whether $p$ belongs to the radical differential ideal $\mathcal{I}(S_i^{=}) : q^{\infty}$.
For more details we refer to \cite{RobertzHabil}.
\end{remark}

\begin{proposition}\label{prop:computationRELnew}
Let $\varphi$ be the differential homomorphism
\[
\begin{array}{rcl}
\varphi\colon \field\{  \bss(\bvv),\bvvbase \} \{ \bZ \} & \to & \field\{ \bss(\bvv),\bvvbase \} \{ y_1^{I''},\dots, y^{I''}_{n_{I''}}\}\,,\\[0.5em]
\bZ=(Z_1,\dots,Z_{n_{I''}}) & \mapsto & (y_1^{I''},\dots, y^{I''}_{n_{I''}})\,.
\end{array}
\]
Then there exist finitely many $\mathrm{REL}_1,\dots,\mathrm{REL}_k \in \field\{  \bss(\bvv),\bvvbase \} \{ \bZ \}$ such that
\[
\ker(\varphi) \, = \, \mathcal{I} : (\mathrm{REL}^{\neq})^{\infty}\,,
\]
where $\mathcal{I}$ is the differential ideal generated by
$\mathrm{REL}_1,\dots,\mathrm{REL}_k$ and where $\mathrm{REL}^{\neq}$ is the product of the initials and separants of these differential polynomials, defined with respect to a chosen
ranking on $\field\{  \bss(\bvv),\bvvbase \} \{ \bZ \}$.
\end{proposition}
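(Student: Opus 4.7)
The plan is to construct the polynomials $\mathrm{REL}_i$ via a differential Thomas decomposition, following the computational template of Remark~\ref{rem:computeEXPVINT} and exploiting the characterization of radical differential ideals via simple systems recalled in Remarks~\ref{rem:introThomas} and~\ref{rem:continueThomas}. The fact that $\ker(\varphi)$ is prime, which follows since its target $\field\{\bss(\bvv),\bvvbase\}\{y_1^{I''},\ldots,y_{n_{I''}}^{I''}\}$ is a subring of the field $\generalext^{\unirad(\parabolic_J)}$, will be used to isolate a single simple system carrying the full kernel.

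First I would introduce auxiliary differential indeterminates $\hat{y}_1,\ldots,\hat{y}_n$ over $\field\langle\bss(\bvv),\bvvbase\rangle$ and form the differential polynomial ring $R:=\field\{\bss(\bvv),\bvvbase\}\{\bZ,\hat{y}_1,\ldots,\hat{y}_n\}$ together with the differential ideal $\mathcal{J}\lhd R$ generated by: (i)~the generators of $I_{\group}^{\rm comp}$, obtained from the defining ideal $I_{\group}$ of $\group$ via the substitution $\coord \mapsto \BNFcomp^{-1}\wronski(\hat{y}_1,\ldots,\hat{y}_n)$ of Remark~\ref{rem:basisLCLM}; (ii)~the differential polynomials $L_{\group}(\bss(\bvv),\partial)\hat{y}_j$ for $j=1,\ldots,n$; (iii)~the linear relations $Z_i-h_i(\hat{y}_1,\ldots,\hat{y}_n)$ for $i=1,\ldots,n_{I''}$, where $h_i$ are the linear differential operators encoding the LCLM-basis $y_i^{I''}$ in terms of the first row of $\BNFcomp\fm$ (Remark~\ref{rem:basisLCLM}). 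By construction the tuple $(\hat{y}_j=y_j,\,Z_i=y_i^{I''})$ is a solution of $\mathcal{J}$, so $\mathcal{J}$ is consistent.

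Next I would apply the differential Thomas algorithm to $\mathcal{J}$ with an elimination ranking in which $\hat{y}_1,\ldots,\hat{y}_n \gg Z_1,\ldots,Z_{n_{I''}}$, obtaining finitely many simple differential systems $S_1,\ldots,S_N$ whose solution sets disjointly cover that of $\mathcal{J}$. Exactly one system, say $S_\nu$, is consistent with the distinguished tuple $(y_j,y_i^{I''})$, and by the elimination property of Thomas decompositions the equations of $S_\nu$ not involving $\hat{y}_1,\ldots,\hat{y}_n$ form a simple differential system in the subring $\field\{\bss(\bvv),\bvvbase\}\{\bZ\}$. I take the left-hand sides of these equations as $\mathrm{REL}_1,\ldots,\mathrm{REL}_k$ and let $\mathcal{I}$ denote the differential ideal they generate.

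It remains to check the equality $\ker(\varphi)=\mathcal{I}:(\mathrm{REL}^{\neq})^{\infty}$. The inclusion $\supseteq$ is immediate: each $\mathrm{REL}_i$ vanishes under $\varphi$ because $(y_i^{I''})$ satisfies all equations of $S_\nu$, and the initials and separants of a simple system do not vanish on that system's own solutions, so $\varphi(\mathrm{REL}^{\neq})\neq 0$ in the domain $\generalext^{\unirad(\parabolic_J)}$. For $\subseteq$, given $p\in\ker(\varphi)$, the effective membership criterion of Remark~\ref{rem:continueThomas} produces a remainder $\tilde p$ after iterated pseudo-reduction of $p$ modulo $\mathrm{REL}_1,\ldots,\mathrm{REL}_k$; the remainder still vanishes at $(y_i^{I''})$, and by the uniqueness of reduced representatives modulo a simple system $\tilde p$ must be zero, hence $p\in\mathcal{I}:(\mathrm{REL}^{\neq})^{\infty}$. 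The main obstacle is justifying the last step precisely: one needs that the reduced remainder of any element of $\ker(\varphi)$ with respect to $S_\nu$ is genuinely zero and not merely an element of some other component. This rests on the fact that $\ker(\varphi)$ is prime and that the simple system $S_\nu$, being the unique component of the Thomas decomposition consistent with the point $(y_j,y_i^{I''})$, captures exactly this prime ideal after elimination.
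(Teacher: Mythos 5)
There is a genuine gap, and it sits exactly where you flagged your "main obstacle". The ideal $\mathcal{J}$ you start from (group membership via $I_{\group}^{\rm comp}$, the equations $L_{\group}(\bss(\bvv),\partial)\hat{y}_j=0$, and $Z_i-h_i(\hat{y})$) is \emph{not} the vanishing ideal of the distinguished point $(y_j,\,y_i^{I''})$ over $\field\langle\bss(\bvv),\bvvbase\rangle$: its solution set in a universal extension is the whole orbit of that point under $\group$ (with constants in the larger constant field), whereas the differential Galois group over $\field\langle\bss(\bvv),\bvvbase\rangle$ is only $\parabolic_J$. Consequently $\ker(\varphi)$ contains relations — for instance the numerators of $v_i-V_i^{I''}(\bZ)$, $\exp_i$- and $\intn_i$-type relations, or any $\parabolic_J$-invariant but not $\group$-invariant identity — that vanish at the specific point but not on the whole orbit, hence lie in no saturation ideal $\mathcal{I}(S_\nu^{=}):q^\infty$ attached to a component of $\mathcal{J}$. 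Your final step then fails: being reduced with respect to the simple system $S_\nu$ and vanishing at \emph{one} solution does not force a differential polynomial to be zero; the membership test of Remark~\ref{rem:continueThomas} certifies membership in the saturation ideal, i.e.\ vanishing on \emph{all} solutions of $S_\nu$, and the distinguished point is generic for $S_\nu$ only if the Galois group over the base were all of $\group$. A minimal counterexample to the reduction argument: for $\hat{y}'-a\hat{y}=0$ over a base containing a solution $y_0$, the polynomial $\hat{y}-y_0$ is reduced, vanishes at the chosen solution, yet is not in $[\hat{y}'-a\hat{y}]:\hat{y}^{\infty}$. So your $\mathrm{REL}_i$ would in general generate (after saturation) a proper subideal of $\ker(\varphi)$; Remark~\ref{rem:computeEXPVINT}, which you model this on, only needs \emph{some} valid relation at the point and therefore does not face this issue.

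The paper avoids the problem by never passing through the group ideal. It introduces tilded copies $\widetilde{\bvv},\widetilde{\bexp},\widetilde{\intn}_i$ ($\beta_i\in\leviroots^-$) of the Bruhat parameters and writes down an \emph{explicit} presentation of the prime ideal $Q$ of all their relations (logarithmic-derivative equations for $\widetilde{\bexp}$, $\widetilde{v}_i-v_i$ for $v_i\in\bvvbase$, $s_i(\widetilde{\bvv})-s_i(\bvv)$, and the integrand equations), so that the quotient is isomorphic to the actual parameter ring inside $\generalext^{\unirad(\parabolic_J)}$. Adding the linear equations $Z_i-\widetilde{y}_i$ yields a \emph{prime} ideal $\widetilde{Q}$ which is, by construction, the full relation ideal of the point. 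Primality guarantees a uniquely determined generic simple system in the Thomas decomposition, and the key identity $\widetilde{Q}=\mathcal{I}(S^{=}_{\rm gen}):q^{\infty}$ follows because $\widetilde{Q}$ is the intersection of the saturation ideals of all components, each of which contains that of the generic one. Elimination with the block ranking then gives $\ker(\varphi)$ in exactly the stated saturated form. If you want to salvage your route, you would have to replace $\mathcal{J}$ by (a presentation of) the exact vanishing ideal of the point and replace "the component consistent with the point" by the generic simple system of a prime ideal; as written, the argument does not go through.
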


\begin{proof}
Since $\field\{ \bss(\bvv),\bvvbase \} \{ y_1^{I''},\dots, y^{I''}_{n_{I''}}\}$ is  contained in $\generalext$, it is an integral domain and so $\ker(\varphi)$ is a prime differential ideal.
 
According to the proof of Theorem~\ref{cor:extensionsbyPI}, a basis 
$y_1^{I''},\dots,y^{I''}_{n_{I''}}$ of the solution space of  
\begin{equation*} 
\LCLM(\bss(\bvv),\bvvbase,\partial) \, y \, = \, 0
\end{equation*}
is obtained by applying the operators 
\[
L_k(\partial), \quad L_{k-1}(\partial) \circ L_k(\partial), \quad \dots, \quad L_{2}(\partial) \circ \cdots \circ L_k(\partial)  \in \field\langle \bss(\bvv),\bvvbase \rangle [\partial]
\]
to the basis elements $y_1,\dots,y_n$ of the solution space of the normal form operator. Let $\bexp^{-1}=(\exp_1^{-1},\dots,\exp_l^{-1})$.
Since $y_1,\dots,y_n$ are elements of
\[
\field\langle \bss(\bvv),\bvvbase \rangle \{ \bvvext , \bexp,\bexp^{-1}, \bint \},  
\]
which is closed under these operators, the basis elements 
$y_1^{I''},\dots,y^{I''}_{n_{I''}}$ are also elements of this differential ring and so
Theorem~\ref{cor:extensionsbyPI} \ref{cor:extensionsbyPI(b)} implies that they are even elements of 
\[
\field\langle \bss(\bvv),\bvvbase \rangle \{ \bvvext , \bexp,\bexp^{-1}, \intn_i \mid \beta_i \in \leviroots^- \}\,.
\]
Hence, we have 
\begin{equation}\label{eqn:PVRingcontainedindiffring}
\begin{array}{l}
    \field \{ \bss(\bvv),\bvvbase \} \{ y_1^{I''},\dots, y^{I''}_{n_{I''}}\}\\[0.5em]
    \qquad \subset \field\langle \bss(\bvv),\bvvbase \rangle \{ \bvvext , \bexp,\bexp^{-1}, \intn_i \mid \beta_i \in \leviroots^- \}\,.
\end{array}
\end{equation}

Let $\widetilde{\bvv} = (\widetilde{v}_1,\dots,\widetilde{v}_l)$, $\widetilde{\bexp}=(\widetilde{\exp}_1,\dots,\widetilde{\exp}_l)$ and $\widetilde{\intn}_i$ with $\beta_i \in \leviroots^-$ be differential indeterminates over $\field\langle \bss(\bvv),\bvvbase \rangle$ and denote by $R$ the differential ring
\[
\field\langle \bss(\bvv),\bvvbase \rangle \{ \widetilde{\bvv} , \widetilde{\bexp},\widetilde{\bexp}^{-1}, \widetilde{\intn}_i \} \, .
\] 
The integrand of $\intn_i$ with index $i$ such that $\beta_i \in \leviroots^-$ is a polynomial expression 
\[
\mathrm{integrand}_i(\bvv,\bexp,\bexp^{-1},\intn_j  )
\]
in the elements $\bvv$, $\bexp$, $\bexp^{-1}$ and $\intn_j$ with indices $j$ such that $\beta_j \in \leviroots^-$ and 
$|\height(\beta_j)|<|\height(\beta_i)|$.
Then the differential ideal $Q$ in $R$ generated by  
\[
\begin{array}{rl}
\widetilde{\exp}'_i \, \widetilde{\exp}^{-1}_i - g_i(\widetilde{\bvv}) & \quad \text{for} \ i=1,\dots,l\,,\\[0.2em]
\widetilde{v}_i - v_i & \quad \text{for} \  v_i \in \bvvbase\,,\\[0.2em]
s_i(\widetilde{\bvv}) - s_i(\bvv) & \quad \text{for} \ i=1,\dots, l\,,\\[0.2em]
\widetilde{\intn}_i' -  \mathrm{integrand}_i(\widetilde{\bvv},\widetilde{\bexp},\widetilde{\bexp}^{-1},\widetilde{\intn}_j) & \quad 
\text{for} \ \beta_i \in \leviroots^-\,,
\end{array}
\]
    is the kernel of the surjective differential $\field\langle \bss(\bvv),\bvvbase \rangle$-homomorphism
    \begin{eqnarray*}
    R & \to & \field\langle \bss(\bvv),\bvvbase \rangle \{ \bvvext , \bexp,\bexp^{-1}, \intn_i \mid \beta_i \in \leviroots^- \}  \\
    (\widetilde{\bvv},\widetilde{\bexp},\widetilde{\intn}_i \mid \beta_i \in \leviroots^-) &\mapsto &(\bvv,\bexp,\intn_i \mid \beta_i \in \leviroots^- ).
    \end{eqnarray*} 
    Thus $Q$ is a prime differential ideal, since $R/Q$ is isomorphic to an integral domain, and we obtain a differential isomorphism 
    \[
    \iota\colon \field\langle \bss(\bvv),\bvvbase \rangle \{ \bvvext , \bexp,\bexp^{-1}, \intn_i \mid \beta_i \in \leviroots^- \} \to R/Q\,.
    \]
    We conclude with \eqref{eqn:PVRingcontainedindiffring} that the problem reduces to computing the kernel of the differential homomorphism
    \[
    \begin{array}{rcl}
    \varphi\colon \field\{ \bss(\bvv),\bvvbase \} \{ \bZ \} & \to & R/Q,\\[0.5em]
    \bZ=(Z_1,\dots,Z_{n_{I''}}) & \mapsto & (\bar{y}_1,\dots, \bar{y}_{n_{I''}})\,,
    \end{array}
    \]
    where $\bar{y}_1,\dots, \bar{y}_{n_{I''}}$ are the images of $y_1^{I''},\dots,y_{n_{I''}}^{I''}$ under $\iota$.
    Let $\widetilde{y}_1,\dots, \widetilde{y}_{n_{I''}}$ be the expressions obtained by replacing 
    in $y_1^{I''},\dots, y_{n_{I''}}^{I''}$ the elements $\bvv$, $\bexp$, $\bexp^{-1}$, $\intn_i$ by $\widetilde{\bvv}$, $\widetilde{\bexp}$, $\widetilde{\bexp}^{-1}$, $\widetilde{\intn}_i$ for $\beta_i \in \leviroots^-$.
    Moreover, let $\widetilde{Q}$ be the differential ideal in 
    \[
    \field\{ \bss(\bvv),\bvvbase \} \{ \bZ, \widetilde{\bvv} , \widetilde{\bexp},\widetilde{\bexp}^{-1}, \widetilde{\intn}_i \} 
    \]
    generated by the generators of $Q$ and the numerators of
    \[
    Z_1 - \widetilde{y}_1, \quad \ldots , \quad Z_{n_{I''}} - \widetilde{y}_{n_{I''}}\,.
    \]
    Since $Q$ is a prime differential ideal and the other generators of $\widetilde{Q}$ are linear in the differential indeterminates $Z_1$, \ldots, $Z_{n_{I''}}$, we conclude that $\widetilde{Q}$ is also prime.
    We compute a Thomas decomposition
    of the differential system defined by $\widetilde{Q}$  
    with respect to an elimination ranking on $\field\{ \bss(\bvv),\bvvbase \} \{ \bZ, \widetilde{\bvv} , \widetilde{\bexp},\widetilde{\bexp}^{-1}, \widetilde{\intn}_i \}$ with
    \[
    (\widetilde{\bvv},\widetilde{\bexp},\widetilde{\bexp}^{-1},\widetilde{\intn}_i \mid \beta_i \in \leviroots^-) \gg (\bss(\bvv),\bvvbase,\bZ)\,,
    \]
    extending a chosen ranking on the second block.
    Since $\widetilde{Q}$ is prime, the resulting Thomas decomposition contains a
    uniquely determined generic simple system (cf.\ \cite[Subsect.~2.2.3]{RobertzHabil}).
    Now $\widetilde{Q}$ is the intersection of the radical differential ideals defined by the simple differential systems of the Thomas decomposition.
    Moreover, each of these differential ideals contains the differential ideal defined by the generic simple system.
    Hence, $\widetilde{Q}$ is equal to the latter ideal.
    Now $\ker(\varphi)$ is the intersection of this ideal with $\field\{ \bss(\bvv),\bvvbase \} \{ \bZ \}$.
    Due to the choice of the elimination ranking, the
    left hand sides $\mathrm{REL}_1,\dots,\mathrm{REL}_k$ of the equations
    in the generic simple system which only involve the indeterminates $\bss(\bvv), \bvvbase$ and $\bZ$ yield differential polynomials as required.
\end{proof}

\section{Reduction of the Normal Form Matrix into the Lie Algebra of a Parabolic Subgroup}\label{sec:intoLieAlgParabolic}

In Sections~\ref{sec:fixedfieldparabolic} and \ref{sec:structureparabolic} we have seen that for any standard parabolic subgroup $\parabolic_J$ the general extension field 
$\generalext$ is a Picard-Vessiot extension of
\[
\generalext^{\parabolic_J} \, = \, \difffield\langle \bss(\bvv), \bvvbase\rangle
\]
for $A_{\group}(\bss(\bvv))$ with differential Galois group $\parabolic_J(\field)$. In this section we will show how to compute a matrix 
$g_1 \in \group(\generalext^{\parabolic_J})$ such that $g_1\fm \in \parabolic_J(\generalext)$ and 
\[
\gauge{g_1}{A_{\group}(\bss(\bvv))} \in \Lie(\parabolic_J)(\generalext^{\parabolic_J}).
\]
This achieves a reduction of the normal form matrix.
Let
\[
\rootbasis'_1 \cup \ldots \cup \rootbasis'_d \, = \, \{\alpha_{i_1},\dots,\alpha_{i_r} \}  
\]
be the unique partition of the subset $\{ \alpha_i \mid i \in I' \} \subset \rootbasis$ of simple roots
such that $\rootbasis'_1,  \dots ,\rootbasis'_d$ are bases of maximal irreducible root subsystems
$\roots'_1, \dots, \roots'_d$ of $\roots$.
Furthermore, we denote the roots of
\[
\roots^-\setminus ( \Phi_1'^- \cup \dots \cup \Phi_d'^- ) \subset \{ \beta_1,\dots, \beta_m \} \, = \, \roots^-
\]
by $\beta_{k_1},\dots,\beta_{k_s}$, where we choose the numbering such that $k_i < k_j$ for $i < j$.  

\begin{lemma}\label{lem:notaroot}
For $i$, $j \in \{1,\dots,d\}$ with $i\neq j$ let $\alpha \in \roots_i'^-$ and $\beta \in \roots_j'^-$. Then $\alpha + \beta$ is not a root of $\roots^-$.
\end{lemma}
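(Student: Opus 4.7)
The plan is to deduce $\alpha+\beta\notin\roots$ from two ingredients: the orthogonality of $\rootbasis'_i$ and $\rootbasis'_j$ inside $\rootbasis$, and the classical fact that the support of any root is a connected subset of the Dynkin diagram.

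First I would unpack the hypothesis on the partition. Since $\roots'_1,\dots,\roots'_d$ are \emph{maximal} irreducible root subsystems arising from subsets of $\{\alpha_k\mid k\in I'\}$, the sets $\rootbasis'_1,\dots,\rootbasis'_d$ are precisely the connected components of the Dynkin subdiagram of $\roots$ induced on $\{\alpha_k\mid k\in I'\}$. In particular, for any $\alpha_s\in\rootbasis'_i$ and $\alpha_t\in\rootbasis'_j$ with $i\neq j$, one has $(\alpha_s,\alpha_t)=0$; equivalently, there is no edge between $\alpha_s$ and $\alpha_t$ in the Dynkin diagram of $\roots$, because any such edge would already appear in the restricted subdiagram and would contradict $\rootbasis'_i,\rootbasis'_j$ lying in different components.

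Next I would recall the standard fact (see, e.g., Bourbaki, \emph{Groupes et alg\`ebres de Lie}, Ch.~VI, \S1, Prop.~19) that for every root $\gamma=\sum_k c_k\alpha_k\in\roots$ the \emph{support} $\mathrm{supp}(\gamma):=\{\alpha_k:c_k\neq 0\}$ induces a connected subdiagram of the Dynkin diagram of $\roots$. This is the only nontrivial input needed; I would quote it rather than reprove it.

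Finally I would write $\alpha=-\sum_{\alpha_s\in\rootbasis'_i}n_s\alpha_s$ and $\beta=-\sum_{\alpha_t\in\rootbasis'_j}m_t\alpha_t$ with $n_s,m_t\in\Z_{\geq 0}$ and, since $\alpha$ and $\beta$ are nonzero roots, not all $n_s$ respectively not all $m_t$ vanish. Then
\[
\mathrm{supp}(\alpha+\beta) \, = \, \mathrm{supp}(\alpha)\cup\mathrm{supp}(\beta) \, \subseteq \, \rootbasis'_i\cup\rootbasis'_j,
\]
and this set meets both $\rootbasis'_i$ and $\rootbasis'_j$ nontrivially. By the orthogonality step, the induced subdiagram on $\mathrm{supp}(\alpha+\beta)$ has no edges connecting its intersection with $\rootbasis'_i$ to its intersection with $\rootbasis'_j$, so it is disconnected. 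Assuming for contradiction that $\alpha+\beta\in\roots$ would then violate the support-connectedness fact. Since $\alpha+\beta$ also has only non-positive simple coefficients, the dichotomy $\roots=\roots^+\cup\roots^-$ shows that we have actually excluded $\alpha+\beta$ from $\roots^-$, as claimed. The main technical point is the support-connectedness lemma; everything else is a direct combinatorial check, so I do not expect any serious obstacle.
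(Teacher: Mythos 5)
Your argument is correct, but it takes a different route from the paper. Both proofs rest on the same first step, namely that simple roots lying in different $\rootbasis'_i$, $\rootbasis'_j$ are orthogonal — you get this from the fact that the $\rootbasis'_k$ are the connected components of the induced Dynkin subdiagram, the paper gets it by arguing that a non-orthogonal pair would let one enlarge $\rootbasis'_i$ to a strictly bigger irreducible set, contradicting maximality; these are the same observation phrased differently. Where you diverge is in the key lemma: you quote the standard fact that the support of any root is a connected subset of the Dynkin diagram, and then the disconnectedness of $\mathrm{supp}(\alpha+\beta)\subseteq\rootbasis'_i\cup\rootbasis'_j$ (nonempty on both sides, no edges in between, no cancellation since the two supports are disjoint) kills $\alpha+\beta$ as a root at once. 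The paper instead avoids invoking support-connectedness: it uses \cite[Corollary~10.2]{HumLie} (partial sums of the simple-root decomposition are roots) to extract a root of the form $-\alpha^i_1-\dots-\alpha^i_u-\alpha^j$ with $\alpha^i_k\in\rootbasis'_i$ and $\alpha^j\in\rootbasis'_j$, and then applies the reflection $\sigma_{\alpha^j}$ to produce a putative root whose simple coefficients have mixed signs, contradicting the basis property. Your version is shorter and more conceptual, at the price of outsourcing the work to a quoted classical lemma (which is itself usually proved by a partial-sum argument close to the paper's computation); the paper's version is more self-contained, in effect reproving the needed special case of support-connectedness by hand. Either way the statement follows, and your closing remark that the coefficients are non-positive is not even needed, since you exclude $\alpha+\beta$ from all of $\roots$.
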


\begin{proof}
Assume that $\alpha + \beta$ is a root of $\roots^-$. By \cite[Corollary~10.2]{HumLie}, $\alpha  + \beta$ can be written as a sum of negative simple roots in such a way that each partial sum is a root. Hence, after exchanging the roles of $i$ and $j$ if necessary, there are simple roots $\alpha^i_{1}, \dots, \alpha^i_{u} \in \rootbasis_i'$ occurring in the representation of $\alpha$ as linear combination of simple roots, and there is a simple root $\alpha^j \in \rootbasis'_j$ occurring in $\beta$ such that
\[
-\alpha^i_{1} - \dots - \alpha^i_{u} - \alpha^j
\]
is a root of $\roots^-$. We claim that $\alpha^i_{1}, \dots, \alpha^i_{u}$ are orthogonal to $\alpha^j$. Suppose that there is $\alpha^i \in \{\alpha^i_{1}, \dots, \alpha^i_{u} \}$ such that $\alpha^i$ and $\alpha^j$ are not orthogonal.
Since the irreducibility of $\roots_i'$ implies the irreducibility of $\rootbasis_i'$, we conclude that if we adjoin $\alpha^j$ to $\rootbasis_i'$ it is not possible to write $\{ \alpha^j \} \cup \rootbasis_i'$ as a disjoint union of two sets such that each simple root in one set is orthogonal to each root in the other. 
This means that $\{ \alpha^j \} \cup \rootbasis_i'$ is irreducible, contradicting the fact that $\rootbasis_i'$ was maximally irreducible.
Hence, $\alpha^i_{1}, \dots, \alpha^i_{u}$ are orthogonal to $\alpha^j$ and so $\langle \alpha^i_{1} + \dots + \alpha^i_{u}, \alpha^j \rangle=0$. Then the image of $\alpha^i_{1} + \dots + \alpha^i_{u} + \alpha^j$ under the reflection $\sigma_{\alpha^j}$ is
\[
\begin{array}{rcl}
\sigma_{\alpha^j} (\alpha^i_{1}+\dots + \alpha^i_{u} + \alpha^j)
\! & \! = \! & \! \alpha^i_{1}+\dots + \alpha^i_{u} + \alpha^j - \langle \alpha^i_{1}+ \dots + \alpha^i_{u} + \alpha^j , \alpha^j\rangle \, \alpha^j\\[0.2em] 
\! & \! = \! & \! \alpha^i_{1}+\dots + \alpha^i_{u} + \alpha^j - \langle \alpha^j , \alpha^j \rangle \, \alpha^j\\[0.2em]
\! & \! = \! & \! \alpha^i_{1}+\dots + \alpha^i_{u} - \alpha^j 
\end{array}
\]
by linearity of the first argument of $\langle \cdot, \cdot \rangle$ and $\langle \alpha^j , \alpha^j \rangle = 2$.
Since the simple root $\alpha^j$ is not among the simple roots $\alpha^i_{1}+\dots + \alpha^i_{u}$, 
we obtain a root of $\roots$ which is the sum of simple roots whose coefficients have different signs,
in contradiction to the properties of a basis,
cf.\ \cite[Chapter~10.1]{HumLie}.
\end{proof}

\begin{proposition}\label{prop:transformationintoPJgen}
    Recall from above that
    \[
    \{\beta_{k_1},\dots,\beta_{k_s}\} \, = \, \roots^-\setminus ( \Phi_1'^- \cup \dots \cup \Phi_d'^- ) \, .
    \]
    There exist $x_{k_1},\dots,x_{k_s} \in \generalext^{\parabolic_J}$ such that
    \[
    g_1 \, := \, n(\overline{w})^{-1} u_{\beta_{k_s}}(x_{k_s})\cdots u_{\beta_{k_1}}(x_{k_1})
    \]
    satisfies $g_1\fm \in \parabolic_J(\generalext)$ and 
    $\gauge{g_1}{A_{\group}(\bss(\bvv))} \in \Lie(\parabolic_J)(\generalext^{\parabolic_J})$.
    Moreover, we have 
    \[
    g_1 \, \buu(\bvv,\bff) \, n(\overline{w}) \in  \unipotent_{\leviroots^+}(\generalext) \leq \parabolic_J(\generalext) \, ,
    \]
    where $\buu(\bvv,\bff)$ is the first factor in the Bruhat decomposition of $\fm$ and $\unipotent_{\leviroots^+}$ is the product of root groups corresponding to the roots in $\leviroots^+$.
\end{proposition}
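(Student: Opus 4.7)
The plan is to construct $x_{k_1}, \ldots, x_{k_s}$ from a semidirect-product decomposition of $\unipotent^-$ and then verify the three assertions. Set
\[
\Psi \, := \, \{ \beta_{k_1}, \ldots, \beta_{k_s} \} \, = \, \roots^- \setminus \Psi', \qquad \Psi' \, := \, \roots_1'^- \cup \ldots \cup \roots_d'^-.
\]
First I would check that $\Psi$ and $\Psi'$ are both closed subsets of $\roots^-$ (any sum of two of their members that is a root stays in the respective set) and that for $\alpha \in \Psi$, $\alpha' \in \Psi'$ with $a\alpha + b\alpha' \in \roots$ ($a, b > 0$) one has $a\alpha + b\alpha' \in \Psi$. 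Closure of $\Psi'$ is immediate; closure of $\Psi$ and the property for mixed sums both follow by tracking the $\alpha_j$-components with $j \in I''$, which remain strictly negative under such sums. Via the commutator formula for root groups, these properties translate to $\unipotent_\Psi$ and $\unipotent_{\Psi'}$ being closed subgroups of $\unipotent^-$ with $\unipotent_\Psi$ normal, yielding $\unipotent^- = \unipotent_\Psi \rtimes \unipotent_{\Psi'}$. Lemma~\ref{lem:notaroot} additionally refines $\unipotent_{\Psi'}$ as the direct product $\prod_{i=1}^d \unipotent_{\roots_i'^-}$.

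By this decomposition, $\buu(\bvv, \bff) \in \unipotent^-(\generalext)$ factors uniquely as $u_\Psi \cdot u_{\Psi'}$, and within $\unipotent_\Psi$ one writes uniquely $u_\Psi = u_{\beta_{k_1}}(-x_{k_1}) \cdots u_{\beta_{k_s}}(-x_{k_s})$ with $x_{k_j} \in \field[\bvv, \bff]$ obtained as polynomials in the entries of $\buu(\bvv, \bff)$. Since $u_\alpha(x)^{-1} = u_\alpha(-x)$, we have $u_\Psi^{-1} = u_{\beta_{k_s}}(x_{k_s}) \cdots u_{\beta_{k_1}}(x_{k_1})$, so
\[
u_{\beta_{k_s}}(x_{k_s}) \cdots u_{\beta_{k_1}}(x_{k_1}) \, \buu(\bvv, \bff) \, = \, u_{\Psi'} \, \in \, \unipotent_{\Psi'}(\generalext).
\]
The delicate step will be to show $x_{k_j} \in \generalext^{\parabolic_J}$. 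For any $\widetilde{u} \in \widetilde{U}(\generalext) = \unipotent_{\Psi'}(\generalext)$, right multiplication gives $\buu(\bvv, \bff) \cdot \widetilde{u} = u_\Psi \cdot (u_{\Psi'} \widetilde{u})$ with $u_{\Psi'} \widetilde{u} \in \unipotent_{\Psi'}$, so by uniqueness of the semidirect decomposition the $\unipotent_\Psi$-factor, and hence each $x_{k_j}$, is invariant under this $\widetilde{U}$-action. By the proof of Theorem~\ref{thm:fixedfieldparabolic}, any $\widetilde{U}$-invariant polynomial in $\field[\bvv, \bff]$ lies in $\field[p_1, \ldots, p_q] \subset \difffield\langle \bss(\bvv)\rangle(\bpp) = \generalext^{\parabolic_J}$, so indeed $x_{k_j} \in \generalext^{\parabolic_J}$.

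For the three conclusions, $\overline{w}$ is an involution mapping $\alpha_i$ to $-\alpha_{j_i}$ with $j_i \in J$ for $i \in I'$, so $\overline{w}^{-1}(\Psi') = \leviroots^+$, and conjugation by $n(\overline{w})^{-1}$ carries $\unipotent_{\Psi'}$ to $\unipotent_{\leviroots^+}$. Hence
\[
g_1 \, \buu(\bvv, \bff) \, n(\overline{w}) \, = \, n(\overline{w})^{-1} \, u_{\Psi'} \, n(\overline{w}) \, \in \, \unipotent_{\leviroots^+}(\generalext) \, \subset \, \parabolic_J(\generalext),
\]
which, combined with $\btt(\bexp) \buu(\bint) \in \borel^-(\generalext) \subset \parabolic_J(\generalext)$, yields $g_1 \fm \in \parabolic_J(\generalext)$. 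Finally, $\gauge{g_1}{A_{\group}(\bss(\bvv))} = \dlog(g_1 \fm) \in \Lie(\parabolic_J)(\generalext)$ by Remark~\ref{remark4}; and since $g_1$ has entries in $\generalext^{\parabolic_J}$ and $A_{\group}(\bss(\bvv))$ has entries in $\field\langle \bss(\bvv)\rangle \subset \generalext^{\parabolic_J}$, the gauge transform has entries in $\generalext^{\parabolic_J}$, so $\gauge{g_1}{A_{\group}(\bss(\bvv))} \in \Lie(\parabolic_J)(\generalext^{\parabolic_J})$. The main obstacle will be the invariance step establishing $x_{k_j} \in \generalext^{\parabolic_J}$, which hinges on identifying the right-multiplication action of $\widetilde{U}$ on $\unipotent^-$ with the Galois action analyzed in Theorem~\ref{thm:fixedfieldparabolic}.
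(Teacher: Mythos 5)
Your proof is correct, but it is organized differently from the paper's. The paper constructs $x_{k_1},\dots,x_{k_s}$ recursively: at each step it applies the exchange formula to $u_{\beta_{k_i}}(x_{k_i})\cdots u_{\beta_{k_1}}(x_{k_1})\,\buu(\bvv,\bff)$ and proves, by explicitly tracking how right multiplication by $u_g\in\widetilde{U}(\generalext)$ (the Galois action from Lemma~\ref{lem:invariantreflections}) permutes the parameters of the standard decomposition, that the relevant coefficient $y_{k_i}$ is Galois-invariant, hence in $\generalext^{\parabolic_J}$, before setting $x_{k_i}:=-y_{k_i}$. You instead observe that $\Psi:=\roots^-\setminus(\Phi_1'^-\cup\dots\cup\Phi_d'^-)$ is an ideal of $\roots^-$, so that $\unipotent^-=\unipotent_{\Psi}\rtimes\unipotent_{\Psi'}$ with $\unipotent_{\Psi}$ normal and the product map an isomorphism of varieties (the same type of statement the paper proves, for $\leviroots^-$ versus its complement, in Lemma~\ref{lem:separationradicalnew} via the reference to Humphreys); this makes both the existence of the $x_{k_j}$ and their invariance under right multiplication by $\widetilde{U}(\generalext)=\unipotent_{\Psi'}(\generalext)$ immediate from uniqueness of the factorization, and you then pass from invariance to membership in $\generalext^{\parabolic_J}$ via the invariant-ring analysis in the proof of Theorem~\ref{thm:fixedfieldparabolic} (equivalently one could quote the Galois correspondence directly, since that proof identifies the Galois action on $(\bvv,\bff)$ with right multiplication by elements of $\widetilde{U}(\generalext)$ — the hinge you correctly flag). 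The underlying inputs are the same (Chevalley commutator relations plus that identification), but your packaging replaces the paper's step-by-step induction by a single structural decomposition, which is shorter and makes the invariance transparent; what the paper's recursive version buys in exchange is an explicitly algorithmic construction of the $x_{k_i}$, exploited later in Remark~\ref{rem:transformationintoPJgen}. The remaining verifications (that $\overline{w}$, being an involution sending $-\alpha_i$, $i\in I'$, to simple roots indexed by $J$, conjugates $\unipotent_{\Psi'}$ into $\unipotent_{\leviroots^+}$; that $\btt(\bexp)\buu(\bint)\in\borel^-(\generalext)$ gives $g_1\fm\in\parabolic_J(\generalext)$; and that Remark~\ref{remark4} together with $\generalext^{\parabolic_J}$-rationality of $g_1$ and $A_{\group}(\bss(\bvv))$ places the gauge transform in $\Lie(\parabolic_J)(\generalext^{\parabolic_J})$) agree with the paper.
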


\begin{proof}
Recall that the negative roots $\beta_1,\dots,\beta_m$ are numbered in such a way that $|\height(\beta_i) |\leq |\height(\beta_j)|$ for $i\leq j$ and so the same holds for the roots $\beta_{k_1},\dots,\beta_{k_s}$.
We prove now by induction on $i=1,\dots,s$ that there are elements $x_{k_1},\dots,x_{k_i} \in \generalext^{\parabolic_J}$ such that in the standard decomposition 
\[
u_{\beta_{k_i}}(x_{k_i})\cdots u_{\beta_{k_1}}(x_{k_1}) \, \buu(\bvv,\bff) \, = \, u_{\beta_1}(y_1) \cdots u_{\beta_m}(y_m)
\]
as the product of elements of all root groups $U_{\beta_1}$, \ldots, $U_{\beta_m}$ (in that order) 
the parameters $y_{k_1},\dots, y_{k_i}$ are all zero.
Before we start with the induction, note that by Lemma~\ref{lem:invariantreflections}, for any $g \in \parabolic_J(\field)$ we have 
\begin{equation}\label{eq:actionPJ}
\buu(\bvv,\bff) \, n(\overline{w}) \, \btt(\bexp) \, \buu(\bint) \, g \, = \, \buu(\bvv,\bff) \, u_g \, n(\overline{w}) \, b\,,
\end{equation}
where the matrix $u_g$ is a product of root group elements corresponding to the roots in $\Phi'^-_1\cup \dots \cup \Phi'^-_d$  with parameter values in $\generalext$ and where $b \in \borel^-(\generalext)$. We will also use the  exchange formula (cf.\ \cite[Theorem~5.2.2]{Carter})
\begin{equation}\label{eqn:exchangeformula2}
u_{\beta'}(x') \, u_{\beta}(x) \, = \, u_{\beta}(x) \, u_{\beta'}(x') \prod_{a',a>0}  u_{a' \beta' + a \beta} (c_{a',a,\beta,\beta'}(-x)^{a'}x'^{a})\, 
\end{equation}
for two roots $\beta$, $\beta' \in \roots^-$, 
where the product is taken over all positive integers $a'$, $a $ such that $a' \beta' + a \beta \in \roots^-$ and where
$c_{a',a,\beta,\beta'} \in \Q$.
Let $i=1$. Then $\beta_{k_1}$ is a negative simple root. By \eqref{eq:actionPJ} the parameter value $\tilde{y}_{k_1}$ in the standard decomposition
\[
\buu(\bvv,\bff) \, u_g \, = \, u_{\beta_1}(\tilde{y}_1) \cdots u_{\beta_m}(\tilde{y}_m)
\]
of $\buu(\bvv,\bff) \, u_g$ is $(\bvv,\bff)_{k_1}$ for every $g \in \parabolic_J$ showing that $(\bvv,\bff)_{k_1}$ is in $\generalext^{\parabolic_J}$, where $(\bvv,\bff)_{k_1}$ means the $k_1$-th entry in the tuple $(\bvv,\bff)$.
We apply now successively the exchange formula to $u_{\beta_{k_1}}(x_{k_1}) \buu(\bvv,\bff)$ with $x_{k_1} = -(\bvv,\bff)_{k_1}$ until we obtain 
\[
u_{\beta_{k_1}}(x_{k_1}) \buu(\bvv,\bff) \, = \, u_{\beta_1}(y_1) \cdots u_{\beta_m}(y_m)\,,
\]
where $y_{k_1}=0$ and $y_j=(\bvv,\bff)_j$ for all $\beta_j \in  \Phi'^-_1\cup \dots \cup \Phi'^-_d$. 
    
Suppose the induction hypothesis holds for $i-1$, that is there exist elements $x_{k_1},\dots,x_{k_{i-1}} \in \generalext^{\parabolic_J}$ such that in the standard decomposition
\[
u_{\beta_{k_{i-1}}}(x_{k_{i-1}})\cdots u_{\beta_{k_1}}(x_{k_1}) \buu(\bvv,\bff) \, = \, u_{\beta_1}(y_1) \cdots u_{\beta_m}(y_m)
\]
the parameters $y_{k_1},\dots, y_{k_{i-1}}$ are all zero. For $\sigma_g \in \Gal_{\partial}(\generalext/\generalext^{\parabolic_J})$ we compute with \eqref{eq:actionPJ} that
\[
\begin{array}{rcl}
u_{\beta_1}(\sigma_g(y_1)) \cdots u_{\beta_m}(\sigma_g(y_m))
\! & \! = \! & \!
\sigma_g(u_{\beta_1}(y_1) \cdots u_{\beta_m}(y_m))\\[0.5em]
\! & \! = \! & \!
\sigma_g(u_{\beta_{k_{i-1}}}(x_{k_{i-1}})\cdots u_{\beta_{k_1}}(x_{k_1}) \buu(\bvv,\bff))\\[0.5em]
\! & \! = \! & \!
u_{\beta_{k_{i-1}}}(x_{k_{i-1}})\cdots u_{\beta_{k_1}}(x_{k_1}) \sigma_g(\buu(\bvv,\bff))\\[0.5em]
\! & \! = \! & \!
u_{\beta_{k_{i-1}}}(x_{k_{i-1}})\cdots u_{\beta_{k_1}}(x_{k_1}) \buu(\bvv,\bff) u_g\\[0.5em]
\! & \! = \! & \! 
u_{\beta_1}(y_1) \cdots u_{\beta_m}(y_m) \, u_g\,.
\end{array}
\]
The element $u_g$ is a product of root group elements corresponding to the roots in $\Phi'^-_1\cup \dots \cup \Phi'^-_d$ (cf.\ \eqref{eq:actionPJ}) and by Lemma~\ref{lem:notaroot} no intermediate product involves a root group element corresponding to a root in $\roots^- \setminus (\Phi'^-_1\cup \dots \cup \Phi'^-_d)$. 
Let $u_{\beta}$ be the first factor in this product and let 
\[
u_{\beta_1}(y_1) \cdots u_{\beta_m}(y_m) \, u_{\beta} \, = \, u_{\beta_1}(\tilde{y}_1) \cdots u_{\beta_m}(\tilde{y}_m)
\]
be the standard decomposition obtained by applying successively formula~\eqref{eqn:exchangeformula2}.
The parameter values among $y_1,\dots,y_m$ of the root group elements corresponding to all roots in $\roots^- \setminus(\Phi'^-_1\cup \dots \cup \Phi'^-_d)$ with height less than $|\height(\beta_{k_i})|$ are zero by induction hypothesis.
Therefore, applying successively formula~\eqref{eqn:exchangeformula2} only affects the parameter values of the root group elements belonging to the roots in $\Phi'^-_1\cup \dots \cup \Phi'^-_d$ and to the roots in $\roots^- \setminus( \Phi'^-_1\cup \dots \cup \Phi'^-_d)$ with height greater than $|\height(\beta_{k_i})|$ in absolute value.
Using induction on the number of factors and the same reasoning as for the first factor shows that $\tilde{y}_{k_i}=y_{k_i}$ in the standard decomposition 
\[
u_{\beta_1}(y_1) \cdots u_{\beta_m}(y_m) \, u_{g} \, = \, u_{\beta_1}(\tilde{y}_1) \cdots u_{\beta_m}(\tilde{y}_m)
\]
and so $y_{k_i} \in \generalext^{\parabolic_J}$. Now we apply formula \eqref{eqn:exchangeformula2} to
\[
u_{\beta_{k_i}}(-y_{k_i}) \, u_{\beta_{k_{i-1}}}(x_{k_{i-1}})\cdots u_{\beta_{k_1}}(x_{k_1}) \, \buu(\bvv,\bff) \, = \, u_{\beta_{k_i}}(-y_{k_i}) \, u_{\beta_1}(y_1) \cdots u_{\beta_m}(y_m).
\]
As above the successive application of formula \eqref{eqn:exchangeformula2} until one reaches the standard decomposition creates only new parameter values in root group elements belonging to roots in $\roots^- \setminus( \Phi'^-_1\cup \dots \cup \Phi'^-_d)$ with height greater than $|\height(\beta_{k_i})|$ and to $\beta_{k_i}$. In the latter case the parameter value becomes zero. This completes the induction. 

The induction statement for $i=s$ implies that in the standard decomposition
\begin{equation}\label{eqn:reductiontoPJ}
u_{\beta_{k_s}}(x_{k_s})\dots u_{\beta_{k_1}}(x_{k_1}) \, \buu(\bvv,\bff) \, = \, u_{\beta_1}(y_1) \cdots u_{\beta_m}(y_m)
\end{equation}
the parameter values $y_{k_1},\dots,y_{k_s}$ are zero and so the right hand side of \eqref{eqn:reductiontoPJ} is a product of root group elements corresponding only to roots in $\Phi'^-_1\cup \dots \cup \Phi'^-_d $. Since the reflection $\sigma_{\overline{w}}$ maps the roots in $ \Phi'^-_1\cup \dots \cup \Phi'^-_d$ to roots in $\leviroots^+$, we conclude with \eqref{eqn:reductiontoPJ} that 
\[
n(\overline{w})^{-1} \, u_{\beta_{k_s}}(x_{k_s})\cdots u_{\beta_{k_1}}(x_{k_1}) \, \buu(\bvv,\bff) \, n(\overline{w}) \in \unipotent_{\leviroots^+}(\generalext) \leq \parabolic_J(\generalext).
\]
Since $\btt(\bexp) \, \buu(\bint)$ is clearly an element of $\borel^-(\generalext)\subset \parabolic_J(\generalext)$, we have that
\[
g_1 \fm \, = \,
n(\overline{w})^{-1} \, u_{\beta_{k_s}}(x_{k_s})\cdots u_{\beta_{k_1}}(x_{k_1}) \, \buu(\bvv,\bff) \, n(\overline{w}) \, \btt(\bexp) \, \buu(\bint) \in \parabolic_J(\generalext) 
\]
with $g_1:=n(\overline{w}) \, u_{\beta_{k_s}}(x_{k_s})\cdots u_{\beta_{k_1}}(x_{k_1}) \in \group(\generalext^{\parabolic_J})$, where we recall that according to the induction statement $x_{k_s},\dots,x_{k_1} \in \generalext^{\parabolic_J}$.
Finally, we conclude with $g_1 \fm \in \parabolic_J(\generalext)$ and Remark~\ref{remark4} that
\begin{gather*}
\dlog (g_1\fm) \, = \, g_1. \dlog(\fm) \, = \, g_1.A_{\group}(\bss(\bvv)) \in \Lie(\parabolic_J)(\generalext^{\parabolic_J}).
\end{gather*}
\end{proof}

\begin{remark}\label{rem:transformationintoPJgen}
    The parameter values 
    $$x_{k_1},\dots,x_{k_s} \in \generalext^{\parabolic_J} = 
    \field\langle \bss(\bvv),\bvvbase \rangle
    $$
    for the root groups $U_{\beta_{k_1}},\dots,U_{\beta_{k_s}}$ of Proposition~\ref{prop:transformationintoPJgen} can be algorithmically determined.
    Indeed, one successively multiplies $\boldsymbol{u}(\bvv,\bff)$ from the left with 
    \[
    u_{\beta_{k_1}}(x_{k_1}),\dots, u_{\beta_{k_s}}(x_{k_s})
    \]
    and applies effectively in each multiplication step multiple times the exchange formula until one obtains a standard decomposition. If 
    \[
    u_{\beta_{k_{i-1}}}(x_{k_{i-1}}) \dots u_{\beta_{k_1}}(x_{k_1}) \boldsymbol{u}(\bvv,\bff) \, = \, u_{\beta_1}(y_1)\cdots 
    u_{\beta_m}(y_m)
    \]
    is the standard decomposition in the $(i-1)$-st multiplication step, then according to the proof of Proposition~\ref{prop:transformationintoPJgen} the parameter value $x_{k_i}$ for the $i$-th multiplication step is simply $-y_{k_i}$.
    Since applying the exchange formula only involves operations in $\field[\bvv,\bff]$ and the $x_{k_1},\dots,x_{k_s}$ are invariant, we conclude that 
    \[
x_{k_1},\dots,x_{k_s} \in \field[\bpp] \, = \, \generalext^{\parabolic_J} \cap \field[\bvv,\bff] \subset \field\{ \bvv \} \, .
\]
Now we use Thomas decomposition to determine  representations of $x_{k_1},\dots,x_{k_s}$ in $\field[\bpp]$.
More precisely, for differential indeterminates $\widehat{\bpp}=(\widehat{p}_1,\dots,\widehat{p}_q)$ compute the normal form of $x_{k_1},\dots,x_{k_s}$ with respect to the differential ideal in $\field \{ \bvv \}$ generated by 
\[
\widehat{p}_1 - p_1, \quad \dots, \quad \widehat{p}_q-p_q \in \field\{ \bvv, \widehat{\bpp} \}
\]
and an elimination ranking $\bvv \gg \widehat{\bpp}$. We obtain expressions for  $x_{k_1},\dots,x_{k_s}$ in $\field [\widehat{\bpp}]$.
According to Remark~\ref{rem:computerepofp_i} we can compute for $1 \leq i \leq q$ elements $p_{1,i}$ and $p_{2,i}$ in $\field\{ \bss(\bvv), \bvvbase \}$ such that $p_i = p_{1,i}/p_{2,i}$.
Thus, if we substitute in $x_{k_1},\dots,x_{k_s}$ for the variables
$\widehat{p}_i$ the rational functions $p_{1,i}/p_{2,i}$, we obtain representations of $x_{k_1},\dots,x_{k_s}$ as rational functions in $\field \langle \bss(\bvv), \bvvbase\rangle$. Note that the denominators of $x_{k_1},\dots,x_{k_s}$ are in the multiplicatively closed subset of $\field \{ \bss(\bvv), \bvvbase\}$ generated by $p_{2,1},\dots,p_{2,q}.$
\end{remark}

We determine the Levi decomposition of the matrix $g_1\fm \in \parabolic_J(\generalext)$, where $g_1$ is as in Proposition~\ref{prop:transformationintoPJgen}. It will be the uniquely determined product 
\[
g_1\fm \, = \, (g_1\fmred) \cdot \fm_{\rm rad}
\]
with $g_1\fmred \in \levi_J(\generalext^{\unirad(\parabolic_J)})$ and $\fm_{\rm rad} \in \unirad(\parabolic_J)(\generalext)$ with entries in $\generalext \setminus \generalext^{\unirad(\parabolic_J)}$, where $\levi_J$ is the standard Levi group of $\parabolic_J$.  
To this end, we denote the roots of $\leviroots^-$ by
\begin{equation}\label{eqn:rootsofpsiminus}
\{ \beta_{j_1},\dots,\beta_{j_k}\} \, = \, \leviroots^-
\end{equation}
and the roots of the complement $\roots^- \setminus \leviroots^-$ by 
\begin{equation}\label{eqn:rootsofradical}
\{ \beta_{j_{k+1}},\dots,\beta_{j_m} \} \, = \, \roots^- \setminus \leviroots^-\,.
\end{equation}
\begin{lemma}\label{lem:separationradicalnew}
Let $\bxx=(x_1,\dots,x_m)$ be indeterminates over $\field$. We have a unique factorization 
\begin{equation}\label{eqn:radicaltotheleft}
u_{\beta_1}(x_1)\cdots u_{\beta_m}(x_m) \, = \, u_{\beta_{j_1}}(y_{j_1}) \cdots u_{\beta_{j_k}}(y_{j_k}) \cdot u_{\beta_{j_{k+1}}}(y_{j_{k+1}}) \cdots u_{\beta_{j_m}}(y_{j_m})
\end{equation}
with $y_{j_1}=x_{j_1}, \dots , y_{j_k}=x_{j_k}$ and $y_{j_{k+1}},\dots,y_{j_m} \in \field[\bxx]\setminus \field[x_{j_1},\dots, x_{j_k}]$.
\end{lemma}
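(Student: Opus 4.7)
The plan is to exploit two structural facts about the decomposition $\roots^- = \leviroots^- \cup (\roots^- \setminus \leviroots^-)$. First, since $\leviroots$ is a root subsystem, $\leviroots^-$ is closed under addition within $\roots^-$. Second, whenever $a, a' > 0$ with $\beta \in \roots^-$, $\beta' \in \roots^- \setminus \leviroots^-$ and $a\beta + a'\beta' \in \roots^-$, the sum still has a non-zero coefficient on some simple root outside $\{\alpha_j \mid j \in J\}$, and therefore lies in $\roots^- \setminus \leviroots^-$. These observations ensure that the exchange formula \eqref{eqn:exchangeformula2} behaves well when migrating $\leviroots^-$-factors past $(\roots^- \setminus \leviroots^-)$-factors.

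Existence and uniqueness of \eqref{eqn:radicaltotheleft} then follow from \cite[Theorem~5.3.3]{Carter} applied to the reordered list $\beta_{j_1}, \ldots, \beta_{j_m}$, which provides a standard decomposition in $\unipotent^-$ with uniquely determined coefficients. To see that these coefficients are polynomial in $\bxx$, I would perform an explicit bubble-sort: starting from the given product, repeatedly apply \eqref{eqn:exchangeformula2} to swap any adjacent pair $u_\beta(\cdot)\, u_{\beta'}(\cdot)$ in which $\beta \in \roots^- \setminus \leviroots^-$ sits immediately to the left of $\beta' \in \leviroots^-$. The observations above guarantee that the extra factors introduced lie in $\roots^- \setminus \leviroots^-$, so the number of $\leviroots^-$-factors is preserved and every swap produces polynomial coefficients. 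After finitely many iterations the word is brought into the desired block shape with $y_{j_s} \in \field[\bxx]$.

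To identify $y_{j_s}$ for $s \le k$, I would use the semidirect product structure $\unipotent^- = \unipotent_{\leviroots^-} \ltimes \unirad(\parabolic_J)$, which exists because $\unirad(\parabolic_J)$ is normal in $\parabolic_J \supset \unipotent^-$. The resulting quotient map
\[
\pi \colon \unipotent^- \longrightarrow \unipotent^-/\unirad(\parabolic_J) \cong \unipotent_{\leviroots^-}
\]
kills every $u_\beta(\cdot)$ with $\beta \in \roots^- \setminus \leviroots^-$ and is the identity on the remaining factors. Applying $\pi$ to both sides of \eqref{eqn:radicaltotheleft} and invoking uniqueness of the standard decomposition inside $\unipotent_{\leviroots^-}$ yields $y_{j_s} = x_{j_s}$ for $s = 1, \ldots, k$. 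For the remaining $y_{j_s}$ with $s \ge k+1$, I would argue by specialization: setting $x_i = 0$ for every $i$ with $\beta_i \in \roots^- \setminus \leviroots^-$ forces the $\unirad(\parabolic_J)$-part of the right hand side to be trivial by uniqueness, so each such $y_{j_s}$ vanishes under that substitution; on the other hand, setting $x_i = 0$ for all $i \neq j_s$ reduces \eqref{eqn:radicaltotheleft} to $u_{\beta_{j_s}}(x_{j_s})$ and forces $y_{j_s}$ to evaluate to $x_{j_s}$. Combining the two specializations gives $y_{j_s} \in \field[\bxx] \setminus \field[x_{j_1}, \ldots, x_{j_k}]$.

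The main obstacle is organizational rather than conceptual: one must check carefully that every swap in the bubble-sort, both across the $\leviroots^-$ / $(\roots^- \setminus \leviroots^-)$ boundary and for any re-sorting moves required inside the right-hand block, introduces only factors belonging to $\roots^- \setminus \leviroots^-$, so that the final word has precisely the block shape asserted in the lemma.
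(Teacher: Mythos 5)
Your proposal is correct, and its skeleton — the closure property that $a'\beta'+a\beta$ stays in $\roots^-\setminus\leviroots^-$ whenever $\beta'\in\roots^-\setminus\leviroots^-$, the exchange formula \eqref{eqn:exchangeformula2} to reorder the word, and uniqueness of the ordered decomposition via Carter/the product-map isomorphism — is exactly the paper's. Where you diverge is in how the coefficients are identified. The paper stays inside the rewriting process: since each application of the exchange formula only creates factors for roots in $\roots^-\setminus\leviroots^-$, the parameters of the $\leviroots^-$-factors are literally never altered, giving $y_{j_s}=x_{j_s}$ for $s\le k$ directly; and since every newly created parameter is a monomial of degree at least two, each radical parameter has the explicit shape $y_{j_s}=x_{j_s}+(\text{terms of degree}\ge 2)$, which immediately puts it outside $\field[x_{j_1},\dots,x_{j_k}]$. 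You instead argue structurally: the Levi parameters are read off by pushing \eqref{eqn:radicaltotheleft} through the quotient $\unipotent^-\to\unipotent^-/\unirad(\parabolic_J)\cong \unipotent^-\cap\levi_J$ (using normality of $\unirad(\parabolic_J)$ and uniqueness of the standard decomposition in the Levi part), and the non-membership of the radical parameters in $\field[x_{j_1},\dots,x_{j_k}]$ follows from two specializations (killing the radical variables versus killing everything except $x_{j_s}$), which is a correct if more indirect route. Your version is a bit more robust, since it does not require bookkeeping of the rewriting beyond the closure property, whereas the paper's version yields the sharper explicit form of the radical coefficients ($x_{j_s}$ plus higher-degree terms); both are complete proofs, and the only care needed in yours — that the two blocks retain the ordering inherited from $\beta_1,\dots,\beta_m$ so that the quotient of the left-hand side is the standard word $u_{\beta_{j_1}}(x_{j_1})\cdots u_{\beta_{j_k}}(x_{j_k})$, and that uniqueness applies to specialized (polynomial-valued) points — is satisfied, the latter because the product map is an isomorphism of varieties and hence bijective on $R$-points for any $\field$-algebra $R$.
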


\begin{proof}
 Let $\beta \in \roots^- \setminus \leviroots^-$. Moreover, let $\widetilde{\beta}_1 \in \roots^- \setminus \leviroots^-$ and $\widetilde{\beta}_2 \in \leviroots^-$. If for $a',a>0  $ the sum $a' \beta + a \widetilde{\beta}_i$ is a root, then $a' \beta + a \widetilde{\beta}_i \in \roots^- \setminus \leviroots^-$.
Thus applying the exchange formula \eqref{eqn:exchangeformula2} to a root group element corresponding to a root in $\roots^- \setminus \leviroots^-$ until all factors belonging to roots in $\leviroots^-$ have been moved to the left of it, creates only new factors belonging again to roots in  $\roots^- \setminus \leviroots^-$.
Hence, in each step the parameters of the factors corresponding to the roots of $\leviroots^-$ are unchanged.
The exchange formula \eqref{eqn:exchangeformula2} implies that the parameters of the newly created factors are monomials of degree greater than one.
We conclude that the parameter $y_j$ of a factor in the final product \eqref{eqn:radicaltotheleft} belonging to a root $\beta_j$ with $j \in \{ j_{k+1},\dots,j_m\}$ is the sum of the indeterminate $x_j$ and a polynomial of degree greater than one in $\field[\bxx]$ and so $y_j \in \field[\bxx]\setminus\field[x_{j_1},\dots, x_{j_k}]$.

The uniqueness of the factorization follows from the fact that the product map 
\begin{gather*}
    U_{\beta_{k_1}} \times \dots \times U_{\beta_{k_j}} \times  U_{\beta_{k_{j+1}}} \times  U_{\beta_{k_m}} \to \unipotent^- , \\
    (u_{\beta_{k_1}},\dots, u_{\beta_{k_j}},u_{\beta_{k_{j+1}}},\dots, u_{\beta_{k_m}}) \mapsto u_{\beta_{k_1}} \cdots  u_{\beta_{k_j}} u_{\beta_{k_{j+1}}}\cdots u_{\beta_{k_m}}
\end{gather*}
is an isomorphism of varieties (cf.\ \cite[end of Section~28.5]{HumGroups}). 
\end{proof}

\begin{remark}\label{rem:genericYredYraddecomp}
Applying Lemma~\ref{lem:separationradicalnew} to the matrix $\buu( \bint)$,
we obtain the decomposition
\[
    \buu(\bint) \, = \, u_{\beta_{j_1}}(\intn_{j_1}) \cdots u_{\beta_{j_k}}(\intn_{j_k}) \cdot u_{\beta_{j_{k+1}}}(y_{j_{k+1}}) \cdots u_{\beta_{j_m}}(y_{j_m})
\]
with $y_{j_{k+1}},\dots,y_{j_m} \in \field[\bint] \setminus \field[\intn_{j_1},\dots,\intn_{j_k}]$.
Since $\{\beta_{j_1},\dots,\beta_{j_k} \} = \leviroots^-$, it follows that 
\[
u_{\beta_{j_1}}(\intn_{j_1}) \cdots u_{\beta_{j_k}}(\intn_{j_k}) \in U_{\leviroots^-}
\]
and from Theorem~\ref{cor:extensionsbyPI} \ref{cor:extensionsbyPI(b)} that $\intn_{j_1},\dots ,\intn_{j_k}$ are elements of $\generalext^{\unirad(\parabolic_J)}$. 
 We define
\[
\fmred \, := \, \buu(\bvv,\bff) \, n(\overline{w}) \, \btt(\bexp) \, u_{j_1}(\intn_{j_1}) \cdots u_{j_k}(\intn_{j_k}) \in \group(\generalext^{\unirad(\parabolic_J)}).
\]
Moreover, $g_1\buu(\bvv,\bff) n(\overline{w})\in \unipotent_{\leviroots^+}(\generalext^{\unirad(\parabolic_J)})$ according to Proposition~\ref{prop:transformationintoPJgen} and so, because $\leviroots$ is the root system of the Levi group $\levi_J$, we have  
\[
g_1 \fmred  
\in \levi_J(\generalext^{\unirad(\parabolic_J)}).
\]
Since the unipotent radical $\unirad(\parabolic_J)$ is the direct product of root groups corresponding to the roots in $\roots^- \setminus \leviroots^- = \{ \beta_{j_{k+1}},\dots,\beta_{j_m}\}$, we conclude that  
\[
\fm_{\rm rad} \, := \, u_{\beta_{j_{k+1}}}(y_{j_{k+1}}) \cdots u_{\beta_{j_m}}(y_{j_m}) \in \unirad(\parabolic_J)(\generalext \setminus \generalext^{\unirad(\parabolic_J)}) \, ,
\]
because $\field[\bint] \setminus \field[\intn_{j_1},\dots,\intn_{j_k}] \subset \generalext \setminus \generalext^{\unirad(\parabolic_J})$.
Clearly the Levi decomposition of $g_1\fm$ is
\[
g_1 \fm  \, = \, g_1 \fmred \cdot \fm_{\rm rad}\,.
\]     
\end{remark}

\part{Computing the Galois Group of a Specialized Normal Form}\label{part:III}

\section{A Parabolic Bound for the Differential Galois Group of a Specialization}\label{sec:bound}

Let $\bsq =(\sq_1,\dots,\sq_l )$ be $l$ rational functions in $\difffield=\field(z)$ and let
\begin{equation*}\label{eqn:defsigma0}
 \sigma_0\colon \field \{ \bss(\bvv) \} \to \difffield, \, \bss(\bvv) \mapsto \bsq 
\end{equation*}
be the differential homomorphism   
which specializes the differentially algebraically independent polynomials $\bss(\bvv)$ to $\bsq$.
We consider now the specialized normal form matrix
\begin{equation*}\label{eqn:defspecnormalform}
  \sigma_0 (A_{\group}(\bss(\bvv)))=A_{\group}(\bsq)  .
\end{equation*}

\begin{assumption}\label{assumption1}
We assume that the rational functions $\bsq$ are chosen such that 
no denominator of the coefficients of the associated operators and of their Riccati equations of Definition~\ref{def:associatedRic} specializes under $\sigma_0$ to zero. 
\end{assumption}

\begin{remark}
     Assumption~\ref{assumption1} guarantees that we can also apply $\sigma_0$ to the normal form equation $L_{\group}(\bss(\bvv),\partial) \, y = 0$.
     Indeed, the coefficients of $L_{\group}(\bss(\bvv),\partial) \, y = 0$ are the entries of the last row of the companion matrix $\BNFcomp.A_{\group}(\bss(\bvv))$.
     Since we have $\BNFcomp \in \GL_n(\field\{ \bss(\bvv)\})$ by Proposition~\ref{prop:PropertiesOfBcomp}, it follows that the coefficients of $L_{\group}(\bss(\bvv),\partial) \, y = 0$  are elements of the differential ring $\field\{ \bss(\bvv)\}$.
\end{remark}

In this section we are going to determine a standard parabolic subgroup $\parabolic_J(\field)$ which will contain the differential Galois group $H(\field)$. To this end, we will present an algorithm which determines a partition $I=I'\cup I''$ as introduced in Part~\ref{part:II} for the specialized normal form matrix $A_{\group}(\bsq)$.   
The algorithm uses $\difffield$-rational solutions of the specialized Riccati equations corresponding to the specialized associated operators.

Consider the two differential ideals 
\[
S \, := \, \langle s_1(\bvv)- \sq_1,\dots, s_l(\bvv)- \sq_l \rangle
\]
and
\[
S_{\mathrm{Ric}} \, := \, \langle \Ric{1}{\bsq}{v_1},\dots , \Ric{l}{\bsq}{v_l} \rangle
\]
of $\difffield\{ \bvv \}$, where
\[
\Ric{1}{\bsq}{v_1}, \quad \dots, \quad \Ric{l}{\bsq}{v_l} \in \difffield\{ \bvv \}
\]
are the differential polynomials obtained by replacing in the $i$-th Riccati polynomial 
$\Ric{i}{\bss(\bvv)}{y}$ (cf.\ Definition~\ref{def:associatedRic}) the elements $\bss(\bvv)$ and $y$ by $\bsq$ and $v_i$, respectively.

\begin{lemma}\label{lem:idealSdiffprim}
The differential ideal $\idealspec$ is prime.
\end{lemma}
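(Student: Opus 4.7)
The plan is to construct an injective differential ring homomorphism from $\difffield\{\bvv\}/\idealspec$ into an integral domain; once this is done, primeness of $\idealspec$ follows immediately.

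First, I would exploit the triangular structure of the $\bss(\bvv)$ established in \cite{RobertzSeissNormalForms} and recalled in the proof of Theorem~\ref{cor:extensionsbyPI}\ref{cor:extensionsbyPI(b)}: there exist positive integers $d_1, \ldots, d_l$ with $d_1 + \dots + d_l = m$ and an orderly ranking on $\difffield\{\bvv\}$ under which the polynomials $s_1(\bvv),\dots,s_l(\bvv)$ have pairwise distinct leaders (of orders $d_1, \ldots, d_l$ after a suitable permutation $\pi$ of $\{1,\ldots,l\}$) with nonzero separants $\sigma_1, \ldots, \sigma_l \in \difffield\{\bvv\}$. Since each $\sq_i$ lies in $\difffield$ and involves no derivative of $\bvv$, the shifted polynomials $s_i(\bvv) - \sq_i$ share these leaders and separants; in particular, $\{s_i(\bvv) - \sq_i \mid 1 \le i \le l\}$ is autoreduced.

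Second, since $\bss(\bvv)$ is differentially algebraically independent over $\difffield$, the subring $\difffield\{\bss(\bvv)\} \subset \difffield\{\bvv\}$ is isomorphic to the differential polynomial ring $\difffield\{s_1, \ldots, s_l\}$ via $s_i \mapsto s_i(\bvv)$, and under this isomorphism $\sigma_0$ corresponds to the evaluation $s_i \mapsto \sq_i$ whose kernel is the prime differential ideal $[s_1 - \sq_1, \ldots, s_l - \sq_l]$. I would then argue that, after localization at the product $H = \prod_i \sigma_i$, the relations $s_i(\bvv) - \sq_i = 0$ together with all their derivatives allow one to solve successively for every derivative $v_j^{(k)}$ of order $k \ge d_{\pi^{-1}(j)}$ in terms of lower-order ones, yielding an isomorphism between the localized quotient and a localization of the polynomial ring $\difffield[v_j^{(k)} \mid 0 \le k < d_{\pi^{-1}(j)}]$. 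The latter ring is an integral domain, so the localized quotient is a domain as well, matching the transcendence degree $m$ of $\difffield\langle \bvv\rangle$ over $\difffield\langle \bss(\bvv)\rangle$ recalled in the proof of Theorem~\ref{cor:extensionsbyPI}\ref{cor:extensionsbyPI(b)}.

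The main obstacle is to descend from primeness after localization to primeness of $\idealspec$ itself, i.e.\ to rule out that the separants $\sigma_i$ become zero-divisors modulo $\idealspec$. I would address this via Rosenfeld's Lemma: for our autoreduced set with nonzero separants, one obtains $[\{s_i(\bvv) - \sq_i\}] : H^{\infty} = \idealspec : H^{\infty}$, and the explicit isomorphism from the previous paragraph shows that this saturation equals $\idealspec$. Alternatively, applying a differential Thomas decomposition (cf.\ Remarks~\ref{rem:introThomas} and \ref{rem:continueThomas}) to the system $\{s_i(\bvv) - \sq_i = 0,\ \sigma_i \ne 0\}$ produces a single simple differential system whose associated radical differential ideal coincides with $\idealspec$, and the embedding into the integral domain constructed above confirms that this radical ideal is prime.
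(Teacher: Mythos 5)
Your overall strategy (exhibit $\difffield\{\bvv\}/\idealspec$ inside, or as, an integral domain by eliminating the higher derivatives of $\bvv$ via the relations $s_i(\bvv)-\sq_i=0$) is the right one and is in essence what the paper does. But as written there are two gaps, and they are exactly where you resort to separants, localization and saturation. First, from ``pairwise distinct leaders with nonzero separants'' you cannot conclude that one may \emph{solve} for the leader $v_j^{(d)}$ itself in $s_i(\bvv)-\sq_i=0$: a nonzero separant only permits pseudo-reduction, not solving, unless $s_i$ is \emph{linear} in its leader; localizing at $\prod_i\sigma_i$ does not help for the leaders of order exactly $d_{\pi^{-1}(j)}$, so your claimed isomorphism of the localized quotient with a localized polynomial ring is not justified from the stated hypotheses. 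Second, and more seriously, the descent step is asserted rather than proved: the localized isomorphism cannot distinguish $\idealspec$ from $\idealspec:H^{\infty}$ (both have the same localization), and Rosenfeld's Lemma as well as a Thomas decomposition of the system $\{s_i(\bvv)-\sq_i=0,\ \sigma_i\neq 0\}$ naturally describe the \emph{saturation} $\idealspec:H^{\infty}$, not $\idealspec$ itself. So the sentence ``the explicit isomorphism from the previous paragraph shows that this saturation equals $\idealspec$'' is precisely the missing argument, not a consequence of what precedes it.

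The fact that closes both gaps -- and the one the paper's proof (adapting \cite[Proposition~7.1\,(b)]{RobertzSeissNormalForms}) hinges on -- is stronger than nonvanishing of the separants: each $s_i(\bvv)$ contains a term which is a derivative of some $v_j$ occurring \emph{linearly with a constant coefficient}, and with a suitable ranking this term is the leader. Then the separants are units of $\field$, so no localization or saturation is needed at all: the relations $s_i(\bvv)-\sq_i$ and all their derivatives let you solve outright for every derivative $v_j^{(k)}$ with $k\geq d_j$ over $\difffield\{\bvv\}$ itself, and $\difffield\{\bvv\}/\idealspec$ is literally a polynomial ring in the finitely many remaining (parametric) derivatives, hence an integral domain, hence $\idealspec$ is prime. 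If you insert this constancy/linearity of the leading terms at the start, your first two paragraphs become correct as stated and your entire third paragraph (Rosenfeld, Thomas decomposition, saturation) can be deleted.
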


\begin{proof}
   The proof of \cite[Proposition~7.1 (b)]{RobertzSeissNormalForms} is easily adapted to the case of the ideal $\idealspec$, using the fact that each $s_i(\bvv)$ involves one term which is a derivative of a certain indeterminate $v_j$ with constant coefficient. 
   Thus $\difffield\{ \bvv \} / \idealspec$ is isomorphic to a polynomial ring in finitely many variables. Therefore, $\difffield\{ \bvv \} / \idealspec$ is an integral domain and so $\idealspec$ is prime. 
\end{proof}

The relation between the solutions of the Riccati equations and the differential ideal $\idealspec$ is given by the following proposition.

\begin{proposition}\label{SRicContainedInS}
  The differential ideal $\idealric$ is contained in $\idealspec$, i.e.\ $\idealric \subset \idealspec$.
\end{proposition}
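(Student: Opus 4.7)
The plan is to verify that each generator of $\idealric$ lies in $\idealspec$. Fix $i\in\{1,\dots,l\}$; the goal is $\Ric{i}{\bsq}{v_i}\in\idealspec$. The key input is Proposition~\ref{prop:exponentialandRiccati}: $b_iv_i\in\field\{\bvv\}$ is a solution of $\Ric{i}{\bss(\bvv)}{y}=0$ for some constant $b_i\in\{\pm1,-2\}$. In other words, the differential polynomial obtained from $\Ric{i}{\bss(\bvv)}{y}\in\field\{\bss(\bvv)\}\{y\}$ by the substitution $y\mapsto b_iv_i$ is identically the zero element of $\field\{\bvv\}$.

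To transfer this identity to $\idealspec$, I would apply the canonical differential projection $\pi\colon\difffield\{\bvv\}\twoheadrightarrow\difffield\{\bvv\}/\idealspec$, which is a differential ring homomorphism satisfying $\pi(s_j(\bvv))=\pi(\sq_j)$ for every $j$ by the definition of $\idealspec$. Because the Riccati polynomial depends on its coefficients and its argument only through iterated polynomial-and-derivation operations, $\pi$ commutes with its formation, so
\[
\pi\bigl(\Ric{i}{\bsq}{b_iv_i}\bigr)\,=\,\pi\bigl(\Ric{i}{\bss(\bvv)}{b_iv_i}\bigr)\,=\,\pi(0)\,=\,0,
\]
which yields $\Ric{i}{\bsq}{b_iv_i}\in\idealspec$ for every $i$.

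The remaining, and most delicate, step is to pass from $\Ric{i}{\bsq}{b_iv_i}\in\idealspec$ to $\Ric{i}{\bsq}{v_i}\in\idealspec$. Here one uses that $b_i$ is a nonzero constant and that the Riccati comes from a linear operator via the substitution $y=\exp(\int u)$, so that the dilation $u\mapsto b_iu$ acts on the Riccati polynomial by an explicit and elementary rewriting. Working with the concrete form of $\Ric{i}{\bss(\bvv)}{y}$ furnished by Appendix~\ref{app:associated&riccati}, this rewriting is seen to be compatible with membership in $\idealspec$. This step is the main obstacle of the argument, because unlike the previous two it relies on the explicit shape of the Riccati rather than only on its defining property. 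Combining the three steps gives $\Ric{i}{\bsq}{v_i}\in\idealspec$ for each $i=1,\dots,l$, whence $\idealric\subset\idealspec$.
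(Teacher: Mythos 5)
Your first two steps reproduce the paper's argument exactly: the paper's entire proof is the observation that under the projection $\pi\colon \difffield\{\bvv\}\to\difffield\{\bvv\}/\idealspec$ one has $\bsq\equiv\bss(\bvv)$, so the image of the generator $\Ric{i}{\bsq}{v_i}$ is $\Ric{i}{\bss(\bvv)}{v_i}+\idealspec$, which is zero because substituting the indeterminate into the generic Riccati gives the zero differential polynomial. The difference is that the paper performs this with the substitution $y\mapsto v_i$ directly (the constant $b_i$ is absorbed into the normalization of the Riccati generators; compare the factor $b_i^{-1}$ appearing in Algorithm~\ref{alg:ratv}), so no further step is needed.

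Your third step, by contrast, is a genuine gap. From $\Ric{i}{\bsq}{b_iv_i}\in\idealspec$ you cannot conclude $\Ric{i}{\bsq}{v_i}\in\idealspec$ by a ``dilation'' argument: the differential substitution $v_i\mapsto b_i^{-1}v_i$ is an automorphism of $\difffield\{\bvv\}$, but it does not preserve $\idealspec$, since it moves the generators $s_j(\bvv)-\sq_j$ (the $s_j(\bvv)$ are not homogeneous under rescaling a single $v_i$). Nor is there any general principle that $P(b_iv_i)\in\idealspec$ forces $P(v_i)\in\idealspec$; the phrase ``is seen to be compatible with membership'' is not an argument, and the explicit shape of the Riccati from Appendix~\ref{app:associated&riccati} does not by itself produce the required membership. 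The correct resolution is the one the paper uses: the generators of $\idealric$ are to be read with the normalization under which $v_i$ itself solves the generic Riccati, i.e. $\Ric{i}{\bss(\bvv)}{v_i}=0$ (equivalently, one works throughout with $b_iv_i$, which is what your first two steps already establish and what the specialization in Algorithm~\ref{alg:ratv} implements via $v_i-b_i^{-1}\sol_i$). With that convention your projection computation alone finishes the proof, and the ``delicate step'' you flag never arises.
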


\begin{proof}
We show that the images of $\Ric{1}{\bsq}{v_1},\dots, \Ric{l}{\bsq}{v_l}$ under
\[
\pi\colon \difffield\{\bvv\} \to \difffield\{ \bvv\} / \idealspec, \quad v_i \mapsto v_i+S
\]
are zero. But this follows easily from the fact that substituting the differential indeterminate $v_i$ for $y$ in $\Ric{i}{\bss(\bvv)}{y}$ yields $0$.
Indeed, we then have
\[
\Ric{i}{\bsq}{v_i+S} \, = \,
\Ric{i}{\bss(\bvv)+\idealspec}{v_i + \idealspec} 
\, = \, \Ric{i}{\bss(\bvv)}{v_i} + \idealspec \, = \, 0 + \idealspec\,.
\]
\end{proof}

Roughly speaking Proposition~\ref{SRicContainedInS} means that the variety defined by $S$ is contained in the variety defined by $S_{\mathrm{Ric}}$.
We are interested in common points of both varieties having the property that as many coordinates as possible are in the rational function field $\difffield$.
To this end, we compute all rational solutions in $\difffield$ of the Riccati equations
\[
\Ric{1}{\bsq}{v_1} \, = \, 0\,, \quad \dots, \quad
\Ric{l}{\bsq}{v_l} \, = \, 0
\]
using the known algorithms. More precisely, it is possible to decide algorithmically whether or not a Riccati equation has a solution in $\difffield$ (cf.\ \cite[Proposition~4.9]{vanderPutSinger} and \cite{Beke}, \cite{Schlesinger}).
It is also possible to determine algorithmically all rational solutions of a Riccati equation with coefficients in $\difffield$.
To be more specific, if the Riccati equation has rational solutions, then one can compute 
solutions $u_i \in \field(z)$ with $i=1,\dots,s$ of the Riccati equation and for each $i$ a finite dimensional $\field$-vector space $W_i \subset \field[z]$ containing $\field$ and a basis $\{ w_{i,1} , \dots, w_{i,m_i}\}$ of $W_i$ such that the set 
\begin{equation}\label{eq:rationalSol}
  \bigcup_{i=1}^s \left\{ u_i + \frac{c_{i,1} (w_{i,1})' + \dots + c_{i,m_i} (w_{i,m_i})'}{c_{i,1} w_{i,1} + \dots + c_{i,m_i} w_{i,m_i}} \, \middle| \, c_{i,k} \in \field \ \text{not all zero}
 \right\} 
\end{equation}
is the set of all solutions in $\difffield$ of the Riccati equation (cf.\ \cite[Proposition~4.9]{vanderPutSinger}).  
We are going to combine with Algorithm~\ref{alg:ratv} as many rational solutions $\vq_i \in \difffield$ as possible into a proper differential ideal 
\[ \label{defn:idealSinter}
\idealinter \, := \, \langle s_1(\bvv)- \sq_1,\dots, s_l(\bvv)- \sq_l , v_{i_{r+1}}- \vq_{i_{r+1}},\dots, v_{i_{l}}- \vq_{i_{l}} \rangle \subset \difffield\{ \bvv \}
\]
using the differential Thomas decomposition (cf.\ \cite{RobertzHabil}), i.e., we determine a partition
\[
I \, = \, I' \cup I'' \, = \, \{i_1,\dots,i_r\} \cup \{i_{r+1},\dots, i_l \}
\]
with $r$ minimal such that $\idealinter \unlhd \difffield\{ \bvv \}$ is proper. 
Algorithm~\ref{alg:ratv} starts with the computation of all solutions in $\difffield$ of all $l$ Riccati equations.
This is done because if a specialization of some $v_i$ in a Picard-Vessiot field for $A_{\group}(\bsq)$ lies in $\difffield$, then the corresponding Riccati equation must have at least one solution in $\difffield$.
But not all found solutions in $\difffield$ can be used to define a proper differential ideal $\idealinter$.
It may be possible that one needs to leave out potential $\vq_i \in \difffield$ and needs to keep instead the respective differential indeterminate $v_i$. 
It may also be the case that only particular combinations of rational solutions to the Riccati equations lead to a proper differential ideal by specialization.
To overcome these problems Algorithm~\ref{alg:ratv} simply checks with the differential Thomas decomposition all finitely many possibilities and one takes a consistent one, that is $\idealinter$ is a proper differential ideal,
for which the number of Riccati equations $\Ric{i}{\bsq}{v_i} = 0$ admitting rational solutions is maximal.
In case a Riccati equation $\Ric{i}{\bsq}{v_i} = 0$ has infinitely many rational solutions, that is the vector space $W_i$ is not trivial, we add the additional differential equations $c_{i,k}' = 0$ to $\idealinter$ and with a suitable ranking the differential Thomas decomposition delivers the conditions on the $c_{i,k}$ such that $\idealinter$ is consistent.

\begin{algorithm} 
\DontPrintSemicolon
\KwInput{$\Ric{1}{\bsq}{v_1}, \, \dots, \, \Ric{l}{\bsq}{v_l} $
and $s_1(\bvv)-\overline{s}_1,\dots,s_l(\bvv)-\overline{s}_l$.}
\KwOutput{A partition
\[
I \, = \, \{ 1, \dots, l \} \, = \, I' \cup I'' \, = \, \{ i_1, \dots, i_r\} \cup \{ i_{r+1}, \dots, i_l \}
\]
 with minimal $r$ and rational solutions $\vq_{i_{r+1}},\dots, \vq_{i_l}$ in $\difffield$ of the Riccati equations $\Ric{i_{r+1}}{\bsq}{v_{i_{r+1}}}=0, \, \dots, \, \Ric{i_l}{\bsq}{v_{i_l}}=0 $ such that 
\[
\idealinter \, = \, \langle 
s_1(\bvv)-\overline{s}_1, \quad \ldots, \quad s_l(\bvv)-\overline{s}_l, \quad v_{i_{r+1}} - \vq_{i_{r+1}}, \quad \dots , \quad v_{i_{l}}-\vq_{i_{l}} \rangle
\]
of $\difffield\{ \bvv \}$ is proper.}  
$I'' \gets \emptyset$\\
\For{$k = 1$, $2$, \ldots, $l$} 
{
$\Sol_k \gets \{ v_k \}$\\
\If{$\Ric{k}{\bsq}{v_k}=0$ has rational solutions}
{
$I'' \gets I'' \cup{\{ k \}}$\\
Compute the set of rational solutions
\[
 \RatSol_k \, = \, \left\{ u^k_i + \frac{c^k_{i,1} (w^k_{i,1})' + \dots + c^k_{i,m_i} (w^k_{i,m_i})'}{c^k_{i,1} w^k_{i,1}+ \dots + c^k_{i,m_i} w^k_{i,m_i}} \, \middle| \, i=1,\dots,r_k 
 \right\}
\]
of $\Ric{k}{\bsq}{v_k}=0$, where $u_i^k$, $w_{i,j}^k$ are as in \eqref{eq:rationalSol}, $r_k \in \Z_{\geq 0}$ and $c_{i,j}^k$ are constant indeterminates.\\ 
$\Sol_k \gets \Sol_k \cup \RatSol_k$
}
}
Define $P$ as the list of elements of the Cartesian product $\Sol_1 \times \dots \times \Sol_l$.\\
Sort $P$ in ascending order comparing the number of components of tuples that are differential indeterminates $\bvv$.\\
\While{$P \neq \emptyset$}{
Take the first element $\bsol=(\sol_{1},\dots,\sol_{l})$ of $P$ and remove it from $P$.\\
Consider the differential system $\mathcal{J}$ corresponding to the differential ideal of $\difffield[c_{i,j}^k]\{ \bvv \}$ that is obtained from 
\[  
\langle s_1(\bvv) - \overline{s}_1,\dots, s_l(\bvv) - \overline{s}_l, \, v_1 - b_1^{-1} \sol_1, \dots, v_l - b_l^{-1} \sol_l \rangle \subset \difffield(c_{i,j}^k)\{ \bvv \}
\]
by clearing denominators involving $c_{i,j}^k$ where $b_i$ as in Proposition~\ref{prop:exponentialandRiccati}.\\
Treat the constants $\boldsymbol{c}=(c_{i,j}^k)$ as differential indeterminates with vanishing derivatives, i.e., add the equations 
$\partial c_{i,j}^k=0$ to $\mathcal{J}$.\\
Compute a Thomas decomposition of $\mathcal{J}$ together with the above cleared denominators as inequations with respect to a ranking of $\difffield\{ \bvv, \mathbf{c} \}$ for which $c_{i,j}^k$ are ranked lowest, and let $\mathcal{S}_1$, \ldots, $\mathcal{S}_t$ be the resulting simple differential systems.\\
\If{$t > 0$}{
Remove indices $k$ from $I''$ where $\sol_{k}$ is a variable $v_{k}$.\\
Set $I' = \{ 1, \dots, l \} \setminus I''$.\\
Choose a simple system $\mathcal{S}$ of the Thomas decomposition and 
read off 
the equations and inequations which only involve the indeterminates $\boldsymbol{c}=(c_{i,j}^k)$. Choose a constant point $\boldsymbol{\overline{c}}=(\overline{c}_{i,j}^k)$ satisfying these equations and inequations and determine the explicit solution $\overline{\sol}_{k}$ in 
$\difffield$ defined by $\boldsymbol{\overline{c}}$.  \\
\Return($I'$, $I''$, $\overline{v}_{k}=\overline{\sol}_{k}$ for $k \in I''$)
}
}
\caption{Compute Consistent System\label{alg:ratv}}
\end{algorithm}

\begin{proposition}
Algorithm~\ref{alg:ratv} is correct and terminates. 
\end{proposition}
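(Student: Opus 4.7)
The plan is to verify termination first, and then to establish correctness in three steps: (i) the ideal $\idealinter$ returned by the algorithm is proper; (ii) the returned $\overline{v}_{i_{r+1}}, \dots, \overline{v}_{i_l}$ lie in $\difffield$ and solve the corresponding Riccati equations; (iii) the integer $r$ is the minimum among all $r$ for which such data exist.

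For termination, observe that for each $k$ the rational solution set of $\Ric{k}{\bsq}{v_k}=0$ is computable in finitely many steps via the construction recalled in \eqref{eq:rationalSol}, which applies by Assumption~\ref{assumption1}. Hence each $\RatSol_k$ is described by finitely many parametric families, so $\Sol_k$ and the Cartesian product $P$ are finite lists. Each iteration of the \textbf{while} loop invokes a differential Thomas decomposition, which terminates by \cite{RobertzHabil}. Thus the \textbf{while} loop consumes $P$ in finitely many iterations and the algorithm halts.

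For correctness at an iteration producing a non-empty Thomas decomposition, the equations and inequations extracted from a chosen simple system $\mathcal{S}$ that only involve the constants $\boldsymbol{c}$ form a consistent non-differential system over $\field$; since $\field$ is algebraically closed, it admits a constant solution $\overline{\boldsymbol{c}}$. Substituting $\overline{\boldsymbol{c}}$ into $\sol_k$ produces an element $\overline{v}_k \in \difffield$ which, coming from $\RatSol_k$, solves $\Ric{k}{\bsq}{v_k}=0$; this yields~(ii). For~(i), properness of $\idealinter$ is equivalent to the specialised system admitting a solution, witnessed by the simple system together with $\overline{\boldsymbol{c}}$. Conversely, when $t=0$ the Thomas decomposition is empty, and no choice of constants could have rendered $\idealinter$ proper, which justifies skipping the tuple.

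The main obstacle is~(iii). Suppose, for contradiction, that there existed a partition $I = \tilde{I}' \cup \tilde{I}''$ with $|\tilde{I}'|<r$ and rationals $\tilde{\overline{v}}_k \in \difffield$ (for $k \in \tilde{I}''$) making the associated intermediate ideal proper. By Proposition~\ref{SRicContainedInS} each $\tilde{\overline{v}}_k$ is a rational solution of $\Ric{k}{\bsq}{v_k}=0$, hence obtainable from some family $\sol_k \in \RatSol_k$ via a constant assignment. The tuple $\tilde{\bsol}$ with $\tilde{\sol}_k = \sol_k$ for $k \in \tilde{I}''$ and $\tilde{\sol}_k = v_k$ for $k \in \tilde{I}'$ lies in $P$, has strictly fewer differential-indeterminate components than the tuple returned, and, by the ascending sort, is processed earlier. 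Its constant witness makes the Thomas decomposition non-empty on $\tilde{\bsol}$, so the algorithm would have returned $\tilde{\bsol}$, a contradiction. Hence $r$ is minimal. The delicate point is the passage from properness of the ideal with cleared denominators over $\difffield(\boldsymbol{c})\{\bvv\}$ to the existence of a concrete constant specialisation in $\field$ that preserves properness; this relies on the algebraic closedness of $\field$ together with the effective membership test for simple systems described in Remark~\ref{rem:continueThomas}.
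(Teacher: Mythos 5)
Your overall strategy coincides with the paper's: termination from finiteness of $P$ plus termination of the Thomas decomposition, correctness of the returned data from the elimination property of simple systems (constants ranked lowest, denominators carried along as inequations), and minimality of $r$ from the ascending sort of $P$. In fact your minimality step is argued in more detail than the paper's, which disposes of it in one sentence by appealing to the sorting; your contradiction argument (a rival partition with fewer indeterminates would, via Proposition~\ref{SRicContainedInS} and the parametrized description \eqref{eq:rationalSol}, correspond to an earlier tuple of $P$ whose Thomas decomposition is non-empty, so it would have been returned first) is a legitimate and welcome expansion of that remark.

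There is, however, one genuine gap: you never show that the algorithm actually reaches the \textbf{return} statement, i.e.\ that at least one tuple of $P$ yields $t>0$. Your termination argument only shows the \textbf{while} loop halts, and your correctness argument is explicitly conditional on ``an iteration producing a non-empty Thomas decomposition''; as written, nothing excludes the scenario in which every tuple gives $t=0$, the loop exhausts $P$, and the algorithm halts without producing the specified output. The paper closes this by observing that the last element of the sorted list $P$ is the tuple consisting of all differential indeterminates $\bvv$, for which the ideal in step~12 degenerates to $\idealspec$; since $\idealspec$ is prime by Lemma~\ref{lem:idealSdiffprim} (hence proper), its Thomas decomposition contains at least one simple system, so in the worst case the algorithm returns $I''=\emptyset$, $I'=\{1,\dots,l\}$ with no rational solutions. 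Your proof never invokes Lemma~\ref{lem:idealSdiffprim} or any substitute for it, so you should add this fallback argument; with it, the rest of your proof stands.
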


\begin{proof}
    Since all sets $\Sol_k$ are finite, the list $P$ also consists only of finitely many elements.
    The sorted list $P$ defined in step~8 and 9 contains always as the last element the tuple which  consists of all differential variables $\bvv$. For this tuple the ideal considered in step~12 becomes the differential ideal $\idealspec$ which is prime by Lemma~\ref{lem:idealSdiffprim}, and therefore a Thomas decomposition of the corresponding differential system $\mathcal{J}$ contains at least one simple system $\mathcal{S}$, that is $t>0$. Since the last tuple of the list is the tuple of all differential indeterminates, we obtain $I'' = \emptyset$ and $I' = \{ 1, \dots, l \}$ in step~16 and 17 and there are no rational solutions. Thus the algorithm always terminates. 

    The minimality of $r$ is guaranteed by the sorting of the list $P$. Note that since we added the denominators as inequations to the input of the differential Thomas decomposition the choice of any simple system cannot lead into loosing a rational solution.
    
    Since we choose a specialization $\boldsymbol{\overline{c}}=(\overline{c}_{i,j}^k)$ of the indeterminates $\boldsymbol{c}=(c_{i,j}^k)$ according to the equations and inequations of the simple system $\mathcal{S}$ only involving the $\boldsymbol{c}$, the linear differential polynomials $v_{k}-\overline{\sol}_{k}$ for $k \in I''$ together with the differential polynomials 
    \[
    s_1(\bvv) - \overline{s}_1, \quad \dots, \quad s_l(\bvv) - \overline{s}_l
    \]
    form a proper differential ideal $\idealinter$ of $\difffield\{\bvv\}$.
\end{proof}

We are going to use $\idealinter$ to construct a Picard-Vessiot extension $\overline{\generalext}$ of $\difffield$ for $A_{\group}(\bsq)$. Our respective fundamental matrix $\overline{\fm}$ will have the Bruhat decomposition 
\[
\fmspec \, = \, \buu(\overline{\bvv},\overline{\bff}) \, n(\overline{w}) \, \btt(\overline{\bexp}) \, \buu(\overline{\bint}) 
\]
with $\overline{v}_i \in \difffield$ for $i\in I''$ and $\overline{v}_i \in \overline{\generalext} \setminus \difffield$ for $i \in I'$ which will force the differential Galois group $H$ of $\overline{\generalext}$ over $\difffield$ to be a subgroup of the standard parabolic group $\parabolic_J$. 

\begin{proposition}\label{prop:parabolicbound}
Let $I=I' \cup I''$ and $\idealinter$ be the result of Algorithm~\ref{alg:ratv} for $A_{\group}(\bsq)$ and let $J$ be as in Definition~\ref{def:partitions} for the partition $I'\cup I''$. 
\begin{enumerate}
\item\label{prop:E(a)}
Then there exists a Picard-Vessiot extension $\overline{\generalext}$ of $\difffield$ for
$A_{\group}(\overline{\bss})$ with fundamental matrix
\[
\fmspec \, = \, \buu(\overline{\bvv},\overline{\bff}) \, n(\overline{w}) \, \btt(\overline{\bexp}) \, \buu(\overline{\bint}) 
\]
such that $\overline{\bvv}$ extends the tuple of $\overline{v}_i$ with $i\in I''$ given by Algorithm~\ref{alg:ratv} with certain $\overline{v}_i \in \overline{\generalext} \setminus \difffield$ for $i \in I'$ and where $\overline{\bexp} \in (\overline{\generalext}^{\times})^l$ and $\overline{\bint} \in \overline{\generalext}^m$.
\item\label{prop:E(b)}
The differential Galois group $H$ of $\overline{\generalext}$ over $\difffield$ with representation induced by $\fmspec$ is contained in $\parabolic_J$.  
\item\label{prop:E(c)}
Let $\bexp^{-1}=(\exp_1^{-1},\dots,\exp_l^{-1})$. The map
\begin{gather*}
\sigPV\colon \difffield \{ \bvv \}[\bexp,\bexp^{-1}, \bint ] \to \overline{\generalext}\,,\\
\bvv \mapsto \overline{\bvv}, \, 
\bexp \mapsto \overline{\bexp}, \, 
\bint \mapsto \overline{\bint}
\end{gather*}
is a differential homomorphism and the kernel of $\sigPV|_{\difffield\{\bvv\}}$ contains $\idealinter$. 
\item\label{prop:E(d)}
If $H = \levi \ltimes \unirad(H)$ 
is a Levi decomposition of $H$ for some Levi group $\levi$, then $\levi$ is $\widetilde{\levi}$-irreducible for a Levi group $\widetilde{\levi}$ of $\parabolic_J$ and $\unirad(H)\leq \unirad(\parabolic_J)$.
\end{enumerate}
\end{proposition}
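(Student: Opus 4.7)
For parts (a) and (c), I would first extend the proper differential ideal $\idealinter$ to a maximal differential ideal $\idealmax\lhd\difffield\{\bvv\}$; the quotient is a simple differential integral domain whose field of fractions $\difffield\langle\overline{\bvv}\rangle$ has constant field $\field$ and contains specialized $\overline{v}_i$ agreeing with the output of Algorithm~\ref{alg:ratv} for $i\in I''$. Then, following the recursive construction of Theorem~\ref{thm:RobertzSeissNormalForms} step by step, I would adjoin Picard-Vessiot extensions for the specialized exponentials $\overline{\exp}_i$ satisfying $\overline{\exp}_i'/\overline{\exp}_i=g_i(\overline{\bvv})$ and for the specialized iterated integrals $\overline{\intn}_i$, at each stage passing to the quotient by a maximal differential ideal in order to keep the constants equal to $\field$. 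The resulting field $\overline{\generalext}$ is then a Picard-Vessiot extension of $\difffield$ for $A_{\group}(\bsq)$ with fundamental matrix $\fmspec=\buu(\overline{\bvv},\overline{\bff})\,n(\overline{w})\,\btt(\overline{\bexp})\,\buu(\overline{\bint})$ of the required form, proving (a). The map $\sigPV$ is a differential homomorphism by construction, and its restriction to $\difffield\{\bvv\}$ has kernel $\idealmax\supseteq\idealinter$, proving (c).

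For (b) I would observe that every $\sigma\in H$ acts on $\fmspec$ by right-multiplication with some $g_\sigma\in\group(\field)$, and the induced action on the Bruhat parameters $\overline{\bvv},\overline{\bff},\overline{\bexp},\overline{\bint}$ is obtained by re-applying the Bruhat decomposition to $\fmspec\,g_\sigma$, just as in Section~\ref{sec:normalform}. Since $\overline{v}_i\in\difffield=\overline{\generalext}^H$ for $i\in I''$, each such parameter is fixed by $\sigma$, and the specialization of Lemma~\ref{lem:invariantreflections} used in the proof of Theorem~\ref{thm:fixedfieldparabolic} forces $g_\sigma\in\parabolic_J$, so that $H\leq\parabolic_J$. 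For the minimality half of (d) I would then show that no standard parabolic $\parabolic_{J'}$ with $J'\subsetneq J$ contains any $\group(\field)$-conjugate of $H$: if $g^{-1}Hg\leq\parabolic_{J'}$, then taking $\fmspec\,g$ as an alternative fundamental matrix and arguing as in (b), its Bruhat coefficients $\overline{v}_i^{(g)}$ for $i$ in the corresponding index set $I^{*''}$ (with $|I^{*''}|>|I''|$ by Remark~\ref{rem:correspondencepartitionparabolic}) would lie in $\difffield$. By Proposition~\ref{prop:exponentialandRiccati} the values $b_i\overline{v}_i^{(g)}$ are then rational solutions of the intrinsic Riccati equations $\Ric{i}{\bsq}{y}=0$, so together with $s_i(\bvv)-\overline{s}_i$ they generate a proper differential ideal strictly enlarging $I''$, contradicting the minimality of $r$ delivered by Algorithm~\ref{alg:ratv}.

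Given this minimality, I would finish (d) by picking any Levi decomposition $H=\levi\ltimes\unirad(H)$, using Theorem~\ref{thm:levidecompositionconj} to choose $u\in\unirad(\parabolic_J)$ with $u\levi u^{-1}\leq\levi_J$, and setting $\widetilde{\levi}:=u^{-1}\levi_J u$, a Levi group of $\parabolic_J$ with $\levi\leq\widetilde{\levi}$. Let $\pi\colon\parabolic_J\to\levi_J$ denote the natural projection with kernel $\unirad(\parabolic_J)$. Then $\pi(u\unirad(H)u^{-1})$ is a unipotent subgroup of $\levi_J$ normalized by $u\levi u^{-1}$; were it nontrivial, the Borel-Tits theorem would furnish a proper parabolic $P_0$ of $\levi_J$ with $\pi(u\unirad(H)u^{-1})\leq\unirad(P_0)$ and $u\levi u^{-1}\leq P_0$, so that $\pi^{-1}(P_0)=P_0\cdot\unirad(\parabolic_J)$ would be a proper parabolic of $\parabolic_J$ (hence of $\group$) containing $uHu^{-1}$, which after conjugation within $\parabolic_J$ to a standard parabolic $\parabolic_{J''}$ with $J''\subsetneq J$ contradicts the minimality established above. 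Hence $\unirad(H)\leq\unirad(\parabolic_J)$, and applying the same minimality-plus-Borel-Tits argument to $u\levi u^{-1}$ directly gives that $\levi$ is $\widetilde{\levi}$-irreducible. The main obstacle will be the careful calibration of the minimality argument: I must verify that the Bruhat coefficients obtained from an alternative fundamental matrix $\fmspec\,g$ are genuine rational solutions of the intrinsic Riccati equations and that they together with the normal-form relations define a proper differential ideal, so that Algorithm~\ref{alg:ratv} really would have enlarged $I''$.
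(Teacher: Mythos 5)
Your treatment of parts (a)--(c) is essentially the paper's: you extend $\idealinter$ to a maximal differential ideal $\idealmax\lhd\difffield\{\bvv\}$, build the Liouvillian part over $\Frac(\difffield\{\bvv\}/\idealmax)$ modulo a further maximal differential ideal (the paper does this in a single step, choosing $\Imax$ to contain the defining ideal of $\borel^-$ so that the Liouvillian fundamental matrix lies in $\borel^-$; your tower-by-tower version amounts to the same construction), and $\sigPV$ and part (b) are handled exactly as in the paper via the Bruhat recomputation and Theorem~\ref{thm:fixedfieldparabolic}. One small omission: statement (a) requires $\overline{v}_i\in\overline{\generalext}\setminus\difffield$ for $i\in I'$, which you never address; the paper settles it in one line (otherwise Algorithm~\ref{alg:ratv} would have returned a strictly larger $I''$), and you should say this explicitly.

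In part (d) you take a genuinely different route, and its load-bearing step has a gap. Everything rests on the claim that no standard parabolic $\parabolic_{J'}$ with $J'\subsetneq J$ contains a $\group(\field)$-conjugate $g^{-1}Hg$, which you prove by declaring $\fmspec\,g$ an alternative fundamental matrix and reading off its Bruhat coefficients $\overline{v}_i^{(g)}$. But $\fmspec\,g$ need not lie in the big cell $\unipotent^-\,n(\overline{w})\,\borel^-$, so these coefficients need not exist. For $g=g_\sigma$ coming from the Galois action this is automatic (a differential automorphism fixes $\field$, hence preserves the $\field$-defined Bruhat cell of $\fmspec$), and for $g\in\unipotent^-(\field)$ it holds because $\borel^-\unipotent^-=\borel^-$; but the conjugating elements your Borel--Tits step actually produces arise from conjugating $\pi^{-1}(P_0)$ inside $\parabolic_J$ to a standard parabolic, so $g$ is an essentially arbitrary element of $\parabolic_J(\field)$ times $u$, and no such guarantee is available. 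Already in rank one, if the specialized integral $\overline{\intn}_1$ happens to vanish, right translation of $\fmspec$ by $n(w_{\alpha_1})$ lands in $\borel^-$, outside the big cell; replacing $g$ by $gp$ with $p\in\parabolic_{J'}(\field)$ does not obviously repair this, since the translated flag $\fmspec\,g\,\parabolic_{J'}/\borel^-$ can be disjoint from the open cell. By contrast, the two difficulties you yourself flag are the easier half: once $\fmspec g$ lies in the big cell, $\dlog(\fmspec g)=A_{\group}(\bsq)$ forces the universal parameter relations, so $s_j(\overline{\bvv}^{(g)})=\overline{s}_j$, the elements $b_i\overline{v}_i^{(g)}$ solve $\Ric{i}{\bsq}{y}=0$, and the enlarged ideal is proper because it lies in the kernel of the specialization $\bvv\mapsto\overline{\bvv}^{(g)}$. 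The paper avoids the problem altogether by never re-decomposing a translated fundamental matrix: it proves minimality of $\parabolic_J$ only among parabolics containing a Levi group $\levi$ of $H$ (a dimension count via Theorem~\ref{thm:fixedfieldparabolic} and Remark~\ref{rem:correspondencepartitionparabolic}), obtains $\widetilde{\levi}$-irreducibility from Proposition~\ref{prop:LirreducibleForLevi} rather than Borel--Tits, and proves $\unirad(H)\leq\unirad(\parabolic_J)$ Galois-theoretically: each $\overline{\exp}_i$ generates a $\mathbb{G}_m$-type subextension and so lies in $\overline{\generalext}^{\unirad(H)}$, giving $\overline{\generalext}^{\borel^-}\subseteq\overline{\generalext}^{\unirad(H)}$ and hence $\unirad(H)\leq\borel^-$, after which \cite[Section~30.3]{HumGroups} places $H$ in a standard parabolic that is then compared with $\parabolic_J$. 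Unless you supply the missing big-cell control, or reformulate the minimality claim so that only Galois-type translations of $\fmspec$ occur, your part (d) is not complete.
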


\begin{proof}
\ref{prop:E(a)}\
Since $\idealinter$ is a proper differential ideal of $\difffield\{\bvv\}$, we can choose a maximal differential ideal $\idealmax$ of $\difffield\{ \bvv \}$ containing $\idealinter$. The quotient $\difffield\{ \bvv \}/\idealmax$ is differentially simple and finitely generated over $\difffield$, namely by the residue classes
\[
v_1 + \idealmax, \quad \dots, \quad v_1^{(d_1)} + \idealmax, \quad \dots, \quad v_l + \idealmax, \quad \dots, \quad v_l^{(d_l)} + \idealmax\,.
\]
Hence, by \cite[Lemma~1.17]{vanderPutSinger} the field of fractions 
\[
\Frac(\difffield\{ \bvv\}/\idealmax)
\]
has constants $\field$. Now one uses the standard method to construct a Picard-Vessiot extension $\overline{\generalext}$ of
    $
    \Frac(\difffield\{ \bvv\}/\idealmax)
    $
for the matrix $\ALiou(\bvv+ \idealmax)$ defining the Liouvillian part. More precisely,
we consider the differential ring
\[
\Frac(\difffield\{ \bvv\}/\idealmax)[X_{i,j},\det(X_{i,j})^{-1}] ,
\]
where the derivation on $X_{i,j}$ is defined by $\partial(X_{i,j}) = \ALiou(\bvv+ \idealmax)(X_{i,j})$ (cf.\ Theorem~\ref{thm:RobertzSeissNormalForms}).
Since $\ALiou(\bvv+ \idealmax) \in \mathfrak{b}^-(\Frac(\difffield\{ \bvv\}/\idealmax))$, we can choose a maximal differential ideal $\Imax$ in 
this differential ring such that $\Imax$ contains the defining ideal of $\borel^-$. Then 
\[
\overline{\generalext} \, := \, \Frac(  \Frac(\difffield\{ \bvv\}/\idealmax)[X_{i,j},\det(X_{i,j})^{-1}]/I_{\rm max})
\]
is a Picard-Vessiot extension for $\ALiou(\bvv+ \idealmax)$ and the fundamental matrix $\fmlspec := X + \Imax$ has the property that $\fmlspec \in \borel^-(\overline{\generalext})$. Thus there exist unique elements 
$\overline{\bexp} = (\overline{\exp}_1,\dots,\overline{\exp}_l)$ and $\overline{\bint}=(\overline{\intn}_1,\dots, \overline{\intn}_m)$ in $\overline{\generalext}$ such that $\fmlspec$ has the Bruhat decomposition 
\[
\fmlspec \, = \, \btt(\overline{\bexp}) \, \buu(\overline{\bint}) \in \borel^-( \overline{\generalext})\,.
\]
    It follows from the construction that the matrix 
    \begin{equation}\label{eqn:BruhatDecompspecY}
    \fmspec \, = \, \buu(\overline{\bvv}) \, n( \overline{w}) \, \btt(\overline{\bexp}) \, \buu(\overline{\bint}) 
    \end{equation}
    is a fundamental matrix for $A_{\group}(\overline{\bss})$ with the property that $\overline{v}_i \in \difffield$ for $i\in I''$. 
    Clearly, we have $\difffield (\fmspec_{i,j}) \subset \overline{\generalext}$. 
    By the uniqueness of the Bruhat decomposition the Bruhat decomposition of
    $\overline{\fm} \in \group(\difffield (\overline{\fm}_{i,j}))$ over $\difffield (\overline{\fm}_{i,j})$ has to coincide with the one in \eqref{eqn:BruhatDecompspecY} over $\overline{\generalext} \supset \difffield (\fmspec_{i,j})$.
    Thus $\bvq$, $\overline{\bexp}$ and $\overline{\bint}$ are elements of $\difffield (\overline{\fm}_{i,j})$ and so $\difffield (\overline{\fm}_{i,j}) =  \overline{\generalext}$.
    We conclude that $\overline{\generalext}$ is a Picard-Vessiot extension of $\difffield$ for $A_{\group}(\bsq)$.
    The remaining $\overline{v}_i$ with $i \in I'$ are not in $\difffield$, since otherwise the set $I''$ returned by Algorithm~\ref{alg:ratv} would have been a proper superset.
    
    \ref{prop:E(b)}\
    Let $H$ be the differential Galois group of $\overline{\generalext}$ over $\difffield$ in the representation induced by $\fmspec$. Then $H$ has to fix all $\overline{v}_i \in \difffield$ with $i \in I''$. This means that $H$ has to fix all indeterminates
    $v_i$ with $i \in I''$, since $\idealinter$ contains the linear differential polynomials
    \[
    v_{i_{r+1}} - \overline{v}_{i_{r+1}},\dots ,v_{i_{l}}- \overline{v}_{i_{l}}\, .
    \]
    According to Theorem~\ref{thm:fixedfieldparabolic}, the largest subgroup of $\group$ fixing these elements is the standard parabolic subgroup $\parabolic_J$ and so $H$ is contained in $\parabolic_J$.

    \ref{prop:E(c)}\
    Clearly $\sigmax\colon \difffield\{ \bvv \} \to \Frac(\difffield \{ \bvv\}/\idealmax)$ is a differential homomorphism whose kernel contains $\idealinter$. The elements $\bexp$ and $\bint$ are algebraically independent over $\difffield \langle \bvv \rangle$
    and the elements in $\overline{\bexp}$ are non-zero, i.e., they are invertible. Thus 
    the differential homomorphism $\sigmax$ extends uniquely in the obvious way to a homomorphism of rings
    \[
    \sigPV\colon \difffield\{ \bvv\}[\bexp, \bexp^{-1}, \bint] \to \Frac(\difffield \{ \bvv\}/\idealmax)[\overline{\bexp},\overline{\bexp}^{-1},\overline{\bint}] \, .
    \]
    The derivative of $\exp_i$ is $g_i(\bvv) \exp_i$ (cf.\ Theorem~\ref{thm:RobertzSeissNormalForms}) and by construction the derivative of $\overline{\exp}_i$ is $\sigmax(g_i(\bvv)) \, \overline{\exp}_i$.
    Thus we have $\partial(\sigPV(\exp_i))=\sigPV(\partial(\exp_i))$. Recall that the integrand of $\intn_i$ is a polynomial expression in $\bexp$, $\bexp^{-1}$ and those $\intn_j$ with $|\height(\beta_j)|<|\height(\beta_i)|$.
    Since $\fml = \btt(\bexp) \, \buu(\bint)$ is the fundamental matrix for $\ALiou(\bvv)$ and $\fmlspec = \btt(\overline{\bexp}) \, \buu(\overline{\bint})$ the one for $\sigmax(\ALiou(\bvv))$, the derivative of $\overline{\intn}_i$ is the same polynomial expression but now in $\overline{\bexp}$, $\overline{\bexp}^{-1}$ and $\overline{\intn}_j$. 
    We conclude that $\sigPV$ is a differential homomorphism.
    
    \ref{prop:E(d)}\
    Since $H \leq \parabolic_J$, the Levi group $\levi$ is contained in $\parabolic_J$. We are going to show that $\parabolic_J$ is minimal among the parabolic subgroups of $\group$ with respect to containing $\levi$. Since $\levi$ is reductive it will then follow from 
    Proposition~\ref{prop:LirreducibleForLevi} that $\levi$ is $\widetilde{\levi}$-irreducible for some Levi group $\widetilde{\levi}$ of $\parabolic_J$. 
    Let $P$ be a further parabolic subgroup of $\group$ such that $\levi$ is contained in $P$ and such that $P\leq \parabolic_J$.
There is a unique standard parabolic subgroup $\parabolic_{\widetilde{J}}$ and
    $g \in \group$ such that $g \parabolic g^{-1} = \parabolic_{\widetilde{J}}$. Since by Algorithm~\ref{alg:ratv} the set $J$ has the property that as many indeterminates as possible of $\bvv$ are fixed by $\parabolic_J$, the group $\parabolic_{\widetilde{J}}$ can only fix the same number or a smaller number of indeterminates $\bvv$. We conclude by Theorem~\ref{thm:fixedfieldparabolic} that $|\tilde{J}|\geq |J|$ and so
    \[
    \dim (P) \, = \, \dim(\parabolic_{\widetilde{J}}) \, \geq \, \dim(\parabolic_J)
    \]
    forcing $P = \parabolic_J$, since $P \leq \parabolic_J$.    

     The radical $\unirad(H)$ is a connected unipotent subgroup and so it is contained in a Borel subgroup of $\group$. We are going to show that this Borel subgroup is $\borel^-$.
     It then follows from \cite[Corollary~A, Section~30.3]{HumGroups} and the last sentence of the proof of \cite[Proposition~30.3]{HumGroups} that there is a standard parabolic subgroup $P_{\widetilde{J}}$ such that $\unirad(H) \leq \unirad(P_{\widetilde{J}})$ and $N_{\group}(\unirad(H)) \leq P_{\widetilde{J}}$ which implies that $H\leq P_{\widetilde{J}}$.
     Since $\unirad(H)$ is unipotent the Picard-Vessiot extension $\overline{\generalext}$ of $\overline{\generalext}^{\unirad(H)}$ corresponds to a tower of one-dimensional anti-derivative extensions with Galois groups $\mathbb{G}_{a}$.
     Because every $\overline{\exp}_i \in \overline{\generalext}$ defines an exponential extension of $\difffield\langle \bvq \rangle$ with Galois group a subgroup of $\mathbb{G}_{m}$, we conclude that $\overline{\exp}_i \in \overline{\generalext}^{\unirad(H)}$, implying that its logarithmic derivative $\vq_i \in \overline{\generalext}^{\unirad(H)}$.
     By the construction of $\overline{\generalext}$, we have that
     \[
     \overline{\generalext}^{\borel^-} \, = \, \Frac(\difffield\{ \bvv\}/\idealmax) \, = \, \difffield \langle \bvq \rangle
     \]
     and so we obtain the inclusion $\overline{\generalext}^{\borel^-} \subseteq \overline{\generalext}^{\unirad(H)}$.
     It follows from the Fundamental Theorem of Differential Galois Theory that $\unirad(H) \leq \borel^-$ as claimed.
     Since $H \leq \parabolic_J$ and $H \leq P_{\widetilde{J}}$, we conclude that $H \leq P_{J \cap \widetilde{J}}$. Since $\parabolic_J$ is minimal among the standard parabolic subgroups with respect to containing $H$ and we have $P_{J \cap \widetilde{J}} \leq \parabolic_J$, it follows that $J\cap \widetilde{J} = J$ and so $\parabolic_J \leq \parabolic_{\widetilde{J}}$. Because $\unirad(\parabolic_J)$ and $\unirad(P_{\widetilde{J}})$ are generated by the root groups corresponding to roots in $\roots^- \setminus \Psi^-_J$ and in $\roots^- \setminus \Psi^-_{\widetilde{J}}$ respectively, it follows from $\Psi^-_{J} \subseteq \Psi^-_{\widetilde{J}}$ that $\unirad(\parabolic_J) \geq \unirad(\parabolic_{\widetilde{J}})$, where 
     \[
     \leviroots_J \, := \, \roots \cap \langle \alpha_j \mid j \in J \rangle_{\Z\mathrm{-span}} 
     \]
     and $\Psi_{\widetilde{J}}$ is defined similarly.
     Thus with the above we obtain that $\unirad(\parabolic_J) \geq \unirad(H)$.
\end{proof}

\section{Specializing the Parameters of the Reductive Part}\label{sec:specializingreductivepart}

We extend the specialization
\[
\sigma_0\colon \field \{ \bss(\bvv) \} \to \difffield, \, \bss(\bvv) \mapsto \bsq
\]
from Section~\ref{sec:bound} to a specialization of $\field\{ \bss(\bvv),\bvvbase \}$ using the ideal $\idealinter$.
More precisely, we define   
\[ \label{defn:sigmainter}
\begin{array}{rcl}
\siginter\colon \field\{ \bss(\bvv),\bvvbase \} & \to & \difffield \, \cong \, \difffield \{ \bss(\bvv),\bvvbase \}/\idealinter\,,\\[0.2em]
\bss(\bvv) & \mapsto & \bsq\,,\\[0.2em]
\bvvbase=(v_{i_{r+1}},\dots, v_{i_l}) & \mapsto & \bvvqbase = (\overline{v}_{i_{r+1}},\dots, \overline{v}_{i_l})\,.
\end{array}
\]
\begin{assumption}\label{assumption2}
Recall that in $\field\langle \bss(\bvv),\bvvbase \rangle [\partial]$ the generic operator $L_{\group}(\bss(\bvv),\partial)$ has the irreducible factorization
\[
L_{\group}(\bss(\bvv),\partial) \, = \, L_1(\bss(\bvv),\bvvbase,\partial) \cdots L_k(\bss(\bvv),\bvvbase,\partial)
\]
and we have determined the least common left multiple 
\[
\LCLM(\bss(\bvv),\bvvbase,\partial) \, = \, \LCLM( L_1(\bss(\bvv),\bvvbase,\partial), \dots , L_k(\bss(\bvv),\bvvbase,\partial))  
\] 
of these irreducible factors. In addition to  Assumption~\ref{assumption1} we further assume the following:
\begin{enumerate}
    \item\label{item1:assumption2}
    The denominators of the coefficients in the irreducible factors and in their least common left multiple do not specialize to zero under $\siginter$.
    \item\label{itemNew:assumption2}
    The denominators $p_{2,1},\dots,p_{2,q} \in \field \{ \bss(\bvv),\bvvbase \}$ of the representation of the invariants $p_1,\dots,p_q$ in $\field \langle \bss(\bvv), \bvvbase \rangle$ computed in Remark~\ref{rem:computerepofp_i} do not specialize to zero under $\siginter$.
    \item\label{itemNew2:assumption2}
    The denominators of the coefficients of $E_1(\bZ),\dots,E_l(\bZ)$ from Proposition~\ref{cor:exprforexpandint} do not specialize to zero under $\siginter$.
    \item\label{itemNew3:assumption2}
    The denominators of the coefficients of the numerators of $\exprforexp_j(\bZ)$, $V_j(\bZ)$ and $\exprforint_i(\bZ)$ do not specialize to zero under $\siginter$.
    \item\label{item3:assumption2}
    The order of the least common left multiple of the specialized factors
    \[
    \overline{L}_1(\partial) \, := \, L_1(\siginter(\bss(\bvv),\bvvbase),\partial), \, \dots , \, \overline{L}_k(\partial) \, := \, L_k(\siginter(\bss(\bvv),\bvvbase),\partial)
    \]
    (cf.\ Definition~\ref{def:specfactorsLCLM} below) is equal to the order of $\LCLM(\bss(\bvv),\bvvbase,\partial)$.
\end{enumerate}
\end{assumption}

According to Assumption~\ref{assumption2} \ref{item1:assumption2} we are able to specialize the generic irreducible factors and their least common left multiple.
\begin{definition}\label{def:specfactorsLCLM}
For the irreducible factorization  
\[
L_{\group}(\bss(\bvv),\partial) \, = \, L_1(\bss(\bvv),\bvvbase,\partial) \cdots L_k(\bss(\bvv),\bvvbase,\partial)
\]
in $\field\langle \bss(\bvv),\bvvbase \rangle [\partial]$ we denote its specialization under $\siginter$ by 
\[
L_{\group}(\bsq,\partial) \, = \, \overline{L}_1(\partial) \cdots \overline{L}_k(\partial)
\]
in $\difffield[\partial]$. Moreover, for the least common left multiple 
\[
\LCLM(\bss(\bvv),\bvvbase,\partial) \, = \, \LCLM( L_1(\bss(\bvv),\bvvbase,\partial), \dots , L_k(\bss(\bvv),\bvvbase,\partial))  
\] 
in $\field\langle \bss(\bvv),\bvvbase \rangle [\partial]$ we denote its specialization under $\siginter$ by
\[
\overline{\LCLM}(\bsq,\bvvqbase,\partial) \, .
\] 
\end{definition}

Let $y_1,\dots,y_n$ be a basis of the solution space $V$ of $L_{\group}(\bss(\bvv),\partial) \, y=0$ in $\generalext$ such that for $i=1,\dots,k$ the elements $\widehat{y}_{i,1},\dots,\widehat{y}_{i,\widehat{n}_i}$ defined as
\begin{equation}\label{eqn:basisforirredfactors} 
\begin{array}{c}
    \widehat{y}_{k,1} = y_1, \ \dots, \  \widehat{y}_{k,\widehat{n}_k}= y_{n_k'}, \\[0.5em]  \widehat{y}_{k-1,1} = L_k(\partial)y_{n_k'+1}, \ \dots , \  \widehat{y}_{k-1,\widehat{n}_{k-1}} = L_k(\partial)y_{n_{k-1}'},\\[0.5em]
    \widehat{y}_{k-2,1}= (L_{k-1}(\partial) \circ L_k(\partial))y_{n_{k-1}'+1}, \  \dots , \qquad\qquad\qquad\qquad\qquad\qquad\qquad \\[0.5em] \qquad\qquad\qquad \qquad \qquad \qquad \qquad \widehat{y}_{k-2,\widehat{n}_{k-2}}= (L_{k-1}(\partial) \circ L_k(\partial))y_{n_{k-2}'}, \\[0.5em]  \vdots \\[0.5em]
    \widehat{y}_{1,1} = (L_{2}(\partial) \circ \cdots \circ L_k(\partial))y_{n_{2}'+1}, \ \dots , \ \widehat{y}_{1,\widehat{n}_1} = (L_{2}(\partial) \circ \cdots \circ L_k(\partial))y_{n_{1}'}
\end{array}
\end{equation}
 form a basis of the solution space $V_i$ of $L_i(\bss(\bvv),\bvvbase,\partial) \, y=0$  in $\generalext$ (cf.\ the proof of Theorem~\ref{cor:extensionsbyPI} and Remark~\ref{rem:basisLCLM}).

\begin{remark}\label{rem:extensigma}
    Let $D \subset \field \{ \bss(\bvv),\bvvbase \}$ be the multiplicatively closed subset generated by the denominators addressed in Assumption~\ref{assumption2} 
    \ref{item1:assumption2}--\ref{itemNew3:assumption2}.
    Then the set $D$ specializes under $\siginter$ to a multiplicatively closed subset of $\difffield$ which does not contain zero. 
    Thus, the differential homomorphism $\sigPV$ of Proposition~\ref{prop:parabolicbound} extends to the localization by $D$, i.e.\ to the differential homomorphism 
    \[
    \sigPV\colon D^{-1} \difffield \{ \bvv \}[\bexp,\bexp^{-1}, \bint ] \to \overline{\generalext}\,,
    \]
    which we also denote by $\sigPV$. Indeed,
    since the kernel of $\sigPV$ contains the ideal $\idealinter$, 
    we have $\ker(\sigPV) \cap D = \emptyset$. Since for $i=1,\dots,k$ the basis elements $ \widehat{y}_{i,1},\dots,\widehat{y}_{i,\widehat{n}_i}$ of the solution space $V_i$ of  $L_i(\bss(\bvv),\bvvbase,\partial) \, y = 0$ and the basis elements $y_1^{I''},\dots,y_{n_{I''}}^{I''}$ of the solution space of $\LCLM(\bss(\bvv),\bvvbase,\partial) \, y = 0$
    are contained in
    \[
     D^{-1} \difffield \{ \bvv \}[\bexp,\bexp^{-1}, \bint ] \, ,
    \]
    we can specialize them to elements in $\overline{\generalext}$ via $\sigPV$.
\end{remark}
    
\begin{lemma}\label{lem:specializesolutionspace}
Let $\sigPV$ be as in Remark~\ref{rem:extensigma}. 
    \begin{enumerate}
    \item\label{item:specializesolutionspace1}
    The specialization $\sigPV$ induces a $\field$-vector space isomorphism 
    \[
    \sigPV|_{V}\colon V \to \overline{V}, \, y_i \mapsto \sigPV(y_i)
    \]
    from $V$ to the solution space $\overline{V}$ of $L_{\group}(\bsq,\partial) \, y = 0$ in $\overline{\generalext}$.
    \item\label{item:specializesolutionspace2}
    For $i=1,\dots,k$ the specialization $\sigPV$ induces a $\field$-vector space isomorphism 
    \[
    \sigPV|_{V_i}\colon V_i \to \overline{V}_i, \, \widehat{y}_{i,j} \mapsto \sigPV(\widehat{y}_{i,j}) \quad \mathrm{with} \ j=1,\dots,\widehat{n}_i
    \]
    from $V_i$ to the solution space $\overline{V}_i$ of $\overline{L}_i(\partial)y=0$ in $\overline{\generalext}$.
    \item\label{item:specializesolutionspace3}
    The spaces $V_i$ and $\overline{V}_i$ are invariant under $\parabolic_J(\field)$ and $H(\field)$, respectively.
    For $g \in H(\field)$ the homomorphism $\sigPV$ is compatible with the induced isomorphisms $\gamma_g\colon V_i \to V_i$ and $\overline{\gamma}_g\colon \overline{V}_i \to\overline{V}_i$, i.e.\
    $\sigma(\gamma_g(v))=\overline{\gamma}_g (\sigPV(v))$ for all $v \in V_i$.
    \end{enumerate}
\end{lemma}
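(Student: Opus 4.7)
The plan is to prove the three parts in order, exploiting that $\sigPV$ is a differential homomorphism from $D^{-1}\difffield\{\bvv\}[\bexp,\bexp^{-1},\bint]$ to $\overline{\generalext}$ with $\sigPV(\fm) = \fmspec$, and that by Remark~\ref{rem:extensigma} the basis $y_1,\dots,y_n$ of $V$ together with all the $\widehat{y}_{i,j}$ lies in the domain of $\sigPV$.

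For part~\ref{item:specializesolutionspace1}, I will first note that each $\sigPV(y_j)$ solves $L_{\group}(\bsq,\partial)\,y=0$, since $\sigPV$ commutes with $\partial$ and sends the coefficients of $L_{\group}(\bss(\bvv),\partial)$ to those of $L_{\group}(\bsq,\partial)$. For linear independence, the idea is that $Y:=\BNFcomp\fm$ is the Wronskian matrix of $(y_1,\dots,y_n)$ (because $\BNFcomp$ gauge-transforms $A_{\group}(\bss(\bvv))$ to companion form), and $\sigPV(Y)=\sigPV(\BNFcomp)\fmspec$ remains invertible: by Proposition~\ref{prop:PropertiesOfBcomp}, $\det(\BNFcomp)$ is a unit of $\field\{\bss(\bvv)\}$, hence lies in $\field^{\times}$ and specializes to a non-zero element of $\field$, while $\fmspec$ is a fundamental matrix. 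A dimension count against $\dim\overline{V}=n$ then finishes the argument.

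For part~\ref{item:specializesolutionspace2}, I plan to compare the two filtrations $V^{(\le i)}:=\ker(L_i\circ\cdots\circ L_k)$ in $V$ and $\overline{V}^{(\le i)}:=\ker(\overline{L}_i\circ\cdots\circ\overline{L}_k)$ in $\overline{V}$. Since each $L_j$ is monic, each $\overline{L}_j$ is monic of the same order, so $\overline{L}_i\circ\cdots\circ\overline{L}_k$ has order $n_i'$ and therefore $\dim\overline{V}^{(\le i)}\le n_i'$. On the other hand, $V^{(\le i)}=\mathrm{span}_{\field}(y_1,\dots,y_{n_i'})$ is mapped by the isomorphism from part~\ref{item:specializesolutionspace1} into $\overline{V}^{(\le i)}$, forcing equality of dimensions and giving an induced isomorphism $V^{(\le i)}\to\overline{V}^{(\le i)}$. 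The operator $\overline{L}_{i+1}\circ\cdots\circ\overline{L}_k$ then induces an injection $\overline{V}^{(\le i)}/\overline{V}^{(\le i+1)}\hookrightarrow\overline{V}_i$, and since $\overline{L}_i$ is monic of order $\widehat{n}_i$ the codomain has dimension at most $\widehat{n}_i$, matching the domain. Evaluating this injection on the basis image $\sigPV(y_{n_{i+1}'+1}),\dots,\sigPV(y_{n_i'})$ produces exactly the $\sigPV(\widehat{y}_{i,j})$ via
\[
\overline{L}_{i+1}\circ\cdots\circ\overline{L}_k(\sigPV(y_{n_{i+1}'+j}))\,=\,\sigPV(L_{i+1}\circ\cdots\circ L_k(y_{n_{i+1}'+j}))\,=\,\sigPV(\widehat{y}_{i,j})\,,
\]
identifying them as a basis of $\overline{V}_i$.

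For part~\ref{item:specializesolutionspace3}, invariance of $V_i$ under $\parabolic_J(\field)$ is immediate because the coefficients of $L_i(\bss(\bvv),\bvvbase,\partial)$ lie in $\generalext^{\parabolic_J}$, so $L_i(\gamma_g(y))=\gamma_g(L_i(y))=0$; the parallel argument with coefficients in $\difffield$ yields $H(\field)$-invariance of $\overline{V}_i$. For the compatibility, fix $g\in H(\field)\le\parabolic_J(\field)$. From $\gamma_g(\fm)=\fm\cdot g$ and the fact that $\BNFcomp$ is $\gamma_g$-fixed, the matrix of $g$ acting on $V$ in the basis $(y_1,\dots,y_n)$ is $g$ itself, and the same holds on $\overline{V}$ via $\fmspec=\sigPV(\fm)$; applying $\sigPV$ and using its $\field$-linearity yields $\sigPV(\gamma_g(y_j))=\overline{\gamma}_g(\sigPV(y_j))$, which extends $\field$-linearly to $V$ and restricts to each $V_i$, with the restriction being compatible with the identification of bases in part~\ref{item:specializesolutionspace2}. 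I expect the main obstacle to be the order-preservation step in part~\ref{item:specializesolutionspace2}: one must rule out that $\overline{L}_i\circ\cdots\circ\overline{L}_k$ could acquire extra solutions under specialization, and the key leverage is monicity of the factors combined with the fact that $\sigPV|_V$ is already a $\field$-linear isomorphism of the correct dimension.
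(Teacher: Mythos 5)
Your proof is correct and follows essentially the paper's route: part (a) rests on the specialized Wronskian $\sigPV(\BNFcomp)\,\fmspec$ being invertible because $\det(\sigPV(\BNFcomp))\in\field^{\times}$ and $\fmspec$ is a fundamental matrix, part (b) on monicity/order preservation of the factors combined with the independence from (a), and part (c) on the $\gamma_g$-invariance of $\BNFcomp$ together with $\sigPV(\fm)=\fmspec$, exactly as in the paper. The only difference is cosmetic: your kernel-filtration and quotient-dimension argument in (b) is a worked-out version of the induction on linear independence that the paper merely sketches, and your direct fixed-coefficient argument in (c) replaces the paper's citation of Singer's Lemma 2.2 with the same content.
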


\begin{proof}
    \ref{item:specializesolutionspace1}\
    We have that $\BNFcomp \fm$ is a Wronskian matrix for $L_{\group}(\bss(\bvv),\partial)y=0$ and so the elements $y_1,\dots,y_n$ of its first row form a basis for $V$. Then Proposition~\ref{prop:parabolicbound} \ref{prop:E(a)}\ and \ref{prop:E(c)}\ imply that 
    \[
    \sigPV(\BNFcomp\fm) \, = \, \sigPV(\BNFcomp) \, \fmspec
    \]
    is a Wronskian matrix for $L_{\group}(\bsq,\partial) \, y = 0$ with entries in $\overline{\generalext}$. Since $\det(\fmspec)=1$ and $\det(\sigPV(\BNFcomp)) \in \field^{\times}$, the entries in the first row 
    $\sigPV(y_1),\dots,\sigPV(y_n)$ are $\field$-linearly independent by \cite[Lemma~1.12]{vanderPutSinger}.\\
    \ref{item:specializesolutionspace2}\
    Let $i \in \{1,\dots,k\}$ and let $y_1,\dots,y_n$ be a basis of $V$ with the property that the elements in \eqref{eqn:basisforirredfactors} form bases of $V_1,\dots,V_k$. Since $\sigPV$ is a differential homomorphism, the images $\sigPV(\widehat{y}_{i,1}),\dots,\sigPV(\widehat{y}_{i,\widehat{n}_i})$ are solutions of $\overline{L}_i(\partial) \, y = 0$. According to \ref{item:specializesolutionspace1}\
    the elements $\sigPV(y_1),\dots,\sigPV(y_n)$ are $\field$-linearly independent. One can use this to show by induction on $i=k,\dots,1$ that $\sigPV(\widehat{y}_{i,1}),\dots,\sigPV(\widehat{y}_{i,\widehat{n}_i})$ are $\field$-linearly independent.  Since the order of $\overline{L}_i(\partial)$ is the same as the order of $L_i(\bss(\bvv),\bvvbase,\partial)$ (recall that $L_i(\bss(\bvv),\bvvbase,\partial)$ is monic), we conclude that $\sigPV(\widehat{y}_{i,1}),\dots,\sigPV(\widehat{y}_{i,\widehat{n}_i})$ form a $\field$-basis of $\overline{V}_i$.\\
    \ref{item:specializesolutionspace3}\
    The first part follows from \cite[Lemma~2.2]{Singer_Reducibility} applied to the generic and specialized situation separately.
    For $g \in H(\field)\leq \parabolic_J(\field)$ let $\gamma_g \in \Gal_{\partial}(\generalext/\generalext^{\parabolic_J})$ and
    $\overline{\gamma}_g \in \Gal_{\partial}(\overline{\generalext}/\difffield)$ be given by 
    $\gamma_g(\fm) = \fm \, g$ and $\overline{\gamma}_g (\fmspec) = \fmspec \, g$.
    Since $\BNFcomp \in \GL_n(\field\{ \bss(\bvv) \})$ and $\sigPV(\BNFcomp) \in \GL_n(\difffield)$, the action of $\gamma_g$ respectively $\overline{\gamma}_g$ on $V$ and $\overline{V}$ is induced by
    \[
    \gamma_g (\BNFcomp \, \fm) \, = \, \gamma_g(\BNFcomp) \, \gamma_g(\fm) \, = \, \BNFcomp \, \fm \, g
    \]
    and
    \[
    \overline{\gamma}_g(\sigPV(\BNFcomp) \, \fmspec) \, = \, \overline{\gamma}_g(\sigPV(\BNFcomp)) \, \overline{\gamma}_g(\overline{\fm} ) \, = \, \sigPV(\BNFcomp) \, \overline{\fm} \, g
    \]
    respectively.  According to Proposition~\ref{prop:parabolicbound} we have $\sigPV(\fm) = \fmspec$ and so we conclude that 
    \[
    \sigPV ( \gamma_g (\BNFcomp \fm)) \, = \, \sigPV(\BNFcomp \, \fm \, g) \, = \, \sigPV(\BNFcomp) \, \fmspec \, g \, = \, \overline{\gamma}_g(\sigPV(\BNFcomp) \, \overline{\fm})\,.
    \]
    By restricting the action on $V$ and $\overline{V}$ to $V_i$ and $\overline{V}_i$, respectively, the claim follows.
\end{proof}

\begin{proposition}\label{prop:specializefactorization} $\,$
\begin{enumerate}
    \item\label{item:specializefactorization1}
    The specialized factorization
    \[
    L_{\group}(\bsq,\partial) \, = \, \overline{L}_1(\partial) \cdots \overline{L}_k(\partial)
    \]
    is an irreducible factorization in $\difffield[\partial]$. 
    \item\label{item:specializefactorization2}
    The specialized least common left multiple $\overline{\LCLM}(\bsq,\bvvqbase,\partial)$ is the least common left multiple of $\overline{L}_1(\partial), \dots ,\overline{L}_k(\partial)$ in $\difffield[\partial]$, that is
    \[
     \overline{\LCLM}(\bsq,\bvvqbase,\partial) \, = \, \LCLM(\overline{L}_1(\partial), \dots ,\overline{L}_k(\partial)) \,.
    \] 
    \item\label{item:specializefactorization3}
    The differential homomorphism $\sigPV$ of Remark~\ref{rem:extensigma} induces a $\field$-vector space isomorphism 
    \[
    \sigPV\colon V_{I''} \to \overline{V}_{I''}, \, (y_1^{I''},\dots,y_{n_{I''}}^{I''}) \mapsto (\sigPV(y_1^{I''}),\dots,\sigPV(y_{n_{I''}}^{I''}))
    \]
    between the solution space $V_{I''}$ of 
    $\LCLM(\bss(\bvv),\bvvbase,\partial)y = 0$ in $\generalext$ and the solution space of 
    $\overline{\LCLM}(\bsq,\bvvqbase,\partial) \, y = 0$ in $\overline{\generalext}$.
    \end{enumerate}
\end{proposition}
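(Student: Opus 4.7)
The plan is to address the three parts in the order (iii), (ii), (i), treating the irreducibility in (i) as the main obstacle. Throughout I use the differential homomorphism $\sigPV$ from Remark~\ref{rem:extensigma} together with Lemma~\ref{lem:specializesolutionspace}, which provides $\field$-vector space isomorphisms $V \to \overline{V}$ and $V_i \to \overline{V}_i$ compatible with the Galois actions of $H \leq \parabolic_J$. I also use that the basis construction in Remark~\ref{rem:basisLCLM} realises $V_{I''} = V_1 + \cdots + V_k$ as a subspace of $V$ of $\field$-dimension $n_{I''}$, the order of $\LCLM(\bss(\bvv),\bvvbase,\partial)$.

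For (iii), the image $\sigPV(V_{I''}) = \overline{V}_1 + \cdots + \overline{V}_k$ is the solution space in $\overline{\generalext}$ of $\LCLM(\overline{L}_1, \dots, \overline{L}_k)$. By Assumption~\ref{assumption2}\ref{item3:assumption2} this space has $\field$-dimension exactly $n_{I''}$, matching $\dim_\field V_{I''}$, so $\sigPV|_{V_{I''}}$ is injective. By Assumption~\ref{assumption2}\ref{item1:assumption2} the leading coefficient of $\LCLM(\bss(\bvv),\bvvbase,\partial)$ does not specialize to zero, so $\overline{\LCLM}(\bsq, \bvvqbase, \partial)$ again has order $n_{I''}$ and hence $\dim_\field \overline{V}_{I''} = n_{I''}$. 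Since $\sigPV(V_{I''}) \subseteq \overline{V}_{I''}$ and both spaces have dimension $n_{I''}$, equality holds. Part (ii) is then immediate: $\overline{\LCLM}(\bsq, \bvvqbase, \partial)$ and $\LCLM(\overline{L}_1, \dots, \overline{L}_k)$ are both monic operators in $\difffield[\partial]$ of order $n_{I''}$ sharing the same $n_{I''}$-dimensional solution space $\overline{V}_{I''}$ in $\overline{\generalext}$, and a monic operator is uniquely determined by such a solution space when its dimension matches the order.

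For (i), via the Galois correspondence $\overline{L}_i$ is irreducible in $\difffield[\partial]$ iff $\overline{V}_i$ is an irreducible $H(\field)$-module. The irreducibility of $L_i(\bss(\bvv), \bvvbase, \partial)$ over $\generalext^{\parabolic_J}$ means that $V_i$ is an irreducible $\parabolic_J(\field)$-module; since $\unirad(\parabolic_J)$ is a unipotent normal subgroup, the invariant subspace $V_i^{\unirad(\parabolic_J)}$ is a nonzero $\parabolic_J$-submodule and hence equals $V_i$, so $V_i$ is already irreducible as a module for every Levi group $\widetilde{\levi}$ of $\parabolic_J$. Using Proposition~\ref{prop:parabolicbound}\ref{prop:E(d)}, fix such a $\widetilde{\levi}$ containing a Levi group $\levi$ of $H$ with $\levi$ being $\widetilde{\levi}$-irreducible, together with $\unirad(H) \leq \unirad(\parabolic_J)$. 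Transport the representation of $\widetilde{\levi}$ on $V_i$ to $\overline{V}_i$ via the isomorphism $\sigPV|_{V_i}$; by Lemma~\ref{lem:specializesolutionspace}\ref{item:specializesolutionspace3} the resulting $H$-action on $\overline{V}_i$ factors through $H/\unirad(H) \cong \levi$ and coincides with the restriction to $\levi$ of this transported representation.

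The remaining step, showing that the restriction of the irreducible $\widetilde{\levi}$-module $V_i$ to the $\widetilde{\levi}$-irreducible subgroup $\levi$ is again irreducible, is the main obstacle. A naive attempt would be to take a proper nonzero $\levi$-stable subspace $W \subsetneq \overline{V}_i$ and argue that $\Stab_{\widetilde{\levi}}(W)$ is a proper parabolic of $\widetilde{\levi}$ containing $\levi$, contradicting $\widetilde{\levi}$-irreducibility; however, subspace stabilizers inside reductive groups need not be parabolic in general, so this naive argument fails. I expect the proof to exploit the minimality of $\parabolic_J$ among the standard parabolic subgroups of $\group$ containing $H$ from Proposition~\ref{prop:parabolicbound}\ref{prop:E(d)}: a putative proper $H$-submodule of some $\overline{V}_i$ would refine the $\parabolic_J$-filtration of $V_{I''}$ by $V_1 + \cdots + V_k$, yielding (via the global Galois correspondence on $V$) a proper parabolic of $\group$ containing $H$ and, after conjugation, a standard parabolic strictly contained in $\parabolic_J$, contradicting minimality and thereby forcing each $\overline{V}_i$ to be an irreducible $H$-module.
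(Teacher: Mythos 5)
Your parts \ref{item:specializefactorization2} and \ref{item:specializefactorization3} are fine and amount to the paper's own argument in a slightly different order: the paper gets \ref{item:specializefactorization2} from the fact that each $\overline{L}_i(\partial)$ right-divides $\overline{\LCLM}(\bsq,\bvvqbase,\partial)$ together with Assumption~\ref{assumption2}~\ref{item3:assumption2} and uniqueness of the least common left multiple, and \ref{item:specializefactorization3} from equality of orders plus surjectivity via the isomorphisms $V_i\to\overline{V}_i$; your dimension count through $\sigPV(V_{I''})=\overline{V}_1+\dots+\overline{V}_k$ is an acceptable equivalent. The genuine gap is in part \ref{item:specializefactorization1}. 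You reduce it to the claim that the irreducible $\widetilde{\levi}$-module $V_i$ stays irreducible upon restriction to the $\widetilde{\levi}$-irreducible subgroup $\levi$, and then you do not prove this; you only state that you ``expect'' an argument exploiting the minimality of $\parabolic_J$. That is exactly where the content of the statement lies. Worse, the abstract claim you reduce to is false as a general principle: for instance $\SO_n$ is $\SL_n$-irreducible, yet the irreducible $\SL_n$-module $\mathrm{Sym}^2(\field^n)$ becomes reducible on restriction to $\SO_n$. So no purely representation-theoretic restriction argument of this shape can close the gap; one must use the specific module and the minimality of $\parabolic_J$ with respect to containing $H$.

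For comparison, the paper argues as follows. The generic factorization induces the flag $V'_k\subset\dots\subset V'_1=V$ of solution spaces of $L_k(\partial)$, $L_{k-1}(\partial)\circ L_k(\partial)$, etc.; $\parabolic_J(\field)$ stabilizes this flag and stabilizes no refinement of it, since a refinement would, via the correspondence between invariant subspaces and right factors, contradict the irreducibility of some $L_i(\partial)$ over $\generalext^{\parabolic_J}$. If some $\overline{L}_i(\partial)$ were reducible over $\difffield$, the specialized flag (which $H$ stabilizes by Lemma~\ref{lem:specializesolutionspace}) would admit an $H$-stable refinement; the stabilizer of that refined flag is a parabolic subgroup $P$ of $\group$ containing $H$ and properly contained in $\parabolic_J$, contradicting the minimality of $\parabolic_J$ among parabolic subgroups containing (the Levi groups of) $H$ established in the proof of Proposition~\ref{prop:parabolicbound}~\ref{prop:E(d)}. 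Your closing sketch gestures at this, but it does not supply the argument: the step that a proper $H$-submodule of some $\overline{V}_i$ produces a proper parabolic of $\group$ containing $H$ inside $\parabolic_J$ is precisely the point you yourself flagged as non-obvious, and your suggestion to pass ``after conjugation'' to a standard parabolic strictly contained in $\parabolic_J$ does not work as stated, since conjugating destroys the containment of $H$; the contradiction must be taken with minimality among all parabolic subgroups containing $H$, as in the paper.
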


\begin{proof}
\ref{item:specializefactorization1}
Recall that the irreducible factorization of $L_{\group}(\bss(\bvv),\partial) \, y = 0$
induces a flag
\begin{equation}\label{eqn:flagirredcublefactoization}
V'_k\subset V'_{k-1} \subset \dots \subset V'_1=V
\end{equation}
on the solution space $V$ of $L_G(\bss(\bvv),\partial) \, y = 0$, where the subspaces $V'_k, \dots, V'_1$  are the solution spaces of the operators
\begin{gather*}
    L_k(\partial) , \ L_{k-1}(\partial) \circ L_k(\partial), \ \dots, \
    L_1(\partial) \circ \cdots \circ L_k(\partial)=L_G(\bss(\bvv),\partial) ,
\end{gather*}
respectively. Since $\parabolic_J(\field)$ is the Galois group of $\generalext$ over $\field\langle \bss(\bvv),\bvvbase\rangle$, the parabolic subgroup $\parabolic_J(\field)$ stabilizes the flag \eqref{eqn:flagirredcublefactoization} by \cite[Lemma~2.2]{Singer_Reducibility}.
Moreover, $\parabolic_J(\field)$ does not stabilize any refinement of the flag in \eqref{eqn:flagirredcublefactoization}.
Indeed, assume that there is a subspace $\tilde{V}'$ such that $\tilde{V}'$ is a proper subspace of $V_i'$, $V_{i+1}'$ is a proper subspace of $\tilde{V}'$ and $\parabolic_J(\field)$ stabilizes the flag 
\[
V'_k\subset \dots \subset V'_{i+1} \subset \tilde{V}' \subset V'_i \subset \dots \subset V'_1=V.
\]
Then by \cite[Lemma~2.2]{Singer_Reducibility} there exists an operator $\tilde{L}(\partial) \in \field\langle \bss(\bvv), \bvvbase \rangle[\partial]$
with solution space $\tilde{V}'$.
Since $\tilde{V}' \subset V'_i$ and $V'_{i+1} \subset \tilde{V}'$, there exist operators $Q_1(\partial), Q_2(\partial) \in \field\langle \bss(\bvv), \bvvbase \rangle[\partial]$ of order at least one such that
\[
L_i(\partial) \circ \dots \circ L_k(\partial) \, = \, Q_1(\partial) \circ \tilde{L}(\partial) \quad \text{and} \quad 
\tilde{L}(\partial) \, = \, Q_2(\partial) \circ L_{i+1}(\partial) \circ \dots \circ L_k(\partial) .
\]
We obtain a contradiction to the irreducibility of the operator $L_i(\partial)$.  

Suppose that there is at least one generic irreducible factor $L_i(\bss(\bvv),\bvvbase,\partial)$ such that its specialization $\overline{L}_i(\partial)$ is not irreducible over $\difffield$, i.e., there exist operators $\overline{L}_{i,1}(\partial)$ and $\overline{L}_{i,2}(\partial)$ in $\difffield[\partial]$ of order at least one such that $\overline{L}_i(\partial) = \overline{L}_{i,1}(\partial) \circ \overline{L}_{i,2}(\partial)$. 
According to Lemma~\ref{lem:specializesolutionspace} the generic flag specializes under $\sigma$ to a flag
\[
\overline{V}'_k \subset \overline{V}'_{k-1} \subset \dots \subset \overline{V}'_1 = \overline{V}
\]
of the solution space $\overline{V}$ of $L_{\group}(\bsq, \partial) \, y = 0$ and this flag is stabilized by $H(\field)$, since $H(\field)\leq \parabolic_J(\field)$.
Our assumption implies now that this flag becomes finer, i.e., there exists a subspace $\overline{V}'_{i,2}$ of positive dimension such that $\overline{V}'_{i,2}$ is a proper subspace of $\overline{V}'_i$ and $\overline{V}'_{i+1}$ is a proper subspace of $\overline{V}'_{i,2}$ and $H(\field)$ stabilizes the flag
\[
\overline{V}'_k \subset \dots \subset \overline{V}'_{i+1} \subset \overline{V}'_{i,2} \subset \overline{V}'_i \subset \dots \subset \overline{V}'_1 = \overline{V}.
\]
The stabilizer of this flag is a parabolic subgroup $P(\field)$ of $\group(\field)$ containing $H(\field)$. 
Since this flag is finer than the one in \eqref{eqn:flagirredcublefactoization} and $\parabolic_J(\field)$ does not stabilize any refinement of it, $P(\field)$ is a proper subgroup of $\parabolic_J(\field)$.
But this contradicts the fact that $\parabolic_J(\field)$ is the smallest parabolic subgroup containing the Levi groups of $H(\field)$ (cf.\ the proof of Proposition~\ref{prop:parabolicbound} \ref{prop:E(d)}). We conclude that
\[
L_{\group}(\bsq,\partial) \, = \, \overline{L}_1(\partial) \dots 
\overline{L}_k(\partial)
\]
is an irreducible factorization over $\difffield$.

\ref{item:specializefactorization2}\
Since $L_i(\bss(\bvv),\bvvbase,\partial)$ divides $\LCLM(\bss(\bvv),\bvvbase,\partial)$ on the right and $\siginter$ is a differential homomorphism and so can be extended to
\[
D^{-1} \difffield\{ \bss(\bvv), \bvvbase\}[\partial] \to \difffield[\partial] \, ,
\]
we obtain that each irreducible operator $\overline{L}_i(\partial)$ divides $\overline{\LCLM}(\bsq,\bvvqbase,\partial)$ on the right for every $i \in \{1,\dots,k\}$.
By Assumption~\ref{assumption2} \ref{item3:assumption2}, the order of $\overline{\LCLM}(\bsq,\bvvqbase,\partial)$ coincides with the order of
$\LCLM(\overline{L}_1(\partial),\dots, \overline{L}_k(\partial))$.
Hence, the statement follows from the uniqueness of the least common left multiple. 

\ref{item:specializefactorization3}\
By \cite[Lemma~2.12]{Singer_Reducibility} the solution spaces of $\LCLM(\bss(\bvv),\bvvbase,\partial)$ and $\overline{\LCLM}(\bsq,\bvvqbase,\partial)$ are $V_1 + \dots + V_k$ and $\overline{V}_1+ \dots + \overline{V}_k$, respectively.
Since the least common left multiples have the same order, their solution spaces have the same dimension. The differential homomorphism $\sigPV$ induces a $\field$-linear map between these solution spaces.
Let $\overline{v} \in \overline{V}_1+ \dots + \overline{V}_k$.
Then there exist $\overline{v}_i \in \overline{V}_i$ with $i=1,\dots,k$ such that $\overline{v}=\overline{v}_1+\dots+\overline{v}_k$.
Since $\sigPV$ restricts to isomorphisms $V_i \to \overline{V}_i$ by Lemma~\ref{lem:specializesolutionspace} \ref{item:specializesolutionspace2}, there exist $v_i \in V_i$ such that $\sigPV(v_i)=\overline{v}_i$ and so $\sigPV(v_1+\dots+ v_k)=\overline{v}_1+\dots+\overline{v}_k = \overline{v}$.
Thus the $\field$-linear map between the solution spaces is surjective and so an isomorphism.\qedhere
\end{proof}

\begin{remark}
Note that Assumption~\ref{assumption2} \ref{item1:assumption2} might imply Assumption~\ref{assumption2} \ref{item3:assumption2}.
For this claim one has to prove that the Wronskian determinant of $y_1^{I''},\dots,y_{n_{I''}}^{I''}$ is the denominator of the coefficient of the second highest order term in $\LCLM(\bss(\bvv),\bvvbase,\partial)$.
Then Assumption~\ref{assumption2} \ref{item1:assumption2} implies that the basis $y_1^{I''},\dots,y_{n_{I''}}^{I''}$ specializes to a $\field$-linearly independent set in $\overline{\generalext}$ which can be shown to be a basis of the least common left multiple of $\overline{L}_1(\partial), \dots,\overline{L}_k(\partial)$.
This forces this least common left multiple to have the same order as $\LCLM(\bss(\bvv),\bvvbase,\partial)$. 
\end{remark}

\begin{definition}\label{def:Q}
We denote by $Q$ a maximal differential ideal in the differential ring
\[
\difffield[X_{i,j},\det(X_{i,j})^{-1} \mid i,j=1,\dots,n_{I''}] \, = \, \difffield[\GL_{n_{I''}}]\,,
\]
where the derivation on $X_{i,j}$ is defined by
\[
\partial(X) \, = \, A_{\rm comp} \, X\,, \qquad X = (X_{i,j})\,,
\]
where $A_{\rm comp}$ denotes the companion matrix corresponding to the differential equation $\overline{\LCLM}(\bsq,\bvvqbase,\partial) \, y = 0$ (cf.\ Proposition~\ref{prop:specializefactorization}).
Moreover, we define
\[
\extfieldred \, := \, \Frac(\difffield[\GL_{n_{I''}}]/Q)\,,
\]
which is -- by the standard construction method -- a Picard-Vessiot extension of $\difffield$ for $\overline{\LCLM}(\bsq,\bvvqbase,\partial) \, y = 0$. 
\end{definition}

\begin{remark}\label{rem:computationQ}
    The maximal differential ideal $Q$ of Definition~\ref{def:Q} can be computed 
    using the results presented in \cite{SingerCompoint} and \cite{HoeijWeilInvariant}. More precisely, according to 
    \cite[Proposition~4.2]{SingerCompoint} the differential Galois group of $\overline{\LCLM}(\bsq,\bvvqbase, \partial) \, y = 0$ is a Levi group of the differential Galois group $H$ for the specialized normal form equation $L_{\group}(\bsq,\partial) \, y = 0$ and so is reductive. Hence, we can use the algorithm presented in \cite[Section~4.1]{SingerCompoint} to compute generators $q_1,\dots,q_s$ of $Q$.
\end{remark}

\begin{proposition}\label{prop:fixedfield} \,
\begin{enumerate}
\item\label{item:fixedfield1}
The differential homomorphism
\[
\sigPV\colon D^{-1} \difffield \{ \bvv \}[\bexp,\bexp^{-1}, \bint ] \to \overline{\generalext}
\]
from Remark~\ref{rem:extensigma} restricts to a differential homomorphism 
\[
\sigPV\colon D^{-1} \difffield \{ \bvv \}[\bexp,\bexp^{-1}, \intn_{i} \mid \beta_i \in \leviroots^- ] \to \overline{\generalext}^{\unirad(H)} \, .
\]
\item\label{item:fixedfield2} We have
\[
\overline{\generalext}^{\unirad(H)} \, = \, \difffield \langle \overline{\bvv} , \overline{\bexp}, \overline{\intn}_i \mid \beta_i \in \leviroots^- \rangle \, = \, \difffield \langle \sigPV(y_1^{I''}),\dots,\sigPV(y_{n_{I''}}^{I''}) \rangle\,.
\]
\end{enumerate}
\end{proposition}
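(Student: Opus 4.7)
For part~(a), the plan is to mirror the proof of Theorem~\ref{cor:extensionsbyPI}(b) on the specialized side. Take any $g\in\unirad(H)(\field)$; by Proposition~\ref{prop:parabolicbound}(d) we have $g\in\unirad(\parabolic_J)(\field)$, so $g$ is a product of root group elements $u_{\beta}(y)$ for $\beta\in\roots^-\setminus\leviroots^-$. Applying Lemma~\ref{lem:bruhatunipotent} to the Bruhat decomposition of $\fmspec\,g$, the automorphism $\overline{\gamma}_g\in\Gal_\partial(\overline{\generalext}/\difffield)$ fixes $\overline{\bvv}$, $\overline{\bff}$, $\overline{\bexp}$, and all $\overline{\intn}_i$ with $\beta_i\in\leviroots^-$. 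Since $\sigPV$ sends $\bvv\mapsto\overline{\bvv}$, $\bexp\mapsto\overline{\bexp}$, and the $\intn_i$ (for $\beta_i\in\leviroots^-$) to the corresponding $\overline{\intn}_i$, the image of $\sigPV|_{D^{-1}\difffield\{\bvv\}[\bexp,\bexp^{-1},\intn_i\mid\beta_i\in\leviroots^-]}$ is contained in $\overline{\generalext}^{\unirad(H)}$.

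For part~(b), set $K_2:=\difffield\langle\overline{\bvv},\overline{\bexp},\overline{\intn}_i\mid\beta_i\in\leviroots^-\rangle$ and $K_3:=\difffield\langle\sigPV(y_1^{I''}),\dots,\sigPV(y_{n_{I''}}^{I''})\rangle$. The inclusion $K_3\subseteq K_2$ follows because each $y_j^{I''}$ lies in $\generalext^{\unirad(\parabolic_J)}\cap D^{-1}\difffield\{\bvv\}[\bexp,\bexp^{-1},\intn_i\mid\beta_i\in\leviroots^-]$ by Theorem~\ref{cor:extensionsbyPI}(b) and Assumption~\ref{assumption2}(a)--(d), so $\sigPV(y_j^{I''})\in K_2$. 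Conversely, by Proposition~\ref{cor:exprforexpandint} together with the supplement, the generic generators $\exp_i$, $v_i$, $\intn_j$ (for $\beta_j\in\leviroots^-$) are rational functions over $\field\langle\bss(\bvv),\bvvbase\rangle$ in the $y^{I''}_k$, with denominators in $D_{\exp}$; by Assumption~\ref{assumption2}(b)--(d) these expressions specialize and therefore $\overline{\bvv},\overline{\bexp},\overline{\intn}_i\in K_3$. This gives $K_2=K_3$, and part~(a) already yields $K_2\subseteq\overline{\generalext}^{\unirad(H)}$.

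It remains to prove $K_3=\overline{\generalext}^{\unirad(H)}$. Since $\sigPV(y_1^{I''}),\dots,\sigPV(y_{n_{I''}}^{I''})$ form a basis of the solution space $\overline{V}_{I''}$ of $\overline{\LCLM}(\bsq,\bvvqbase,\partial)\,y=0$ by Proposition~\ref{prop:specializefactorization}(c), and since $K_3$ sits inside the Picard-Vessiot extension $\overline{\generalext}$ (and hence has constant field $\field$), $K_3$ is itself a Picard-Vessiot extension of $\difffield$ for $\overline{\LCLM}$. Let $H_1:=\Gal_\partial(\overline{\generalext}/K_3)$; then $H/H_1$ is the image of $H$ in $\GL(\overline{V}_{I''})$. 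By Lemma~\ref{lem:specializesolutionspace}(c) this action is compatible via $\sigPV$ with the action of $H\leq\parabolic_J$ on the generic space $V_{I''}$, and the kernel of $\parabolic_J\to\GL(V_{I''})$ is exactly $\unirad(\parabolic_J)$ (since $\levi_J$ is the Galois group of the generic LCLM by Theorem~\ref{cor:extensionsbyPI}(c)). Therefore $H_1=H\cap\unirad(\parabolic_J)$.

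The key step, and main obstacle, is to identify $H\cap\unirad(\parabolic_J)$ with $\unirad(H)$. One inclusion is Proposition~\ref{prop:parabolicbound}(d). For the other, observe that $H\cap\unirad(\parabolic_J)$ is a closed normal unipotent subgroup of $H$, as $\unirad(\parabolic_J)\lhd\parabolic_J\supseteq H$. In characteristic zero every unipotent element has infinite order unless trivial, so any finite unipotent group is trivial; applying this to the component group shows that every closed unipotent linear algebraic group over $\field$ is connected. Hence $H\cap\unirad(\parabolic_J)$ is a connected normal unipotent subgroup of $H$, and by maximality it is contained in $\unirad(H)$. Thus $H_1=\unirad(H)$, and by the Fundamental Theorem $K_3=\overline{\generalext}^{\unirad(H)}$, completing the chain $K_2=K_3=\overline{\generalext}^{\unirad(H)}$.
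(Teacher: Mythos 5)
Your proof is correct, but for the crucial equality it takes a genuinely different route from the paper. Part (a) and the inclusions $K_3\subseteq K_2\subseteq\overline{\generalext}^{\unirad(H)}$ are handled essentially as in the paper (generic invariance under $\unirad(\parabolic_J)$ specialized via Lemma~\ref{lem:bruhatunipotent} and Proposition~\ref{prop:parabolicbound}~(d), plus the fact that the $y^{I''}_j$ lie in the localized parameter ring). The difference is how one closes the chain: the paper simply cites Compoint--Singer (Proposition~4.2 of \cite{SingerCompoint}) to the effect that $\overline{\generalext}^{\unirad(H)}$ is itself a Picard--Vessiot extension of $\difffield$ for the least common left multiple of the irreducible factors of $L_{\group}(\bsq,\partial)$, so that it must coincide with the Picard--Vessiot subfield $K_3$ generated by the specialized basis; you instead identify $\Gal_{\partial}(\overline{\generalext}/K_3)$ with the kernel of the $H$-action on $\overline{V}_{I''}$, transport this kernel computation to the generic side via the $\sigPV$-equivariance of Lemma~\ref{lem:specializesolutionspace}~(c) and the fact that $\ker(\parabolic_J\to\GL(V_{I''}))=\unirad(\parabolic_J)$ (Theorem~\ref{cor:extensionsbyPI}~(c)), and then prove $H\cap\unirad(\parabolic_J)=\unirad(H)$ using that unipotent algebraic groups in characteristic zero are connected together with the maximality of the unipotent radical. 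In effect you re-prove the instance of Compoint--Singer's result that the paper imports, which makes your argument more self-contained at the cost of a little extra group theory; the paper's route is shorter because the external reference does this work. Two small remarks: your direct verification of $K_2\subseteq K_3$ via the expressions $\exprforexp^{I''}_i$, $V^{I''}_i$, $\exprforint^{I''}_j$ is actually redundant, since $K_3\subseteq K_2\subseteq\overline{\generalext}^{\unirad(H)}=K_3$ already forces all three fields to coincide; and if you do keep that step, you should note that the evaluations $E_j(\sigPV(y_1^{I''}),\dots,\sigPV(y_{n_{I''}}^{I''}))=\sigPV(\expsolass_j)$ are nonzero because the $\expsolass_j$ are units in $D^{-1}\difffield\{\bvv\}[\bexp,\bexp^{-1},\bint]$, so the denominators from $D_{\exp}$ do not vanish after specialization -- Assumption~\ref{assumption2}~\ref{itemNew2:assumption2}--\ref{itemNew3:assumption2} only controls the coefficients, not this evaluation (compare Remark~\ref{rem:specializeEXPVINT}).
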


\begin{proof}
    It follows from Theorem~\ref{cor:extensionsbyPI} describing the generic situation that $\unirad(\parabolic_J)$ fixes the elements $\bvv$, $\bexp$ and $\intn_i$ with $\beta_i \in \leviroots^-$.
    Since by Proposition~\ref{prop:parabolicbound} \ref{prop:E(d)}\ we have the inclusion $\unirad(H) \leq \unirad(\parabolic_J)$, we conclude that the specialized elements $\overline{\bvv}$, $\overline{\bexp}$ and $\overline{\intn}_i$ with $\beta_i \in \leviroots^-$ are fixed by $\unirad(H)$. Thus, we obtain the inclusions
    \[
    \overline{\generalext}^{\unirad(H)} \supseteq \difffield \langle \overline{\bvv} , \overline{\bexp}, \overline{\intn}_i \mid \beta_i \in \leviroots^- \rangle \supset \difffield \{ \overline{\bvv} \}[\overline{\bexp},\overline{\bexp}^{-1}, \overline{\intn}_{i} \mid \beta_i \in \leviroots^- ] \, .
    \]
    This shows \ref{item:fixedfield1}\
    and one inclusion in \ref{item:fixedfield2}.
    
    By Proposition~\ref{prop:specializefactorization} \ref{item:specializefactorization3}\ the elements $\sigPV(y_1^{I''}),\dots,\sigPV(y_{n_{I''}}^{I''})$ in $\overline{\generalext}$ form a $\field$-basis of the solution space of $\overline{\LCLM}(\bsq,\bvvqbase, \partial) \, y = 0$ and so  
    \[
    \KPV:= \difffield (\wronski( \sigPV(y_1^{I''}),\dots,\sigPV(y_{n_{I''}}^{I''}))) 
    \]
    is a Picard-Vessiot extension of $\difffield$ for that equation. Since the generic elements $y_1^{I''} ,\dots, y_{n_{I''}}^{I''}$ are fixed by $\unirad(H)\leq \unirad(\parabolic_J)$ (cf.\ Theorem~\ref{cor:extensionsbyPI}), the specialized elements
    \[
    \sigPV(y_1^{I''}), \quad \dots, \quad \sigPV(y_{n_{I''}}^{I''})
    \]
    are fixed by $\unirad(H)$ and so
    $\KPV \subseteq \overline{\generalext}^{\unirad(H)}$.
    Since $\overline{\generalext}^{\unirad(H)}$  is also a Picard-Vessiot extension of $\difffield$ for the same equation (cf.\ \cite[Proposition~4.2]{SingerCompoint}), we conclude that
    $\KPV = \overline{\generalext}^{\unirad(H)}$.
    By Remark~\ref{rem:extensigma} and Theorem~\ref{cor:extensionsbyPI} the elements $y_1^{I''} ,\dots, y_{n_{I''}}^{I''}$ are contained in 
    \[
     D^{-1} \difffield \{ \bvv \}[\bexp,\bexp^{-1}, \bint ]  \cap \difffield \langle \bss(\bvv), \bvvbase \rangle \langle \bexp, \bvvext, \intn_i \mid \beta_i \in \leviroots^- \rangle.
    \]
     Hence, their specializations $\sigPV(y_1^{I''}),\dots,\sigPV(y_{n_{I''}}^{I''})$ are contained in 
     \[ 
     \difffield \langle \overline{\bvv} , \overline{\bexp}, \overline{\intn}_i \mid \beta_i \in \leviroots^- \rangle \, .
     \] 
     From this we obtain the inclusion
    \[
      \overline{\generalext}^{\unirad(H)}  = \KPV
     \subseteq \difffield \langle \overline{\bvv} , \overline{\bexp}, \overline{\intn}_i \mid \beta_i \in \leviroots^- \rangle \, .
    \]
\end{proof}

\begin{proposition}\label{prop:sigmared}
There are rational functions
\[
\widehat{\bvv} \, := \, (\widehat{v}_1, \ldots, \widehat{v}_l), \quad \bratexp \, := \, (\ratexp_1, \ldots, \ratexp_l) \quad \text{and} \quad \widehat{\intn}_i \quad (\beta_i \in \leviroots^-)
\]
in $\difffield(\GL_{n_{I''}})$ such that the map 
\[
\begin{array}{rcl}
    \siginter\colon \field\{ \bss(\bvv),\bvvbase \} & \to & \difffield\,,\\[0.2em]
    \bss(\bvv) & \mapsto & \bsq\,,\\[0.2em]
    \bvvbase = (v_{i_{r+1}},\dots, v_{i_l}) & \mapsto & \bvvqbase = (\overline{v}_{i_{r+1}},\dots, \overline{v}_{i_l})
\end{array}
\]
extends to a differential homomorphism 
\[
\begin{array}{rcl}
\sigred\colon
D^{-1} \difffield \{ \bvv \}[\bexp,\bexp^{-1}, \intn_i \mid \beta_i \in \leviroots^- ]
& \to & \extfieldred\,,\\[0.2em]
\bvv & \mapsto & \bvh + Q\,,\\[0.2em]
\bexp & \mapsto & \bratexp + Q\,,\\[0.2em]
\intn_i & \mapsto & \ratint_i + Q
\end{array}
\]
mapping the basis $y_1^{I''},\dots,y_{n_{I''}}^{I''}$ of $V_{I''}$ to the basis $\sigred(y_1^{I''}),\dots,\sigred(y_{n_{I''}}^{I''})$ of
the solution space $\overline{V}_{I''}$ of $\overline{\LCLM}(\bsq,\bvvqbase, \partial) \, y = 0$ in $\extfieldred$.
\end{proposition}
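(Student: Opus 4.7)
The plan is to realize $\sigred$ as a composition $\iota \circ (\sigPV|_R)$, where $R := D^{-1}\difffield\{\bvv\}[\bexp,\bexp^{-1},\intn_i \mid \beta_i \in \leviroots^-]$ and $\iota$ is an isomorphism between two Picard-Vessiot extensions of $\difffield$ for the same equation $\overline{\LCLM}(\bsq,\bvvqbase,\partial)\,y=0$. By Proposition~\ref{prop:fixedfield}~\ref{item:fixedfield1}, the restriction of the specialization $\sigPV$ to $R$ takes values inside $\overline{\generalext}^{\unirad(H)}$, and by Proposition~\ref{prop:fixedfield}~\ref{item:fixedfield2} the latter is a Picard-Vessiot extension of $\difffield$ for $\overline{\LCLM}(\bsq,\bvvqbase,\partial)\,y=0$. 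By Definition~\ref{def:Q}, $\extfieldred$ is also such an extension. Uniqueness of Picard-Vessiot extensions up to differential $\difffield$-isomorphism then yields $\iota\colon \overline{\generalext}^{\unirad(H)} \to \extfieldred$.

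Next I would pin down the image of the distinguished basis. By Proposition~\ref{prop:specializefactorization}~\ref{item:specializefactorization3}, the tuple $\sigPV(y_1^{I''}),\dots,\sigPV(y_{n_{I''}}^{I''})$ is a $\field$-basis of the solution space of $\overline{\LCLM}(\bsq,\bvvqbase,\partial)\,y=0$ in $\overline{\generalext}^{\unirad(H)}$. Hence $\iota$ carries it to another $\field$-basis of the solution space in $\extfieldred$, which is spanned by $\overline{X}_{1,1},\dots,\overline{X}_{1,n_{I''}}$, so there exist unique constants $\overline{c}_{j,i}\in\field$ with
\[
\iota(\sigPV(y_i^{I''})) \, = \, \sum_{j=1}^{n_{I''}} \overline{c}_{j,i}\,\overline{X}_{1,j}.
\]
Setting $\sigred := \iota \circ \sigPV|_R$ gives a differential homomorphism extending $\siginter$. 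Invoking Proposition~\ref{cor:exprforexpandint}, and the fact that $\sigred$ commutes with the differential polynomial operations $V_i^{I''}(\bZ)$, $\exprforexp_i^{I''}(\bZ)$, $\exprforint_j^{I''}(\bZ)$, one obtains the asserted rational functions $\widehat{v}_i$, $\ratexp_i$, $\ratint_j \in \difffield(\GL_{n_{I''}})$ by substituting $Z_k \mapsto \sum_j \overline{c}_{j,k}X_{1,j}$ into these expressions and recognizing their residue classes modulo $Q$ as $\sigred(v_i)$, $\sigred(\exp_i)$, $\sigred(\intn_j)$ respectively.

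The main obstacle will be to confirm that this substitution actually defines elements of $\difffield(\GL_{n_{I''}})$ whose residues lie in $\extfieldred$, i.e., that the denominators occurring in $V_i^{I''}, \exprforexp_i^{I''}, \exprforint_j^{I''}$ do not fall into $Q$ after substitution. According to the supplement of Proposition~\ref{cor:exprforexpandint}, these denominators lie in the multiplicative set $D_{\exp}$ generated by $E_1(\bZ),\dots,E_l(\bZ)$, whose evaluations at the generic basis are the exponential solutions $\expsolass_1,\dots,\expsolass_l$ of the associated equations. These are units in $\generalext$; under $\sigPV$ they remain units in $\overline{\generalext}$ (Assumption~\ref{assumption1} and \ref{assumption2}~\ref{itemNew2:assumption2}), and so their images under $\iota$ are units in $\extfieldred$. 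Hence $E_j\bigl(\sum_k \overline{c}_{k,\cdot}\overline{X}_{1,k},\dots\bigr) \neq 0$ in $\extfieldred$, so the representing elements in $\difffield(\GL_{n_{I''}})$ avoid $Q$ and the substitution is well-defined. The differential homomorphism property of $\sigred$ is then automatic from its construction as $\iota\circ \sigPV|_R$, and matching the basis images gives the final clause of the statement.
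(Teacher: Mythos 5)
Your construction is correct and is essentially the paper's own proof: restrict $\sigPV$ to $D^{-1}\difffield\{\bvv\}[\bexp,\bexp^{-1},\intn_i \mid \beta_i\in\leviroots^-]$ via Proposition~\ref{prop:fixedfield}, note that its image lies in the Picard-Vessiot extension $\difffield\langle \sigPV(y_1^{I''}),\dots,\sigPV(y_{n_{I''}}^{I''})\rangle = \overline{\generalext}^{\unirad(H)}$ of $\difffield$ for $\overline{\LCLM}(\bsq,\bvvqbase,\partial)\,y=0$, and compose with a differential $\difffield$-isomorphism onto $\extfieldred$ furnished by uniqueness of Picard-Vessiot extensions, which carries the specialized basis to $\field$-linear combinations of $\overline{X}_{1,1},\dots,\overline{X}_{1,n_{I''}}$. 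The only divergence is in producing $\widehat{\bvv}$, $\bratexp$, $\ratint_i$: the paper simply takes arbitrary preimages of $\sigred(\bvv)$, $\sigred(\bexp)$, $\sigred(\intn_i)$ under the canonical map from the localization $\difffield[\GL_{n_{I''}}]_{Q}$ at the prime ideal $Q$ to $\Frac(\difffield[\GL_{n_{I''}}]/Q)$, whereas you substitute into the explicit expressions of Proposition~\ref{cor:exprforexpandint} and verify that their denominators avoid $Q$ via the unit property of the specialized exponentials — a valid but unnecessary refinement at this point, which the paper defers to Remark~\ref{rem:specializeEXPVINT} and Proposition~\ref{prop:algmatchbases}, where such explicit representatives are actually needed.
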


\begin{proof}
Since by Proposition~\ref{prop:specializefactorization} \ref{item:specializefactorization3}, $\sigPV(y_1^{I''}),\dots,\sigPV(y_{n_{I''}}^{I''})$ form a basis of the solution space $\overline{V}_{I''}$ of $\overline{\LCLM}(\bsq,\bvvqbase, \partial) \, y = 0$, the differential field
\[
\KPV:=\difffield(\wronski(\sigPV(y_1^{I''}),\dots,\sigPV(y_{n_{I''}}^{I''})))
\]
is a Picard-Vessiot extension of $\difffield$ for $\overline{\LCLM}(\bsq,\bvvqbase, \partial) \, y = 0$, where $\wronski$ denotes the Wronskian matrix. Indeed, as a subfield of $\overline{\generalext}$ its constants are $\field$. Trivially, there exists a fundamental matrix in $\GL_{n_{I''}}(\KPV)$
for $A_{\rm comp}$ and 
$\KPV$ is 
generated as field over $\difffield$ by the entries of this fundamental matrix.
According to Proposition~\ref{prop:fixedfield} \ref{item:fixedfield1}\ 
and \ref{item:fixedfield2}\
the differential homomorphism $\sigPV$
of Remark~\ref{rem:extensigma} restricts to a differential homomorphism 
\[
\sigPV\colon D^{-1} \difffield \{ \bvv \}[\bexp,\bexp^{-1}, \intn_i \mid \beta_i \in \leviroots^- ] \to  
\KPV \, . 
\]

Since both differential fields $\extfieldred$ and 
$\KPV$
are Picard-Vessiot extensions of $\difffield$ for the same differential equation $\overline{\LCLM}(\bsq,\bvvqbase, \partial) \, y = 0$, there exists a differential $\difffield$-isomorphism 
\[
\begin{array}{rcl}
   \varphi \colon  
   \KPV 
   & \! \to \! & \! \Frac(\difffield[\GL_{n_{I''}}]/Q) \, = \, \extfieldred \\[0.5em]
   \wronski(\sigPV(y_1^{I''}),\dots,\sigPV(y_{n_{I''}}^{I''})) \! & \! \mapsto \! & \! \wronski(\overline{X}_{1,1},\dots,\overline{X}_{1,{n_{I''}}}) \, M
\end{array}
\]
for a constant matrix $M \in \GL_{n_{I''}}(\field)$. Composing $\sigPV$ with $\varphi$ we obtain a differential homomorphism 
\begin{gather*}
    \sigred\colon D^{-1} \difffield \{ \bvv \}[\bexp,\bexp^{-1}, \intn_i \mid \beta_i \in \leviroots^- ] \to \extfieldred \, .
\end{gather*}
The rational functions
\[
\widehat{\bvv} \, := \, (\widehat{v}_1, \ldots, \widehat{v}_l), \quad \bratexp \, := \, (\ratexp_1, \ldots, \ratexp_l) \quad \mathrm{and} \quad \widehat{\intn}_i \quad (\beta_i \in \leviroots^-)
\]
can be chosen as preimages in $\Frac(\difffield[\GL_{n_{I''}}])$ of  
$\sigred(\bexp)$, $\sigred(\bvv)$ and $\sigred(\intn_i)$ in
$\Frac(\difffield[\GL_{n_{I''}}]/Q)$ under the canonical map
\[
\difffield[\GL_{n_{I''}}]_{Q} \to \Frac(\difffield[\GL_{n_{I''}}]/Q)
\]
from the localization of $\difffield[\GL_{n_{I''}}]$ at the prime ideal $Q$.
The images of the basis elements $y_1^{I''},\dots,y_{n_{I''}}^{I''}$ under $\sigred$ are the entries in the vector $(\overline{X}_{1,1},\dots,\overline{X}_{1,n_{I''}}) M$ and they form a basis of the solution space $\overline{V}_{I''}$ of $\overline{\LCLM}(\bsq,\bvvqbase, \partial) \, y = 0$ in $\extfieldred$, because $M \in \GL_{n_{I''}}(\field)$.
\end{proof}

\begin{remark}\label{rem:specializeEXPVINT}
Note that we can restrict the domain of definition of $\sigred$ to  
$$R_{\rm rest} \, := \, D^{-1} \field\{ \bss(\bvv) , \bvvbase \} \{ y_1^{I''},\dots,y_{n_{I''}}^{I''} \}$$
and obtain a differential homomorphism $\eta$. Recall that 
in Proposition~\ref{cor:exprforexpandint} and Remark~\ref{rem:computeEXPVINT} we determined 
\[
\exprforexp_j^{I''}(\bZ), \quad V_j^{I''}(\bZ) \quad \text{and} \quad \exprforint_i^{I''}(\bZ) 
\]
in the localization $D_{\rm exp}^{-1}\field \langle \bss(\bvv),\bvvbase \rangle \{ \bZ \} $ such that 
\begin{gather*}
\exprforexp_j^{I''}(y_1^{I''},\dots,y_{n_{I''}}^{I''} ) \, = \, \exp_i, \quad V_j^{I''}(y_1^{I''},\dots,y_{n_{I''}}^{I''} ) \, = \, v_i \quad \text{and} \\ \exprforint_i^{I''}(y_1^{I''},\dots,y_{n_{I''}}^{I''} ) \, = \, \intn_i \, ,
\end{gather*}
where $D_{\rm exp}$ is the multiplicatively closed set generated by 
\[
E_1(\bZ),\dots,E_l(\bZ) \in \field \langle \bss(\bvv),\bvvbase \rangle \{ \bZ \}.
\]
Proposition~\ref{cor:exprforexpandint} and Assumption~\ref{assumption2} \ref{itemNew2:assumption2} imply that the  elements 
\begin{equation*}  
E_1(y_1^{I''},\dots,y_{n_{I''}}^{I''}) \, = \, \expsolass_1, \quad \dots, \quad E_l(y_1^{I''},\dots,y_{n_{I''}}^{I''}) \, = \, \expsolass_l
\end{equation*}
are in $R_{\rm rest}$ and so we can apply $\sigred$ to them. 
Proposition~\ref{prop:exponentialandRiccati} implies that the exponential solutions $\expsolass_1,\dots,\expsolass_l$ of the associated equations 
are products of powers of $\exp_1,\dots,\exp_l$ with exponents in $\Z$. Since $\exp_1,\dots,\exp_l$ are units in 
$$
D^{-1} \difffield \{ \bvv \}[\bexp,\bexp^{-1}, \intn_i \mid \beta_i \in \leviroots^-] \, , 
$$
we conclude that $\expsolass_1,\dots,\expsolass_l$ do not lie in the kernel of $\sigred$. Hence, we can 
extend $\eta$ to the localization of $R_{\rm rest}$ by the multiplicatively closed subset generated by $\expsolass_1,\dots,\expsolass_l$. This localization has now the important property that it contains the elements $\bexp$, $\bvv$ and $\intn_i$ with $\beta_i \in \leviroots^-$.
\end{remark}

\begin{lemma}\label{lem:condcijhom}
For an invertible matrix of constants $(\overline{c}_{i,j}) \in \GL_{n_{I''}}(\field)$ define
\[
\begin{array}{rcl}
\eta\colon D^{-1} \field\{ \bss(\bvv) , \bvvbase \} \{ y_1^{I''},\dots,y_{n_{I''}}^{I''} \} \! & \! \to \! & \! \difffield [\GL_{n_{I''}}] / Q\,,\\[0.5em]
\bss(\bvv) \! & \! \mapsto \! & \! \bsq\,,\\[0.5em]
\bvvbase \! & \! \mapsto \! & \! \bvvqbase\,,\\[0.5em]
y_i^{I''}  \! & \! \mapsto \! & \! \sum_{j=1}^{n_{I''}} \overline{c}_{j,i} \overline{X}_{1,j}\,.
\end{array}
\]
Then $\eta$ is a differential homomorphism
if and only if we have
\begin{equation}\label{eq:RELinQ}
\begin{array}{lcl}
\mathrm{REL}_i(\sum \overline{c}_{j,1} \overline{X}_{1,j} ,\dots,\sum \overline{c}_{j,n_{I''}} \overline{X}_{1,j} ,\bsq,\bvvqbase) \! & \! = \! & \! 0\,,
\quad  i=1,\dots,k\,,\\[0.5em]
\mathrm{REL}^{\neq}(\sum \overline{c}_{j,1} \overline{X}_{1,j} ,\dots,\sum \overline{c}_{j,n_{I''}} \overline{X}_{1,j} ,\bsq,\bvvqbase) \! & \! \neq \! & \! 0\,,
\end{array}
\end{equation}
where $\mathrm{REL}_i(\bZ,\bsq,\bvvqbase)$ and $\mathrm{REL}^{\neq}(\bZ,\bsq,\bvvqbase)$  are obtained from  $\mathrm{REL}_i(\bZ)$ and $\mathrm{REL}^{\neq}(\bZ)$ respectively as in Proposition~\ref{prop:computationRELnew} by applying $\siginter$ to their coefficients.
\end{lemma}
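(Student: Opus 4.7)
The plan is to lift $\eta$ to a differential homomorphism from the free differential polynomial ring in $\bZ$, and then invoke Proposition~\ref{prop:computationRELnew} to recast well-definedness of $\eta$ in terms of the two conditions in \eqref{eq:RELinQ}. The main work is concentrated in the $(\Leftarrow)$ direction, which is a clean application of the saturation description of $\ker(\varphi)$ combined with the integral-domain property of $\difffield[\GL_{n_{I''}}]/Q$.

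First I would define the auxiliary differential ring homomorphism
\[
\psi\colon D^{-1}\field\{\bss(\bvv),\bvvbase\}\{\bZ\} \longrightarrow \difffield[\GL_{n_{I''}}]/Q,
\]
acting as $\siginter$ on $\field\{\bss(\bvv),\bvvbase\}$ and sending $Z_i$ to $\sum_{j=1}^{n_{I''}}\overline{c}_{j,i}\overline{X}_{1,j}$. This is well-defined: the $\bZ$ are free differential indeterminates, and by Assumption~\ref{assumption2} the elements of $D$ specialize under $\siginter$ to nonzero elements of $\difffield$, hence to units of $\difffield[\GL_{n_{I''}}]/Q$. The same argument shows that the map $\varphi$ of Proposition~\ref{prop:computationRELnew} extends to the $D$-localization. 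The candidate $\eta$ is then a well-defined differential homomorphism precisely when $\psi$ factors through $\varphi$, i.e., when $\ker(\varphi) \subseteq \ker(\psi)$.

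Next I would invoke Proposition~\ref{prop:computationRELnew} to write $\ker(\varphi) = \mathcal{I} : (\mathrm{REL}^{\neq})^{\infty}$, where $\mathcal{I}$ is the differential ideal generated by $\mathrm{REL}_1,\ldots,\mathrm{REL}_k$. Since $Q$ is a maximal differential ideal and we are in characteristic zero, $\difffield[\GL_{n_{I''}}]/Q$ is differentially simple and in particular an integral domain. For the $(\Leftarrow)$ direction, assume both conditions hold: then $\psi(\mathcal{I}) = 0$, and for any $f \in \ker(\varphi)$ one picks $N \geq 0$ with $(\mathrm{REL}^{\neq})^N f \in \mathcal{I}$; applying $\psi$ gives $\psi(\mathrm{REL}^{\neq})^N \cdot \psi(f) = 0$, and since $\psi(\mathrm{REL}^{\neq}) \neq 0$ in an integral domain, $\psi(f) = 0$, establishing $\ker(\varphi) \subseteq \ker(\psi)$. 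Thus $\eta$ exists as the induced map, and it is automatically a differential homomorphism because $\psi$ and $\varphi$ are.

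For the $(\Rightarrow)$ direction, assuming $\eta$ is a differential homomorphism one has $\psi = \eta \circ \varphi$, and applying this to $\mathrm{REL}_i \in \ker(\varphi)$ yields $\psi(\mathrm{REL}_i) = \eta(\varphi(\mathrm{REL}_i)) = 0$, which is the system of equations in \eqref{eq:RELinQ}. For the inequation, note that $\varphi(\mathrm{REL}^{\neq})$ is a nonzero element of the integral domain $D^{-1}\field\{\bss(\bvv),\bvvbase\}\{y_1^{I''},\ldots,y_{n_{I''}}^{I''}\} \subset \generalext$; the requirement $\psi(\mathrm{REL}^{\neq}) \neq 0$ encodes that the specialization lies on the prime component of the variety corresponding to the generic simple system produced by the Thomas decomposition. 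Concretely, the saturation in the formula $\ker(\varphi) = \mathcal{I} : (\mathrm{REL}^{\neq})^{\infty}$ only forces $\psi(f) = 0$ for $f \in \ker(\varphi) \setminus \mathcal{I}$ when $\psi(\mathrm{REL}^{\neq}) \neq 0$, so a well-defined matching $\eta$ of the generic basis $y_1^{I''},\ldots,y_{n_{I''}}^{I''}$ onto a basis of the specialised solution space $\overline{V}_{I''}$ must avoid the degenerate vanishing locus of $\mathrm{REL}^{\neq}$. I expect the main obstacle to be making this second step fully rigorous: one must show that vanishing of $\psi(\mathrm{REL}^{\neq})$ forces $\eta$ to factor through a further quotient corresponding to a non-generic component of the Thomas decomposition and hence to a matching incompatible with the generic Picard-Vessiot structure, which is precisely the failure mode the inequation rules out.
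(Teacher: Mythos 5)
Your route is the paper's: you lift the candidate map to the free differential polynomial ring in $\bZ$ (your $\psi$ is the paper's $\widehat{\eta}$), observe that $\eta$ exists as a differential homomorphism exactly when $\widehat{\eta}$ factors through $\varphi$, i.e.\ when $\ker(\varphi)\subseteq\ker(\widehat{\eta})$, and then invoke Proposition~\ref{prop:computationRELnew}. Your sufficiency direction is correct and in fact more explicit than the paper's one-line conclusion: writing $\ker(\varphi)=\mathcal{I}:(\mathrm{REL}^{\neq})^{\infty}$, noting that $\difffield[\GL_{n_{I''}}]/Q$ is an integral domain, and cancelling $\widehat{\eta}(\mathrm{REL}^{\neq})^{N}$ is precisely what is behind the paper's ``which by Proposition~\ref{prop:computationRELnew} is equivalent to \eqref{eq:RELinQ}''. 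The equalities in the necessity direction are also fine, since each $\mathrm{REL}_i$ lies in $\ker(\varphi)$.

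The gap you flag yourself is a genuine one: you never prove that the existence of $\eta$ forces $\widehat{\eta}(\mathrm{REL}^{\neq})\neq 0$, and the containment $\ker(\varphi)\subseteq\ker(\widehat{\eta})$ does not give it for free. Saturating $\mathcal{I}$ by $\mathrm{REL}^{\neq}$ only guarantees $\mathrm{REL}^{\neq}\notin\ker(\varphi)$; it is perfectly consistent, as a matter of pure algebra, for a prime ideal containing $\mathcal{I}:(\mathrm{REL}^{\neq})^{\infty}$ to contain $\mathrm{REL}^{\neq}$ as well, so your appeal to ``non-generic components of the Thomas decomposition'' is a heuristic rather than an argument. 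This direction is not optional: it is exactly what is used later, e.g.\ in the proof of Proposition~\ref{prop:algmatchbases}, where from the homomorphism supplied by Proposition~\ref{prop:sigmared} one concludes that $\mathrm{REL}^{\neq}$ evaluated at the matched basis does not fall into $Q_1$, so the inequation must be part of what the lemma asserts. The paper's proof compresses this into the final equivalence with \eqref{eq:RELinQ}; to close your version you would have to argue, using the specific situation (invertibility of $(\overline{c}_{i,j})$, so that the images of the $y_i^{I''}$ form a fundamental system of solutions of $\overline{\LCLM}(\bsq,\bvvqbase,\partial)$ in $\extfieldred$, together with the way $\mathrm{REL}^{\neq}$ arises as a product of initials and separants of the generic simple system), why this evaluation cannot vanish — not merely state that you expect it.
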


\begin{proof}
Extending the differential homomorphism $\varphi$ from
Proposition~\ref{prop:computationRELnew} to
\[
\varphi\colon D^{-1} \field\{ \bss(\bvv),\bvvbase \} \{ \bZ \} \to D^{-1} \field\{ \bss(\bvv),\bvvbase \} \{ y_1^{I''},\dots, y^{I''}_{n_{I''}}\}\,,
\]
the differential homomorphism
\[
\begin{array}{rcl}
\widehat{\eta}\colon D^{-1} \field\{ \bss(\bvv) , \bvvbase \} \{ Z_1,\dots, Z_{n_{I''}} \} \! & \! \to \! & \! \difffield [\GL_{n_{I''}}] / Q\,,\\[0.5em]
\bss(\bvv) \! & \! \mapsto \! & \! \bsq\,,\\[0.5em]
\bvvbase \! & \! \mapsto \! & \! \bvvqbase\,,\\[0.5em]
Z_i \! & \! \mapsto \! & \! \sum_{j=1}^{n_{I''}} \overline{c}_{j,i} \overline{X}_{1,j}
\end{array}
\]
induces a differential homomorphism $\eta$ as required if and only if
it factors as indicated in the following diagram:
\[
\begin{tikzcd}
D^{-1} \field\{ \bss(\bvv) , \bvvbase \} \{ Z_1,\dots, Z_{n_{I''}} \} \arrow{r}{\varphi}
\arrow[swap]{ddr}{\widehat{\eta}} & D^{-1} \field\{ \bss(\bvv) , \bvvbase \} \{ y_1^{I''},\dots,y_{n_{I''}}^{I''} \} \arrow{dd}{\eta} \\ \\
  & \difffield [\GL_{n_{I''}}] / Q
\end{tikzcd} 
\]
This is equivalent to
\[
p\big( \sum_{j=1}^{n_{I''}} \overline{c}_{j,1} \overline{X}_{1,j}, \ldots,
\sum_{j=1}^{n_{I''}} \overline{c}_{j,n_{I''}} \overline{X}_{1,j}, \bsq, \bvvqbase
\big) \, = \, 0
\quad \text{for all} \ p \in \ker(\varphi)\,,
\]
which by Proposition~\ref{prop:computationRELnew} is equivalent to \eqref{eq:RELinQ}.
\end{proof}

We are ready now to present an algorithm which computes representatives 
\[\label{eqn:definnitionofhatvariables}
\widehat{\bvv} \, := \, (\widehat{v}_1, \ldots, \widehat{v}_l), \quad \bratexp \, := \, (\ratexp_1, \ldots, \ratexp_l) \quad \mathrm{and} \quad \widehat{\intn}_i \quad \mbox{for } \beta_i \in \leviroots^-
\]
in $\difffield(\GL_{n_{I''}})$ of residue classes in $\extfieldred$
defining a specialization $\sigred$ as in Proposition~\ref{prop:sigmared}.
Lemma~\ref{lem:condcijhom} gives a criterion such that the map $\eta$ associating to the generic basis a basis of the solution space of
\[
\overline{\LCLM}(\bsq,\bvvqbase, \partial) \, y \, = \, 0
\]
in $\extfieldred$ is a differential homomorphism.
Once we know the differential homomorphism $\eta$, we obtain 
residue classes 
$\widehat{\bvv} + Q$, $\bratexp + Q$ and $\widehat{\intn}_i + Q$ as the images under $\eta$ (applied to numerator and denominator) of 
\[
\begin{array}{l}
\exp_1 \, = \, \exprforexp_1^{I''}(y_1^{I''},\dots,y_{n_{I''}}^{I''}), \quad \dots, \quad \exp_l \, = \, \exprforexp_l^{I''}(y_1^{I''},\dots,y_{n_{I''}}^{I''}),\\[0.5em]
\qquad
\intn_i \, = \, \exprforint_i^{I''}(y_1^{I''},\dots,y_{n_{I''}}^{I''}) \quad \text{for} \ \beta_i \in \leviroots^-
\end{array}
\]
and of
\[
     v_1 \, = \,V_1^{I''}(y_1^{I''},\dots,y_{n_{I''}}^{I''}), \quad \ldots, \quad  
     v_l \, = \, V_l^{I''}(y_1^{I''},\dots,y_{n_{I''}}^{I''}).
\]
To determine the differential homomorphism $\eta$ we make for each $y_i^{I''}$ the ansatz
\[
c_{i,1} X_{1,1} + \dots + c_{i,n_{I''}} X_{1,n_{I''}} 
\] 
with $(c_{i,j})$ an invertible matrix of constant indeterminates.
Then we use the differential Thomas decomposition to compute polynomial conditions on $c_{i,j}$ such that 
\[
\begin{array}{l}
\mathrm{REL}_s(\sum c_{j,1} X_{1,j},\dots,\sum c_{j,n_{I''}} X_{1,j},\bsq,\bvvqbase) = 0   \ \text{mod} \ Q \quad \text{for} \ s=1,\dots, k\,,\\[0.5em]
\mathrm{REL}^{\neq}(\sum c_{j,1} X_{1,j} ,\dots,\sum c_{j,n_{I''}} X_{1,j} ,\bsq,\bvvqbase) \neq 0 \  \text{mod} \ Q\,.
\end{array}
\]

\begin{algorithm} 
\DontPrintSemicolon
\KwInput {\begin{itemize}
    \item Generators $q_1,\dots,q_{s}$ of $Q \unlhd \difffield[\GL_{n_{I''}}]$ from Definition~\ref{def:Q},
    \item the specialized least common left multiple $\overline{\LCLM}(\bsq,\bvvqbase,\partial)$,
    \item the specialized $\mathrm{REL}_s(\bZ,\bsq,\bvvqbase)$ 
    and $\mathrm{REL}^{\neq}( \bZ ,\bsq,\bvvqbase)$ for $s=1,\dots,k$,
    \item $\exprforexp_j^{I''}(\bZ)$ and $V_j^{I''}(\bZ)$ for $j = 1$, \ldots, $l$ and 
     $\exprforint_i^{I''}(\bZ)$ for $i=1,\dots,m$ with $\beta_i \in \leviroots^-$.
\end{itemize}   }
\KwOutput{Representatives $\widehat{\bvv}$, $\widehat{\bexp}$ and $\ratint_i$ in $\difffield(\GL_{n_{I''}})$ of the residue classes in $\Frac(\difffield[\GL_{n_{I''}}]/Q)$ corresponding to $\bvv$, $\bexp$ and $\intn_i$ for $i=1,\dots,m$ with $\beta_i \in \leviroots^-$ as in Proposition~\ref{prop:sigmared}.}
Let $S_{\mathrm{eqs}}=\emptyset$ and $S_{\mathrm{ineqs}}=\emptyset$.\\
Let $x_1,\dots,x_{n_{I''}}$ and $c_{i,j}$ with $i,j=1,\dots,n_{I''}$ be differential indeterminates over $\difffield$. We will compute a differential Thomas decomposition of a system of equations and inequations in $\difffield\{x_1,\dots,x_{n_{I''}},c_{i,j}\}$\\
Apply the substitution
\[
X_{i,j} \mapsto x_j^{(i-1)}
\]
to the generators $q_1,\dots,q_{s}$ of $Q$ and append the resulting differential polynomials to $S_{\mathrm{eqs}}$.\\
 For $j=1,\dots,n_{I''}$ append $\overline{\LCLM}(\bsq,\bvvqbase,\partial) \, x_j$ to $S_{\mathrm{eqs}}$.\\
For $i, j = 1, \ldots, n_{I''}$ append  $c_{i,j}'$ to $S_{\mathrm{eqs}}$ and $\det(c_{i,j})$ to $S_{\mathrm{ineqs}}$.\\ 
Append the differential polynomials 
\begin{gather*}
\mathrm{REL}_s(\sum_r c_{1,r} x_r,\dots,\sum_r c_{n_{I''},r} x_r,\bsq,\bvvqbase) \quad \text{for} \ s=1,\dots,k
\end{gather*}
to $S_{\mathrm{eqs}}$ and $\mathrm{REL}^{\neq}( \sum_r c_{1,r} x_r,\dots,\sum_r c_{n_{I''},r} x_r ,\bsq,\bvvqbase)$ to $S_{\mathrm{ineqs}}$.\\
For $j=1,\dots,l$ and for $i=1,\dots,m$ with $\beta_i \in \leviroots^-$ append to $S_{\mathrm{ineqs}}$ the denominators of  
\[
\begin{array}{l}
    \exprforexp_j^{I''}(\sum_r c_{1,r} x_r,\dots,\sum_r c_{n_{I''},r} x_r,\bsq,\bvvqbase ) \, ,\\[0.5em] 
    V_j^{I''}(\sum_r c_{1,r} x_r,\dots,\sum_r c_{n_{I''},r} x_r,\bsq,\bvvqbase ) \quad \mathrm{and} \quad \\[0.5em]
    \exprforint_i^{I''}( \sum_r c_{1,r} x_r,\dots,\sum_r c_{n_{I''},r} x_r,\bsq,\bvvqbase ) .
\end{array}
\]
\\
With respect to an elimination ranking satisfying 
\[
x_1 > \dots > x_{n_{I''}} \gg c_{1,1} > \dots >c_{1,n_{I''}} > \dots > c_{l,1}>\dots >c_{l,n_{I''}}
\]
compute a Thomas decomposition of the differential system with equations $S_{\mathrm{eqs}}$ and inequations $S_{\mathrm{ineqs}}$.\\
Choose a simple system and determine a solution $\overline{c}_{i,j}$ of the equations and inequations only involving $c_{i,j}$.\\    
Define for $j=1,\dots,l$ and for $i=1,\dots,m$ with $\beta_i \in \leviroots^-$ the rational functions 
\begin{eqnarray*}
   \ratexp_j & := & \exprforexp_j^{I''}(\sum_r \overline{c}_{1,r} X_{1,r},\dots,\sum_r \overline{c}_{n_{I''},r} X_{1,r}) \, , \\
   \widehat{v}_j & := & V_j^{I''}(\sum_r \overline{c}_{1,r} X_{1,r},\dots,\sum_r \overline{c}_{n_{I''},r} X_{1,r}) \, , \\
   \ratint_i & := & \exprforint_i^{I''}(\sum_r \overline{c}_{1,r} X_{1,r},\dots,\sum_r \overline{c}_{n_{I''},r} X_{1,r})) \, .
\end{eqnarray*}
\Return($\widehat{\bvv}=(\widehat{v}_1,\dots,\widehat{v}_l)$, $\widehat{\bexp}=(\ratexp_1,\dots,\ratexp_l)$, $\ratint_i  \ \text{with} \ \beta_i \in \leviroots^-$)
\caption{Match Bases\label{alg:matchbases}}
\end{algorithm}

\begin{proposition}\label{prop:algmatchbases}
    Algorithm~\ref{alg:matchbases} is correct and terminates.
\end{proposition}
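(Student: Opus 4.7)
The plan is to reduce the correctness of Algorithm~\ref{alg:matchbases} to Lemma~\ref{lem:condcijhom} and to the existence result of Proposition~\ref{prop:sigmared}, and the termination to the finiteness of the differential Thomas decomposition. First, termination is essentially automatic: the computation of the Thomas decomposition in step~8 terminates in finitely many steps (cf.\ Remark~\ref{rem:introThomas} and \cite{RobertzHabil}), all other steps involve only finitely many algebraic manipulations over $\difffield$, and step~5 forces the $c_{i,j}$ to be constants, so the ranking used in step~8 is a genuine elimination ranking separating the $x_r$ from the $c_{i,j}$.

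For correctness, I would first show that the differential system assembled in steps~1--7 is consistent, i.e.\ that the resulting Thomas decomposition contains at least one simple system and that among these simple systems there is one whose equations and inequations restricted to the block $\{c_{i,j}\}$ admit a point $(\overline{c}_{i,j}) \in \GL_{n_{I''}}(\field)$. This is where Proposition~\ref{prop:sigmared} does the heavy lifting: it produces a differential homomorphism $\sigred$ extending $\siginter$ which maps $y_i^{I''}$ into the $\field$-span of $\overline{X}_{1,1},\ldots,\overline{X}_{1,n_{I''}}$, and the corresponding coefficient matrix $(\overline{c}_{i,j})$ is invertible because the $\sigred(y_i^{I''})$ form a basis of $\overline{V}_{I''}$. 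This matrix satisfies the conditions \eqref{eq:RELinQ} of Lemma~\ref{lem:condcijhom}, and moreover the denominators appearing in $\exprforexp_j^{I''}$, $V_j^{I''}$ and $\exprforint_i^{I''}$, when evaluated at $\sum_r \overline{c}_{\bullet,r}\overline{X}_{1,r}$, are non-zero in $\difffield[\GL_{n_{I''}}]/Q$ because $\sigred(\exp_j)$, $\sigred(v_j)$, $\sigred(\intn_i)$ are well-defined elements of $\extfieldred$. Passing from the residue classes modulo $Q$ back to polynomial representatives in the $X_{i,j}$ via the substitution $X_{i,j}\mapsto x_j^{(i-1)}$ used in step~3, one sees that $(\overline{c}_{i,j})$ satisfies the equations of $S_{\mathrm{eqs}}$ and the inequations of $S_{\mathrm{ineqs}}$, so it lies in some simple system of the Thomas decomposition.

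Next I would verify that \emph{any} choice of simple system made in step~9 yields a correct output. Let $(\overline{c}_{i,j})$ be a solution of the block of equations and inequations in the $c_{i,j}$ selected in step~9. By construction the Thomas decomposition returns only consequences of the original system, so $(\overline{c}_{i,j})$ satisfies the relations \eqref{eq:RELinQ} of Lemma~\ref{lem:condcijhom}. That lemma therefore yields a differential homomorphism
\[
\eta\colon D^{-1}\field\{\bss(\bvv),\bvvbase\}\{y_1^{I''},\dots,y_{n_{I''}}^{I''}\} \to \difffield[\GL_{n_{I''}}]/Q
\]
sending $y_i^{I''}\mapsto\sum_j \overline{c}_{j,i}\overline{X}_{1,j}$. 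Because the denominators appearing in $\exprforexp_j^{I''}(\bZ)$, $V_j^{I''}(\bZ)$ and $\exprforint_i^{I''}(\bZ)$ are forced to be non-zero modulo $Q$ by the inequations added in step~7, $\eta$ extends (via Remark~\ref{rem:specializeEXPVINT} and the identifications $\exp_j=\exprforexp_j^{I''}(\bvv)$, $v_j=V_j^{I''}(\bvv)$, $\intn_i=\exprforint_i^{I''}(\bvv)$) to a differential homomorphism
\[
\sigred\colon D^{-1}\difffield\{\bvv\}[\bexp,\bexp^{-1},\intn_i\mid \beta_i\in\leviroots^-]\to \extfieldred
\]
sending $\bvv$, $\bexp$, $\intn_i$ to the residue classes of the output $\widehat{\bvv}$, $\widehat{\bexp}$, $\ratint_i$ in $\difffield[\GL_{n_{I''}}]/Q$. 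This is exactly the map promised by Proposition~\ref{prop:sigmared}, so the algorithm returns valid representatives.

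The main obstacle in executing this plan is bookkeeping the compatibility between the ring $\difffield[\GL_{n_{I''}}]$ (where $Q$ lives and the $X_{i,j}$ carry the companion-matrix derivation) and the differential polynomial ring $\difffield\{x_1,\ldots,x_{n_{I''}},c_{i,j}\}$ in which Thomas decomposition is performed: one must verify that the substitution $X_{i,j}\mapsto x_j^{(i-1)}$ identifies the differential ideal $Q$ with an ideal whose solutions in $\overline{\generalext}$ are exactly tuples of solutions of $\overline{\LCLM}(\bsq,\bvvqbase,\partial)\,y=0$, so that the differential equations added in step~4 are a correct translation and no spurious constraints are introduced. Once this dictionary is set up, the verification reduces to the Lemma~\ref{lem:condcijhom} check and the non-vanishing of the denominators, which are both directly enforced by the system constructed in steps~1--7.
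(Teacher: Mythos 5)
Your overall architecture matches the paper's (termination from the Thomas decomposition; consistency of the system in steps~1--7 via Proposition~\ref{prop:sigmared} and Lemma~\ref{lem:condcijhom}; then correctness of an arbitrary choice in step~9), but the decisive step in the correctness half is justified by a reason that does not carry it. You write that ``the Thomas decomposition returns only consequences of the original system, so $(\overline{c}_{i,j})$ satisfies the relations \eqref{eq:RELinQ}.'' That inference is backwards: the fact that the equations of a simple system are consequences of the input system says nothing about why a constant point satisfying only the $c$-block of one simple system forces the \emph{ideal membership} $\mathrm{REL}_s(\sum_r \overline{c}_{1,r}X_{1,r},\dots,\bsq,\bvvqbase)\in Q$ and the non-membership of $\mathrm{REL}^{\neq}$. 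What is actually needed (and what the paper uses) is, first, the elimination property of simple systems under the ranking with the $c_{i,j}$ lowest: any $(\overline{c}_{i,j})$ satisfying the $c$-only equations and inequations extends to a solution $(\overline{x}_k,\overline{c}_{i,j})$ of the whole simple system in some differential extension field; and second, the maximality of the differential ideal $Q_1\lhd\difffield\{x_1,\dots,x_{n_{I''}}\}$ obtained from $Q$ via $X_{i,j}\mapsto x_j^{(i-1)}$ together with the $\overline{\LCLM}$ equations. Maximality forces the proper differential ideal obtained from $\widehat{Q}$ by substituting $c_{i,j}\mapsto\overline{c}_{i,j}$ (proper because it has the solution $\overline{x}$) to coincide with $Q_1$, and only this collapse yields $\mathrm{REL}_s(\dots)\in Q_1$, hence in $Q$, while the inequations guarantee $\mathrm{REL}^{\neq}(\dots)\notin Q_1$. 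Without invoking maximality of $Q_1$ you only get vanishing of the substituted $\mathrm{REL}_s$ on one solution, not membership in $Q$, so Lemma~\ref{lem:condcijhom} cannot yet be applied.

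Compounding this, the very fact you defer as the ``main obstacle'' --- that the substitution $X_{i,j}\mapsto x_j^{(i-1)}$ identifies $\difffield[\GL_{n_{I''}}]/Q$ with $\difffield\{x_1,\dots,x_{n_{I''}}\}/Q_1$, so that $Q_1$ is a \emph{maximal} differential ideal --- is precisely the ingredient the correctness argument hinges on; it is not mere bookkeeping and must be proved (in the paper this is a short isomorphism argument, since the generators of $Q_1$ are the transported generators of $Q$ plus $\overline{\LCLM}(\bsq,\bvvqbase,\partial)\,x_j$). The same mechanism (extension of the $c$-point to a full solution plus $\varphi(\widehat{Q})=Q_1$) is also what shows that the denominators of $\exprforexp_j^{I''}$, $V_j^{I''}$ and $\exprforint_i^{I''}$ evaluated at $\sum_r\overline{c}_{\bullet,r}\overline{X}_{1,r}$ are non-zero modulo $Q$, so asserting it directly from the inequations of step~7 hides the same gap. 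Your consistency argument (existence of at least one simple system via Proposition~\ref{prop:sigmared}) and the termination argument are fine and agree with the paper.
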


\begin{proof}
The algorithm terminates, since the  Thomas decomposition terminates.

Since the map 
\[
\begin{array}{rcl}
    \difffield\{ x_1,\dots,x_{n_{I''}} \}/Q_1 \! & \! \to \! & \! \difffield[\GL_{n_{I''}}]/Q\,,\\[0.5em]
    \overline{x}_j^{(i-1)} \! & \! \mapsto \! & \! \overline{X}_{i,j}\,,
\end{array}
\]
where $Q_1$ is the differential ideal generated by the differential polynomials obtained from substituting $X_{i,j}$ in $q_1,\dots, q_s$ by 
$x_j^{(i-1)}$  and the differential polynomials $\overline{\LCLM}(\bsq,\bvvqbase, \partial) \, x_j = 0$, is a differential isomorphism, the ideal $Q_1$ is a maximal differential ideal.
Let $\widehat{Q}$ be the differential ideal in $\difffield\{x_1,\dots,x_{n_{I''}},c_{i,j}\}$ generated by the elements of $S_{\rm eqs}$ obtained in step~6, i.e.\ by the generators of $Q_1$ and $c_{i,j}'$ and
\begin{gather*}
\mathrm{REL}_s(\sum_r c_{1,r} x_r,\dots,\sum_r c_{n_{I''},r} x_r,\bsq,\bvvqbase) \quad \text{for} \ s=1,\dots,k.
\end{gather*}
We are going to show that $\widehat{Q}$ is a proper differential ideal which does not contain the inequations $S_{\rm ineqs}$ so that the Thomas decomposition consists of at least one simple differential system.
To this end, observe first that $\widehat{Q}$ contains the ideal $(Q_1)$ generated by $Q_1$. Now Proposition~\ref{prop:sigmared} implies the existence of a matrix $(\overline{c}_{i,j}) \in \GL_{n_{I''}}(\field)$ such that the restriction of $\sigred$ to 
    $$D^{-1} \field\{ \bss(\bvv) , \bvvbase \} \{ y_1^{I''},\dots,y_{n_{I''}}^{I''} \}$$ yields a differential homomorphism $\eta$ as in Lemma~\ref{lem:condcijhom} (cf.\ $\varphi$ in the proof of Proposition~\ref{prop:sigmared}).
    It follows now from  Lemma~\ref{lem:condcijhom} that substituting $c_{i,j}$ by $\overline{c}_{i,j}$ maps the differential ideal $\widehat{Q}$ of $\difffield\{x_1,\dots,x_{n_{I''}},c_{i,j}\}$ to the differential ideal $Q_1$ of $\difffield\{x_1,\dots,x_{n_{I''}} \}$ showing that $\widehat{Q}$ is proper.
    We now address the inequalities.
    Lemma~\ref{lem:condcijhom} implies that substituting $c_{i,j}$ by $\overline{c}_{i,j}$ does not map 
    $$\mathrm{REL}^{\neq}( \sum_r c_{1,r} x_r,\dots,\sum_r c_{n_{I''},r} x_r ,\bsq,\bvvqbase)$$ 
    into the differential ideal $Q_1$ and hence this differential polynomial does not lie in  $\widehat{Q}$.  Moreover, it follows from Remark~\ref{rem:specializeEXPVINT} that the denominators of 
\[
\exprforexp_j^{I''}(y_1^{I''},\dots,y_{n_{I''}}^{I''}), \ 
V_j^{I''}(y_1^{I''},\dots,y_{n_{I''}}^{I''}) \quad \text{and} \quad 
\exprforint_i^{I''}(y_1^{I''},\dots,y_{n_{I''}}^{I''})
\]
do not lie in the kernel of $\eta$. In other words, if we substitute $c_{i,j}$by $\overline{c}_{i,j}$ in the denominators of
\[
\begin{array}{l}
    \exprforexp_j^{I''}(\sum_r c_{1,r} x_r,\dots,\sum_r c_{n_{I''},r} x_r,\bsq,\bvvqbase ) \, ,\\[0.5em] 
    V_j^{I''}(\sum_r c_{1,r} x_r,\dots,\sum_r c_{n_{I''},r} x_r,\bsq,\bvvqbase ) \quad \mathrm{and} \quad \\[0.5em]
    \exprforint_i^{I''}( \sum_r c_{1,r} x_r,\dots,\sum_r c_{n_{I''},r} x_r,\bsq,\bvvqbase )\,,
\end{array}
\]
the obtained differential polynomials in $\difffield\{ x_1,\dots,x_{n_{I''}}\}$ do not lie $Q_1$.
Therefore, these denominators do not lie in $\widehat{Q}$. Finally, since $\widehat{Q} \cap F\{ c_{i,j}\} =(0)$, we have that $c_{i,j}'$ and $\det(c_{i,j})$ are not contained in $\widehat{Q}$. Overall, we conclude that the  Thomas decomposition applied to $S_{\rm eqs}$ and $S_{\rm ineqs}$ returns at least one simple system.

Since the $c_{i,j}$ are ranked lowest, each simple system returned by the differential Thomas decomposition computed in step~8 has the following elimination property:
each $(\overline{c}_{i,j}) \in \GL_{n_{I''}}(\field)$ satisfying all equations and inequations of the simple system that only involve the indeterminates $c_{i,j}$ can be extended to a solution $(\overline{x}_k, \overline{c}_{i,j})$ of the simple system,
where $\overline{x}_k$ belong to some differential extension field of $\difffield$.
Thus each such $(\overline{c}_{i,j})$ yields a surjective differential homomorphism 
\[
    \varphi\colon \difffield\{x_1,\dots,x_{n_{I''}},c_{i,j}\} \to \difffield\{ x_1,\dots,x_{n_{I''}} \}, c_{i,j} \mapsto \overline{c}_{i,j}
\]
with the property that $\varphi(\widehat{Q})$ is a proper differential ideal of $\difffield\{x_1,\dots,x_{n_{I''}} \}$
    containing $Q_1$. Since $Q_1$ was maximal, we have $Q_1 = \varphi(\widehat{Q})$ and so
    \begin{gather*}
\mathrm{REL}_s(\sum_r \overline{c}_{1,i} x_r,\dots,\sum_r \overline{c}_{n_{I''},i} x_r,\bsq,\bvvqbase)  \quad \text{for} \ s=1,\dots,k
\end{gather*}
    are elements of $Q_1$. Moreover, by the definition of $S_{\rm ineqs}$ we have that 
    $$
    \mathrm{REL}^{\neq}(\sum_r \overline{c}_{1,i} x_r,\dots,\sum_r \overline{c}_{n_{I''},i} x_r,\bsq,\bvvqbase) \notin Q_1 .
    $$
    We conclude that 
\[
\begin{array}{l}
\mathrm{REL}_s(\sum_r \overline{c}_{1,i} X_{1,r},\dots,\sum_r \overline{c}_{n_{I''},i} X_{1,r},\bsq,\bvvqbase)  \in Q \\[0.5em]
\mathrm{REL}^{\neq}(\sum_r \overline{c}_{1,i} X_{1,r},\dots,\sum_r \overline{c}_{n_{I''},i} X_{1,r},\bsq,\bvvqbase) \notin Q
\end{array}
\]
    and so Lemma~\ref{lem:condcijhom} implies that the map 
   \begin{eqnarray*}
    \eta\colon D^{-1}\field\{ \bss(\bvv) , \bvvbase \} \{ y_1^{I''},\dots,y_{n_{I''}}^{I''} \} & \to & \difffield [\GL_{n_{I''}}] / Q\,,\\
    \bss(\bvv) & \mapsto & \bsq\,,\\
    \bvvbase & \mapsto & \bvvqbase\,,\\
    y_i^{I''} & \mapsto & \sum_{j=1}^{n_{I''}} \overline{c}_{j,i} \overline{X}_{1,j} 
    \end{eqnarray*}
    is a differential homomorphism. 
    By the definition of $S_{\rm ineqs}$ the denominators of
    $\exprforexp_j^{I''}( y_1^{I''},\dots,y_{n_{I''}}^{I''})$,  $V_j^{I''}( y_1^{I''},\dots,y_{n_{I''}}^{I''})$ for $j = 1, \ldots,l$ and $\exprforint_i^{I''}( y_1^{I''},\dots,y_{n_{I''}}^{I''})$ for $1\leq i \leq m$ with $\beta_i \in \leviroots^-$ do not lie in the kernel of $\eta$. Thus, we can extend $\eta$ to localizations of its domain and codomain respectively such that these denominators 
    and their images are contained in the respective multiplicatively closed subsets and denote this map again by $\eta$.  Then
    \[
    \begin{array}{rcll}
    \eta(\exprforexp_j^{I''}( y_1^{I''},\dots,y_{n_{I''}}^{I''})) \! & \! = \! & \! \ratexp_j + Q,\\[0.5em]
    \displaystyle\eta (V_j^{I''}( y_1^{I''},\dots,y_{n_{I''}}^{I''})) \! & \! = \! & \! \widehat{v}_j + Q,\\[0.5em]
    \eta(\exprforint_i^{I''}(y_1^{I''},\dots,y_{n_{I''}}^{I''})) \! & \! = \! & \! \ratint_i + Q.
   \end{array}
   \]
 \end{proof}

\section{The Structure of the Reductive Part}\label{sec:structureofreductivepart}

In this section we extend the specialization $\sigred$ computed in Section~\ref{sec:specializingreductivepart} to a specialization 
\[
  \sigul\colon  D^{-1} \difffield \{ \bvv \}[\bexp,\bexp^{-1}, \bint ]   
  \to \underline{E},
\]
where the differential field $\underline{E}$ is the field of fractions of a certain differential integral domain $\underline{R}$ containing $\extfieldred$.
For maximal differential ideals $\Imax \unlhd \underline{R}$ we construct Picard-Vessiot extensions $\overline{\generalext} = \Frac(\underline{R}/\Imax)$ of $\difffield$,
each of which in turn allows us to construct the announced specialization
\[
\sigPV\colon D^{-1} \difffield \{ \bvv \}[\bexp,\bexp^{-1}, \bint ]   
 \to \overline{\generalext}.
\]

We study the relationship between the group $\underline{H}$ of differential $\difffield$-automorphisms of $\underline{E}$ and the differential Galois group $H$ of $\overline{\generalext}$ depending on the choice of $\Imax$.  
The group $\underline{H}$ is the semidirect product of the unipotent radical $\unirad(\parabolic_J)$ of the parabolic group $\parabolic_J$ computed in Section~\ref{sec:bound} and a Levi factor of the differential Galois group $H$.
The choice of $\Imax$ determines the unipotent radical $\unirad(H) \leq \unirad(\parabolic_J)$ of $H$ as well as which Levi groups of $\underline{H}$ are Levi groups of $H$.
More precisely, the Levi groups of $\underline{H}$ are all conjugate by elements of $\unirad(\parabolic_J)$. In case $\unirad(H)$ is properly contained in $\unirad(\parabolic_J)$, the set of Levi groups may decompose into different orbits under $\unirad(H)$. The Levi groups of $H$ will correspond to one such orbit.

For the construction of the extension $\sigul$ and $\underline{E}$ let $\intrad_i$ for $\beta_i \in \roots^- \setminus \leviroots^-$ be differential indeterminates over $\extfieldred$ and let $\bratint$ be the $m$-tuple whose $i$-th entry, for $\beta_i \in \leviroots^-$, is a rational function $\widehat{\intn}_i$ as in  Proposition~\ref{prop:sigmared},
and whose $i$-th entry with $\beta_i \in \roots^- \setminus \leviroots^-$ is the indeterminate $\intrad_i$. Let
\[
 \Iuni \, := \, \langle \intrad_i' - \text{integrand of} \, \intn_i \rangle  
\]
be the differential ideal in $\extfieldred\{ \intrad_i \mid \beta_i \in \roots^- \setminus \leviroots^-\}$, 
where in the integrand of $\intn_i$, that is $\intn_i'$, one substitutes $\bexp$, $\bvv$ and $\intn_i$ with $\beta_i \in \leviroots^-$ by the respective residue classes of $\bratexp$, $\widehat{\bvv}$ and $\widehat{\intn}_i$ in $\extfieldred$ and $\intn_i$ with $\beta_i \in \roots^- \setminus \leviroots^-$ by the respective
indeterminate $\intrad_i$.
We define
\[
\underline{R} \, := \, \extfieldred\{ \intrad_i \mid \beta_i \in \roots^- \setminus \leviroots^- \}/ \Iuni.
\]
\begin{remark}\label{rem:Rintegraldomain}
The differential ring $\underline{R}$ is an integral domain, since $\Iuni$
is generated by differential polynomials which involve the pairwise distinct highest derivatives $\intrad_i'$ only linearly with constant coefficient and the remaining terms lie in 
$$\extfieldred[ \intrad_j \mid \beta_j \in \roots^- \setminus \leviroots^- \ \text{with} \ | \height(\beta_j) | <  | \height(\beta_i) |]$$
implying triangularity.
We denote its field of fractions by $\underline{E}=\Frac(\underline{R})$.
\end{remark}
Then $\sigred$ extends to a differential homomorphism 
\begin{equation}\label{eqn:specializationreductivepart}
\begin{array}{rcl}
\sigul\colon D^{-1} \difffield \{ \bvv \}[\bexp,\bexp^{-1}, \bint ] \! & \! \to \! & \! \underline{E} \, , \\[0.5em]
\intn_i \! & \! \mapsto \! & \! \underline{\intrad}_i \quad \text{for} \quad  \beta_i \in \roots^- \setminus \leviroots^-.
\end{array}
\end{equation}
Since $\sigul$ is a differential homomorphism, the specialization of the generic fundamental matrix
\[\label{eqn:imageoffmundersigul}
\fmuq \, := \, \sigul(\fm)  \in \group(\underline{R})
\]
satisfies 
\[
\partial(\fmuq) \, = \, A_{\group}(\bsq) \, \fmuq.
\]
If we denote by $\underline{\bvv}$, $\underline{\bff}$, $\underline{\bexp}$ and $\underline{\bint}$ the residue classes of $\bvv$, $\bff$, $\bexp$ and $\bint$ in $\underline{R}$, respectively, then the matrix $\fmuq=\sigul(\fm)$ decomposes explicitly as 
\begin{equation}\label{eqn:bruhatoffundmatrixprechapter10}
\fmuq \, = \, \buu(\underline{\bvv},\underline{\bff}) \, n(\overline{w}) \, \btt(\underline{\bexp}) \, \buu(\underline{\bint}) .
\end{equation}
Note that $\underline{\bvv}$, $\underline{\bff}$, $\underline{\bexp}$ and $\underline{\intn}_i$ with $\beta_i \in \leviroots^-$ are elements of $\extfieldred$.

The differential ring $\underline{R}$ is not necessarily differentially simple, unless $\Iuni$ is a maximal differential ideal. If this is not the case, we can choose a maximal differential ideal $\Imax$ in $\underline{R}$
and then take the quotient
\[\label{eqn:definitionRq}
\overline{R} \, := \, \underline{R}/\Imax   
\]
obtaining a differentially simple ring $\overline{R}$. 
Since $\Imax$ is prime, $\overline{R}$ is an integral domain and we define 
\[\label{eqn:definitionEq}
 \overline{\generalext} \, := \, \Frac(\overline{R}).
\]
From $\sigul$ we construct our final extension of the differential homomorphism $\sigred$ to the differential homomorphism 
\begin{equation}\label{eqn:specializationreductivepartfinal}
\begin{array}{rcl}
\sigPV\colon  D^{-1} \difffield \{ \bvv \}[\bexp,\bexp^{-1}, \bint ] \! & \! \to \! & \! \overline{\generalext},\\[0.5em]
\intn_i \! & \! \mapsto \! & \! \underline{\intrad}_i + \Imax \quad \text{for} \quad  \beta_i \in \roots^- \setminus \leviroots^-.
\end{array}
\end{equation}
Moreover, we denote by $\bvq,\overline{\bff}$, $\overline{\bexp}$ and
$\overline{\bint}$ the residue classes of $\underline{\bvv},\underline{\bff}$, $\underline{\bexp}$ and $\underline{\bint}$ in $\overline{R}$, respectively. 
Note that the elements $\bvq, \overline{\bff}$, $\overline{\bexp}$ and
$\overline{\intn}_i$ for $\beta_i \in \leviroots^-$ are elements of $\extfieldred$ and
$\overline{\intn}_i = \overline{\intrad}_i$ for $\beta_i \in \roots^- \setminus \leviroots^-$ lies in  
$\overline{R} \setminus \extfieldred$. We let
\begin{equation}\label{eqn:bruhatoffundmatrixchapter10}
\fmspec \, = \, \buu(\bvq,\overline{\bff}) \, n(\overline{w}) \, \btt(\overline{\bexp})\, \buu(\overline{\bint})\,.  
\end{equation}

Recall from Section~\ref{sec:intoLieAlgParabolic} that we denoted the roots of $\leviroots^-$ by $\beta_{j_1},\dots,\beta_{j_k}$
and the roots of the complement $\roots^- \setminus \leviroots^-$ by 
$\beta_{j_{k+1}},\dots,\beta_{j_m}$. 
Similarly as in Remark~\ref{rem:genericYredYraddecomp} we apply Lemma~\ref{lem:separationradicalnew} to the matrix
$\buu(\underline{\bint})$ and $\buu(\overline{\bint})$ and obtain the decompositions
\begin{eqnarray*}
    \buu(\underline{\bint}) & = & u_{\beta_{j_1}}(\underline{\intn}_{j_1}) \cdots u_{\beta_{j_k}}(\underline{\intn}_{j_k}) \cdot u_{\beta_{j_{k+1}}}(\underline{y}_{j_{k+1}}) \cdots u_{\beta_{j_m}}(\underline{y}_{j_m}) \\
    \buu(\overline{\bint}) & = & u_{\beta_{j_1}}(\overline{\intn}_{j_1}) \cdots u_{\beta_{j_k}}(\overline{\intn}_{j_k}) \cdot u_{\beta_{j_{k+1}}}(\overline{y}_{j_{k+1}}) \cdots u_{\beta_{j_m}}(\overline{y}_{j_m})
\end{eqnarray*}
with $\underline{y}_{j_{k+1}},\dots,\underline{y}_{j_m} \in \underline{R} \setminus \extfieldred$ and $\overline{y}_{j_{k+1}},\dots,\overline{y}_{j_m} \in \overline{R}$. 
Thus the matrices 
  \begin{equation*}\label{eqn:definitionofYredradobuq}
  \begin{array}{rcl}
 \fmreduq & := &  \buu(\underline{\bvv},\underline{\bff}) \, n(\overline{w}) \, \btt(\underline{\bexp}) \, u_{j_1}(\underline{\intn}_{j_1}) \cdots u_{j_k}(\underline{\intn}_{j_k}) \in \group(\extfieldred) \, ,\\[0.5em]
 \fmredq & := &  \buu(\bvq,\overline{\bff}) \, n(\overline{w}) \, \btt(\overline{\bexp}) \, u_{j_1}(\overline{\intn}_{j_1}) \cdots u_{j_k}(\overline{\intn}_{j_k}) \in \group(\extfieldred) \, , \\[0.5em]
 \fmraduq & := & u_{\beta_{j_{k+1}}}(\underline{y}_{j_{k+1}}) \cdots u_{\beta_{j_m}}(\underline{y}_{j_m}) \in \unirad(\parabolic_J)(\underline{R}) \, ,\\[0.5em]
 \fmradq & := & u_{\beta_{j_{k+1}}}(\overline{y}_{j_{k+1}}) \cdots u_{\beta_{j_m}}(\overline{y}_{j_m}) \in \unirad(\parabolic_J)(\overline{R}) 
 \end{array}
 \end{equation*}
satisfy $\fmuq = \fmreduq \, \fmraduq$ and $\fmspec = \fmredq \, \fmradq$. Note that $\fmreduq=\fmredq$.

\begin{proposition}\label{prop:galaction} Using the above notation we have:
\begin{enumerate}
    \item\label{prop:galaction(a)}
    The differential field $\extfieldred$ is generated as a field over $\difffield$ by $\underline{\bvv}$, $\underline{\bff}$, $\underline{\bexp}$ and $\underline{\intn}_i$  with $\beta_i \in \leviroots^-$ and we have 
    \[
    \extfieldred \subset \Frac(\difffield[\fmuq,\det(\fmuq)^{-1}]) \, = \, \underline{E}\,.
    \]
    \item\label{prop:galaction(b)}
    The differential field $\extfieldred$ is generated as a field over $\difffield$ by $\bvq$, $\overline{\bff}$, $\overline{\bexp}$ and $\overline{\intn}_i$ with $\beta_i \in \leviroots^-$ and we have 
    $$ \extfieldred \subset \Frac(\difffield[\fmspec,\det(\fmspec)^{-1}]) \, = \, \overline{\generalext}.$$
    \item\label{prop:galaction(c)}
    The differential ring $\difffield[\fmspec,\det(\fmspec)^{-1}]$
    is a Picard-Vessiot ring over $\difffield$ for $A_{\group}(\bsq)$ with fundamental matrix $\fmspec$. 
    Its differential Galois group $H(\field)$ is a subgroup of $\parabolic_J(\field)$.
    \item\label{prop:galaction(d)}
    The differential ring $\extfieldred[\fmspec,\det(\fmspec)^{-1}]$ is a Picard-Vessiot ring over $\extfieldred$ for $A_{\group}(\bsq)$ with fundamental matrix $\fmspec$. Its differential Galois group is a subgroup of $\unirad(\parabolic_J)(\field)$.
\end{enumerate}
\end{proposition}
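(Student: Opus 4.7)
For parts (a) and (b) the plan is to exploit the uniqueness of the Bruhat decomposition together with Proposition~\ref{prop:BruhatParametersofXmainpart} transferred to the specialized setting. Uniqueness expresses the Bruhat parameters as rational functions of the matrix entries, so $\underline{\bvv}, \underline{\bff}, \underline{\bexp}, \underline{\bint}$ lie in $\Frac(\difffield[\fmuq, \det(\fmuq)^{-1}])$, while conversely the entries of $\fmuq$ are polynomial in these parameters; this gives the equality with $\underline{E}$ of Remark~\ref{rem:Rintegraldomain}. Restricting to the reductive factor $\fmreduq$ then shows that the field over $\difffield$ generated by $\underline{\bvv}, \underline{\bff}, \underline{\bexp}$ and those $\underline{\intn}_i$ with $\beta_i \in \leviroots^-$ coincides with $\Frac(\difffield[\fmreduq, \det(\fmreduq)^{-1}])$. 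I would identify this with $\extfieldred$ by combining Remark~\ref{rem:basisLCLM} (which presents the generic basis $y^{I''}_j$ as rational expressions in the generic Bruhat parameters) with Proposition~\ref{cor:exprforexpandint} (which goes the other way), and then applying $\sigred$. Part~(b) follows from (a) by passing through the quotient $\underline{R} \twoheadrightarrow \overline{R}$, using $\Imax \cap \extfieldred = (0)$ so that $\extfieldred$ embeds into $\overline{R}$ and the same parametrization transfers verbatim to $\fmspec$ and $\overline{\generalext}$.

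Part~(c) reduces to Proposition~\ref{prop:parabolicbound} combined with standard Picard-Vessiot theory. Since $\sigPV$ is a differential homomorphism, specializing $\partial(\fm) = A_{\group}(\bss(\bvv))\,\fm$ yields $\partial(\fmspec) = A_{\group}(\bsq)\,\fmspec$. Proposition~\ref{prop:parabolicbound}(a) identifies $\overline{\generalext}$ as a Picard-Vessiot extension of $\difffield$ for $A_{\group}(\bsq)$ with fundamental matrix $\fmspec$, so $\difffield[\fmspec, \det(\fmspec)^{-1}] \subset \overline{\generalext}$ is the associated Picard-Vessiot ring by \cite[Corollary~1.38]{vanderPutSinger}, in particular differentially simple with constants $\field$. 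The bound $H(\field) \le \parabolic_J(\field)$ is Proposition~\ref{prop:parabolicbound}(b).

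For part~(d) the crucial step is the equality $\extfieldred[\fmspec, \det(\fmspec)^{-1}] = \overline{R}$. From $\fmspec = \fmredq\,\fmradq$ with $\fmredq \in \group(\extfieldred)$ and $\fmradq \in \unirad(\parabolic_J)(\overline{R})$, unipotency of $\fmradq$ gives $\det(\fmspec) = \det(\fmredq) \in \extfieldred^\times$ and hence $\fmradq = \fmredq^{-1}\fmspec \in \extfieldred[\fmspec]$. Applying Lemma~\ref{lem:separationradicalnew}, the height ordering on $\roots^- \setminus \leviroots^-$ and the fact that the exchange formula produces only roots in $\roots^- \setminus \leviroots^-$ of strictly larger height yield a triangular structure for the entries $\overline{y}_{j_i}$ of $\fmradq$ as polynomials in the algebra generators $\overline{\intrad}_{j_i}$ of $\overline{R}$ over $\extfieldred$, so these generators are recoverable inductively from the entries of $\fmradq$. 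Therefore $\overline{R} \subseteq \extfieldred[\fmspec]$, and the reverse inclusion is immediate. Differential simplicity of $\overline{R}$ (from maximality of $\Imax$) together with $\Frac(\overline{R}) = \overline{\generalext}$ having constants $\field$ then establish that $\extfieldred[\fmspec, \det(\fmspec)^{-1}]$ is a Picard-Vessiot ring for $A_{\group}(\bsq)$ over $\extfieldred$. For the Galois-group bound, I would invoke the uniqueness of the Levi decomposition in $\parabolic_J$: any $\tau \in \Gal_\partial(\overline{\generalext}/\extfieldred)$ acts as $\tau(\fmspec) = \fmspec\,g$ with $g \in \parabolic_J(\field)$ by part~(c), and comparing with $\tau(\fmspec) = \fmredq\,\tau(\fmradq)$ via the Levi decomposition $g = g_L g_U$ forces $\fmredq g_L = \fmredq$, hence $g_L = e$ and $g \in \unirad(\parabolic_J)(\field)$. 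The most delicate point I anticipate is in part~(a), namely rigorously establishing the purely field-theoretic (and not merely differential-field) generation statement, which requires checking that derivatives of the generators can be expressed rationally in the generators themselves together with $\difffield$ after specialization.
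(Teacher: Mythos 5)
Your parts (a) and (b) follow essentially the paper's route (the Bruhat parameters are rational in the matrix entries since $\fmuq$, $\fmspec$ lie in the big cell, and $\extfieldred$ is identified with the field generated by the specialized parameters through the specialized basis of the least common left multiple), and your part (d) is a legitimate variant: establishing $\extfieldred[\fmspec,\det(\fmspec)^{-1}] = \overline{R}$ by inverting the triangular root-group parametrization and then using differential simplicity of $\overline{R}$ replaces the paper's passage through the fraction field and the proof of \cite[Proposition~1.22]{vanderPutSinger}; your Levi-decomposition comparison for the group bound, after cancelling $\fmredq$ and using normality of $\unirad(\parabolic_J)$, reduces to the paper's computation $\fmradq^{-1}\gamma(\fmradq) \in \unirad(\parabolic_J)(\field)$ (the intermediate claim ``$\fmredq g_L = \fmredq$'' is not literally forced as stated, since $\fmredq$ does not lie in a Levi subgroup, but the conclusion $g_L = e$ survives).

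The genuine gap is in part (c). In Proposition~\ref{prop:galaction} the field $\overline{\generalext}$ is $\Frac(\underline{R}/\Imax)$, built on top of $\extfieldred = \Frac(\difffield[\GL_{n_{I''}}]/Q)$ by adjoining the indeterminates $\intrad_i$ and dividing by a chosen maximal differential ideal $\Imax \lhd \underline{R}$; it is \emph{not} the field constructed in Proposition~\ref{prop:parabolicbound}, which came from a maximal differential ideal $\idealmax \lhd \difffield\{\bvv\}$ containing $\idealinter$ and a subsequent maximal ideal for the Liouvillian part. Hence you cannot invoke Proposition~\ref{prop:parabolicbound}(a) to say that this $\overline{\generalext}$ is a Picard-Vessiot extension with fundamental matrix $\fmspec$, nor Proposition~\ref{prop:parabolicbound}(b) for $H \leq \parabolic_J$: the Picard-Vessiot property of this particular field is exactly what part (c) asserts. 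What is missing is the no-new-constants argument: $\overline{R} = \underline{R}/\Imax$ is differentially simple and finitely generated over $\extfieldred$, whose constants are $\field$, so by \cite[Lemma~1.17]{vanderPutSinger} the constants of $\overline{\generalext}$ are $\field$; combined with the fact that $\fmspec$ is a fundamental matrix for $A_{\group}(\bsq)$ whose entries generate the field (your part (b)), this gives the Picard-Vessiot extension, and the proof of \cite[Proposition~1.22]{vanderPutSinger} then gives the ring statement. The bound $H \leq \parabolic_J$ must likewise be argued directly: by construction $\vq_i \in \difffield$ for $i \in I''$, so $H$ fixes $\bvvqbase$, and Theorem~\ref{thm:fixedfieldparabolic} says $\parabolic_J$ is the largest subgroup of $\group$ fixing these indeterminates. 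Note finally that your part (d) takes ``constants of $\overline{\generalext}$ equal $\field$'' as an input, so it inherits this gap until (c) is repaired (though in your route simplicity of $\overline{R}$ together with $\overline{R} = \extfieldred[\fmspec,\det(\fmspec)^{-1}]$ already yields the Picard-Vessiot ring, with the constants statement as a consequence rather than a hypothesis).
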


\begin{proof}
\ref{prop:galaction(a)}\
Recall from Theorem~\ref{cor:extensionsbyPI} that $\generalext^{\unirad(\parabolic_J)}$ is a Picard-Vessiot extension of $\difffield\langle \bss(\bvv)\rangle(\bpp)$ for 
\begin{equation}\label{eqn:LCLMsection10}
\LCLM(\bss(\bvv),\bvvbase,\partial) \, y \, = \, 0
\end{equation}
and that $\generalext^{\unirad(\parabolic_J)}$ is generated as a field by $\bvv$, $\bff$, $\bexp$ and $\intn_i$ with $\beta_i \in \leviroots^-$ over $\difffield\langle \bss(\bvv) \rangle(\bpp)$.
As a consequence we have that 
the basis elements $y^{I''}_1,\dots,y^{I''}_{n_{I''}}$ of the solution space of \eqref{eqn:LCLMsection10} and their derivatives can be expressed as rational functions in $\bvv$, $\bff$, $\bexp$ and $\intn_i$ with $\beta_i \in \leviroots^-$ over $\difffield\langle \bss(\bvv) \rangle (\bpp)$. 
From Proposition~\ref{prop:specializefactorization} \ref{item:specializefactorization3} we obtain that $\sigPV$ specializes this basis to a basis in $\extfieldred$ of the specialized least common left multiple.  
Since the specialized basis and its derivatives generate $\extfieldred$ as a field over $\difffield$, the same is true for $\overline{\bvv}=\underline{\bvv}$, $\overline{\bff}=\underline{\bff}$, $\overline{\bexp}=\underline{\bexp}$ and $\overline{\intn}_i=\underline{\intn}_i$ with $\beta_i \in \leviroots^-$.
From \cite[Lemma~4.2]{Seiss_Generic} and the Bruhat decomposition in \eqref{eqn:bruhatoffundmatrixprechapter10} we conclude that all parameters $\underline{\bvv}, \underline{\bff}$, $\underline{\bexp}$ and $\underline{\bint}$  are in $\Frac(\difffield[\fmuq,\det(\fmuq)^{-1}])$ and so $\extfieldred$ is contained in $\Frac(\difffield[\fmuq,\det(\fmuq)^{-1}])$.
Since $\underline{E}$ is generated as a field by $\underline{\intn}_i$ with $\beta_i \in \roots^- \setminus \leviroots^-$ over $\extfieldred$ and these elements are contained in $\Frac(\difffield[\fmuq,\det(\fmuq)^{-1}])$, it follows that 
$\underline{E}=\Frac(\difffield[\fmuq,\det(\fmuq)^{-1}])$.

\ref{prop:galaction(b)}\
Since $\difffield[\fmspec,\det(\fmspec)^{-1}] \subset \overline{\generalext}$, the ring
$\difffield[\fmspec,\det(\fmspec)^{-1}]$ is an integral domain and so we can consider $\Frac(\difffield[\fmspec,\det(\fmspec)^{-1}])$. As above we conclude from  \cite[Lemma~4.2]{Seiss_Generic} and the Bruhat decomposition in \eqref{eqn:bruhatoffundmatrixchapter10} that all parameters $\bvq, \overline{\bff}$, $\overline{\bexp}$ and $\overline{\bint}$ are in $\Frac(\difffield[\fmspec,\det(\fmspec)^{-1}])$.
Since $\overline{\bff}$, $\overline{\bexp}$ and $\overline{\intn}_i$ with $\beta_i \in \leviroots^-$ are the same elements in $\extfieldred$ as the elements $\underline{\bvv}$, $\underline{\bff}$, $\underline{\bexp}$ and $\underline{\intn}_i$ with $\beta_i \in \leviroots^-$, the first statement follows from \ref{prop:galaction(a)}. Since for $\beta_i \in \leviroots^-$ we have that $\overline{\intn}_i \in \Frac(\difffield[\fmspec,\det(\fmspec)^{-1}])$ and these elements generate $\overline{\generalext}$ as a field over $\extfieldred$, we conclude that $\overline{\generalext}=\Frac(\difffield[\fmspec,\det(\fmspec)^{-1}])$.

\ref{prop:galaction(c)}\
Recall that the constants of $\extfieldred$ are $\field$. Since $\underline{R}/\Imax=\overline{R}$ is differentially simple and finitely generated over $\extfieldred$, the constants of $\overline{\generalext}=\Frac(\difffield[\fmspec,\det(\fmspec)^{-1}])$ are $\field$.
Clearly $\fmspec$ is a fundamental matrix for $A_{\group}(\bsq)$ and its entries are contained in 
$\Frac(\difffield[\fmspec,\det(\fmspec)^{-1}])$.
Obviously, $\Frac(\difffield[\fmspec,\det(\fmspec)^{-1}])$ is generated as a field over $\difffield$ by the entries of $\fmspec$ and so
$\Frac(\difffield[\fmspec,\det(\fmspec)^{-1}])$ is a Picard-Vessiot extension of $\difffield$ for $A_{\group}(\bsq)$ with fundamental matrix $\fmspec$. It follows now from the proof of \cite[Proposition~1.22]{vanderPutSinger} that $\difffield[\fmspec, \det(\fmspec)^{-1}]$ is a Picard-Vessiot ring for $A_{\group}(\bsq)$ over $\difffield$ with fundamental matrix $\fmspec$.

Since by the construction of $\fmspec$ the parameters $\vq_{i_{r+1}},\dots,\vq_{i_{l}}$ are in $\difffield$, 
Theorem~\ref{thm:fixedfieldparabolic} implies that the differential Galois group $H$ is contained in $\parabolic_J$.

\ref{prop:galaction(d)} 
Since $\extfieldred \subset \Frac(\difffield[\fmspec,\det(\fmspec)^{-1}])$ by \ref{prop:galaction(b)}, we conclude that
\[
\Frac(\difffield[\fmspec,\det(\fmspec)^{-1}]) \, = \, \Frac(\extfieldred[\fmspec,\det(\fmspec)^{-1}])
\]
and so it follows from \ref{prop:galaction(c)}\ 
that $\Frac(\extfieldred[\fmspec,\det(\fmspec)^{-1}])$ 
is a Picard-Vessiot extension of $\extfieldred$ for $A_{\group}(\bsq)$ with fundamental matrix $\fmspec$.
Again, the proof of \cite[Proposition~1.22]{vanderPutSinger} shows that $ \extfieldred[\fmspec, \det(\fmspec)^{-1}]$ 
is a Picard-Vessiot ring over $\extfieldred$ for $A_{\group}(\bsq)$.
Let $\gamma$ be a differential $\extfieldred$-automorphism of $\Frac(\extfieldred[\fmspec,\det(\fmspec)^{-1}])$ and $C_{\gamma} \in \parabolic_J(\field)$ such that $\gamma(\fmspec) = \fmspec \, C_{\gamma}$.
Then we obtain
\[
 \gamma(\fmredq \, \fmradq) \, = \, \fmredq \, \gamma( \fmradq) \, = \, \fmredq \, \fmradq \, C_{\gamma} \, ,
\]
which is equivalent to $\fmradq^{-1} \gamma( \fmradq) = C_{\gamma}$. Since the algebraic group $\unirad(\parabolic_J)$ is defined over $\field$, we obtain that $\gamma(\fmradq) \in \unirad(\parabolic_J)(\Frac(\extfieldred[\fmspec,\det(\fmspec)^{-1}]))$ and so $C_{\gamma} \in \unirad(\parabolic_J)(\field)$. 
\end{proof}

Next we prove that $\fmredq$ is a fundamental matrix of a matrix differential equation over $\difffield$ and that it induces a representation of $\Gal_{\partial}(\extfieldred/ \difffield)$ which is contained in the standard Levi group of $\parabolic_J$.

\begin{proposition}\label{prop:reductivepartYred}
    Let 
    \[
    \Aprered \, := \, \dlog(\fmredq).  
    \]
    Then $\Aprered \in \Lie(\group)(\difffield)$ and $\extfieldred=\Frac(\difffield[\fmredq,\det(\fmredq)^{-1}])$ is a Picard-Vessiot extension of $\difffield$ for 
    $\Aprered$ with fundamental matrix $\fmredq$. Its differential Galois group $\Lred(\field)$ in the representation induced by $\fmredq$ is contained in the standard Levi group $\levi_J(\field)$ of $\parabolic_J(\field)$.
\end{proposition}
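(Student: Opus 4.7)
The plan is to deduce all four claims from the corresponding generic statements in Sections~\ref{sec:intoLieAlgParabolic} and \ref{sec:structureparabolic}, pushed forward through the specialization $\sigred$ of Proposition~\ref{prop:sigmared}. The main input is the reduction matrix $g_1 \in \group(\generalext^{\parabolic_J})$ from Proposition~\ref{prop:transformationintoPJgen}: since its defining parameters $x_{k_1},\ldots,x_{k_s}$ lie in $\field\langle\bss(\bvv),\bvvbase\rangle$ with denominators in $D$ (cf.\ Remark~\ref{rem:transformationintoPJgen} and Assumption~\ref{assumption2} \ref{itemNew:assumption2}), the specialization $\overline{g}_1 := \siginter(g_1) \in \group(\difffield)$ is well-defined, and by functoriality $\overline{g}_1\fmredq = \sigred(g_1\fmred) \in \levi_J(\extfieldred)$.

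For the assertion $\Aprered \in \Lie(\group)(\difffield)$, I first establish the generic counterpart $\dlog(\fmred) \in \Lie(\group)(\generalext^{\parabolic_J}) = \Lie(\group)(\difffield\langle\bss(\bvv),\bvvbase\rangle)$. The inclusion $\fmred \in \group(\generalext^{\unirad(\parabolic_J)})$ together with Remark~\ref{remark4} gives $\dlog(\fmred) \in \Lie(\group)(\generalext^{\unirad(\parabolic_J)})$. To descend further to $\generalext^{\parabolic_J}$, I use that for any $\sigma$ in the Galois group $\levi_J(\field) \cong \Gal_\partial(\generalext^{\unirad(\parabolic_J)}/\generalext^{\parabolic_J})$, the fact that $g_1\fmred \in \levi_J(\generalext^{\unirad(\parabolic_J)})$ together with $g_1$ being $\sigma$-fixed forces $\sigma(\fmred) = \fmred \cdot M_\sigma$ with $M_\sigma \in \levi_J(\field)$ constant, so $\dlog(\fmred)$ is Galois-invariant and lies in the fixed field by the Fundamental Theorem. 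Since differential homomorphisms commute with $\dlog$, applying $\sigred$ gives $\Aprered = \dlog(\fmredq) = \sigred(\dlog(\fmred)) \in \Lie(\group)(\difffield)$. The main technical obstacle here is ensuring $\sigred$ is defined on all entries of $\dlog(\fmred)$, which may require a mild enlargement of $D$ to absorb the denominators introduced by the matrix inversion involved in $\dlog$.

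For the Picard-Vessiot claim, uniqueness of the Bruhat decomposition of $\fmredq$ (exploited as in the proof of Proposition~\ref{prop:galaction}) expresses $\bvq$, $\overline{\bff}$, $\overline{\bexp}$ and the $\overline{\intn}_i$ with $\beta_i \in \leviroots^-$ as rational functions in the entries of $\fmredq$, which together with Proposition~\ref{prop:galaction} \ref{prop:galaction(b)} yields $\Frac(\difffield[\fmredq,\det(\fmredq)^{-1}]) = \extfieldred$. Since $\fmredq$ is a fundamental matrix of $\partial(Y) = \Aprered Y$, has non-vanishing determinant, and $\extfieldred \subseteq \overline{\generalext}$ has field of constants $\field$, the three defining conditions of a Picard-Vessiot extension are met.

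Finally, each $\sigma \in \Lred(\field) = \Gal_\partial(\extfieldred/\difffield)$ satisfies $\sigma(\fmredq) = \fmredq \cdot M_\sigma$ with a constant matrix $M_\sigma \in \GL_n(\field)$, because $\fmredq$ is a fundamental matrix of a system defined over $\difffield$. Left-multiplication by the $\sigma$-fixed element $\overline{g}_1 \in \group(\difffield)$ yields $\sigma(\overline{g}_1\fmredq) = \overline{g}_1\fmredq \cdot M_\sigma$; since $\levi_J$ is defined over $\field$ and hence stable under $\sigma$, and since $\overline{g}_1\fmredq \in \levi_J(\extfieldred)$ by the first paragraph, one obtains $M_\sigma = (\overline{g}_1\fmredq)^{-1}\sigma(\overline{g}_1\fmredq) \in \levi_J(\extfieldred) \cap \GL_n(\field) = \levi_J(\field)$, placing the representation of $\Lred$ induced by $\fmredq$ inside $\levi_J(\field)$.
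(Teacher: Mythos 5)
Your handling of the Picard--Vessiot claim and of the inclusion $\Lred(\field)\leq\levi_J(\field)$ coincides with the paper's proof: both rest on the matrix $\overline{g}_1\in\group(\difffield)$ with $\overline{g}_1\fmredq\in\levi_J(\extfieldred)$ (your inline specialization of $g_1$ via Remark~\ref{rem:transformationintoPJgen} and Assumption~\ref{assumption2} is exactly the content of Proposition~\ref{prop:gaugetoAPJ}), on the big-cell Bruhat decomposition together with Proposition~\ref{prop:galaction}~\ref{prop:galaction(b)}, and on $\levi_J$ being defined over $\field$. The genuinely different step is your argument for $\Aprered\in\Lie(\group)(\difffield)$, and this is where there is a gap. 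Your generic statement $\dlog(\fmred)\in\Lie(\group)(\generalext^{\parabolic_J})$ is correct, although the key claim $\sigma(\fmred)=\fmred M_\sigma$ with $M_\sigma\in\levi_J(\field)$ needs more than ``$g_1$ is $\sigma$-fixed'': one has to lift $\sigma$ to $\gamma_g$ with $\gamma_g(\fm)=\fm g$, write $g=\ell u$ with $\ell\in\levi_J(\field)$, $u\in\unirad(\parabolic_J)(\field)$, and invoke the uniqueness of the $\levi_J\ltimes\unirad(\parabolic_J)$-factorization of $g_1\fm g$ to identify $\gamma_g(\fmred)=\fmred\,\ell$.

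The real problem is the transfer ``applying $\sigred$ gives $\Aprered\in\Lie(\group)(\difffield)$''. You have misidentified the obstacle: definedness of $\sigred$ on $\dlog(\fmred)$ is not an issue, since $\det(\fmred)$ is, up to sign, a monomial in the $\exp_i$ and hence a unit in $D^{-1}\difffield\{\bvv\}[\bexp,\bexp^{-1},\intn_i\mid\beta_i\in\leviroots^-]$; no enlargement of $D$ is needed for that. The issue is that $\sigred$ takes values in $\difffield$ only on $D^{-1}\difffield\{\bss(\bvv),\bvvbase\}$, whereas the entries of $\dlog(\fmred)$ are merely known to lie in $\difffield\langle\bss(\bvv),\bvvbase\rangle$ intersected with the domain of $\sigred$. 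If such an entry is written as $p/q$ with $p,q\in\difffield\{\bss(\bvv),\bvvbase\}$ and $\siginter(q)=0$, nothing forces its image under $\sigred$ (computed from its presentation in the parameters $\bvv,\bexp,\intn_i$) to lie in $\difffield$; these denominators are not among those controlled by Assumption~\ref{assumption2}, and ``enlarging $D$'' would amount to imposing new hypotheses on $\bsq$. An invariance argument at the specialized level ($\Stab(Q)$ fixes the entries of $\dlog(\fmredq)$) is circular, because it presupposes that $\fmredq$ is a fundamental matrix of a system over $\difffield$. The paper avoids all of this by arguing directly with the specialized objects: $\dlog\bigl((\overline{g}_1\fmredq)\fmradq\bigr)=\gauge{\overline{g}_1}{A_{\group}(\bsq)}\in\Lie(\parabolic_J)(\difffield)$, the summand $\dlog(\overline{g}_1\fmredq)$ lies in $\Lie(\levi_J)$ while $\Ad(\overline{g}_1\fmredq)(\dlog(\fmradq))$ lies in $\Lie(\unirad(\parabolic_J))$, and directness of the sum forces the Levi component to be $\difffield$-rational; gauging back by $\overline{g}_1^{-1}\in\group(\difffield)$ yields $\Aprered\in\Lie(\group)(\difffield)$. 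Your argument can be repaired by this route, or equivalently by specializing the identity expressing $\dlog(g_1\fmred)$ as the $\Lie(\levi_J)$-component of $\gauge{g_1}{A_{\group}(\bss(\bvv))}$, whose entries have denominators already covered by Assumption~\ref{assumption2}, rather than by specializing the field-level rationality statement.
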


\begin{proof}
    We show that $\Aprered = \dlog (\fmredq) \in \Lie(\group)(\difffield)$.
    To this end we use a result from Section~\ref{sec:unipotentradical} where we shall compute reduction matrices to reduce $A_{\group}(\bsq)$ over an algebraic extension of $\difffield$. Let
    \[
    \overline{g}_1 \, := \, n(\overline{w})^{-1} \, u_{k_s}(\overline{x}_{k_s}) \cdots u_{k_1}(\overline{x}_{k_1}) \in \group(\difffield)
    \]
    be as in Proposition~\ref{prop:gaugetoAPJ}. Since $\fmreduq = \fmredq$, it follows with Proposition~\ref{prop:gaugetoAPJ} that $\overline{g}_1 \fmspec \in \parabolic_J(\overline{\generalext})$ and that it decomposes into a product
    \[
     \overline{g}_1 \, \fmspec \, = \, ( \overline{g}_1 \, \fmredq) \, \fmradq
    \]
    with $\overline{g}_1  \fmredq \in \levi_J(\extfieldred)$ and $\fmradq \in \unirad(\parabolic_J)(\overline{R})$.
    Its logarithmic derivative is
    \[
    \begin{array}{rcl}
    \dlog( (\overline{g}_1 \fmredq ) \fmradq)
    \! & \! = \! & \! \dlog(\overline{g}_1 \fmredq ) + 
    (\overline{g}_1  \fmredq ) \, \dlog(\fmradq) (\overline{g}_1   \fmredq )^{-1}\\[0.5em]
    \! & \! = \! & \! \overline{g}_1.A_{\group}(\bsq)\\[0.5em]
    \! & \! \in \! & \! \Lie(\group)(\difffield) \, = \, \Lie(\levi_J)(\difffield) \oplus \Lie(\unirad(\parabolic_J))(\difffield).
    \end{array}
    \]
    Since $\dlog(\overline{g}_1 \fmredq ) \in \Lie(\levi_J)$ and $(\overline{g}_1 \fmredq ) \, \dlog(\fmradq) (\overline{g}_1  \fmredq )^{-1} \in \Lie(\unirad(\parabolic_J))$ and the sum decomposition is direct, it follows from the $\difffield$-rationality of $\gauge{\overline{g}_1}{A_{\group}(\bsq)}$ that 
    $\dlog(\overline{g}_1 \fmredq ) \in \Lie(\levi_J)(\difffield)$. Gauge transforming $\dlog(\overline{g}_1 \fmredq )$ with $\overline{g}_1^{-1} \in \group(\difffield)$ yields that $\dlog(\fmredq)\in \Lie(\group)(\difffield)$.

    Clearly we have $\Frac(\difffield[\fmredq, \det(\fmredq)^{-1}]) \subset \extfieldred$. 
    The Bruhat decomposition of $\fmredq$ is given by 
    \[
    \fmredq \, = \, \buu(\bvq,\overline{\bff}) \, n(\overline{w}) \, \btt(\overline{\bexp}) \, u_{j_1}(\overline{\intn}_{j_1}) \cdots u_{j_k}(\overline{\intn}_{j_k})
    \]
    and it lies in the big cell. It follows then from \cite[Lemma~4.2]{Seiss_Generic} that all parameters
    $\bvq$, $\overline{\bff}$, $\overline{\bexp}$
    and $\overline{\intn}_i$ with $\beta_i \in \leviroots^-$ are elements of $\Frac(\difffield[\fmredq, \det(\fmredq)^{-1}])$. Since by Proposition~\ref{prop:galaction} \ref{prop:galaction(b)} these elements generate $\extfieldred$, we have that
    \[
    \Frac(\difffield[\fmredq, \det(\fmredq)^{-1}]) \, = \, \extfieldred\,.
    \]
    We conclude that $\extfieldred$ is a Picard-Vessiot extension of $\difffield$ for $\Aprered$ with fundamental matrix $\fmredq$.
    
    Let $\gamma$ be a differential $\difffield$-automorphism of $\extfieldred$ and let $C_{\gamma} \in \GL_n(\field)$ be such that $\gamma(\fmredq) = \fmredq C_{\gamma}$. Then we have $\gamma(\overline{g}_1 \, \fmredq) = \overline{g}_1 \, \fmredq \, C_{\gamma}$
    and since the standard Levi group $\levi_J$ of $\parabolic_J$ is defined over $\field$, we obtain from
    $\overline{g}_1 \, \fmredq \in \levi_J$ that $\gamma(\overline{g}_1 \, \fmredq)$ is also an element of $\levi_J$.
    Hence, $C_{\gamma} \in \levi_J(\field)$ and so the induced representation $\Lred(\field)$ of the differential Galois group is contained in $\levi_J(\field)$.
\end{proof}

For an $n \times n$ matrix $\widehat{\fm} =(\widehat{\fm}_{i,j})$ of indeterminates over $\difffield$ we consider now the substitution homomorphisms
\[
\begin{array}{rclrcl}
    \ulphi\colon \difffield[\widehat{\fm},\det(\widehat{\fm})^{-1}]
    \! & \! \to \! & \! \difffield[\fmuq,\det(\fmuq)^{-1}]\,, & \widehat{\fm}_{i,j} \! & \! \mapsto \! & \! \fmuq_{i,j}\,,\\[0.5em]
    \ovphi\colon \difffield[\widehat{\fm},\det(\widehat{\fm})^{-1}]
    \! & \! \to \! & \! \difffield[\fmspec,\det(\fmspec)^{-1}]\,, & \widehat{\fm}_{i,j} \! & \! \mapsto \! & \! \fmspec_{i,j} \,,\\[0.5em]
    \ovphi_{\rm red}\colon \difffield[\widehat{\fm},\det(\widehat{\fm})^{-1}]
    \! & \! \to \! & \! \difffield[ \fmredq,\det( \fmredq)^{-1}]\,, & \widehat{\fm}_{i,j} \! & \! \mapsto \! & \! (\fmredq)_{i,j}
\end{array}
\]
and we denote their kernels by $\ulQ$, $\ovQ$ and $\ovQ_{\rm red}$, respectively.

\begin{proposition}\label{prop:statementsaboutstabilzers}
Denote by $\Stab(\ulQ)$ and $\Stab(\ovQ)$ and $\Stab(\ovQred)$ the
stabilizer of the ideal $\ulQ$ and $\ovQ$ and $\ovQred$, respectively,
in $\GL_n(\field)$ for the action $g \mapsto \widehat{\fm}g$.
\begin{enumerate}
    \item\label{prop:statementsaboutstabilzers(a)} 
    Then $H(\field) = \Stab(\ovQ)$ and $\Lred(\field) = \Stab(\ovQred)$ are the differential Galois groups of the Picard-Vessiot 
    rings $\difffield[\fmspec,\det(\fmspec)^{-1}]$ and $\difffield[ \fmredq,\det( \fmredq)^{-1}]$,  
    respectively.
    \item\label{prop:statementsaboutstabilzers(b)}
    We define  
$$\underline{H}(\field) \, := \, \Stab(\ulQ)\, .$$ 
Then $\underline{H}(\field)$ is a linear algebraic group and it is the group of differential $\difffield$-automorphisms of $\difffield[\fmuq,\det(\fmuq)^{-1}]$. Moreover, we have $\ulQ \subset \ovQ$ and $\parabolic_J(\field) \geq \underline{H}(\field)\geq H(\field)$.
\end{enumerate}
\end{proposition}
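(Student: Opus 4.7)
Part (a) is a standard application of differential Galois theory to the Picard-Vessiot rings already identified. Indeed, by Proposition~\ref{prop:galaction}\ref{prop:galaction(c)} the ring $\difffield[\fmspec,\det(\fmspec)^{-1}]$ is a Picard-Vessiot ring for $A_{\group}(\bsq)$ with fundamental matrix $\fmspec$, and the differential Galois group acts via $\widehat{\fm} \mapsto \widehat{\fm} g$. By the general correspondence (cf.\ \cite[Proposition~1.22]{vanderPutSinger} and \cite[Proposition~1.27]{vanderPutSinger}) this Galois group equals $\Stab(\ovQ)$. The analogous argument applied to Proposition~\ref{prop:reductivepartYred} yields $\Lred(\field)=\Stab(\ovQred)$.

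For (b), the algebraicity of $\Stab(\ulQ)$ follows from the general fact that the stabilizer of an ideal in the coordinate ring of $\GL_n$ under the right multiplication action is a closed subgroup of $\GL_n$. To identify $\Stab(\ulQ)$ with the group of differential $\difffield$-automorphisms of $\underline{R}=\difffield[\fmuq,\det(\fmuq)^{-1}]$, I would first note that $\underline{R}$ is an integral domain (Remark~\ref{rem:Rintegraldomain}) and that the field of constants of $\underline{E}=\Frac(\underline{R})$ is $\field$, since $\underline{R}$ is built from $\extfieldred$ by successively adjoining antiderivatives $\underline{\intrad}_i$ in a triangular fashion (and $\extfieldred$ itself has constants $\field$). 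Hence for any differential $\difffield$-automorphism $\sigma$ of $\underline{R}$, the matrix $\fmuq^{-1}\sigma(\fmuq)$ is annihilated by $\partial$ and so lies in $\GL_n(\field)$; this is the required bijection. The inclusion $\ulQ\subset\ovQ$ is immediate from the factorization $\ovphi=\pi\circ\ulphi$, where $\pi\colon \underline{R}\to\overline{R}=\underline{R}/\Imax$ is the canonical projection: any relation satisfied by $\fmuq$ in $\underline{R}$ is satisfied by $\fmspec=\pi(\fmuq)$ in $\overline{R}$.

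For $\underline{H}\leq\parabolic_J$, I would argue as in the proof of Proposition~\ref{prop:galaction}\ref{prop:galaction(c)}: any $g\in\underline{H}(\field)$ induces an automorphism of $\underline{R}$ sending $\fmuq\mapsto\fmuq g$, and the uniqueness of the Bruhat decomposition forces the parameters $\underline{\bvv}_{i}=\bvvqbase_{i}\in\difffield\subset\underline{R}$ (for $i\in I''$) to be fixed. Since the set-theoretic algebraic condition on $g$ of preserving these entries in the Bruhat decomposition is the same as in the generic case of Theorem~\ref{thm:fixedfieldparabolic}, we conclude $g\in\parabolic_J(\field)$. The hard part is $H\leq\underline{H}$, since $\ulQ\subset\ovQ$ does not in general imply $\Stab(\ulQ)\supseteq\Stab(\ovQ)$. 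My plan is to construct the action of $H$ on $\underline{R}$ directly rather than via stabilizers: use the decomposition $\fmuq=\fmreduq\cdot\fmraduq$ with $\fmreduq\in\levi_J(\extfieldred)$ and $\fmraduq\in\unirad(\parabolic_J)(\underline{R})$, and recall that $\underline{R}$ is, over $\extfieldred$, a polynomial ring in the antiderivative indeterminates $\underline{\intrad}_i$ whose defining differential relations $\underline{\intrad}_i'=\mathrm{integrand}_i$ only involve elements of $\extfieldred$ and lower $\underline{\intrad}_j$'s. For $g\in H(\field)\subseteq\parabolic_J(\field)$, multiplication on the right by $g$ transforms $\fmuq$ via Bruhat formulas into $\buu(\underline{\bvv}^g,\underline{\bff}^g)\,n(\overline{w})\,\btt(\underline{\bexp}^g)\,\buu(\underline{\bint}^g)$; since these formulas are the specialization of the corresponding generic formulas (which are polynomial/rational in the entries of $g$ and the Bruhat parameters and which commute with the derivation at the generic level by Theorem~\ref{thm:RobertzSeissNormalForms}), the induced map preserves the differential relations defining $\underline{R}$. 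The only point needing care is that $g$ actually induces an automorphism of $\extfieldred$; this holds because $g$ acts on $\overline{R}$ (as $g\in H=\Stab(\ovQ)$), this action preserves $\extfieldred\subset\overline{\generalext}$ by Proposition~\ref{prop:galaction}\ref{prop:galaction(b)} and Proposition~\ref{prop:reductivepartYred}, and the resulting automorphism of $\extfieldred$ is the unique lift to $\underline{R}$ restricted to $\extfieldred$. Hence the action on $\underline{R}$ is well-defined, giving $g\in\underline{H}(\field)$.
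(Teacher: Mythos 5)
Your part (a) and most of part (b) follow the paper's route, but one step in your plan is genuinely wrong: the claim that $\underline{E}=\Frac(\underline{R})$ has field of constants $\field$ because $\underline{R}$ is obtained from $\extfieldred$ by "successively adjoining antiderivatives in a triangular fashion". Adjoining a transcendental element $\underline{\intrad}_i$ with $\underline{\intrad}_i'$ equal to an integrand that is already a derivative inside the previously constructed ring creates a new constant, and this happens exactly when $\unirad(H)$ is a proper subgroup of $\unirad(\parabolic_J)$ -- the situation the whole construction is designed to handle. (Take $\group=\SL_2$ and $\bsq$ so that the specialized equation is diagonalizable while its Riccati equation has a rational solution: then $\parabolic_J=\borel^-$, the integral already lies in $\extfieldred$, and the difference with $\underline{\intrad}$ is a new constant of $\underline{E}$.) The paper explicitly warns that $\underline{R}$ need not be differentially simple and that $\underline{E}$ need not be a Picard--Vessiot extension of $\difffield$. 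Hence your argument "$\fmuq^{-1}\sigma(\fmuq)$ is annihilated by $\partial$, so it lies in $\GL_n(\field)$" fails, and with it your identification of $\Stab(\ulQ)$ with the differential automorphism group. The paper never invokes constants of $\underline{E}$ here: it only shows that each $g\in\Stab(\ulQ)$ induces a differential $\difffield$-automorphism of $\difffield[\fmuq,\det(\fmuq)^{-1}]$ via the $\underline{H}$-equivariant isomorphism $\rho$ with $\difffield[\widehat{\fm},\det(\widehat{\fm})^{-1}]/\ulQ$, and later (Lemma~\ref{lem:restrictionEred}) works only with the automorphisms of $\underline{E}$ induced by right multiplication by elements of $\underline{H}(\field)$.

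Concerning $H\leq\underline{H}$: you are right that this is the delicate point; the paper itself deduces $\Stab(\ulQ)\geq\Stab(\ovQ)$ directly from $\ulQ\subset\ovQ$, whereas your plan is to construct the action of $g\in H$ on $\underline{R}$ through the Bruhat decomposition -- an argument in the spirit of the paper's Lemma~\ref{lem:restrictionEred}, just run in the opposite direction. As a sketch, however, it omits the two points that make that lemma nontrivial and which your "only point needing care" does not cover: (i) the Bruhat decomposition of $\fmuq\,g$ must exist over $\underline{E}$, i.e.\ the relevant denominators must not vanish after specialization (the paper handles this with an $H$-equivariant specialization map whose kernel is stable, so non-vanishing at $\fmspec\,g$ lifts to $\fmuq\,g$); and (ii) the parameters $\bxx_1,\bxx_2$ and $x_{3,i}$ with $\beta_i\in\leviroots^-$ of $\fmuq\,g$ must be shown to lie in $\extfieldred$ and to coincide with the $\gamma_g$-images of the corresponding parameters of $\fmuq$ -- this is exactly what is needed so that your homomorphism (well defined because the $\underline{\intrad}_i$ are algebraically independent over $\extfieldred$) really sends $\fmuq$ to $\fmuq\,g$; the phrase about "the unique lift to $\underline{R}$" does not supply this. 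Finally, a small slip: $\fmreduq\in\group(\extfieldred)$, not $\levi_J(\extfieldred)$; only $g_1\fmreduq$ lies in the Levi group.
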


\begin{proof}
\ref{prop:statementsaboutstabilzers(a)}\
Extending the derivation of $\difffield$ to $\difffield[\widehat{\fm},\det(\widehat{\fm})^{-1}]$ 
by $\partial(\widehat{\fm}) = A_{\group}(\bsq) \, \widehat{\fm}$ and by $\partial(\widehat{\fm}) = \Aprered \, \widehat{\fm}$ respectively, turns the surjective homomorphisms $\ovphi$ and $\ovphired$ into differential homomorphisms.
Thus, the ideals $\ovQ$ and $\ovQred$ are maximal differential ideals and so their stabilizers define the respective differential Galois groups.
Since the induced differential isomorphisms send fundamental matrices to fundamental matrices, we obtain that $H(\field) = \Stab(\ovQ)$ and $\Lred(\field) =\Stab(\ovQred)$.

\ref{prop:statementsaboutstabilzers(b)}\
Using the same arguments as in the proof which shows that the differential Galois group is a linear algebraic group (cf.\ \cite[Thm.~1.27 (1)]{vanderPutSinger}), we obtain that $\underline{H}$ is a linear algebraic group. 
By definition,
the differential homomorphism $\ovphi$ factors as $\psi \circ \ulphi$:
\[
\begin{tikzcd}
\difffield[\widehat{\fm},\det(\widehat{\fm})^{-1}] \arrow{r}{\ulphi}
\arrow[swap]{dr}{\ovphi} & \difffield[\fmuq,\det(\fmuq)^{-1}] \arrow{d}{\psi} \\
  & \difffield[\fmspec,\det(\fmspec)^{-1}]
\end{tikzcd} 
\]
Thus we have $\ulQ \subset \ovQ$ and so $\Stab(\ulQ) \geq \Stab(\ovQ)$.

Every matrix $g \in \Stab(\ulQ)$ induces by right multiplication on $\widehat{\fm}$ a differential $\difffield$-automorphism of $\difffield[\widehat{\fm},\det(\widehat{\fm})^{-1}]/\ulQ$. 
Since
\[
\rho\colon \difffield[\widehat{\fm},\det(\widehat{\fm})^{-1}]/\ulQ \to \difffield[\fmuq,\det(\fmuq)^{-1}], \ \widehat{\fm}_{i,j} + \ulQ \mapsto \fmuq_{i,j}
\]
is a differential $\difffield$-isomorphism mapping $\widehat{\fm} + \ulQ$ to $\fmuq$, the matrix $g$ also induces a differential $\difffield$-automorphism on $\difffield[\fmuq,\det(\fmuq)^{-1}]$ by $\fmuq \mapsto  \fmuq \, g$, i.e., $\rho$ is $\underline{H}$-equivariant.
Thus the values $\vq_{i_{r+1}},\dots,\vq_{i_l}$ of the parameters of the root groups $U_{-\alpha_{i_{r+1}}},\dots,U_{-\alpha_{i_l}}$ in the Bruhat decomposition of $\fmuq$ have to be the same in the Bruhat decomposition of $\fmuq \, g$.
By Lemma~\ref{lem:invariantreflections} this is only possible for elements of $\parabolic_J(\field)$ and so $\Stab(\ulQ)\leq \parabolic_J(\field)$.
\end{proof}

\begin{lemma}\label{lem:forrestrictiontobeinjective}
For a field $K \supset \field$ and $\bxx_1 \in K^m$, $\bxx_2 \in (K^{\times})^l$ and $\bxx_3 \in K^m$ let 
\[
Y \, = \, \buu(\bxx_1) \, n(\overline{w}) \, \btt(\bxx_2) \, \buu(\bxx_3) \, .
\]
Moreover, for $p_1$, $p_2 \in \parabolic_J(\field)$ let 
$\ell_1$, $\ell_2 \in \widetilde{\levi}(\field)$ and $\urelem_1$, $\urelem_2 \in \unirad(\parabolic_J)(\field)$ be the unique elements such that $p_1 = \ell_1 \, \urelem_1$ and $p_2 = \ell_2 \, \urelem_2$, where $\widetilde{\levi}$ is a Levi group of $\parabolic_J$. 
Assume that the coefficients $\baa_1,\baa_3,\boldsymbol{b}_1,\boldsymbol{b}_3 \in K^m$ and $\baa_2,\boldsymbol{b}_2 \in (K^{\times})^{l}$ in the Bruhat decompositions
\[
Y p_1 \, = \, \buu(\baa_1) \, n(\overline{w}) \, \btt(\baa_2) \, \buu(\baa_3) \qquad \text{and} \qquad 
Y p_2 \, = \, \buu(\boldsymbol{b}_1) \, n(\overline{w}) \, \btt(\boldsymbol{b}_2) \, \buu(\boldsymbol{b}_3)
\]
satisfy $\baa_1 = \boldsymbol{b}_1$, 
$\baa_2 = \boldsymbol{b}_2$ and 
$a_{3,i} = b_{3,i}$ for all $\beta_i \in \leviroots^-$.  
Then $\ell_1 = \ell_2$.
\end{lemma}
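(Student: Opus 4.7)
My plan is to extract an explicit expression for $p_2^{-1} p_1$ from the Bruhat data, show that this element actually lies in $\unirad(\parabolic_J)(\field)$, and then conclude $\ell_1 = \ell_2$ by uniqueness of the Levi decomposition.

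First, the hypotheses $\boldsymbol{a}_1 = \boldsymbol{b}_1$ and $\boldsymbol{a}_2 = \boldsymbol{b}_2$ mean that the two Bruhat decompositions of $Yp_1$ and $Yp_2$ share the entire leading part $\buu(\boldsymbol{a}_1)\, n(\overline{w})\, \btt(\boldsymbol{a}_2)$. Cancelling this common factor from
\[
Yp_1 \, = \, \buu(\boldsymbol{a}_1)\, n(\overline{w})\, \btt(\boldsymbol{a}_2)\, \buu(\boldsymbol{a}_3), \qquad
Yp_2 \, = \, \buu(\boldsymbol{a}_1)\, n(\overline{w})\, \btt(\boldsymbol{a}_2)\, \buu(\boldsymbol{b}_3)
\]
immediately yields
\[
p_2^{-1} p_1 \, = \, \buu(\boldsymbol{b}_3)^{-1}\, \buu(\boldsymbol{a}_3) \, \in \, \unipotent^-(\field) \cap \parabolic_J(\field).
\]

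Next, I will apply Lemma~\ref{lem:separationradicalnew} to both $\buu(\boldsymbol{a}_3)$ and $\buu(\boldsymbol{b}_3)$, obtaining unique factorisations $\buu(\boldsymbol{a}_3) = v_a w_a$ and $\buu(\boldsymbol{b}_3) = v_b w_b$ with $v_a, v_b$ lying in the subgroup of $\unipotent^-$ generated by the root groups $U_{\beta_{j_1}}, \dots, U_{\beta_{j_k}}$ associated with $\leviroots^-$, and with $w_a, w_b \in \unirad(\parabolic_J)(\field)$. The crucial content of that lemma is that the parameter of $v_a$ at $U_{\beta_i}$ (for $\beta_i \in \leviroots^-$) is precisely $a_{3,i}$, and similarly for $v_b$ and $b_{3,i}$. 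Since $a_{3,i} = b_{3,i}$ for all such $i$ by assumption, $v_a = v_b$, and therefore
\[
p_2^{-1} p_1 \, = \, (v_b w_b)^{-1}\, (v_a w_a) \, = \, w_b^{-1}\, w_a \, \in \, \unirad(\parabolic_J)(\field).
\]

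Finally, I will compute the Levi decomposition of $p_2^{-1} p_1$ starting from $p_i = \ell_i \urelem_i$. Using that $\widetilde{\levi}$ normalises $\unirad(\parabolic_J)$, a routine rearrangement of $\urelem_2^{-1} \ell_2^{-1} \ell_1 \urelem_1$ produces a decomposition $p_2^{-1} p_1 = (\ell_2^{-1} \ell_1) \cdot \urelem''$ with $\urelem'' \in \unirad(\parabolic_J)(\field)$; this is the Levi decomposition of $p_2^{-1} p_1$ with respect to $\widetilde{\levi}$. Combining with the second step, $p_2^{-1} p_1 \in \unirad(\parabolic_J)(\field)$ has trivial Levi component, and the uniqueness stated in Theorem~\ref{thm:levidecomposition} forces $\ell_2^{-1} \ell_1 = e$, so $\ell_1 = \ell_2$. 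The only point requiring real care is the second step, namely the appeal to Lemma~\ref{lem:separationradicalnew} to identify the parameters of the $\unipotent_{\leviroots^-}$-factors with the entries $a_{3,i}$, $b_{3,i}$ at exactly the indices with $\beta_i \in \leviroots^-$; everything else is a formal manipulation in the semidirect product $\parabolic_J = \widetilde{\levi} \ltimes \unirad(\parabolic_J)$.
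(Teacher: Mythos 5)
Your proof is correct and follows essentially the same route as the paper's: cancel the common Bruhat factors, invoke Lemma~\ref{lem:separationradicalnew} to see that the hypothesis $a_{3,i}=b_{3,i}$ for $\beta_i\in\leviroots^-$ forces $p_2^{-1}p_1=\buu(\boldsymbol{b}_3)^{-1}\buu(\baa_3)\in\unirad(\parabolic_J)$, and then use the semidirect product structure $\parabolic_J=\widetilde{\levi}\ltimes\unirad(\parabolic_J)$ to conclude $\ell_1=\ell_2$ (the paper phrases this last step via $\widetilde{\levi}\cap\unirad(\parabolic_J)=\{\mathrm{id}\}$ rather than uniqueness of the Levi decomposition, which is the same thing). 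Only a minor slip: the factors $w_a,w_b$ lie in $\unirad(\parabolic_J)(K)$, not $\unirad(\parabolic_J)(\field)$; one then gets $p_2^{-1}p_1\in\unirad(\parabolic_J)(\field)$ because it has entries in $\field$, and the argument goes through unchanged.
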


\begin{proof}
    Recall that $\buu(\baa_3)$ (resp.\ $\buu(\boldsymbol{b}_3)$) means a product of root group elements of fixed order with parameter values $\baa_3$ (resp.\ $\boldsymbol{b}_3$).
    Applying Lemma~\ref{lem:separationradicalnew} to $\buu(\baa_3)$ and $\buu(\boldsymbol{b}_3)$ we obtain
    \[
    \begin{array}{rcl}
        \buu(\baa_3) \! & \! = \! & \! u_{\beta_{j_1}}(a_{3,j_1}) \cdots u_{\beta_{j_k}}(a_{3,j_k}) \cdot u_{\beta_{j_{k+1}}}(y_{j_{k+1}}) \cdots u_{\beta_{j_m}}(y_{j_m}) \quad \text{and}\\[0.5em] 
        \buu(\boldsymbol{b}_3) \! & \! = \! & \! u_{\beta_{j_1}}(b_{3,j_1}) \cdots u_{\beta_{j_k}}(b_{3,j_k}) \cdot u_{\beta_{j_{k+1}}}(x_{j_{k+1}}) \cdots u_{\beta_{j_m}}(x_{j_m})\,,
    \end{array}
    \]
    where  
    \[
    \begin{array}{rcl}
    u_{\beta_{j_{k+1}}}(y_{j_{k+1}}) \cdots u_{\beta_{j_m}}(y_{j_m}) \! & \! \in \! & \! \unirad(\parabolic_J)(K) \quad \text{and}\\[0.5em]
    u_{\beta_{j_{k+1}}}(x_{j_{k+1}}) \cdots u_{\beta_{j_m}}(x_{j_m}) \! & \! \in \! & \! \unirad(\parabolic_J)(K) .
    \end{array}
    \]
    Since 
    $a_{3,i} = b_{3,i}$ for all $\beta_i \in \leviroots^-$, we have that 
    \[
    u_{\beta_{j_1}}(a_{3,j_1}) \cdots u_{\beta_{j_k}}(a_{3,j_k}) \, = \, u_{\beta_{j_1}}(b_{3,j_1}) \cdots u_{\beta_{j_k}}(b_{3,j_k})\,.
    \]
    Normality of $\unirad(\parabolic_J)(K)$ implies now that there exists $\widetilde{\urelem} \in \unirad(\parabolic_J)(K)$ such that
\[
\buu(\baa_3) \, \buu(\boldsymbol{b}_3)^{-1} \, = \, \widetilde{\urelem} \, \iff \, \buu(\baa_3) \, = \, \widetilde{\urelem} \, \buu(\boldsymbol{b}_3)\,.
\]
    Normality again implies that there exists $\urelem \in \unirad(\parabolic_J)(K)$ such that $\buu(\baa_3) = \buu(\boldsymbol{b}_3) \, \urelem$ and so we find that
    \begin{equation}\label{eqn:samereductivepart}
    Y p_1 \, = \, Y p_2 \, \urelem \, \iff \, \ell_1 \, \urelem_1 \, = \, \ell_2 \, \urelem_2 \, \urelem \, \iff \, \ell_2^{-1} \, \ell_1 \, = \, \urelem_2 \, \urelem \, \urelem_1^{-1}\,.
    \end{equation}
    Since for the two factors of the semidirect product $\parabolic_J(K) = \widetilde{\levi}(K)\ltimes \unirad(\parabolic_J)(K)$, we have that 
    $\widetilde{\levi}(K) \cap \unirad(\parabolic_J)(K) = \{ \mathrm{id} \}$ and $\ell_2^{-1} \ell_1 \in \widetilde{\levi}(K)$ and $\urelem_2 \, \urelem \, \urelem_1^{-1} \in \unirad(\parabolic_J)(K)$, it follows from \eqref{eqn:samereductivepart} that 
    $\ell_1 = \ell_2$ and $\urelem = \urelem_2^{-1} \, \urelem_1$.
\end{proof}

The following lemma shows that the group of differential $\difffield$-automorphisms of $\underline{E}$ allows to recover the differential Galois group of the reductive part $\extfieldred$ over $\difffield$.

\begin{lemma}\label{lem:restrictionEred}
Denote by $\Aut_{\partial}(\underline{E}/\difffield)$ the group of differential $\difffield$-automorphisms of $\underline{E}$ induced by right multiplication of elements of $\underline{H}(\field)$ on $\fmuq$. 
Then every $\gamma \in \Aut_{\partial}(\underline{E}/\difffield)$ restricts to a differential $\difffield$-automorphism of $\extfieldred$. Moreover, the map
\begin{equation}\label{eqn_surjectivrestrictionmapofgaloisgrps}
\mathrm{Aut}_{\partial}(\underline{E}/\difffield) \twoheadrightarrow \Gal_{\partial}(\extfieldred/\difffield) , \ \gamma \mapsto \gamma \big|_{\extfieldred}
\end{equation}
is a surjective group homomorphism.
\end{lemma}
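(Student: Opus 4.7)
The plan is to prove the lemma in three parts: (i) the restriction is well-defined, (ii) it is a group homomorphism, and (iii) it is surjective. By Proposition~\ref{prop:statementsaboutstabilzers}, every $\gamma \in \Aut_{\partial}(\underline{E}/\difffield)$ is of the form $\gamma_g$ for a unique $g \in \underline{H}(\field) \leq \parabolic_J(\field)$ acting by $\fmuq \mapsto \fmuq \, g$, and by Proposition~\ref{prop:galaction}(a) together with Proposition~\ref{prop:reductivepartYred}, the subfield $\extfieldred = \Frac(\difffield[\fmreduq,\det(\fmreduq)^{-1}])$ is generated over $\difffield$ by the Bruhat parameters $\underline{\bvv}, \underline{\bff}, \underline{\bexp}$ and $\underline{\intn}_i$ for $\beta_i \in \leviroots^-$ of $\fmuq$.

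For part (i), I would use the Levi decomposition $g = \ell \, u$ with $\ell \in \levi_J(\field)$ and $u \in \unirad(\parabolic_J)(\field)$. Lemma~\ref{lem:bruhatunipotent} shows that right multiplication by $u$ changes only the parameters $\underline{\intn}_j$ with $\beta_j \in \roots^- \setminus \leviroots^-$, so $\gamma_u$ fixes $\extfieldred$ pointwise. For the action of $\ell$, since $\unirad(\parabolic_J)$ is normal in $\parabolic_J$, I would use
\[
\fmuq \, \ell \, = \, \fmreduq \, \fmraduq \, \ell \, = \, (\fmreduq \, \ell) \cdot (\ell^{-1} \, \fmraduq \, \ell),
\]
where the first factor has entries in $\extfieldred$ (both $\fmreduq$ and $\ell$ do) and the second factor lies in $\unirad(\parabolic_J)(\underline{R})$. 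Computing the Bruhat decomposition of $\fmreduq \, \ell$ using the exchange-formula analysis from the proof of Theorem~\ref{thm:fixedfieldparabolic}, its reductive Bruhat parameters are rational expressions in the original ones and in the entries of $\ell$, hence remain in $\extfieldred$. Combining with $\ell^{-1} \, \fmraduq \, \ell$ through Lemma~\ref{lem:separationradicalnew} then yields the Bruhat decomposition of $\fmuq \, \ell$, whose parameters corresponding to the reductive part coincide with those of $\fmreduq \, \ell$ and therefore lie in $\extfieldred$. Thus $\gamma_g(\extfieldred) \subseteq \extfieldred$, and equality follows by applying the same argument to $g^{-1}$.

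Part (ii) is immediate from the definition of restriction. For part (iii), fix $\widetilde{\gamma} \in \Gal_{\partial}(\extfieldred/\difffield) = \Lred(\field)$, realized by some $C \in \Lred(\field) \leq \levi_J(\field) \leq \parabolic_J(\field)$ via $\fmredq \mapsto \fmredq \, C$. I would construct the lift $\gamma \in \Aut_{\partial}(\underline{E}/\difffield)$ as the action $\fmuq \mapsto \fmuq \, C$; its restriction to $\extfieldred$ reproduces $\widetilde{\gamma}$ by the reductive-part computation carried out in (i). What remains is to verify that this right multiplication extends to a differential $\difffield$-automorphism of $\underline{E}$, equivalently that $C \in \underline{H}(\field) = \Stab(\ulQ)$.

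The main obstacle is precisely this last verification. Since $\underline{R}$ is in general not differentially simple, $\ulQ$ is not a maximal differential ideal and the standard Picard--Vessiot correspondence does not immediately yield $C \in \Stab(\ulQ)$. Instead, one has to exploit the explicit construction of $\underline{R} = \extfieldred\{\intrad_i \mid \beta_i \in \roots^- \setminus \leviroots^-\}/\Iuni$ as an iterated antiderivative extension of $\extfieldred$ whose integrands lie in $\extfieldred[\intrad_j \mid |\height(\beta_j)| < |\height(\beta_i)|]$. The lift would proceed by induction on the height of $\beta_i$: the image of each integrand under $\widetilde{\gamma}$ is an element of the same form and so admits an antiderivative in $\underline{R}$ compatible with $\Iuni$. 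The uniqueness of the Bruhat decomposition of $\fmuq \, C$ in $\underline{E}$ then identifies this extension with right multiplication by $C$, establishing $C \in \Stab(\ulQ)$ and completing the proof.
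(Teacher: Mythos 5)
Your part (i) is close to a workable alternative to the paper's argument, but it skips the point the paper's proof is almost entirely devoted to: after specialization, the ``rational expressions'' for the Bruhat parameters of $\fmuq g$ (equivalently of $\fmreduq\,\ell$) only make sense if their denominators do not vanish, i.e.\ if $\fmuq g$ still lies in the big cell $\unipotent^-\, n(\overline{w})\, \borel^-$ over $\underline{E}$. The exchange-formula analysis of Lemma~\ref{lem:invariantreflections}/Theorem~\ref{thm:fixedfieldparabolic} was carried out for generic parameters (the $\SL_2$ step \eqref{eq:sl2computation} needs $a_2 \neq 0$), and after specializing to $\underline{R}$ this can fail a priori; so the conclusion ``hence remain in $\extfieldred$'' is unsupported as written. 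There is a short repair available in your framework: since $\gamma \in \Aut_{\partial}(\underline{E}/\difffield)$ is by hypothesis a field automorphism and the big cell is the non-vanishing locus of the regular functions $e_1,\dots,e_l \in \field[\group]$ of Proposition~\ref{prop:BruhatParametersofXmainpart}, one has $e_i(\fmuq g) = \gamma(e_i(\fmuq)) \neq 0$, so $\fmuq g$, $\fmuq\,\ell$ and $\fmreduq\,\ell$ stay in the big cell and the parameters are values of the global rational coordinates; the paper instead handles this via the $\underline{H}$-equivariant specialization $\widetilde{\sigma}$ whose kernel $\widetilde{Q}$ is $\underline{H}$-stable. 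Some such argument must be supplied.

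The genuine gap is in the surjectivity. You lift $\widetilde{\gamma} \in \Gal_{\partial}(\extfieldred/\difffield)$ via its matrix $C \in \Lred(\field)$ and must show $C \in \underline{H}(\field) = \Stab(\ulQ)$; this is exactly the inclusion $\Lred(\field) \leq \underline{H}(\field)$, which the paper only establishes afterwards (Theorem~\ref{thm:Levigroupsconj}~(b)) \emph{using} this lemma, so you are carrying the full burden of a strictly stronger statement. Your inductive step --- ``the image of each integrand under $\widetilde{\gamma}$ is of the same form and so admits an antiderivative in $\underline{R}$ compatible with $\Iuni$'' --- is precisely the point at issue and is asserted, not proved: $\underline{R}$ is not closed under antiderivatives, and what has to be produced is the concrete candidate, namely the radical Bruhat parameters of $\fmuq C = (\fmreduq C)(C^{-1}\fmraduq C)$, together with proofs that $\fmuq C$ lies in the big cell, that these parameters lie in $\underline{R}$, and that they satisfy the specialized integrand relations so that the extension is differential and bijective; the appeal to ``uniqueness of the Bruhat decomposition of $\fmuq C$'' presupposes part of this. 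The paper avoids the problem entirely: since $\unirad(H)$ is normal in $H$ and $\overline{\generalext}^{\unirad(H)} = \extfieldred$ (Proposition~\ref{prop:fixedfield}), every $\widetilde{\gamma}$ is the restriction of the automorphism of $\overline{\generalext}$ induced by some $g \in H(\field)$, and $H(\field) \leq \underline{H}(\field)$ by Proposition~\ref{prop:statementsaboutstabilzers}~(b); a compatibility argument with the map $\widehat{\sigma}\colon \difffield[\fmuq,\det(\fmuq)^{-1}] \to \difffield[\fmspec,\det(\fmspec)^{-1}]$, which is the identity on $\extfieldred$, plus uniqueness of Bruhat decompositions then shows the $\underline{E}$-automorphism induced by the same $g$ restricts to $\widetilde{\gamma}$. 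Either switch to that lifting route or fill in the construction of $\underline{\gamma}(\underline{\intrad}_i)$ in full detail; as it stands the surjectivity is not proved.
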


\begin{proof}
Since the extension $\generalext$ of $\generalext^{\parabolic_J}$ is a Picard-Vessiot extension with differential Galois group $\parabolic_J(\field)$ and since $\unirad(\parabolic_J)(\field)$ is normal in $\parabolic_J(\field)$, every differential $\generalext^{\parabolic_J}$-automorphism $\gamma$ of $\generalext$ restricts to a differential $\generalext^{\parabolic_J}$-automorphism of $\generalext^{\unirad(\parabolic_J)}$.
Let $\bxx_1 \in \generalext^m$, $\bxx_2 \in (\generalext^{\times})^l$ and $\bxx_3 \in \generalext^m$ be the coefficients of the Bruhat decomposition  
\begin{eqnarray}
\nonumber
\gamma(\fm) \! & \! = \! & \! \fm \, g \, = \, \buu(\bvv,\bff) \, n(\overline{w}) \, \btt(\bexp) \, \buu(\bint) \, g \\ \label{eq:specializebruhat}
 \! & \! = \! & \! \buu(\bxx_1) \, n(\overline{w}) \, \btt(\bxx_2) \, \buu(\bxx_3) \\
 \nonumber
 \! & \! = \! & \! \buu(\gamma(\bvv,\bff)) \, n(\overline{w}) \, \btt(\gamma(\bexp)) \, \buu(\gamma(\bint)) \, .
\end{eqnarray}
Since by Theorem~\ref{cor:extensionsbyPI} \ref{cor:extensionsbyPI(b)}\
the elements $\bvv$, $\bff$, $\bexp$ and $\intn_i$ with $\beta_i \in \leviroots^-$ generate $\generalext^{\unirad(\parabolic_J)}$ over $\generalext^{\parabolic_J}$ and so in particular lie in $\generalext^{\unirad(\parabolic_j)}$, we obtain that $\bxx_1$, $\bxx_2$ and $x_{3,i}$ with $\beta_i \in \leviroots^-$ are again elements of $\generalext^{\unirad(\parabolic_J)}$, i.e., they are rational expressions over $\generalext^{\parabolic_J}$ in  $\bvv$, $\bff$, $\bexp$ and $\intn_i$ with $\beta_i \in \leviroots^-$.

We are going to show below that for every $g \in \underline{H}(\field) \leq \parabolic_J(\field)$ we can specialize the elements $\bxx_1$, $\bxx_2$ and $\bxx_3$ to $\underline{E}$. Since $\underline{\bvv}$, $\underline{\bff}$, $\underline{\bexp}$ and $\underline{\intn}_i$
with $\beta_i \in \leviroots^-$ are elements of $\extfieldred$, it will then follow that the specialized elements $\bxx_1$, $\bxx_2$ and $x_{3,i}$ with $\beta_i \in \leviroots^-$ are again in $\extfieldred$.
Since according to Proposition~\ref{prop:galaction} \ref{prop:galaction(a)}\
the elements $\underline{\bvv}$, $\underline{\bff}$ $\underline{\bexp}$ and $\underline{\intn}_i$ with $\beta_i \in \leviroots^-$ generate $\extfieldred$ over $\difffield$, we conclude that the differential $\difffield$-automorphism of $\underline{E}$ induced by $g$ restricts to a differential $\difffield$-automorphism of $\extfieldred$. 

Recall from \eqref{eqn:specializationreductivepart} the specialization
\[
\sigul\colon D^{-1}\difffield\{\bvv\}[\bexp,\bexp^{-1},\bint] \to \underline{E}\,.
\]
Since the entries of $\fm$ are contained in $D^{-1} \difffield\{\bvv\}[\bexp,\bexp^{-1},\bint]$ and the entries of $\fmuq$ are contained in the image of $\sigul$, we obtain a specialization 
\[
\begin{array}{rcl}
\widetilde{\sigma}\colon  \difffield[\fm ,\det (\fm )^{-1}] \! & \! \to \! & \! \Frac(\difffield[\fmuq,\det (\fmuq)^{-1}]) \, = \, \underline{E}\,,\\[0.5em]
\fm \, = \, \buu(\bvv,\bff) \, n(\overline{w}) \, \btt(\boldsymbol{\exp}) \, \buu(\bint) \! & \! \mapsto \! & \!
\buu(\underline{\bvv},\underline{\bff}) \, n(\overline{w}) \, \btt(\underline{\bexp}) \, \buu(\underline{\bint}) \, = \, \fmuq\,.
\end{array}
\]
Since for $g\in \underline{H}(\field) \leq \parabolic_J(\field)$ and $a(\fm) \in \difffield[\fm ,\det (\fm )^{-1}]$ we have  
\[
\widetilde{\sigma}(\gauge{a(\fm)}{g}) \, = \, \widetilde{\sigma}(a(\fm g)) \, = \, a(\fmuq g) \, = \, \gauge{a(\fmuq)}{g} \, = \, \widetilde{\sigma}(a(\fm)).g \, ,
\] 
we conclude that $\widetilde{\sigma}$ is an $\underline{H}$-equivariant differential $\difffield$-homomorphism.
Thus, the kernel $\widetilde{Q}$ of $\widetilde{\sigma}$ is stabilized by $\underline{H}(\field)$.

Let $\field[\group] = \field[\overline{X},\det(\overline{X})^{-1}]$ be the coordinate ring of $\group$.
We are going to show that the Bruhat decomposition
\[
\overline{X} \, = \, \buu(\bxx) \, n(\overline{w}) \, \btt(\bzz) \, \buu(\byy)
\]
specializes to the Bruhat decompositions of both $\fm$ and $\fmuq$, where $\bxx$, $\bzz$ and $\byy$ are rational functions over $\field$ in the coordinates $\overline{X}$ of $\group$.
The varieties $\unipotent^-\times \torus \times \unipotent^-$ and $\group$ are birationally equivalent; more precisely, according to the proof of \cite[Lemma~4.2]{Seiss_Generic} the product morphism
\[
\varphi\colon \unipotent^- \times \torus \times \unipotent^- \to \group , \, (u_1,t,u_2) \mapsto u_1 \, n(\overline{w}) \, t \, u_2
\]
is an isomorphism onto the open subset $\unipotent^-n(\overline{w}) \borel^-$ of $\group$. Thus, the denominators of these rational functions do not vanish at any point of $\unipotent^-(K) \, n(\overline{w}) \, \borel^-(K)\subset \group(K)$ for any field extension $K$ of $\field$.
Since 
\[
\fm \in \unipotent^-(\generalext) \, n(\overline{w}) \, \borel^-(\generalext)\subset \group(\generalext) \quad \text{and} \quad \fmuq \in \unipotent^-(\underline{E}) \, n(\overline{w}) \, \borel^-(\underline{E})\subset \group(\underline{E}) \, ,
\]
the coefficients $\bvv,\bff,\boldsymbol{\exp},\bint$ and $\underline{\bvv},\underline{\bff},\underline{\boldsymbol{\exp}},\underline{\bint}$ of the respective Bruhat decomposition are obtained by evaluating $\bxx$, $\bzz$ and $\byy$ at $\fm$ and $\fmuq$, respectively.

Let $\widetilde{p}$ be one of the components of $\bxx$, $\bzz$, $\byy$ and let $a_1(\overline{X})$, $a_2(\overline{X}) \in \field[\overline{X},\det(\overline{X})^{-1}]$ such that
$\widetilde{p} = a_1(\overline{X})/a_2(\overline{X})$. Then by the above we obtain the respective component $p$ of $\bvv,\bff,\bexp$, $\bint$ and the respective component $\underline{p}$ of $\underline{\bvv},\underline{\bff},\underline{\bexp},\underline{\bint}$ by evaluating $\widetilde{p}$ at $\fm$ and $\fmuq$ respectively, i.e., we have  
\[
p \, = \, \widetilde{p}(\fm) \, = \, \frac{a_1(\fm)}{a_2(\fm)} 
\quad \text{and} \quad  \underline{p} \, = \, \widetilde{p}(\fmuq) \, = \, \frac{a_1(\fmuq)}{a_2(\fmuq)}
\]
with $a_2(\fm)\neq 0$ and $a_2(\fmuq)\neq 0$.
Since $\widetilde{\sigma}(\fm)=\fmuq$, we have $\widetilde{\sigma}(a_2(\fm))=a_2(\fmuq) \neq 0$ implying that $a_2(\fm) \notin \widetilde{Q}$.
For $g \in \underline{H}(\field)$ the induced differential $\difffield$-automorphism $\gamma$ maps $a_1(\fm)/a_2(\fm)$ to
\[
\gamma(p) \, = \,
\gamma(a_1(\fm)/a_2(\fm)) \, = \, x_{k,i} \, = \, a_1(\fm g)/ a_2(\fm g) \quad \text{(cf.\ \eqref{eq:specializebruhat})}.
\]
Since $\widetilde{Q}$ is stabilized by $\underline{H}(\field)$ and $a_2(\fm) \notin \widetilde{Q}$, we conclude that for every $g \in \underline{H}(\field)$ the element $a_2(\fm g) \notin \widetilde{Q}$ and so $\widetilde{\sigma}(a_2(\fm g))\neq 0$. Hence, for every element of $\underline{H}(\field)$ we can extended $\widetilde{\sigma}$ to a localization of its domain containing $\bxx_1$, $\bxx_2$ and $\bxx_3$.

It is left to show the surjectivity of the restriction map in \eqref{eqn_surjectivrestrictionmapofgaloisgrps}.
Since $\unirad(H)(\field)$ is normal in $H(\field)$ and $\overline{\generalext}^{\unirad(H)} = \extfieldred$, we obtain that the map
\[
\Gal_{\partial}(\overline{\generalext}/\difffield) \twoheadrightarrow \mathrm{Gal}_{\partial}(\extfieldred/\difffield) , \ \gamma \mapsto \gamma \big|_{\extfieldred}
\]
is surjective.
So for $\gamma_{\rm red} \in \mathrm{Gal}_{\partial}(\extfieldred/\difffield)$ let $g \in H(\field)$ be such that the automorphism $\gamma \in \Gal_{\partial}(\overline{\generalext}/\difffield)$ induced by $g$ restricts to $\gamma|_{\extfieldred}=\gamma_{\rm red}$.
Since $g \in H(\field) \leq \underline{H}(\field) \leq \parabolic_J(\field)$, the induced differential $\difffield$-automorphism $\underline{\gamma}$ of $\underline{E}$ restricts by the first part of our proof to a differential $\difffield$-automorphism $\underline{\gamma}|_{\extfieldred}$ of $\extfieldred$.
We show that $\underline{\gamma}|_{\extfieldred}$ coincides with $\gamma|_{\extfieldred} = \gamma_{\rm red}$ proving surjectivity.
To this end we prove that the images of the generators
of $\extfieldred$ under $\underline{\gamma}|_{\extfieldred}$ and $\gamma|_{\extfieldred}$ agree using the uniqueness of the Bruhat decomposition.
Consider the Bruhat decompositions of $\underline{\gamma}(\fmuq) = \fmuq \, g$ and $\gamma(\fmspec)=\fmspec \, g$, i.e.\
\[
\begin{array}{rcl}
\fmuq \, g \!& \! = \! & \! \buu(\underline{\bxx}_1) \, n(\overline{w}) \, \btt(\underline{\bxx}_2) \, \buu(\underline{\bxx}_3)\\[0.2em]
\! & \! = \! & \! \buu(\underline{\gamma}(\underline{\bvv},\underline{\bff})) \, n(\overline{w}) \, \btt(\underline{\gamma}(\underline{\bexp})) \, \buu(\underline{\gamma}(\underline{\bint}))
\qquad \text{and}\\[0.5em]
\fmspec \, g \! & \! = \! & \! \buu(\overline{\bxx}_1) \, n(\overline{w}) \, \btt(\overline{\bxx}_2) \, \buu(\overline{\bxx}_3) \\[0.2em]
\! & \! = \! & \! \buu(\gamma(\overline{\bvv},\overline{\bff})) \, n(\overline{w}) \, \btt(\gamma(\overline{\bexp})) \, \buu(\gamma(\overline{\bint}))\,,
\end{array}
\]
and the differential $\difffield$-homomorphism 
\[
\widehat{\sigma}\colon \difffield[\fmuq,\det(\fmuq)^{-1}] \to \difffield [\fmspec,\det(\fmspec)^{-1}], \, \fmuq \mapsto \fmspec
\]
completing the following commutative diagram:
\[
\begin{tikzcd}
\difffield[\fm,\det(\fm)^{-1}] \arrow{rrr}{\sigul|_{\difffield[\fm,\det(\fm)^{-1}]}}
\arrow[swap]{drrr}{\sigPV|_{\difffield[\fm,\det(\fm)^{-1}]}} & & & \difffield[\fmuq,\det(\fmuq)^{-1}] \arrow[dashed]{d}{\widehat{\sigma}} \\
& & & \difffield[\fmspec,\det(\fmspec)^{-1}]
\end{tikzcd} 
\]
As in case of $\widetilde{\sigma}$ one proves that $\widehat{\sigma}$ is $H$-equivariant and that one can extend $\widehat{\sigma}$ to the parameters $\underline{\bvv}$, $\underline{\bff}$, $\underline{\bexp}$, $\underline{\bint}$ and $\underline{\bxx}_1$, $\underline{\bxx}_2$, $\underline{\bxx}_3$  of the Bruhat decompositions of $\fmuq$ and $\fmuq g$ respectively.
Since $\widehat{\sigma}(\fmuq)=\fmspec$, we conclude with the uniqueness of the Bruhat decomposition that 
\[
\widehat{\sigma}(\underline{\bvv})= \bvq, \quad \widehat{\sigma}(\underline{\bff}) \, = \, \overline{\bff} , \quad \widehat{\sigma}(\underline{\bexp}) \, = \, \overline{\bexp}, \quad
\widehat{\sigma}(\underline{\bint}) \, = \, \overline{\bint}\,.
\]
By construction of  $\underline{E}$ and $\overline{\generalext}$ we have 
\[
\underline{\bvv} \, = \, \bvq, \quad
\underline{\bff} \, = \, \overline{\bff} , \quad
\underline{\bexp} \, = \, \overline{\bexp}, \quad
\underline{\intn}_i \, = \, \overline{\intn}_i \quad \text{with} \ \beta_i \in \leviroots^- 
\]
in $\extfieldred$ and since these elements generate $\extfieldred$, it follows that $\widehat{\sigma}$ is the identity on $\extfieldred$. Hence, we obtain 
\[
\begin{array}{rcl}
\widehat{\sigma}(\fmuq \, g) \! & \! = \! & \! \buu(\widehat{\sigma}(\underline{\bxx}_1)) \, n(\overline{w}) \, \btt(\widehat{\sigma}(\underline{\bxx}_2)) \, \buu(\widehat{\sigma}(\underline{\bxx}_3)) \, = \,  
\buu(\underline{\bxx}_1) \, n(\overline{w}) \, \btt(\underline{\bxx}_2) \, \buu(\widehat{\sigma}(\underline{\bxx}_3))\\[0.5em]
\! & \! = \! & \! \fmspec \, g \, = \, \buu(\overline{\bxx}_1) \, n(\overline{w}) \, \btt(\overline{\bxx}_2) \, \buu(\overline{\bxx}_3),  
\end{array}
\]
where the $i$-th entry in $\widehat{\sigma}(\underline{\bxx}_3)$ such that $\beta_i \in \leviroots^-$
satisfies $\widehat{\sigma}(\underline{x}_{3,i})=\underline{x}_{3,i}$.
It follows then from the uniqueness of the Bruhat decomposition that the images of the generators of $\extfieldred$ under  $\underline{\gamma}|_{\extfieldred}$ and $\gamma|_{\extfieldred}$ coincide.
\end{proof}

\begin{remark}\label{rem:fiexedelementsunderRu}
Let $\urelem \in \unirad(\parabolic_J)(\field)$. It follows from Lemma~\ref{lem:bruhatunipotent} that the coefficients
$\bxx_1$, $\bxx_2$ and $\bxx_3$ in the Bruhat decomposition
\[
\fmuq \, \urelem  \, = \, \buu(\underline{\bvv},\underline{\bff}) \, n(\overline{w}) \, \btt(\underline{\bexp}) \, \buu(\underline{\bint}) \, \urelem \, = \, \buu(\bxx_1) \, n(\overline{w}) \, \btt(\bxx_2) \, \buu(\bxx_3)
\]
satisfy $\bxx_1 = (\underline{\bvv},\underline{\bff})$, $\bxx_2 = \underline{\bexp}$ and $x_{3,i}=\underline{\intn}_i$ 
with $\beta_i \in \leviroots^-$. Hence, the unipotent radical $\unirad(\parabolic_J)(\field)$ fixes the elements $\underline{\bvv}$, $\underline{\bff}$, $\underline{\bexp}$ and $\underline{\intn}_i$ 
with $\beta_i \in \leviroots^-$ and so leaves $\extfieldred$ elementwise fixed.
\end{remark}

 The following theorem shows that the Levi groups of $\underline{H}(\field)$ are conjugate by elements of $\unirad(\parabolic_J)(\field)$ to the Levi groups of $H(\field)$. But not all Levi groups of $\underline{H}(\field)$ are Levi groups of $H(\field)$. Only those Levi groups of $\underline{H}(\field)$ appear as Levi groups of $H(\field)$ which stabilize the ideal $\Imax \lhd \underline{R}$, where $g \in \underline{\levi}(\field)$ acts on  $\underline{R}$ as usually by
 \[
\fmuq \, g \, = \, \buu(\underline{\bvv},\underline{\bff}) \, n(\overline{w}) \, \btt(\underline{\bexp}) \, \buu(\underline{\bint}) \, g  
\, = \, \buu(\bxx_1) \, n(\overline{w}) \, \btt(\bxx_2) \, \buu(\bxx_3). 
\]

\begin{theorem}\label{thm:Levigroupsconj}
Let $\underline{\levi}(\field)$ and $\levi(\field)$ be Levi groups of $\underline{H}(\field)$ and $H(\field)$, respectively. Let $\Lred$ be as in Proposition~\ref{prop:reductivepartYred}.
\begin{enumerate}
    \item\label{thm:Levigroupsconj(a)} 
    We have $\unirad(\underline{H})(\field) = \unirad(\parabolic_J)(\field)$ and $\underline{H}(\field) = \underline{\levi}(\field) \ltimes \unirad(\parabolic_J)(\field)$ is a Levi decomposition of $\underline{H}(\field)$.
    \item\label{thm:Levigroupsconj(b)} 
    The group $\Lred(\field)$ is contained in $\underline{H}(\field)$.
    \item\label{thm:Levigroupsconj(c)}
    The groups $\Lred(\field)$ and $\levi(\field)$ are Levi groups of $\underline{H}(\field)$, i.e., the groups $\underline{\levi}(\field)$, $\levi(\field)$ and $\Lred(\field)$ are conjugate by elements in $\unirad(\parabolic_J)(\field)$.
\end{enumerate}
\end{theorem}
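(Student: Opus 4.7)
The plan is to prove parts (a)--(c) sequentially, leveraging the restriction surjection of Lemma~\ref{lem:restrictionEred} and applying Lemma~\ref{lem:forrestrictiontobeinjective} and Theorem~\ref{thm:levidecompositionconj} at the decisive points.

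I begin with part (a). First I will verify that $\unirad(\parabolic_J)(\field) \leq \underline{H}(\field)$: for any $u \in \unirad(\parabolic_J)(\field)$, Lemma~\ref{lem:bruhatunipotent} together with Remark~\ref{rem:fiexedelementsunderRu} shows that right multiplication of $\fmuq$ by $u$ preserves $\underline{\bvv}$, $\underline{\bff}$, $\underline{\bexp}$ and those $\underline{\intn}_i$ with $\beta_i \in \leviroots^-$, and alters only the remaining $\underline{\intn}_i$ through polynomial expressions in the generators $\underline{\intrad}_j$ of $\underline{R}$ over $\extfieldred$; hence $u$ stabilizes $\ulQ$. Next I will identify the kernel $K$ of the restriction homomorphism $\underline{H}(\field) \twoheadrightarrow \Gal_{\partial}(\extfieldred/\difffield) = \Lred(\field)$ from Lemma~\ref{lem:restrictionEred}. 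Remark~\ref{rem:fiexedelementsunderRu} gives $\unirad(\parabolic_J)(\field) \subseteq K$, and conversely any $g \in K$ fixes $\underline{\bvv}$, $\underline{\bff}$, $\underline{\bexp}$ and $\underline{\intn}_i$ for $\beta_i \in \leviroots^-$, so applying Lemma~\ref{lem:forrestrictiontobeinjective} with $Y = \fmuq$, $p_1 = g$ and $p_2 = e$ forces the reductive part of $g$ in the standard Levi decomposition of $\parabolic_J$ to be trivial, i.e.\ $g \in \unirad(\parabolic_J)(\field)$. This yields $\underline{H}/\unirad(\parabolic_J) \cong \Lred$, and $\Lred$ is reductive by the Compoint--Singer construction underlying Definition~\ref{def:Q}. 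Reductivity of the quotient, combined with the normality of $\unirad(\parabolic_J)$ in $\parabolic_J$ (hence in $\underline{H}$), then forces $\unirad(\underline{H}) = \unirad(\parabolic_J)$, and Theorem~\ref{thm:levidecomposition} supplies the asserted Levi decomposition in (a).

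For part (b), I will fix $g \in \Lred(\field) \subseteq \levi_J(\field)$, using Proposition~\ref{prop:reductivepartYred}, and exploit the Levi factorization $\fmuq = \fmreduq \fmraduq$ together with $\fmreduq = \fmredq$. Since $\levi_J$ normalizes $\unirad(\parabolic_J)$, the product rewrites as $\fmuq\, g = (\fmreduq\, g) \cdot (g^{-1} \fmraduq\, g)$, where $\fmreduq\, g$ realizes the Galois action of $g$ on $\fmredq$ and so lies in $\levi_J(\extfieldred)$, while $g^{-1} \fmraduq\, g \in \unirad(\parabolic_J)(\underline{R})$. Therefore the assignment $\fmuq \mapsto \fmuq\, g$ extends to a differential automorphism of $\difffield[\fmuq, \det(\fmuq)^{-1}]$, showing that $g$ stabilizes $\ulQ$ and so lies in $\underline{H}(\field)$.

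For part (c), I will combine (a) and (b) with a dimension/correspondence argument. Reductivity of $\Lred$ gives $\Lred \cap \unirad(\parabolic_J) = \{e\}$, and by construction the restriction of the quotient $\underline{H} \twoheadrightarrow \Lred$ to $\Lred$ is the identity; thus $\Lred \cdot \unirad(\parabolic_J)$ contains the kernel $\unirad(\parabolic_J)$ and surjects onto $\Lred$, which by the correspondence theorem gives $\underline{H} = \Lred \ltimes \unirad(\parabolic_J)$, making $\Lred$ a Levi group. For $\levi$, the Galois correspondence applied to $\extfieldred = \overline{\generalext}^{\unirad(H)}$ from Proposition~\ref{prop:fixedfield} yields $H/\unirad(H) \cong \Lred$, so the composition $\levi \hookrightarrow H \hookrightarrow \underline{H} \twoheadrightarrow \Lred$ is an isomorphism, and the same correspondence-theorem argument then gives $\underline{H} = \levi \ltimes \unirad(\parabolic_J)$. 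Finally, Theorem~\ref{thm:levidecompositionconj} provides the required conjugacy of $\underline{\levi}$, $\levi$ and $\Lred$ by elements of $\unirad(\underline{H}) = \unirad(\parabolic_J)$. The main obstacle will be the precise identification of the kernel $K$ in part (a): one must align the invariance of $\underline{\bvv}$, $\underline{\bff}$, $\underline{\bexp}$ and the $\underline{\intn}_i$ with $\beta_i \in \leviroots^-$ with the Bruhat-coefficient hypothesis of Lemma~\ref{lem:forrestrictiontobeinjective}, and then extract reductivity of $\Lred$ so that the quotient $\underline{H}/\unirad(\parabolic_J)$ descends to a genuine Levi decomposition.
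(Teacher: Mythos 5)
Your plan reproduces the skeleton of the paper's proof (it rests on the same three pillars: the surjective restriction map of Lemma~\ref{lem:restrictionEred}, the kernel identification via Lemma~\ref{lem:forrestrictiontobeinjective}, and $\extfieldred=\overline{\generalext}^{\unirad(H)}$ from Proposition~\ref{prop:fixedfield}), but it reorganizes the logic: for $\unirad(\underline{H})=\unirad(\parabolic_J)$ the paper argues via Proposition~\ref{prop:parabolicbound}~(d) and Proposition~\ref{prop:LirreducibleForLevi} (minimality of $\parabolic_J$ and $\widetilde{\levi}$-irreducibility), whereas you argue via the quotient $\underline{H}/\unirad(\parabolic_J)\cong\Lred$; for (b) the paper uses surjectivity of the restriction plus the identity $\ell=\fmraduq\,g\,\underline{\gamma}(\fmraduq)^{-1}$ together with $\unirad(\parabolic_J)\le\underline{H}$, whereas you try to verify $g\in\Stab(\ulQ)$ directly; and in (c) your section argument ($\Lred$ maps identically under the quotient, hence $\underline{H}=\Lred\ltimes\unirad(\parabolic_J)$, and likewise for $\levi$) is a legitimate and in fact cleaner alternative to the paper's ``every Levi of $\underline{H}$ is isomorphic to $\Gal_{\partial}(\extfieldred/\difffield)$'' comparison.

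Two steps, however, are asserted where the real work lies. First, in (b) the sentence ``therefore the assignment $\fmuq\mapsto\fmuq\,g$ extends to a differential automorphism of $\difffield[\fmuq,\det(\fmuq)^{-1}]$'' is precisely the statement $g\in\Stab(\ulQ)$ that you are proving; the factorization $\fmuq\,g=(\fmreduq\,g)\cdot(g^{-1}\fmraduq\,g)$ by itself does not yield it. You must actually produce a differential homomorphism realizing the substitution, e.g.\ extend $\gamma_g\in\Gal_{\partial}(\extfieldred/\difffield)$ to $\underline{E}$ by sending each $\underline{\intrad}_i$ to the corresponding radical-part Bruhat coordinate of $\fmuq\,g$ and check compatibility with the derivation (using that the $\underline{\intrad}_i$ are algebraically independent over $\extfieldred$ and that $\fmuq\,g$ is again a fundamental matrix in the big cell, so its Bruhat parameters satisfy the same differential relations, and that by uniqueness of the Levi factorization its reductive-part parameters are the $\gamma_g$-images). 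The paper sidesteps exactly this verification by proving $\unirad(\parabolic_J)(\field)\le\underline{H}(\field)$ first and then deducing $\ell\in\underline{H}(\field)$ from $\ell=\fmraduq\,g\,\underline{\gamma}(\fmraduq)^{-1}$. (Also, $\fmreduq\,g$ lies in $\group(\extfieldred)$, not in $\levi_J(\extfieldred)$; only $g_1\fmreduq$ is in $\levi_J$.)

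Second, in (a) the inference ``reductivity of the quotient forces $\unirad(\underline{H})=\unirad(\parabolic_J)$'' requires $\underline{H}/\unirad(\parabolic_J)$ to be reductive \emph{as an algebraic group}, while at that point you only have an abstract isomorphism of $\field$-points with $\Lred(\field)$ coming from the Galois-theoretic restriction; an abstract isomorphism does not by itself transfer reductivity. This is repairable by reordering: establish $\unirad(\parabolic_J)\le\underline{H}$ and part (b) first, note (as you do in (c)) that the restriction map is the identity on $\Lred$, obtain the closed algebraic splitting $\underline{H}=\Lred\ltimes\unirad(\parabolic_J)$, and read off both the reductivity of the quotient and $\unirad(\underline{H})=\unirad(\parabolic_J)$; the paper instead gets $\unirad(\underline{H})\le\unirad(\parabolic_J)$ from the irreducibility/minimality argument. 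A small further slip: $\Lred\cap\unirad(\parabolic_J)=\{e\}$ does not follow from reductivity of $\Lred$ alone, but from $\Lred\le\levi_J$ (Proposition~\ref{prop:reductivepartYred}) and $\levi_J\cap\unirad(\parabolic_J)=\{e\}$.
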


\begin{proof}
    \ref{thm:Levigroupsconj(a)}\
    We need only to show that $\unirad(\underline{H})(\field) = \unirad(\parabolic_J)(\field)$. 
    According to Proposition~\ref{prop:parabolicbound} \ref{prop:E(d)} the group $\levi(\field)$ is $\widetilde{\levi}(\field)$-irreducible for a Levi group $\widetilde{\levi}(\field)$ of $\parabolic_J(\field)$. 
    As $\levi(\field)$ is a reductive subgroup of $\underline{H}(\field)$ by Proposition~\ref{prop:statementsaboutstabilzers} \ref{prop:statementsaboutstabilzers(b)}, there is a Levi group $\widetilde{\underline{\levi}}(\field)$ such that
    \[
    \levi(\field) \leq \widetilde{\underline{\levi}}(\field) \leq \parabolic_J(\field).
    \]
    Minimality of $\parabolic_J(\field)$ for $\levi(\field)$ implies minimality of $ \parabolic_J(\field)$ for $\widetilde{\underline{\levi}}(\field)$. Thus,  $\widetilde{\underline{\levi}}(\field)$ is $\widetilde{\levi}(\field)$-irreducible for a Levi group $\widetilde{\levi}(\field)$ of $\parabolic_J(\field)$ and thus $\unirad(\underline{H})(\field) \leq \unirad(\parabolic_J)(\field)$.
    The group $\unirad(\parabolic_J)(\field)$ acts on $\underline{R}$ leaving $\extfieldred$ elementwise fixed by Remark~\ref{rem:fiexedelementsunderRu}. Since $\underline{\intn}_i \in\underline{E}$ for $\beta_i \in \roots^- \setminus \leviroots^-$ are transcendental
   over $\extfieldred$, we conclude that 
    $\unirad(\parabolic_J)(\field) \leq \underline{H}(\field)$ and so $\unirad(\underline{H})(\field) = \unirad(\parabolic_J)(\field)$. 

\ref{thm:Levigroupsconj(b)}\
Let $\ell \in \Lred$. Then $\ell$ induces a differential $\difffield$-automorphism $\gamma$ of $\extfieldred$ by right multiplication on $\fmreduq = \fmredq$.
It follows from Lemma~\ref{lem:restrictionEred} that there exists $g \in \underline{H}$ such that the restriction of the induced differential $\difffield$-automorphism $\underline{\gamma}$ of $\underline{E}$ is equal to $\gamma$, that is $\underline{\gamma}|_{\extfieldred} = \gamma$. Thus 
\[
\underline{\gamma}|_{\extfieldred}(\fmreduq) \, = \, \gamma(\fmreduq) \, = \, \fmreduq \, \ell\,.
\]
We compute 
\[
\underline{\gamma}(\fmreduq \, \fmraduq) \, = \, \underline{\gamma}|_{\extfieldred}(\fmreduq) \, \underline{\gamma}( \fmraduq) \, = \, \fmreduq \, \ell \, \underline{\gamma} (\fmraduq) \, = \,  \fmreduq \, \fmraduq \, g
\]
and obtain
\[
\ell \underline{\gamma} (\fmraduq) \, = \, \fmraduq \, g
\, \iff \, \ell \, = \, \fmraduq \, g \, \underline{\gamma} (\fmraduq)^{-1} \, .
\]
Since the defining ideal of $\unirad(\parabolic_J)$ is defined over $\field$ and $\fmraduq \in \unirad(\parabolic_J)(\underline{E})$, we conclude that $\underline{\gamma} (\fmraduq)^{-1} \in \unirad(\parabolic_J)(\underline{E})$.
Since $\unirad(\parabolic_J) \leq \underline{H}$ according to
\ref{thm:Levigroupsconj(a)}, it follows that $\ell \in \underline{H}(\field)$. 

\ref{thm:Levigroupsconj(c)}\
We start by showing first that any Levi group $\underline{\levi}(\field)$ of $\underline{H}(\field)$ is isomorphic to
$\Gal_{\partial}(\extfieldred/\difffield)$. As in Lemma~\ref{lem:restrictionEred} denote by $\Aut_{\partial}(\underline{E}/\difffield)$ the group of differential $\difffield$-automorphisms of $\underline{E}$ induced by right multiplication of elements of $\underline{H}(\field)$ on $\fmuq$.
We obtain from Lemma~\ref{lem:restrictionEred} that the group homomorphism
\[
\Aut_{\partial}(\underline{E}/\difffield) \twoheadrightarrow \Gal_{\partial}(\extfieldred/\difffield), \ \underline{\gamma} \mapsto \underline{\gamma} \big|_{\extfieldred}
\]
is surjective. We determine its kernel.
Suppose that $g_1,g_2 \in \underline{H}(\field)$ induce
$\underline{\gamma}_1$ and $\underline{\gamma}_2$, respectively, such that 
$\underline{\gamma}_1|_{\extfieldred} \, = \, \underline{\gamma}_2|_{\extfieldred}$.
Then the Bruhat decompositions of $\fmuq g_1$ and $\fmuq g_2$ satisfy the conditions of Lemma~\ref{lem:forrestrictiontobeinjective}.
The same lemma then implies that $g_1^{-1}g_2 \in \unirad(\parabolic_J)(\field)$. Moreover, for every $g \in \unirad(\parabolic_J)(\field) \leq \underline{H}(\field)$ the induced differential $\difffield$-automorphism restricts to the identity on $\extfieldred$.
Thus, the kernel consists of all $\underline{\gamma}$
with $\underline{\gamma}(\fmuq) = \fmuq g$ for some $g \in \unirad(\parabolic_J)(\field)$.
 Hence, 
 \[
 \underline{\levi}(\field) \, \cong \, \underline{H}(\field)/\unirad(\parabolic_J)(\field) \, \cong \, \Gal_{\partial}(\extfieldred/\difffield)\,.
 \]

Since $\Lred(\field)$ and $\levi(\field)$ are reductive subgroups of $\underline{H}(\field)$, there exist Levi groups $\underline{\levi}_1(\field)$ and $\underline{\levi}_2(\field)$ of $\underline{H}(\field)$ such that $\Lred(\field) \leq \underline{\levi}_1(\field)$ and $\levi(\field) \leq \underline{\levi}_2(\field)$. 
We have
\[
\Lred(\field) \, \cong \, \Gal_{\partial}(\extfieldred/\difffield) \quad \text{and} \quad \levi(\field) \, \cong \, H(\field)/\unirad(\parabolic_J)(\field) \, \cong \, \Gal_{\partial}(\extfieldred/\difffield)
\]
according to Proposition~\ref{prop:reductivepartYred} and the Fundamental Theorem of Differential Galois Theory, respectively.
Since by the first part of this proof any Levi group of $\underline{H}(\field)$ is isomorphic to $\Gal_{\partial}(\extfieldred/\difffield)$, we obtain 
$\Lred (\field) = \underline{\levi}_1 (\field)$ and $\levi (\field) = \underline{\levi}_2 (\field)$.
The last assertion is simply the fact that all Levi groups of $\underline{H}(\field)$ are conjugate by elements of $\unirad(H) (\field)= \unirad(\parabolic_J)(\field)$.
\end{proof}

The following proposition shows that every Levi group of $\underline{H}(\field)$ can be realized as a Levi group of $H$ by the choice of a suitable maximal differential ideal $\Imax \lhd \underline{R}$ in the construction of $\overline{\generalext}$ introduced at the beginning of Section~\ref{sec:structureofreductivepart}.
\begin{proposition}\label{cor:ForOurLeviGrpThereisanidealnew} $\,$
\begin{enumerate}
\item\label{cor:ForOurLeviGrpThereisanidealnew(a)} 
Every $\urelem \in \unirad(\parabolic_J)(\field)$ induces a differential $\extfieldred$-automorphism
\[
\varphi_{\urelem}\colon \underline{R} \to \underline{R}, \ \underline{\intn}_i \mapsto x_{3,i} \quad \text{for all} \quad \beta_i \in \roots^- \setminus \leviroots^- \, ,
\]
where $x_{3,i}$ are the respective coefficients of the Bruhat decomposition
\[
\fmuq \, \urelem \, = \, \buu(\bxx_1)\, n(\overline{w}) \, \btt(\bxx_2) \, \buu(\bxx_3) 
\qquad
\text{with} \, \,
\bxx_1, \bxx_3 \in \underline{E}^m \, \, \text{and} \, \, \bxx_2 \in (\underline{E}^{\times})^l\,.
\]
\item\label{cor:ForOurLeviGrpThereisanidealnew(b)} 
For every Levi group $\underline{\levi}(\field)$ of $\underline{H}(\field)$ 
there exists a maximal differential ideal $\Imax^{(1)} \lhd \underline{R}$ such that $\underline{\levi}(\field)$ is a Levi group of the differential Galois group $H^{(1)}(\field)$ of the respective Picard-Vessiot extension $\overline{\generalext}^{(1)}=\Frac(\underline{R}/\Imax^{(1)})$ of $\difffield$ with fundamental matrix $\fmspec^{(1)}$.
\item\label{cor:ForOurLeviGrpThereisanidealnew(c)}
Let $\Imax^{(1)}$ and $\Imax^{(2)}$ be two maximal differential ideals in $\underline{R}$ and denote by $\overline{\generalext}^{(1)}=\Frac(\underline{R}/\Imax^{(1)})$ and $\overline{\generalext}^{(2)}=\Frac(\underline{R}/\Imax^{(2)})$ the respective Picard-Vessiot extension of $\difffield$ with fundamental matrices $\fmspec^{(1)}$ and $\fmspec^{(2)}$.
Then there exists $\urelem \in \unirad(\parabolic_J)(\field)$ such that
\[
\varphi\colon \overline{\generalext}^{(1)} \to \overline{\generalext}^{(2)}, \ \fmspec^{(1)} \mapsto \fmspec^{(2)} \, \urelem
\]
is a differential $\difffield$-isomorphism with $\varphi(\fmspec_{\rm red}^{(1)}) = \fmspec_{\rm red}^{(2)}$. Moreover, the map $\varphi_{\urelem}$ from \ref{cor:ForOurLeviGrpThereisanidealnew(a)}\
maps $\Imax^{(1)}$ to $\Imax^{(2)}$.
\end{enumerate}
\end{proposition}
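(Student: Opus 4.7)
\medskip

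\noindent\textbf{Proof proposal.} For part~\ref{cor:ForOurLeviGrpThereisanidealnew(a)}, I would argue as follows. Since $\urelem \in \unirad(\parabolic_J)(\field)$, Lemma~\ref{lem:bruhatunipotent} applied factor by factor to the product $\buu(\underline{\bint})\,\urelem$, combined with Remark~\ref{rem:fiexedelementsunderRu}, shows that in $\fmuq\,\urelem = \buu(\bxx_1)\,n(\overline{w})\,\btt(\bxx_2)\,\buu(\bxx_3)$ one has $\bxx_1=(\underline{\bvv},\underline{\bff})$, $\bxx_2 = \underline{\bexp}$ and $x_{3,i}=\underline{\intn}_i$ for all $\beta_i \in \leviroots^-$. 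The only nontrivial ``new'' parameters are the $x_{3,i}$ with $\beta_i \in \roots^-\setminus\leviroots^-$, which lie in $\underline{E}$. The key point is then to show that the assignment $\underline{\intn}_i \mapsto x_{3,i}$ defines a differential $\extfieldred$-ring endomorphism of $\underline{R}$. For this I would argue that $\partial(\fmuq\,\urelem)=A_{\group}(\bsq)\,\fmuq\,\urelem$ (because $\urelem$ is constant), so the tuple $\bxx_3$ satisfies the same integrand relations defining $\Iuni$ over $\extfieldred$; hence the map descends from $\extfieldred\{\intrad_i\}$ to $\underline{R}$. Well-definedness into $\underline{R}$ (rather than $\underline{E}$) follows from the triangular construction of the integrands (Remark~\ref{rem:Rintegraldomain}): inductively on $|\height(\beta_i)|$, each $x_{3,i}$ is obtained from $\underline{\intn}_i$ by adding a polynomial expression in the lower-height $\underline{\intn}_j$ with $\beta_j \in \roots^-\setminus\leviroots^-$ and the (already in $\underline{R}$) parameters of the reductive part. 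The inverse is $\varphi_{\urelem^{-1}}$, and $\varphi_{\urelem_1\urelem_2}=\varphi_{\urelem_2}\circ\varphi_{\urelem_1}$ follows from the uniqueness of the Bruhat decomposition.

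For part~\ref{cor:ForOurLeviGrpThereisanidealnew(b)}, start from the given $\Imax$ with Picard-Vessiot extension $\overline{\generalext}=\Frac(\underline{R}/\Imax)$ and differential Galois group $H$ with some Levi group $\levi(\field)$. By Theorem~\ref{thm:Levigroupsconj}\ref{thm:Levigroupsconj(c)}, there exists $\urelem \in \unirad(\parabolic_J)(\field)$ with $\urelem\,\underline{\levi}(\field)\,\urelem^{-1}=\levi(\field)$. Define $\Imax^{(1)} := \varphi_{\urelem}(\Imax) \lhd \underline{R}$; this is again a maximal differential ideal because $\varphi_{\urelem}$ is a differential automorphism by \ref{cor:ForOurLeviGrpThereisanidealnew(a)}. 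The corresponding Picard-Vessiot extension $\overline{\generalext}^{(1)}=\Frac(\underline{R}/\Imax^{(1)})$ has differential Galois group $H^{(1)}(\field)=\Stab(\Imax^{(1)})$, which by a direct computation using the chain rule applied to $\fmuq\mapsto\fmuq\,\urelem$ equals $\urelem\,H(\field)\,\urelem^{-1}$. Hence a Levi group of $H^{(1)}(\field)$ is $\urelem\,\levi(\field)\,\urelem^{-1}=\underline{\levi}(\field)$, as required.

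For part~\ref{cor:ForOurLeviGrpThereisanidealnew(c)}, observe that both $\overline{\generalext}^{(1)}$ and $\overline{\generalext}^{(2)}$ contain (by Proposition~\ref{prop:galaction}\ref{prop:galaction(b)}) the common Picard-Vessiot extension $\extfieldred$ of $\difffield$, and the respective reductive parts $\fmredq^{(1)}$ and $\fmredq^{(2)}$ are fundamental matrices for the single equation $\Aprered$ (Proposition~\ref{prop:reductivepartYred}); since $\extfieldred$ is uniquely determined by $\Aprered$ up to a constant change of basis and both copies sit canonically inside $\underline{R}/\Imax^{(j)}$, they can be identified so that $\fmredq^{(1)}$ and $\fmredq^{(2)}$ correspond. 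Then I would use the standard uniqueness theorem for Picard-Vessiot extensions (\cite[Proposition~1.20]{vanderPutSinger}) applied over $\extfieldred$ to obtain a differential $\extfieldred$-isomorphism $\varphi\colon \overline{\generalext}^{(1)} \to \overline{\generalext}^{(2)}$ with $\varphi(\fmspec^{(1)}) = \fmspec^{(2)}\,\urelem$ for a matrix $\urelem$ with constant entries; by Proposition~\ref{prop:galaction}\ref{prop:galaction(d)}, $\urelem \in \unirad(\parabolic_J)(\field)$. The condition $\varphi(\fmredq^{(1)})=\fmredq^{(2)}$ is then automatic from the identification of the reductive-part Picard-Vessiot subrings. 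Finally, by the Bruhat decomposition the image of $\Imax^{(1)}$ under $\varphi$ (viewed back on $\underline{R}$ via $\fmuq\mapsto\fmuq\,\urelem$) is $\Imax^{(2)}$, i.e.\ $\varphi_{\urelem}(\Imax^{(1)}) = \Imax^{(2)}$.

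The main obstacle I anticipate is in part~\ref{cor:ForOurLeviGrpThereisanidealnew(c)}: justifying that $\extfieldred$ admits a canonical identification inside both $\overline{\generalext}^{(1)}$ and $\overline{\generalext}^{(2)}$ making $\fmredq^{(1)}$ and $\fmredq^{(2)}$ correspond, so that the ambiguity in the Picard-Vessiot isomorphism $\varphi$ is really confined to the unipotent radical factor. Once this is pinned down via the rigidity of the construction of $\extfieldred$ from the reductive parameters (Proposition~\ref{prop:sigmared} and the defining ideal $Q$), the rest is bookkeeping with the Bruhat decomposition.
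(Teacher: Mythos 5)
Your proposal follows the paper's route in all three parts, with one genuinely different (and valid) justification in part~(a): you verify by hand that the recomputed Bruhat parameters $x_{3,i}$ satisfy the integrand relations generating $\Iuni$ (using that $\urelem$ is constant, so $\fmuq\,\urelem$ solves the same equation) and that the recomputation is triangular, whereas the paper gets the differential automorphism of $\underline{E}$ for free from $\unirad(\parabolic_J)(\field)\leq\underline{H}(\field)$ (Theorem~\ref{thm:Levigroupsconj}) together with Remark~\ref{rem:fiexedelementsunderRu}, and then only remarks that the Bruhat recomputation is polynomial, so $\underline{R}$ is preserved. Your route costs a bit more verification (one must actually check that $\dlog$ of the third Bruhat factor of $\fmuq\,\urelem$ coincides with that of $\buu(\underline{\bint})$), and note that with the right-multiplication convention one gets $\varphi_{\urelem_2}\circ\varphi_{\urelem_1}=\varphi_{\urelem_2\urelem_1}$; this is harmless since only $\varphi_{\urelem^{-1}}=\varphi_{\urelem}^{-1}$ is needed. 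Your anticipated ``main obstacle'' in (c) is in fact vacuous: $\fmreduq$ has entries in $\extfieldred\subset\underline{R}$ and the quotient maps are injective on the field $\extfieldred$, so $\fmspec^{(1)}_{\rm red}=\fmspec^{(2)}_{\rm red}$ holds literally by construction, and any differential $\extfieldred$-isomorphism fixes this matrix automatically — exactly as the paper argues.

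Two small repairs are needed. In (b) you normalize $\urelem\,\underline{\levi}\,\urelem^{-1}=\levi$ and set $\Imax^{(1)}=\varphi_{\urelem}(\Imax)$; since, as you compute, this gives $H^{(1)}=\urelem H\urelem^{-1}$, the Levi group you actually produce is $\urelem\levi\urelem^{-1}=\urelem^{2}\,\underline{\levi}\,\urelem^{-2}$, not $\underline{\levi}$. Choose the conjugator the other way round, $\urelem\levi\urelem^{-1}=\underline{\levi}$ (the paper's normalization), or equivalently use $\varphi_{\urelem^{-1}}$; the fix is one line. In (c), Proposition~\ref{prop:galaction}~\ref{prop:galaction(d)} concerns automorphisms of a single Picard-Vessiot ring over $\extfieldred$ and does not by itself place the constant matrix relating two \emph{different} extensions in $\unirad(\parabolic_J)(\field)$; what is needed is the short computation $\varphi(\fmspec^{(1)})=\varphi(\fmspec^{(1)}_{\rm red})\,\varphi(\fmspec^{(1)}_{\rm rad})=\fmspec^{(2)}_{\rm red}\,\varphi(\fmspec^{(1)}_{\rm rad})$, whence $\varphi(\fmspec^{(1)}_{\rm rad})=\fmspec^{(2)}_{\rm rad}\,\urelem\in\unirad(\parabolic_J)(\overline{\generalext}^{(2)})$ and therefore $\urelem\in\unirad(\parabolic_J)(\field)$ — precisely the paper's argument. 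Similarly, the final claim $\varphi_{\urelem}(\Imax^{(1)})=\Imax^{(2)}$ deserves the explicit verification that $\varphi^{-1}\circ(\pi^{(2)}\circ\varphi_{\urelem})$ sends $\fmuq$ to $\fmspec^{(1)}$ and hence has kernel $\Imax^{(1)}$, rather than just ``by the Bruhat decomposition''.
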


\begin{proof}
\ref{cor:ForOurLeviGrpThereisanidealnew(a)}\
Since by Theorem~\ref{thm:Levigroupsconj} \ref{thm:Levigroupsconj(a)}\
we have $\unirad(\underline{H})(\field)=\unirad(\parabolic_J)(\field)$, the element $\urelem$ induces a differential $\difffield$-automorphism $\varphi_{\urelem}$ of $\underline{E}$.
Since the fixed field of $\underline{E}$ under the action of $\unirad(\parabolic_J)(\field)$ is $\extfieldred$, it is a differential $\extfieldred$-automorphism of $\underline{E}$. Moreover, because $\underline{\intn}_i \in \underline{R}$ for all $1 \leq i \leq m$, the elements $x_{3,i}$ are also in $\underline{R}$. 

\ref{cor:ForOurLeviGrpThereisanidealnew(b)}\
In the construction of the Picard-Vessiot extension $\overline{\generalext} = \Frac(\underline{R}/\Imax)$ of $\difffield$ with differential Galois group $H(\field)$ at the beginning
of Section~\ref{sec:structureofreductivepart} we chose a maximal differential ideal $\Imax \lhd \underline{R}$.
Let $\levi(\field)$ be a Levi group of $H(\field)$.
By Theorem~\ref{thm:Levigroupsconj} \ref{thm:Levigroupsconj(c)}\  
there exists $\urelem \in \unirad(\parabolic_J)(\field)$ such that $\underline{\levi}(\field) = \urelem \levi(\field) \urelem^{-1}$.
According to \ref{cor:ForOurLeviGrpThereisanidealnew(a)}, 
the map $\varphi_{\urelem}$ is a differential $\extfieldred$-automorphism of $\underline{R}$ and so the image $\Imax^{(1)}$ of $\Imax$ under $\varphi_{\urelem}$ is a maximal differential ideal of $\underline{R}$. Then, $\urelem$ induces the differential $\difffield$-isomorphism 
\[
\overline{\generalext} \to \overline{\generalext}^{(1)}, \ \fmspec \mapsto \fmspec^{(1)} \urelem\,.
\]
The differential Galois group $H^{(1)}(\field)$ of $\overline{\generalext}^{(1)}$ then satisfies $H^{(1)}(\field) = \urelem H(\field) \urelem^{-1}$.
Hence, with  $\underline{\levi}(\field) = \urelem \levi(\field) \urelem^{-1}$ we conclude that $\underline{\levi}(\field)$ is a Levi group of $H^{(1)}(\field)$.  

\ref{cor:ForOurLeviGrpThereisanidealnew(c)}\
According to Proposition~\ref{prop:galaction} \ref{prop:galaction(d)}\
the differential fields $\overline{\generalext}^{(1)}$ and $\overline{\generalext}^{(2)}$ are also Picard-Vessiot extensions of $\extfieldred$ for $A_{\group}(\bsq)$.
Thus, there exists $\urelem \in \GL_n(\field)$ such that the map
\[
\varphi\colon \overline{\generalext}^{(1)} \to \overline{\generalext}^{(2)}, \ \fmspec^{(1)} \mapsto \fmspec^{(2)} \urelem
\]
is a differential $\extfieldred$-isomorphism. Since by construction $\fmspec^{(1)}_{\rm red} = \fmspec^{(2)}_{\rm red}$, we conclude that $\varphi(\fmspec^{(1)}_{\rm red}) = \fmspec^{(2)}_{\rm red}$.
Then, from
\[
\fmspec^{(2)}_{\rm red} \, \fmspec^{(2)}_{\rm rad} \, \urelem \, = \, \varphi (\fmspec^{(1)}_{\rm red} \, \fmspec^{(1)}_{\rm rad}) \, = \, \varphi(\fmspec^{(1)}_{\rm red}) \, \varphi (\fmspec^{(1)}_{\rm rad} ) \, = \,  \fmspec^{(2)}_{\rm red} \, \varphi (\fmspec^{(1)}_{\rm rad} )
\]
we obtain that $\varphi(\fmspec^{(1)}_{\rm rad}) = \fmspec^{(2)}_{\rm rad} \, \urelem$ and so $\urelem \in \unirad(\parabolic_J)(\field)$.
The last assertion follows from the fact that the kernel of the differential $\difffield$-homomorphism
\[
\widetilde{\varphi}_{\urelem}\colon \underline{R} \to \underline{R}/\Imax^{(2)}, \ \underline{\intn}_i \mapsto x_{3,i} \quad \text{for all} \ \beta_i \in \roots^- \setminus \leviroots^-
\]
is $\Imax^{(1)}$, where $x_{3,i}$ is as in \ref{cor:ForOurLeviGrpThereisanidealnew(a)}.
Indeed, composing $\widetilde{\varphi}_{\urelem}$ with the restriction of $\varphi^{-1}$ to $\underline{R}/\Imax^{(2)}$, we obtain a differential $\difffield$-homomorphism which maps $\fmuq$ to $\fmspec^{(1)}$.
\end{proof}

\section{Computing the Reductive Part of the differential Galois Group} \label{sec:reductivepart}

In this section we present an algorithm which computes the ideal $\ulQ$ of algebraic relations between the entries of $\fmuq$ and an ideal $I_{\underline{H}}$ of the ring $\field[\GL_n]$ defining the group $\underline{H}(\field) = \Stab(\ulQ)(\field)$ contained in $\parabolic_J(\field) \leq \GL_n(\field)$ by Proposition~\ref{prop:statementsaboutstabilzers} \ref{prop:statementsaboutstabilzers(b)}.
Our algorithm will also compute the ideal $\ovQred$ of algebraic relations between the entries of $\fmspec_{\rm red}$ and an ideal $I_{\Lred}$ in $\field[\GL_n]$ defining the differential Galois group $\Lred(\field) = \Stab(\ovQred)(\field)$, which is a Levi group of $\underline{H}(\field)$ contained in the standard Levi group of $\parabolic_J(\field)$ according to Theorem~\ref{thm:Levigroupsconj} \ref{thm:Levigroupsconj(c)}.
 
In Section~\ref{sec:specializingreductivepart} we computed representatives $\bratv=(\ratv_1,\dots,\ratv_l)$ in
\[
\difffield(\GL_{n_{I''}}) \, = \, \difffield(X) \, = \, \difffield(X_{i,j}\mid i,j=1,\dots,n_{I''})
\]
of the residue classes in $\extfieldred$ which are the images of $\bvv$ under the specialization $\sigma$.
Since the derivation on $\difffield(X)$ is defined by
\[
\partial(X) \, = \, A_{\rm comp}X  \qquad (\text{cf.\ Definition~\ref{def:Q}} ),
\] 
we can also compute representatives in $\difffield(X)$ of the derivatives of $\sigma(\bvv)$ in $\extfieldred$. Thus, we can also determine representatives in $\difffield(X)$ of the images in $\extfieldred$ of the $m-l$ differential polynomials $\bff$ appearing in the Bruhat decomposition of $\fm$. We denote these elements in $\difffield(X)$ by
$$\bratf \, = \, (\ratf_{l+1},\dots,\ratf_m).$$ 

We substitute in the Bruhat decomposition of
$\fm$ the generic solutions $\bvv$, $\bff$, $\bexp$, $\bint$ by 
$\bratv ,\, \bratf, \, \bratexp, \, \bratint,$
where we recall from the beginning
of Section~\ref{sec:structureofreductivepart} that the entry $\ratint_i$ in $\bratint$ with $\beta_i \in \roots^-\setminus \leviroots^-$ is equal to $\intrad_i$, and obtain the matrix product
\begin{equation}\label{eqn:fm_in_hatelements}
\buu(\bratv,\bratf) \, n(\overline{w}) \, \btt(\bratexp) \, \buu(\bratint)\,.
\end{equation}
Note that the entries of this matrix product lie in the ring
\[
\difffield(X)[\intrad_i \mid \beta_i \in \roots^- \setminus \leviroots^-],
\]
which for the current purpose is not considered as a differential ring.
We denote by $\mathcal{D}$ the multiplicatively closed subset of $\difffield[X ,\ratfm,\intrad_i \mid \beta_i \in \roots^-\setminus \leviroots^-]$ which is generated by the denominators of
$\bratv$, $\bratf$, $\bratexp$ and of $\widehat{\intn}_i$ with $\beta_i \in \leviroots^-$ and the determinants $\det(X)$ and $\det(\ratfm)$.
We consider now the localization
\[
\locali \, := \, \mathcal{D}^{-1}\difffield[X,\ratfm,\intrad_i \mid \beta_i \in \roots^-\setminus \leviroots^-]
\]
and the ideal $\widetilde{Q}$ of $\locali$
generated by the numerators of the entries of the matrix
\[
\ratfm - \buu(\bratv, \bratf) \, n(\overline{w}) \, \btt(\bratexp) \, \buu(\bratint) \in \locali^{n \times n}
\]
and the generators of $Q$.
We can use now Gr\"obner basis methods to compute the intersection
\[
\ulQ \, := \, \widetilde{Q} \cap \difffield[\ratfm,\det(\ratfm)^{-1}]\,. 
\]
Again we can use Gr\"obner basis methods to compute the generators of the ideal $I_{\underline{H}}$ in $\field[\GL_n]$ which defines the stabilizer $\underline{H}(\field) = \Stab(\ulQ)(\field)$ in $\parabolic_J(\field) \leq \GL_n(\field)$ of the ideal $\ulQ$. Similarly, we consider in the localization 
\[
\locali_{\rm red} \, := \, \mathcal{D}^{-1}\difffield[X,\ratfm]
\]
the ideal $\widetilde{Q}_{\rm red}$ generated by the numerators of the entries of the matrix 
\[
\ratfm - \buu(\bratv, \bratf) \, n(\overline{w}) \, \btt(\bratexp) \, 
u_{j_1}(\widehat{\intn}_{j_1}) \cdots u_{j_k}(\widehat{\intn}_{j_k}) \in \locali_{\rm red}^{n \times n}
\]
and the generators of $Q$. One uses Gr\"obner basis methods to compute the intersection 
\[
\ovQred \, := \,  \widetilde{Q}_{\rm red} \cap \difffield[\ratfm,\det(\ratfm)^{-1}] 
\]
and generators of the ideal $I_{\Lred}$ defining the stabilizer $\Lred (\field)= \Stab(\ovQred)(\field)$ of the ideal $\ovQred$.
We summarize these steps in Algorithm~\ref{alg:reductivepart}.
\begin{algorithm} 
\DontPrintSemicolon
\KwInput { The matrix $\buu(\bratv,\bratf) \, n(\overline{w}) \, \btt(\bratexp) \, \buu(\bratint)$ and the generators of the ideal $Q \unlhd \difffield[X]$. }
\KwOutput{
\begin{itemize}
    \item A generating set of an ideal $I_{\underline{H}} \unlhd \field[\GL_n]$ defining the stabilizer $\underline{H}(\field) = \Stab(\ulQ)(\field)$. 
    The stabilizer has a Levi decomposition $\underline{\levi}(\field) \ltimes \unirad(\parabolic_J)(\field)$ for a Levi group $\underline{\levi}(\field)$  and $\underline{\levi}(\field)$ is conjugate by an element of $\unirad(\parabolic_J)(\field)$ to a Levi group of $H(\field)$.
    \item  A generating set of an ideal $\ovQred \unlhd \difffield[\widehat{\fm},\det(\widehat{\fm})^{-1}]$ which is a maximal differential ideal for $\Aprered$.
    \item A generating set of an ideal $I_{\Lred} \unlhd \field[\GL_n]$ defining the stabilizer $\Lred(\field)  = \Stab(\ovQred)(\field)$
    which is a Levi group of $\underline{H} (\field)= \Stab(\ulQ)(\field)$ contained in the standard Levi group of $\parabolic_J(\field)$.
\end{itemize}
}
Let $\widetilde{Q}$ be the ideal in $\difffield[X,\ratfm,\intrad_i, \det(\ratfm)^{-1} \mid \beta_i \in \roots^-\setminus \leviroots^-]$
generated by the numerators of the entries of the matrix 
\[
\ratfm - \buu(\bratv, \bratf) \, n(\overline{w}) \, \btt(\bratexp) \, \buu(\bratint) \in \locali^{n \times n}
\]
and the generators of $Q$.\\
Let $\widetilde{Q}_{\rm red}$ be the ideal in $\difffield[X,\ratfm, \det(\ratfm)^{-1} ]$
generated by the numerators of the entries of the matrix 
\[
\ratfm - \buu(\bratv, \bratf) \, n(\overline{w}) \, \btt(\bratexp) \, 
u_{j_1}(\widehat{\intn}_{j_1}) \cdots u_{j_k}(\widehat{\intn}_{j_k}) \in \locali_{\rm red}^{n \times n}
\]
and the generators of $Q$.\\
Compute with Gr\"obner basis methods a generating set of 
\[
 \ulQ \, = \, \widetilde{Q} \cap \difffield[\ratfm,\det(\ratfm)^{-1}] \quad \mathrm{and} \quad  \ovQred \, = \, \widetilde{Q}_{\rm red} \cap \difffield[\ratfm,\det(\ratfm)^{-1}].
\] \\
Compute with Gr\"obner basis methods generating sets of the  defining ideals 
\[
I_{\underline{H}} \unlhd \field[\GL_n] \quad \mathrm{and} \quad  I_{\Lred} \unlhd \field[\GL_n]
\]
of the stabilizers of $\ulQ$ and $\ovQred$ in $\GL_n(\field)$.\\
\Return(the generating sets of $I_{\underline{H}}$, $\ovQred$ and $I_{\Lred}$)
\caption{ComputeReductivePart\label{alg:reductivepart}}
\end{algorithm}

\begin{proposition}
    Algorithm~\ref{alg:reductivepart} is correct and terminates.
\end{proposition}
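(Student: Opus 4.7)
Termination is immediate: every step is either a Gr\"obner basis computation over a polynomial ring (localized at an explicit multiplicative subset) or an elimination of a finite set of variables, both of which terminate by standard effective commutative algebra.

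For correctness, the main point is to verify that the two intersections
\[
\ulQ \, = \, \widetilde{Q} \cap \difffield[\ratfm,\det(\ratfm)^{-1}], \qquad \ovQred \, = \, \widetilde{Q}_{\rm red} \cap \difffield[\ratfm,\det(\ratfm)^{-1}]
\]
are in fact equal to the kernels of the substitution homomorphisms $\ulphi$ and $\ovphired$ introduced in Section~\ref{sec:structureofreductivepart}. I would argue this via the evaluation morphism
\[
\Phi\colon \locali \, \to \, \underline{E}, \qquad X_{i,j} \mapsto \overline{X}_{i,j}, \quad \intrad_i \mapsto \underline{\intrad}_i, \quad \ratfm_{i,j} \mapsto (\fmuq)_{i,j}\,,
\]
which is well defined because $\mathcal{D}$ maps to non-zero elements of $\underline{E}$ (by construction of the representatives $\bratv$, $\bratf$, $\bratexp$, $\widehat{\intn}_i$ in Section~\ref{sec:specializingreductivepart} and because $\det(X) \not\in Q$). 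Clearly $\widetilde{Q} \subseteq \ker(\Phi)$, since the generators of $Q$ lie in $\ker(\Phi)$ and the numerators of the defining matrix equation map to the numerators of $\fmuq - \fmuq = 0$. The reverse inclusion follows because modulo the generators of $\widetilde{Q}$ one may solve each $\ratfm_{i,j}$ as a rational expression in $X$ and the $\intrad_i$ (after inverting $\mathcal{D}$), giving an isomorphism
\[
\locali / \widetilde{Q} \, \xrightarrow{\cong} \, \mathcal{D}^{-1}\bigl(\difffield[X]/Q\bigr)[\intrad_i \mid \beta_i \in \roots^-\setminus\leviroots^-]\,,
\]
and the latter injects into $\underline{E}$ since the $\underline{\intrad}_i$ are algebraically independent over $\extfieldred$ (by Remark~\ref{rem:Rintegraldomain} and the triangular form of $\Iuni$). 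Intersecting with $\difffield[\ratfm,\det(\ratfm)^{-1}]$ then yields exactly $\ker(\ulphi)=\ulQ$ as defined in Proposition~\ref{prop:statementsaboutstabilzers}. The argument for $\ovQred$ is the same, using the factor $\fmreduq = \fmredq$ and dropping the variables $\intrad_i$; alternatively one observes that $\ovQred = \ulQ + (\text{generators of the ideal of } \unirad(\parabolic_J)) \cap \difffield[\ratfm,\det(\ratfm)^{-1}]$, and one sees that the substitution made in $\widetilde{Q}_{\rm red}$ realizes precisely this. By Proposition~\ref{prop:statementsaboutstabilzers}\ref{prop:statementsaboutstabilzers(a)} the ideal $\ovQred$ is a maximal differential ideal for $\Aprered$.

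The defining ideals $I_{\underline{H}}$ and $I_{\Lred}$ of the stabilizers $\underline{H}(\field) = \Stab(\ulQ)$ and $\Lred(\field)=\Stab(\ovQred)$ are Zariski-closed subgroups of $\GL_n(\field)$ by general principles, and can be computed by the standard stabilizer algorithm: for each element $q$ of a finite generating set of $\ulQ$ one forms $q(\ratfm \cdot Y)$ with $Y$ a generic matrix of indeterminates, reduces modulo $\ulQ$ in the $\ratfm$-variables, and collects the coefficients of the remaining polynomial in $\ratfm$; requiring these coefficients to vanish gives polynomial equations in $Y$ whose solution variety in $\GL_n(\field)$ equals $\Stab(\ulQ)$. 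A Gr\"obner basis computation in $\field[\GL_n]$ returns a generating set, from which one extracts $I_{\underline{H}}$ (and likewise $I_{\Lred}$ from $\ovQred$). The properties of $\underline{H}(\field)$ and $\Lred(\field)$ advertised in the output specification are then precisely those established in Proposition~\ref{prop:statementsaboutstabilzers}, Proposition~\ref{prop:reductivepartYred} and Theorem~\ref{thm:Levigroupsconj}.

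The step I expect to be the main obstacle is the verification that $\widetilde{Q}$ is indeed saturated enough to contain every relation among the entries of $\fmuq$, i.e., that $\widetilde{Q} = \ker(\Phi)$. A priori the ideal $\widetilde{Q}$ only encodes the two obvious sources of relations (the ideal $Q$ and the Bruhat matrix equation); one must rule out additional relations coming from the fact that the rational functions $\bratv,\bratf,\bratexp,\widehat{\intn}_i$ in $X$ may themselves satisfy polynomial identities modulo $Q$. The triangular structure of the Bruhat decomposition, together with the fact that $\mathcal{D}^{-1}(\difffield[X]/Q)$ already contains the correct images of $\bvv,\bff,\bexp$ and the relevant $\intn_i$, is what forces the required equality; making this bookkeeping rigorous, with care about the order in which variables are eliminated and how denominators in $\mathcal{D}$ propagate, will be the technical crux of the proof.
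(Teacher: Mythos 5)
Your proof is correct and follows essentially the same route as the paper: both identify $\ulQ$ (resp.\ $\ovQred$) with the kernel of the substitution homomorphism $\ulphi$ (resp.\ $\ovphired$) by observing that modulo $\widetilde{Q}$ the variables $\ratfm_{i,j}$ are eliminated, so that $\locali/\widetilde{Q}$ is (a localization of) $(\difffield[X,\det(X)^{-1}]/Q)[\underline{\intrad}_i]$ embedded in $\underline{E}$, the images of the Bruhat-product entries are exactly the entries of $\fmuq$ (resp.\ $\fmredq$), and the advertised group-theoretic properties of $\underline{H}=\Stab(\ulQ)$ and $\Lred=\Stab(\ovQred)$ then follow from Propositions~\ref{prop:reductivepartYred}, \ref{prop:statementsaboutstabilzers} and Theorem~\ref{thm:Levigroupsconj}. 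The only blemish is your parenthetical ``alternative'' description of $\ovQred$ as $\ulQ$ plus the contracted ideal of $\unirad(\parabolic_J)$, which is imprecise and unjustified (but unnecessary, since your primary argument -- repeating the elimination with the factor $\fmreduq=\fmredq$ and dropping the $\intrad_i$ -- is exactly what the paper does), and the ``main obstacle'' you flag at the end is already settled by the very elimination/triangularity argument you give.
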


\begin{proof}
    Because the Gr\"obner basis computations terminate, the algorithm also terminates.
        
    We first show that the computed ideal $\ulQ$ in step~3 is the kernel of the substitution homomorphism
    \[
    \ulphi \colon \difffield[\widehat{\fm},\det(\widehat{\fm})^{-1}] \to \difffield[\fmuq,\det(\fmuq)^{-1}], \ \widehat{\fm}_{i,j} \to \fmuq_{i,j} \, .
    \]
    Then the properties of $\underline{H}(\field)=\Stab(\ulQ)(\field)$ stated in the output of the algorithm follow from Theorem~\ref{thm:Levigroupsconj}.

    The ideal $\ulQ$ is the kernel of the ring homomorphism 
    \begin{eqnarray*}
    \difffield[\ratfm,\det(\ratfm)^{-1}] & \to & 
    \locali / \widetilde{Q}, \\ \ratfm_{i,j} & \mapsto & \ratfm_{i,j} + \widetilde{Q} \, \equiv \, (\buu(\bratv, \bratf) \, n(\overline{w}) \, \btt(\bratexp) \, \buu(\bratint) )_{i,j} + \widetilde{Q}
    \end{eqnarray*}
    and so the ring homomorphism 
 \begin{eqnarray*}
    \varphi\colon \difffield[\ratfm ,\det(\ratfm)^{-1}]/\ulQ & \to & 
    \locali /\widetilde{Q}, \\
    \ratfm_{i,j} + \ulQ & \mapsto & (\buu(\bratv, \bratf) \, n(\overline{w}) \, \btt(\bratexp) \, \buu(\bratint))_{i,j} + \widetilde{Q}
\end{eqnarray*}
is a monomorphism. Its image is the subring generated over $\difffield$ by the entries
\[
(\buu(\bratv, \bratf) \, n(\overline{w}) \, \btt(\bratexp) \, \buu(\bratint))_{i,j} + \widetilde{Q} \, ,
\]
which is isomorphic as a ring to $\difffield[\fmuq,\det(\fmuq)^{-1}]$.

Finally, we have to show that the computed ideal $\ovQred$ in step~3 is the kernel of the substitution homomorphism
\[
    \ovphired \colon \difffield[\widehat{\fm},\det(\widehat{\fm})^{-1}] \to \difffield[ \fmspec_{\rm red},\det( \fmspec_{\rm red})^{-1}], \ \widehat{\fm}_{i,j} \mapsto (\fmspec_{\rm red})_{i,j}.
    \]
    Then, the statements about $\Lred(\field) = \Stab(\ovQred)(\field)$ will follow from Propositions~\ref{prop:reductivepartYred} and \ref{prop:statementsaboutstabilzers} \ref{prop:statementsaboutstabilzers(a)} and Theorem~\ref{thm:Levigroupsconj} \ref{thm:Levigroupsconj(c)}. 
    The ideal $\ovQred$ is the intersection  of $\widetilde{Q}_{\rm red}$ with $\difffield[\ratfm,\det(\ratfm)^{-1}]$ and so it is the kernel of the ring homomorphism 
    \begin{eqnarray*}
    \difffield[\ratfm ,\det(\ratfm)^{-1}] & \to & \locali_{\rm red}/\widetilde{Q}_{\rm red}, \\ \ratfm_{i,j} & \mapsto & \ratfm_{i,j} + \widetilde{Q}_{\rm red} \, \equiv \, (\buu(\bratv, \bratf) \, n(\overline{w}) \, \btt(\bratexp) \cdot \\ 
    & & \quad u_{j_1}(\widehat{\intn}_{j_1}) \cdots u_{j_k}(\widehat{\intn}_{j_k}))_{i,j} + \widetilde{Q}_{\rm red}.
    \end{eqnarray*}
    Thus, the ring homomorphism
    \[
    \begin{array}{rcl}
        \varphi\colon \difffield[\ratfm ,\det(\ratfm)^{-1}]/\ovQred & \to & \locali_{\rm red}/\widetilde{Q}_{\rm red},\\[0.5em]
        \ratfm_{i,j} + \ovQred &\mapsto & \ratfm_{i,j} + \widetilde{Q}_{\rm red} \, \equiv \, (\buu(\bratv, \bratf) \, n(\overline{w}) \, \btt(\bratexp) \cdot \\[0.5em]
        & & \quad u_{j_1}(\widehat{\intn}_{j_1}) \cdots u_{j_k}(\widehat{\intn}_{j_k}))_{i,j} + \widetilde{Q}_{\rm red} 
    \end{array}
    \]
    is a monomorphism. The image of $\varphi$ is the subring generated over $\difffield$ by the entries  
    \[
    (\buu(\bratv, \bratf) \, n(\overline{w}) \, \btt(\bratexp) \, u_{j_1}(\widehat{\intn}_{j_1}) \cdots u_{j_k}(\widehat{\intn}_{j_k}))_{i,j} + \widetilde{Q}_{\rm red} \, = \, (\fmspec_{\rm red})_{i,j}
    \]
    and so $\image(\varphi)$ is isomorphic to $\difffield[\fmspec_{\rm red},\det(\fmspec_{\rm red})^{-1}]$. 
\end{proof}

\section{Computing the Unipotent Radical}\label{sec:unipotentradical}

We start using the results of Section~\ref{sec:intoLieAlgParabolic} to show that we can compute a matrix $\overline{g}_1 \in \group(\difffield)$ such that $\overline{g}_1 \, \fmuq \in \parabolic_J(\underline{R})$ and $\overline{g}_1 \fmreduq \in \levi_J(\extfieldred)$ and such that $\overline{g}_1$ gauge transforms $A_{\group}(\bsq)$ into the Lie algebra of $\parabolic_J$, that is
\begin{equation}\label{eqn:definitionA_PJ}
\gauge{\overline{g}_1}{A_{\group}(\bsq)} \, =: \, A_{\parabolic_J} \in \Lie(\parabolic_J)(\difffield)\,.
\end{equation}

\begin{proposition}\label{prop:gaugetoAPJ}
In the notation of Section~\ref{sec:intoLieAlgParabolic}
let $\beta_{k_1},\dots,\beta_{k_s}$ be the roots in $\roots^-\setminus ( \Phi_1'^- \cup \dots \cup \Phi_d'^- )$.
We can compute  
$\overline{x}_{k_1},\dots, \overline{x}_{k_s} \in \difffield$ such that $A_{\group}(\bsq) $ is gauge equivalent by
\[
\overline{g}_1 \, := \, n(\overline{w})^{-1} \, u_{k_s}(\overline{x}_{k_s}) \cdots u_{k_1}(\overline{x}_{k_1}) \in \group(\difffield)
\]
to a matrix $A_{\parabolic_J}$ in $\Lie(\parabolic_J)(\difffield)$.
Moreover, we have
\begin{eqnarray*}
\overline{g}_1 \, \fmuq & \in & \parabolic_J(\underline{R})\,,\\
\overline{g}_1 \, \buu(\underline{\bvv},\underline{\bff}) \, n(\overline{w}) &\in& \unipotent^+_{\Psi}(\extfieldred)\,,\\
\overline{g}_1 \fmreduq & \in & \levi_J(\extfieldred)
\end{eqnarray*}
with $\fmuq = \fmreduq \, \fmraduq$, $\underline{R}$, $\underline{\bvv}$ and $\underline{\bff}$ as introduced in the beginning of Section~\ref{sec:structureofreductivepart}.
\end{proposition}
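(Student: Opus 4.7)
The plan is to obtain Proposition~\ref{prop:gaugetoAPJ} as the specialization of its generic counterpart Proposition~\ref{prop:transformationintoPJgen} via the differential homomorphism $\sigul$ introduced in \eqref{eqn:specializationreductivepart}. By that proposition, there are elements $x_{k_1},\dots,x_{k_s} \in \generalext^{\parabolic_J} = \difffield\langle \bss(\bvv),\bvvbase\rangle$ such that the matrix
\[
g_1 \, := \, n(\overline{w})^{-1} \, u_{\beta_{k_s}}(x_{k_s})\cdots u_{\beta_{k_1}}(x_{k_1})
\]
satisfies $g_1 \fm \in \parabolic_J(\generalext)$, $g_1 \buu(\bvv,\bff)\, n(\overline{w}) \in \unipotent_{\leviroots^+}(\generalext)$ and $\gauge{g_1}{A_{\group}(\bss(\bvv))} \in \Lie(\parabolic_J)(\generalext^{\parabolic_J})$. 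Moreover, Remark~\ref{rem:transformationintoPJgen} explains how each $x_{k_i}$ can be written effectively as a rational function in $\field\langle \bss(\bvv),\bvvbase\rangle$ whose denominator is a product of powers of the elements $p_{2,1},\dots,p_{2,q}$ computed in Remark~\ref{rem:computerepofp_i}.

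Next I would invoke Assumption~\ref{assumption2} \ref{itemNew:assumption2}, which guarantees that none of the $p_{2,j}$ lies in the kernel of $\siginter$. Therefore each $x_{k_i}$ lies in the domain of $\siginter$ and specializes to a computable element
\[
\overline{x}_{k_i} \, := \, \siginter(x_{k_i}) \in \difffield\,,
\]
so that $\overline{g}_1$ is defined as prescribed and has entries in $\difffield$. Since $\siginter$ (and its extension $\sigul$) is a differential homomorphism that commutes with the formation of products, inverses and logarithmic derivatives, applying it to the generic identity $\gauge{g_1}{A_{\group}(\bss(\bvv))} = A_{\parabolic_J,\mathrm{gen}}$ yields
\[
\gauge{\overline{g}_1}{A_{\group}(\bsq)} \, =: \, A_{\parabolic_J} \in \Lie(\parabolic_J)(\difffield)\,,
\]
using that $\Lie(\parabolic_J)$ is defined over $\field$, so that the inclusion of entries into the subspace $\Lie(\parabolic_J)$ is preserved by specialization.

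For the three remaining statements I would extend $\siginter$ to $\sigul$ and apply it to the generic inclusions. The matrix $\fm$ has entries in the domain $D^{-1}\difffield\{\bvv\}[\bexp,\bexp^{-1},\bint]$ of $\sigul$, and $\sigul(\fm) = \fmuq$ and $\sigul(\fmred) = \fmreduq$ by the very construction of $\fmuq$ in \eqref{eqn:bruhatoffundmatrixprechapter10} and the definition of $\fmreduq$ at the start of Section~\ref{sec:structureofreductivepart}. Because $\parabolic_J$, $\unipotent^+_{\leviroots}$ and $\levi_J$ are closed subgroups defined over $\field$, we obtain
\[
\overline{g}_1 \fmuq \, = \, \sigul(g_1 \fm) \in \parabolic_J(\underline{R})\,, \qquad
\overline{g}_1 \fmreduq \, = \, \sigul(g_1 \fmred) \in \levi_J(\extfieldred)\,,
\]
and the analogous inclusion for $\overline{g}_1 \buu(\underline{\bvv},\underline{\bff})\, n(\overline{w}) \in \unipotent^+_{\leviroots}(\extfieldred)$. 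That the images land in $\extfieldred$ (rather than merely in $\underline{E}$) follows from Proposition~\ref{prop:galaction} \ref{prop:galaction(a)}, since the parameters $\underline{\bvv}, \underline{\bff}, \underline{\bexp}$ and $\underline{\intn}_i$ with $\beta_i\in\leviroots^-$ all lie in $\extfieldred$, and because $\overline{g}_1 \in \group(\difffield)$.

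The argument is essentially bookkeeping, and the only mildly delicate point — which I regard as the main obstacle — is to check that the generic $x_{k_i}$ admit representations in $\difffield\langle \bss(\bvv),\bvvbase\rangle$ whose denominators lie in the multiplicatively closed set $D$ used to define $\sigul$. This is exactly what Remark~\ref{rem:transformationintoPJgen} and Assumption~\ref{assumption2} \ref{itemNew:assumption2} provide, and once this is established the specialization proceeds uniformly for all assertions of the proposition.
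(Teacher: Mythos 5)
Your proposal is correct and follows essentially the same route as the paper: both deduce the proposition by specializing the generic Proposition~\ref{prop:transformationintoPJgen} (and Remark~\ref{rem:genericYredYraddecomp}) through $\siginter$ for the gauge identity and through $\sigul$ for the matrix inclusions, using that $\parabolic_J$, $\unipotent_{\leviroots^+}$ and $\levi_J$ are defined over $\field$ and that the parameters $\underline{\bvv}$, $\underline{\bff}$, $\underline{\bexp}$, $\underline{\intn}_i$ with $\beta_i\in\leviroots^-$ lie in $\extfieldred$. The only minor difference is in how computability of $\overline{x}_{k_i}\in\difffield$ is justified: the paper expresses the $x_{k_i}$ in terms of $\bss(\bvv)$ and $\bvvbase$ by differential elimination and then substitutes $\bsq$, $\bvvqbase$, whereas you use the rational-function representation of Remark~\ref{rem:transformationintoPJgen} together with Assumption~\ref{assumption2}~\ref{itemNew:assumption2}, which is equally valid within the paper's framework since those denominators generate part of $D$.
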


\begin{proof}
According to Proposition~\ref{prop:transformationintoPJgen} and Remark~\ref{rem:transformationintoPJgen} we can compute
differential polynomials $x_{k_1},\dots,x_{k_s} \in \field\{ \bss(\bvv),\bvvbase \}$ such that the matrix 
\[
g_1 \, = \, n(\overline{w})^{-1} \, u_{k_s}(x_{k_s}) \cdots u_{k_1}(x_{k_1})
\]
satisfies:
\begin{eqnarray}
    \gauge{g_1}{A_{\group}(\bss(\bvv))} \, =: \, A_{\parabolic_J}^{\rm gen} & \in & \Lie(\parabolic_J)(\field\{\bss(\bvv),\bvvbase\}) \, ,\label{eqn:gaugetransformationPJgeneric}\\
    g_1 \, \fm & \in & \parabolic_J(\field\{\bvv\}[\bexp,\bexp^{-1},\bint ]) \, , \label{eqn:multiplicationintoPJgeneric} \\
    g_1 \, \buu(\bvv,\bff) \, n(\overline{w}) & \in & \unipotent^+_{\Psi}(\field\{\bvv\} ) \,. \label{eqn:genericelementinUPsi}
\end{eqnarray}
We are going to specialize the generic results to the corresponding results over $\underline{R}$.

Let  
\[
\pi_{\mathrm{inter}}\colon \difffield\{ \bss(\bvv),\bvvbase \} \to \difffield\{ \bss(\bvv),\bvvbase \}/\idealinter \, \cong \, \difffield
\]
be the surjective homomorphism of differential rings obtained from $\siginter$ in Section~\ref{sec:specializingreductivepart} by extending scalars.
Then, we set
\[
\overline{x}_{k_s} \, = \, \pi_{\mathrm{inter}}(x_{k_s}), \quad \dots, \quad \overline{x}_{k_1} \, = \, \pi_{\mathrm{inter}}(x_{k_1}) \, .
\]
We explain how to compute $\overline{x}_{k_1},\dots, \overline{x}_{k_s} \in \difffield$ from $x_{k_1},\dots, x_{k_s} \in \field\{ \bvv \}$. 
We use differential elimination to express $x_{k_1},\dots, x_{k_s}$ as differential polynomials in $\bss(\bvv)$ and $\bvvbase$. Then one simply substitutes in these expressions 
$\bss(\bvv)$ and $\bvvbase$ by $\bsq$ and $\bvvqbase$.
Let $\baa=(a_1,\dots,a_l)$ be differential indeterminates over $\field\{\bvv\}$. One uses the differential Thomas decomposition to compute the normal form of $x_{k_1},\dots, x_{k_s}$ with respect to the differential ideal generated by
\[
 a_1-s_1(\bvv), \quad \dots, \quad a_l-s_l(\bvv)  
\]
and an elimination ranking satisfying $\bvv \gg \baa$. We obtain expressions for $x_{k_1},\dots, x_{k_s}$ in $\baa$ and $\bvvbase$.
Substituting $\baa$ and $\bvvbase$ by $\bsq$ and $\bvvqbase$, respectively, we obtain $\overline{x}_{k_1},\dots, \overline{x}_{k_s} \in \difffield$.

Applying $\pi_{\rm inter}$ to \eqref{eqn:gaugetransformationPJgeneric} we obtain
\[
\begin{array}{rcl}
    \pi_{\mathrm{inter}}(A_{\parabolic_J}^{\rm gen}) \! & \! = \! & \! \pi_{\mathrm{inter}} \big(n(\overline{w})^{-1} u_{k_s}(x_{k_s}) \cdots u_{k_1}(x_{k_1}).A_{\group}(\bss(\bvv)) \big)\\[0.2em]
    \! & \! = \! & \! n(\overline{w})^{-1} u_{k_s}(\pi_{\mathrm{inter}}(x_{k_s})) \cdots \gauge{u_{k_1}(\pi_{\mathrm{inter}}(x_{k_1}))}{A_{\group}(\pi_{\mathrm{inter}}(\bss(\bvv)))}\\[0.2em]
    \! & \! = \! & \! n(\overline{w})^{-1} u_{k_s}(\overline{x}_{k_s}) \cdots u_{k_1}(\overline{x}_{k_1}).A_{\group}( \bsq) \, =: \, A_{\parabolic_J} \in \Lie(\parabolic_J)(\difffield).
\end{array}
\]

Let $\pi$ be the surjective homomorphism of differential rings
\[
\pi\colon \difffield \{\bvv\}[\bexp,\bexp^{-1} ,\bint ] \to \underline{R}
\]
obtained from $\sigul$ in \eqref{eqn:specializationreductivepart} in Section~\ref{sec:structureofreductivepart} by restriction to $\difffield \{\bvv\}[\bexp,\bexp^{-1} ,\bint ]$. Since the restriction of $\sigul$ to $\field\{\bss(\bvv),\bvvbase\}$ is $\siginter$, we conclude that the restriction of $\pi$ to $\difffield\{\bss(\bvv),\bvvbase\}$ coincides with $\pi_{\mathrm{inter}}$ and so $\pi(g_1) = \pi_{\mathrm{inter}}(g_1)$. Applying $\pi$ to \eqref{eqn:multiplicationintoPJgeneric} we obtain  
\[
\pi(g_1\fm) \, = \, \pi(g_1) \, \pi(\fm) \, = \, \pi_{\mathrm{inter}}(g_1) \, \fmuq \, \in \, \parabolic_J(\underline{R})\,.
\]
We apply $\pi$ to \eqref{eqn:genericelementinUPsi} and obtain
\[
\begin{array}{rcl}
\pi ( g_1 \, \buu(\bvv,\bff) \, n(\overline{w}) ) \! & \! = \! & \! \pi_{\mathrm{inter}}(g_1) \, \buu( \pi(\bvv), \pi(\bff)) \, n(\overline{w})\\[0.2em]
\! & \! = \! & \! \pi_{\mathrm{inter}}(g_1) \, \buu( \underline{\bvv}, \underline{\bff}) \, n(\overline{w}) \in \unipotent^+_{\Psi}(\extfieldred).
\end{array}
\]
The last assertion simply follows now from the definition of $\fmreduq$, i.e.\ from
\[
\fmreduq \, = \, \buu(\underline{\bvv},\underline{\bff}) \, n(\overline{w}) \, \btt(\underline{\bexp}) \, u_{j_1}(\underline{\intn}_{j_1}) \cdots u_{j_k}(\underline{\intn}_{j_k}) \in \group(\extfieldred) \, , 
\]
and from the fact that $u_{j_1}(\underline{\intn}_{j_1}) \cdots u_{j_k}(\underline{\intn}_{j_k}) \in U^-_{\Psi}(\extfieldred)$.
\end{proof}

Recall from Definition~\ref{def:Q} that $\extfieldred = \Frac(\difffield[X,\det(X)^{-1}]/Q)$ is a Picard-Vessiot extension of $\difffield$ for the companion matrix $A_{\rm comp}$ for
$\overline{\LCLM}(\bsq,\bvvqbase,\partial) \, y = 0$.
Moreover, $\Hred(\field) := \Stab(Q)(\field) \leq \GL_{n_{I''}}(\field)$ is the differential Galois group of $\extfieldred$ over $\difffield$ in the representation induced by the fundamental matrix 
$$X + Q$$
 The Fundamental Theorem of Differential Galois Theory (cf.\ \cite[Proposition~1.34]{vanderPutSinger}) implies that the fixed field $\extfieldred^{\cocomp{\Hred}} $ is a finite Galois extension of $\difffield$ with Galois group $\Hred/\cocomp{\Hred}$.

\begin{definition}\label{def:difffieldalg}
We denote the finite Galois extension $\extfieldred^{\cocomp{\Hred}}$ of $\difffield$ by $\difffieldalg$.
\end{definition}

\begin{proposition}\label{prop:primelem}
We can compute a primitive element
\[
p \in \left( \difffield[X,\det(X)^{-1}]/Q \right)^{\cocomp{\Hred}}
\]
for the algebraic extension $\difffieldalg$ of $\difffield$.
\end{proposition}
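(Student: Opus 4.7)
The plan is to exploit that $\cocomp{\Hred}$ is a connected reductive subgroup of $\GL_{n_{I''}}(\field)$ (being the connected component of the reductive group $\Hred$) and to use Hilbert's finiteness theorem together with the classical primitive element theorem.

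First I would make $\cocomp{\Hred}$ explicit. Algorithm~\ref{alg:reductivepart} already returns a generating set of the defining ideal $I_{\Hred} \unlhd \field[\GL_{n_{I''}}]$. By standard Gr\"obner basis techniques we can compute the minimal associated primes of $I_{\Hred}$, pick the component containing the identity, and thereby obtain generators of the defining ideal $I_{\cocomp{\Hred}}$ of $\cocomp{\Hred}$. Since $\cocomp{\Hred}$ is reductive, Hilbert's theorem guarantees that the ring of invariants
\[
R_{\mathrm{inv}} \, := \, \field[X,\det(X)^{-1}]^{\cocomp{\Hred}}
\]
is finitely generated as a $\field$-algebra, and by the classical algorithms of Derksen this generating set $f_1,\dots,f_k \in \field[X,\det(X)^{-1}]$ can be computed effectively.

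Next I would pass to the Picard-Vessiot ring $\difffield[X,\det(X)^{-1}]/Q$. The residue classes $\bar f_1,\dots,\bar f_k$ of $f_1,\dots,f_k$ are fixed by $\cocomp{\Hred}$, and since
\[
(\difffield[X,\det(X)^{-1}]/Q)^{\cocomp{\Hred}} \, = \, \difffield \otimes_\field R_{\mathrm{inv}} + Q / Q
\]
by the standard extension-of-scalars argument for reductive group actions, the subring generated by $\bar f_1,\dots,\bar f_k$ over $\difffield$ has fraction field equal to the fixed field $\extfieldred^{\cocomp{\Hred}} = \difffieldalg$. Since the Fundamental Theorem ensures that $\difffieldalg/\difffield$ is a finite Galois extension, the classical primitive element theorem applies, and some $\field$-linear combination
\[
p \, := \, c_1 \bar f_1 + \dots + c_k \bar f_k \quad (c_i \in \field)
\]
generates $\difffieldalg$ over $\difffield$. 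To verify that a candidate $p$ is primitive, I would compute the $\Hred$-orbit of $p$ modulo $Q$ (the orbit size equals the degree of the minimal polynomial of $p$ over $\difffield$, because $\Hred/\cocomp{\Hred} \cong \Gal(\difffieldalg/\difffield)$ acts transitively on the roots) and check that this orbit has exactly $[\Hred:\cocomp{\Hred}]$ elements; since the cardinality of $\Hred/\cocomp{\Hred}$ is obtained from the dimensions and numbers of irreducible components of $I_{\Hred}$ and $I_{\cocomp{\Hred}}$, this is a finite check, and we can iterate over a Zariski-dense finite set of coefficient tuples $(c_i)$ until the check succeeds.

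The main obstacle will be the effective computation of generators of $R_{\mathrm{inv}}$, which is in general expensive; a practical improvement is to use Reynolds-operator techniques to produce invariants of controlled degree and then apply the primitivity test above, rather than insisting on a full generating set of the invariant ring.
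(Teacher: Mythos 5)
Your strategy is essentially the one the paper follows in Appendix~E: obtain the defining ideal of $\cocomp{\Hred}$ from that of $\Hred$ by a prime/primary decomposition, compute a finite generating set of invariants of the reductive group $\cocomp{\Hred}$ with Derksen--Kemper-type algorithms, transport these generators to the Picard--Vessiot ring $\difffield[X,\det(X)^{-1}]/Q$ using the surjectivity of invariants onto the invariants of the quotient by the $\cocomp{\Hred}$-stable ideal $Q$ (the paper realizes this via an equivariant closed embedding into affine space, \cite[Corollary~2.2.9]{DerksenKemper}, and scalar-extension arguments between $\field$, $\difffield$ and $\overline{\difffield}$, while you invoke the Reynolds-operator fact directly on the $\GL$-coordinate ring -- the same key input), and finally take a generic linear combination of the resulting generators, certified primitive by a degree count against $[\Hred:\cocomp{\Hred}]$; your orbit-size test is equivalent to the paper's minimal-polynomial-degree test, and taking coefficients in $\field$ rather than $\difffield$ is harmless since $\field$ is infinite.

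The one step you assert without justification is that the fraction field of the $\difffield$-subalgebra generated by $\bar f_1,\dots,\bar f_k$ is all of $\difffieldalg$, i.e.\ that the fixed \emph{field} $\extfieldred^{\cocomp{\Hred}}=\difffieldalg$ coincides with (the fraction field of) the invariant \emph{ring} $\left(\difffield[X,\det(X)^{-1}]/Q\right)^{\cocomp{\Hred}}$. This is not a consequence of reductive invariant theory alone: for a reductive group acting on an affine variety, the field of invariant rational functions may strictly contain the fraction field of the ring of invariants (already $\mathbb{G}_m$ scaling $\mathbb{A}^2$ has $x/y$ invariant while the invariant ring is trivial). The paper closes this gap with a differential-Galois argument: any element of $\extfieldred$ algebraic over $\difffield$ has finite orbit under the Galois group, so the $\field$-span of its orbit is finite dimensional and the element lies in the Picard--Vessiot ring by \cite[Corollary~1.38]{vanderPutSinger}; hence $\difffieldalg$ is contained in the invariant ring and the two objects agree. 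You need this (one-line) addition, because the termination of your search over coefficient tuples $(c_i)$ depends on it: if the fraction field of your computed invariant subring were a proper subfield of $\difffieldalg$, no candidate $p$ of the proposed form would ever pass the orbit/degree test and the loop would not halt.
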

\begin{proof}
A proof is given in Appendix~\ref{prop:primelemappendix}.
\end{proof}

Recall from Section~\ref{sec:structureofreductivepart} the decomposition $\fmuq = \fmreduq \, \fmraduq$ and the maximal differential ideal $\ovQred \unlhd \difffield[\widehat{\fm},\det(\widehat{\fm})^{-1}]$.
Its stabilizer is the differential Galois group $\Lred(\field) = \Stab(\ovQred)(\field)$ of the Picard-Vessiot extension $\extfieldred$ over $\difffield$ with respect to the fundamental matrix $\fmreduq$ for $\Aprered$. Recall that $\Hred(\field) \cong\Lred(\field)$ and that we have $\extfieldred^{\cocomp{\Lred}}=\difffieldalg $.

\begin{proposition}\label{prop:reduceredivtivepartnew}
Let $\overline{g}_1$ be as in Proposition~\ref{prop:gaugetoAPJ} and for an $n\times n$ matrix  $\widehat{X}=(\widehat{X}_{i,j})$ of indeterminates let 
$$\ovQred' \unlhd \difffield[\widehat{X},\det(\widehat{X})^{-1}]$$ 
be the ideal obtained from $\ovQred \unlhd \difffield[\widehat{\fm},\det(\widehat{\fm})^{-1}]$ by applying the transformation $\widehat{X} = \overline{g}_1 \, \widehat{\fm}$.
Then there exists an $\difffieldalg$-rational point $\overline{g}_2$ of $\ovQred'$ such that  
\[
\overline{g}_2 \in \levi_J(\difffieldalg) \quad \text{and} \quad \overline{g}_2 \overline{g}_1 \fmreduq \in \cocomp{\Lred}(\extfieldred) \, .
\] 
In particular, the matrix
\[
\gauge{\overline{g}_2 \overline{g}_1}{\Aprered} \, = \, \dlog( \overline{g}_2 \overline{g}_1 \fmreduq) \, =: \, \Ared \in \Lie(\Lred)(\difffieldalg)
\]
is in reduced form.
\end{proposition}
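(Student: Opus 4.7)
The plan is to obtain $\overline{g}_2$ via Kolchin's reduction theorem applied to the Picard-Vessiot tower $\extfieldred/\difffieldalg$, whose differential Galois group is connected and whose base field is $C_1$. Before invoking the reduction, I would set the stage: combining Proposition~\ref{prop:reductivepartYred} (which gives $\Aprered \in \Lie(\group)(\difffield)$) with Proposition~\ref{prop:gaugetoAPJ} (which gives $\overline{g}_1 \in \group(\difffield)$ and $\overline{g}_1\fmreduq \in \levi_J(\extfieldred)$), a short computation of the logarithmic derivative yields
\[
\overline{g}_1.\Aprered \, = \, \dlog(\overline{g}_1\fmreduq) \, \in \, \Lie(\levi_J)(\extfieldred) \cap \Lie(\group)(\difffield) \, = \, \Lie(\levi_J)(\difffield),
\]
since $\levi_J$ is defined over $\field \subseteq \difffield$. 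Moreover, because $\overline{g}_1 \in \group(\difffield)$ is Galois-fixed, the Galois group of $\extfieldred/\difffield$ in the representation afforded by the fundamental matrix $\overline{g}_1\fmreduq$ still equals $\Lred$.

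Next I would apply the Fundamental Theorem of Differential Galois Theory to the tower $\difffield \subseteq \difffieldalg \subseteq \extfieldred$: by Definition~\ref{def:difffieldalg}, $\difffieldalg = \extfieldred^{\cocomp{\Lred}}$, so $\extfieldred/\difffieldalg$ is a Picard-Vessiot extension for $\overline{g}_1.\Aprered$ with fundamental matrix $\overline{g}_1\fmreduq$ and \emph{connected} differential Galois group $\cocomp{\Lred}$. Since $\field$ is algebraically closed, $\difffield = \field(z)$ is a $C_1$-field by Tsen's theorem, and any finite algebraic extension of a $C_1$-field is again $C_1$, so $\difffieldalg$ is $C_1$ as well.

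The crux is then Kolchin's reduction theorem (equivalently, Steinberg's theorem on the triviality of torsors under connected linear algebraic groups over $C_1$-fields) applied to the PV extension $\extfieldred/\difffieldalg$ with connected group $\cocomp{\Lred} \leq \levi_J$. This produces a matrix $\overline{g}_2 \in \levi_J(\difffieldalg)$ such that $\overline{g}_2\overline{g}_1\fmreduq$ is a fundamental matrix in $\cocomp{\Lred}(\extfieldred)$ for the gauge-transformed equation. Taking logarithmic derivatives then gives
\[
\gauge{\overline{g}_2\overline{g}_1}{\Aprered} \, = \, \dlog(\overline{g}_2\overline{g}_1\fmreduq) \, = \, \Ared \, \in \, \Lie(\cocomp{\Lred})(\difffieldalg),
\]
as required. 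The fact that $\overline{g}_2$ can be chosen in $\levi_J$ (rather than just in $\GL_n$) reflects that $\overline{g}_1.\Aprered$ lies in $\Lie(\levi_J)$ and the fundamental matrix already lies in $\levi_J$, so the Kolchin reduction can be carried out inside this subgroup.

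Finally, the $\difffieldalg$-rational point property in $\ovQred'$ is a reformulation of the same triviality statement: the variety $V(\ovQred')$ represents the Picard-Vessiot torsor for $\overline{g}_1.\Aprered$ after the change of coordinates $\widehat{X} = \overline{g}_1\widehat{\fm}$; its base change to $\difffieldalg$ carries the structure of a $\cocomp{\Lred}$-torsor, which by Steinberg's theorem is trivial and hence admits an $\difffieldalg$-rational point. Tracing this point through the construction identifies it with $\overline{g}_2$, up to right multiplication by a constant in $\Lred(\field)$ which preserves both membership in $V(\ovQred')$ and in $\levi_J(\difffieldalg)$. The hardest part of the argument will be matching the three conditions on $\overline{g}_2$ simultaneously --- being an $\difffieldalg$-rational point of $\ovQred'$, lying in $\levi_J(\difffieldalg)$, and effecting the gauge reduction into $\cocomp{\Lred}$ --- which amounts to careful bookkeeping of the Galois cocycles and the torsor structure; connectedness of $\cocomp{\Lred}$ is crucial here, since Steinberg's theorem fails for disconnected groups.
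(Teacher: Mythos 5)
Your proposal follows essentially the same route as the paper's proof: pass to the base field $\difffieldalg = \extfieldred^{\cocomp{\Lred}}$ (a $C_1$-field), use connectedness of $\cocomp{\Lred}$ to conclude the Picard-Vessiot torsor is trivial and hence has an $\difffieldalg$-rational point $\overline{g}_2$, observe that membership of $\overline{g}_1\fmreduq$ in $\levi_J(\extfieldred)$ forces $\overline{g}_2 \in \levi_J(\difffieldalg)$, and finish with the logarithmic-derivative computation. The paper merely formalizes your "base change" step by introducing the maximal differential ideal $\ovQred''$ over $\difffieldalg$ containing $(\ovQred')$, whose rational point is then automatically a rational point of $\ovQred'$.
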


\begin{proof}
It follows from Proposition~\ref{prop:reductivepartYred} that $\difffield[\fmreduq,\det(\fmreduq)^{-1}]$ is a Picard-Vessiot ring for $\Aprered$ over $\difffield$ with fundamental matrix $\fmreduq$ and differential Galois group $\Lred(\field)$. 
Since $\overline{g}_1 \in \group(\difffield)$, the ring $\difffield[\fmreduq,\det(\fmreduq)^{-1}]$ is also a Picard-Vessiot ring for $\gauge{\overline{g}_1}{\Aprered}$ over $\difffield$ with fundamental matrix
$\overline{g}_1 \fmreduq$.
Moreover, $\overline{g}_1 \in \group(\difffield)$ implies that the representation of the differential Galois group with respect to $\overline{g}_1 \, \fmreduq$ is again $\Lred(\field)$.

We extend now the derivation of $\difffield$ to $\difffield[\widehat{X},\det(\widehat{X})^{-1}]$ by $\partial(\widehat{X})=(\overline{g}_1.\Aprered)\widehat{X}$.
Since the derivation on $\difffield[\widehat{\fm},\det(\widehat{\fm})^{-1}]$ is defined by $\Aprered$, we conclude that 
\[
\varphi\colon \difffield[\widehat{X},\det(\widehat{X})^{-1}] \to \difffield[\fmreduq,\det(\fmreduq)^{-1}] , \ \widehat{X} \mapsto \overline{g}_1 \, \fmreduq
\]
is a surjective differential $\difffield$-homomorphism. Because 
\[
\difffield[\widehat{\fm},\det(\widehat{\fm})^{-1}]/\ovQred \to \difffield[\fmreduq,\det(\fmreduq)^{-1}], \ \widehat{\fm} + \ovQred \mapsto \fmreduq
\]
is a differential $\difffield$-isomorphism, it follows that $\kernel(\varphi)=\ovQred'$.
Since the ring $\difffield[\fmreduq,\det(\fmreduq)^{-1}]$ is differentially simple, $\ovQred'$ is a maximal differential ideal and so 
\[
\difffield[\widehat{X},\det(\widehat{X})^{-1}]/\ovQred'
\]
is a Picard-Vessiot ring over $\difffield$ with fundamental matrix $\widehat{X} + \ovQred'$. Moreover, since $\varphi$ maps the fundamental matrix $\widehat{X} + \ovQred'$ to the fundamental matrix $\overline{g}_1 \, \fmreduq$, the differential Galois group of $\difffield[\widehat{X},\det(\widehat{X})^{-1}]/\ovQred'$ is also $\Lred(\field)$.

Since 
\[
\difffieldalg \, = \, \extfieldred^{\cocomp{\Lred}} \, = \, \Frac(\difffield[\fmreduq,\det(\fmreduq)^{-1}])^{\cocomp{\Lred}} \, = \, \difffield[\fmreduq,\det(\fmreduq)^{-1}]^{\cocomp{\Lred}},
\]
the differential ring 
\[
\difffield[\fmreduq,\det(\fmreduq)^{-1}] \, = \, \difffieldalg[\fmreduq,\det(\fmreduq)^{-1}]
\]
is also a Picard-Vessiot ring over $\difffieldalg$ for $\overline{g}_1 \Aprered$ with differential Galois group the connected component $\cocomp{\Lred}$ according to \cite[Proposition~1.34]{vanderPutSinger}. 
Consider now the surjective differential homomorphism
\[
\eta \colon \difffieldalg[\widehat{X},\det(\widehat{X})^{-1}] \to \difffieldalg[\fmreduq,\det(\fmreduq)^{-1}], \ \widehat{X} \mapsto \overline{g}_1 \, \fmreduq
\]
and denote its kernel by $\ovQred'' := \kernel(\eta)$.
Since $\difffieldalg[\fmreduq,\det(\fmreduq)^{-1}]$ is differentially simple, $\ovQred''$ is a maximal differential ideal and one easily checks that   
\begin{equation}\label{eqn:inclusionidealsQ3}
(\ovQred') \subset \ovQred'' \subset \difffieldalg[\widehat{X},\det(\widehat{X})^{-1}]\,,
\end{equation}
where $(\ovQred')$ denotes the ideal in $\difffieldalg[\widehat{X},\det(\widehat{X})^{-1}]$ generated by $\ovQred'$.
Hence,
\[
\overline{\eta}\colon \difffieldalg[\widehat{X},\det(\widehat{X})^{-1}]/\ovQred'' \to \difffieldalg[\fmreduq,\det(\fmreduq)^{-1}], \ \widehat{X} + \ovQred'' \mapsto \overline{g}_1 \, \fmreduq
\]
is a differential $\difffieldalg$-isomorphism of Picard-Vessiot rings.
Since it maps the fundamental matrix $\widehat{X} + \ovQred''$ to the fundamental matrix $\overline{g}_1 \, \fmreduq$, we conclude that the differential Galois group of the Picard-Vessiot ring 
$\difffieldalg[\widehat{X},\det(\widehat{X})^{-1}]/\ovQred''$ over $\difffieldalg$ is also $\cocomp{\Lred}(\field)$. 
Note that since $\difffield$ is a $C_1$-field and because finite algebraic extensions of a $C_1$-field are again $C_1$-fields (cf.\ \cite{SerreGaloisCohomology}), the field 
 $\difffieldalg$ is again a $C_1$-field.
Since $\cocomp{\Lred}(\field)$ is connected and the base field $\difffieldalg$ is a $C_1$-field, it follows that the torsor
\begin{equation}\label{eqn:trivialtorsor}
\max(\difffieldalg[\widehat{X},\det(\widehat{X})^{-1}]/\ovQred'' )
\end{equation}
is trivial and therefore $\ovQred''$ has an $\difffieldalg$-rational point $\overline{g}_2$.
The first inclusion in \eqref{eqn:inclusionidealsQ3} implies that $\overline{g}_2$ is also an $\difffieldalg$-rational point of $\ovQred'$.

According to Proposition~\ref{prop:gaugetoAPJ} we have that $\overline{g}_1 \fmreduq \in \levi_J(\extfieldred)$ and since the substitution homomorphism $\eta$ maps $\widehat{X}$ to $\overline{g}_1 \fmreduq$, 
we conclude that the ideal $\ovQred''$ contains the ideal $(I_{\levi_J})$ of $ \difffield[\widehat{X},\det(\widehat{X})^{-1}]$,  which is generated by the defining ideal $I_{\levi_J}$ in $\field[\widehat{X},\det(\widehat{X})^{-1}]$ of $\levi_J$. Thus, $\overline{g}_2 \in \levi_J(\difffieldalg)$.

We have shown that $\overline{g}_2$ is an $\difffieldalg$-rational point of the trivial torsor \eqref{eqn:trivialtorsor},
which means that
\[
\overline{g}_2 \, (\widehat{X} + \ovQred'') \in \cocomp{\Lred}(\Frac( \difffieldalg[\widehat{X},\det(\widehat{X})^{-1}]/\ovQred'' ) ) \, .
\]
Using the $\difffieldalg$-isomorphism $\overline{\eta}$ we conclude that
\[
\overline{g}_2 \overline{g}_1 \fmreduq \in \cocomp{\Lred}(\extfieldred) \,. 
\]
Finally, by Remark~\ref{remark4}, we obtain that
\[
\overline{g}_2 \overline{g}_1.\Aprered \, = \, \dlog (\overline{g}_2 \overline{g}_1 \fmreduq) \in \Lie(\cocomp{\Lred})(\difffieldalg) \, ,
\] 
because $\overline{g}_2 \overline{g}_1 \in \group(\difffieldalg)$.
Since $\cocomp{\Lred}(\field)$ is the differential Galois group of $\extfieldred$ over $\difffieldalg$, the matrix $\overline{g}_2 \overline{g}_1.\Aprered$ is in reduced form.
\end{proof}

\begin{algorithm} 
\DontPrintSemicolon
\KwInput {
\begin{enumerate}
    \item Generators $l_1,\dots,l_b$ of the ideal $I_{\levi_J} \unlhd \field[\GL_n] = \field[\coord,\det(\coord)^{-1}]$ defining $\levi_J$.
    \item Generators of the ideal $I_{\Lred} \unlhd \field[\coord,\det(\coord)^{-1}]$ defining $\Lred$.
    \item A generator $p \in \difffield[X,\det(X)^{-1}]$ of $\difffieldalg$ over $\difffield$ and the degree $\delta$ of the extension.
    \item Generators $q_1,\dots,q_s$ of $Q \unlhd \difffield[X,\det(X)^{-1}]$ (cf.\ Definition~\ref{def:Q}).
    \item The matrix $\overline{g}_1 \fmreduq$.
\end{enumerate}
    }
\KwOutput{
A matrix $\overline{g}_2 \in \levi_J(\difffieldalg)$ such that $\overline{g}_2\overline{g}_1 \fmreduq \in \cocomp{\Lred}(\extfieldred)$
}
Compute a primary decomposition of $I_{\Lred} \unlhd \field[\coord,\det(\coord)^{-1}]$ and find the primary ideal $I_{\cocomp{\Lred}}$ representing the connected component $\cocomp{\Lred}$ by testing the membership of the identity matrix.
Let $\ell_1,\dots,\ell_c \in \field[\coord,\det(\coord)^{-1}]$ form a generating set of $I_{\cocomp{\Lred}}$.
\\
Let $h_1,\dots,h_a$ be the generators of the ideal $I$ in
\[
\difffield[X,\coord,\det(X)^{-1},\det(\coord)^{-1}]
\]
generated by $l_1,\dots,l_b$ and the numerators of 
\[
\ell_1(\coord\overline{g}_1 \fmreduq),\dots,\ell_c(\coord\overline{g}_1 \fmreduq) \in \Frac(\difffield[X,\det(X)^{-1}])[\coord,\det(\coord)^{-1}] \, .
\]
\\
For $r \in \N$ and for each $\coord_{i,j}$ make the ansatz 
\[
\frac{c_0+ c_1p+ c_2 p^2+ \dots + c_{\delta} p^{\delta} }{\widetilde{c}_{0}+\widetilde{c}_{1}p+\widetilde{c}_{2}p^2+ \dots + \widetilde{c}_{\delta} p^{\delta}}
\]
where $c_k = c_{k,0}+c_{k,1}z+\dots+c_{k,r} z^r$
and $\widetilde{c}_k = \widetilde{c}_{k,0}+\widetilde{c}_{k,1}z+\dots+\widetilde{c}_{k,r} z^r$
for $k = 0, \ldots, \delta$ with constant coefficients $c_{r,s}$ and $\widetilde{c}_{r,s}$, respectively, and substitute the so obtained matrix $\mathcal{Z}$ in $h_1,\dots,h_a$.\\
Compute the normal forms of the numerators of $h_1(\mathcal{Z}),\dots,h_a(\mathcal{Z})$ modulo $Q$. 
Compute a Gr\"obner basis of the system of equations in $\field[c_{r,s},\tilde{c}_{r,s}]$ obtained by comparing coefficients with respect to the monomials in $z$ and $p$.\\
If the system is consistent compute a solution $\boldsymbol{c}$ and set $\overline{g}_2:=\mathcal{Z}(\boldsymbol{c})$.
If the system is not consistent, increase $r$ and repeat.

\Return(the matrix $\overline{g}_2$)
\caption{ComputeRationalPoint\label{alg:RationalPoint}}
\end{algorithm}

\begin{proposition}\label{prop:algorithmcomputeg2iscorrect}
    Algorithm~\ref{alg:RationalPoint} terminates and is correct.
\end{proposition}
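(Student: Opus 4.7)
The plan is to separate termination from correctness and to reduce both to Proposition~\ref{prop:reduceredivtivepartnew}. For correctness, the key step is to interpret the ideal $I$ geometrically: thanks to Step~1, $\cocomp{\Lred}$ is cut out in $\group$ by the generators $\ell_1,\ldots,\ell_c$ of the primary component of $I_{\Lred}$ containing the identity matrix, so $I$ is generated by $l_1,\ldots,l_b$ together with the numerators of the pullbacks $\ell_k(\coord\,\overline{g}_1\fmreduq)$. Using the canonical isomorphism $\difffield[X,\det(X)^{-1}]/Q \cong \difffield[\fmreduq,\det(\fmreduq)^{-1}]$ from the proof of Proposition~\ref{prop:reduceredivtivepartnew} together with the fact that $\overline{g}_1\in\group(\difffield)$, a matrix $\mathcal{Z}$ with entries in a $\difffield$-algebra satisfies $h_1(\mathcal{Z}),\ldots,h_a(\mathcal{Z}) \in Q$ if and only if $\mathcal{Z} \in \levi_J$ and $\mathcal{Z}\,\overline{g}_1\fmreduq \in \cocomp{\Lred}(\extfieldred)$. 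Hence any matrix $\overline{g}_2 = \mathcal{Z}(\boldsymbol{c})$ returned in Step~5 has entries in $\difffield(p) = \difffieldalg$ and meets the output specification.

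Next I would verify that Step~4 correctly encodes the condition $h_i(\mathcal{Z}) \in Q$ as a polynomial system over $\field$ in the unknowns $c_{k,s},\widetilde{c}_{k,s}$. This rests on two linear-algebra observations: since by Proposition~\ref{prop:primelem} the element $p$ generates the finite Galois extension $\difffieldalg/\difffield$ of degree $\delta$, the classes $1, p, \ldots, p^{\delta-1}$ form a $\difffield$-basis of the image of $\difffield[p]$ in $\difffield[X,\det(X)^{-1}]/Q$; and the monomials $z^s$ form an $\field$-basis of $\field[z]$. Consequently the normal form of the numerator of $h_i(\mathcal{Z})$ modulo $Q$, expanded as a $\difffield$-linear combination of $1, p, \ldots, p^{\delta-1}$ (after reducing any $p^{\delta}$ terms via the minimal polynomial of $p$) and then as an $\field$-linear combination of the monomials $z^s p^k$, vanishes if and only if every $z^s p^k$-coefficient does, yielding the announced system over $\field$.

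For termination, I would invoke Proposition~\ref{prop:reduceredivtivepartnew} to fix an actual matrix $\widetilde{g}_2 \in \levi_J(\difffieldalg)$ with $\widetilde{g}_2\,\overline{g}_1\fmreduq \in \cocomp{\Lred}(\extfieldred)$. Each entry of $\widetilde{g}_2$ lies in $\difffieldalg = \difffield[p]$ and therefore admits a representation of the form $\sum_{k=0}^{\delta-1}(N_k(z)/D(z))\,p^k$ with a common denominator $D \in \field[z]\setminus\{0\}$ and $N_k \in \field[z]$. Letting $r_0$ be the maximal $z$-degree occurring among the $N_k$ and $D$ across all matrix entries, this representation is captured by the ansatz of Step~3 for $r = r_0$, via the concrete choice $\widetilde{c}_0 = D$, $\widetilde{c}_k = 0$ for $k \geq 1$, $c_k = N_k$ for $k < \delta$, and $c_\delta = 0$. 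This choice solves the coefficient-comparison system of Step~4, so the Gr\"obner basis computation certifies consistency at $r = r_0$ at the latest, and the loop terminates.

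The main obstacle is the bookkeeping in the second paragraph: one has to argue both that the ansatz is expressive enough (in particular, that the seemingly redundant degree-$\delta$ terms $c_\delta p^\delta$ and $\widetilde{c}_\delta p^\delta$ cause no issue, which follows because $p^\delta$ is a $\difffield$-linear combination of $1,p,\ldots,p^{\delta-1}$ via the minimal polynomial of $p$) and that the $z$/$p$-coefficient comparison faithfully characterises $Q$-membership after normal-form reduction. Everything else is routine once these verifications are in place.
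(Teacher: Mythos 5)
Your proof takes essentially the same route as the paper's: termination follows from the existence of a solution over $\difffieldalg$ guaranteed by Proposition~\ref{prop:reduceredivtivepartnew} together with the observation that the ansatz exhausts $\difffieldalg$ as $r$ grows, and correctness follows because the generators of $I$ (the $l_j$ plus the numerators of the $\ell_k(\coord\,\overline{g}_1\fmreduq)$) encode exactly the conditions $\mathcal{Z}\in\levi_J$ and $\mathcal{Z}\,\overline{g}_1\fmreduq\in\cocomp{\Lred}(\extfieldred)$. Your additional verification of Step~4 goes beyond what the paper records (it omits this entirely) and is sound in spirit, though note one imprecision: since $\fmreduq$ has entries in $\extfieldred$ and not in $\difffieldalg$, the normal form of $h_i(\mathcal{Z})$ modulo $Q$ is in general an $\field(z)$-combination of residue classes of $X$-monomials rather than of $1,p,\dots,p^{\delta-1}$ alone, but the coefficient comparison over $\field$ still characterizes membership in $Q$, so the conclusion is unaffected.
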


\begin{proof}
    According to Proposition~\ref{prop:reduceredivtivepartnew},  there exists a solution over $\difffieldalg$ and the ansatz exhausts all elements of $\difffieldalg$ with increasing degree bound.
    Therefore, the algorithm terminates. 
    
    Since among the generators of $I$ are the generators of $I_{\levi_J}$ the found solution belongs to $\levi_J(\difffieldalg)$.
    Since the other generators are the numerators of
    \[
    \ell_1(\coord \overline{g}_1 \fmreduq),\dots,\ell_c(\coord \overline{g}_1 \fmreduq) \,,
    \]
    the generators $\ell_1,\dots , \ell_c$ of the defining ideal $I_{\cocomp{\Lred}}$ of $\cocomp{\Lred}$ vanish on $\overline{g}_2 \overline{g}_1 \fmreduq$ implying 
    that $\overline{g}_2 \overline{g}_1 \fmreduq \in \cocomp{\Lred}(\difffieldalg)$.
\end{proof}

\begin{lemma}\label{lem:partiallyreducedAP_J}
Let $\Ared$ and $\overline{g}_2 \overline{g}_1 \in \group(\difffieldalg)$ be as in Proposition~\ref{prop:reduceredivtivepartnew}. 
Then the gauge transform $\gauge{(\overline{g}_2 \overline{g}_1)}{A_{\group}(\bsq)}$ has the direct sum decomposition
\[
\gauge{(\overline{g}_2 \overline{g}_1)}{A_{\group}(\bsq)} \, = \, \Ared + \Aprerad \in \Lie(\cocomp{\Lred})(\difffieldalg) \oplus \Lie(\unirad(\parabolic_J))(\difffieldalg) \, . 
\]
In particular, we have
\[
\Aprerad \, = \, \Ad(\overline{g}_2 \overline{g}_1 \fmreduq) ( \dlog(\fmraduq)) \, .
\]
\end{lemma}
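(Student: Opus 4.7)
The plan is to compute $\dlog(\overline{g}_2 \overline{g}_1 \fmuq)$ in two different ways, exploiting the factorization $\fmuq = \fmreduq \cdot \fmraduq$ from \eqref{eqn:definitionofYredradobuq}. On the one hand, since $\overline{g}_2 \overline{g}_1 \in \group(\difffieldalg)$ and $\dlog(\fmuq) = A_{\group}(\bsq)$, Remark~\ref{remark4} gives
\[
\dlog(\overline{g}_2 \overline{g}_1 \fmuq) \, = \, \gauge{\overline{g}_2 \overline{g}_1}{A_{\group}(\bsq)} \in \Lie(\group)(\difffieldalg) \, .
\]
On the other hand, the product rule $\dlog(XY) = \dlog(X) + \Ad(X)(\dlog(Y))$ applied to $X = \overline{g}_2 \overline{g}_1 \fmreduq$ and $Y = \fmraduq$ gives
\[
\dlog(\overline{g}_2 \overline{g}_1 \fmuq) \, = \, \Ared + \Ad(\overline{g}_2 \overline{g}_1 \fmreduq)(\dlog(\fmraduq)) \, ,
\]
since $\Ared = \dlog(\overline{g}_2 \overline{g}_1 \fmreduq)$ by definition. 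Comparing these two expressions yields the claimed formula
\[
\gauge{(\overline{g}_2 \overline{g}_1)}{A_{\group}(\bsq)} \, = \, \Ared + \Aprerad \qquad \text{with} \quad \Aprerad \, := \, \Ad(\overline{g}_2 \overline{g}_1 \fmreduq)(\dlog(\fmraduq)) \, .
\]

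It remains to justify that this is indeed a direct sum decomposition into the asserted summands. By Proposition~\ref{prop:reduceredivtivepartnew} we have $\Ared \in \Lie(\cocomp{\Lred})(\difffieldalg)$, and by Proposition~\ref{prop:reductivepartYred} the group $\cocomp{\Lred}$ is contained in the standard Levi group $\levi_J$, so $\Lie(\cocomp{\Lred}) \subseteq \Lie(\levi_J)$. Since $\fmraduq \in \unirad(\parabolic_J)(\underline{R})$, Remark~\ref{remark4} gives $\dlog(\fmraduq) \in \Lie(\unirad(\parabolic_J))(\underline{R})$. As $\unirad(\parabolic_J)$ is normal in $\parabolic_J$, its Lie algebra is stable under $\Ad(\parabolic_J)$; combined with $\overline{g}_2 \overline{g}_1 \fmreduq \in \levi_J(\extfieldred) \subseteq \parabolic_J(\underline{E})$ (coming from Proposition~\ref{prop:gaugetoAPJ} and $\overline{g}_2 \in \levi_J(\difffieldalg)$), this yields
\[
\Aprerad \, = \, \Ad(\overline{g}_2 \overline{g}_1 \fmreduq)(\dlog(\fmraduq)) \in \Lie(\unirad(\parabolic_J))(\underline{E}) \, .
\]
The decomposition $\Lie(\group) = \Lie(\levi_J) \oplus \Lie(\unirad(\parabolic_J)) \oplus \bigoplus_{\alpha \in \roots^+ \setminus \leviroots^+} \liealg_{\alpha}$ is a decomposition of $\field$-vector spaces cut out by linear conditions on matrix entries. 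Hence intersecting with $\gl_n(\difffieldalg)$ preserves the decomposition, and from $\Ared \in \Lie(\levi_J)(\difffieldalg)$ together with $\gauge{(\overline{g}_2 \overline{g}_1)}{A_{\group}(\bsq)} \in \Lie(\group)(\difffieldalg)$ we conclude
\[
\Aprerad \in \Lie(\unirad(\parabolic_J))(\underline{E}) \cap \Lie(\group)(\difffieldalg) \, = \, \Lie(\unirad(\parabolic_J))(\difffieldalg) \, .
\]

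The only subtle point is the $\difffieldalg$-rationality of $\Aprerad$: a priori the conjugate $\Ad(\overline{g}_2 \overline{g}_1 \fmreduq)(\dlog(\fmraduq))$ only has entries in $\underline{E}$, not in $\difffieldalg$. The comparison with the $\difffieldalg$-rational expression $\gauge{(\overline{g}_2 \overline{g}_1)}{A_{\group}(\bsq)} - \Ared$ is precisely what forces those entries to actually lie in $\difffieldalg$, and the directness of the Cartan-type decomposition of $\Lie(\group)$ is what guarantees that no cancellation between the two summands is possible, so the identification of the two summands is unambiguous.
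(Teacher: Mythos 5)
Your proposal is correct and follows essentially the same route as the paper: expand $\dlog(\overline{g}_2\overline{g}_1\,\fmreduq\,\fmraduq)$ via the logarithmic-derivative product rule, identify $\Ared = \dlog(\overline{g}_2\overline{g}_1\fmreduq)$ from Proposition~\ref{prop:reduceredivtivepartnew}, use normality of $\unirad(\parabolic_J)$ in $\parabolic_J$ together with Remark~\ref{remark4} to place $\Aprerad$ in $\Lie(\unirad(\parabolic_J))(\underline{E})$, and then deduce $\difffieldalg$-rationality from the directness of the decomposition and the $\difffieldalg$-rationality of $\gauge{(\overline{g}_2\overline{g}_1)}{A_{\group}(\bsq)}$. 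Your slightly more explicit bookkeeping of the rationality step (subtracting the $\difffieldalg$-rational $\Ared$ and intersecting with the $\field$-defined subspace $\Lie(\unirad(\parabolic_J))$) is just an unpacking of the paper's one-line conclusion.
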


\begin{proof}
Using Proposition~\ref{prop:reduceredivtivepartnew} we compute
\begin{eqnarray*}
\gauge{(\overline{g}_2 \overline{g}_1)}{A_{\group}(\bsq)}
\! & \! = \! & \! \dlog (\overline{g}_2 \overline{g}_1 \, \fmreduq \, \fmraduq)\\
\! & \! = \! & \!
\dlog (\overline{g}_2 \overline{g}_1) + \Ad(\overline{g}_2 \overline{g}_1) \big(\dlog (\fmreduq) \big) + \Ad(\overline{g}_2 \overline{g}_1 \fmreduq) \big( \dlog (\fmraduq) \big)\\
\! & \! = \! & \!
\Ared + \Ad(\overline{g}_2 \overline{g}_1 \fmreduq) \big( \dlog(\fmraduq) \big) \,.
\end{eqnarray*}
Since $\unirad(\parabolic_J)$ is normal in $\parabolic_J$, for all $g \in \parabolic_J(\underline{E})$ we have 
\[
\Ad(g)(\Lie(\unirad(\parabolic_J))(\underline{E})) \, \subseteq \, \Lie(\unirad(\parabolic_J))(\underline{E})\, .
\]
 Moreover, from Remark~\ref{remark4} it follows that $\dlog(\fmraduq) \in \Lie(\unirad(\parabolic_J))(\underline{E})$ and so we conclude that 
\[
\Ad( \overline{g}_2 \overline{g}_1 \fmreduq) \big( \dlog (\fmraduq) \big) \, =: \, \Aprerad \, \in \, \Lie(\unirad(\parabolic_J))(\underline{E}).
\]
Finally, since the sum of Lie subalgebras in the assertion of the lemma is direct and $\gauge{(\overline{g}_2 \overline{g}_1)}{A_{\group}(\bsq)}$ has entries in $\difffieldalg$, it follows that $\Aprerad \in \Lie(\unirad(\parabolic_J))(\difffieldalg)$.
\end{proof}

By applying the fourth step of the algorithm presented in \cite[Subsection~5.2]{DreyfusWeil} by T.~Dreyfus and J.-A.\ Weil, we achieve the following reduction. (The first step is automatically achieved by the transformation into the parabolic subgroup $\parabolic_J$, the second step, that is the reduction of the reductive part, is achieved by Proposition~\ref{prop:reduceredivtivepartnew} and Algorithm~\ref{alg:RationalPoint} and the third step is simply Lemma~\ref{lem:partiallyreducedAP_J}.)

\begin{proposition}\label{prop:completereduction}
Recall the direct sum decomposition
\begin{equation}\label{eqn:directdecompLieP_J}
    \Lie(\parabolic_J) \, = \, \Lie(\levi_J)\oplus \Lie(\unirad(\parabolic_J))
\end{equation}
and suppose that $\Lie(\levi_J)$ and $\Lie(\unirad(\parabolic_J))$ consist of block diagonal matrices respectively unipotent lower triangular matrices.
\begin{enumerate}
\item\label{item:completereductiona}
We can compute $\overline{g}_3 \in \unirad(\parabolic_J)(\difffieldalg)$ such that
\[
\overline{g}_3\overline{g}_2\overline{g}_1.A_{\group}(\bsq) \, = \, \Ared + \Arad  
\]
is in reduced form, that is $\Ared + \Arad$ lies in a
Lie algebra $\Lie_{\rm red}(\difffieldalg)$ such that there is a connected algebraic group $\GalConn$ with Lie algebra $\Lie_{\rm red}$ and $\GalConn(\field)$ is a differential Galois group for $\Ared + \Arad$.
\item\label{item:completereductionb}
The differential Galois group $\GalConn(\field)$ has
Levi decomposition $\GalConn (\field) = \cocomp{\Lred}(\field) \ltimes R_1(\field)$ for some $R_1(\field) \leq \unirad(\parabolic_J)(\field)$.
\item\label{item:completereductionc}
We can compute generators $f_1,\dots,f_a$ of the defining ideal $I_{R_1}$ in $\field[\GL_n]$ of $R_1$.
\end{enumerate}
\end{proposition}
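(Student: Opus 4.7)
The plan is to reduce $\gauge{(\overline{g}_2\overline{g}_1)}{A_{\group}(\bsq)}=\Ared+\Aprerad$ further by a gauge transformation $\overline{g}_3$ drawn from $\unirad(\parabolic_J)(\difffieldalg)$ and to read off the structure of $\GalConn$ from the resulting normal form. For part~\ref{item:completereductiona}, the assumption that $\Lie(\levi_J)$ consists of block diagonal matrices while $\Lie(\unirad(\parabolic_J))$ consists of unipotent lower triangular matrices puts us in exactly the situation addressed by the fourth step of the algorithm of T.~Dreyfus and J.-A.~Weil in \cite[Subsect.~5.2]{DreyfusWeil}: the diagonal part $\Ared$ lies in the reductive Lie algebra $\Lie(\cocomp{\Lred})(\difffieldalg)$ and is already reduced by Proposition~\ref{prop:reduceredivtivepartnew}, while the off-diagonal part $\Aprerad$ lies in the nilpotent Lie algebra $\Lie(\unirad(\parabolic_J))(\difffieldalg)$. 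Applying their procedure yields an effectively computable $\overline{g}_3\in\unirad(\parabolic_J)(\difffieldalg)$ such that $\gauge{\overline{g}_3}{(\Ared+\Aprerad)}=\Ared+\Arad$ with $\Arad\in\Lie(\unirad(\parabolic_J))(\difffieldalg)$ lying in the smallest Lie subalgebra $\Lie_{\rm red}$ containing $\Ared$ and all the coefficients occurring in the Wei-Norman decomposition of the new unipotent part. Because gauge-equivalent matrices over $\difffieldalg$ define isomorphic Picard-Vessiot extensions, and because $\Lie_{\rm red}$ is by construction the Lie algebra of the differential Galois group in its standard representation, the connected algebraic group $\GalConn$ with $\Lie(\GalConn)=\Lie_{\rm red}$ realizes the differential Galois group of $\Ared+\Arad$.

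For part~\ref{item:completereductionb}, note that $\overline{g}_3\in\unirad(\parabolic_J)(\difffieldalg)$ acts trivially modulo $\Lie(\unirad(\parabolic_J))$, so the image of $\Ared+\Arad$ in $\Lie(\parabolic_J)/\Lie(\unirad(\parabolic_J))\cong\Lie(\levi_J)$ coincides with the image of $\Ared$. Consequently, $\cocomp{\Lred}$ sits inside $\GalConn$ as a reductive subgroup mapping isomorphically onto the reductive quotient $\GalConn/\unirad(\GalConn)$. On the other hand, since $\Arad\in\Lie(\unirad(\parabolic_J))(\difffieldalg)$, the subspace of $\Lie_{\rm red}$ spanned by the Wei-Norman coefficients of $\Arad$ is contained in $\Lie(\unirad(\parabolic_J))$, and this subspace is precisely $\Lie(\unirad(\GalConn))$. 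Setting $R_1:=\unirad(\GalConn)$ one obtains $R_1\leq\unirad(\parabolic_J)$ and a Levi decomposition $\GalConn=\cocomp{\Lred}\ltimes R_1$ as required.

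For part~\ref{item:completereductionc}, I would use that $\Arad$ has a Wei-Norman decomposition $\Arad=\sum_{i=1}^{a}\varphi_i\,X_{\gamma_i}$ with $\gamma_i\in\roots^-\setminus\leviroots^-$ and $\varphi_i\in\difffieldalg$, and that the smallest Lie subalgebra of $\Lie(\unirad(\parabolic_J))$ over $\field$ containing the $X_{\gamma_i}$ appearing with nonzero coefficient is $\Lie(R_1)$; this Lie algebra is found by iteratively adjoining brackets of the $X_{\gamma_i}$ until closure is reached, which terminates because $\Lie(\unirad(\parabolic_J))$ is finite-dimensional and nilpotent. Since $R_1$ is unipotent, the exponential map $\exp\colon\Lie(R_1)\to R_1$ is a polynomial isomorphism of varieties, so parametrizing a generic element of $\Lie(R_1)$ by coordinates with respect to a basis, applying $\exp$, and eliminating the coordinates via Gr\"obner basis methods yields generators $f_1,\ldots,f_a$ of the defining ideal $I_{R_1}\unlhd\field[\GL_n]$. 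The main obstacle in this program is part~\ref{item:completereductiona}: one must verify that under the triangular block hypothesis the Dreyfus-Weil fourth reduction step applies verbatim to $\Ared+\Aprerad$ and produces a matrix whose Wei-Norman Lie envelope actually equals $\Lie(\unirad(\GalConn))$; this is the content of their reduction theorem combined with the fact that $\cocomp{\Lred}$ is already a Levi factor of the connected component of the ambient group, so no further reductive simplification can occur within $\unirad(\parabolic_J)$.
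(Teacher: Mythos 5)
Your part~\ref{item:completereductiona} follows the paper (apply the fourth Dreyfus--Weil step over $\difffieldalg$ after the reductions by $\overline{g}_1,\overline{g}_2$), but parts~\ref{item:completereductionb} and \ref{item:completereductionc} contain a genuine gap. First, your identification of the unipotent radical is wrong as stated: the Lie algebra generated by the Wei--Norman matrices (or root components $X_{\gamma_i}$) of $\Arad$ alone is in general strictly smaller than $\Lie(\unirad(\GalConn))$, because the algebraic envelope $\Lie_{\rm red}$ also contains brackets of the reductive generators coming from $\Ared$ with the unipotent ones, and these produce new directions inside $\Lie(\unirad(\parabolic_J))$. Concretely, in $\SL_3$ with $\leviroots=\{\pm\alpha_1\}$, take $\Ared = a(X_{\alpha_1}+X_{-\alpha_1})$ and $\Arad = b\,X_{-\alpha_2}$: the envelope contains $[X_{-\alpha_1},X_{-\alpha_2}]=\pm X_{-\alpha_1-\alpha_2}$, so $\Lie(R_1)$ is at least two-dimensional while your recipe yields the one-dimensional algebra $\langle X_{-\alpha_2}\rangle$. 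The correct description, which the paper uses in part~\ref{item:completereductionc}, is $\Lie(R_1)=\Lie(\GalConn)\cap\Lie(\unirad(\parabolic_J))$; once that is in hand, your exponential-plus-elimination step is an acceptable alternative to the paper's use of the Derksen--Koiran algorithm, since $R_1$ is unipotent and connected in characteristic zero.

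Second, the sentence ``consequently, $\cocomp{\Lred}$ sits inside $\GalConn$ as a reductive subgroup mapping isomorphically onto the reductive quotient'' asserts exactly the nontrivial content of part~\ref{item:completereductionb}. Knowing that $\Ared+\Arad$ and $\Ared$ have the same image in $\Lie(\parabolic_J)/\Lie(\unirad(\parabolic_J))$ gives at most a surjection of $\GalConn$ onto a quotient related to $\cocomp{\Lred}$; it gives neither the inclusion $\cocomp{\Lred}\leq\GalConn$ as a subgroup, nor that $\cocomp{\Lred}$ is a \emph{maximal} reductive subgroup (a Levi factor), nor that $\unirad(\GalConn)\leq\unirad(\parabolic_J)$. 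The paper proves these facts by a global Picard--Vessiot comparison: choosing a maximal differential ideal $\Imax\lhd\underline{R}$, it builds a Picard--Vessiot extension of $\difffield$ with group $\widetilde{H}=\widetilde{\levi}\ltimes\widetilde{R}\leq\parabolic_J$ (Theorem~\ref{thm:Levigroupsconj}, Proposition~\ref{prop:parabolicbound}), shows its fixed field under $\cocomp{\widetilde{H}}$ is $\difffieldalg$, regards it as a Picard--Vessiot extension of $\difffieldalg$ for $\Ared+\Arad$, and uses uniqueness of Picard--Vessiot rings to obtain $\GalConn=g\,\cocomp{\widetilde{H}}\,g^{-1}$ with $g\in\parabolic_J(\field)$; this simultaneously yields $R_1\leq\unirad(\parabolic_J)$ and the Levi decomposition $\GalConn=\cocomp{\Lred}\ltimes R_1$. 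Your purely infinitesimal argument would need, at a minimum, a proof that the projection of $\Lie_{\rm red}$ to $\Lie(\levi_J)$ equals $\Lie(\cocomp{\Lred})$, that $\Lie(\cocomp{\Lred})\subseteq\Lie_{\rm red}$, and that $\GalConn\cap\unirad(\parabolic_J)$ is the unipotent radical; none of this is supplied.
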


\begin{proof}
\ref{item:completereductiona}\
The assumption on the block diagonal structure guarantees that we can apply the algorithm in \cite[Subsection~5.2]{DreyfusWeil} to $\gauge{\overline{g}_1}{A_{\group}(\bsq)}$.
The block diagonal matrix $\gauge{\overline{g}_1}{A_{\group}(\bsq)} \in \Lie(\parabolic_J)(\difffield)$ is irreducible, since otherwise a gauge transformation into a smaller parabolic subgroup would be possible. 
The next step is guided by the transformation of
the diagonal block matrix $\Aprered$ over $\difffieldalg\cong \extfieldred^{\cocomp{\Lred}}$ into reduced form $\Ared$,
which was achieved in Proposition~\ref{prop:reduceredivtivepartnew}.
Algorithm~\ref{alg:RationalPoint} computes $\overline{g}_2$ performing such a
gauge transformation over $\difffieldalg$.
The effect of this gauge transformation on the block off-diagonal part is given by the matrix computed in Lemma~\ref{lem:partiallyreducedAP_J}, that is
\[
\gauge{\overline{g}_2 \overline{g}_1}{A_{\group}(\bsq)} \, = \, \Ared + \Aprerad \in \Lie(\cocomp{\Lred})(\difffieldalg) \oplus \Lie(\unirad(\parabolic_J))(\difffieldalg) .
\]
We can apply now step four of the algorithm presented in \cite[Subsection~5.2]{DreyfusWeil} to $\Ared + \Aprerad$ with $\boldsymbol{k}_0$ replaced by $\difffieldalg$, since we performed the reduction of the block diagonal part over $\difffieldalg$.
This yields a matrix $\overline{g}_3 \in \unirad(\parabolic_J)(\difffieldalg)$ such that
\[
\gauge{(\overline{g}_3\overline{g}_2\overline{g}_1)
}{A_{\group}(\bsq)} \, = \, \Ared + \Arad
\]
is reduced with $\Arad \in \Lie(\unirad(\parabolic_J))(\difffieldalg)$.
   
\ref{item:completereductionb}\
We consider now a Wei-Norman decomposition
\[
\Ared + \Arad \, = \, \sum a_i M_i
\]
of $\Ared + \Arad$, where $M_i \in \gl_n(\field)$ 
and $a_i \in \difffieldalg$ form a basis of the $\field$-vector space
spanned by the entries of $\Ared + \Arad$.
Now we can compute a basis of the smallest Lie subalgebra $\Lie_{\rm red}$ of $\gl_n(\field)$ which contains all matrices $M_i$, that is the algebraic envelope of the Lie algebra generated by all $M_i$ (cf.\ \cite[Definition~1.8]{DreyfusWeil}). 
 Since $\Ared + \Arad$ is in reduced form, it follows from \cite[Remark~1.9]{DreyfusWeil} that   
\begin{equation}\label{eqn:envelopeequalsLiealgebra}
\Lie(\GalConn)(\field) \, = \, \Lie_{\rm red}(\field)\,.   
\end{equation}
Since the smallest Lie algebra which contains $\Ared$ is $\Lie(\cocomp{\Lred})(\difffieldalg)$ and since $\Ared$ and $\Arad$ lie in the two different components of the direct decomposition \eqref{eqn:directdecompLieP_J}, we conclude that $\Lie(\cocomp{\Lred}) \subset \Lie(\GalConn)$ and so 
\begin{equation}\label{eqn:LhatsubgroupHcon}
    \cocomp{\Lred}(\field) \leq \GalConn (\field). 
\end{equation}

For a maximal differential ideal $\Imax$ of $\underline{R}$ we construct as in Section~\ref{sec:structureofreductivepart} (cf.\ in particular Proposition~\ref{prop:galaction})   
the Picard-Vessiot extension $\Frac(\difffield[\fmspec,\det(\fmspec)^{-1}])$ of $\difffield$ for 
$A_{\group}(\bsq)$ with differential Galois group $\widetilde{H}(\field)=\widetilde{\levi}(\field) \ltimes \widetilde{R}(\field) \leq \parabolic_J(\field)$, where, according to Theorem~\ref{thm:Levigroupsconj}, the group $\widetilde{\levi}(\field)$ is a Levi group of $\underline{H}(\field)$ and $\widetilde{R}(\field) \leq \unirad(\parabolic_J)(\field)$.

According to \cite[Proposition~1.34.3]{vanderPutSinger}
\[
\Frac(\difffield[\fmspec,\det(\fmspec)^{-1}])^{\cocomp{\widetilde{H}}}
\]
is the algebraic closure of $\difffield$ in $\Frac(\difffield[\fmspec,\det(\fmspec)^{-1}])$. Moreover, since
$\widetilde{\levi}(\field)$ and $\Lred(\field)$ are both Levi groups of $\underline{H}(\field)$, we obtain that
\[
\widetilde{H}(\field) / \cocomp{\widetilde{H}}(\field) \, \cong \, \widetilde{\levi}(\field) / \cocomp{\widetilde{\levi}}(\field) \, \cong \, \Lred(\field) / \cocomp{\Lred}(\field) \,. 
\]
Thus, we have  
\[
\Frac(\difffield[\fmspec,\det(\fmspec)^{-1}])^{\cocomp{\widetilde{H}}} \, = \, \extfieldred^{\cocomp{\Lred}} \, = \, \difffieldalg\,.
\]
Now the Galois correspondence implies that $\Frac(\difffield[\fmspec,\det(\fmspec)^{-1}])$ is a Picard-Vessiot extension of $\difffieldalg$
for $A_{\group}(\bsq)$ with differential Galois group $\cocomp{\widetilde{H}}(\field)$.
Because $\overline{g}_3 \overline{g}_2 \overline{g}_1 \in \group(\difffieldalg)$, we conclude that $\Frac(\difffield[\fmspec,\det(\fmspec)^{-1}])$ is also a Picard-Vessiot extension of $\difffieldalg$
for $\Ared + \Arad$ with differential Galois group $\cocomp{\widetilde{H}}(\field)$ and fundamental matrix $\overline{g}_3\overline{g}_2\overline{g}_1 \fmspec$.

Since $\Ared + \Arad$ is in reduced form and $\Lie(\GalConn)(\field)   = \Lie_{\rm red}(\field)$, the
defining ideal $I_{\GalConn}$ of $\GalConn$ in $\field[X,\det(X)^{-1}]$ generates a maximal differential ideal $(I_{\GalConn})$  in $\difffieldalg[X,\det(X)^{-1}]$, where the derivation on $\difffieldalg[X,\det(X)^{-1}]$ is defined by $\partial(X) = (\Ared + \Arad) X$, and so the differential field 
$$\Frac(\difffieldalg[X,\det(X)^{-1}]/(I_{\GalConn})) $$
is also a Picard-Vessiot extension of $\difffieldalg$ for $\Ared + \Arad$ with Galois group $\GalConn(\field)$.  

By \cite[Proposition~1.20.3]{vanderPutSinger} the two Picard-Vessiot rings are isomorphic, that is, there exists $g \in \GL_n(\field)$ such that the map
\[
\begin{array}{rcl}
\Frac(\difffield[\fmspec,\det(\fmspec)^{-1}]) \! & \! \to \! & \! \Frac(\difffieldalg[X,\det(X)^{-1}]/(I_{\GalConn})), \\[0.2em]
\overline{g}_3\overline{g}_2\overline{g}_1 \fmspec
\! & \! \mapsto \! & \! (X + (I_{\GalConn})) \, g
\end{array}
\]
is a differential $\difffieldalg$-isomorphism.
This isomorphism implies that the two differential Galois groups are conjugate by $g$, i.e.\
\[
\GalConn (\field) \, = \, g \, \cocomp{\widetilde{H}}(\field) \, g^{-1}\,.
\]
The first consequence of this conjugation is that from the connectedness of $\cocomp{\widetilde{H}}(\field)$ the connectedness of $\GalConn(\field)$ follows. 
As a second consequence, we obtain a Levi decomposition
\[
\GalConn (\field) \, = \, g \, \cocomp{\widetilde{H}}(\field) \, g^{-1} \, = \, g \, (\cocomp{\widetilde{\levi}}(\field) \ltimes \widetilde{R}(\field) ) \, g^{-1} \, = \, g \, \cocomp{\widetilde{\levi}}(\field) \, g^{-1} \ltimes g\widetilde{R}(\field) \, g^{-1}
\]
of $\GalConn(\field)$ with Levi group $g \, \cocomp{\widetilde{\levi}}(\field) \, g^{-1}$ and unipotent radical $g \, \widetilde{R}(\field) \, g^{-1}$.
Since both fundamental matrices $\overline{g}_3\overline{g}_2\overline{g}_1 \fmspec$ and $X + (I_{\GalConn})$ are elements of $\parabolic_J$, we conclude that $g \in \parabolic_J(\field)$ and so
\[
R_1(\field) \, := \, g \, \widetilde{R}(\field) \, g^{-1} \, \leq \, \unirad(\parabolic_J)(\field)
\]
and $g \, \cocomp{\widetilde{\levi}}(\field) \, g^{-1} \leq \parabolic_J(\field)$.
Since both Levi groups $\Lred(\field)$ and $\widetilde{\levi}(\field)$ of $\underline{H}(\field)$ are 
conjugate, the same holds for $\cocomp{\Lred}(\field)$ and $\cocomp{\widetilde{\levi}}(\field)$.
Thus, $\cocomp{\Lred}(\field)$ and $g \, \cocomp{\widetilde{\levi}}(\field) \, g^{-1}$ 
are also conjugate.
We conclude with \eqref{eqn:LhatsubgroupHcon} that $\cocomp{\Lred}(\field)$ is also a maximal reductive subgroup of $\GalConn(\field)$, i.e.,
$\cocomp{\Lred}(\field)\ltimes R_1(\field)$ is a Levi decomposition of $\GalConn(\field)$. 

\ref{item:completereductionc}\
Since we know a basis of $\Lie(\unirad(\parabolic_J))(\field)$, we can compute now a basis of the intersection
\[
\Lie(\GalConn)(\field) \cap \Lie(\unirad(\parabolic_J))(\field)\,,
\]
which is a basis of $\Lie(R_1) (\field)$. Indeed, from \ref{item:completereductionb}\  and \eqref{eqn:directdecompLieP_J} we obtain
\[
\Lie(R_1) (\field) \subset \Lie(\unirad(\parabolic_J))(\field) \quad \text{and} \quad  \Lie(\unirad(\parabolic_J))(\field) \cap \Lie(\cocomp{\Lred})(\field) \, = \, 0\,. 
\]
Using the exponential map, we can compute a generating set of one-parameter unipotent subgroups of $R_1(\field)$.
Using these generating matrices, we can compute now generators $f_1,\dots,f_a$ of the defining ideal $I_{R_1}$ in
$\field[\GL_n] = \field[\coord,\det(\coord)^{-1}] $
of $R_1(\field)$ with \cite[Algorithm~1, page~367]{DerksenKoiran}.
More precisely, we compute with this algorithm the Zariski closure of the group generated by the finitely many matrices obtained from the one-parameter unipotent subgroups of $R_1(\field)$ by specializing the parameter to $1 \in \field$.
\end{proof}

\begin{remark}
We can also compute generators of the defining ideal of $\GalConn(\field)$ in $\field[\GL_n]$. Indeed, we can
compute a primary decomposition of the defining ideal $I_{\Lred}$ of $\Lred(\field)$ and find among the primary ideals the ideal $I_{\cocomp{\Lred}}$ defining the connected component $\cocomp{\Lred}(\field)$ of $\Lred(\field)$ by checking the vanishing on the identity matrix. 
We consider the defining ideals $I_{\cocomp{\Lred}}$ and $I_{R_1}$ as ideals in the polynomial ring $\field[\coord^{(1)},\det(\coord^{(1)})^{-1}]$ and 
$\field[\coord^{(2)},\det(\coord^{(2)})^{-1}]$, respectively,
where $\coord^{(1)}$ and $\coord^{(2)}$ are as usually $n\times n$ matrices of respective indeterminates $\coord^{(1)}_{i,j}$ and $\coord^{(2)}_{i,j}$.
Then the coordinate ring of the semidirect product $\cocomp{\Lred}(\field)\ltimes R_1(\field)$ is
\[
\begin{array}{rcl}
\field[\cocomp{\Lred}] \otimes \field[R_1]
\! & \! \cong \! & \! \field[\cocomp{\Lred} \times R_1]\\[0.5em]
\! & \! = \! & \! \field[\coord^{(1)},\coord^{(2)},\det(\coord^{(1)})^{-1},\det(\coord^{(2)})^{-1}]/\langle I_{{\cocomp{\Lred}}},I_{R_1} \rangle\,.
\end{array}
\]
The multiplication map
\[
\mu\colon \cocomp{\Lred}(\field) \ltimes R_1(\field) \to \GalConn(\field), \ (g_1,g_2) \mapsto g_1 g_2
\]
is an isomorphism of affine varieties and so the map
\[
\mu^*\colon \field[\GalConn] \to \field[\cocomp{\Lred} \times R_1], f \mapsto f \circ \mu
\]
is an isomorphism of $\field$-algebras. We can use Gr\"obner basis methods to compute the kernel of the map
\[
\begin{array}{rcl}
 \field[\coord,\det(\coord)^{-1}] \! & \! \to \! & \! \field[\coord^{(1)},\coord^{(2)},\det(\coord^{(1)})^{-1},\det(\coord^{(2)})^{-1}]/\langle I_{\cocomp{\Lred}},I_{R_1} \rangle,\\[0.2em]
 \coord_{i,j} \! & \! \mapsto \! & \! (\coord^{(1)}\cdot \coord^{(2)})_{i,j} + \langle I_{\cocomp{\Lred}},I_{R_1} \rangle\,,
\end{array}
\]
which is equal to the defining ideal of $\GalConn(\field)$.
\end{remark}

Using the Lie structure, Proposition~\ref{prop:droppingtraingularblock} below shows the existence of reduction matrices $\overline{g}_2 \in \levi_{\parabolic_j}(\difffieldalg)$ and $\overline{g}_3 \in \unirad(\parabolic_J)(\difffieldalg)$ for $A_{\parabolic_J} = \gauge{\overline{g}_1}{A_{\group}(\bsq)}$ (cf.\ \eqref{eqn:definitionA_PJ}) for an arbitrary Levi group $\widetilde{\levi}$ of $\parabolic_J$ and independently of whether $\overline{g}_2. A_{\parabolic_J}$ is in triangular block form or not.
Its proof is very similar to the proof of \cite[Theorem~2.4]{DreyfusWeil}. Proposition~\ref{prop:droppingtraingularblock} is not needed later.
\begin{proposition}\label{prop:droppingtraingularblock} 
    Let $\widetilde{\levi}$ be an arbitrary Levi group of $\parabolic_J$ and $A_{\parabolic_J}$ as in \eqref{eqn:definitionA_PJ}.
    \begin{enumerate}
        \item\label{prop:droppingtraingularblock(a)}
        There exists $\overline{g}_2 \in \widetilde{\levi}(\difffieldalg)$ such that $\widetilde{A}_{\rm red}$ in the direct sum decomposition
    \begin{equation}\label{eqn:existenceofreductionLieAlgebra}
    \gauge{\overline{g}_2}{A_{\parabolic_J}} \, = \, \widetilde{A}_{\rm red} + \widetilde{A}_{\rm rad}^{\rm pre} \in \Lie(\widetilde{\levi})(\difffieldalg) \oplus \Lie(\unirad(\parabolic_J))(\difffieldalg)
    \end{equation}
    is in reduced form. 
    \item\label{prop:droppingtraingularblock(b)}
    There exists $\overline{g}_3 \in \unirad(\parabolic_J)(\difffieldalg)$ such that 
    \[
    \gauge{\overline{g}_3\overline{g}_2}{A_{\parabolic_J}} \, = \, \widetilde{A}_{\rm red} + \widetilde{A}_{\rm rad} \in \Lie(\widetilde{\levi})(\difffieldalg) \oplus \Lie(\unirad(\parabolic_J))(\difffieldalg)
    \]
    is in reduced form.
    \end{enumerate}
\end{proposition}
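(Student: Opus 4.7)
\medskip
\noindent\textbf{Proof plan.} The approach is to adapt Propositions~\ref{prop:reduceredivtivepartnew} and \ref{prop:completereduction} by transporting the standard Levi decomposition via conjugation inside $\unirad(\parabolic_J)$ and, for the radical part, by exploiting the triviality of unipotent-group torsors together with the $C_1$-property of $\difffieldalg$ in place of the triangular block structure used in \cite{DreyfusWeil}.

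\medskip
\noindent For \textbf{(a)}, invoke Theorem~\ref{thm:levidecompositionconj} to choose a constant $u \in \unirad(\parabolic_J)(\field)$ with $\widetilde{\levi} = u\levi_J u^{-1}$. Since $u$ is constant, $\Ad(u)$ intertwines the two direct-sum decompositions of $\Lie(\parabolic_J)$ determined by $\levi_J$ and $\widetilde{\levi}$. Comparing the standard Levi factorization $\overline{g}_1\fmspec = (\overline{g}_1\fmredq)\,\fmradq$ supplied by Proposition~\ref{prop:gaugetoAPJ} with the $\widetilde{\levi}$-Levi factorization, one may set $L := u(\overline{g}_1\fmredq)u^{-1} \in \widetilde{\levi}(\extfieldred)$, whence $A_{\rm red}^{\rm pre} = \dlog(L) = \Ad(u)\gauge{\overline{g}_1}{\Aprered}$. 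Thus $A_{\rm red}^{\rm pre}$ is constant-gauge equivalent to $\gauge{\overline{g}_1}{\Aprered}$, they share the Picard-Vessiot field $\extfieldred$, and the differential Galois group of $A_{\rm red}^{\rm pre}$ with respect to the fundamental matrix $L$ is $u\Lred u^{-1}$. Let $\overline{g}_2^{\rm std} \in \levi_J(\difffieldalg)$ be the matrix delivered by Proposition~\ref{prop:reduceredivtivepartnew} and define $\overline{g}_2 := u\overline{g}_2^{\rm std}u^{-1} \in \widetilde{\levi}(\difffieldalg)$. Using constancy of $u$,
\[
\gauge{\overline{g}_2}{A_{\parabolic_J}} \, = \, \Ad(u)\,\gauge{\overline{g}_2^{\rm std}}{\Ad(u^{-1})A_{\parabolic_J}} \, ,
\]
and because $\Ad(u^{-1})$ carries the $\widetilde{\levi}$-decomposition onto the $\levi_J$-decomposition, the $\widetilde{\levi}$-projection $\widetilde{A}_{\rm red}$ of the left-hand side equals $\Ad(u)\Ared$, which lies in $\Lie(u\Lred u^{-1})(\difffieldalg)$. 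This is exactly the condition that $\widetilde{A}_{\rm red}$ be in reduced form.

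\medskip
\noindent For \textbf{(b)}, the matrix $\widetilde{A}_{\rm red} + \widetilde{A}_{\rm rad}^{\rm pre}$ has entries in $\difffieldalg$ with reductive part already reduced. Arguing as in the proof of Proposition~\ref{prop:completereduction}(b), its Picard-Vessiot extension of $\difffieldalg$ has connected differential Galois group $\GalConn$ admitting a Levi decomposition $\GalConn = u\cocomp{\Lred}u^{-1} \ltimes R_1$ with $R_1 \leq \unirad(\parabolic_J)$. The associated Picard-Vessiot ring is a $\GalConn$-torsor over $\difffieldalg$, and the quotient $\parabolic_J \twoheadrightarrow \parabolic_J/\unirad(\parabolic_J) \cong \widetilde{\levi}$ induces a torsor morphism onto the $u\cocomp{\Lred}u^{-1}$-torsor attached to $\widetilde{A}_{\rm red}$; the latter is trivialized by the reducedness of $\widetilde{A}_{\rm red}$, so the identity matrix is a canonical $\difffieldalg$-rational point. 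The fiber of this morphism over the identity is a torsor under the connected unipotent group $R_1$, and $H^1(\difffieldalg, R_1) = 0$ since $R_1$ is a successive extension of additive groups and any extension of $\difffieldalg$ has trivial $H^1$ with coefficients in $\mathbb{G}_a$. A $\difffieldalg$-rational point of this fiber yields $\overline{g}_3 \in \unirad(\parabolic_J)(\difffieldalg)$ with $\gauge{\overline{g}_3}{(\widetilde{A}_{\rm red}+\widetilde{A}_{\rm rad}^{\rm pre})} \in \Lie(\GalConn)(\difffieldalg)$. Using $[\Lie(\unirad(\parabolic_J)), \Lie(\widetilde{\levi})] \subseteq \Lie(\unirad(\parabolic_J))$ together with normality of $\unirad(\parabolic_J)$ in $\parabolic_J$, one verifies that the $\widetilde{\levi}$-projection of the gauge is unchanged (equal to $\widetilde{A}_{\rm red}$) while the radical projection $\widetilde{A}_{\rm rad}$ lies in $\Lie(R_1)(\difffieldalg) \subseteq \Lie(\unirad(\parabolic_J))(\difffieldalg)$, giving the claimed direct sum decomposition.

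\medskip
\noindent The main obstacle sits in part (b): arranging that the reduction matrix lies in $\unirad(\parabolic_J)(\difffieldalg)$ rather than merely in $\parabolic_J(\difffieldalg)$, without access to the triangular block structure exploited by Dreyfus-Weil. The fibration argument above resolves this by combining the reducedness of $\widetilde{A}_{\rm red}$ (which trivializes the reductive side of the torsor) with the vanishing of $H^1(\difffieldalg, R_1)$ for the connected unipotent $R_1$; this cohomological input, rooted in the Lie-algebraic semidirect structure of $\parabolic_J$, replaces the triangularity assumption from \cite{DreyfusWeil}.
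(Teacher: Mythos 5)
Your argument is correct, but at both steps it follows a genuinely different route from the paper. For (a) the paper chooses, via Proposition~\ref{cor:ForOurLeviGrpThereisanidealnew}, a maximal ideal $\Imax$ realizing a Levi group of $\underline{H}$ inside $\widetilde{\levi}$ as a Levi group of $H$, applies the Kolchin--Kovacic reduction theorem over the $C_1$-field $\difffieldalg$ to get $\overline{g}\in\parabolic_J(\difffieldalg)$ with $\gauge{\overline{g}}{A_{\parabolic_J}}\in\Lie(\cocomp{H})(\difffieldalg)$, factors $\overline{g}=u\,\overline{g}_2$ with $\overline{g}_2\in\widetilde{\levi}(\difffieldalg)$, $u\in\unirad(\parabolic_J)(\difffieldalg)$, and uses the normality/root-string fact \eqref{eqn:reductiononlywithLiestructure} to see that only $\overline{g}_2$ contributes to the $\widetilde{\levi}$-component; you instead transport the explicit reduction $\overline{g}_2^{\rm std}$ of Proposition~\ref{prop:reduceredivtivepartnew} by a constant conjugation $u\in\unirad(\parabolic_J)(\field)$ with $\widetilde{\levi}=u\levi_J u^{-1}$, which is shorter and inherits the computability of Algorithm~\ref{alg:RationalPoint}. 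For (b) the paper applies Kolchin--Kovacic a second time and simply discards the Levi factor $\ell$ of the reducing matrix, noting that $\ell^{-1}$ is an $\difffieldalg$-rational point of the Galois group so gauging by it preserves $\Lie(\cocomp{H})(\difffieldalg)$; you replace this by a torsor fibration: the quotient $\parabolic_J\to\widetilde{\levi}$ maps the Galois torsor onto the reductive torsor, which is trivial with the identity as rational point because the reductive block is already reduced, and the fiber over the identity is a torsor under the connected unipotent group $R_1$, trivialized by $H^1(\difffieldalg,R_1)=0$. Your version buys a conceptual point -- once the reductive block is reduced, no further $C_1$-input is needed, only the vanishing of unipotent cohomology -- at the cost of extra scaffolding that your write-up compresses: one must fix a fundamental matrix inside $\parabolic_J$ whose $\widetilde{\levi}$-part lies in $(u\cocomp{\Lred}u^{-1})(\extfieldred)$ and identify the image of the Galois torsor with the reductive torsor (this uses part (a), Proposition~\ref{prop:gaugetoAPJ}, a decomposition as in Proposition~\ref{prop:decompredrad}, and, if the Levi factor of $\GalConn$ is to be literally $u\cocomp{\Lred}u^{-1}$ rather than a $\unirad(\parabolic_J)(\field)$-conjugate, the choice of $\Imax$ from Proposition~\ref{cor:ForOurLeviGrpThereisanidealnew}); these steps, like the d\'evissage behind $H^1(k,U)=\{1\}$ for unipotent $U$ in characteristic zero, are fillable with the paper's machinery, so I regard them as compression rather than gaps.
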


\begin{proof}
\ref{prop:droppingtraingularblock(a)}
Since $\unirad(\underline{H})(\field) = \unirad(\parabolic_J)(\field)$, there exists a Levi group $\underline{\levi}(\field)$ of $\underline{H}(\field)$ such that $\underline{\levi}(\field)\leq \widetilde{\levi}(\field)$.
According to Proposition~\ref{cor:ForOurLeviGrpThereisanidealnew} there exists a maximal differential ideal $\Imax \unlhd \underline{R}$ such that $\underline{\levi}(\field)$ is a Levi group of the differential Galois group $H(\field)$ of the
Picard-Vessiot extension $\overline{\generalext} = \Frac(\underline{R}/\Imax)$ of $\difffield$ constructed with respect to $\Imax$.
The group $H(\field)$ has a Levi decomposition $H(\field) = \underline{\levi}(\field) \ltimes \unirad(H)(\field)$ and since $\unirad(\cocomp{H})(\field) = \unirad(H)(\field)$, its connected component has Levi decomposition 
\[
\cocomp{H}(\field) \, = \, \cocomp{\underline{\levi}}(\field) \ltimes \unirad(H)(\field).
\]
According to the Fundamental Theorem, $\overline{\generalext}^{\cocomp{H}}$ is a finite algebraic extension of $\difffield$ with Galois group $H(\field)/\cocomp{H}(\field)$. Since $\underline{\levi}(\field) \cong \Stab(Q)(\field)$ and $\extfieldred \subset \overline{\generalext}$,
we conclude that $\difffieldalg = \overline{\generalext}^{\cocomp{H}}$. Since $\difffieldalg$ is a finite algebraic extension of the $C_1$-field $\difffield$, it is again a $C_1$-field by \cite{SerreGaloisCohomology}. 
Hence, the Kolchin-Kovacic Reduction Theorem (cf.\ \cite[Proposition~1.31]{vanderPutSinger}) implies that there exists 
$\overline{g} \in \parabolic_J(\difffieldalg)$ such that 
\[
\overline{g}. A_{\parabolic_J} \, =: \, \widetilde{A}_{\rm red} + \widetilde{A}_{\rm rad}  \in \Lie(\cocomp{H})(\difffieldalg) \, = \, \Lie(\cocomp{\underline{\levi}})(\difffieldalg) \oplus \Lie(\unirad(H))(\difffieldalg)\,.
\]
Since $\overline{g} \in \parabolic_J(\difffieldalg) = \unirad(\parabolic_J)(\difffieldalg) \cdot \widetilde{\levi}(\difffieldalg)$
there are uniquely determined $\overline{g}_2 \in \widetilde{\levi}(\difffieldalg)$ and $u \in \unirad(\parabolic_J)(\difffieldalg)$ such that $\overline{g} = u \, \overline{g}_2$.
Let $A_{\parabolic_J} = A_{1}+A_{2}$ be the decomposition according to 
\[
\Lie(\widetilde{\levi})(\difffield) \oplus \Lie(\unirad(\parabolic_J))(\difffield).
\]
 First observe that since $\unirad(\parabolic_J)(\difffieldalg)$ is a normal subgroup of $\parabolic_J(\difffieldalg)$,  for any $g \in \parabolic_J(\difffieldalg)$ the Lie algebra automorphism $\Ad (g)$ of $\Lie(\parabolic_J)(\difffieldalg)$ 
 stabilizes the Lie subalgebra $\Lie(\unirad(\parabolic_J))(\difffieldalg)$. 
 Next observe that if for $ \beta  \in  \roots^- \setminus \leviroots^-$ and $\alpha \in \leviroots$ and $k\geq 1$ the sum $\alpha + k \beta$ is a root of $\roots$, then it lies in $\roots^- \setminus \leviroots^-$ and so it follows  
 with Remark~\ref{remark3} that for any $A \in \Lie(\widetilde{\levi})(\difffieldalg)$ and any $g \in \unirad(\parabolic_J)(\difffieldalg)$  
the image of $A$ under $\Ad(g)$  lies in the plane $A+\Lie(\unirad(\parabolic_J))(\difffieldalg)$, that is 
\begin{equation}\label{eqn:reductiononlywithLiestructure}
    \Ad(g)(A) \in A+\Lie(\unirad(\parabolic_J))(\difffieldalg) .
\end{equation}
Thus, these two observations and Remark~\ref{remark4} imply together with  
\begin{gather*}
     \widetilde{A}_{\rm red} + \widetilde{A}_{\rm rad} \, = \, \overline{g}.A_{\parabolic_J}=\Ad(\overline{g})(A_{\parabolic_J})+\dlog(\overline{g}) \, = \, \\
     \Ad(u\overline{g}_2)(A_{1})+\Ad(u\overline{g}_2)(A_{2}) + \dlog(u) + \Ad(u)(\dlog(\overline{g}_2))
\end{gather*}
that only the matrices $\Ad(u\overline{g}_2)(A_{1})$ and $\Ad(u)(\dlog(\overline{g}_2))$ contribute to the part of $\overline{g}.A_{\parabolic_J}$ which lies in  $\Lie(\cocomp{\underline{\levi}})(\difffieldalg)$, that is to $\widetilde{A}_{\rm red}$.
We actually have that 
 \begin{eqnarray*}
 \Ad(u\overline{g}_2)(A_{1}) \! & \! \in \! & \! \Ad(\overline{g}_2)(A_{1})+\Lie(\unirad(\parabolic_J)) \quad \text{and}\\
 \Ad(u)(\dlog(\overline{g}_2)) \! & \! \in \! & \! \dlog(\overline{g}_2) + \Lie(\unirad(\parabolic_J)).
 \end{eqnarray*}
We conclude that
 \[
 \overline{g}_2.A_{\parabolic_J} \, = \, \widetilde{A}_{\rm red} + \widetilde{A}_{\rm rad}^{\rm pre}  
 \]
 with some suitable $\widetilde{A}_{\rm rad}^{\rm pre} \in \Lie(\unirad(\parabolic_J))(\difffieldalg)$.

\ref{prop:droppingtraingularblock(b)}
For the second assertion assume that we have a reduction matrix $\overline{g}_2$ such that \eqref{eqn:existenceofreductionLieAlgebra} holds with $\widetilde{A}_{\rm red}$ in reduced form, meaning that $\widetilde{A}_{\rm red}$ lies in the Lie algebra of a Levi group $\levi(\cocomp{H})$ of a potential differential Galois group $\cocomp{H}(\field)=\levi(\cocomp{H})(\field)\ltimes \unirad(\cocomp{H})(\field)$.
In other words we have
\[
\gauge{\overline{g}_2}{A_{\parabolic_J}} \, = \, \widetilde{A}_{\rm red} + \widetilde{A}_{\rm rad}^{\rm pre} \in 
\Lie(\levi(\cocomp{H}))(\difffieldalg)\oplus \Lie(\unirad(\parabolic_J))(\difffieldalg)
\]
from which we conclude with the Kolchin-Kovacic reduction theorem that there exists a matrix 
$$\overline{g} \in \levi(\cocomp{H})(\difffieldalg) \cdot \unirad(\parabolic_J)(\difffieldalg) = \levi(\cocomp{H})(\difffieldalg)\ltimes \unirad(\parabolic_J)(\difffieldalg)$$
 such that $\gauge{\overline{g}}{(\widetilde{A}_{\rm red} + \widetilde{A}_{\rm rad}^{\rm pre})}$ lies in  $\Lie(\cocomp{H})(\difffieldalg)$.
 Let $g = \ell \, \overline{g}_3$ be the product decomposition with uniquely determined matrices $\ell \in \levi(\cocomp{H})(\difffieldalg)$ and $\overline{g}_3 \in \unirad(\parabolic_J)(\difffieldalg)$.
 Since $\ell^{-1} \in \levi(\cocomp{H})(\difffieldalg) \leq \cocomp{H}(\difffieldalg)$ is an $\difffieldalg$-rational point of the differential Galois group and since $\Lie(\cocomp{H})(\difffieldalg)$ is closed under gauge transformation by elements of $\cocomp{H}(\difffieldalg)$, it follows that 
 \[
 \ell^{-1}.(g .(\widetilde{A}_{\rm red} + \widetilde{A}_{\rm rad}^{\rm pre})) \, = \, \ell^{-1}g .(\widetilde{A}_{\rm red} + \widetilde{A}_{\rm rad}^{\rm pre})) \, = \, \overline{g}_3 .(\widetilde{A}_{\rm red} + \widetilde{A}_{\rm rad}^{\rm pre})   
 \]
 still lies in the Lie algebra $\Lie(\cocomp{H})(\difffieldalg)$. Hence, $\overline{g}_3 \in \unirad(\parabolic_J)(\difffieldalg)$ completely reduces $\widetilde{A}_{\rm red} + \widetilde{A}_{\rm rad}^{\rm pre}=\overline{g}_2.A_{\parabolic_J}$. 
 The same arguments made in the proof of \ref{prop:droppingtraingularblock(a)} to show \eqref{eqn:reductiononlywithLiestructure} imply that $\Ad(\overline{g}_3)$ maps $\widetilde{A}_{\rm red}$ into the plane $\widetilde{A}_{\rm red} + \Lie(\unirad(\parabolic_J))(\difffieldalg)$.
 Together with the fact that 
 \[
 \Ad(\overline{g}_3)(\Lie(\unirad(\parabolic_J))(\difffieldalg)) \subset \Lie(\unirad(\parabolic_J))(\difffieldalg)
 \]
 and Remark~\ref{remark4} we conclude that the reduced form 
$\overline{g}_3 .(\widetilde{A}_{\rm red} + \widetilde{A}_{\rm rad}^{\rm pre})$ is
 \[
 \overline{g}_3\overline{g}_2.A_{\parabolic_J} \, = \, \overline{g}_3 .(\widetilde{A}_{\rm red} + \widetilde{A}_{\rm rad}^{\rm pre}) \, = \, \widetilde{A}_{\rm red} + \widetilde{A}_{\rm rad} \, .
 \] 
\end{proof}

\section{Computing the Differential Galois Group}\label{sec:compGalois}

We start this section with the proof that the product of the reduction matrix $\overline{g}_3 \overline{g}_2 \overline{g}_1$ from Proposition~\ref{prop:completereduction} with the partially identified fundamental matrix $\fmuq$ can be decomposed into the product of a matrix in $\cocomp{\Lred}(\extfieldred)$ and one in $\unirad(\parabolic_J)(\underline{R})$. 

\begin{proposition}\label{prop:decompredrad}
Suppose we are in the situation of Proposition~\ref{prop:completereduction}.
We can effectively decompose $\overline{g}_3 \overline{g}_2 \overline{g}_1 \fmuq$ as
\[
\overline{g}_3 \overline{g}_2 \overline{g}_1 \fmuq \, = \, \fmreduqhat \, \fmraduqhat
\]
with
(uniquely determined) matrices $\fmreduqhat \in \cocomp{\Lred}(\extfieldred)$ and $\fmraduqhat \in \unirad(\parabolic_J)(\underline{R})$.
\end{proposition}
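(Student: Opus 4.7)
The plan is to assemble the decomposition from the factorisation $\fmuq = \fmreduq \fmraduq$ established in Section~\ref{sec:structureofreductivepart} together with the reduction matrices constructed in Propositions~\ref{prop:gaugetoAPJ}, \ref{prop:reduceredivtivepartnew} and \ref{prop:completereduction}. The key observation is that the reduced reductive part already sits in $\cocomp{\Lred}(\extfieldred)$, and that $\cocomp{\Lred} \leq \levi_J$ normalises $\unirad(\parabolic_J)$ inside $\parabolic_J$.

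Concretely, I would set
\[
\fmreduqhat \, := \, \overline{g}_2 \overline{g}_1 \fmreduq \, ,
\]
which lies in $\cocomp{\Lred}(\extfieldred)$ by Proposition~\ref{prop:reduceredivtivepartnew}. Using $\fmuq = \fmreduq \fmraduq$ I would then rewrite
\[
\overline{g}_3 \overline{g}_2 \overline{g}_1 \fmuq \, = \, \overline{g}_3 \fmreduqhat \fmraduq \, = \, \fmreduqhat \cdot \bigl( \fmreduqhat^{-1} \overline{g}_3 \fmreduqhat \bigr) \cdot \fmraduq \, .
\]
Since $\overline{g}_3 \in \unirad(\parabolic_J)(\difffieldalg) \subset \unirad(\parabolic_J)(\extfieldred)$ and $\fmreduqhat \in \levi_J(\extfieldred)$ (because $\cocomp{\Lred} \leq \Lred \leq \levi_J$ by Proposition~\ref{prop:reductivepartYred}), the normality of $\unirad(\parabolic_J)$ in $\parabolic_J$ yields $\fmreduqhat^{-1} \overline{g}_3 \fmreduqhat \in \unirad(\parabolic_J)(\extfieldred)$. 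Combining this with $\fmraduq \in \unirad(\parabolic_J)(\underline{R})$ and the inclusion $\extfieldred \subset \underline{R}$ from the construction at the beginning of Section~\ref{sec:structureofreductivepart}, the matrix
\[
\fmraduqhat \, := \, \bigl( \fmreduqhat^{-1} \overline{g}_3 \fmreduqhat \bigr) \cdot \fmraduq
\]
lies in $\unirad(\parabolic_J)(\underline{R})$, which gives the desired factorisation.

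For uniqueness, two such decompositions $\fmreduqhat_1 \fmraduqhat_1 = \fmreduqhat_2 \fmraduqhat_2$ would give $\fmreduqhat_2^{-1} \fmreduqhat_1 = \fmraduqhat_2 \fmraduqhat_1^{-1}$, an element of $\levi_J(\underline{E}) \cap \unirad(\parabolic_J)(\underline{E}) = \{ e \}$ by the standard Levi decomposition $\parabolic_J = \levi_J \ltimes \unirad(\parabolic_J)$, forcing $\fmreduqhat_1 = \fmreduqhat_2$ and $\fmraduqhat_1 = \fmraduqhat_2$. Effectivity is automatic: the matrices $\overline{g}_1$, $\overline{g}_2$ and $\overline{g}_3$ are produced by Proposition~\ref{prop:gaugetoAPJ}, Algorithm~\ref{alg:RationalPoint} and Proposition~\ref{prop:completereduction} respectively, the factors $\fmreduq$ and $\fmraduq$ of $\fmuq$ are explicitly given in Section~\ref{sec:structureofreductivepart}, and the remaining operations consist of matrix multiplication and inversion in $\group(\underline{R})$. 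No real obstacle is expected here; the statement is essentially an assembly of previous results, and the only subtle point is the verification that the conjugate $\fmreduqhat^{-1} \overline{g}_3 \fmreduqhat$ does not escape $\unirad(\parabolic_J)$, which is guaranteed group-theoretically by the normality of the unipotent radical inside the parabolic.
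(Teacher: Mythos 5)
Your proposal is correct and follows essentially the same route as the paper: you take $\fmreduqhat = \overline{g}_2\overline{g}_1\fmreduq \in \cocomp{\Lred}(\extfieldred)$ and absorb $\overline{g}_3$ into the radical factor via normality of $\unirad(\parabolic_J)$ in $\parabolic_J$, which is exactly the paper's construction (the paper writes the conjugation as $\overline{g}_3(\overline{g}_2\overline{g}_1\fmreduq)\overline{g}_3^{-1} = (\overline{g}_2\overline{g}_1\fmreduq)u$, yielding the same matrix $\fmraduqhat = \fmreduqhat^{-1}\overline{g}_3\fmreduqhat\,\fmraduq$ as yours). Your uniqueness and effectivity remarks also match the paper's brief justification.
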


\begin{proof}
It follows from Proposition~\ref{prop:reduceredivtivepartnew} that in 
\[
\overline{g}_3 \overline{g}_2 \overline{g}_1 \fmuq \, = \, \overline{g}_3 (\overline{g}_2 \overline{g}_1 \fmreduq) \fmraduq
\]
with $\fmraduq \in \unirad(\parabolic_J)(\underline{R})$ we have that $\overline{g}_2 \overline{g}_1 \fmreduq \in \cocomp{\Lred}(\extfieldred)$.
Since we have $\overline{g}_3 \in \unirad(\parabolic_J)(\difffieldalg)$ and since $\unirad(\parabolic_J)$ is normal in $\parabolic_J$, we conclude that there exists a matrix $u \in \unirad(\parabolic_J)(\extfieldred)$ such that 
\[
\overline{g}_3 \, (\overline{g}_2 \overline{g}_1 \, \fmreduq) \, \overline{g}_3^{-1} \, = \, ( \overline{g}_2 \overline{g}_1 \fmreduq) \, u \, .  
\]
Hence, with $\fmreduqhat := \overline{g}_2 \overline{g}_1 \, \fmreduq \in \cocomp{\Lred}(\extfieldred)$ and $\fmraduqhat := u \, \overline{g}_3 \fmraduq \in \unirad(\parabolic_J)(\underline{R})$ we obtain the Levi decomposition
\[
\overline{g}_3 \overline{g}_2 \overline{g}_1 \fmuq \, = \, \fmreduqhat \, \fmraduqhat \,.
\]
Note that the factors of a Levi decomposition are unique.  
Clearly, these matrix multiplications can be computed, where the matrix $u$ is simply read off. 
\end{proof}

Let $\overline{g}_3\overline{g}_2 \overline{g}_1 \fmuq = \fmreduqhat \,  \fmraduqhat$ be the decomposition of Proposition~\ref{prop:decompredrad}. Clearly, $ \fmreduqhat \,  \fmraduqhat$ satisfies 
\[
(\fmreduqhat \,  \fmraduqhat)' \, = \, (\overline{g}_3\overline{g}_2 \overline{g}_1.A_{\group}(\bsq)) (\fmreduqhat \,  \fmraduqhat) \, = \, (\Ared + \Arad)  (\fmreduqhat \,  \fmraduqhat) .
\]
Recall from the beginning of Section~\ref{sec:structureofreductivepart} the maximal differential ideal $\Imax$ in $\underline{R}$, the projection $\pi\colon \underline{R} \to \underline{R}/\Imax=\overline{R}$ and the image $\fmspec$ of $\fmuq$ under $\pi$.
We denote now the image of the matrices $\fmreduqhat$ and $\fmraduqhat$ under $\pi$ by 
\[
\widehat{\fmspec}_{\rm red} \, := \, \pi(\fmreduqhat) \quad \text{and} \quad 
\widehat{\fmspec}_{\rm rad} \, := \, \pi(\fmraduqhat).
\]
Since $\pi$ is a differential homomorphism and the identity on $\extfieldred$, we obtain now the decomposition 
\begin{equation}\label{eqn:decompositionreducedfm}
     \overline{g}_3 \overline{g}_2 \overline{g}_1 \fmspec \, = \, 
     \fmredqhat \, \fmradqhat 
\end{equation}
satisfying 
\[
(\widehat{\fmspec}_{\rm red} \,  \widehat{\fmspec}_{\rm rad})' \, = \, (\overline{g}_3\overline{g}_2 \overline{g}_1.A_{\group}(\bsq)) (\widehat{\fmspec}_{\rm red} \,  \widehat{\fmspec}_{\rm rad})  .
\]
In the following we are going to compute the generators of a maximal differential ideal $\Imax$ in $\underline{R}$ such that
\[
\fmradqhat \in R_1(\underline{R}/\Imax) \leq \unirad(\parabolic_J)(\underline{R}/\Imax),
\]
where $R_1$ is the unipotent group of Proposition~\ref{prop:completereduction}. The purpose of this choice of $\Imax$ is to match the reduction of $A_{\group}(\bsq)$ by $\overline{g}_3\overline{g}_2\overline{g}_1$ with the (Lie algebra of the) differential Galois group of $\overline{\generalext} = \Frac(\underline{R}/\Imax)$. But first we will show the following lemma.
\begin{lemma}\label{lem:ringsoverFalg} $\,$
\begin{enumerate}
\item\label{lem:ringsoverFalg(a)}
The field $\difffieldalg$ is contained in $\difffield[\fmspec,\det(\fmspec)^{-1}]$ and we have 
\begin{equation}\label{eqn:equalityofrings}
\difffield[\fmspec,\det(\fmspec)^{-1}] \, = \, \difffieldalg[ \fmredqhat \fmradqhat,\det(\fmredqhat \fmradqhat)^{-1}] 
\end{equation}
as differential rings.
\item\label{lem:ringsoverFalg(b)}
The ring $\difffieldalg[\fmredqhat \fmradqhat,\det(\fmredqhat \fmradqhat)^{-1}]$ is a Picard-Vessiot ring over $\difffieldalg$ for $\Ared + \Arad$ with fundamental matrix $\fmredqhat \fmradqhat$.
\end{enumerate}
\end{lemma}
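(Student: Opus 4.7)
The plan is to handle the two parts in sequence, using the identity $\overline{g}_3\overline{g}_2\overline{g}_1 \fmspec = \fmredqhat \fmradqhat$ from \eqref{eqn:decompositionreducedfm} together with the fact that all three reduction matrices lie in $\group(\difffieldalg)$. The main point for part~(a) is first to verify the inclusion $\difffieldalg \subseteq \difffield[\fmspec,\det(\fmspec)^{-1}]$; once this is in place, the equality of rings follows by a short two-sided containment.

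For the inclusion, I would appeal to the identification $\difffieldalg = \overline{\generalext}^{\cocomp{H}}$ that was established in the proof of Proposition~\ref{prop:completereduction}~\ref{item:completereductionb}. Because $\cocomp{H}(\field)$ is of finite index in $H(\field)$, the fixed field $\overline{\generalext}^{\cocomp{H}}$ is a finite algebraic extension of $\difffield$, and by the standard structure theory of Picard-Vessiot rings (see \cite[Prop.~1.34]{vanderPutSinger}) the fixed subring $\difffield[\fmspec,\det(\fmspec)^{-1}]^{\cocomp{H}}$ already equals this fixed field, so in particular $\difffieldalg \subseteq \difffield[\fmspec,\det(\fmspec)^{-1}]$. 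Given this, the matrix $\overline{g}_3\overline{g}_2\overline{g}_1$ lies in $\group\bigl(\difffield[\fmspec,\det(\fmspec)^{-1}]\bigr)$, hence $\fmredqhat \fmradqhat = \overline{g}_3\overline{g}_2\overline{g}_1\,\fmspec$ has entries in $\difffield[\fmspec,\det(\fmspec)^{-1}]$, and its determinant is a unit there because $\det(\overline{g}_3\overline{g}_2\overline{g}_1) \in \difffieldalg^{\times}$ and $\det(\fmspec)$ is a unit. This gives one inclusion in \eqref{eqn:equalityofrings}. The reverse inclusion is immediate from $\fmspec = (\overline{g}_3\overline{g}_2\overline{g}_1)^{-1}\,\fmredqhat\fmradqhat$, since the inverse reduction matrix also lies in $\group(\difffieldalg)$.

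For part~(b), I would verify the defining properties of a Picard-Vessiot ring for $\Ared + \Arad$ over $\difffieldalg$ using the already-established equality of rings. The matrix $\fmredqhat \fmradqhat$ is invertible over the ring by the unit property of its determinant, and it is a fundamental matrix for $\Ared + \Arad$ because $\overline{g}_3\overline{g}_2\overline{g}_1. A_{\group}(\bsq) = \Ared + \Arad$ (Proposition~\ref{prop:completereduction}) together with the standard gauge transformation rule. The ring is generated over $\difffieldalg$ by the entries of $\fmredqhat\fmradqhat$ and $\det(\fmredqhat\fmradqhat)^{-1}$ by construction. Finally, $\difffield[\fmspec,\det(\fmspec)^{-1}]$ is differentially simple over $\difffield$ (Proposition~\ref{prop:galaction}~\ref{prop:galaction(c)}), so it is \emph{a fortiori} differentially simple as a ring and hence also over the larger base $\difffieldalg$, and its constants coincide with those of $\difffield$, namely $\field$, which are also the constants of $\difffieldalg$. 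Together with \cite[Def.~1.15 / Prop.~1.22]{vanderPutSinger} these four properties characterize the ring as a Picard-Vessiot ring.

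The only genuinely delicate step I foresee is the first one, namely ensuring $\difffieldalg \subseteq \difffield[\fmspec,\det(\fmspec)^{-1}]$ rather than merely $\difffieldalg \subseteq \overline{\generalext}$. This is a purely Picard-Vessiot-theoretic point: one needs that the algebraic-over-$\difffield$ elements of the Picard-Vessiot field already sit in the Picard-Vessiot ring. Once this standard fact is cited the rest of the proof is a clean bookkeeping argument.
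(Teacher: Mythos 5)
Your proposal is correct and follows essentially the same route as the paper: the only substantive point is the inclusion $\difffieldalg \subseteq \difffield[\fmspec,\det(\fmspec)^{-1}]$, which the paper gets by taking a primitive element of $\difffieldalg$ and invoking \cite[Corollary~1.38]{vanderPutSinger} (finite $H$-orbit, hence the element lies in the Picard--Vessiot ring), while you invoke the same standard fact phrased as ``the $\cocomp{H}$-invariants of the ring equal the fixed field''; note the precise reference for that fact is Corollary~1.38 rather than Proposition~1.34. The remaining steps — the two-sided containment via $\overline{g}_3\overline{g}_2\overline{g}_1 \in \group(\difffieldalg)$, transfer of differential simplicity through the ring equality, and $\dlog(\fmredqhat\fmradqhat) = \Ared + \Arad$ making it a fundamental matrix — match the paper's argument.
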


\begin{proof}
\ref{lem:ringsoverFalg(a)}\ 
Recall from Proposition~\ref{prop:galaction} that 
$\difffield[ \fmspec,\det(\fmspec)^{-1}]$ is a Picard-Vessiot ring for $A_{\group}(\bsq)$ and that $\extfieldred \subset \Frac(\difffield[\fmspec,\det(\fmspec)^{-1}]) = \overline{\generalext}$. 
Let $p$ be a primitive element for the algebraic extension $\difffieldalg$ of $\difffield$ (cf.\ Proposition~\ref{prop:primelem}). Then the orbit of $p$ under the differential Galois group $H$ of $\overline{\generalext}$ over $\difffield$ is finite and so the $\field$-vector space spanned by the elements of the orbit is finite dimensional. It follows from \cite[Corollary~1.38]{vanderPutSinger} that $p \in \difffield[\fmspec,\det(\fmspec)^{-1}]$ and so $\difffieldalg \subset  \difffield[\fmspec,\det(\fmspec)^{-1}]$. 
This proves the first statement of \ref{lem:ringsoverFalg(a)}.
Note that
\[
\fmredqhat \, \fmradqhat  \, = \, \overline{g}_3 \overline{g}_2 \overline{g}_1 \, \fmspec
\]
with $\overline{g}_3 \overline{g}_2 \overline{g}_1 \in \group(\difffieldalg)$
shows that the entries of $\fmredqhat \fmradqhat$ and the entries of $\fmspec$
can be expressed in terms of one another as homogeneous polynomials of degree one with coefficients in $\difffieldalg$.
Therefore, the first statement of \ref{lem:ringsoverFalg(a)} implies the second one.
The equality as differential rings follows from the fact that the derivation of $\difffield$ uniquely extends to $\difffieldalg$ and that matrices $A_{\group}(\bsq)$ and $\Ared +\Arad$ defining the derivative of $\fmspec$ and $\fmredqhat \, \fmradqhat$, respectively, are gauge equivalent over $\difffieldalg$. 

\ref{lem:ringsoverFalg(b)}\ Since the Picard-Vessiot ring 
$\difffield[\fmspec,\det(\fmspec)^{-1}]$ is differentially simple, we conclude with \eqref{eqn:equalityofrings} that
\[
 \difffieldalg[\fmredqhat \fmradqhat,\det(\fmredqhat \fmradqhat)^{-1}]
\]
is also differentially simple. Moreover, since 
\[
\dlog(\fmredqhat \fmradqhat) \, = \,
\dlog(\overline{g}_3\overline{g}_2\overline{g}_1 \fmspec) \, = \, (\overline{g}_3\overline{g}_2\overline{g}_1).
\dlog(\fmspec) \, = \, (\overline{g}_3\overline{g}_2\overline{g}_1).A_{\group}(\bsq) \, = \, \Ared + \Arad,
\]
the matrix $\fmredqhat \, \fmradqhat$ is a fundamental matrix for $\Ared + \Arad$
and so
$$\difffieldalg[\fmredqhat \fmradqhat,\det(\fmredqhat \fmradqhat)^{-1}]$$ 
is a Picard-Vessiot ring over $\difffieldalg$ for $\Ared + \Arad$. 
\end{proof}

\begin{proposition}
Let $\GalConn (\field) = \cocomp{\Lred} (\field) \ltimes R_1 (\field)$ be as in Proposition~\ref{prop:completereduction} and 
suppose there exists a maximal differential ideal $\Imax$ in $\underline{R}$
defining $\fmradqhat$ with the property
\[
\fmradqhat \in R_1(\underline{R}/\Imax)  .
\]
As earlier denote by $H (\field)$ the differential Galois group of the Picard-Vessiot ring
$\difffield[\fmspec,\det(\fmspec)^{-1}]$
over $\difffield$ for $A_{\group}(\bsq)$ constructed with respect to $\Imax$.
Then $\unirad(H) (\field) = R_1 (\field)$ and there exists a Levi group $\levi(\field)$ of $H (\field)$ such that $\cocomp{\levi} (\field) = \cocomp{\Lred} (\field)$.
In particular, $\cocomp{H} (\field) = \GalConn (\field)$ and $H (\field) = \levi(\field) \ltimes R_1 (\field)$.
\end{proposition}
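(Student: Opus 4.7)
The plan is to identify $\cocomp{H}(\field)$ directly with $\GalConn(\field)$ as concrete subgroups of $\GL_n(\field)$, making use of the hypothesis that $\fmradqhat$ already takes values in $R_1$. First I would verify that $\difffieldalg = \overline{\generalext}^{\cocomp{H}}$: using $\extfieldred = \overline{\generalext}^{\unirad(H)}$ and the isomorphism $\Gal_{\partial}(\extfieldred/\difffield) \cong \Lred(\field)$ (induced by identifying this quotient with any Levi group $\levi$ of $H$), the subgroup of $H(\field)$ corresponding to $\difffieldalg = \extfieldred^{\cocomp{\Lred}}$ is the preimage in $H(\field)$ of $\cocomp{\Lred}(\field) = \cocomp{\levi}(\field)$, which coincides with $\cocomp{H}(\field) = \cocomp{\levi}(\field)\cdot\unirad(H)(\field)$. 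Combined with Lemma~\ref{lem:ringsoverFalg}, this lets me view $\difffieldalg[\fmredqhat\fmradqhat,\det(\fmredqhat\fmradqhat)^{-1}]$ as a Picard-Vessiot ring over $\difffieldalg$ for $\Ared + \Arad$ whose Galois group, in the representation induced by $\fmredqhat\fmradqhat$, is $\cocomp{H}(\field)$; because $\overline{g}_3\overline{g}_2\overline{g}_1 \in \group(\difffieldalg)$ is fixed by $\cocomp{H}$, the representation transfers verbatim from $\fmspec$ to $\fmredqhat\fmradqhat$, so we genuinely get the same subgroup of $\GL_n(\field)$.

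Next I would establish $\cocomp{H}(\field) = \GalConn(\field)$. The hypothesis $\fmradqhat \in R_1(\underline{R}/\Imax)$ together with $\fmredqhat \in \cocomp{\Lred}(\extfieldred)$ and the Levi decomposition $\GalConn = \cocomp{\Lred} \ltimes R_1$ from Proposition~\ref{prop:completereduction}\ref{item:completereductionb} gives $\fmredqhat\fmradqhat \in \GalConn(\overline{\generalext})$. For $\gamma \in \cocomp{H}(\field)$ with associated matrix $\gamma^* \in \GL_n(\field)$ defined by $\gamma(\fmredqhat\fmradqhat) = \fmredqhat\fmradqhat\,\gamma^*$, the fact that $\GalConn$ is defined over $\field$ forces $\gamma(\fmredqhat\fmradqhat) \in \GalConn(\overline{\generalext})$, and hence
\[
\gamma^* \, = \, (\fmredqhat\fmradqhat)^{-1}\gamma(\fmredqhat\fmradqhat) \in \GalConn(\overline{\generalext}) \cap \GL_n(\field) \, = \, \GalConn(\field).
\]
So $\cocomp{H}(\field) \leq \GalConn(\field)$. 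Both groups are connected and share the dimension $\trdeg_{\difffieldalg}(\Frac(\difffieldalg[\fmredqhat\fmradqhat, \det(\fmredqhat\fmradqhat)^{-1}]))$, so equality holds.

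The identity $\cocomp{H} = \GalConn = \cocomp{\Lred} \ltimes R_1$ immediately yields $\unirad(H) = \unirad(\cocomp{H}) = R_1$. To exhibit the desired Levi group, pick any Levi group $\levi'$ of $H$ (existing by Theorem~\ref{thm:levidecomposition}); its connected component $\cocomp{\levi'}$ is a reductive subgroup of $\cocomp{H}$, so by Theorem~\ref{thm:levidecompositionconj} there exists $u \in R_1(\field)$ with $u\cocomp{\levi'}u^{-1} \leq \cocomp{\Lred}$, and since both sides are connected reductive groups of the same dimension, equality holds. Setting $\levi := u\levi' u^{-1}$ gives a Levi group of $H$ (Levi groups form a single $\unirad(H)$-orbit) with $\cocomp{\levi} = \cocomp{\Lred}$ and Levi decomposition $H = \levi \ltimes R_1$. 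The main obstacle is precisely the equality $\cocomp{H}(\field) = \GalConn(\field)$ rather than mere conjugacy: two Picard-Vessiot rings over $\difffieldalg$ for $\Ared + \Arad$ a priori produce Galois subgroups of $\GL_n(\field)$ that agree only up to conjugation, and the role of the hypothesis $\fmradqhat \in R_1(\underline{R}/\Imax)$ is exactly to pin down the specific representation of $\cocomp{H}$ induced by $\fmspec$ so that it lands inside $\GalConn(\field)$ on the nose.
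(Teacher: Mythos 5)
Your proof is correct, but for the central equality $\cocomp{H}(\field)=\GalConn(\field)$ you take a genuinely different route than the paper. The paper exploits reducedness of $\Ared+\Arad$ by observing that $I_{\GalConn}$ generates a \emph{maximal} differential ideal in $\difffieldalg[X,\det(X)^{-1}]$; since $\fmredqhat\fmradqhat\in\GalConn(\underline{R}/\Imax)$, the substitution $X\mapsto\fmredqhat\fmradqhat$ then yields an isomorphism of Picard-Vessiot rings sending fundamental matrix to fundamental matrix, so the Galois group of $\overline{\generalext}/\difffieldalg$ in the representation induced by $\fmredqhat\fmradqhat$ (equivalently by $\fmspec$) is computed to be exactly $\GalConn(\field)$; the identification with $\cocomp{H}$ then follows by sandwiching ($\GalConn$ connected inside $H$ gives $\GalConn\leq\cocomp{H}$, and $\difffieldalg$ lying in the fixed field of $\cocomp{H}$ gives the reverse). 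You instead identify $\Gal_{\partial}(\overline{\generalext}/\difffieldalg)$ with $\cocomp{H}$ first, prove $\cocomp{H}\leq\GalConn$ by the elementary observation that the fundamental matrix takes values in the $\field$-defined group $\GalConn$, and close the gap by a dimension count in which reducedness enters through the uniqueness of Picard-Vessiot extensions (so that $\dim\GalConn$ equals the transcendence degree of $\Frac(\difffieldalg[\fmredqhat\fmradqhat,\det(\fmredqhat\fmradqhat)^{-1}])$ over $\difffieldalg$); this buys you a softer argument that avoids manipulating the defining ideal, at the cost of invoking uniqueness of Picard-Vessiot extensions. Two small points: in your first step the literal equality $\cocomp{\Lred}(\field)=\cocomp{\levi}(\field)$ is not yet available (it is part of the conclusion); what you actually need, and what is true, is that the identity component of $\Gal_{\partial}(\extfieldred/\difffield)$ is intrinsic (independent of the representations $\Hred$, $\Lred$, $\levi$, which are related by isomorphisms of algebraic groups), so its fixed field is $\difffieldalg$ and its preimage under the restriction surjection $H\twoheadrightarrow\Gal_{\partial}(\extfieldred/\difffield)$ with connected kernel $\unirad(H)$ is $\cocomp{H}$ — phrase it that way to avoid circularity. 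Your concluding step, conjugating an arbitrary Levi group of $H$ into $\cocomp{\Lred}$ by an element of $R_1(\field)$ and comparing dimensions, is a legitimate variant of the paper's argument, which instead picks a Levi group of $H$ containing $\cocomp{\Lred}$ and uses conjugacy of Levi groups of $\underline{H}$ to conclude $\cocomp{\levi}=\cocomp{\Lred}$.
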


\begin{proof}
   According to Lemma~\ref{lem:ringsoverFalg} the differential ring 
   \[
   \difffieldalg[\fmredqhat \fmradqhat,\det(\fmredqhat \fmradqhat)^{-1}]
   \]
   is a Picard-Vessiot ring for $\Ared + \Arad$ over $\difffieldalg$. The assumption on $\Imax$ implies that its
   fundamental solution matrix satisfies  
\begin{equation}\label{eq:Hconinclusion}
    \overline{g}_3 \overline{g}_2 \overline{g}_1 \fmspec \, = \, \fmredqhat \fmradqhat \in \cocomp{\Lred}(\extfieldred) \cdot R_1(\overline{R}) \, \subseteq \, \cocomp{\Lred}(\overline{R}) \cdot R_1(\overline{R}) \, = \, \GalConn(\overline{R})\,,
\end{equation}
where we recall $\overline{R}=\underline{R}/\Imax$ and $\extfieldred \subset \overline{R}$. 
   Consider now the defining ideal  
   $$I_{\GalConn} \lhd \field[X,\det(X)^{-1}]$$
   of $\GalConn (\field)=\cocomp{\Lred} (\field) \ltimes R_1 (\field)$ and the ideal $(I_{\GalConn})$ in $\difffieldalg[X,\det(X)^{-1}]$
   generated by $I_{\GalConn}$. We extend the derivation of $\difffieldalg$ to  $\difffieldalg[X,\det(X)^{-1}]$ by 
   $$\partial(X) = (\Ared + \Arad)X.$$ 
   The ideal $(I_{\GalConn})$ is then a maximal differential ideal, since $\Ared + \Arad$ belongs to $\Lie(\GalConn)(\difffieldalg)$ and the differential Galois group for $\Ared + \Arad$ is $\GalConn (\field)$.
   The kernel of the surjective differential $\difffieldalg$-homomorphism 
   \[
   \Phi\colon \difffieldalg[X,\det(X)^{-1}] \to \difffieldalg[\fmredqhat \fmradqhat,\det(\fmredqhat \fmradqhat)^{-1}], \, X \mapsto  \fmredqhat \fmradqhat 
   \]
   contains the differential ideal $(I_{\GalConn})$, because $\fmredqhat \fmradqhat \in \GalConn(\overline{R})$ (cf.\ \eqref{eq:Hconinclusion}). Since $(I_{\GalConn})$ is a maximal differential ideal, it follows that $(I_{\GalConn})=\ker(\Phi)$ and so
   \[
   \begin{array}{rcl}
   \difffieldalg[X,\det(X)^{-1}]/(I_{\GalConn}) & \to & \difffieldalg[\fmredqhat \fmradqhat,\det(\fmredqhat \fmradqhat)^{-1}], \\[0.2em]
   X + (I_{\GalConn}) & \mapsto & \fmredqhat \fmradqhat
   \end{array}
   \]
   is a differential $\difffieldalg$-isomorphism of Picard-Vessiot rings. Since the differential Galois group of the first ring is $\GalConn(\field)$ and this differential $\difffieldalg$-isomorphism maps the fundamental matrix $X + (I_{\GalConn})$ 
   to the fundamental matrix $\fmredqhat \fmradqhat$, we conclude that the differential Galois group of
   \[
   \difffieldalg[ \fmredqhat \fmradqhat,\det(\fmredqhat \fmradqhat)^{-1}]
   \]
   over $\difffieldalg$ is also $\GalConn(\field)=\cocomp{\Lred}(\field) \ltimes R_1(\field)$.
   
   According to Theorem~\ref{thm:Levigroupsconj} and Proposition~\ref{prop:parabolicbound} the differential Galois group $H(\field)$ of the Picard-Vessiot ring $\difffield[\fmspec,\det(\fmspec)^{-1}]$ over $\difffield$ for $A_{\group}(\bsq)$ has a Levi decomposition 
   \[
   H(\field) \, = \, \underline{\levi}(\field) \ltimes R_2(\field),
   \]
   with $\underline{\levi}(\field)$ a Levi group of $\underline{H}(\field)$ and $R_2(\field) \leq \unirad(\parabolic_J(\field))$ its unipotent radical.
   Since by Lemma~\ref{lem:ringsoverFalg} we have the inclusions 
   \[
   \difffield \subset \difffieldalg \subset\overline{\generalext}=\Frac(\difffield[\fmspec,\det(\fmspec)^{-1}])
   \]
   the Fundamental Theorem of Differential Galois Theory implies that the differential Galois group of the Picard-Vessiot extension $\overline{\generalext}$ of $\difffieldalg$ is the subgroup of $H(\field)$ which leaves $\difffieldalg$ fixed.
   By Lemma~\ref{lem:ringsoverFalg} \ref{lem:ringsoverFalg(a)}\
   the rings $\difffield[\fmspec,\det(\fmspec)^{-1}]$ and
   $\difffieldalg[ \fmredqhat \fmradqhat,\det(\fmredqhat \fmradqhat)^{-1}]$ have the same $\difffieldalg$-automorphisms.
   From
   \[
   \overline{g}_3 \overline{g}_2 \overline{g}_1 \fmspec \, = \, \fmredqhat \fmradqhat
   \]
we conclude that for every $\difffieldalg$-automorphism $\gamma$
there exists $h \in \GL_n(\field)$ satisfying both $\gamma(\fmspec)=\fmspec h$ and $\gamma (\fmredqhat \fmradqhat)=\fmredqhat \fmradqhat h$, i.e., the representations of $\gamma$ induced by $\fmspec$ and $\fmredqhat \fmradqhat$ coincide. 
Hence, the subgroup of $H(\field)$ fixing $\difffieldalg$ is $\GalConn(\field)$.
   
   The inclusion $\GalConn(\field) \leq H(\field)$ implies the inclusion $\cocomp{\Lred}(\field) \leq H(\field)$ and so, since $\cocomp{\Lred}(\field)$ is reductive, there exists a Levi group $\levi(\field)$ of $H(\field)$, which is also a Levi group of $\underline{H}$ by Theorem~\ref{thm:Levigroupsconj}, such that $\cocomp{\Lred}(\field) \leq \levi(\field)$.
   The conjugacy of $\Lred$ and $\levi$
   implies that $\cocomp{\Lred}(\field) = \cocomp{\levi}(\field)$ and so we obtain
   \[
   \cocomp{H}(\field) \, = \, (\levi(\field) \ltimes \cocomp{R_2(\field))} \, = \, \cocomp{\levi}(\field) \ltimes R_2(\field) \, = \, \cocomp{\Lred}(\field) \ltimes R_2(\field).
   \]
   Since $\Frac(\difffield[\fmspec,\det(\fmspec)^{-1}])^{\cocomp{H}}$ is the algebraic closure of $\difffield$ in $\Frac(\difffield[\fmspec,\det(\fmspec)^{-1}])$, it follows that 
   \[
   \difffieldalg \, = \, \Frac(\difffield[\fmspec,\det(\fmspec)^{-1}])^{\GalConn} \subset \Frac(\difffield[\fmspec,\det(\fmspec)^{-1}])^{\cocomp{H}}
   \]
   and so $\GalConn(\field) \geq \cocomp{H}(\field)$.
   With $\GalConn(\field) \leq H(\field)$ we conclude now that  
   $\cocomp{H}(\field) = \GalConn(\field)$ and so $R_2(\field) = R_1(\field)$.
\end{proof}

\begin{proposition}\label{prop:generatorsforImax}
Suppose we are in the situation of Proposition~\ref{prop:completereduction}.
Evaluating the generators of the ideal $I_{R_1}$ in $\field[\GL_n]$ at $\fmraduqhat$ gives generators
$$f_1(\fmraduqhat), \dots, f_a(\fmraduqhat)$$
of a maximal differential ideal $\Imax$ in $\underline{R}$. This ideal has the property that
\[
\fmradqhat \in R_1(\underline{R}/\Imax)\,.
\]
\end{proposition}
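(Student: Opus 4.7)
First, I would establish the identity
\[
\dlog(\fmraduqhat) \, = \, \Ad(\fmreduqhat^{-1})(\Arad) \, \in \, \Lie(R_1)(\extfieldred),
\]
which underlies everything. Taking logarithmic derivatives in the Levi decomposition $\overline{g}_3\overline{g}_2\overline{g}_1\,\fmuq=\fmreduqhat\,\fmraduqhat$ of Proposition~\ref{prop:decompredrad} and matching with $\overline{g}_3\overline{g}_2\overline{g}_1.A_{\group}(\bsq)=\Ared+\Arad$ from Proposition~\ref{prop:completereduction}\ref{item:completereductiona}, the direct sum $\Lie(\parabolic_J)=\Lie(\cocomp{\Lred})\oplus\Lie(\unirad(\parabolic_J))$ forces $\Ared=\dlog(\fmreduqhat)$ and $\Arad=\Ad(\fmreduqhat)(\dlog(\fmraduqhat))$. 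Since $\Ared+\Arad\in\Lie(\GalConn)(\difffieldalg)$ with $\GalConn=\cocomp{\Lred}\ltimes R_1$, the summand $\Arad$ lies in $\Lie(\GalConn)\cap\Lie(\unirad(\parabolic_J))=\Lie(R_1)$; and since $\cocomp{\Lred}$ normalises $R_1$, conjugation by $\fmreduqhat^{-1}\in\cocomp{\Lred}(\extfieldred)$ keeps $\Arad$ in $\Lie(R_1)$, which gives the identity. The immediate consequence $\fmradqhat\in R_1(\underline{R}/\Imax)$ is independent of this: it follows from $\pi(f_i(\fmraduqhat))=f_i(\fmradqhat)=0$, where $\pi\colon\underline{R}\to\underline{R}/\Imax$ denotes the projection.

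Second, to show that $\Imax$ is a differential ideal, I would equip $\extfieldred[\GL_n]$ with the derivation extending that of $\extfieldred$ by $\partial(X)=\dlog(\fmraduqhat)\,X$, so that $\psi\colon\extfieldred[\GL_n]\to\underline{R}$, $X\mapsto\fmraduqhat$, becomes a differential homomorphism. Writing $\dlog(\fmraduqhat)=\sum_k c_k A_k$ with $c_k\in\extfieldred$ and $A_k\in\Lie(R_1)(\field)$, the derivation acts on $f\in\field[\GL_n]$ as $\sum_k c_k D_{A_k}(f)$, where $D_{A_k}$ is the left-invariant derivation associated with $A_k$; each $D_{A_k}$ stabilises $I_{R_1}$ because $\exp(tA_k)\in R_1$ left-translates $R_1$ into itself and $I_{R_1}$ is radical in characteristic zero. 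Hence $(I_{R_1})\unlhd\extfieldred[\GL_n]$ is a differential ideal, and applying $\psi$ yields $\partial(f_i(\fmraduqhat))\in\Imax$. Properness is established by a coordinate comparison: the triangular polynomial substitution of Remark~\ref{rem:genericYredYraddecomp}, applied over $\underline{R}$, makes $\fmraduq$ (hence also $\fmraduqhat$, which differs from it only by left multiplication by an element of $\unirad(\parabolic_J)(\extfieldred)$, as in the proof of Proposition~\ref{prop:decompredrad}) an isomorphism $\extfieldred[\unirad(\parabolic_J)]\xrightarrow{\sim}\underline{R}$, under which $\Imax$ corresponds to $\extfieldred\otimes_\field I_{R_1}$. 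Consequently $\underline{R}/\Imax\cong\extfieldred[R_1]$ is a nonzero integral domain of transcendence degree $\dim(R_1)$ over $\extfieldred$.

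Third, and this is the main obstacle, I would prove maximality by a dimension comparison. Let $M\supseteq\Imax$ be any maximal differential ideal. Because $\Imax\subseteq M$, the image of $\fmraduqhat$ in $\underline{R}/M$ lies in $R_1$, so the unnumbered proposition immediately preceding Proposition~\ref{prop:generatorsforImax} applies with $\Imax$ replaced by $M$ and delivers a Levi decomposition $H_M=\levi_M\ltimes R_1$ with $\cocomp{\levi_M}=\cocomp{\Lred}$ of the differential Galois group of $\Frac(\underline{R}/M)$ over $\difffield$. Hence $\dim(H_M)=\dim(\Lred)+\dim(R_1)$, and combined with $\trdeg_\difffield(\extfieldred)=\dim(\Lred)$ this gives $\trdeg_\extfieldred\Frac(\underline{R}/M)=\dim(R_1)=\trdeg_\extfieldred\Frac(\underline{R}/\Imax)$. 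The induced surjection $\underline{R}/\Imax\twoheadrightarrow\underline{R}/M$ between finitely generated integral domains of equal transcendence degree over $\extfieldred$ is forced to be an isomorphism, so $M=\Imax$. Without the preceding proposition the required dimension match would be unavailable, which is precisely why this step is the crux of the argument.
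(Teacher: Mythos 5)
Your proof is correct in substance, but it follows a genuinely different route from the paper's. The paper never verifies directly that $\langle f_1(\fmraduqhat),\dots,f_a(\fmraduqhat)\rangle$ is a proper differential ideal, let alone maximal; instead it starts from Proposition~\ref{cor:ForOurLeviGrpThereisanidealnew}~\ref{cor:ForOurLeviGrpThereisanidealnew(b)}, which supplies a maximal differential ideal $\Imax^{(1)}$ realizing $\Lred$ as a Levi group of $H^{(1)}$, builds a differential $\difffieldalg$-isomorphism of the Picard-Vessiot ring $\difffieldalg[\fmredqhat^{(1)}\fmradqhat^{(1)},\det{}^{-1}]$ with $\difffieldalg[X,\det(X)^{-1}]/(I_{\GalConn})$, splits $\overline{X}=\overline{X}_{\rm red}\overline{X}_{\rm rad}$, composes with an automorphism coming from $\cocomp{\Lred}(\field)$ so that the connecting constant matrix $u_1$ lands in $\unirad(\parabolic_J)(\field)$, and then transports $\Imax^{(1)}$ by the $\extfieldred$-automorphism $\varphi_{u_1}$ of $\underline{R}$ (Proposition~\ref{cor:ForOurLeviGrpThereisanidealnew}~\ref{cor:ForOurLeviGrpThereisanidealnew(a)}) to obtain a maximal differential ideal that is identified with the ideal generated by the $f_i(\fmraduqhat)$. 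Your argument instead proves the three required properties separately: the identity $\dlog(\fmraduqhat)=\Ad(\fmreduqhat^{-1})(\Arad)\in\Lie(R_1)(\extfieldred)$ plus tangency of the associated vector fields to $R_1$ gives differentiality; the identification $\underline{R}\cong\extfieldred[\unirad(\parabolic_J)]$ via evaluation at $\fmraduqhat$ gives properness with $\underline{R}/\Imax$ a domain of transcendence degree $\dim R_1$ over $\extfieldred$; and maximality follows from a transcendence-degree comparison after applying the (conditional, previously proved, hence non-circular) proposition preceding Proposition~\ref{prop:generatorsforImax} to an arbitrary maximal differential ideal $M\supseteq\Imax$. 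What the paper's route buys is that maximality comes for free, since the ideal is exhibited as the image of a known maximal differential ideal under an automorphism; what your route buys is a more self-contained, dimension-theoretic argument that avoids the Picard-Vessiot ring isomorphisms and the adjustment by $\ell\in\cocomp{\Lred}(\field)$.

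Two small points you should tighten, neither of which is a real gap. First, $\Lie(\cocomp{\Lred})\oplus\Lie(\unirad(\parabolic_J))$ is in general a proper subspace of $\Lie(\parabolic_J)$; either match components inside $\Lie(\levi_J)\oplus\Lie(\unirad(\parabolic_J))$, or simply quote $\dlog(\fmreduqhat)=\dlog(\overline{g}_2\overline{g}_1\fmreduq)=\Ared$ from Proposition~\ref{prop:reduceredivtivepartnew}, after which $\Arad=\Ad(\fmreduqhat)(\dlog(\fmraduqhat))$ is forced. Second, in the properness step the ideal $\Theta^{-1}(\Imax)$ is generated by the restrictions $f_i|_{\unirad(\parabolic_J)}$; to conclude that the quotient is a domain you should note that, because the restriction map $\field[\GL_n]\to\field[\unirad(\parabolic_J)]$ is surjective and $I_{R_1}$ is the full vanishing ideal of $R_1$, the ideal generated by these restrictions is again the full (prime) vanishing ideal of $R_1$ in $\field[\unirad(\parabolic_J)]$ — the left translation by $u\,\overline{g}_3$ is absorbed into the evaluation isomorphism and does not produce a coset issue.
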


\begin{proof}
 According to Proposition~\ref{cor:ForOurLeviGrpThereisanidealnew} ~\ref{cor:ForOurLeviGrpThereisanidealnew(b)} there exists a maximal differential ideal $\Imax^{(1)}$ in $\underline{R}$ such that $\Lred(\field)$ is a Levi group of the differential Galois group $H^{(1)}(\field)$ of the Picard-Vessiot ring $\difffield[\fmspec^{(1)},\det(\fmspec^{(1)})^{-1}]$ constructed for $\Imax^{(1)}$. The differential Galois group has then a Levi decomposition $H^{(1)}(\field)=\Lred(\field) \ltimes R_2(\field)$ with $R_2(\field) \leq \unirad(\parabolic_J)(\field)$ by Proposition~\ref{prop:parabolicbound}. The decomposition of Proposition~\ref{prop:decompredrad} reduces modulo $\Imax^{(1)}$ to a decomposition 
 \[
    \overline{g}_3 \overline{g}_2 \overline{g}_1 \fmspec^{(1)} = \fmredqhat^{(1)} \fmradqhat^{(1)}
 \]
 with $\fmredqhat^{(1)} \in \cocomp{\Lred}(\extfieldred)$ and $\fmradqhat^{(1)} \in \unirad(\parabolic_J)(\underline{R}/\Imax^{(1)})$.
 According to Lemma~\ref{lem:ringsoverFalg} \ref{lem:ringsoverFalg(b)} the differential ring
\[
  \difffieldalg[\fmredqhat^{(1)} \fmradqhat^{(1)},\det(\fmredqhat^{(1)} \fmradqhat^{(1)})^{-1}] 
\]
is a Picard-Vessiot ring over $\difffieldalg$ for $\Ared + \Arad$.
We will prove that its differential Galois group is the connected component $\cocomp{(H^{(1)})}(\field)$ of $H^{(1)}(\field)$. 
The fixed field 
$\overline{\generalext}^{\cocomp{(H^{(1)})}}$  
is the algebraic closure of $\difffield$ in $\overline{\generalext}$, where as usually $\overline{\generalext}$ denotes $\Frac(\difffield[\fmspec^{(1)},\det(\fmspec^{(1)})^{-1}])$, and so it contains $\difffieldalg$.
Since $H^{(1)}(\field)/\cocomp{(H^{(1)})}(\field) \cong \Lred(\field) / \cocomp{\Lred}(\field)$, both algebraic extensions have the same degree, forcing
$$\difffieldalg=\overline{\generalext}^{\cocomp{(H^{(1)})}}$$
and so the differential Galois group of $\difffield[\fmspec^{(1)},\det(\fmspec^{(1)})^{-1}]$ over $\difffieldalg$ is $\cocomp{(H^{(1)})}(\field)$. 
Hence, for every $\gamma \in \Gal_{\partial}(\overline{\generalext}/\difffieldalg)$ there exists $g \in \cocomp{(H^{(1)})}(\field)$ such that $\gamma(\fmspec^{(1)})=\fmspec^{(1)} g$. It follows from Lemma~\ref{lem:ringsoverFalg} \ref{lem:ringsoverFalg(a)} that $\Gal_{\partial}(\overline{\generalext}/\difffieldalg)$ is also the group of differential $\difffieldalg$-automorphisms of  $$\difffieldalg[\fmredqhat^{(1)} \fmradqhat^{(1)},\det(\fmredqhat^{(1)} \fmradqhat^{(1)})^{-1}].$$
We conclude with
\[
\gamma (\fmredqhat^{(1)} \fmradqhat^{(1)}) \, = \,
\gamma (\overline{g}_1 \overline{g}_2 \overline{g}_3 \fmspec^{(1)}) \, = \,
\overline{g}_1 \overline{g}_2 \overline{g}_3 \gamma(\fmspec^{(1)}) \, = \,
\overline{g}_1 \overline{g}_2 \overline{g}_3 \fmspec^{(1)} g \, = \, \fmredqhat^{(1)} \fmradqhat^{(1)} \, g
\]
that its representation with respect to $\fmredqhat^{(1)} \fmradqhat^{(1)}$ is also $\cocomp{(H^{(1)})}(\field)$, i.e.\ the differential Galois group of $\difffieldalg[\fmredqhat^{(1)} \fmradqhat^{(1)},\det(\fmredqhat^{(1)} \fmradqhat^{(1)})^{-1}]$ over $\difffieldalg$ is $\cocomp{(H^{(1)})}(\field)=\cocomp{\Lred}(\field) \ltimes R_2(\field)$ as stated.

Next we extend the derivation of $\difffieldalg$ to $\difffieldalg[X,\det(X)^{-1}]$ by 
\[
\partial(X) \, = \, (\Ared + \Arad) \, X \, .
\]
Then, Proposition~\ref{prop:completereduction} implies that the defining ideal $I_{\GalConn}$ in $\field[X,\det(X)^{-1}]$ generates a maximal differential ideal $(I_{\GalConn})$ in $\difffieldalg[X,\det(X)^{-1}]$. We obtain that
\[
 \difffieldalg[X,\det(X)^{-1}]/(I_{\GalConn})
\]
is a Picard-Vessiot ring over $\difffieldalg$ with differential Galois group $\GalConn (\field)= \cocomp{\Lred}(\field) \ltimes R_1(\field)$
(cf.\ Proposition~\ref{prop:completereduction}).
By construction we trivially have
\[
\overline{X} \, := \, X + (I_{\GalConn}) \in \GalConn(\difffieldalg[X,\det(X)^{-1}]/(I_{\GalConn}))\,.
\]
Since the multiplication map
\[
\mu\colon \cocomp{\Lred}\times R_1 \to \GalConn, \ (\ell,n) \mapsto \ell n
\]
is an isomorphism of varieties, its comorphism
\[
\mu^*\colon \field[\GalConn] \to \field[\cocomp{\Lred}\times R_1], \ \overline{X}_{i,j} \mapsto \overline{X}_{i,j} \circ \mu
\]
is a ring isomorphism. Combining it with the isomorphism  
\begin{gather*}
    \field[\cocomp{\Lred}\times R_1] \, \cong \, 
    \field[\coord^{(1)},\coord^{(2)},\det(\coord^{(1)})^{-1},\det(\coord^{(2)})^{-1}]/ (I_{\cocomp{\Lred}} , I_{R_1}) \, ,
\end{gather*}
where $I_{\cocomp{\Lred}}\unlhd \field[\coord^{(1)},\det(\coord^{(1)})^{-1}]$ and $I_{R_1} \unlhd \field[\coord^{(2)},\det(\coord^{(2)})^{-1}]$ are the defining ideals of $\cocomp{\Lred}$ and $R_1$, respectively, we obtain a ring isomorphism 
\[
 \field[\GalConn] \to \field[\coord^{(1)},\coord^{(2)},\det(\coord^{(1)})^{-1},\det(\coord^{(2)})^{-1}]/ (I_{\cocomp{\Lred}} , I_{R_1}), \ \overline{X} \mapsto  \overline{\coord}^{(1)} \overline{\coord}^{(2)} \, ,
\]
where the factors of the matrix product are $\overline{\coord}^{(i)}:=\coord^{(i)} + (I_{\cocomp{\Lred}} , I_{R_1})$ with $i=1,2$.
Applying its inverse to $\overline{\coord}^{(1)} \overline{\coord}^{(2)}$ we conclude that there exist
\begin{eqnarray*}
    \overline{X}_{\rm red} & \in & \cocomp{\Lred}(\difffieldalg[X,\det(X)^{-1}]/(I_{\GalConn})) \quad \text{and} \quad \\ \overline{X}_{\rm rad} & \in & R_1(\difffieldalg[X,\det(X)^{-1}]/(I_{\GalConn}))
\end{eqnarray*}
such that $\overline{X} = \overline{X}_{\rm red}\overline{X}_{\rm rad}$.

Since $\difffieldalg[\fmredqhat^{(1)} \fmradqhat^{(1)},\det(\fmredqhat^{(1)} \fmradqhat^{(1)})^{-1}]$ and $\difffieldalg[X,\det(X)^{-1}]/(I_{\GalConn})$ are Picard-Vessiot rings over $\difffieldalg$ for $\Ared + \Arad$,
there exists a matrix $g \in \group(\field)$ such that
\[
\begin{array}{rcl}
  \psi\colon \difffieldalg[\fmredqhat^{(1)} \fmradqhat^{(1)},\det(\fmredqhat^{(1)} \fmradqhat^{(1)})^{-1}] \! & \! \to \! & \! \difffieldalg[X,\det(X)^{-1}]/(I_{\GalConn}), \\[0.2em]
    \fmredqhat^{(1)} \fmradqhat^{(1)} \! & \! \mapsto \! & \! \overline{X}_{\rm red} \, \overline{X}_{\rm rad} \, g
\end{array}
\]
is a differential $\difffieldalg$-isomorphism.
Since both fundamental matrices $\fmredqhat^{(1)} \fmradqhat^{(1)}$ and $\overline{X}_{\rm red}\overline{X}_{\rm rad}$ are elements of the group $\cocomp{\Lred} \cdot \unirad(\parabolic_J)$, we conclude that $g \in \cocomp{\Lred}(\field) \cdot \unirad(\parabolic_J)(\field)$. Hence, there exist $\ell \in \cocomp{\Lred}(\field)$ and $u \in \unirad(\parabolic_J)(\field)$ such that $g = \ell \, u$. The choice of $\Imax^{(1)}$ implies now that 
$\ell^{-1} \in \cocomp{\Lred} \leq \cocomp{(H^{(1)})}$ induces a differential $\difffieldalg$-isomorphism
\[
\begin{array}{rcl}
\gamma_{\ell^{-1}}\colon \difffieldalg[\fmredqhat^{(1)} \fmradqhat^{(1)},\det(\fmredqhat^{(1)} \fmradqhat^{(1)})^{-1}] \! & \! \to \! & \! \difffieldalg[\fmredqhat^{(1)} \fmradqhat^{(1)},\det(\fmredqhat^{(1)} \fmradqhat^{(1)})^{-1}] ,\\[0.2em]
\fmredqhat^{(1)} \, \fmradqhat^{(1)} \! & \! \mapsto \! & \! \fmredqhat^{(1)} \, \fmradqhat^{(1)} \, \ell^{-1}.
\end{array}
\]
Thus, the composition $\overline{\psi} = \psi \circ \gamma_{\ell^{-1}}$ is the differential $\difffieldalg$-isomorphism
\[
\begin{array}{rcl}
\overline{\psi}\colon \difffieldalg[\fmredqhat^{(1)} \fmradqhat^{(1)},\det(\fmredqhat^{(1)} \fmradqhat^{(1)})^{-1}] \! & \! \to \! & \! \difffieldalg[X,\det(X)^{-1}]/(I_{\GalConn} ) , \\[0.2em]
\fmredqhat^{(1)} \fmradqhat^{(1)} \! & \! \mapsto \! & \! \overline{X}_{\rm red}\overline{X}_{\rm rad} u_1
\end{array}
\]
with $u_1 := \ell u \ell^{-1} \in \unirad(\parabolic_J)(\field)$.
Its inverse $\overline{\psi}^{-1}$ maps $\overline{X}_{\rm red}\overline{X}_{\rm rad}$
to $\fmredqhat^{(1)} \fmradqhat^{(1)} u_1^{-1}$ and so we obtain  
\[
 (\fmredqhat^{(1)})^{-1} \overline{\psi}^{-1}(\overline{X}_{\rm red}) \, = \, \fmradqhat^{(1)} u_1^{-1} \overline{\psi}^{-1}(\overline{X}_{\rm rad})^{-1}
\]
and since the left hand side and the right hand side of this equality are contained in $\cocomp{\Lred}$ and $\unirad(\parabolic_J)$ respectively, we conclude with $\cocomp{\Lred} \cap \unirad(\parabolic_J) = \{ \mathrm{id} \}$ that 
\[
\overline{\psi}^{-1}(\overline{X}_{\rm rad}) \, = \, \fmradqhat^{(1)} u_1^{-1} \, .
\]
Since $R_1$ is defined over $\field$ and since $\overline{\psi}^{-1}$ is an $\difffieldalg$-isomorphism, the fact that $\overline{X}_{\rm rad} \in R_1$ implies that  
\begin{equation}\label{eqn:wichtigeGleichung}
\overline{\psi}^{-1}(\overline{X}_{\rm rad}) \, = \, \fmradqhat^{(1)} u_1^{-1} \in R_1(\underline{R}/\Imax^{(1)}). 
\end{equation}
By Proposition~\ref{cor:ForOurLeviGrpThereisanidealnew} ~\ref{cor:ForOurLeviGrpThereisanidealnew(a)} the element $u_1$ induces a differential $\extfieldred$-isomorphism $\varphi_{u_1}\colon \underline{R} \to \underline{R}$ and so the image $\Imax^{(2)} := \varphi_{u_1}(\Imax^{(1)})$ is a maximal differential ideal of $\underline{R}$. According to Proposition~\ref{cor:ForOurLeviGrpThereisanidealnew} ~\ref{cor:ForOurLeviGrpThereisanidealnew(c)} the map
\[
\varphi\colon \Frac(\underline{R}/\Imax^{(1)}) \to \Frac(\underline{R}/\Imax^{(2)}) , \quad \fmspec^{(1)} \mapsto \fmspec^{(2)} u_1
\]
is a differential $\difffield$-isomorphism which is also the identity on $\extfieldred$.
Combining the surjective differential $\extfieldred$-homomorphism 
\[
\pi\colon \underline{R} \to \underline{R}/\Imax^{(2)}
\]
with $\varphi^{-1}$ we obtain a surjective differential $\extfieldred$-homomorphism 
\[
\widehat{\pi}\colon \underline{R} \to \underline{R}/\Imax^{(1)} , \ \fmuq \mapsto \fmspec^{(1)} u_1^{-1}
\]
with kernel $\Imax^{(2)}$.
From $\widehat{\pi}(\fmreduqhat) = \fmredqhat^{(1)}$ and \eqref{eqn:wichtigeGleichung} and
\[
 \widehat{\pi}(\fmreduqhat ) \widehat{\pi}( \fmraduqhat) \, = \, \widehat{\pi}(\overline{g}_1\overline{g}_2 \overline{g}_3 \fmuq) \, = \,
\overline{g}_1\overline{g}_2 \overline{g}_3 \widehat{\pi}(\fmuq) \, = \, \overline{g}_1\overline{g}_2 \overline{g}_3\fmspec^{(1)} u_1^{-1} = \fmredqhat^{(1)} \fmradqhat^{(1)} u_1^{-1},
\]
we obtain $\widehat{\pi}( \fmraduqhat) = \fmradqhat^{(1)} u_1^{-1} \in R_1$ and so $\Imax^{(2)} = \langle f_1(\fmraduqhat), \dots, f_a(\fmraduqhat) \rangle$. 
\end{proof}

Note that the generators $f_1(\fmraduqhat),\dots, f_a(\fmraduqhat)$ of the maximal differential ideal $\Imax$ in 
\[
\underline{R} \, = \,
\extfieldred\{\intrad_i \mid \beta_i \in \roots^- \setminus \leviroots^- \} / \Iuni \, = \, \extfieldred[\underline{\intrad}_i \mid \beta_i \in \roots^-\setminus \leviroots^- ]
\]
from Proposition~\ref{prop:generatorsforImax} have coefficients in
\[
\Frac(\difffield[X,\det(X)^{-1}]/Q) \, = \, \extfieldred.
\]
We denote by $\underline{\bintrad}$ the tuple whose entries are the algebraic indeterminates $\underline{\intrad}_i$ with $\beta_i \in \roots^- \setminus \leviroots^-$.
\begin{definition}\label{def:widetildef}
We denote by $\widetilde{f}_1,\dots, \widetilde{f}_a$ the polynomials in
\[
\Frac(\difffield[X])[\underline{\bintrad}] \, ,
\]
which modulo $Q$ are equal to $f_1(\fmraduqhat),\dots, f_a(\fmraduqhat)$.
\end{definition}

\begin{algorithm} 
\DontPrintSemicolon
\KwInput { 
\begin{enumerate}
\item The matrix $\buu(\bratv,\bratf) \, n(\overline{w}) \, \btt(\bratexp) \, \buu(\bratint)$ in 
$\group(\Frac(\difffield[X])[\underline{\bintrad} ] )$ (cf.\ \eqref{eqn:fm_in_hatelements}).
\item A generating set of the ideal $Q \unlhd \difffield[\GL_{n_{I''}}] = \difffield[X,\det(X)^{-1}]$ (cf.\ Definition~\ref{def:Q}).
\item The polynomials $\widetilde{f}_1 ,\dots, \widetilde{f}_a$ in 
\[
\Frac(\difffield[X])[\underline{\bintrad} ] \, , 
\]
which generate modulo $Q$ the ideal $\Imax\lhd \underline{R}$.
\end{enumerate}
 }
\KwOutput{
\begin{enumerate}
    \item A generating set of a maximal differential ideal $\ovQ$ of $\difffield[\ratfm,\det(\ratfm)^{-1}]$ for $A_{\group}(\bsq)$.
    \item A generating set of the defining ideal $I_{H} \unlhd \field[\GL_n]$ of the differential Galois group for the Picard-Vessiot ring $\difffield[\ratfm,\det(\ratfm)^{-1}]/\ovQ$ over $\difffield$.
\end{enumerate}
}
Let $\widetilde{Q}$ be the ideal in 
$$\difffield[X,\det(X)^{-1},\underline{\bintrad},\ratfm, \det(\ratfm)^{-1}]$$
generated by the numerators of the entries of the matrix 
\[
\ratfm - \buu(\bratv, \bratf) \, n(\overline{w}) \, \btt(\bratexp) \, \buu(\bratint) \in \Frac(\difffield[X])[\ratfm,\underline{\bintrad} ]^{n \times n} ,
\]
the generators of $Q$ and the numerators  of $\widetilde{f}_1,\dots, \widetilde{f}_a$ in 
    $\difffield[X][\underline{\bintrad}]$.
     \\
 Compute with Gr\"obner basis methods a generating set of 
 \[\ovQ \, = \, \widetilde{Q} \cap \difffield[\ratfm,\det(\ratfm)^{-1}]\] \\
Compute with Gr\"obner basis methods a generating set of the  defining ideal 
$$I_{H} \unlhd \field[\GL_n] \ $$ of the stabilizer of $\ovQ $ in $\GL_n(\field)$.\\
\Return(the generating sets of $\ovQ$ and $I_{H}$)
\caption{ComputeDifferentialGaloisGroup\label{alg:DiffGaloisgroup}}
\end{algorithm}

\begin{proposition}
    Algorithm~\ref{alg:DiffGaloisgroup} terminates and is correct.
\end{proposition}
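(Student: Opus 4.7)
The plan is to mirror the proof of correctness of Algorithm~\ref{alg:reductivepart}, extending its template to handle the additional relations imposed by the maximal differential ideal $\Imax$, encoded via the polynomials $\widetilde{f}_1,\dots,\widetilde{f}_a$. Termination is immediate from the termination of the two Gr\"obner basis computations in steps~2 and~3. For correctness, the crux is to establish the identity
\begin{equation*}
\widetilde{Q} \cap \difffield[\ratfm,\det(\ratfm)^{-1}] \, = \, \ovQ,
\end{equation*}
where $\ovQ$ is the kernel of the substitution homomorphism $\ovphi\colon \difffield[\widehat{\fm},\det(\widehat{\fm})^{-1}] \to \difffield[\fmspec,\det(\fmspec)^{-1}]$, $\widehat{\fm}_{i,j} \mapsto \fmspec_{i,j}$, from Proposition~\ref{prop:statementsaboutstabilzers}.

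To establish this identity, first pass to a localization $\locali''$ of $\difffield[X,\det(X)^{-1},\underline{\bintrad},\ratfm,\det(\ratfm)^{-1}]$ at the multiplicatively closed set containing the denominators of $\bratv,\bratf,\bratexp,\widehat{\intn}_i$ (for $\beta_i \in \leviroots^-$) and of the coefficients of $\widetilde{f}_1,\dots,\widetilde{f}_a$, and extend $\widetilde{Q}$ to the ideal $\widetilde{Q}^{\locali''}$ of $\locali''$. Next, construct the ring homomorphism
\begin{equation*}
\Phi\colon \locali'' \to \Frac(\overline{R}), \quad X \mapsto X+Q, \quad \underline{\bintrad} \mapsto \underline{\bintrad}+\Imax, \quad \ratfm \mapsto \fmspec,
\end{equation*}
where $\overline{R} = \underline{R}/\Imax$ with $\Imax$ as in Proposition~\ref{prop:generatorsforImax}. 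Each family of generators of $\widetilde{Q}^{\locali''}$ lies in $\ker(\Phi)$: the generators of $Q$ by construction; each $\widetilde{f}_j$ because it is congruent modulo $Q$ to $f_j(\fmraduqhat) \in \Imax$ by Definition~\ref{def:widetildef} and Proposition~\ref{prop:generatorsforImax}; and the numerators of the entries of $\ratfm - \buu(\bratv,\bratf)n(\overline{w})\btt(\bratexp)\buu(\bratint)$ because under the quotients $X \mapsto X+Q$ and $\underline{\bintrad} \mapsto \underline{\bintrad}+\Imax$ this matrix becomes $\sigPV(\fm) = \fmspec$, by the matching $\sigred$ of Proposition~\ref{prop:sigmared} and the definition \eqref{eqn:specializationreductivepart} of $\sigul$. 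Hence $\Phi$ factors through $\locali''/\widetilde{Q}^{\locali''}$, and its restriction to $\difffield[\ratfm,\det(\ratfm)^{-1}]$ is exactly $\ovphi$, giving the inclusion $\widetilde{Q}^{\locali''} \cap \difffield[\ratfm,\det(\ratfm)^{-1}] \subseteq \ovQ$. The reverse inclusion follows from the observation that, modulo $\widetilde{Q}^{\locali''}$, every $\ratfm_{ij}$ can be replaced by its rational expression in $X$ and $\underline{\bintrad}$; an element $p(\ratfm) \in \ovQ$ is therefore congruent modulo $\widetilde{Q}^{\locali''}$ to an element of the ideal generated by $Q$ and the $\widetilde{f}_j$'s, which itself lies in $\widetilde{Q}^{\locali''}$.

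Once the identity $\widetilde{Q} \cap \difffield[\ratfm,\det(\ratfm)^{-1}] = \ovQ$ is proved, correctness of the remaining output is routine: by Proposition~\ref{prop:galaction}~\ref{prop:galaction(c)} the ring $\difffield[\fmspec,\det(\fmspec)^{-1}]$ is a Picard-Vessiot ring for $A_{\group}(\bsq)$ over $\difffield$, so $\ovQ$ is a maximal differential ideal, and by Proposition~\ref{prop:statementsaboutstabilzers}~\ref{prop:statementsaboutstabilzers(a)} its stabilizer in $\GL_n(\field)$ equals the differential Galois group $H(\field)$; extracting generators of the stabilizer ideal $I_H$ is the same Gr\"obner basis step as in the proof of correctness of Algorithm~\ref{alg:reductivepart}. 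The main obstacle I anticipate is the localization bookkeeping needed to pass between $\widetilde{Q}$ (as computed in the partially localized polynomial ring by the algorithm) and its extension $\widetilde{Q}^{\locali''}$ used in the argument; that is, verifying $\widetilde{Q}^{\locali''} \cap \difffield[\ratfm,\det(\ratfm)^{-1}] = \widetilde{Q} \cap \difffield[\ratfm,\det(\ratfm)^{-1}]$. The required saturation does not enlarge the intersection because the inverted elements lie in $\difffield[X,\det(X)^{-1}]$ and no nonzero element there maps into $\ovQ \subseteq \difffield[\ratfm,\det(\ratfm)^{-1}]$ under $\Phi$, since each such element remains invertible after reducing to $\extfieldred \subseteq \Frac(\overline{R})$ by the construction of the matching $\sigred$.
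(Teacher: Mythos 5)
Your overall architecture is the same as the paper's: termination from the Gr\"obner basis computations, correctness by identifying the ideal computed in step~2 with $\ker(\ovphi)$, and then concluding via Proposition~\ref{prop:galaction} and Proposition~\ref{prop:statementsaboutstabilzers} that $I_H$ defines the differential Galois group. The decisive difference lies in the reverse inclusion $\ovQ \subseteq \widetilde{Q}\cap\difffield[\ratfm,\det(\ratfm)^{-1}]$, and there your argument has a genuine gap. You localize only at the finitely many denominators of $\bratv$, $\bratf$, $\bratexp$, $\widehat{\intn}_i$ and of the coefficients of $\widetilde{f}_1,\dots,\widetilde{f}_a$. Under that localization the subring of $\locali''$ in the variables $X$ and $\underline{\bintrad}$ does \emph{not} map onto $\underline{R}=\extfieldred[\underline{\bintrad}]$, but only onto the subring whose $\extfieldred$-coefficients have denominators from your fixed multiplicative set. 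Hence, writing $M$ for $\buu(\bratv,\bratf)\,n(\overline{w})\,\btt(\bratexp)\,\buu(\bratint)$, the fact that $\Phi(p(M))=0$, i.e.\ that the image of $p(M)$ lies in $\Imax=\langle f_1(\fmraduqhat),\dots,f_a(\fmraduqhat)\rangle$, does not let you conclude that $p(M)$ lies in the ideal generated by $Q$ and the $\widetilde{f}_j$ inside $\locali''$: a representation $\sum_j c_j f_j(\fmraduqhat)$ of that image has coefficients $c_j\in\underline{R}$ whose denominators are arbitrary elements of $\difffield[X,\det(X)^{-1}]\setminus Q$ and need not be invertible in your localization. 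The ``therefore'' in your reverse inclusion hides exactly this. The paper's proof avoids the problem by localizing at the prime ideal $(Q)$ itself, i.e.\ inverting \emph{everything} outside $(Q)$; then $\mathcal{R}_{\rm loc}$ surjects onto $\underline{R}$, and for a surjection the preimage of $\Imax$ is generated by the kernel together with the lifts $\widetilde{f}_1,\dots,\widetilde{f}_a$, which yields $\mathcal{R}_{\rm loc}/(\widetilde{Q})\cong\underline{R}/\Imax$ and hence $\ker(\ovphi)=\ovQ$. Your argument can be repaired by adopting this localization at $(Q)$ in place of your explicit multiplicative set.

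On your closing concern about whether passing from $\widetilde{Q}$ to its extension in the localization changes the intersection with $\difffield[\ratfm,\det(\ratfm)^{-1}]$: the justification you give (the inverted elements stay invertible after reduction to $\extfieldred$) shows they do not lie in the kernel, but it does not by itself establish the saturation statement you would need. Note, however, that the paper handles the corresponding step with comparable brevity, simply asserting that the step-2 ideal equals the kernel of the map into $\mathcal{R}_{\rm loc}/(\widetilde{Q})$; so this bookkeeping point is at the same level of informality in both treatments. The substantive repair your proof requires is the surjectivity issue above.
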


\begin{proof}
    Since Gröbner basis computations terminate, we conclude that the algorithm terminates.

    For the proof of the correctness of the algorithm let $\Imax$ be the maximal differential ideal of $\underline{R}$ generated by $f_1(\fmraduqhat),\dots, f_a(\fmraduqhat)$ from Proposition~\ref{prop:generatorsforImax} and consider the Picard-Vessiot ring $\difffield[\fmspec,\det(\fmspec)^{-1}]$ over $\difffield$ for $A_{\group}(\bsq)$ constructed with respect to $\Imax$ (cf.\ the beginning of Section~\ref{sec:structureofreductivepart} and Proposition~\ref{prop:galaction}). 
    Extending now the derivation of $\difffield$ to $\difffield[\ratfm,\det(\ratfm)^{-1}]$ 
    by $\partial(\ratfm) = A_{\group}(\bsq) \ratfm$
    the substitution homomorphism 
    \[
    \ovphi\colon \difffield[\ratfm,\det(\ratfm)^{-1}] \to \difffield[\fmspec,\det(\fmspec)^{-1}], \ 
    \ratfm \mapsto \fmspec 
    \]
    is a surjective differential $\difffield$-homomorphism.  
    Being a Picard-Vessiot ring over $\difffield$, $\difffield[\fmspec,\det(\fmspec)^{-1}]$ is differentially simple and so  $\ker(\ovphi)$ is a maximal differential ideal in $\difffield[\ratfm,\det(\ratfm)^{-1}]$. 
    Assuming that we have proved that the ideal $\ovQ$ computed in step~2 is equal to $\ker(\ovphi)$, it follows that 
    the ring 
    $$\difffield[\ratfm,\det(\ratfm)^{-1}]/\ovQ$$ 
    is a Picard-Vessiot ring over $\difffield$ and so the ideal $I_{H}$ computed in step~3 of the algorithm, i.e.\
    the defining ideal of the stabilizer of $\ovQ$ in $\GL_n(\field)$, is the defining ideal of the differential Galois group of $\difffield[\ratfm,\det(\ratfm)^{-1}]/\ovQ$ over $\difffield$ for $A_{\group}(\bsq)$.

    It is left to show that $\ker(\ovphi)=\ovQ$. As a maximal differential ideal in the ring $\difffield[X,\det(X)^{-1}]$ the ideal $Q$ is a prime ideal and so the ideal $(Q)$ generated by $Q$ in 
    \[
    \difffield[X,\det(X)^{-1},\underline{\bintrad},\ratfm, \det(\ratfm)^{-1}]
    \]
is also a prime ideal. We consider now the localization
    \[
   \mathcal{R}_{\rm loc} \, := \, \difffield[X,\det(X)^{-1},\underline{\bintrad},\ratfm, \det(\ratfm)^{-1}]_{(Q)}
    \]
of $\difffield[X,\det(X)^{-1},\underline{\bintrad},\ratfm, \det(\ratfm)^{-1}]$ at $(Q)$. Since the denominators of the elements
$\bratv$, $\bratf$, $\bratexp$ and $\ratint_i$ with $\beta_i \in \leviroots^-$ in $\Frac(\difffield[X,\det(X)^{-1}])$ do not vanish modulo $Q$, the parameters of the Bruhat decomposition 
\[
\buu(\bratv, \bratf) \, n(\overline{w}) \, \btt(\bratexp) \, \buu(\bratint)
\]
are contained in $\mathcal{R}_{\rm loc}$.
Denote by $I$ the ideal in $\mathcal{R}_{\rm loc}$ generated by the numerators of the entries of the matrix 
\[
\widehat{\fm} - \buu(\bratv, \bratf) \, n(\overline{w}) \, \btt(\bratexp) \, \buu(\bratint). 
\]
Consider the canonical projection 
\[
\pi_1\colon \mathcal{R}_{\rm loc} \to \mathcal{R}_{\rm loc}/I
\]
for $I$ and then the canonical projection 
\[
\pi_2\colon \mathcal{R}_{\rm loc}/I \to (\mathcal{R}_{\rm loc}/ I )/\pi_1\big((Q)\big)
\]
for the proper ideal $\pi_1((Q))$ in $\mathcal{R}_{\rm loc}/I$. Since the generators of $I$ are linear in $\widehat{\fm}_{i,j}$ and since 
\[
\difffield[X,\det(X)^{-1}]_{Q}/Q' \, \cong \, \Frac(\difffield[X,\det(X)^{-1}]/Q) \, = \, \extfieldred , 
\]
where $\difffield[X,\det(X)^{-1}]_{Q}$ is the localization at the prime ideal $Q$ and $Q'$ is the ideal generated by $Q$ 
in $\difffield[X,\det(X)^{-1}]_{Q}$, we conclude that  
\[
(\mathcal{R}_{\rm loc}/I)/ \pi_1 \big( (Q) \big) \, = \, \extfieldred[\underline{\bintrad}] \, = \, \underline{R} .
\]
Thus, the map 
\[
\pi_2 \circ \pi_1\colon \mathcal{R}_{\rm loc} \to \underline{R}
\]
is a surjective $\difffield$-algebra homomorphism and its kernel is generated by the generators of $I$ and $(Q)$. The preimage 
$(\pi_2 \circ \pi_1)^{-1}(\Imax)$
of the ideal $\Imax \lhd \underline{R}$ is the ideal of $\mathcal{R}_{\rm loc}$ which is generated by the generators of $I$ and $(Q)$, since it clearly contains $\ker(\pi_2 \circ \pi_1 )$, and by the numerators of
$\widetilde{f}_1,\dots,\widetilde{f}_a$. We observe that $(\pi_1 \circ \pi_2)^{-1}(\Imax)=(\widetilde{Q})$, that is the ideal in $\mathcal{R}_{\rm loc}$ generated by $\widetilde{Q}$, and so we obtain an $\difffield$-algebra isomorphism 
\[
\pi\colon \mathcal{R}_{\rm loc}/(\widetilde{Q}) \to \underline{R}/\Imax \, ,
\]
which by construction maps the entries  
\[
\widehat{\fm}_{i,j} + (\ovQ) \equiv \big( \buu(\bratv, \bratf) \, n(\overline{w}) \, \btt(\bratexp) \, \buu(\bratint) \big)_{i,j} + (\ovQ)
\]
to the entries of 
\[
\fmspec \, = \, \buu(\bvq,\overline{\bff}) \, n(\overline{w}) \, \btt(\overline{\bexp})\, \buu(\overline{\bint})\,. 
\]
The ideal $\ovQ$ in step~2 is the kernel of the $\difffield$-algebra homomorphism 
\[
\phi\colon \difffield[\widehat{\fm},\det(\widehat{\fm})^{-1}] \to \mathcal{R}_{\rm loc}/(\widetilde{Q}), \ \widehat{\fm}_{i,j} \mapsto \widehat{\fm}_{i,j} + (\widetilde{Q})\,.
\]
Composing $\phi$ with the isomorphism $\pi$ we obtain the differential $\difffield$-algebra homomorphism $\ovphi$ and conclude that its kernel is $\ovQ$.
\end{proof}

\part{Appendix}\label{part:IV}

\appendix

\section{The Normal Form Matrix and Operator}
\label{sec:correspondenceA_GandL_G}
In this section we describe the transformation matrix $\BNFcomp \in \GL_n(\field\langle \bss(\bvv) \rangle)$ which defines a gauge transformation of the normal form matrix $A_{\group}(\bss(\bvv))$ to a companion matrix $\ANFcomp$, whose entries in the last row are the coefficients of the normal form operator $L_{\group}(\bss(\bvv),\partial)$.
\begin{definition}
    Let $\group$ be one of the groups $\SL_{l+1}$, $\SP_{2l}$, $\SO_{2l+1}$ or $\Gzwei$ in their natural representation. Define the transformation matrix $\BNFcomp  \in \GL_n(\field\langle \bss(\bvv) \rangle)$ as the one corresponding to the cyclic vector presented in 
    \cite{Seiss} in Section~7, 8, 9 and 11, respectively.
\end{definition}

\begin{proposition}\label{prop:PropertiesOfBcomp}
 Let $\group$ be one of the groups $\SL_{l+1}$, $\SP_{2l}$, $\SO_{2l+1}$ or $\Gzwei$. Then $\BNFcomp \in \GL_n(\field\{ \bss(\bvv) \})$.
\end{proposition}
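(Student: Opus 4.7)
The plan is to proceed by inspecting the construction of $\BNFcomp$ given case-by-case in Sections~7, 8, 9 and 11 of \cite{Seiss}. Recall that $\BNFcomp$ is the matrix whose rows are the iterated derivatives $e,\partial_A(e),\ldots,\partial_A^{n-1}(e)$ of a fixed cyclic vector $e$, where $\partial_A$ is the derivation on row vectors induced by $A_{\group}(\bss(\bvv))$; equivalently, $\BNFcomp$ is the change-of-basis matrix witnessing the gauge equivalence of $A_{\group}(\bss(\bvv))$ to the companion matrix of $L_{\group}(\bss(\bvv),\partial)$. Two things must be verified: (i) the entries of $\BNFcomp$ lie in $\field\{\bss(\bvv)\}$, and (ii) $\BNFcomp$ is invertible as a matrix over this ring, i.e.\ $\det(\BNFcomp)$ is a unit.

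For (i), I would argue uniformly. The normal form matrix
\[
A_{\group}(\bss(\bvv)) \, = \, A_0^+ + \sum_{i=1}^l s_i(\bvv) \, X_{-\gamma_i}
\]
has entries in $\field[\bss(\bvv)]\subset\field\{\bss(\bvv)\}$, since the basis elements $X_{\alpha_i}$ and $X_{-\gamma_i}$ have coefficients in $\field$ with respect to the standard basis of the natural representation. In each of the four cases the cyclic vector $e$ chosen by Seiss has entries in $\field$. Because $\field\{\bss(\bvv)\}^n$ is stable under both the coordinate-wise derivation $\partial$ and left multiplication by $A_{\group}(\bss(\bvv))$, it is stable under $\partial_A$. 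An easy induction then shows that every row of $\BNFcomp$ has entries in $\field\{\bss(\bvv)\}$.

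The main obstacle is (ii). Since $s_1(\bvv),\ldots,s_l(\bvv)$ are differentially algebraically independent over $\field$, the group of units of $\field\{\bss(\bvv)\}$ is $\field^\times$, so I need to prove that $\det(\BNFcomp)$ is a nonzero constant. The strategy is to track the leading behaviour of $\partial_A^k(e)$ with respect to the height grading of $\liealg$: since $A_0^+$ is a principal nilpotent sum of simple positive root vectors (raising height by one), while each $X_{-\gamma_i}$ is a root vector of \emph{negative} height, the height-highest component of $\partial_A^k(e)$ coincides with $(A_0^+)^k(e)$ modulo terms involving $\bss(\bvv)$ and its derivatives. Because the cyclic vectors from \cite{Seiss} are chosen precisely so that $e,A_0^+e,\ldots,(A_0^+)^{n-1}e$ form a basis of the representation space (this is what makes them cyclic already for the constant-coefficient system $\partial - A_0^+$), the matrix of leading terms has a nonzero determinant $c\in\field^\times$ independent of $\bss(\bvv)$, and consequently $\det(\BNFcomp)\equiv c\pmod{(\bss(\bvv))}$. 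As $\det(\BNFcomp)\in\field\{\bss(\bvv)\}$, this forces $\det(\BNFcomp)=c\in\field^\times$.

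The hard part is the verification of the leading-term claim in the four explicit cases. For $\SL_{l+1}$, $\SP_{2l}$ and $\SO_{2l+1}$ the natural representations and principal nilpotent $A_0^+$ have a transparent triangular structure on weight vectors, so the argument reduces to a bookkeeping exercise on the weights of $e$. The $\Gzwei$ case, with exponents $1$ and $5$ and a non-classical seven-dimensional representation, requires direct inspection of Section~11 of \cite{Seiss}, but the same height argument applies once one identifies the weight structure of $e$ in the cyclic vector chosen there.
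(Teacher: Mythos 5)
Your strategy — (i) entries in $\field\{\bss(\bvv)\}$ by stability of $\field\{\bss(\bvv)\}^n$ under the connection, (ii) invertibility by comparing with the constant-coefficient operator $\partial-A_0^+$ and arguing that $\det(\BNFcomp)$ must be a nonzero constant since the units of $\field\{\bss(\bvv)\}$ are $\field^\times$ — is close in spirit to the paper's argument, which for $\SL_{l+1}$, $\SP_{2l}$, $\SO_{2l+1}$ reads everything off the explicit shape of $A_{\group}(\bss(\bvv))$ (nonzero constants on the superdiagonal, zeros strictly above it, the $s_i(\bvv)$ only on or below the diagonal), concluding that $\BNFcomp$ is lower triangular with constant diagonal, and which for $\Gzwei$ simply exhibits the $7\times 7$ matrix and computes $\det(\BNFcomp)=8\sqrt{2}$.

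There is, however, a genuine gap in your final step of (ii). From the leading-term claim you obtain only $\det(\BNFcomp)\equiv c \pmod{(\bss(\bvv))}$, and you then assert that this ``forces'' $\det(\BNFcomp)=c$. That inference is invalid: an element of $\field\{\bss(\bvv)\}$ congruent to a nonzero constant modulo the differential ideal generated by $\bss(\bvv)$ need not be constant (e.g.\ $c+s_1(\bvv)$), and such an element is precisely \emph{not} a unit — so the step fails exactly where the unit property has to be established. What you need is the stronger, exact statement that all $\bss(\bvv)$-dependent contributions to $\partial_A^k(e)$ live in components of strictly lower height than $(A_0^+)^k(e)$, i.e.\ that $\BNFcomp$ is genuinely triangular with constant diagonal with respect to a suitably ordered basis, not merely triangular modulo the ideal; this follows from the fact that every entry of $A_{\group}(\bss(\bvv))$ strictly above the superdiagonal vanishes identically (the lowering operators $X_{-\gamma_i}$ carry all the $s_i$-dependence), and it is exactly the observation the paper makes, after which $\det(\BNFcomp)$ equals the constant on the nose. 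Also note that for $\Gzwei$ the matrix $\BNFcomp$ given in the paper is not triangular, so your ``same height argument'' does not carry over verbatim there; the paper settles that case by the explicit computation $\det(\BNFcomp)=8\sqrt{2}$, and your deferral to direct inspection should likewise be understood as an explicit determinant check rather than an instance of the leading-term argument.
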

\begin{proof}
In case $\group$ is one of the groups $\SL_{l+1}$, $\SP_{2l}$ or $\SO_{2l+1}$ the shape of the respective normal form matrix is 
\[
A_{\group}(\bss(\bvv))= \left( \begin{array}{ccccc}
0 & e_1 & 0 & \ldots & 0 \\
p_{2,1} & 0 & e_2 & \ldots & 0 \\
p_{3,1} & p_{3,2} & 0 & \ddots & \vdots \\
\vdots & & & \ddots & e_{n-1} \\
p_{n,1} & \ldots & \ldots & p_{n,n-1} & 0
\end{array} \right),
\]
where $e_1,\dots,e_{n-1} \in \field^{\times}$ and $p_{i,j} \in \{ 0, \tilde{e}_1 s_1(\bvv) , \dots , \tilde{e}_{l} s_l(\bvv) \}$ with  $\tilde{e}_1,\dots, \tilde{e}_l \in \field^{\times}$ (cf.\ \cite[Section~7, 8 and 9]{Seiss} respectively).
Let $y_1,\dots , y_n$ be a basis of the differential module defined by $A_{\group}(\bss(\bvv))$.
Then, in the respective section of \cite{Seiss}, we chose $y_1$ as a cyclic vector leading to the respective normal form equation.
The specific shape of $A_{\group}(\bss(\bvv))$ implies that the matrix $\BNFcomp$ describing the change of basis
\[
\BNFcomp \, (y_1,\dots, y_n)^{\rm tr}=(y_1,y_1',\dots, y_1^{(n-1)})^{\rm tr}
\]
is a unipotent lower triangular matrix with entries in $\field\{ \bss(\bvv) \}$.
Hence, we have $\BNFcomp \in \GL_n(\field\{ \bss(\bvv) \})$. 

In case $\group$ is the group $\Gzwei$ one checks that the matrix
\[
\BNFcomp=
\begin{pmatrix}
    0 & 1 & 0 & 0 & 0 & 0 & 0 \\
    0 & 0 & 0 & 0 & 0 & 0 & 1\\
    0 & 0 & 0 & 0 & 0 & -1 & 0 \\ 
     \sqrt{2} & 0 & 0 & 0 & 0 & 0 & s_1 \\
    0 & 0 & 2 & 0 & 0 & -s_1 & s_1' \\
     \sqrt{2} s_1 & 0 & 0 & 2 & 0 & -2 s_1' &  s_1'' +s_1^2 \\
    3  \sqrt{2} s_1' & 2 s_2 & 4 s_1 & 0 & -2 & -s_1^2-3  s_1''& 4 s_1 s_1'+ s_1''' 
\end{pmatrix}, 
\]
where $s_i = s_i(\bvv)$ for $i=1,2$, gauge transforms $A_{\group}(\bss(\bvv))$ (cf.\ \cite[Section~11]{Seiss} for its explicit definition) to companion form of the normal form equation.
The entries of $\BNFcomp$ are in $\field\{ \bss(\bvv)\}$ and $\det(\BNFcomp)=8 \sqrt{2}$, implying $\BNFcomp \in \GL_n(\field\{ \bss(\bvv)\})$.  
\end{proof}

\section{The Factorization of the Normal Form Operators}\label{sec:exponentialsol}

In this section we prove that the normal form operators $L_{\group}(\bss(\bvv),\partial) \in  \field\{\bss(\bvv) \}[\partial]$ for the classical groups $\SL_{l+1}$, $\SP_{2l}$, $\SO_{2l+1}$ and $\Gzwei$ factorize over $\field \langle \bvv \rangle$ into a product of operators of order one, where the factors depend linearly on the indeterminates $\bvv$ over $\field$, i.e., they have shape
\[
\partial + c_1 v_1 + \dots + c_l v_l
\]
with $c_i \in \field$ not all zero. 

\begin{lemma}\label{lem:factorization}
Let $R = \field \langle \bvv \rangle[\partial]$ and
\[
A \, = \, \left( \begin{array}{ccccc}
d_1 & e_1 & 0 & \ldots & 0 \\
0 & d_2 & e_2 & \ldots & 0 \\
0 & 0 & d_3 & \ddots & \vdots \\
\vdots & & \ddots & \ddots & e_{n-1} \\
0 & \ldots & \ldots & \ldots & d_n
\end{array} \right) \in R^{n \times n},
\]
where $e_1$, \ldots, $e_{n-1} \in \{ -1, 1,2 \}$.
Then we have
\[
R^{1 \times n} / R^{1 \times n} A \, \cong \,
R / R \, r
\]
with
\begin{equation}\label{eq:op}
r \, = \, (-1)^n (e_1 \cdots e_{n-1})^{-1} d_n d_{n-1} \cdots d_1\,. 
\end{equation}
More precisely, the residue class of
the first standard basis vector $(1, 0, \ldots, 0)$ in $R^{1 \times n} / R^{1 \times n} A$ is a cyclic vector whose annihilator is generated by $r$.
\end{lemma}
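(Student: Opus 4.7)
The plan is to exhibit the first standard basis vector as a cyclic vector for the quotient $M := R^{1 \times n}/R^{1 \times n} A$ and compute its annihilator directly from the upper-bidiagonal shape of $A$. Writing $\epsilon_1, \ldots, \epsilon_n$ for the images in $M$ of the rows of the identity matrix, the rows of $A$ yield the relations
\[
d_i \epsilon_i + e_i \epsilon_{i+1} \, = \, 0 \quad (i = 1, \ldots, n-1), \qquad d_n \epsilon_n \, = \, 0.
\]
Since each $e_i \in \{-1,1,2\}$ is a unit in $R$ and lies in $\field$ (hence is central), the first $n-1$ relations can be solved recursively by induction on $k$ to give
\[
\epsilon_{k+1} \, = \, (-1)^k (e_1 \cdots e_k)^{-1} d_k d_{k-1} \cdots d_1 \, \epsilon_1 \qquad (k = 0, 1, \ldots, n-1),
\]
with the empty product interpreted as $1$. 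This shows that $\epsilon_1$ generates $M$ as a left $R$-module, and substituting the expression for $\epsilon_n$ into $d_n \epsilon_n = 0$ yields $(-1)^{n-1}(e_1 \cdots e_{n-1})^{-1} d_n d_{n-1} \cdots d_1 \, \epsilon_1 = 0$, i.e.\ $r \, \epsilon_1 = 0$ (up to the unit sign $-1$). Consequently the surjective $R$-linear map $\phi\colon R \to M$, $s \mapsto s \epsilon_1$, descends to a surjection $\bar\phi\colon R/Rr \to M$.

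To obtain injectivity, I would construct an explicit inverse. Define $\pi\colon R^{1 \times n} \to R/Rr$ by specifying it on the standard basis via
\[
\pi(\epsilon_k) \, := \, (-1)^{k-1}(e_1 \cdots e_{k-1})^{-1} d_{k-1} d_{k-2} \cdots d_1 \, + \, Rr
\]
and extending by left $R$-linearity. A direct calculation then shows that $\pi$ annihilates each row of $A$: for $i < n$ one uses the identity $e_i (e_1 \cdots e_i)^{-1} = (e_1 \cdots e_{i-1})^{-1}$ (valid because the $e_j$ are central) to see that the two summands in $\pi(d_i \epsilon_i + e_i \epsilon_{i+1})$ cancel, while $\pi(d_n \epsilon_n) = (-1)^{n-1}(e_1 \cdots e_{n-1})^{-1} d_n d_{n-1} \cdots d_1 + Rr = -r + Rr = 0 + Rr$. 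Thus $\pi$ factors through $M$, giving $\bar\pi\colon M \to R/Rr$ with $\bar\pi(\epsilon_1) = 1 + Rr$. The composition $\bar\pi \circ \bar\phi$ sends the generator $1 + Rr$ to $1 + Rr$, while $\bar\phi \circ \bar\pi$ sends each $\epsilon_k$ back to $\epsilon_k$ in view of the recursive formula above, so the two maps are mutually inverse isomorphisms.

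The only subtle point is the interaction between the non-commutative operators $d_i \in R = \field\langle \bvv \rangle [\partial]$ and the scalars $e_i$, but since the $e_i$ lie in $\field$ they are central in $R$ and all of the reorderings in the recursion as well as in the verification that $\pi$ vanishes on the rows of $A$ are automatic. I therefore do not anticipate any genuine obstacle: once the ansatz for $\pi$ is written down, the whole argument reduces to the two short telescoping identities indicated above.
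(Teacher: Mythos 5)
Your proof is correct and rests on the same elimination idea as the paper's: use the bidiagonal shape and the fact that the superdiagonal entries $e_i$ are central units to solve for $\epsilon_{i+1}$ in terms of $\epsilon_i$, which is exactly what the paper's unipotent row transformation of the presentation matrix accomplishes (your products $(-1)^k(e_1\cdots e_k)^{-1}d_k\cdots d_1$ are the mirror images of the paper's $p_j$). The only difference is packaging: the paper reduces the presentation matrix to one with last row $(r,0,\dots,0)$ and then appeals to the invertibility of the $e_i$, whereas you argue with generators and relations and certify the isomorphism --- and hence that the annihilator of $\epsilon_1$ is exactly $R\,r$ --- by exhibiting the explicit inverse homomorphism $\bar\pi$, which is a slightly more self-contained verification of the ``more precisely'' clause.
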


\begin{proof}
For $j \in \{ 1, \ldots, n-1 \}$ let
\[
p_j \, = \, (-1)^j (e_{n-1} e_{n-2} \cdots e_{n-j} )^{-1}d_n d_{n-1} \cdots d_{n-j+1} .
\]
Then, by induction, we have
\[
\left( \begin{array}{ccccc}
1 & 0 & \ldots & 0 & 0\\
0 & 1 & \ldots & 0 & 0\\
\vdots & & \ddots & & \vdots\\
0 & & & 1 & 0\\
p_{n-1} & p_{n-2} & \ldots & p_1 & 1
\end{array} \right) A
\, = \, \left( \begin{array}{ccccc}
d_1 & e_1 & 0 & \ldots & 0 \\
0 & d_2 & e_2 & \ldots & 0 \\
\vdots & & \ddots & \ddots & \vdots \\
0 & & & d_{n-1} & e_{n-1} \\
r & 0 & \ldots & 0 & 0
\end{array} \right).
\]
The matrix on the right hand side of the previous equation defines a presentation of the same module $R^{1 \times n} / R^{1 \times n} A$ as the one presented by $A$.
Since $e_1$, \ldots, $e_{n-1}$ are units in $R$, this module is isomorphic to $R / R \, r$ and the claim follows.
\end{proof}

\begin{proposition}\label{prop:factorization}
Let $\group$ be one of the groups $\SL_{l+1}$, $\SP_{2l}$ or $\SO_{2l+1}$. Let $a_k$ be the coefficient of $E_{k,k}$ in the linear representation of 
\[
\gauge{n(\overline{w})}{\ALiou(\bvv)} \, = \, \gauge{n(\overline{w})}{( \sum_{i=1}^l g_i(\bvv) H_i + \sum_{i=1}^l c_i X_{\beta_i})}
\]
(cf.\ Theorem~\ref{thm:RobertzSeissNormalForms}) with respect to the standard basis $E_{i,j}$ of $\field^{n \times n}$.
    Then the normal form operator $L_{\group}(\bss(\bvv),\partial)$ has the factorization over $\difffield \langle  \bvv \rangle$ in first order operators
\[
L_{\group}(\bss(\bvv),\partial) \, = \, (\partial - a_n)  \cdots  (\partial - a_1)
\]
with $a_k \in \field[\bvv]$ homogeneous of degree one.
\end{proposition}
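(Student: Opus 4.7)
The plan is to reduce the factorization of $L_{\group}(\bss(\bvv),\partial)$ to an application of Lemma~\ref{lem:factorization} by gauge-transforming $A_{\group}(\bss(\bvv))$ into a tridiagonal matrix $M$ over $\field\langle\bvv\rangle$ whose diagonal entries are exactly the claimed $a_k$.

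First, I would exploit the Bruhat-type factorization of the fundamental matrix from Theorem~\ref{thm:RobertzSeissNormalForms}, namely $\fm = \buu(\bvv,\bff)\,n(\overline{w})\,\fml$ where $\fml$ is a fundamental matrix of $\ALiou(\bvv)$. This directly yields
\[
A_{\group}(\bss(\bvv)) \, = \, \gauge{\buu(\bvv,\bff)\,n(\overline{w})}{\ALiou(\bvv)},
\]
so $A_{\group}(\bss(\bvv))$ and $\ALiou(\bvv)$ (and hence also $M := \gauge{n(\overline{w})}{\ALiou(\bvv)}$) are gauge equivalent over $\field\langle\bvv\rangle$. Because $n(\overline{w}) \in N_{\group}(\torus)(\field)$ is constant, the transformation $\gauge{n(\overline{w})}{(\cdot)}$ is pure adjoint conjugation $\Ad(n(\overline{w}))$.

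Next, I would determine the shape of $M$. Writing $\ALiou(\bvv) = \sum_{i=1}^{l} g_i(\bvv) H_i + \sum_{i=1}^{l} c_i X_{\beta_i}$, the Ad-action of $n(\overline{w})$ preserves $\cartan$ (giving diagonal entries that are $\Z$-linear combinations of $g_1(\bvv),\ldots,g_l(\bvv)$, hence homogeneous of degree one by Theorem~\ref{thm:RobertzSeissNormalForms}), and sends each $X_{\beta_i}$ to $\varepsilon_i \, X_{\overline{w}(\beta_i)}$ with $\varepsilon_i \in \{\pm 1\}$, where the roots $\overline{w}(\beta_i)$ range over the positive simple roots $\alpha_1,\ldots,\alpha_l$ (this is the key property of the longest Weyl element used throughout Part~\ref{part:I}). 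In the natural representation of $\SL_{l+1}$, $\SP_{2l}$ and $\SO_{2l+1}$, each simple positive root vector $X_{\alpha_j}$ of the chosen Chevalley basis is supported on a single first-superdiagonal matrix entry $E_{k,k+1}$ with coefficient in $\{-1,1,2\}$ (the value $2$ occurring only at the short simple root for $\SO_{2l+1}$). Combined with the diagonal contribution, this forces $M$ to be exactly of the form assumed in Lemma~\ref{lem:factorization}:
\[
M \, = \, \begin{pmatrix} a_1 & e_1 & & & \\ & a_2 & e_2 & & \\ & & \ddots & \ddots & \\ & & & a_{n-1} & e_{n-1} \\ & & & & a_n \end{pmatrix},
\]
with $a_k \in \field[\bvv]$ homogeneous of degree one and $e_k \in \{-1,1,2\}$.

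Finally, I would apply Lemma~\ref{lem:factorization} with $d_i = \partial - a_i$ to the operator matrix $\partial I_n - M$. This gives that the differential module $R^{1\times n}/R^{1\times n}(\partial I_n - M)$ is cyclic with annihilator $(-1)^n(e_1\cdots e_{n-1})^{-1}(\partial - a_n)\cdots(\partial - a_1)$, whose monic representative is $(\partial - a_n)\cdots(\partial - a_1)$. Since $A_{\group}(\bss(\bvv))$ and $M$ define isomorphic differential modules over $\field\langle\bvv\rangle$ and $L_{\group}(\bss(\bvv),\partial)$ is the unique monic operator of order $n$ defining this module (cf.~Definition~\ref{de:normalformequation}, where the cyclic vector is given explicitly by $\BNFcomp$), we conclude $L_{\group}(\bss(\bvv),\partial) = (\partial - a_n)\cdots(\partial - a_1)$. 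The main obstacle is the explicit case-by-case verification in Step~2 that the Ad-action of $n(\overline{w})$ on the Chevalley basis lands, in the natural representation of each classical group, on a single first-superdiagonal entry with the stated normalization; this relies on the explicit choices of $n(\overline{w})$ and the Chevalley basis made in \cite{Seiss,RobertzSeissNormalForms}.
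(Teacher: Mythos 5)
Your proposal is correct and follows essentially the same route as the paper's proof: pass from $A_{\group}(\bss(\bvv))$ to $\gauge{n(\overline{w})}{\ALiou(\bvv)}$ via the gauge transformation coming from the factorization $\fm = \buu(\bvv,\bff)\,n(\overline{w})\,\fml$, observe that in the chosen representations this matrix is upper bidiagonal with nonzero constant superdiagonal entries and degree-one diagonal entries $a_k$, and apply Lemma~\ref{lem:factorization} to $\partial I_n - \gauge{n(\overline{w})}{\ALiou(\bvv)}$. Two harmless imprecisions: for $\SP_{2l}$ and $\SO_{2l+1}$ each simple root vector occupies two superdiagonal entries rather than one (what matters is only that $\Ad(n(\overline{w}))$ of the lowering part lands on the superdiagonal with nonzero constant coefficients), and the final identification with $L_{\group}(\bss(\bvv),\partial)$ should rest on matching the cyclic vector (equivalently the solution space of the normal form equation) rather than on a claimed uniqueness of the monic order-$n$ operator presenting the module, since similar but unequal operators present isomorphic modules.
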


\begin{proof}
Recall that the normal form equation $L_{\group}(\bss(\bvv),\partial) \, y = 0$ is equivalent to the matrix differential equation defined by the normal form matrix $A_{\group}(\bss(\bvv))$. 
Applying the gauge transformation with the inverse of 
\[
\buu(\bvv,f_{l+1}(\bvv), \dots , f_m(\bvv) )
\]
to $A_{\group}(\bss(\bvv))$
we obtain
\[
\gauge{n(\overline{w})}{\ALiou(\bvv)} \, = \, \gauge{n(\overline{w})}{( \sum_{i=1}^l g_i(\bvv) H_i + \sum_{i=1}^l c_i X_{\beta_i})} \in \mathfrak{b}^-(\field [\bvv])\,,
\]
where $g_i(\bvv) \in \field[\bvv]$ are homogeneous of degree one by Theorem~\ref{thm:RobertzSeissNormalForms}. 
For the groups $\SL_{l+1}$, $\SP_{2l}$ and $\SO_{2l+1}$ we used for the construction of $\ALiou(\bvv)$ the respective representations of the Lie algebras presented in \cite[Section 7, 8 and 9]{Seiss} and so 
\[
\gauge{n(\overline{w})}{\ALiou (\bvv)} \, = \, \sum_{k=1}^n a_k E_{k,k} + \sum_{i=1}^{n-1} e_i E_{i,i+1}
\]
is an upper triangular matrix with $e_i \in \{ +1,-1,+2\}$ and $a_k \in \field[\bvv]$ homogeneous of degree one.
Hence, the differential equation
$\partial(\byy) = \gauge{n(\overline{w})}{\ALiou(\bvv)} \, \byy$ under consideration is equivalent to
$A \, \byy = 0$ with operator matrix
\[
A \, = \, \partial I_n - \gauge{n(\overline{w})}{\ALiou (\bvv)} \in \field\langle \bvv \rangle[\partial]^{n \times n} ,
\] 
which has the same shape as the matrix $A$ of Lemma~\ref{lem:factorization} with $d_k = \partial - a_k$. Thus the statement of the proposition follows by applying Lemma~\ref{lem:factorization} and observing that the left hand side is monic.
\end{proof}

Finally we present the factorization for the group $\Gzwei$.
\begin{proposition}\label{prop:factorizationG2}
   Let $\group$ be the exceptional group $\Gzwei$. Then the normal form operator $L_{\Gzwei}(\bss(\bvv),\partial)$ has the factorization over $\difffield\langle \bvv\rangle$ in first order operators
   \[
   L_{\Gzwei}(\bss(\bvv),\partial) \, = \, (\partial+v_1) (\partial - v_1+v_2)(\partial +2v_1-v_2) \partial (\partial - 2v_1+v_2) (\partial +v_1 -v_2)(\partial - v_1)\,.
   \]
\end{proposition}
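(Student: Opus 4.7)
The plan is to carry the argument of Proposition~\ref{prop:factorization} over to $\Gzwei$ in its $7$-dimensional standard representation from \cite[Section~11]{Seiss}. First, by Theorem~\ref{thm:RobertzSeissNormalForms} we have $A_{\Gzwei}(\bss(\bvv)) = \dlog(\buu(\bvv,\bff)\, n(\overline{w})\, \btt(\bexp)\, \buu(\bint))$. Gauge-transforming by the inverse of $\buu(\bvv,\bff)$ strips off the leftmost factor, so
\[
\gauge{\buu(\bvv,\bff)^{-1}}{A_{\Gzwei}(\bss(\bvv))} \, = \, \dlog(n(\overline{w})\, \btt(\bexp)\, \buu(\bint)) \, = \, \gauge{n(\overline{w})}{\ALiou(\bvv)}.
\]
In particular $A_{\Gzwei}(\bss(\bvv))$ and $\gauge{n(\overline{w})}{\ALiou(\bvv)}$ are gauge equivalent over $\field\langle \bvv\rangle$, so they define isomorphic differential modules and their cyclic companion operators coincide with $L_{\Gzwei}(\bss(\bvv),\partial)$.

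Next, since
\[
\ALiou(\bvv) \, = \, g_1(\bvv) H_1 + g_2(\bvv) H_2 + c_1 X_{\beta_1} + c_2 X_{\beta_2} \, \in \, \mathfrak{b}^-(\field[\bvv])
\]
and $\Ad(n(\overline{w}))$ maps $\mathfrak{b}^-$ to $\mathfrak{b}^+$ (because $\overline{w}$ exchanges $\rootbasis$ with $-\rootbasis$), the matrix $\gauge{n(\overline{w})}{\ALiou(\bvv)}$ lies in $\mathfrak{b}^+$, i.e., it is upper triangular in the chosen basis. I would then use the explicit Chevalley basis and the ordering of weight vectors from \cite[Section~11]{Seiss} to check that the two images $\Ad(n(\overline{w}))(X_{\beta_1})$ and $\Ad(n(\overline{w}))(X_{\beta_2})$ produce only entries on the first superdiagonal of the $7 \times 7$ matrix: concretely, the weight string of the $7$-dimensional representation is a single chain under alternate application of the two simple root vectors, so the matrix is upper \emph{bidiagonal}, with nonzero scalar superdiagonal entries $e_1,\ldots,e_6 \in \field^{\times}$ and diagonal entries $a_1,\ldots,a_7 \in \field[\bvv]$ homogeneous of degree one.

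Once this bidiagonal shape is established, Lemma~\ref{lem:factorization} applied to $A = \partial I_7 - \gauge{n(\overline{w})}{\ALiou(\bvv)}$ with $d_k = \partial - a_k$ produces a factorization of the associated scalar operator into first-order operators $\partial - a_k$; since $L_{\Gzwei}(\bss(\bvv),\partial)$ is monic, the leading scalar $(-1)^7 (e_1\cdots e_6)^{-1}$ must equal $1$, which fixes the normalizations of the $e_i$ and yields
\[
L_{\Gzwei}(\bss(\bvv),\partial) \, = \, (\partial - a_7)(\partial - a_6) \cdots (\partial - a_1).
\]
It then remains to identify the diagonal entries. The Weyl action of $\overline{w}$ on the Cartan sends $(H_1,H_2)$ to $(-H_1,-H_2)$, so conjugation by $n(\overline{w})$ negates the diagonal part of $\ALiou(\bvv)$ and reverses the ordering of the weights along the chain; reading off the weights of the $7$-dimensional representation of $\Gzwei$ (they are $\{0, \pm \mu_1, \pm \mu_2, \pm \mu_3\}$ with $\mu_i$ linear in the short roots) and expressing them through $g_1(\bvv), g_2(\bvv)$ via Proposition~\ref{prop:exponentialandRiccati} (which pins down the relationship to $v_1,v_2$) gives $(a_1,\ldots,a_7) = (v_1,\, -v_1+v_2,\, 2v_1-v_2,\, 0,\, -2v_1+v_2,\, v_1-v_2,\, -v_1)$, as required.

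The main obstacle is purely the explicit bookkeeping in this last step: verifying, for the specific Chevalley basis, weight ordering, and scaling constants $c_1,c_2$ used in \cite[Section~11]{Seiss}, that $\gauge{n(\overline{w})}{\ALiou(\bvv)}$ is honestly bidiagonal (no extra entries above the first superdiagonal from $\Ad(n(\overline{w}))(X_{\beta_i})$ hitting non-adjacent weight vectors) and that the seven diagonal entries come out in exactly the signs and order displayed in the stated factorization. Everything else is a direct transcription of the proof of Proposition~\ref{prop:factorization}.
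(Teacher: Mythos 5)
The step you set aside as ``purely bookkeeping'' is in fact the entire content of the $\Gzwei$ case, and as you have set it up it fails in the basis actually used. In the representation of \cite[Section~11]{Seiss} the paper's own data show that the basis is \emph{not} ordered by weights: the representative $n(w_1)$ displayed in the paper's proof fixes only the first basis vector (up to sign), and the only weight of the seven-dimensional representation fixed by a simple reflection in the short simple root is the zero weight, so the zero-weight vector sits in position~$1$; consequently $\gauge{n(\overline{w})}{\ALiou(\bvv)}$ is not even upper triangular, let alone bidiagonal, in that basis. Moreover $\BNFcomp$ for $\Gzwei$ (Proposition~\ref{prop:PropertiesOfBcomp}) has first row $(0,1,0,\dots,0)$, so by Remark~\ref{rem:basisLCLM} the solutions of $L_{\Gzwei}(\bss(\bvv),\partial)\,y=0$ are the entries of the \emph{second} row of $\fm$. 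This is exactly where the transcription of Proposition~\ref{prop:factorization} breaks down: for $\SL$, $\SP$, $\SO$ the factor $\buu(\bvv,\bff)$ is unipotent lower triangular and leaves the first solution component untouched, which is what identifies the operator produced by Lemma~\ref{lem:factorization} with the normal form operator. For $\Gzwei$ you can pass to a weight-ordered basis by a permutation and then obtain a bidiagonal system, but Lemma~\ref{lem:factorization} then factors the minimal operator of a particular component of $n(\overline{w})\,\btt(\bexp)\,\buu(\bint)$, and you still owe a proof that this operator coincides with $L_{\Gzwei}$, i.e.\ that the second row of $\fm=\buu(\bvv,\bff)\,n(\overline{w})\,\btt(\bexp)\,\buu(\bint)$ has the same minimal operator; two cyclic vectors of the same module generally have different (though related) minimal operators, so this is a genuine claim, and it is precisely why the paper does not extend Proposition~\ref{prop:factorization} to $\Gzwei$ but treats Proposition~\ref{prop:factorizationG2} separately.

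Your identification of the diagonal entries is also circular: for $\Gzwei$, Proposition~\ref{prop:exponentialandRiccati} rests on Proposition~\ref{prop:assoc_operator}~(d), whose proof in the appendix reads the exponential solutions off from Proposition~\ref{prop:factorizationG2} itself; and even if it were available, it only determines $b_1v_1,b_2v_2$ up to the $\Z$-module they generate together with $g_1(\bvv),g_2(\bvv)$, not the precise signed and ordered tuple $(v_1,\,-v_1+v_2,\,2v_1-v_2,\,0,\dots)$ asserted. (A small further slip: the constant $(-1)^7(e_1\cdots e_6)^{-1}$ in Lemma~\ref{lem:factorization} need not be $1$; monicity is restored by discarding it, it does not constrain the $e_i$.) The paper's proof is instead a direct verification: it quotes the explicit seventh-order operator from \cite[Lemma~11.2]{Seiss}, computes $s_1(\bvv)$, $s_2(\bvv)$ and $\bexp=(e^{\int v_1},e^{-\int v_2})$ explicitly by carrying out the construction of $\fm$ in that representation, and checks the stated factorization by expanding the product. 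To make your route work you would have to perform essentially the same explicit computations (weight ordering, structure constants, the values of $g_1,g_2$, and the cyclic-vector matching), at which point its advantage over the direct check evaporates.
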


\begin{proof}
According to \cite[Lemma~11.2]{Seiss} the normal form equation for $\Gzwei$ is
\begin{equation*}
\begin{array}{l}
L_{\Gzwei}(\bss(\bvv),\partial)y \, = \, \\[0.5em] \quad 
y^{(7)} - 2 s_2(\bvv) y' - 2 (s_2(\bvv) y )' - ( s_1(\bvv) y^{(4)})' - (s_1(\bvv) y')^{(4)} + (s_1(\bvv) (s_1(\bvv) y')')'.
\end{array}
\end{equation*} 
We explain briefly how one can compute $s_1(\bvv)$ and $s_2(\bvv)$.   
One uses the representation of the Lie algebra presented in \cite[Section~11]{Seiss} to derive parametrized generators of the torus and the root groups of $\Gzwei$ from it.
Moreover, using the representatives 
\begin{eqnarray*}
    n(w_1) \! & \! = \! & \! -E_{1,1} - E_{2,7} - E_{3,6} + E_{4,5} - E_{5,4} - E_{6,3} + E_{7,2} \quad \text{and} \\
    n(w_2) \! & \! = \! & \! E_{1,1} + E_{2,2} - E_{3,4} + E_{4,3}+E_{5,5} -E_{6,7} + E_{7,6}
\end{eqnarray*}
of the two Weyl group generators, one obtains by matrix multiplication a representative $n(\overline{w})=(n(w_2)n(w_1))^3$ of the longest Weyl group element.
Using these matrices, one follows the construction of the fundamental matrix $\fm$ for $A_{\Gzwei}(\bss(\bvv))$ as introduced in \cite{Seiss_Generic} (cf.\ also Theorem~\ref{thm:RobertzSeissNormalForms}).
Finally, one computes the logarithmic derivative $A_{\Gzwei}(\bss(\bvv))$ of $\fm$ and simply reads off $s_1(\bvv)$ and $s_2(\bvv)$. One will find 
\begin{equation*}
\begin{array}{l}
    s_1(\bvv) \, = \,  3 v_1' + 3 v_1^2 - 3 v_1 v_2 + v_2' + v_2^2 \,,\\[0.5em]
    s_2(\bvv) \, = \, \frac{1}{4} ( 2v_1^{(5)} + 4 v_1 v_1^{(4)} - 2v_1  v_2^{(4)} + (18 v_1 v_2 -14 v_1^2  - 4 v_2^2 + 2 v_1' - 4 v_2')  v_1''' + \\[0.5em] \quad \quad 
      (2 v_1^2 - 4 v_1 v_2 - 6 v_1') v_2''' + (v_1'')^2 + ((36 v_2-72 v_1) v_1' -6 v_2'' 
    + (30 v_1 - 12 v_2)  \\[0.5em] \quad \quad 
    v_2' - 28 v_1^3 + 30 v_1^2 v_2 - 6 v_1 v_2^2) v_1'' + ((26 v_1 - 12 v_2) v_1' 
    + 14 v_1^3 - 14 v_1^2 v_2 + 2 v_1 v_2^2 \\[0.5em] \quad \quad  
    - 10 v_1 v_2') v_2'' - 24 (v_1')^3 + (36 v_1 v_2 -60 v_1^2 
    - 2 v_2^2 + 34 v_2') (v_1')^2 + ( (68 v_1^2  - 38 \\[0.5em] \quad \quad 
    v_1 v_2 + 4 v_2^2) v_2' -10 (v_2')^2  + 12 v_1^4 
     - 48 v_1^3 v_2 + 50 v_1^2 v_2^2 - 18 v_2^3 v_1 + 2 v_2^4) v_1'  + \\[0.5em] \quad \quad (4 v_1 v_2 
     -13 v_1^2) (v_2')^2 
    + (16 v_1^3 v_2-2 v_1^4  - 16 v_1^2 v_2^2 + 4 v_2^3 v_1) v_2' + 4v_1^6 - 12 v_1^5 v_2 + \\[0.5em] \quad \quad
     13 v_1^4 v_2^2  - 6 v_1^3 v_2^3 + v_1^2 v_2^4 ).
\end{array}
\end{equation*}
Now one checks by computation that $L_{\Gzwei}(\bss(\bvv),\partial)$ factors as stated.
\end{proof}

\section{The Associated Equations and Their Riccati Equations}
\label{app:associated&riccati}
Following \cite[Section~3.2.1]{Singer_Reducibility} we define the \emph{$i$-th associated equation}
\[
L^{\det(i)}(\bss(\bvv),\partial)y \, = \, 0 
\]
for the normal form equation $L_{\group}(\bss(\bvv),\partial) \, y = 0$ (cf.\ Definition~\ref{def:associatedRic}) as
the differential equation of lowest order whose solution space $V^{\det(i)}$ is spanned by the elements of the set
 \begin{equation*}
 \big\{ \det(\wronski(y_{j_1},\dots , y_{j_i}) ) \mid \{j_1,\dots,j_i\} \subset \{1,\ldots,n\} \big\} , 
 \end{equation*}
where $y_1,\dots,y_n$ is a basis of the solution space $V \subset \generalext$ of $L_{\group}(\bss(\bvv),\partial) \, y = 0$.
The vector space $V^{\det(i)}$ is left invariant under the action of $\group$ and therefore the associated equation has coefficients in $\field\langle \bss(\bvv) \rangle$. The map
\begin{equation*}
\bigwedge\nolimits^i V \to V^{\det(i)}, \quad y_{j_1} \wedge \dots \wedge y_{j_i} \mapsto \det(\wronski(y_{j_1},\dots , y_{j_i}) )
\end{equation*}
is a surjective $\group$-homomorphism. It is a $\group$-isomorphism if and only if the order of 
the associated equation is $\binom{n}{i}$.

\begin{remark}
It is also explained in \cite[Section~3.2.1]{Singer_Reducibility} how one can construct the $i$-th associated equation
$L^{\det(i)}(\bss(\bvv),\partial) \, y = 0$ from the normal form equation $L_{\group}(\bss(\bvv),\partial) \, y = 0$ and a basis $y_1,\dots,y_n$ of its solution space. In fact, one differentiates $\binom{n}{i}$ times the Wronskian determinant
\[
w \, = \, \det(\wronski(y_{1},\dots , y_{i}) )
\]
and uses in each step the relation $L_{\group}(\bss(\bvv),\partial) \, y_k = 0$ to eliminate the $n$-th derivative of $y_k$ for $1 \leq k \leq i$.
Finally, one determines the $\field\langle \bss(\bvv) \rangle$-linear dependencies among
$$w, w', \dots, w^{\left(\binom{n}{i}\right)}$$
and one picks a dependency relation whose maximum differentiation order is minimal
for the definition of the associated equation.
\end{remark}

\begin{proposition}\label{prop:assoc_operator}
Let $\group$ be one of the classical groups $\SL_{l+1}$, $\SP_{2l}$, $\SO_{2l+1}$  
or $\Gzwei$ and let $\generalext/ \difffield \langle \bss(\bvv) \rangle$ be the respective general extension defined by the normal form equation 
\[
L_{\group}(\bss(\bvv),\partial) \, y \, = \, 0
\]
with solution space $V$ in $\generalext$. Then the $i$-th associated equation 
\[
L^{\det(i)}(\bss(\bvv),\partial) \, y \, = \, 0
\]
has the following exponential as a solution:
\begin{enumerate}
    \item\label{prop:assoc_operator_sl}
    In case of $\SL_{l+1}$ the exponential is 
    \[
    \exp(\smallint v_{i}) \quad \mbox{for} \ i=1,\dots,l\,.
    \]
    \item\label{prop:assoc_operator_sp}
    In case of $\SP_{2l}$ the exponential is
    \[
    \exp(-\smallint v_{i}) \quad \mbox{for} \ i=1,\dots ,l\,.
    \]
    \item\label{prop:assoc_operator_so_odd}
    In case of $\SO_{2l+1}$ the exponential is
    \[
    \exp(-\smallint v_{i}) \quad \mbox{for} \ i=1,\dots ,l-1 \quad \mbox{and} \quad \exp(-\smallint 2 v_{l})
    \quad \mbox{for} \ i=l\,.
    \]
    \item\label{prop:assoc_operator_g2}
    In case of $\Gzwei$ the exponential is 
    $\exp(\smallint v_1)$ for $i=1$ and $\exp(\smallint v_2)$ for $i = 2$. 
\end{enumerate}
Moreover, the logarithmic derivatives of the exponential solutions of the associated equations generate the same $\Z$-module as $g_1(\bvv),\dots,g_l(\bvv)$.
\end{proposition}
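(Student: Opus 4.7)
My approach is to exhibit, for each of the four group cases and each index $i$, the claimed exponential $\exp(\int b_i v_i)$ as an explicit element of the solution space $V^{\det(i)}$ of the $i$-th associated equation. Since $V^{\det(i)}$ is by definition $\field$-spanned by the $i \times i$ Wronskian minors of a basis $y_1, \ldots, y_n$ of $V$, and since $\BNFcomp \fm$ is such a Wronskian matrix (by the choice of $\BNFcomp$ in Section~\ref{sec:correspondenceA_GandL_G} and by Proposition~\ref{prop:PropertiesOfBcomp}), these minors can be computed from the Bruhat-decomposed fundamental matrix $\fm = \buu(\bvv,\bff)\,n(\overline{w})\,\btt(\bexp)\,\buu(\bint)$. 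The key observation is that, since $\BNFcomp$ has entries in the torus-invariant subring $\field\{\bss(\bvv)\}$, each basis element $y_j = (\BNFcomp \fm)_{1,j}$ is a torus weight vector whose weight $\mu_j$ is precisely the $j$-th diagonal character of $\btt(\bexp)$ in the standard representation; consequently each $y_j$ has the form $r_j \exp(\int \mu_j)$ for some torus-invariant $r_j \in \generalext$.

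For $\SL_{l+1}$, $\SP_{2l}$ and $\SO_{2l+1}$ in their natural representations, I would select $i$ columns of $\BNFcomp \fm$ whose associated weights $\mu_{j_1}, \ldots, \mu_{j_i}$ sum to the $i$-th fundamental weight $\omega_i$ of the standard representation (for types $A_l, C_l$, and for $B_l$ with $i < l$), respectively to $2\omega_l$ in the exceptional case of $B_l$ with $i = l$. The latter case is where $b_l = -2$ appears: since the spin representation $V(\omega_l)$ of $\SO_{2l+1}$ does not occur in any exterior power of $V(\omega_1)$, the highest weight realized in $\bigwedge^l V(\omega_1)$ is $2\omega_l = \epsilon_1 + \cdots + \epsilon_l$ rather than $\omega_l$. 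The corresponding Wronskian $\det(\wronski(y_{j_1}, \ldots, y_{j_i}))$ then lies in the weight-$\omega_i$ (or $2\omega_l$) subspace of $V^{\det(i)}$, hence is of the form (torus-invariant in $\generalext$) $\cdot \exp(\int \omega_i)$; a direct computation using the specific Chevalley representations of \cite{Seiss} shows that for the distinguished column choice corresponding to the highest weight of $\bigwedge^i V$, this torus-invariant prefactor collapses to a nonzero element of $\field$. Translating $\omega_i$ (resp.\ $2\omega_l$) into the $g_j(\bvv)$-basis via the identifications fixed in the construction of $\ALiou(\bvv)$ in \cite{Seiss_Generic, RobertzSeissNormalForms} yields $b_i v_i$ with the stated signs.

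For $\Gzwei$, the case $i = 1$ is immediate from Proposition~\ref{prop:factorizationG2}: the rightmost factor $(\partial - v_1)$ of $L_{\Gzwei}(\bss(\bvv),\partial)$ annihilates $\exp(\int v_1)$. For $i = 2$ I would use the decomposition $\bigwedge^2 V(\omega_1) \cong V(\omega_2) \oplus V(\omega_1)$ of $\Gzwei$-modules, under which $\omega_2$ corresponds to $v_2$ in the log-derivative basis (by matching the exponents $\pm v_1, \pm(v_1-v_2), \pm(2v_1-v_2), 0$ visible in the factorization with the weights of $V(\omega_1)$). Since the unique pair of weights of $V(\omega_1)$ summing to $v_2$ is $(v_1, -v_1+v_2)$, the weight-$v_2$ space of $V^{\det(2)}$ is one-dimensional and the relevant Wronskian $\det(\wronski(y_{j_1}, y_{j_2}))$ with $\mu_{j_1}+\mu_{j_2}=v_2$ is a scalar multiple of $\exp(\int v_2)$ for the appropriate normalization of the basis.

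The main obstacle is the explicit verification, case by case, that the torus-invariant prefactor of the distinguished Wronskian minor is actually a nonzero scalar in $\field$ and not merely an element of $\field\langle\bss(\bvv),\bvv,\bff,\bint\rangle^T$ involving $\bvv, \bff$ or products of $\intn_i$'s; this boils down to tracking the specific form of the cyclic vector encoded in $\BNFcomp$ and the Chevalley basis choice underlying $g_i(\bvv)$. The supplement concerning the $\Z$-module is then a direct bookkeeping exercise: once $b_i v_i = \sum_j c_{i,j}\, g_j(\bvv)$ is established with $c_{i,j} \in \Z$ via the identification of $b_i v_i$ with the fundamental weights (or $2\omega_l$), one verifies that the $l \times l$ integer transition matrix $(c_{i,j})$ lies in $\GL_l(\Z)$ by inspection in each of the four cases (the factor $2$ in the $B_l$, $i = l$ row being exactly compensated by the coefficient structure of the other rows).
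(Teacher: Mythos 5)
Your weight-theoretic setup is sound as far as it goes: since $\BNFcomp$ has Galois-invariant entries, right multiplication of $\fm$ by $t\in\torus(\field)$ scales the $j$-th column, so each $y_j$ is indeed a torus weight vector, and any $i$-fold Wronskian minor is a weight vector whose weight is the corresponding sum of diagonal characters. But this only yields that $V^{\det(i)}$ contains an element of the form $r\,\exp(\smallint b_i v_i)$ with $r$ a nonzero torus-invariant element of $\generalext$, which is strictly weaker than the proposition: you need $r\in\field^{\times}$, i.e.\ that the logarithmic derivative of the distinguished minor is \emph{exactly} $b_i v_i$ and not $b_i v_i + r'/r$ with $r'/r$ some nonconstant element of $\generalext^{\torus}$ (a priori involving $\bvv$, $\bff$, $\bint$). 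You name this yourself as ``the main obstacle'' and dispose of it by asserting that ``a direct computation\ldots shows'' the prefactor collapses to a scalar, but no such computation, nor any structural reason for it, is given. Since this is precisely the quantitative content of the proposition, the proof is incomplete at its central step; the $\Z$-module supplement, while routine bookkeeping as you say, also rests on these unproven identities.

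For comparison, the paper closes exactly this gap by a different mechanism: it first factors $L_{\group}(\bss(\bvv),\partial)$ over $\field\langle\bvv\rangle$ into monic first-order factors $(\partial-a_n)\cdots(\partial-a_1)$ (Propositions~\ref{prop:factorization} and \ref{prop:factorizationG2}), then applies the classical fact that the Wronskian of a basis of solutions of an operator satisfies $w'=c\,w$ with $c$ the subleading coefficient, to each partial product $(\partial-a_{i})\cdots(\partial-a_1)$. This identifies the logarithmic derivative of the corresponding Wronskian (an element of $V^{\det(i)}$) as the partial sum $a_1+\dots+a_{i}$, and an explicit computation of the diagonal entries of $\gauge{n(\overline{w})}{\ALiou(\bvv)}$ in the Chevalley representations of \cite{Seiss} shows these partial sums telescope to $v_i$, $-v_i$, $-2v_l$, etc. If you want to salvage your approach, you would have to supply the analogous explicit verification (tracking $\BNFcomp$ and the column ordering to show the chosen minor is, up to a constant, the Wronskian of the flag subspace cut out by the unitriangular factor $\buu(\bint)$), at which point you are essentially redoing the paper's telescoping computation in different language; note also that for $\SP_{2l}$ the exterior powers $\bigwedge^i V$ are not irreducible, so an argument relying purely on highest-weight multiplicity one in $\bigwedge^i V$ needs extra care about what survives the surjection $\bigwedge^i V \to V^{\det(i)}$.
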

\begin{proof}
It is well known (cf.\ \cite[Ex.~1.14, 5.~(b)]{vanderPutSinger}) that for a linear differential equation
\[
y^{(n)} \, = \, c_{n-1}y^{(n-1)}+\dots + c_0 y
\]
with coefficients in some differential field $\widetilde{F}$ and for a basis $\widetilde{y}_1,\dots,\widetilde{y}_n$ of its solution space in a Picard-Vessiot extension $\widetilde{E}$ of $\widetilde{F}$, the determinant of the Wronskian matrix $\wronski(\widetilde{y}_1,\dots,\widetilde{y}_n)$ satisfies the first order linear differential equation 
\[
y' \, = \, c_{n-1}y\,.
\]
Having said that, we consider for each of the above groups $\group$ the intermediate products
\[
\mathcal{L}^{\group}_i(\partial) \, := \, (\partial-a_{1+i}) \cdots (\partial-a_1) \qquad \text{for} \ i=0,\dots,l-1
\]
of the factorization of its normal form operator $L_{\group}(\bss(\bvv),\partial)$ from Proposition~\ref{prop:factorization} and Proposition~\ref{prop:factorizationG2}.
The solution space $\mathcal{V}_i^{\group}$ of the equation $\mathcal{L}^{\group}_i(\partial) \, y = 0$ is a subspace of the solution space $V$ of $L_{\group}(\bss(\bvv),\partial) \, y = 0$
and we denote a basis of $\mathcal{V}_i^{\group}$ by $\widetilde{y}_1,\dots,\widetilde{y}_{i+1}$.
Moreover, the coefficient of the second highest derivative in $\mathcal{L}^{\group}_i(\partial)$ is the sum
$$-a_{1+i} - \dots - a_1.$$
Applying now the  result mentioned at the beginning of this proof, we obtain
\[
\det(\wronski(\widetilde{y}_1,\dots,\widetilde{y}_{i+1}))' \, = \, (a_{1+i}+\dots +a_1) \, \det(\wronski(\widetilde{y}_1,\dots,\widetilde{y}_{i+1})).
\]
Hence, $0 \neq \det(\wronski(\widetilde{y}_1,\dots,\widetilde{y}_{i+1}))$ is
an element of the solution space $V^{\det(i)}$ of the $i$-th associated operator $L^{\det(i)}(\bss(\bvv),\partial)$, whose logarithmic derivative is
\[
\det(\wronski(\widetilde{y}_1,\dots,\widetilde{y}_{i+1}))' \det(\wronski(\widetilde{y}_1,\dots,\widetilde{y}_{i+1}))^{-1} \, = \, a_{1+i}+\dots +a_1 \, .
\]

In the following we compute the explicit values of $a_1,\dots,a_n$ for each of the groups $\SL_{l+1}$, $\SP_{2l}$, $\SO_{2l+1}$ and $\Gzwei$ separately.
The assertion then follows by applying the above result to these explicit values. 
We recall from \cite[Section~5]{Seiss} (note that there $g_i(\bvv)$ is $\overline{g}_i(\bvv)$) that the diagonal entries of $\gauge{n(\overline{w})}{\ALiou(\bvv)}$ are equal to the negatives of the diagonal entries of $\Ad(\buu(\bvv,\bff))(A_0^+)$ (cf.\ Section~\ref{sec:normalform}).
Thus, Remark~\ref{remark3} implies that it is sufficient to compute the diagonal entries of
$$\Ad(u_1(v_1)\cdots u_l(v_l))(A_0^+) . $$
In the following we write $E$ for the identity matrix $E_{1,1}+ \dots + E_{n,n}.$

\ref{prop:assoc_operator_sl}\
For the construction of the normal form matrix for $\SL_{l+1}$  we used the representation of  $\Lie(\SL_{l+1})$ introduced in \cite[Section~7]{Seiss}. Using this representation we find 
\[
A_0^+ \, = \, \sum_{i=1}^l E_{i,i+1} \quad \text{and} \quad 
u_{-\alpha_i}(v_i) \, = \, E + E_{i+1,i}v_i  \quad \text{for} \ 1 \leq i \leq l.
\]
Carrying out the respective matrix multiplications we obtain 
\[
A_0^+ + \sum_{i=1}^l- v_i H_i +r \, = \, \sum_{i=1}^l E_{i,i+1} + \sum_{i=1}^l -v_i(E_{i,i}-E_{i+1,i+1}) +r
\]
with $r$ a lower triangular nilpotent matrix in $\gl_n(C)$. 
Hence, the diagonal entries of $\gauge{n(\overline{w})}{\ALiou(\bvv)}$ are 
\[
a_1 \, = \, v_1\,, \quad
a_2 \, = \, -v_{1} +v_2\,, \quad \dots, \quad
a_l \, = \, -v_{l-1} + v_l\,, \quad
a_{l+1} \, = \, -v_l \,. 
\]
We conclude that for each $1\leq i \leq l$ the associated equation $L^{\det(i)}(\bss(\bvv),\partial) \, y = 0$ has the exponential solution $\exp(\int v_i)$.

\ref{prop:assoc_operator_sp}\
The construction of the normal form matrix in case of the group $\SP_{2l}$  uses the representation of the Lie algebra presented in \cite[Section~8]{Seiss}. Note that there we renumbered the rows and columns of matrices in $\field^{2l\times 2l}$ using the range $(1,2,\dots,l,-l,\dots,-2,-1)$.
In this representation we have 
\begin{gather*}
A_0^+ = ( \sum_{i=1}^{l-1} E_{i,i+1}-E_{-i-1,-i}) + E_{l,-l} \ , \quad 
u_{-\alpha_i}(v_i) = E + v_i(-E_{i+1,i}+E_{-i,-i-1}) \\ \text{for} \ 1 \leq i \leq l-1 \quad  \text{and} \quad 
u_{-\alpha_l}(v_l) = E + v_l E_{-l,l} \, .
\end{gather*}
Carrying out the matrix multiplications we find 
\[
A_0^+ + \sum_{i=1}^l(\tilde{g}_i H_i) +r = A_0^+ + \sum_{i=1}^l \tilde{g}_i(E_{i,i}-E_{-i,-i}) +r,
\]
where $\tilde{g}_1=v_1$ and $\tilde{g}_i=v_i-v_{i-1}$ for $2\leq i \leq l$ and $r \in \gl_n(C)$ a lower triangular nilpotent matrix. We conclude that the first $l$ diagonal entries of $\gauge{n(\overline{w})}{\ALiou(\bvv)}$ are 
$$
a_1 = -v_1, \ a_2 = -v_2 + v_{1} , \dots, a_l= -v_l+v_{l-1}
$$
and so for $1\leq i \leq l$ the $i$-th associated equation $L^{\det(i)}(\bss(\bvv),\partial)y=0$ has the exponential solution $\exp(\int -v_i)$.

\ref{prop:assoc_operator_so_odd}\
For the construction of the normal form matrix for $\SO_{2l+1}$ we used the representation of $\Lie(\SO_{2l+1})$ introduced in \cite[Section 9]{Seiss}. There, we renumbered the rows and columns of matrices in 
$\gl_{2l+1}(\field)$ using the range $(1,\dots,l,0,-l,\dots,-1)$. 
In this representation we find  
\begin{gather*}
\begin{array}{rcl}
A_0^+ & = &  \displaystyle \sum_{i=1}^{l-1} (E_{i,i+1}-E_{-i-1,-i}) + 2E_{l,0} +E_{0,-l} \ , \\[0.5em] 
u_{-\alpha_i}(v_i)& =&  E +v_i (-E_{i+1,i}+ E_{-i,-i-1}) \qquad \text{for} \ 1 \leq i \leq l-1  \quad \text{and} \\[0.5em]
u_{-\alpha_l}(v_l)&=& E +v_l (-E_{0,l}-2 E_{-l,0}) +v_l^2 E_{-l,l} \, .
\end{array}
\end{gather*}
Multiplying out the conjugation of $A_0^+$ by  $u_1(v_1)\cdots u_l(v_l)$ we obtain 
\[
A_0^+ + \sum_{i=1}^l \tilde{g}_i H_i +r \, = \, A_0^++ \sum_{i=1}^l \tilde{g}_i (E_{i,i}-E_{-i,-i}) +r\,,
\]
where $\tilde{g}_1=v_1$ and $\tilde{g}_i=v_i-v_{i-1}$ for $2\leq i\leq l-1$ and $\tilde{g}_l=2v_l-v_{l-1}$. As before, $r \in \gl_n(\field)$ is a suitable lower triangular nilpotent matrix. Thus, the first $l$ diagonal entries of $\gauge{n(\overline{w})}{\ALiou(\bvv)}$ are 
$$
a_1 \, = \, -v_1, \, \,
a_2 \, = \, -v_2+v_1, \, \, \dots, \, \,
a_{l-1} \, = \, -v_{l-1}+v_{l-2}, \, \,
a_l \, = \, -2v_l+v_{l-1}\,.
$$
Hence, for $1\leq i \leq l-1$ the $i$-th associated equation has the exponential solution $\exp(\int - v_i)$ and the $l$-th associated equation has the exponential solution $\exp(-2v_l)$.

\ref{prop:assoc_operator_g2}\
According to Proposition~\ref{prop:factorizationG2} the last $l=2$ factors in the factorization of the normal form operator $L_{\Gzwei}(\bss(\bvv),\partial)$ are $\partial +v_1 -v_2$ and $\partial - v_1$.
Hence, the first associated equation has the exponential solution $\exp(\int v_1)$ and the second one has the exponential solution $\exp(\int v_2)$. 

It is left to show the assertion of the supplement. Since the diagonal entries of $\ALiou(\bvv)$ are $\Z$-linear combinations of $g_1(\bvv), \dots,g_l(\bvv)$ and conjugation with $n(\overline{w})$ permutes those entries and potentially changes their signs, we conclude that the diagonal entries of  $n(\overline{w}).\ALiou(\bvv)$ are also $\Z$-linear combinations of $g_1(\bvv), \dots,g_l(\bvv)$. Because the logarithmic derivatives of the exponential solutions of the associated operators are the partial sums of these diagonal entries, it follows that they are contained in the $\Z$-span of $g_1(\bvv), \dots,g_l(\bvv)$. The reversed inclusion  simply follows from the fact that $g_i(\bvv)$ are $\Z$-linear combinations of the $v_i$ (in case $\group = \SO_{2l+1}$ only one $g_i$ involves $v_l$ and it has the form $\pm (2 v_l-v_{l-1})$) and that the logarithmic derivatives of the exponential solutions of the associated operators have the form $\pm v_i$ (in case $\group = \SO_{2l+1}$ one logarithmic derivative is $-2v_l$).
\end{proof}

Recall that in Definition~\ref{def:associatedRic} we denoted by $\Ric{i}{\bss(\bvv)}{y} = 0$ the Riccati equation for the 
$i$-th associated equation $L^{\det(i)}(\bss(\bvv),\partial) \, y = 0$ for the normal form equation $L_{\group}(\bss(\bvv),\partial) \, y = 0$. 
More generally, for a definition and construction of a Riccati equation associated to a scalar differential equation, cf.\ e.g.\ \cite[Definition~4.6]{vanderPutSinger}.
From Proposition~\ref{prop:assoc_operator} we immediately obtain the following corollary.
\begin{corollary}\label{cor:riccati_assoc_operator} $\,$
\begin{enumerate}
    \item Case $\SL_{l+1}$: For $i=1,\dots,l$ the Riccati equation $\Ric{i}{\bss(\bvv)}{y} = 0$ for the $i$-th associated equation $L^{\det(i)}(\bss(\bvv),\partial) \, y = 0$ has $v_{i}$ as a solution.
    \item Case $\SP_{2l}$: For $i=1,\dots ,l$ the Riccati equation $\Ric{i}{\bss(\bvv)}{y} = 0$ for the $i$-th associated equation $L^{\det(i)}(\bss(\bvv),\partial) \, y = 0$ has $-v_{i}$ as a solution.
    \item Case $\SO_{2l+1}$: For $i=1,\dots ,l-1$ the Riccati equation $\Ric{i}{\bss(\bvv)}{y} = 0$ for the $i$-th associated equation $L^{\det(i)}(\bss(\bvv),\partial) \, y = 0$ has $-v_{i}$ as a solution
    and the Riccati equation $\Ric{l}{\bss(\bvv)}{y} = 0$ for the $l$-th associated equation $L^{\det(l)}(\bss(\bvv),\partial) \, y = 0$ has
    $-2 v_{l}$ as a solution.
    \item Case $\Gzwei$: The Riccati equations $\Ric{1}{\bss(\bvv)}{y} = 0$ and $\Ric{2}{\bss(\bvv)}{y} = 0$ for the first and second associated equations, i.e.\ for 
    \[
    L^{\det(1)}(\bss(\bvv),\partial) \, y \, = \, L_{\Gzwei}(\bss(\bvv),\partial) \, y \, = \, 0
    \]
    and for $L^{\det(2)}(\bss(\bvv),\partial) \, y = 0$, have $v_{1}$ and $v_2$ as solutions respectively. 
\end{enumerate}
\end{corollary}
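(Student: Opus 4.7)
The plan is to deduce this corollary directly from Proposition~\ref{prop:assoc_operator} together with the standard bijection between exponential solutions of a linear ODE and rational (or algebraic) solutions of its associated Riccati equation. Recall that the Riccati equation $\Ric{i}{\bss(\bvv)}{y}=0$ is obtained from $L^{\det(i)}(\bss(\bvv),\partial)\,y=0$ by the substitution $u = y'/y$, so that $u$ solves the Riccati equation if and only if $\exp(\smallint u)$ solves $L^{\det(i)}(\bss(\bvv),\partial)\,y=0$ (cf.\ \cite[Section~4.1]{vanderPutSinger}).

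With this correspondence in hand, the proof reduces to reading off the logarithmic derivatives of the exponential solutions furnished by Proposition~\ref{prop:assoc_operator}. First I would handle $\SL_{l+1}$: Proposition~\ref{prop:assoc_operator}~\ref{prop:assoc_operator_sl} provides the exponential solution $\exp(\smallint v_i)$ of $L^{\det(i)}(\bss(\bvv),\partial)\,y=0$, hence $u = v_i$ solves $\Ric{i}{\bss(\bvv)}{y}=0$. The cases of $\SP_{2l}$ and $\Gzwei$ are analogous, using parts \ref{prop:assoc_operator_sp} and \ref{prop:assoc_operator_g2} of the same proposition; the logarithmic derivatives of $\exp(-\smallint v_i)$ and $\exp(\smallint v_i)$ give $-v_i$ and $v_i$ respectively. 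For $\SO_{2l+1}$, the first $l-1$ associated equations are handled exactly as in the symplectic case, while for $i=l$ the exponential solution is $\exp(-\smallint 2v_l)$, yielding $-2v_l$ as a Riccati solution.

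There is no genuine obstacle here: the statement is essentially a reformulation of Proposition~\ref{prop:assoc_operator} under the exponential-to-Riccati correspondence, and no further computation is required beyond verifying that each logarithmic derivative lies in the differential polynomial ring $\field\{\bvv\}$ (which is clear since each is a $\Z$-multiple of a single indeterminate $v_i$). The only bookkeeping to keep straight is the signs and, in the $\SO_{2l+1}$ case with $i=l$, the factor of $2$ that appears because $\alpha_l$ is the short simple root and the corresponding complementary root contributes $H_l$ with coefficient $2v_l$ in the relevant diagonal entry of $\gauge{n(\overline{w})}{\ALiou(\bvv)}$.
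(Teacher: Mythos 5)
Your proposal is correct and follows the same route as the paper: the corollary is stated there as an immediate consequence of Proposition~\ref{prop:assoc_operator} via the standard correspondence (cf.\ \cite[Definition~4.6]{vanderPutSinger}) under which $u$ solves the Riccati equation exactly when $\exp(\smallint u)$ solves the associated linear equation, so one just reads off the logarithmic derivatives $b_i v_i$ of the exponential solutions. Your remark on the factor $2$ for the short root in the $\SO_{2l+1}$ case matches the computation already carried out in the proof of Proposition~\ref{prop:assoc_operator}.
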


\section{The Denominators in the Bruhat Decomposition}\label{sec:denom}
In this section we are going to analyze in more detail the coefficients in $\field(\group)$ of the Bruhat decomposition 
\[
\overline{Y}  \, = \, \buu(\bxx) \, n(\overline{w}) \, \btt(\boldsymbol{z}) \, \buu(\bww) 
\]
where the entries $\overline{Y}_{i,j}$ are the residue classes of $Y_{i,j}$ in $C[G]=C[Y_{i,j},\det(Y)^{-1}]/I_G$.

\begin{lemma}\label{lem:sollinearsystem}
For $n\in \N$ let $Y=(Y_{i,j})$ be an $n \times n$ matrix whose entries are indeterminates $Y_{i,j}$ over $\field$.
For further indeterminates $c_1,\dots,c_n$ and $y_{i,j}$ over $\field$ with $2 \leq i \leq n $ and $1 \leq j <i$ consider the matrices
\[
W \, := \, \left( \begin{array}{cccc}
0 & \ldots & 0 & c_1 \\
\vdots & \addots & c_2 & 0 \\
0 & \addots & \addots & \vdots \\
c_n & 0 & \ldots & 0
\end{array}
\right), \qquad
L \, := \, \left( \begin{array}{ccccc}
1 & 0 & \ldots & \ldots & 0 \\
y_{2,1} & 1 & 0 & \ldots & 0 \\
y_{3,1} & y_{3,2} & 1 & \ddots & \vdots \\
\vdots & \ddots & \ddots & \ddots & 0 \\
y_{n,1} & \ldots & y_{n,n-2} & y_{n,n-1} & 1
\end{array} \right).
\]
Moreover, for $1 \leq k \leq n-1$ define the matrices and vectors
\begin{equation*}
\begin{array}{rcl}
\displaystyle
Y^{(k)} & = & (Y_{i,j})_{{1 \le i \le n-k} \atop {k+1 \le j \le n}} \ \text{and} \\[0.5em] 
\displaystyle
v_k & = & ( -Y_{n-k+1,k+1},- Y_{n-k+1,k+2}. \dots,- Y_{n-k+1,n}  ).
\end{array}
\end{equation*}
\begin{enumerate}
    \item\label{item(a):sollinearsystem}
    The linear system of equations $(W L Y)_{i,j} =  0$ with $1\leq i \leq n-1$ and $ i < j \leq n$ in the indeterminates  $y_{i,j}$ over $\field(c_1,\dots,c_n,Y_{i,j})$ is equivalent to the conjunction of the $(n-1)$ linear systems
    \begin{equation}\label{eq:sollinearsystem1}
    \left\{
    \begin{array}{lcl} 
    \displaystyle
    c_k \sum_{s=1}^{n-k}  y_{n+1-k,s} Y_{s,k+1} &=& - c_k Y_{n-k+1,k+1} \\[0.5em] 
    \displaystyle
    c_k \sum_{s=1}^{n-k}  y_{n+1-k,s} Y_{s,k+2} &=& - c_kY_{n-k+1,k+2} \\[0.5em]
    \qquad \qquad \vdots && \qquad \vdots \\[0.5em] 
    \displaystyle
    c_k \sum_{s=1}^{n-k}  y_{n+1-k,s} Y_{s,n} &=& -c_k Y_{n-k+1,n} 
    \end{array}
    \right\}
\quad \text{with}  \quad 1 \leq k \leq n-1.
\end{equation}
    \item\label{item(b):sollinearsystem} For $1 \leq k \leq n-1$ the systems of equations in \ref{item(a):sollinearsystem} have the unique solutions 
    \[
    y_{n+1-k,s} \, = \, \frac{\det(Y_s^{(k)})}{\det(Y^{(k)})} \qquad \text{with} \ 1 \leq s \leq n-k\,,
    \]
    where $Y_s^{(k)}$ is the matrix obtained by replacing the $s$-th row by $v_k$.
    \item\label{item(c):sollinearsystem}
    Substituting the solutions of \ref{item(b):sollinearsystem} into $WLY$ the diagonal entries $(W L Y)_{i,i}$ become 
    \begin{gather*}
    (-1)^{n-1} c_1 \frac{\det(Y)}{\det(Y^{(1)})}, \ (-1)^{n-2}c_2 \frac{\det(Y^{(1)})}{\det(Y^{(2)})}, \ \dots , \\
    \qquad \qquad \qquad \qquad \qquad \qquad -c_{n-1} \frac{\det(Y^{(n-2)})}{\det(Y^{(n-1)})}  , \ c_n \det(Y^{(n-1)}) . 
    \end{gather*}
\end{enumerate}
\end{lemma}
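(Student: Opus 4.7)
The lemma decomposes naturally along its three parts, which I will handle in order.

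For part (a), the key observation is that since $W$ is anti-diagonal with $(W)_{i,n+1-i}=c_i$, for any matrix $M$ we have $(WM)_{i,j}=c_i M_{n+1-i,j}$. Applying this to $M=LY$ and using the unipotent lower triangular form of $L$, I get
\[
(WLY)_{i,j} \, = \, c_i \Big( Y_{n+1-i,j} + \sum_{s=1}^{n-i} y_{n+1-i,s} Y_{s,j} \Big).
\]
Demanding vanishing for the upper triangular indices $(i,j)=(k,j)$ with $j=k+1,\ldots,n$ produces exactly the $(n-1)$ blocks in \eqref{eq:sollinearsystem1}; these blocks decouple because the $k$-th block involves only the $(n+1-k)$-th row of $L$ below the diagonal. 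For part (b), the coefficient matrix of the $k$-th block (after cancelling $c_k$) has entry $Y_{s,j}$ in the row indexed by equation $j\in\{k+1,\ldots,n\}$ and column indexed by unknown $s\in\{1,\ldots,n-k\}$; this is the transpose of $Y^{(k)}$, with determinant $\det(Y^{(k)})\neq 0$ over the ground field. Cramer's rule then gives the unique solution, and replacing the $s$-th column of this coefficient matrix by the right-hand side $v_k^{\mathrm{tr}}$ corresponds under transposition to replacing the $s$-th row of $Y^{(k)}$ by $v_k$, producing $Y_s^{(k)}$. This yields $y_{n+1-k,s}=\det(Y_s^{(k)})/\det(Y^{(k)})$ as claimed.

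For part (c), which is the main technical step, after substitution I must evaluate
\[
(WLY)_{i,i} \, = \, c_i (LY)_{n+1-i,i} \, = \, \frac{c_i}{\det(Y^{(i)})}\Big(Y_{n+1-i,i}\det(Y^{(i)}) + \sum_{s=1}^{n-i} Y_{s,i}\det(Y_s^{(i)})\Big).
\]
The plan is to recognise the bracket as, up to sign, the Laplace expansion of $\det(Y^{(i-1)})$ along its first column, whose entries are $Y_{1,i},\ldots,Y_{n-i+1,i}$. That expansion reads $\det(Y^{(i-1)})=\sum_{s=1}^{n-i+1}(-1)^{s+1}Y_{s,i}\det(M_s)$, where $M_s$ omits row $s$ and the first column. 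The term $s=n-i+1$ gives $\det(M_{n-i+1})=\det(Y^{(i)})$ directly. For $s<n-i+1$, I would transform $M_s$ into $Y_s^{(i)}$ by moving the remaining last row (originally row $n-i+1$ of $Y$) from position $n-i$ back to position $s$, costing $(n-i)-s$ adjacent transpositions, and then absorbing the overall minus sign encoded in $v_i=-(\text{row})$. The combined sign $(-1)^{s+1}\cdot(-1)^{n-i-s+1}=(-1)^{n-i}$ is independent of $s$, so
\[
Y_{n+1-i,i}\det(Y^{(i)}) + \sum_{s=1}^{n-i} Y_{s,i}\det(Y_s^{(i)}) \, = \, (-1)^{n-i}\det(Y^{(i-1)}),
\]
giving $(WLY)_{i,i}=(-1)^{n-i}c_i\det(Y^{(i-1)})/\det(Y^{(i)})$ for $i=1,\ldots,n-1$. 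The edge case $i=n$ is handled separately: since the first row of $L$ is the unit vector, $(LY)_{1,n}=Y_{1,n}=\det(Y^{(n-1)})$, and hence $(WLY)_{n,n}=c_n\det(Y^{(n-1)})$.

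The main obstacle is the sign bookkeeping in part (c): one must carefully combine the alternating Laplace signs with the signs produced by the row transpositions and verify that a common factor $(-1)^{n-i}$ emerges, so that the sum telescopes into a single ratio of consecutive determinants $\det(Y^{(i-1)})/\det(Y^{(i)})$. Parts (a) and (b) are essentially routine once the roles of $W$ and of the transpose are correctly identified.
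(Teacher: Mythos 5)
Your proposal is correct and follows essentially the same route as the paper's proof: part (a) via the observation that the rows of $WL$ are $c_i L_{n+1-i,*}$, part (b) via Cramer's rule for the system $L'_k Y^{(k)} = v_k$, and part (c) via Laplace expansion of $\det(Y^{(i-1)})$ along its first column combined with the row-transposition sign count, yielding the common factor $(-1)^{n-i}$. The sign bookkeeping you carry out matches the paper's computation, including the edge case $(WLY)_{n,n} = c_n Y_{1,n} = c_n\det(Y^{(n-1)})$.
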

\begin{proof}
\ref{item(a):sollinearsystem}\
The matrix $WL$ is the matrix whose rows are  
\[
c_1 L_{n,*}, \quad c_2 L_{n-1,*} , \quad \dots , \ c_{n-1} L_{2,*} , \quad c_n L_{1,*}\,,
\]
where $L_{i,*}$ denotes the $i$-th row of $L$. Now for $1 \leq k \leq n-1$ the $k$-th system in \eqref{eq:sollinearsystem1} is obtained by multiplying the $k$-th row of $WL$ with the columns $Y_{*,k+1}, \dots, Y_{*,n}$ of $Y$ and then equating these products to zero.
The right hand sides of the so obtained system are those entries of the $k$-th row of  $WLY$ which lie above the diagonal.\\
\ref{item(b):sollinearsystem}\
Observe that the systems in \eqref{eq:sollinearsystem1} are equivalent to 
\[
L'_k Y^{(k)}  = v_k \qquad \text{for} \ 1 \leq k \leq n-1\,,
\]
where $L'_k$ denotes the vector of indeterminates
$$ L'_k =(y_{n+1-k,1},\dots,y_{n+1-k,n-k}). $$
The assertion then follows from applying Cramer's rule.\\
\ref{item(c):sollinearsystem}\
Applying the argumentation of \ref{item(a):sollinearsystem}\ to the diagonal entries $(W L Y)_{i,i}$ of $WLY$, we observe that they are
\begin{gather*}
\begin{array}{c} 
\displaystyle
c_1 (\sum_{s=1}^{n-1}  y_{n,s} Y_{s,1} + Y_{n,1}),  \ \dots , \
c_k (\sum_{s=1}^{n-k}  y_{n+1-k,s} Y_{s,k} + Y_{n+1-k,k}) \ \dots ,  \\[0.5em] 
\displaystyle
c_{n-1} (  y_{2,1} Y_{1,n-1} + Y_{2,n-1}) \quad \text{and} \quad c_n Y_{1,n}=c_n \det(Y^{(n-1)}) .
\end{array}
\end{gather*}
Let $1 \leq k \leq n-1$. Substituting the solutions of \ref{item(b):sollinearsystem}\ into the $k$-th diagonal element 
\[
c_k (\sum_{s=1}^{n-k}  y_{n+1-k,s} Y_{s,k} + Y_{n+1-k,k}),
\]
we obtain
\[
c_k (\sum_{s=1}^{n-k} \frac{\det(Y_s^{(k)})}{\det(Y^{(k)})}  Y_{s,k} + Y_{n+1-k,k}) =
c_k (\sum_{s=1}^{n-k} \frac{\det(Y_s^{(k)})}{\det(Y^{(k)})}  Y_{s,k} + \frac{\det(Y^{(k)})}{\det(Y^{(k)})} Y_{n+1-k,k})\,.
\]
Observe that
\[
Y^{(k-1)} \, = \, (Y_{i,j})_{{1 \le i \le n-k+1} \atop {k \le j \le n}} \, = \, 
\left(
\begin{array}{c|ccccc}
    Y_{1,k} &&&&& \\
    Y_{2,k} &&&&&\\
    \vdots  &&&Y^{(k)}&& \\
    &&&&& \\
    Y_{n-k,k} &&&&& \\ 
    \hline
    Y_{n-k+1,k} & & &-v_k & &
\end{array}
\right).
\]
Developing the determinant of $Y^{(k-1)}$ for the first column, we obtain 
\begin{equation}\label{eq:sollinearsystem2}
\det(Y^{(k-1)})=\sum_{s=1}^{n-k} (-1)^{s-1}Y_{s,k} \det(\widehat{Y}^{(k-1)}_s) + (-1)^{n-k} Y_{n-k+1,k} \det(Y^{(k)}),
\end{equation}
where $\widehat{Y}^{(k-1)}_s$ is the matrix obtained by canceling the $s$-th row of $\begin{pmatrix}
    Y^{(k)} \\ -v_k
\end{pmatrix}$. 
Swapping over the last row of $\widehat{Y}^{(k-1)}_s$ until it becomes the $s$-th row, the determinant of the so obtained matrix is  
\[
(-1)^{n-k-s} \det(\widehat{Y}^{(k-1)}_s)=-\det(Y^{(k)}_s)\,,
\] 
which is equivalent to 
$$
\det(\widehat{Y}^{(k-1)}_s)=(-1)^{-n+k+s+1}  \det(Y^{(k)}_s).
$$
Substituting now in equation \eqref{eq:sollinearsystem2} the expression for $\det(\widehat{Y}^{(k-1)}_s)$, we obtain 
\begin{eqnarray*}
\det(Y^{(k-1)})
& = & \sum_{s=1}^{n-k} (-1)^{-n+k+2s} Y_{s,k} \det(Y^{(k)}_s) + (-1)^{n-k} Y_{n-k+1,k} \det(Y^{(k)}) \\
& = & \sum_{s=1}^{n-k} (-1)^{-n+k} Y_{s,k} \det(Y^{(k)}_s) + (-1)^{n-k} Y_{n-k+1,k} \det(Y^{(k)}) \\
& = & (-1)^{n-k}( \sum_{s=1}^{n-k}  Y_{s,k} \det(Y^{(k)}_s) +   Y_{n-k+1,k} \det(Y^{(k)}))
\end{eqnarray*}
and so the $k$-th diagonal entry becomes 
\begin{equation*}
    c_k (\sum_{s=1}^{n-k} \frac{\det(Y_s^{(k)})}{\det(Y^{(k)})}  Y_{s,k} + \frac{\det(Y^{(k)})}{\det(Y^{(k)})} Y_{n+1-k,k}) = (-1)^{n-k} c_k\frac{\det(Y^{(k-1)})}{\det(Y^{(k)})}.
\end{equation*}
\end{proof}

\begin{proposition}\label{prop:BruhatParametersofX}
Let $\group$ be one of the groups $\SL_{l+1}$, $\SP_{2l}$ or $\SO_{2l+1}$ and let 
\[
\field[\group] \, = \, \field[\overline{Y}_{i,j} \mid i,j=1, \dots, n] \, = \, \field[Y_{i,j}\mid i,j=1, \dots, n]/I_{\group}
\]
the coordinate ring of $\group$, where $I_{\group}$ is the defining ideal of $\group$ and $n$ denotes the dimension of the representation of $\group$, that is  $n=l+1$, $n=2l$ and  $n=2l+1$ respectively. Then there exist $e_1,\dots,e_l \in \field[\group]$ and  
$$\boldsymbol{z} = (\prod_{j=1}^l e_j^{a_{1,j}},\dots, \prod_{j=1}^l e_j^{a_{l,j}}) \quad \text{with} \ a_{i,j} \in \Z$$  
and $\bxx:=(x_1,\dots,x_m)$ and $\bww:=(w_1,\dots,w_m)$ in the localization $\mathcal{M}^{-1} \field[\group]$ of $\field[\group]$ by the multiplicatively closed subset  $\mathcal{M}$ generated by $e_1,\dots,e_l$ such that 
    \[
    \overline{Y}  \, = \, \buu(\bxx) \, n(\overline{w}) \, \btt(\boldsymbol{z}) \, \buu(\bww)  
    \]
    is the Bruhat decomposition of $\overline{Y}:=(\overline{Y}_{i,j})$.  
\end{proposition}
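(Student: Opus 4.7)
The plan is to use Lemma~\ref{lem:sollinearsystem} to compute the Bruhat decomposition of the generic matrix $\overline{Y}$ explicitly. First I would note that in the representations of $\SL_{l+1}$, $\SP_{2l}$ and $\SO_{2l+1}$ chosen in~\cite[Sections~7--9]{Seiss}, the representative $n(\overline{w})$ of the longest Weyl element is anti-diagonal, matching the matrix $W$ of Lemma~\ref{lem:sollinearsystem} with specific entries $c_1,\ldots,c_n \in \{\pm 1\}$ read off from the representation. The Bruhat factorization $\overline{Y} = \buu(\bxx)\,n(\overline{w})\,\btt(\boldsymbol{z})\,\buu(\bww)$ is equivalent to finding a lower unipotent matrix $L = \buu(\bxx)^{-1}$ such that $n(\overline{w})^{-1} L\,\overline{Y}$ is lower triangular, which is exactly what Lemma~\ref{lem:sollinearsystem} produces: by part~(b) the entries of $L$ are the rational functions $\det(Y_s^{(k)})/\det(Y^{(k)})$, living in the localization of $\field[\group]$ by the top-right principal minors $\det(Y^{(k)})$ for $k=1,\ldots,n-1$.

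From the output of the lemma I would then read off the remaining Bruhat factors. By part~(c) the diagonal of $n(\overline{w})^{-1} L\,\overline{Y}$ consists of signed ratios $\pm c_k\,\det(Y^{(k-1)})/\det(Y^{(k)})$ (with $\det(Y^{(0)}) := \det(\overline{Y})$); matching these against the prescribed diagonal of $\btt(\boldsymbol{z}) = t_1(z_1)\cdots t_l(z_l)$, where each one-dimensional torus $T_i$ acts on the standard basis by an integer character coming from the representation, expresses each $z_j$ as a product of integer powers of the $\det(Y^{(k)})$. The strictly lower triangular remainder of $n(\overline{w})^{-1} L\,\overline{Y}$, scaled by the inverse diagonal, yields $\buu(\bww)$, and unwinding its product expression $u_{\beta_1}(w_1)\cdots u_{\beta_m}(w_m)$ shows each $w_i$ to be polynomial in the entries of $L$ and of $\overline{Y}$; the same holds for $\bxx$, recovered from the lower unipotent matrix $L^{-1}$. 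All $m+m+l$ parameters therefore lie in the localization by the $\det(Y^{(k)})$.

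Finally, I would reduce the set of denominators from $n-1$ down to $l$. For $\SL_{l+1}$ this is automatic: $n-1 = l$ and $\det(\overline{Y}) = 1$, so $e_k := \det(Y^{(k)})$ for $k=1,\ldots,l$ suffices. For $\SP_{2l}$ and $\SO_{2l+1}$, the defining relation $\overline{Y}^{\mathrm{tr}} B\,\overline{Y} = B$, with $B$ the (anti-diagonal) symplectic or orthogonal form, gives $\overline{Y}^{-1} = B^{-1} \overline{Y}^{\mathrm{tr}} B$ in $\field[\group]$. Combined with $\det(\overline{Y}) = 1$ and the cofactor formula for minors of $\overline{Y}^{-1}$, this identifies each ``upper half'' minor $\det(Y^{(k)})$ with $k > l$, up to sign, with a minor of $\overline{Y}$ whose row and column index sets lie in the first $l$ positions, and hence expresses it as a Laurent monomial in $\det(Y^{(1)}),\ldots,\det(Y^{(l)})$. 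Setting $e_k := \det(Y^{(k)})$ for $k=1,\ldots,l$ therefore still suffices.

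The step I expect to be hardest is the last one. At the birational level the reduction is clear, since the torus of $\SP_{2l}$ and $\SO_{2l+1}$ has rank $l$ so only $l$ independent denominators can possibly occur. However, turning this into the explicit statement that each complementary minor is a \emph{Laurent} monomial, rather than merely a rational function, in $\det(Y^{(1)}),\ldots,\det(Y^{(l)})$ requires a careful index-chase using the anti-diagonal shape of $B$ in \cite[Sections~8--9]{Seiss}; everything else is a direct reading-off from Lemma~\ref{lem:sollinearsystem}.
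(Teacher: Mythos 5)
Your use of Lemma~\ref{lem:sollinearsystem} to produce $\overline{Y}=\overline{L}^{-1}n(\overline{w})\overline{B}$ with denominators the corner minors $\det(Y^{(k)})$, and the identification $\overline{L}^{-1}=\buu(\bxx)$, $\overline{B}=\btt(\bzz)\buu(\bww)$, is exactly the first half of the paper's proof. The genuine gap is in your last step, and it is specific to $\SO_{2l+1}$: taking $e_k:=\det(Y^{(k)})$ for $k=1,\dots,l$ does not satisfy the requirement that each $z_i$ be a Laurent monomial $\prod_j e_j^{a_{i,j}}$ with $a_{i,j}\in\Z$. In the representation used, $t_l(x)$ is the image of $\mathrm{diag}(x,x^{-1})$ under the $\SL_2$-map attached to the short root $\alpha_l$, and $\langle \epsilon_l,\alpha_l^\vee\rangle=2$, so the $l$-th diagonal entry of $\btt(\bzz)$ is $z_{l-1}^{-1}z_l^2$. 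Matching diagonals against the ratios of corner minors from Lemma~\ref{lem:sollinearsystem}~(c) therefore only determines $z_l^2$ as a Laurent monomial in the $\det(Y^{(k)})$; the exponent matrix $(\langle\epsilon_k,H_i\rangle)$ has determinant $2$, so $z_l$ itself is \emph{not} in the multiplicative group generated by the corner minors. This is already visible for $\SO_3\cong\mathrm{PGL}_2$: there $z_1^2$ is (up to sign) the inverse of the top-right $2\times 2$ minor, and $z_1$ pulls back along $\SL_2\to\SO_3$ to a quadratic expression in the $\SL_2$-entries whose square is that minor, so $z_1$ is a regular function on the group but not a Laurent monomial in corner minors. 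Hence with your choice of $e_1,\dots,e_l$ the displayed condition on $\boldsymbol{z}$ fails in type $B_l$ (and this condition is not cosmetic: Proposition~\ref{cor:exprforexpandint} and Remark~\ref{rem:computeEXPVINT} later need the $e_i$ to be exactly the preimages under $\psi$ of the exponential solutions of the associated equations). The paper closes precisely this gap differently: it sets $e_i:=\psi^{-1}(\expsolass_i)$, proves $e_i\in\field[\group]$ via the Picard--Vessiot ring argument (\cite[Corollary~1.38]{vanderPutSinger}), obtains the integer-exponent relations $z_i=\prod_j e_j^{a_{i,j}}$ from the supplement of Proposition~\ref{prop:assoc_operator} (equality of the $\Z$-modules of logarithmic derivatives, which for $\SO_{2l+1}$ involves $-2v_l$ rather than $-v_l$), and only then compares diagonals to show that the corner-minor denominators $d_i$ are Laurent monomials in the $e_j$, so that $\bxx,\bww\in\mathcal{M}^{-1}\field[\group]$.

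Two smaller points. First, your Jacobi-identity reduction is essentially right but misstated: for the anti-diagonal form, $\det(Y^{(k)})$ with $k>l$ is a nonzero constant multiple of the \emph{top-right} $(n-k)\times(n-k)$ corner minor $\det(Y^{(n-k)})$, not of a minor whose rows and columns lie in the first $l$ positions; the correct index chase still gives the reduction you want for $\SP_{2l}$ and $\SO_{2l+1}$. Second, even in types $A_l$ and $C_l$, where the cocharacter computation is unimodular and your approach does recover the $z_i$ from corner minors, the matching introduces the sign constants $c_k$, $(-1)^{n-k}$ of Lemma~\ref{lem:sollinearsystem}~(c); to get the exact form $z_i=\prod_j e_j^{a_{i,j}}$ (no constant factor) you must absorb these into the choice of the $e_j$, which is easy but should be said. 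Neither of these is fatal; the $\SO_{2l+1}$ torus coordinate is the real obstruction to your choice of $e_1,\dots,e_l$.
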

\begin{proof}
    Since we used the representations of $\SL_{l+1}$, $\SP_{2l}$ and $\SO_{2l+1}$ presented in Sections~7, 8 and 9 of \cite{Seiss}, the maximal unipotent subgroups $\unipotent^-$ of these groups consist of lower triangular matrices and their maximal tori $\torus$ of diagonal matrices.
    In particular, $\borel^-$ consists of lower triangular matrices, too. The inverses of the representatives $n(\overline{w})$ of the longest Weyl group elements for these groups are matrices of the form
    \[ 
\left( \begin{array}{cccc}
0 & \ldots & 0 & c_1 \\
\vdots & \addots & c_2 & 0 \\
0 & \addots & \addots & \vdots \\
c_n & 0 & \ldots & 0
\end{array}
\right)
\quad \text{with} \ c_i \in \{\pm 1 \}.
    \]
    
    On the one hand, according to \cite[Lemma~4.2]{Seiss_Generic}, there exist $\bxx=(x_1,\dots,x_m)$, $\bzz=(z_1,\dots, z_l)$ and $\bww=(w_1,\dots,w_m)$ in the field of fractions $\field(\group)$ of the coordinate ring $\field[\group]$ such that 
    \[
    \overline{Y} \, = \, \buu(\bxx) \, n(\overline{w}) \, \btt(\boldsymbol{z}) \, \buu(\bww)\,.
    \]
    On the other hand, Lemma~\ref{lem:sollinearsystem} applied with $W = n(\overline{w})^{-1}$ to $Y:=(Y_{i,j})$ yields a unipotent lower triangular matrix $L^{-1}$ and a lower triangular matrix $B$ such that 
    \[
    Y \, = \, L^{-1} \, n(\overline{w}) \, B
    \]
    with the following properties. The diagonal entries of $B$ are as in
    Lemma~\ref{lem:sollinearsystem} \ref{item(c):sollinearsystem} and the remaining entries of $B$ and the entries of $L$ are in the localization of $\field[Y_{i,j}]$ by the multiplicatively closed subset generated by
    \begin{equation}\label{eqn:1appendixD}
    \det(Y^{(n-1)}), \ \det(Y^{(n-2)}), \ \dots , \ \det(Y^{(1)}), \  \det(Y) \in \field [Y_{i,j}]\,. 
    \end{equation}
    Since the evaluation of the elements in \eqref{eqn:1appendixD}
     at the point $n(\overline{w})$ does not vanish, we conclude that these elements are not in the defining ideal $I_{\group}$ of $\group$. Thus, the entries of $L^{-1}$ and $B$ map to the localization of $\field[\group]$ by the multiplicatively closed subset generated by the residue classes
\begin{gather*}
   d_n \, := \, \det(Y^{(n-1)}) + I_{\group}, \quad d_{n-1} \, := \, \det(Y^{(n-2)})+I_{\group}, \quad \ldots , \\ d_2 \, := \, \det(Y^{(1)})+I_{\group}, \quad d_1 \, := \, \det(Y) +I_{\group}\,.
\end{gather*}
     We denote the respective image matrices by $\overline{L}^{-1}$ and $\overline{B}$. We obtain\
    \[
    \buu(\bxx) \, n(\overline{w}) \, \btt(\boldsymbol{z}) \, \buu(\bww) \, = \, \overline{Y} \, = \, \overline{L}^{-1} \, n(\overline{w}) \, \overline{B}\,,
    \]
    which is equivalent to
    \[
    \overline{L}\buu(\bxx) \, = \, n(\overline{w}) \, \overline{B} \, (\btt(\boldsymbol{z}) \, \buu(\bww))^{-1} \, n(\overline{w})^{-1}\,.
    \] 
    The left hand side of the last equality is a unipotent lower triangular matrix and since conjugation by $n(w)$ maps a lower triangular matrix to an upper triangular one, the right hand side is an upper triangular matrix.
    This implies that $\overline{L}^{-1} = \buu(\bxx)$ and that $\overline{B}=\btt(\boldsymbol{z}) \, \buu(\bww)$.

    According to \cite[Theorem~1.28]{vanderPutSinger} (note that the torsor here is trivial) there exists a differential $\field\langle \bss(\bvv)\rangle$-isomorphism of Picard-Vessiot rings
    \[
    \psi\colon \field\langle \bss(\bvv)\rangle \otimes_{\field} \field[\group] \to \field\langle \bss(\bvv)\rangle[\fm], \quad  1 \otimes \overline{Y}_{i,j} \mapsto \fm_{i,j} ,
    \]  
    which extends to a differential $\field\langle \bss(\bvv)\rangle$-isomorphism of Picard-Vessiot fields.
    The exponential solutions $\expsolass_1,\dots, \expsolass_l$ of the associated equations (cf.\ Proposition~\ref{prop:assoc_operator}) trivially satisfy linear differential equations over $\field\langle \bss(\bvv)\rangle$ and so,
    according to \cite[Corollary~1.38]{vanderPutSinger}, they lie in the Picard-Vessiot ring $\field\langle \bss(\bvv) \rangle [\fm]$.
    By Proposition~\ref{prop:assoc_operator} 
    there exist $b_{i,j} \in \Z$ such that 
    \[
     \expsolass_i \, = \, \prod_{j=1}^l \exp_j^{b_{i,j}}\,.
    \]
Moreover, since $\psi^{-1}$ sends $\exp_j$ to $z_j \in \field(\group)$, we conclude that $\psi^{-1}(\expsolass_i) =: e_i$ lies in $\field(\group)$.
Because $\expsolass_i$ lies in the Picard-Vessiot ring $\field\langle \bss(\bvv) \rangle[\fm]$,
we finally obtain that $e_i \in \field[\group]$.
Since by Proposition~\ref{prop:assoc_operator} there exist $a_{i,j} \in \Z$ such that  
\[
\exp_i \, = \, \prod_{j=1}^l (\expsolass_j)^{a_{i,j}}\,,
\]
we obtain by applying $\psi^{-1}$ that  
\[
z_i \, = \, \prod_{j=1}^l e_j^{a_{i,j}}\,.
\]
Denote by $\check{z}_1,\dots,\check{z}_n$ the diagonal entries of $t(\bzz)$ and note that these are products of $e_1,\dots,e_l$ with exponents in $\Z$ and a non-zero constant.
Comparing the diagonal entries of the left and right hand side of 
$\overline{B}=\btt(\boldsymbol{z}) \buu(\bww)$ we obtain
\begin{gather*}
    \diag( \check{z}_1,\check{z}_2,\dots,\check{z}_{n-1},\check{z}_n) =
    \diag ( \frac{d_1}{d_2},\frac{d_2}{d_3}, \dots,\frac{d_{n-1}}{d_n} ,d_n)\,,
\end{gather*}
which is equivalent to 
    \begin{gather*}
   d_n \, = \, \check{z}_n, \frac{d_{n-1}}{d_n} =  \check{z}_{n-1}, \dots, \frac{d_2}{d_3}= \check{z}_2 , \frac{d_1}{d_2} =  \check{z}_1 .
    \end{gather*} 
We conclude that $d_i$ is the product of $e_1,\dots,e_l$ with exponents in $\Z$ and a non-zero constant. Combining this with Lemma~\ref{lem:sollinearsystem} it follows that the entries of $\overline{L}=\buu(\bxx)$ and $\overline{B}=\btt(\bzz) \buu(\bww)$ are in the localization of $\field[\group]$ by the multiplicatively closed subset $\mathcal{M}$ generated by $e_1,\dots,e_l$.
Since the entries of $\btt(\bzz)^{-1}$ are also contained in $\mathcal{M}^{-1} \field[\group]$, the same is true for $\buu(\bww)$. Since producing the standard factorization of $\buu(\bxx)$ and $\buu(\bww)$ into root group elements only involves operations in $\mathcal{M}^{-1} \field[\group]$, we conclude that $\bxx$ and $\bww$ are in $\mathcal{M}^{-1} \field[\group]$
\end{proof}

\begin{proposition} \label{prop:BruhatParametersofXforG2}
    Let $\group = \Gzwei \subset \SO_{7}$. In the notation of
    Proposition~\ref{prop:BruhatParametersofX} there exist $e_1,e_2 \in \field[\group]$ and  
$$\boldsymbol{z} = (e_1,e_2^{-1})$$   
and $\bxx:=(x_1,\dots,x_6)$ and $\bww:=(w_1,\dots,w_6)$ in the localization $\mathcal{M}^{-1} \field[\group]$ of $\field[\group]$ by the multiplicatively closed subset  $\mathcal{M}$ generated by $e_1$ and $e_2$ such that 
    \[
    \overline{Y}  \, = \, \buu(\bxx) \, n(\overline{w}) \, \btt(\boldsymbol{z}) \, \buu(\bww)  
    \]
    is the Bruhat decomposition of $\overline{Y}:=(\overline{Y}_{i,j})$. 
\end{proposition}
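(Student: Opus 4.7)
The proof will mirror that of Proposition~\ref{prop:BruhatParametersofX}, adapted to the embedding $\Gzwei \subset \SO_7$ fixed in~\cite[Section~11]{Seiss}. The key observation is that this embedding inherits from $\SO_7$ all the structural properties required by Lemma~\ref{lem:sollinearsystem}: $\unipotent^-$ is lower unitriangular, $\torus$ is diagonal, $\borel^-$ is lower triangular, and $n(\overline{w})^{-1}$ is anti-diagonal with entries in $\{\pm 1\}$.

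Applying Lemma~\ref{lem:sollinearsystem} with $n=7$ to the matrix $Y=(Y_{i,j})$ of indeterminates yields a factorization $Y = L^{-1} \, n(\overline{w}) \, B$ with $L^{-1}$ unipotent lower triangular and $B$ lower triangular, whose entries lie in the localization of $\field[Y_{i,j}]$ by $\det(Y^{(k)})$ for $k=1,\ldots,6$. These determinants do not vanish at $n(\overline{w})$, hence their residue classes $d_1, \ldots, d_7$ in $\field[\Gzwei]$ are non-zero and one may localize to obtain $\overline{Y} = \overline{L}^{-1} \, n(\overline{w}) \, \overline{B}$. Matching this against the Bruhat decomposition $\overline{Y} = \buu(\bxx) \, n(\overline{w}) \, \btt(\bzz) \, \buu(\bww)$ guaranteed by~\cite[Lemma~4.2]{Seiss_Generic}, via the same upper/lower triangular separation argument as in the proof of Proposition~\ref{prop:BruhatParametersofX}, forces $\overline{L}^{-1} = \buu(\bxx)$ and $\overline{B} = \btt(\bzz) \, \buu(\bww)$.

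For the identification of $e_1, e_2 \in \field[\Gzwei]$ with $\bzz = (e_1, e_2^{-1})$, I would invoke Proposition~\ref{prop:assoc_operator}\,(iv): the exponentials $\expsolass_1 = \exp(\smallint v_1)$ and $\expsolass_2 = \exp(\smallint v_2)$ of the two associated equations satisfy linear ODEs over $\field\langle\bss(\bvv)\rangle$, hence lie in the Picard-Vessiot ring $\field\langle\bss(\bvv)\rangle[\fm]$ by~\cite[Corollary~1.38]{vanderPutSinger}, and their preimages under the differential isomorphism $\psi$ of~\cite[Theorem~1.28]{vanderPutSinger} are regular on $\Gzwei$. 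Using that $\{g_1(\bvv),g_2(\bvv)\}$ and $\{v_1, v_2\}$ span the same $\Z$-module (supplement of Proposition~\ref{prop:assoc_operator}), the torus coordinates $\exp_1$ and $\exp_2$ are expressible as monomials in $\expsolass_1, \expsolass_2$ with integer exponents; the particular normalization of $t_1(z), t_2(z)$ in~\cite[Section~11]{Seiss} then yields $z_1 = e_1$ and $z_2 = e_2^{-1}$ for suitable regular functions $e_1, e_2$ built from $\psi^{-1}(\expsolass_1), \psi^{-1}(\expsolass_2)$. From Lemma~\ref{lem:sollinearsystem}\,(c) each $d_k$ is itself a monomial in $e_1, e_2$ with integer exponents, so the denominators appearing in the entries of $\overline{L}^{-1} = \buu(\bxx)$ and $\overline{B} = \btt(\bzz)\buu(\bww)$ are products of powers of $e_1, e_2$, whence $\bxx, \bww \in \mathcal{M}^{-1}\field[\Gzwei]$.

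The main obstacle, and the only point at which $\Gzwei$ diverges from the uniform treatment of the classical groups in Proposition~\ref{prop:BruhatParametersofX}, is this last identification: whereas there one obtains $\bzz = (e_1, \ldots, e_l)$ directly from setting $e_i := \psi^{-1}(\expsolass_i)$, for $\Gzwei$ the asymmetric normalization between $t_1(z)$ and $t_2(z)$ in~\cite[Section~11]{Seiss} forces an inverse on the second coordinate. Pinning this down is a bookkeeping exercise involving the explicit Chevalley basis, the representatives $n(w_1), n(w_2)$ of the Weyl group generators, and the seven characters of $\torus$ appearing on the diagonal of the $7$-dimensional representation, but it introduces no new ideas beyond those used for the classical cases.
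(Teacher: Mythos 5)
There is a genuine gap, and it sits exactly where you wave it away as "a bookkeeping exercise": your whole argument is a transplant of the proof of Proposition~\ref{prop:BruhatParametersofX}, and it stands or falls with the claim that in the representation $\Gzwei\subset\SO_7$ fixed in \cite[Section~11]{Seiss} the representative $n(\overline{w})$ is anti-diagonal with entries $\pm 1$, so that Lemma~\ref{lem:sollinearsystem} applies with $W=n(\overline{w})^{-1}$. This claim is false for the basis the paper actually uses. Computing $n(\overline{w})=(n(w_2)n(w_1))^3$ from the explicit matrices given in the proof of Proposition~\ref{prop:factorizationG2} yields the signed permutation matrix sending $e_1\mapsto -e_1$, $e_2\mapsto -e_5$, $e_3\mapsto -e_6$, $e_4\mapsto -e_7$, $e_5\mapsto -e_2$, $e_6\mapsto -e_3$, $e_7\mapsto -e_4$, i.e.\ the involution $(2\,5)(3\,6)(4\,7)$ fixing $1$ -- not the order-reversing permutation. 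Consequently conjugation by $n(\overline{w})$ does not exchange lower and upper triangular matrices, the matching $\overline{L}^{-1}=\buu(\bxx)$, $\overline{B}=\btt(\bzz)\,\buu(\bww)$ breaks down, and the nonvanishing of the minors $\det(Y^{(k)})$ modulo $I_{\Gzwei}$ can no longer be certified by evaluation at $n(\overline{w})$ (already the top-right $6\times 6$ minor vanishes there). Re-ordering the basis to force anti-diagonality would require re-checking that $\unipotent^-$, $\torus$, $\borel^-$ of $\Gzwei$ stay triangular/diagonal and that the relevant minors are not identically zero on $\Gzwei$; none of this is addressed, and it is precisely why the paper treats $\Gzwei$ separately rather than folding it into Proposition~\ref{prop:BruhatParametersofX}.

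The paper's own proof takes a different route which your proposal does not contain. After establishing $z_{1,2}=z_{2,1}=1$ (so $\bzz=(e_1,e_2^{-1})$, the asymmetry you did identify correctly via $\bexp=(e^{\int v_1},e^{\int -v_2})$ versus $\expsolass_1,\expsolass_2$), it proves $\bxx,\bww\in\mathcal{M}^{-1}\field[\Gzwei]$ by exhibiting explicit identities expressing each of $v_1,v_2,f_3,\dots,f_6,\intn_1,\dots,\intn_6$, multiplied by a suitable monomial $(\expsolass_1)^{c_1}(\expsolass_2)^{c_2}$, as a polynomial in the entries $\fm_{i,j}$, hence as an element of the Picard-Vessiot ring; pulling back along $\psi^{-1}$ and using that the reduced denominator must divide $z_{1,1}^{c_1}z_{2,2}^{c_2}$ together with the irreducibility of $\expsolass_1,\expsolass_2$ (hence of $z_{1,1},z_{2,2}$) forces every denominator to be a monomial in $e_1,e_2$. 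Your proposal replaces this clearing-of-denominators and divisibility argument by the minors-chain argument of Lemma~\ref{lem:sollinearsystem}, which is unavailable here; as written, the proof does not go through.
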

\begin{proof}
In the proof of Proposition~\ref{prop:factorizationG2} we explained how one can compute the explicit Bruhat decomposition of the fundamental matrix 
 \begin{equation*} 
 \fm \, = \, \buu(\bvv, \bff) \, n(\overline{w}) \, \btt(\bexp) \, \buu(\bint)
 \end{equation*}
 for $A_{\Gzwei}(\bss(\bvv))$. Carrying out this construction one finds that 
 \[
 \bexp =( \exp_1, \exp_2)=( e^{\int v_1}, e^{\int -v_2}).
 \] 
Now, by \cite[Theorem~1.28]{vanderPutSinger} the map 
\[ 
\psi\colon  \field \langle \bss(\bvv)\rangle \otimes \field[\Gzwei] \to \field \langle \bss(\bvv)\rangle [\fm], \ \overline{Y}_{i,j} \mapsto \fm_{i,j}
\]
is a differential $\field \langle \bss(\bvv)\rangle$-isomorphism of Picard-Vessiot rings which extends to a differential $\field \langle \bss(\bvv)\rangle$-isomorphism 
\[ 
\psi\colon  \field \langle \bss(\bvv)\rangle \otimes \field(\Gzwei) \rightarrow \generalext, \ \overline{Y}_{i,j} \mapsto \fm_{i,j}
\]
of Picard-Vessiot fields. According to \cite[Lemma~4.2]{Seiss_Generic}, there exist 
\begin{equation} \label{eqn:G2computation}
\bxx=(\frac{x_{1,1}}{x_{1,2}},\dots,\frac{x_{6,1}}{x_{6,2}}), \quad \bzz=(\frac{z_{1,1}}{z_{1,2}}, \frac{z_{2,1}}{z_{2,2}}) \quad \text{and} \quad \bww=(\frac{w_{1,1}}{w_{1,2}},\dots,\frac{w_{6,1}}{w_{6,2}})
\end{equation}
in the field of fractions $\field(\group)$ of the coordinate ring $\field[\group]$ such that 
\[
\overline{Y} \, = \, \buu(\bxx) \, n(\overline{w}) \, \btt(\boldsymbol{z}) \, \buu(\bww) \,,
\]
where we assume that these fractions are completely reduced.
First, we show that $\bzz=(e_1,e_2^{-1})$ with $e_1$, $e_2 \in C[\Gzwei]$. We apply $\psi$ to $\bzz$ and obtain with
Proposition~\ref{prop:assoc_operator} \ref{prop:assoc_operator_g2} that
\[
\psi(\frac{z_{1,1}}{z_{1,2}}) \, = \, \frac{\psi(z_{1,1})}{\psi(z_{1,2})} \, = \, \exp_1 \, = \, \frac{\expsolass_1}{1} \ \text{and} \ \psi(\frac{z_{2,1}}{z_{2,2}}) \, = \, \frac{\psi(z_{2,1})}{\psi(z_{2,2})} \, = \, \exp_2 \, = \, \frac{1}{\expsolass_2}.
\]
Observe that since $z_{i,1}/z_{i,2}$ with $i=1,2$ is completely reduced, so is $\psi(z_{i,1})/\psi(z_{i,2})$. Moreover, since $\psi(z_{i,1})$, $\psi(z_{i,2})$ and $\expsolass_i$ are elements of the Picard-Vessiot ring $\field \langle \bss(\bvv)\rangle [\fm]$ (the  $\expsolass_i$  because of \cite[Corollary 1.38]{vanderPutSinger}), we conclude that $z_{1,2} = z_{2,1} = 1$ without loss of generality.
Next we are going to show that the fractions $\bxx$ and $\bww$ lie in the localization of $\field[\Gzwei]$ by the multiplicatively closed subset generated by $z_{1,1}$ and $z_{2,2}$. A computation shows that
\[
\begin{array}{rcl}
    v_1 \expsolass_1 &=& -\fm_{7,5}\,,\\[0.2em]
    v_2 \expsolass_2 &=& \expsolass_1 \, \fm_{6,4} + \fm_{2,4} \, \fm_{6,5}\,,\\[0.2em]
    f_3 \expsolass_1 &=& \fm_{6,5}\,,\\[0.2em]
    f_4 (\expsolass_1)^2 &=& -\frac{\sqrt{2}}{2} \, \expsolass_1 \, \fm_{1,5} + \fm_{6,5} \, \fm_{7,5}\,,\\[0.2em]
    f_5 (\expsolass_1)^3 &=& -(\expsolass_1)^2 \, \fm_{3,5} \, +\\[0.2em]
    & & \fm_{7,5} \, (2 \, f_4 (\expsolass_1)^2 + v_1 \expsolass_1 \, f_3 \expsolass_1)\,,\\[0.2em]
    f_6 (\expsolass_1)^4 \expsolass_2 &=& -(\expsolass_1)^3 \expsolass_2 \, \fm_{4,5} - \expsolass_1 \, v_2 \expsolass_2 \, f_5 (\expsolass_1)^3 \, -\\[0.2em]
    & & 2 \, \expsolass_1 \expsolass_2 \, f_3 \expsolass_1 \, f_4 (\expsolass_1)^2\,,\\[0.2em]
    \intn_1 \expsolass_1 &=& \fm_{2,4}\,,\\[0.2em]
    \intn_2 \expsolass_2 &=& -\expsolass_1 \, \fm_{7,3} + v_1 \expsolass_1 \, \fm_{2,3}\,,\\[0.2em]
    \intn_3 \expsolass_1 \expsolass_2 &=& -\expsolass_2 \, \fm_{2,3} - \fm_{2,4} \, ( \expsolass_1 \, \fm_{7,3} - v_1 \expsolass_1 \, \fm_{2,3} )\,,\\[0.2em]
    \intn_4 \expsolass_2 &=& -\expsolass_1 \, \fm_{7,6} - \fm_{2,6} \, \fm_{7,5}\,,\\[0.2em]
    \intn_5 \expsolass_1 \expsolass_2 &=& \expsolass_2 \, \fm_{2,6} + \fm_{2,4} \, ( \expsolass_1 \, \fm_{7,6} + \fm_{2,6} \, \fm_{7,5} )\,,\\[0.2em]
    \intn_6 (\expsolass_1)^3 (\expsolass_2)^2 &=& (\expsolass_1)^2 \, (\expsolass_2)^2 \, \fm_{2,7} \, +\\[0.2em]
    & & (\expsolass_1)^2 \, (\intn_1 \expsolass_1 \, \intn_2 \expsolass_2 \, -\\[0.2em]
    & & \intn_3 \expsolass_1 \expsolass_2) \, \intn_4 \expsolass_2 \, +\\[0.2em]
    & & \intn_1 \expsolass_1 \, (\intn_3 \expsolass_1 \expsolass_2)^2\,.
\end{array}
\]
Since $\fm_{i,j}$ and $\expsolass_1$ and $\expsolass_2$ are in the Picard-Vessiot ring $\field \langle \bss(\bvv)\rangle [\fm]$, also  the left hand sides of these equations are contained in $\field \langle \bss(\bvv)\rangle [\fm]$. Let $a$ be one of the elements  $v_1,v_2$, $f_3,f_4,f_5,f_6$ and $\intn_1,\dots,\intn_6$ and let $\frac{b_1}{b_2}$ be its counterpart among the elements 
in \eqref{eqn:G2computation}, that is the preimage of $a$ under $\psi$.
Let $(\expsolass_1)^{c_1}(\expsolass_2)^{c_2}$ with $c_1,c_2 \in \Z_{\geq 0}$ be the respective factor from above such that $a(\expsolass_1)^{c_1}(\expsolass_2)^{c_2}$ is contained in $\field \langle \bss(\bvv)\rangle [\fm]$. Applying $\psi^{-1}$ we obtain  
\[
\psi^{-1}(a (\expsolass_1)^{c_1}(\expsolass_2)^{c_2}) \, = \, \frac{b_{1}}{b_{2}} z_{1,1}^{c_1} z_{2,2}^{c_2}\in \field \langle \bss(\bvv) \rangle [\Gzwei]\,.
\]
Since $b_1$ and $b_2$ have no common devisor and the right hand side lies in the Picard-Vessiot ring, we conclude that $b_2$ divides $z_{1,1}^{c_1} z_{2,2}^{c_2}$. Since $\expsolass_i$ are irreducible in $\field \langle \bss(\bvv)\rangle [\fm]$, it follows that $z_{1,1}$ and $z_{2,2}$ are irreducible in $\field[\Gzwei]$ and so $b$ lies in the localization of $\field[\Gzwei]$ by the multiplicatively closed subset generated by $z_{1,1}$ and $z_{2,2}$. 
\end{proof}

\section{Computation of a Primitive Element for $\difffieldalg$}
\label{appendix:primitveElement}

In the last part of the appendix we explain how one can compute a primitive element for the algebraic extension $\difffieldalg$ of $\difffield$ corresponding under the Galois correspondence to the connected component of the differential Galois group of the completely reducible part (cf.\ Definition~\ref{def:difffieldalg}). 

\begin{proposition}\label{prop:primelemappendix}
We can compute a primitive element
\[
p \in \left( \difffield[X,\det(X)^{-1}]/Q \right)^{\cocomp{\Hred}}
\]
for the algebraic extension $\difffieldalg$ of $\difffield$.
\end{proposition}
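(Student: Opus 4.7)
The plan is to use the reductivity of $\cocomp{\Hred}$ (the connected component of the reductive group $\Hred \cong \Lred$, cf.\ Remark~\ref{rem:computationQ}) to compute, via invariant theory, enough $\cocomp{\Hred}$-fixed elements in the Picard-Vessiot ring $\difffield[X,\det(X)^{-1}]/Q$ to generate $\difffieldalg$ over $\difffield$, and then to extract a primitive element as a suitable linear combination certified via its minimal polynomial.

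First, I would compute a generating set of the defining ideal $I_{\Hred} \unlhd \field[\GL_{n_{I''}}]$ (e.g.\ via Algorithm~\ref{alg:reductivepart}) together with its primary decomposition: the primary component containing the identity matrix $I_n$ defines $\cocomp{\Hred}$, and the number $\delta$ of primary components equals $[\Hred(\field) : \cocomp{\Hred}(\field)] = [\difffieldalg : \difffield]$; picking an $\field$-rational point $h_i$ in each component yields coset representatives $h_1 = I_n, h_2, \ldots, h_\delta$ of $\Hred / \cocomp{\Hred}$. Second, I would apply Derksen's algorithm to compute a finite generating set $g_1, \ldots, g_s \in \field[\GL_{n_{I''}}]^{\cocomp{\Hred}}$ of the ring of invariants under the right-translation action. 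Because $\cocomp{\Hred}$ is linearly reductive and stabilizes the differential ideal $Q$, the exactness of the invariant functor yields that the residue classes $\overline{g}_j := g_j + Q$ generate $(\difffield[X,\det(X)^{-1}]/Q)^{\cocomp{\Hred}}$ as a $\difffield$-algebra. Since every element of $\difffieldalg$ has finite $\Hred$-orbit and therefore lies in the Picard-Vessiot ring by \cite[Corollary~1.38]{vanderPutSinger}, this fixed subring coincides with $\difffieldalg$, so the $\overline{g}_j$ generate $\difffieldalg$ over $\difffield$ as a $\difffield$-algebra and in particular as a field.

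To extract a primitive element, I would make an ansatz $p = c_1 \overline{g}_1 + \dots + c_s \overline{g}_s$ with indeterminate constants $c_i \in \field$ and form the polynomial
\[
P(T;\bcc) \, := \, \prod_{i=1}^{\delta} \bigl( T - h_i \cdot p \bigr) \in \bigl(\difffield[X,\det(X)^{-1}]/Q\bigr)[T] \, ,
\]
where $h_i \cdot p$ denotes the image of $p$ under the substitution $X \mapsto X h_i$ reduced modulo $Q$. The coefficients of $P(T;\bcc)$ are $\Hred$-invariant and therefore lie in $\difffield$ by the Galois correspondence for $\extfieldred / \difffield$, and $p$ is a primitive element for $\difffieldalg / \difffield$ iff $P(T;\bcc)$ is separable iff $\mathrm{disc}_T \bigl( P(T;\bcc) \bigr) \neq 0$. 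A valid $\field$-specialization of the $c_i$ exists by the theorem of the primitive element and can be found by evaluating the discriminant at finitely many points. The main obstacle is the call to Derksen's algorithm, whose practical complexity is very high; an alternative is to bypass it by making a polynomial ansatz $p = \sum_{|I| \leq d} c_I \overline{X}^I$ in monomials of total degree at most $d$, imposing $\cocomp{\Hred}$-invariance directly through the linear conditions $(h \cdot p - p) \in Q$ for a generating set of $\cocomp{\Hred}(\field)$, and incrementing $d$ until the separability test above succeeds (which must terminate by the existence of a primitive element).
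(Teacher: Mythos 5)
Your proposal is correct and follows essentially the same route as the paper: reductivity of $\cocomp{\Hred}$ yields surjectivity of the invariant ring onto the $\cocomp{\Hred}$-fixed subring of the Picard-Vessiot ring, which is identified with $\difffieldalg$ via \cite[Corollary~1.38]{vanderPutSinger}, and a generic linear combination of the computed invariant generators is then certified primitive -- the paper by checking that its minimal polynomial has degree $|\Hred/\cocomp{\Hred}|$, you by the equivalent discriminant test on $\prod_i (T - h_i \cdot p)$ over coset representatives. The only point to tighten is your call to Derksen's algorithm, which as cited applies to linear actions on polynomial rings rather than to the localization $\field[\GL_{n_{I''}}]$, so one should either embed $\GL_{n_{I''}}$ equivariantly into a representation or, as the paper does, compute invariants of the polynomial ring $\field[\mathcal{X}]$ on the ambient matrix space and use the reductive restriction surjection onto the invariants of the quotient.
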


\begin{proof}
    Since
    \[
    \extfieldred^{\cocomp{\Hred}(\field)}  \, = \, \Frac(\difffield[X,\det(X)^{-1}]/Q)^{\cocomp{\Hred}(\field)}
    \]
    is an algebraic extension of $\difffield$ and every element that is algebraic over $\difffield$ lies in the Picard-Vessiot ring $\difffield[X,\det(X)^{-1}]/Q$, we have that 
    \[
    \Frac(\difffield[X,\det(X)^{-1}]/Q)^{\cocomp{\Hred}(\field)} \, = \, (\difffield[X,\det(X)^{-1}]/Q)^{\cocomp{\Hred}(\field)}.
    \]
    Indeed, if an element $a \in \Frac(\difffield[X,\det(X)^{-1}]/Q)$ is algebraic over $\difffield$, then its orbit is finite and so the $\field$-vector space spanned by the elements in the orbit is finite dimensional.
    It follows from \cite[Corollary~1.38]{vanderPutSinger} that $a$ lies in $\difffield[X,\det(X)^{-1}]/Q$. Thus we have to compute generators of the invariant ring 
    \[
    (\difffield[X,\det(X)^{-1}]/Q)^{\cocomp{\Hred}(\field)}.
    \]
    Consider the $n_{I''}^2$-dimensional vector space 
    \[
    \mathcal{V} \, = \, \overline{\difffield}^{n_{I''} \times n_{I''}} = \mathbb{A}^{n_{I''}^2}(\overline{\difffield})
    \]
    over the algebraic closure $\overline{\difffield}$ of $\difffield$ and let
    \[
    \cocomp{\Hred}(\overline{\difffield}) \to \GL(\mathcal{V}), \quad g \mapsto (v \mapsto vg)
    \]
    be the respective rational representation of $\cocomp{\Hred}(\overline{\difffield})$. 
    Let $(Q)$ be the ideal in $\overline{\difffield}[X,\det(X)^{-1}]$ generated by $Q \lhd \difffield[X,\det(X)^{-1}]$ and consider the variety
    \[
    \mathcal{U} \, = \, \{ v \in \mathcal{V} \mid f(v)=0 \ \text{for all} \, f \in (Q) \}\,.
    \]
    The stability of $Q$ under $\Hred(\field)$ implies stability of 
    $Q$ under $\Hred(\overline{\difffield})$ and so $(Q)$ is stable under $\cocomp{\Hred}(\overline{\difffield})$. Thus $\mathcal{U}$ is an affine variety stable under $\cocomp{\Hred}(\overline{\difffield})$. We obtain an $\cocomp{\Hred}(\overline{\difffield})$-equivariant embedding 
    \[
    i\colon \mathcal{U} \hookrightarrow \mathcal{V} 
    \]
    and so an $\cocomp{\Hred}(\overline{\difffield})$-equivariant surjective ring homomorphism
    \[
    i^*\colon \overline{\difffield}[\mathcal{X}] \to \overline{\difffield}[X,\det(X)^{-1}]/(Q), \ 
    \mathcal{X}_{i,j} \mapsto X_{i,j} + (Q) , 
    \]
    where $\overline{\difffield}[\mathcal{X} ]$ is the coordinate ring of $\mathcal{V}$ with $\mathcal{X}$ an $(n_{I''}\times n_{I''})$ matrix whose entries $\mathcal{X}_{i,j}$ are indeterminates over $\overline{\difffield}$.
    According to \cite[Corollary~2.2.9]{DerksenKemper} (recall that $\cocomp{\Hred} (\overline{\difffield})$ is reductive) we have 
    \[
    i^*(\overline{\difffield}[\mathcal{X}]^{\cocomp{\Hred}(\overline{\difffield})}) \, = \, (\overline{\difffield}[X,\det(X)^{-1}]/(Q))^{\cocomp{\Hred}(\overline{\difffield})} .
    \]
    Since $\overline{\difffield}[\cocomp{\Hred}]\cong \overline{\difffield} \otimes_{\field} \field[\cocomp{\Hred}]$, we conclude that 
    \[
    \overline{\difffield}[\mathcal{X}]^{\cocomp{\Hred}(\overline{\difffield})} \, = \, \overline{\difffield}[\mathcal{X}]^{\cocomp{\Hred}(\field)}  
    \]
    and that 
    \[
    (\overline{\difffield}[X,\det(X)^{-1}]/(Q))^{\cocomp{\Hred}(\overline{\difffield})} \, = \, (\overline{\difffield}[X,\det(X)^{-1}]/(Q))^{\cocomp{\Hred}(\field)}.
    \]
    Thus, we obtain 
    \[
    i^*(\overline{\difffield}[\mathcal{X}]^{\cocomp{\Hred}(\field)}) \, = \, (\overline{\difffield}[X,\det(X)^{-1}]/(Q))^{\cocomp{\Hred}(\field)} .
    \]
    The multiplication maps
    \begin{gather*}
    \mu_1\colon \overline{\difffield} \otimes_{\difffield} \difffield[\mathcal{X}] \to \overline{\difffield}[\mathcal{X}], \ f \otimes h_1 \mapsto f \, h_1 
    \end{gather*}
    and 
    \begin{gather*}
    \mu_2\colon \overline{\difffield} \otimes_{\difffield} \difffield[X,\det(X)^{-1}]/Q \to \overline{\difffield}[X,\det(X)^{-1}]/(Q), \ f \otimes h_2 \mapsto f \, h_2
    \end{gather*}
    are $\cocomp{\Hred}(\field)$-equivariant isomorphisms. Hence, we have 
    \begin{gather*}
    \overline{\difffield} \otimes_{\difffield} \difffield[\mathcal{X}]^{\cocomp{\Hred}(\field)} \, \cong \, (\overline{\difffield} \otimes_{\difffield} \difffield[\mathcal{X}])^{\cocomp{\Hred}(\field)} \, \underset{{\rm via} \, \mu_1}\cong \, \overline{\difffield}[\mathcal{X}]^{\cocomp{\Hred}(\field)}
    \end{gather*}
    and 
    \begin{gather*}
    \overline{\difffield} \otimes_{\difffield} (\difffield[X,\det(X)^{-1}]/Q)^{\cocomp{\Hred}(\field)} \, \cong \, (\overline{\difffield} \otimes_{\difffield} \difffield[X,\det(X)^{-1}]/Q)^{\cocomp{\Hred}(\field)} \\ 
    \underset{{\rm via} \, \mu_2}\cong  \, (\overline{\difffield}[X,\det(X)^{-1}]/(Q))^{\cocomp{\Hred}(\field)}.
    \end{gather*}
    Combining this with $i^*$ we obtain a surjective ring homomorphism
    \[
    \begin{array}{rcl}
     \overline{\difffield} \otimes_{\difffield} \difffield[\mathcal{X}]^{\cocomp{\Hred}(\field)} \! & \! \to \! & \! \overline{\difffield} \otimes_{\difffield} (\difffield[X,\det(X)^{-1}]/Q)^{\cocomp{\Hred}(\field)},\\[0.2em]
    f \otimes h \! & \! \mapsto \! & \! f \otimes \widehat{\rho}(h) \, ,
    \end{array}
    \]
    where
    \[
    \widehat{\rho}\colon \difffield[\mathcal{X}] \to \difffield[X,\det(X)^{-1}]/Q, \  \mathcal{X}_{i,j} \mapsto X_{i,j} + Q.
    \]
    We conclude that $\widehat{\rho}$ restricts to a surjective ring homomorphism 
    \[
    \rho\colon \difffield[\mathcal{X}]^{\cocomp{\Hred}(\field)} \to (\difffield[X,\det(X)^{-1}]/Q)^{\cocomp{\Hred}(\field)}, \  \mathcal{X}_{i,j} \mapsto X_{i,j} + Q.
    \]
    Moreover, we have that 
    \[
    \difffield[\mathcal{X}]^{\cocomp{\Hred}(\field)} \cong \difffield \otimes_{\field} \field[\mathcal{X}]^{\cocomp{\Hred}(\field)}
    \]
    and so a generating set of $\field[\mathcal{X}]^{\cocomp{\Hred}(\field)}$ over $\field$ maps to a generating set of 
    $$ (\difffield[X,\det(X)^{-1}]/Q)^{\cocomp{\Hred}(\field)}$$ 
    over $\difffield$.
    We are going to compute a generating set of $\field[\mathcal{X}]^{\cocomp{\Hred}(\field)}$ using \cite[Algorithm~4.1.9]{DerksenKemper}.
    To this end we need generators of the defining ideal of $\cocomp{\Hred}$. We compute with Gr\"obner basis methods a minimal primary decomposition of the defining ideal of $\Hred$ and determine by evaluating the defining polynomials of the components at the identity matrix the defining ideal of $\cocomp{\Hred}$. We can compute now with \cite[Algorithm~4.1.9]{DerksenKemper} generators of $\field[\mathcal{X}]^{\cocomp{\Hred}(\field)}$ and obtain generators of $(\difffield[X,\det(X)^{-1}]/Q)^{\cocomp{\Hred}(\field)}$ by applying $\rho$ to them.
    Now one needs to find an $\difffield$-linear combination 
    $$p \in \difffield[X,\det(X)^{-1}]/Q$$ 
    of the  generators of $(\difffield[X,\det(X)^{-1}]/Q)^{\cocomp{\Hred}(\field)}$ such that its minimal polynomial over $\difffield$ has degree equal to the order of $\Hred/\cocomp{\Hred}$ (the number of components in the minimal primary decomposition).
    Note that a generic choice of such a linear combination has the desired property.
    Then $p$ is a primitive element generating the algebraic extension $\extfieldred^{\cocomp{\Hred}} = \difffieldalg$ of $\difffield$.  
\end{proof}

\newpage
\section*{Notation} 
\begin{center}
\begin{longtable}{lll}
  $\field$   & a computable alg.~closed field of char.~zero & \pageref{sec:classicalgroups}  \\
  $\group$   & a classical group  & \pageref{sec:classicalgroups}\\
  $\difffield$   & the rational function field $\field(z)$ with derivation $\frac{d}{dz}$  & \pageref{sec:normalform}\\
  $\sigma_0$ & specialization of $\bss(\bvv)$ to $\bsq \in \difffield^l$ & \pageref{eqn:defsigma0} \\
  $A_{\group}(\bsq)$ & specialized normal form matrix& \pageref{eqn:defspecnormalform}\\
  $H$ & the differential Galois group of $\overline{\generalext}$ over $\difffield$ for $A_{\group}(\bsq)$ & \pageref{prop:parabolicbound} \\
  $l$ & the Lie rank of the classical group $\group$ & \pageref{sec:classicalgroups}\\
  $I$ & the set of indices $\{1, \dots, l\}$ & \pageref{def:partitions}\\
  $J$ & a subset of indices $J \subseteq I$ & \pageref{def:partitions} \\
  $\parabolic_J$ & the standard parabolic subgroup corresponding to $J$ & \pageref{eqn:standardparabolic} \\
  $\unirad(\widetilde{G})$ & the unipotent radical of a linear algebraic group $\widetilde{G}$ & \pageref{thm:levidecomposition} \\
  $\levi$ & a Levi group of a linear algebraic group & \pageref{thm:levidecomposition} \\
  $\levi_J$ &the standard Levi group of $\parabolic_J$ & \pageref{eqn:standardLevi} \\
  $\bvvbase $ & the indeterminates among $\bvv$ fixed by $\parabolic_J$ & \pageref{def:partitions} \\
$ \bvvext$ & the indeterminates among $\bvv$ not fixed by $\parabolic_J$ & \pageref{def:partitions}  \\
  $I'$ & the indices  $\{ i_1,\dots,i_r \} \subset I$ corresponding to $\bvvext$  & \pageref{def:partitions} \\
  $ I''$ &the indices $ \{ i_{r+1},\dots,i_l \} \subset I$ corresponding to $\bvvbase$  & \pageref{def:partitions} \\
  $\beta_1,\dots,\beta_m$ & the roots of $\roots^-$ enumerated in a specific way & \pageref{eqn:numberingnegativeroots} \\
  $\leviroots \subset \roots$ & the root system of the standard Levi group $\levi_J$ of $\parabolic_J$ & \pageref{eqn:definitionLeviroots} \\
  $\beta_{j_1},\dots,\beta_{j_k}$ & the roots in $\leviroots^-$ & \pageref{eqn:rootsofpsiminus} \\
  $\beta_{j_{k+1}},\dots,\beta_{j_m}$  & the roots in $\roots^- \setminus \leviroots^-$ corresponding to $\unirad(\parabolic_J)$& \pageref{eqn:rootsofradical} \\
  $L_{\group}(\bss(\bvv), \partial)$ &normal form operator in $\field \{ \bss(\bvv)\}$ corresponding &  \pageref{de:normalformequation} \\ & to $A_{\group}(\bsq)$& \\
  $L^{\det(i)}(\bss(\bvv),\partial)$ & for $1 \leq i \leq l$ the associated operator with & \pageref{def:associatedRic} \\ 
  & solution $\exp(\int b_i v_i)$&  \\
  $\expsolass_i$ & exponential solution $\exp(\int b_i v_i)$ of the $i$-th associated equation & \pageref{prop:exponentialandRiccati} \\
  $\Ric{i}{\bss(\bvv)}{y}=0$ &  for $1 \leq i \leq l$ the Riccati equation for $L^{\det(i)}(\bss(\bvv),y)=0$ & \pageref{def:associatedRic} \\
  $L_{i}(\bss(\bvv),\bvvbase,\partial)$ & for $1\leq i \leq k$ the irreducible factors of an irreducible  & \pageref{eqn:irreduciblefactorization} \\
  & factorization of  $L_{\group}(\bss(\bvv), \partial)$ over $\difffield\langle \bss(\bvv),\bvvbase \rangle$& \\
  $L_i(\partial)$ & short notation for $L_{i}(\bss(\bvv),\bvvbase,\partial)$ & \pageref{eqn:shortdefofirredfactors} \\
  $\LCLM(\bss(\bvv),$ & the least common left multiple of the $L_{i}(\bss(\bvv),\bvvbase,\partial)$ & \pageref{def:lclm} \\ $\qquad \quad \bvvbase,\partial)$& &\\ 
  $n_{I''}$ & the order of $\LCLM(\bss(\bvv),\bvvbase,\partial)$ & \pageref{def:lclm}  \\
  $y^{I''}_1,\dots,y^{I''}_{n^{I''}}$ & a fixed basis of $\LCLM(\bss(\bvv),\bvvbase,\partial)$ in $\generalext^{\unirad(\parabolic_J)}$ & \pageref{rem:basisLCLM} \\
  $\bZ $ & differential indeterminates $Z_1,\dots, Z_{n^{I''}}$ over $\generalext^{\parabolic_J}$ & \pageref{cor:exprforexpandint} \\   
  $\exprforexp^{I''}_i(\bZ)$ & for $1 \leq i \leq l$ a differential rational function in $\generalext^{\parabolic_J}\langle \bZ \rangle$ & \pageref{cor:exprforexpandint} \\ 
  & such that $\exprforexp^{I''}_i(y^{I''}_1,\dots,y^{I''}_{n^{I''}})=\exp_i$ & \\
  $V^{I''}_i(\bZ)$ & for $1 \leq i \leq l$ a differential rational function in $\generalext^{\parabolic_J}\langle \bZ \rangle$ & \pageref{cor:exprforexpandint} \\ 
  & such that $V^{I''}_i(y^{I''}_1,\dots,y^{I''}_{n^{I''}})=v_i$  &  \\
  $\exprforint^{I''}_j(\bZ)$ & for $1\leq j \leq m$ with $\beta_j \in \leviroots^-$ a differential rational function & \pageref{cor:exprforexpandint} \\
  & in $\generalext^{\parabolic_J}\langle \bZ \rangle$ such that $\exprforint^{I''}_j(y^{I''}_1,\dots,y^{I''}_{n^{I''}}) =\intn_j$  & \\
  $\mathrm{REL}_i$ & differential polynomial in $\field \{ \bss(\bvv),\bvvbase\}\{ \bZ \}$ & \pageref{prop:computationRELnew} \\
  & representing a relation between $y^{I''}_1,\dots,y^{I''}_{n^{I''}}$     & \\
  $\mathrm{REL}^{\neq}$ & product of the initials and separants of the $\mathrm{REL}_i$  & \pageref{prop:computationRELnew} \\
  $ \fmred \, \fmrad$ & the factorization of $\fm$ with & \pageref{rem:genericYredYraddecomp} \\
  & $\fmred \in \group(\generalext^{\unirad(\parabolic_J)})$ and $\fmrad \in \unirad(\parabolic_J)(\generalext)$ & \\
  $g_1 $ & a matrix in $\group(\generalext^{\parabolic_J})$ satisfying $g_1\fmred \in \levi_J(\generalext^{\unirad(\parabolic_J)})$& \pageref{prop:transformationintoPJgen} \\ 
  $\overline{\generalext}$ &specialized Picard-Vessiot extension of $\difffield$ for $A_{\group}(\bsq)$; & \pageref{prop:parabolicbound} \\
   & the field of fractions of $\overline{R}$ & \pageref{eqn:definitionEq}\\
$\overline{\fm}$ & specialized fundamental matrix for $A_{\group}(\bsq)$ defining $\overline{\generalext}$;  & \pageref{prop:parabolicbound}\\
&image of $\fm$ under $\sigPV$ & \pageref{eqn:bruhatoffundmatrixchapter10} \\
$\bvvqbase $ & the specialization of $\bvvbase$ in $\difffield^{|I''|}$ under $\siginter$ & \pageref{defn:sigmainter} \\
$\siginter$ &an extension of $\sigma_0$ specializing consistently $\bvvbase$ to $\bvvqbase$ & \pageref{defn:sigmainter} \\
$\idealinter$  & the differential ideal in $\difffield\{\bvv\}$ defining the specialization $\siginter$& \pageref{defn:idealSinter} \\
$L_{\group}(\bsq,\partial)$ &specialized normal form operator & \pageref{def:specfactorsLCLM} \\
$\overline{L}_i(\partial)$ & for $1 \leq i \leq k$ the image of $L_{i}(\bss(\bvv),\bvvbase,\partial)$ under $\siginter$  & \pageref{item3:assumption2} \\
$\overline{\LCLM}(\bsq,$& the image of $\LCLM(\bss(\bvv),\bvvbase,\partial)$ under $\siginter$  & \pageref{def:specfactorsLCLM} \\
 $\qquad \quad \bvvqbase,\partial)$ && \\
$A_{\rm comp}$ & the companion matrix for $\overline{\LCLM}(\bsq,\bvvqbase,\partial)$ & \pageref{def:Q} \\
$\difffield[\GL_{n_{I''}}]$ & the differential ring $ \difffield[X_{i,j},\det(X_{i,j})^{-1}]$ with derivation & \pageref{def:Q} \\ 
& defined by $X'=A_{\rm comp} X$ & \\
$Q $ & a maximal differential ideal in $\difffield[\GL_{n_{I''}}]$  & \pageref{def:Q} \\ 
$\extfieldred$ & the Picard-Vessiot extension $  \Frac(\difffield[\GL_{n_{I''}}]/Q)$ & \pageref{def:Q} \\ 
& for $\overline{\LCLM}(\bsq,\bvvqbase,\partial)$& \\
$\Hred$  & the stabilizer $\Stab(Q) $ of $Q$ in $\GL_{n_{I''}}(\field)$ & \pageref{def:difffieldalg} \\
$D$ & multiplicatively closed subset in $\field\{ \bss(\bvv),\bvvbase \}$ & \pageref{rem:extensigma}\\
$\sigred$ & specialization  
of the reductive part to $\extfieldred$ & \pageref{prop:sigmared} \\
$\widehat{v}_i$ & for $1\leq i\leq l$ a function in $\Frac(\difffield[\GL_{n_{I''}}])$ such & \pageref{eqn:definnitionofhatvariables} \\ 
& that $\sigred(v_i)=\widehat{v_i} +Q $ &  \\
$\widehat{\bvv}$ & the tuple $(\widehat{v}_1,\dots,\widehat{v}_l)$ in $\Frac(\difffield[\GL_{n_{I''}}])^l$   & \pageref{eqn:definnitionofhatvariables} \\
$\ratexp_i$ &  for $1\leq i\leq l$ a function in $\Frac(\difffield[\GL_{n_{I''}}])^{\times}$ such & \pageref{eqn:definnitionofhatvariables}  \\
 & that $\sigred(\exp)=\ratexp_i +Q $  & \\
$\bratexp$ & the tuple $(\ratexp_1,\dots,\ratexp_l)$ in $(\Frac(\difffield[\GL_{n_{I''}}])^{\times})^l$ & \pageref{eqn:definnitionofhatvariables} \\
$\ratint_i$ & for $1\leq i \leq m$ such that $\beta_i \in \leviroots^-$ a function in  & \pageref{eqn:definnitionofhatvariables} \\ 
& $\Frac(\difffield[\GL_{n_{I''}}])$ such that  $\sigred(\intn_i)=\ratint_i + Q$  & \\
$\intrad_i$ & for $1\leq i \leq m$ with $\beta_i \in \roots^- \setminus \leviroots^-$ a differential &  \pageref{sec:structureofreductivepart}  \\ & indeterminate over $\extfieldred$ & \\
 $\Iuni$ & differential ideal in $\extfieldred\{ \intrad_i \mid \beta_i \in \roots^- \setminus \leviroots^-\}$& \pageref{sec:structureofreductivepart} \\
$\bratint$ & the $m$-tuple where the $i$-th entry is $\ratint_i$ if $\beta_i \in \leviroots^-$ and & \pageref{sec:structureofreductivepart}\\ 
&$\intrad_i$ if $\beta_i \in \roots^- \setminus \leviroots^-$ & \\
$\underline{\intrad}_i$ & for $1\leq i \leq m$ with $\beta_i \in \roots^- \setminus \leviroots^-$ the residue class & \pageref{eqn:specializationreductivepart} \\ 
& of $\intrad_i$ modulo $\Iuni$ &  \\
$\overline{\intrad}_i$ & for $1\leq i \leq m$ with $\beta_i \in \roots^- \setminus \leviroots^-$ the residue class & \pageref{eqn:specializationreductivepartfinal} \\
&  of $\underline{\intrad}_i$ modulo $\Imax$ & \\
$\underline{R}$ & the differential ring  $ \extfieldred[\underline{\intrad}_i \mid \beta_i \in \roots^- \setminus \leviroots^-]$ & \pageref{sec:structureofreductivepart} \\
$\underline{E}$ & the differential field $\Frac(\underline{R})$& \pageref{rem:Rintegraldomain} \\
$\sigul$ & the specialization 
of the parameters to $\underline{E}$ extending $\sigred$  & \pageref{eqn:specializationreductivepart} \\
$\Imax$  & a maximal differential ideal in $\underline{R}$ & \pageref{sec:structureofreductivepart} \\
$\overline{R}$ & the quotient of $\underline{R}$ by $\Imax$  & \pageref{eqn:definitionRq}\\
$\fmuq$ & the image of $\fm$ under $\sigul$ in $\group(\underline{R}) $& \pageref{eqn:imageoffmundersigul} \\
$\fmreduq\, \fmraduq$ & the factorization of $\fmuq$ with $\fmreduq \in \group(\extfieldred)$ and & \\ & $\fmraduq \in \unirad(\parabolic_J)(\underline{R})$& \pageref{eqn:definitionofYredradobuq}\\
$\fmredq\, \fmradq$ & the factorization of $\fmspec$ with $\fmredq \in \group(\extfieldred)$ and & \\ & $\fmradq \in \unirad(\parabolic_J)(\overline{R})$& \pageref{eqn:definitionofYredradobuq}\\
$\Aprered$  & the logarithmic derivative $\dlog (\fmredq) \in \Lie(\group)(\difffield)$ & \pageref{prop:reductivepartYred} \\
$\Lred$ & representation of $\Gal_{\partial}(\extfieldred/\difffield)$ induced by $\fmredq$ & \pageref{prop:reductivepartYred} \\
$\underline{H}$ & representation of $\Aut_{\partial}(\underline{E}/\difffield)$ induced by $\fmuq$ & \pageref{prop:statementsaboutstabilzers(b)} \\
$I_{\Lred}$ & the defining ideal of $\Lred$ in $\field[\GL_n] = \field[\coord,\det(\coord)^{-1}]$& \pageref{sec:reductivepart} \\
$I_{\underline{H}}$ & the defining ideal of $\underline{H}$ in $\field[\GL_n] = \field[\coord,\det(\coord)^{-1}]$ & \pageref{sec:reductivepart} \\
$\overline{g}_1$ &a matrix in $\group(\difffield)$ satisfying $\overline{g}_1.A_{\group}(\bsq) \in \Lie(\parabolic_J)(\difffield)$ & \pageref{prop:gaugetoAPJ} \\
$A_{\parabolic_J} $ & the matrix $\gauge{\overline{g}_1}{A_{\group}(\bsq)}\in \Lie(\parabolic_J)(\difffield) $& \pageref{eqn:definitionA_PJ}\\
$\difffieldalg$ & the algebraic closure of $\extfieldred$ in $\difffield$    & \pageref{def:difffieldalg} \\
$p$ &a primitive element generating $\difffieldalg$ over $\difffield$ & \pageref{prop:primelem} \\
$\overline{g}_2$ & a matrix in $\levi_J(\difffieldalg)$ such that $\gauge{\overline{g}_2 \overline{g}_1}{\Aprered} \in \Lie(\Lred)(\difffieldalg)$ & \pageref{prop:reduceredivtivepartnew} \\
$\Ared$ &  the matrix $\gauge{\overline{g}_2 \overline{g}_1}{\Aprered}$ & \pageref{prop:reduceredivtivepartnew}  \\
$\Aprerad$ & the matrix in $\Lie(\unirad(\parabolic_J))(\difffieldalg)$ such & \pageref{lem:partiallyreducedAP_J} \\
& that $\gauge{\overline{g}_2 \overline{g}_1}{A_{\group}(\bsq)} \, = \, \Ared + \Aprerad$ &  \\
$\overline{g}_3$ & a matrix in $\unirad(\parabolic_J)(\difffieldalg)$ such that $\overline{g}_3\overline{g}_2\overline{g}_1.A_{\group}(\bsq)$ is  & \pageref{prop:completereduction} \\ & in reduced form & \\ 
$\Arad$ & the matrix such that $\gauge{\overline{g}_3\overline{g}_2 \overline{g}_1}{A_{\group}(\bsq)}= \Ared + \Arad$ & \pageref{prop:completereduction} \\
$\Lie_{\rm red}(\difffieldalg)$ & smallest Lie algebra containing $\Ared + \Arad$  & \pageref{prop:completereduction} \\
$\GalConn$ & connected linear algebraic group with Lie algebra $\Lie_{\rm red}(\difffieldalg)$  & \pageref{prop:completereduction} \\
$R_1$ & unipotent radical of $\GalConn$ & \pageref{prop:completereduction} \\
$I_{R_1}$ & defining ideal of $R_1$ in $\field[\GL_n]=\field[\coord,\det(\coord)^{-1}]$ & \pageref{prop:completereduction} \\
$f_1,\dots,f_a$ & generators of $I_{R_1}$ & \pageref{prop:completereduction}  \\
$\fmreduqhat \fmraduqhat$ & the factorization of $\overline{g}_3\overline{g}_2 \overline{g}_1 \fmuq$ with $\fmreduqhat \in \cocomp{\Lred}(\extfieldred)$ & \pageref{prop:decompredrad}  \\ & and $\fmraduqhat \in \unirad(\parabolic_J)(\underline{R})$ & \\
$f_i(\fmraduqhat)$ & the element in $\underline{R}$ obtained by evaluating $f_i$ at $\fmraduqhat$    & \pageref{prop:generatorsforImax} \\
$\fmredqhat \fmradqhat$ & the factorization of $\overline{g}_3 \overline{g}_2 \overline{g}_1 \fmspec$ with $\fmredqhat \in \cocomp{\Lred}(\extfieldred)$ & \pageref{eqn:decompositionreducedfm} \\ 
&and $\fmradqhat \in R_1(\overline{\generalext})$ & \\
$I_{H}$ &the defining ideal of $H$ in $\field[\GL_n]=\field[\coord,\det(\coord)^{-1}]$ & \pageref{alg:DiffGaloisgroup}\\
$\widetilde{f}_i$ & polynomial in $ \difffield(\GL_{n_{I''}})[\underline{\intrad}_i \mid \beta_i \in \leviroots^-]$ such & \pageref{def:widetildef} \\
& that $\widetilde{f}_i+Q=f_i(\fmraduqhat)$  & \\
$E_{i,j}$ & standard basis element of $C^{n \times n}$& \pageref{prop:factorization}
\end{longtable}
\end{center}

\newpage

\bibliographystyle{alpha}
\bibliography{main}

\end{document}